\setlist[itemize]{noitemsep}
\tikzset{curve/.style={settings={#1},to path={(\tikztostart)
    .. controls ($(\tikztostart)!\pv{pos}!(\tikztotarget)!\pv{height}!270:(\tikztotarget)$)
    and ($(\tikztostart)!1-\pv{pos}!(\tikztotarget)!\pv{height}!270:(\tikztotarget)$)
    .. (\tikztotarget)\tikztonodes}},
    settings/.code={\tikzset{quiver/.cd,#1}
        \def\pv##1{\pgfkeysvalueof{/tikz/quiver/##1}}},
    quiver/.cd,pos/.initial=0.35,height/.initial=0}
\tikzset{tail reversed/.code={\pgfsetarrowsstart{tikzcd to}}}
\tikzset{2tail/.code={\pgfsetarrowsstart{Implies[reversed]}}}
\tikzset{2tail reversed/.code={\pgfsetarrowsstart{Implies}}}
\tikzset{no body/.style={/tikz/dash pattern=on 0 off 1mm}}
\tikzset{curve/.style={settings={#1},to path={(\tikztostart)
    .. controls ($(\tikztostart)!\pv{pos}!(\tikztotarget)!\pv{height}!270:(\tikztotarget)$)
    and ($(\tikztostart)!1-\pv{pos}!(\tikztotarget)!\pv{height}!270:(\tikztotarget)$)
    .. (\tikztotarget)\tikztonodes}},
    settings/.code={\tikzset{quiver/.cd,#1}
        \def\pv##1{\pgfkeysvalueof{/tikz/quiver/##1}}},
    quiver/.cd,pos/.initial=0.35,height/.initial=0}
  \DeclareMathSymbol{:}{\mathpunct}{operators}{"3A}
  \newcommand{\uvar}{\mathord{\relbar}}
   \tikzset{cto/.style={>->}}
   \tikzset{fto/.style={->>}}
\newcommand{\Db}{\mathbb{D}} 
\newcommand{\Gb}{\mathbb{G}}
\theoremstyle{plain}
\newtheorem{theorem}{Theorem}[subsection]
\newtheorem{lemma}[theorem]{Lemma}
\newtheorem{prop}[theorem]{Proposition}
\newtheorem*{prop*}{Proposition}
\newtheorem{cor}[theorem]{Corollary}
\theoremstyle{definition}
\newtheorem{definition}[theorem]{Definition}
\newtheorem{remark}[theorem]{Remark}
\newtheorem{example}[theorem]{Example}
\newtheorem{construction}[theorem]{Construction}
\newtheorem{notation}[theorem]{Notation}
\newcommand\Psh[1]{\widehat{#1}}
\newcommand\tPsh[1]{\widehat{#1}^+}
\newcommand\relativtPsh[2]{\widehat{#1}^{+_{#2}}}
\newcommand\ctPsh[1]{\widehat{#1}^{++}}
\newcommand\relativctPsh[3]{\widehat{#1}^{+_{#2}+_{#3}}}
\DeclareMathOperator*{\Colim}{Colim}
\DeclareMathOperator*{\Lim}{Lim}
\DeclareMathOperator{\Hom}{Hom}
\DeclareMathOperator{\mSset}{m\Psh{\Delta}}
\DeclareMathOperator{\stratSset}{\tPsh{\Delta}}
\DeclareMathOperator{\bmSset}{bm\Psh{\Delta}}
\DeclareMathOperator{\bstratSset}{\ctPsh{\Delta}}
\DeclareMathOperator{\infcat}{\omega -Cat}
\DeclareMathOperator{\satinfcat}{\overline{\omega -Cat}}
\newcommand{\invamalg}{\mathbin{\rotatebox[origin=c]{180}{$\amalg$}}}
\newcommand{\invoslash}{\mathbin{\rotatebox[origin=c]{90}{$\oslash$}}}
\newcommand{\invperp}{\mathbin{\rotatebox[origin=c]{180}{$\perp$}}}
\newcommand{\costar }{\mathbin{\overset{co}{\star}}}
\newcommand{\fwedge}{\mathbin{\rotatebox[origin=c]{270}{$\gtrdot$}}}
\author{Félix Loubaton}
\title{Dualities in the complicial model of $\infty$-categories}
\begin{document}
\maketitle
\begin{abstract}
In this note, we study the connection between Gray  tensor product and suspension. We derive a characterization of weak equivalences as fully faithful and essentially surjective functors. We construct the $co$ duality, a weak involution that reverses the direction of even dimensional cells. We conclude by studying Grothendieck fibrations of complicial sets.
\end{abstract}
\section*{Introduction}
The complicial sets, defined by Verity, are one of the models of $\infty$-categories\footnote{$\infty$-category stands here for what is sometimes called a $(\infty,\infty)$-category in the literature.}. One of their advantages is to admit a simple definition of the  Gray tensor product, an important ingredient of the theory of higher categories. Being strongly linked to (strict) $\omega$-categories by the Street nerve, they are also a privileged framework for stating and proving results of strictness, as done in \cite{ozornova2020fundamental},\cite{ozornova2020gray} and \cite{ozornova2020suspension}. 
However, they do not interact \textit{a priori} well with the globular language. One of the striking examples is the difficulty of constructing a functor of the Joyal's $\Theta$ category  into complicial sets, a work that should however be done in \cite{ozornova2020suspension}.
The goal of this note is to show that with some computation, it is possible to have a globular point of view in this model. 

The technical ingredient on which this work is based, is a study of the interaction between the Gray tensor product with the directed interval and the suspension. It will be  easier to give an intuition in the world of (strict) $\omega$-categories.  We denote $\Db_1$ the directed interval, i.e the $\omega$-category generated by the $1$-graph $0\to 1$. If $X$ is any $\omega$-category, the suspension of $A$, noted $\Sigma A$, is the $\omega$-category having two $0$-objects, $\perp $ and $\invperp$, and such that 
$$\Hom_{\Sigma A}(\perp,\invperp) := A,~~~\Hom_{\Sigma A}(\invperp,\perp) := \emptyset,~~~\Hom_{\Sigma A}(\perp,\perp)=\Hom_{\Sigma A}(\invperp,\invperp):=\{id\}.$$
For example, if we denote $\Db_0$ the terminal $\omega$-category, we have $\Db_1 = \Sigma\Db_0$.
The Gray tensor product $\Db_1\otimes \Db_1$ can be pictured by:
\[\begin{tikzcd}
	00 & 01 \\
	10 & 11.
	\arrow[from=1-1, to=2-1]
	\arrow[from=2-1, to=2-2]
	\arrow[from=1-1, to=1-2]
	\arrow[from=1-2, to=2-2]
	\arrow["{f\otimes f}"{description}, Rightarrow, from=1-2, to=2-1]
\end{tikzcd}\]
The $\omega$-category $\Db_1\otimes\Db_1$ is then equal to the colimit of the following diagram: 
$$\Db_1\vee \Db_1\xleftarrow{(f\otimes 1)\circ_0(0\otimes f)} \Db_1\hookrightarrow \Db_2\hookleftarrow\Db_1\xrightarrow{(1\otimes f) \circ_0 (f\otimes 0) } \Db_1\vee \Db_1.$$
Now, let $\Db_2$ be the suspension of $\Db_1$. This is the $\omega$-category generated by the following $2$-graph: 
\[\begin{tikzcd}
	0 & 1.
	\arrow[""{name=0, anchor=center, inner sep=0}, "{f_0}", curve={height=-12pt}, from=1-1, to=1-2]
	\arrow[""{name=1, anchor=center, inner sep=0}, "{f_1}"', curve={height=12pt}, from=1-1, to=1-2]
	\arrow["\alpha", shorten <=3pt, shorten >=3pt, Rightarrow, from=0, to=1]
\end{tikzcd}\]
The Gray tensor product $\Db_1\otimes\Db_2$ can be pictured by:
\[\begin{tikzcd}
	00 & 01 & 00 & 01 \\
	10 & 11 & 10 & 11.
	\arrow[""{name=0, anchor=center, inner sep=0}, curve={height=-30pt}, from=1-1, to=1-2]
	\arrow[""{name=1, anchor=center, inner sep=0}, curve={height=6pt}, from=1-1, to=1-2]
	\arrow[from=1-1, to=2-1]
	\arrow[from=2-1, to=2-2]
	\arrow[""{name=2, anchor=center, inner sep=0}, from=1-2, to=2-2]
	\arrow["{f\otimes f_1}"{description}, Rightarrow, from=1-2, to=2-1]
	\arrow[""{name=3, anchor=center, inner sep=0}, from=1-3, to=2-3]
	\arrow[""{name=4, anchor=center, inner sep=0}, curve={height=-6pt}, from=2-3, to=2-4]
	\arrow[""{name=5, anchor=center, inner sep=0}, curve={height=30pt}, from=2-3, to=2-4]
	\arrow[from=1-3, to=1-4]
	\arrow[from=1-4, to=2-4]
	\arrow["{f\otimes f_0}"{description}, Rightarrow, from=1-4, to=2-3]
	\arrow["0\otimes\alpha"{description}, shorten <=5pt, shorten >=5pt, Rightarrow, from=0, to=1]
	\arrow["{1\otimes \alpha}"{description}, shorten <=5pt, shorten >=5pt, Rightarrow, from=4, to=5]
	\arrow["f\otimes\alpha", shorten <=6pt, shorten >=6pt, Rightarrow, from=2, to=3]
\end{tikzcd}\]
If we denote $\alpha_0 := (f\otimes 1)\circ_0 (0\otimes\alpha)$ and $\alpha_1 := (1\otimes \alpha)\circ_0 (f\otimes 0)$, the
 source of $f\otimes \alpha$ is  $(f\otimes f_1)\circ_1\alpha_0$ and it's target is $\alpha_1\circ_1 (f\otimes f_1)$. The $\omega$-category $\Db_1\otimes\Db_2$ is then equal to the colimit of the following diagram: 
 $$\Db_2\vee \Db_1\xleftarrow{\alpha_0} \Db_2\hookrightarrow \Sigma(\Db_1\otimes\Db_1)\hookleftarrow\Db_2\xrightarrow{\alpha_1} \Db_1\vee \Db_2.$$
We now want to find a formula that combines these two examples. If $A$ is a category, we denote $\triangledown$ the two whiskerings :
$$\triangledown:\Sigma A\to \Db_1\vee \Sigma A ~~~~\mbox{and}~~~~ \triangledown:\Sigma A \to \Sigma A\vee \Db_1.$$

\begin{theorem}
In the category of $\omega$-categories, there exists an isomorphism, natural in $A$, between $\Db_1\otimes \Sigma A$ and the colimit of the following diagram:
 $$\Sigma A\vee \Db_1\xleftarrow{\triangledown} \Sigma A \hookrightarrow \Sigma(\Db_1\otimes A)\hookleftarrow \Sigma A\xrightarrow{\triangledown} \Db_1\vee \Sigma A.$$
\end{theorem}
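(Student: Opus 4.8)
The strategy is to establish the identity first on a generating family of objects $A$ and then propagate it by a connected-colimit argument, exploiting that both sides are functors of $A$ preserving connected colimits. First I would record these preservation properties. The functor $\Db_1\otimes-$ is cocontinuous, the Gray tensor product being part of a biclosed monoidal structure on $\omega$-categories. The suspension $\Sigma$ preserves connected colimits: in a connected diagram all copies of $\perp$ (resp.\ of $\invperp$) are identified, and the gluing introduces no new cells between the two base objects, so $\Sigma(\Colim_i A_i)\cong\Colim_i\Sigma A_i$. Consequently $A\mapsto\Sigma A\vee\Db_1$ and $A\mapsto\Db_1\vee\Sigma A$ preserve connected colimits (pushout along a fixed object), and therefore so does $A\mapsto\Colim\big(\Sigma A\vee\Db_1\leftarrow\Sigma A\to\Sigma(\Db_1\otimes A)\leftarrow\Sigma A\to\Db_1\vee\Sigma A\big)$, a colimit over the fixed five-term shape commuting with connected colimits in $A$. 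On the other hand, every $\omega$-category is a connected colimit of globes $\Db_n$ together with the empty $\omega$-category: its canonical presentation makes it a reflexive coequalizer of free $\omega$-categories, each free $\omega$-category is a colimit of globes, and one may enlarge such an indexing diagram by an initial object mapping into each connected component, making it connected without changing the colimit. Hence, once a natural comparison map is constructed, it suffices to check that it is invertible for $A=\Db_n$ and $A=\emptyset$.

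Next I would build a comparison map $\Phi_A$ \emph{out of} the colimit into $\Db_1\otimes\Sigma A$, piece by piece. On $\Db_1\vee\Sigma A$: send $\Db_1=\Db_1\otimes\Db_0\xrightarrow{\Db_1\otimes\perp}\Db_1\otimes\Sigma A$ (so $\Db_1\mapsto f\otimes\perp$) and $\Sigma A\cong\Db_0\otimes\Sigma A\hookrightarrow\Db_1\otimes\Sigma A$ along the object $1$ of $\Db_1$; these agree on the shared object $1\otimes\perp$. Symmetrically on $\Sigma A\vee\Db_1$, using $f\otimes\invperp$ and the object $0$. On $\Sigma(\Db_1\otimes A)$: using that maps out of a suspension are described by $\Hom(\Sigma X,Y)\cong\coprod_{p,q\in Y_0}\Hom\big(X,\Hom_Y(p,q)\big)$, send the two objects to the extreme corners $0\otimes\perp$ and $1\otimes\invperp$ and take the functor $\Db_1\otimes A\to\Hom_{\Db_1\otimes\Sigma A}(0\otimes\perp,\,1\otimes\invperp)$ that carries a generating cell $x\otimes a$ to the appropriate whiskering of $x\otimes\sigma(a)$, where $\sigma(a)$ denotes $a$ read as a cell of $\Sigma A$. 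One then checks that these three maps agree along the two copies of $\Sigma A$ in the diagram — reconciling the whiskering built into the middle map with the whiskerings $\triangledown$ on the outer legs — which produces $\Phi_A$, natural in $A$.

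It remains to verify that $\Phi_A$ is an isomorphism on generators. For $A=\emptyset$ one has $\Sigma\emptyset=\Db_0\sqcup\Db_0$, so both sides equal $\Db_1\sqcup\Db_1$ and $\Phi_\emptyset$ is the evident identification. For $A=\Db_n$ one has $\Sigma\Db_n=\Db_{n+1}$, so the claim becomes that $\Db_1\otimes\Db_{n+1}$ is the colimit of $\Db_{n+1}\vee\Db_1\xleftarrow{\triangledown}\Db_{n+1}\hookrightarrow\Sigma(\Db_1\otimes\Db_n)\hookleftarrow\Db_{n+1}\xrightarrow{\triangledown}\Db_1\vee\Db_{n+1}$; the case $n=0$ is the first example of the introduction, checked by hand. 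For general $n$ I would invoke the explicit description of Gray products of globes — either through Steiner's theory of augmented directed complexes, where $\otimes$ is the signed tensor product of chain complexes, the four maps of the diagram are inclusions of sub-complexes, and the colimit is thus computed degreewise, or directly from the Leibniz-type source/target formulas for the generating cells $x\otimes e$ of $\Db_1\otimes\Db_{n+1}$: the cells with $\dim e\ge 1$ account precisely for $\Sigma(\Db_1\otimes\Db_n)$, the six cells $x\otimes e_0^{\pm}$ are supplied by the two outer legs, and the gluing maps identify the overlapping copies of $\Db_{n+1}$. I expect the main obstacle to be exactly this globe computation: describing the cell structure of $\Db_1\otimes\Db_{n+1}$ — in particular the interchange terms in the sources and targets — precisely enough to recognize the colimit, and checking that the middle functor $\Db_1\otimes A\to\Hom_{\Db_1\otimes\Sigma A}(0\otimes\perp,\,1\otimes\invperp)$ is well defined with all whiskerings matching. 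Everything else is formal.
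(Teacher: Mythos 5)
Your global strategy is sensible, and the formal part of it is essentially correct: $\Db_1\otimes\uvar$ is cocontinuous, $\Sigma$ preserves connected colimits (it is cocontinuous into bipointed $\omega$-categories and the forgetful functor creates connected colimits), the five-term colimit therefore preserves connected colimits in $A$, and every $\omega$-category is an iterated connected colimit of globes and $\emptyset$ via the reflexive coequalizer of free $\omega$-categories (with $\emptyset$ adjoined to connect the categories of elements). The checks for $A=\emptyset$ and the outer legs of the comparison map are also fine. Note, for comparison, that the paper never proves this strict statement at all: it is stated as motivation, and what is actually proved is the complicial analogue (\ref{prop:interval_first_formula}, \ref{prop:interval_second_formula}), by an explicit decomposition of $\Delta[1]\otimes\Sigma X$ into marked pieces $A_0,\dots,A_4,B_0,B_1$ and chains of pushouts of complicial horns, yielding a zigzag of acyclic cofibrations rather than an isomorphism; for strict computations of this kind the paper's tool is Steiner's theory of augmented directed complexes with loop-free atomic bases (cf.\ \ref{lemma:unicity_of_composition}).

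The genuine gap is that the two steps you defer are not peripheral: together they \emph{are} the theorem. First, the middle comparison $\Sigma(\Db_1\otimes A)\to\Db_1\otimes\Sigma A$ is not obtained for free by "sending a generating cell $x\otimes a$ to the appropriate whiskering of $x\otimes\sigma(a)$": $\Db_1\otimes A$ is not free on those cells, and the whole difficulty is compatibility of this assignment with the Gray source/target formulas, whose interchange terms mix $f\otimes(\partial a)$ contributions with whiskerings by $f\otimes\perp$ and $f\otimes\invperp$; this must be verified (most plausibly via Steiner's ADC formalism for $A$ with a strong loop-free basis, followed by an argument extending the natural transformation to all $A$), not asserted. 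Second, the identification for $A=\Db_n$, i.e.\ that $\Db_1\otimes\Db_{n+1}$ is exactly the amalgam $\Db_{n+1}\vee\Db_1\amalg_{\Db_{n+1}}\Sigma(\Db_1\otimes\Db_n)\amalg_{\Db_{n+1}}\Db_1\vee\Db_{n+1}$, is only a matching of generating cells in your sketch; one must also check that the relations (sources and targets, including all interchange cells) agree, and if this is done at the level of augmented directed complexes one must further justify that Steiner's functor $\nu$ sends this particular pushout of ADCs to a pushout of $\omega$-categories — $\nu$ does not preserve colimits in general, so the hypotheses (basis-preserving inclusions of strong Steiner complexes, etc.) have to be verified. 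As written, the proposal correctly reduces the statement to its special case on globes plus the well-definedness of the middle functor, but proves neither, so it is an outline of a proof rather than a proof.
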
 
It is natural to ask whether an instance of this formula exists in complicial sets. We also have a suspension is this category. Objects $\Delta[1]\fwedge \Sigma A$ and $\Sigma A\fwedge \Delta[1]$ are defined in \ref{subsection:wedge}, but for now, we can suppose that they are fibrant replacements of respectively $\Delta[1]\coprod_{\Delta[0]}\Sigma A$ and $\Sigma A\coprod_{\Delta[0]}\Delta[1]$.
They come along with morphisms that are analogue to whiskerings, and that we also note $\triangledown$: 
$$\triangledown:\Sigma A\to \Delta[1]\fwedge \Sigma A ~~~~\mbox{and}~~~~ \triangledown:\Sigma A \to \Sigma A\fwedge \Delta[1].$$ 
In this paper, we show the following theorem:
\begin{theorem}[\ref{subsubsection:First_formula}]
In the category of complicial sets, there exists a zigzag of weak equivalences, natural in $A$, between $\Delta[1]\otimes \Sigma A$ and the colimit of the following diagram:
 $$\Sigma A\fwedge \Delta[1]\xleftarrow{\triangledown} \Sigma A \hookrightarrow \Sigma(\Delta[1]\otimes A)\hookleftarrow \Sigma A\xrightarrow{\triangledown} \Delta[1]\fwedge \Sigma A.$$
\end{theorem}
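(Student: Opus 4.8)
\emph{Strategy.} The plan is to deduce the statement from the strict isomorphism of the preceding theorem through a point-set analogue in which the fibrant wedges $\fwedge$ are replaced by ordinary pushouts along $\Delta[0]$. Precisely, I would first establish an isomorphism, natural in $A$,
$$\Delta[1]\otimes\Sigma A \;\cong\; \Colim\Bigl(\Sigma A\coprod_{\Delta[0]}\Delta[1]\xleftarrow{\triangledown}\Sigma A\hookrightarrow\Sigma(\Delta[1]\otimes A)\hookleftarrow\Sigma A\xrightarrow{\triangledown}\Delta[1]\coprod_{\Delta[0]}\Sigma A\Bigr)$$
(with $\triangledown$ the point-set whiskerings), and then replace the two ordinary pushouts by their fibrant replacements.

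\emph{Step 1 — the point-set formula.} Both sides, as functors of $A$, preserve connected colimits: the left-hand side because $\Delta[1]\otimes-$ is a left adjoint and $\Sigma$ preserves connected colimits; the right-hand side because each of $A\mapsto\Sigma A\coprod_{\Delta[0]}\Delta[1]$, $A\mapsto\Sigma(\Delta[1]\otimes A)$, $A\mapsto\Delta[1]\coprod_{\Delta[0]}\Sigma A$ does, and a colimit commutes with colimits. A skeletal induction then reduces the claim to $A=\Delta[n]$, naturally in $[n]$; the only subtlety is that $\Sigma$ does not preserve coproducts, which is handled via the observation that the suspension of a coproduct of representables is itself a connected colimit of the suspensions of those representables, together with the trivial case $\Sigma\emptyset\cong\Delta[0]\sqcup\Delta[0]$ (for which both sides give $\Delta[1]\sqcup\Delta[1]$). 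For $A=\Delta[n]$, I would verify the decomposition by a direct analysis of the marked simplicial sets involved: the strict isomorphism of the preceding theorem, applied to $A=[n]$ (the $n$-th ordinal, so $\Delta[n]=N[n]$), predicts exactly which simplices of $\Delta[1]\otimes\Sigma\Delta[n]$ should be contributed by each piece, and the known comparison between the Gray tensor product and the Street nerve (cf.\ \cite{ozornova2020gray}), combined with the compatibility of the Street nerve with suspension, lets one recognise $\Delta[1]\otimes\Sigma\Delta[n]$ as a model for the nerve of the strict $\omega$-category $\Db_1\otimes\Sigma[n]$ and so match the two sides — possibly only up to a natural weak equivalence, which is harmless below.

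\emph{Step 2 — passing to the fibrant wedges.} By definition of $\fwedge$, the canonical maps $\Sigma A\coprod_{\Delta[0]}\Delta[1]\to\Sigma A\fwedge\Delta[1]$ and $\Delta[1]\coprod_{\Delta[0]}\Sigma A\to\Delta[1]\fwedge\Sigma A$ are weak equivalences. The inclusions $\Sigma A\hookrightarrow\Sigma(\Delta[1]\otimes A)$ are cofibrations: $\Sigma$ preserves cofibrations, and $A\hookrightarrow\Delta[1]\otimes A$ is a cofibration, being the pushout-product of $\Delta[0]\hookrightarrow\Delta[1]$ with $\emptyset\hookrightarrow A$ and the Gray tensor product a left Quillen bifunctor; and the whiskerings $\triangledown$ are cofibrations, as cobase changes of $\Delta[0]\hookrightarrow\Delta[1]$. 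Hence the colimit of each of the two spans is an iterated pushout along cofibrations, so a homotopy pushout, and since every object is cofibrant the complicial model structure is left proper; therefore the induced map from the colimit of the point-set span to the colimit of the $\fwedge$-span in the statement is a weak equivalence, natural in $A$. Composing it with the comparison of Step 1 yields the asserted natural zigzag of weak equivalences.

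\emph{Main obstacle.} The difficulty is concentrated in Step 1 for representables. The shape of the decomposition is handed to us by the strict theorem, but transporting it into marked simplicial sets forces one to keep precise track of the thin simplices of $\Delta[1]\otimes\Sigma\Delta[n]$ and of $\Sigma(\Delta[1]\otimes\Delta[n])$, and to know that the Street nerve carries the strict decomposition colimit to the corresponding complicial colimit up to weak equivalence — a statement about the nerve preserving a specific homotopy colimit. Checking that $\Sigma$ is left Quillen and commutes with the Street nerve is a secondary, more routine, ingredient.
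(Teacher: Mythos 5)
There is a genuine gap, and it is located exactly where you placed your confidence: Step 1 is false, and Step 2 does not repair it. First, the claimed point-set isomorphism already fails for $A=\Delta[0]$: the left side is $\Delta[1]\otimes\Delta[1]$, whose two nondegenerate $2$-simplices have all faces nondegenerate, whereas in your colimit the two $2$-simplices of $\Sigma\Delta[1]$ each have a degenerate face, so the underlying simplicial sets are not isomorphic. More seriously, the colimit in the statement is formed along the whiskerings $\triangledown$, and $\triangledown:\Sigma A\to\Sigma A\fwedge\Delta[1]$ is \emph{not} the composite of the inclusion $\Sigma A\hookrightarrow\Sigma A\coprod_{\Delta[0]}\Delta[1]$ with the fibrant-replacement map: it is the image of $A\otimes[0,2]\subset A\otimes\Delta[2]_t$, i.e.\ the cells of $\Sigma A$ composed with the whisker edge. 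Hence there is no induced map from your point-set span to the span of the statement, and in fact the two spans have different homotopy colimits: already for $A=\Delta[0]$, gluing along the inclusions gives $\Sigma\Delta[1]$ with two freely attached edges, in which the hom from the free source vertex to the pole $\top$ is equivalent to $\Delta[1]$, whereas in the lax square $\Delta[1]\otimes\Delta[1]$ the corresponding hom is a point; only the pushout along $\triangledown$, which identifies the two poles with two of the corners and creates the middle vertices as new objects, reproduces the lax square. So your "harmless" weakening to a weak equivalence is not harmless; it is false for the span you wrote down. Finally, even granting a correct representable case, the reduction to general $A$ would still need a \emph{natural} comparison (a zigzag of natural transformations of cocontinuous, homotopically well-behaved functors), which the Street-nerve matching does not provide; and the homotopical input you invoke (nerve sending the strict decomposition to a homotopy colimit, or the criterion that a transformation of left Quillen functors which is an equivalence on globes is a pointwise equivalence) is either unproven or, in the paper's logical order, proved \emph{after} and by means of these very formulas, so it cannot be used here.

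For contrast, the paper's proof avoids any strict formula and any appeal to the Street nerve: it builds, for arbitrary $X$ at once, an explicit marked simplicial set $\underline{X\times B}$ (a marked quotient of $X\times\Delta[3]\times\Delta[1]$) together with subobjects $X\times A_0,\dots,X\times A_4$ whose quotients are identified with $\Sigma X\fwedge\Delta[1]$, $\Sigma(\Delta[1]\otimes X)$, $\Delta[1]\fwedge\Sigma X$ and $\Delta[1]\otimes\Sigma X$, and shows by explicit filtrations (pushouts along complicial horn inclusions, plus the saturation lemmas for $\oslash$ and $\invoslash$ of the appendix) that both the colimit of the $\triangledown$-span and $\Delta[1]\otimes\Sigma X$ include into $\underline{X\times B}$ by acyclic cofibrations, naturally in $X$. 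If you want to salvage your strategy, the essential missing ingredient is precisely such a naturally defined intermediate object receiving both sides; the whiskering maps cannot be traded for inclusions at any point of the argument.
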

We also have a similar formula for $\Sigma A\otimes \Delta[1]$.

\subsubsection*{Organisation of the paper}

The first section is a recollection of usual results and definitions on complicial sets.

In the second section, we prove the formulas mentioned above. 
We deduce other formulas, in particular some establishing an interaction between the join and the suspension.

In the third section, we use these formulas to characterize weak equivalences between $\infty$-categories as essentially surjective and fully faithful functors. We also give a criterium to verify that a natural transformation between two left Quillen adjoints is a pointwise  weak equivalence.

Complicial sets inherit from simplicial sets the duality "op". Morally, it is the endofunctor that reverses the direction of odd dimensional cells. In the fourth part, we then define two other dualities, the even duality, and the full duality. The first one reverses the direction of the even dimension cells, and the second one reverses the directions of all the cells.

In the fifth section, we give two definitions of Grothendieck fibrations, one more simplicial and the other more globular, and we show that they coincide. A more detailed study of these fibrations is the subject of a future work.

\subsubsection*{Acknowledgment}

The author would like to thank Viktoriya Ozornova and Martina Rovelli for sharing their unpublished results, which are essential for sections \ref{section:A criterium to be a weakly invertible transformation} and \ref{section:The even duality}. The author also thanks Denis-Charles Cisinski and Carlos Simpson for helpful discussions during the development of this project. Finally, we thank Marnie Valentini for her help with English.
 
\subsubsection*{Warning} 

This text is still in draft form. Although the author is convinced of the veracity of the results, the text probably still contains many typos, and some proof are not written in the clearest way, especially the more technical ones.  A new version should quickly replace this one.
\tableofcontents
\section{The complicial model}

\subsection{Model structure on marked simplicial sets}
\begin{definition}
A \textit{stratified simplicial set} is a pair $(X,tX)$ where  $X$ is a simplicial set and  $tX := \cup_{n>0}tX_n$
 a graded set such that for all $n\geq 1$, $tX_n$ is a subset of $X_n$ that includes all degenerate simplices. A simplex in $tX$ is called \textit{thin}. 
A \textit{stratified morphism} $f:(X,tX)\to (Y,tY)$ is the data of a morphism on the underlying simplicial set  such that $f(tX_n)\subset tY_n$.
The category of stratified simplicial sets is denoted $\stratSset$.  
\end{definition}

We can extend the join to stratified simplicial sets as follow: 

\begin{definition}
If $(X,tX)$ and $(Y,tY)$ are two stratified simplicial sets, we define $tX\star tY$ as the set of simplices of  $X\star Y$ of shape $x\star y$ where either $x$  or $y$ are thin. We then define 
$$(X,tX)\star (Y,tY) := (X\star Y, tX\star tY).$$
\end{definition}

\begin{definition}
A stratified monomorphism $f:X\to Y$ is 
\begin{enumerate}
\item  \textit{entire} if it is an identity on  underlying simplicial sets.
\item \textit{regular} if for every $n\geq 1$ the following diagram is a pullback:
\[\begin{tikzcd}
	{tX_n} & {X_n} \\
	{tY_n} & {Y_n}.
	\arrow[from=2-1, to=2-2]
	\arrow[from=1-2, to=2-2]
	\arrow[from=1-1, to=1-2]
	\arrow[from=1-1, to=2-1]
	\arrow["\lrcorner"{anchor=center, pos=0.125}, draw=none, from=1-1, to=2-2]
\end{tikzcd}\]
\end{enumerate}
\end{definition}

\begin{definition}
We define several stratified structures on $\Delta[n]$:
\begin{enumerate}
\item $\Delta[n]_t$. The top $n$-simplex is thin.
\item $\Delta^k[n]$. All simplices that include $\{k-1,k,k+1\}\cap[n]$ are thin. 
\item $(\Delta^k[n])'$. All simplices that include $\{k-1,k,k+1\}\cap[n]$, together with the $(k-1)$-face and the $(k+1)$ face are thin.
\item $(\Delta^k[n])''$. All simplices that include $\{k-1,k,k+1\}\cap[n]$, together with the $(k-1)$-face, the $k$-face and the $(k+1)$ face are thin.
\item $\Delta[3]^{eq}$. All simplices of dimension strictly higher than $2$, together with $[0,2]$ and $[1,3]$ are thin.
\item $\Delta[n]^\sharp$. All simplices are thin.
\end{enumerate}
\end{definition}

\begin{definition}[{\cite[Definition 1.19]{ozornova2020model}}]
An \textit{elementary anodyne extension} is one of the following:
\begin{enumerate}
\item The \textit{complicial horn inclusions} are the regular extensions:
$$\Lambda^k[n]\to \Delta^k[n],~n\geq 1,~ n\geq k\geq 0.$$
\item The \textit{complicial thinness extensions}:
$$(\Delta^k[n])'\to (\Delta^k[n])'',~n\geq 2,~ n\geq k\geq 0.$$
\item The \textit{saturation extensions}:
$$\Delta[n]\star\Delta[3]^{eq}\star\Delta[m]\to \Delta[n]\star\Delta[3]^{\sharp}\star\Delta[m],~ n,m\geq -1.$$
\end{enumerate}
The set of complicial horn inclusions is $\Lambda$ and the reunion of \textit{complicial thinness extensions} and of \textit{saturation extensions} is $S$.
\end{definition}

\begin{definition}	
An \textit{$\infty$-category} is a stratified set having the right lifting property against all elementary anodyne extensions. 
\end{definition}

\begin{definition}
A \textit{marked simplicial set} is a stratified simplicial set having the right lifting property against morphisms in $S$. In particular, all $\infty$-categories are marked. The category of marked simplicial sets is denoted $\mSset$. There is an adjunction:
\[\begin{tikzcd}
	\stratSset && \mSset.
	\arrow[""{name=0, anchor=center, inner sep=0}, "R", curve={height=-12pt}, from=1-1, to=1-3]
	\arrow[""{name=1, anchor=center, inner sep=0}, "i", curve={height=-12pt}, from=1-3, to=1-1]
	\arrow["\dashv"{anchor=center, rotate=-90}, draw=none, from=0, to=1]
\end{tikzcd}\]
The left adjoint $R$ sends a stratified simplicial set $(X,tX)$ on the marked simplicial set $(X,\overline{tX})$ where $\overline{tX}$ is the smaller stratification including $tX$ and making $(X,tX)$ a marked simplicial set.
\end{definition}

\begin{prop}[{\cite[Proposition 1.26]{ozornova2020model}}]
\label{prop:martina}
For any elementary anodyne extension $K\to L$ and any cofibration $M\to N$ the induced morphism:
$$K\times N\cup L\times M\to L\times N.$$
is  a sequence of pushouts along anodyne extensions.
\end{prop}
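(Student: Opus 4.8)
The plan is to use the formal calculus of pushout--products to reduce the statement to a short, explicit list of cases, and then to treat the one genuinely hard case by a stratified refinement of the classical prism filtration. Fix an elementary anodyne extension $f\colon K\to L$ and write $\mathsf{An}$ for the saturated class of anodyne extensions and $f\mathbin{\hat{\times}}j\colon K\times N\cup_{K\times M}L\times M\to L\times N$ for the map induced by a cofibration $j\colon M\to N$. Viewed in the arrow category, the assignment $j\mapsto(f\mathbin{\hat{\times}}j)$ preserves colimits, sends pushout squares of cofibrations to pushout squares, and sends transfinite composites to transfinite composites; since $\mathsf{An}$ is saturated, so is the class of cofibrations $j$ with $f\mathbin{\hat{\times}}j\in\mathsf{An}$. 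As the cofibrations of $\stratSset$ are generated by the boundary inclusions $\partial\Delta[m]\hookrightarrow\Delta[m]$ ($m\ge 0$) and the marking inclusions $\Delta[m]\hookrightarrow\Delta[m]_t$ ($m\ge 1$) --- and for $\mSset$ one applies $R$ throughout --- it suffices to verify $f\mathbin{\hat{\times}}j\in\mathsf{An}$ for $j$ in this generating set and for each of the three kinds of $f$.

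The next observation is that $f\mathbin{\hat{\times}}j$ is \emph{entire} (an isomorphism on underlying simplicial sets) in every case except $f=(\Lambda^k[n]\to\Delta^k[n])$ together with $j=(\partial\Delta[m]\to\Delta[m])$, because complicial thinness extensions, saturation extensions and the marking inclusions $\Delta[m]\to\Delta[m]_t$ are all entire and a product with an entire map is entire. Conversely, anodyne extensions are monomorphisms and a pushout of a non-entire monomorphism is again non-entire, so an entire member of $\mathsf{An}$ can only be built from the entire elementary anodyne extensions. Hence in all these cases the problem is purely about stratifications: one identifies the (finite) family of simplices of $L\times N$ that are thin there but not in the source, well-orders it, and checks that declaring its members thin one at a time is a sequence of pushouts of complicial thinness extensions $(\Delta^\ell[p])'\to(\Delta^\ell[p])''$ and saturation extensions --- the product stratification inherited from $L$ being rich enough to provide the thinness of the ``collar'' faces these maps require. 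This is routine, but must be carried out type by type.

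The remaining case, $f=(\Lambda^k[n]\to\Delta^k[n])$ against $j=(\partial\Delta[m]\to\Delta[m])$, is the heart of the matter. Its underlying map is the classical inclusion $\Lambda^k[n]\times\Delta[m]\cup\Delta[n]\times\partial\Delta[m]\hookrightarrow\Delta[n]\times\Delta[m]$, and I would run the familiar shuffle filtration: order the non-degenerate simplices of $\Delta[n]\times\Delta[m]$ that are not already in the subcomplex by dimension and, within a dimension, by a total order on $(n,m)$-paths, then attach them one at a time. Classically each attachment fills a horn $\Lambda^\ell[p]$; the new input is to read off the pivot index $\ell$ from the position of $k$ inside the path so that the complicial horn $\Delta^\ell[p]$ matches the restriction of the product stratification of $\Delta^k[n]\times\Delta[m]$ to the simplex being attached, and to interleave a pushout of a complicial thinness extension wherever the horn filling would leave some ``collar'' face insufficiently marked. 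One then verifies that at every stage the simplex being attached, together with all those already attached, does present the claimed complicial horn with the claimed marking.

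The formal reduction and the entire cases above are routine; essentially all the difficulty sits in the last step. The classical prism filtration is already intricate, and its complicial refinement adds two coupled bookkeeping problems: choosing the pivot indices consistently with the $\Delta^k[n]$-stratification along the whole filtration, and ensuring that no simplex is ever forced to be thin at one stage and non-thin at another (equivalently, that every interleaved thinness extension really has the stratification $(\Delta^\ell[p])'$ as its source). Controlling this combinatorics --- as opposed to the mere underlying-simplicial-set statement --- is where the real work lies.
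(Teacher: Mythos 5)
The paper itself does not prove this proposition: it is imported as Proposition~1.26 of \cite{ozornova2020model}, so the only proof to compare with is the one in that reference (which in turn rests on Verity's combinatorial analysis). Your formal skeleton is correct and matches the architecture of the known proofs: the Leibniz product with a fixed $f$ preserves cellular structure in the other variable, the cofibrations of $\stratSset$ are cellular over the boundary inclusions $\partial\Delta[m]\to\Delta[m]$ and the markings $\Delta[m]\to\Delta[m]_t$, and every case except (complicial horn)\,$\hat{\times}$\,(boundary inclusion) is entire, hence must be built from thinness and saturation extensions only.

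However, there is a genuine gap: everything that constitutes the actual mathematical content of the statement is deferred. For the entire cases you assert that declaring the missing thin simplices one at a time can be realized by pushouts of thinness and saturation extensions because the product stratification is ``rich enough'', but that is precisely what has to be proved; for instance, for $\bigl(\Delta[n]\star\Delta[3]^{eq}\star\Delta[m]\to\Delta[n]\star\Delta[3]^{\sharp}\star\Delta[m]\bigr)\hat{\times}\bigl(\partial\Delta[p]\to\Delta[p]\bigr)$ one must exhibit, for each newly thin simplex of the product, an explicit map from some $\Delta[a]\star\Delta[3]^{eq}\star\Delta[b]$ or some $(\Delta^\ell[q])'$ into the product whose prerequisite faces are already thin at that stage of the filtration, and producing these maps in a consistent order is the delicate part of the published argument, not bookkeeping. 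Likewise, in the decisive case $\Lambda^k[n]\hat{\times}\,\partial\Delta[m]$ you describe the shuffle filtration and claim the pivot of each attachment can be ``read off from the position of $k$'' with thinness extensions interleaved as needed, but you specify neither the order, nor the pivots, nor verify admissibility (that the faces through the chosen pivot are thin in the product stratification at the moment of attachment), nor that the resulting stratification is exactly that of $\Delta^k[n]\times\Delta[m]$. Since these verifications are exactly where such an argument can fail --- and where \cite{ozornova2020model} and Verity spend most of their effort --- what you have written is a plausible plan rather than a proof.
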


\begin{prop}
Using notations of appendix \ref{section:appendix1}, we have an equivalence of categories:
$$\mSset \cong \tPsh{\Delta}_{/\Gamma S}.$$
\end{prop}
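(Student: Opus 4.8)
The plan is to obtain this as an application of the general construction of Appendix~\ref{section:appendix1}; the only substantial step is a structural observation about the set $S$. Namely, every morphism in $S$ is an \emph{entire monomorphism} whose underlying morphism of simplicial sets is the identity of a standard simplex. A complicial thinness extension $(\Delta^k[n])'\to(\Delta^k[n])''$ has underlying simplicial set $\Delta[n]$ and merely adjoins the $k$-face to the thin simplices; a saturation extension $\Delta[n]\star\Delta[3]^{eq}\star\Delta[m]\to\Delta[n]\star\Delta[3]^{\sharp}\star\Delta[m]$ has underlying simplicial set $\Delta[n]\star\Delta[3]\star\Delta[m]$, which is again a standard simplex, and merely adjoins thin simplices. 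Hence each $s\in S$ is the datum of a standard simplex $\Delta[k_s]$ together with a pair of stratifications $t_s^-\subseteq t_s^+$ on it, and — $s$ being entire — the right lifting property against $s$ holds uniquely when it holds at all. In other words, being a marked simplicial set, i.e. having the right lifting property against every morphism in $S$, is a closure condition on the stratification that is tested simplex by simplex, not a filling condition; this is precisely the type of data the construction $\Gamma$ of the appendix takes as input. It is essential here that $S$, and not the larger set $\Lambda\cup S$ of elementary anodyne extensions, appears: the complicial horn inclusions are not entire, and a genuine filling condition of that kind could not be captured by a slice.

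With the observation in hand, the proposition follows from the appendix: the general equivalence established there, specialized to our $S$, identifies the full subcategory of $\tPsh{\Delta}$ on the objects with the right lifting property against $S$ — which is, by the definition recalled above, $\mSset$ — with $\tPsh{\Delta}_{/\Gamma S}$. One should note that this equivalence is abstract and is not compatible with the forgetful functors to $\tPsh{\Delta}$: the slice projection $\tPsh{\Delta}_{/\Gamma S}\to\tPsh{\Delta}$ and the inclusion $\mSset\hookrightarrow\tPsh{\Delta}$ are genuinely different functors (as one already sees by tracking terminal objects). So the proof is really just the verification of the appendix's hypotheses — exactly the observation above, up to matching conventions — together with the unwinding of the definition of $\mSset$, and I do not anticipate any obstacle at this stage.

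If one instead wanted to establish the equivalence from scratch, the content would be twofold. First, one must check that $S$-closedness of a stratification is stable under pullback along an arbitrary map $\Delta[k]\to\Delta[m]$; this is a short diagram chase with the defining squares of the elements of $S$, using that the lift in each such square is unique because $S$ consists of entire maps, and it is this stability that lets the admissible stratifications assemble into the object $\Gamma S$. Second — the subtler point — one must get the morphisms right: a stratified map between marked simplicial sets need not be regular, since it may carry a non-thin simplex to a thin one, so $\Gamma S$ cannot be taken in $\sSet$ (a classifier of stratifications there would only see regular maps) but must be a genuine stratified simplicial set whose own thin structure absorbs this phenomenon. Arranging that a single $\Gamma S$ simultaneously classifies the closure condition and accommodates all stratified maps is the heart of the matter, and is what the construction of the appendix is built to do.
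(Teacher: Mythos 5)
Your proposal rests on a misreading of what $\Gamma S$ and $\tPsh{\Delta}_{/\Gamma S}$ are. In the appendix, $\Gamma S$ is not an object classifying admissible stratifications: Construction \ref{cons:Lambda} defines $\Gamma T := T\ \hat{\times}\ Cel(D)$, a \emph{set of morphisms} (Leibniz products of $S$ with the cellular model $\partial\Delta[n]\to\Delta[n]$, $\Delta[n]\to\Delta[n]_t$), and the category appearing in the statement is the full subcategory of $\stratSset$ of $\Gamma S$-saturated objects, i.e.\ those with the right lifting property against $\Gamma S$ (the appendix writes this $\relativtPsh{A}{B}_{\Gamma S}$; the slash in the main text is only notational). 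There is no ``general equivalence'' in the appendix identifying an RLP-defined full subcategory with a slice over a classifier, so the step where you say the proposition ``follows from the appendix'' appeals to a result that does not exist. Relatedly, your remark that the equivalence is abstract and incompatible with the forgetful functors to $\tPsh{\Delta}$ (terminal objects differing) is false for the statement actually being proved: the proposition asserts that the two full subcategories of $\stratSset$ have the \emph{same} objects, so the equivalence is an identity commuting with the inclusions.

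Once the statement is read correctly, the whole content is the implication ``RLP against $S$ $\Rightarrow$ RLP against $\Gamma S$'' (the converse being immediate since $S$ is recovered from $\Gamma S$ by Leibniz product with $\emptyset\to\Delta[0]$), and this is the step your proposal never addresses. The paper's argument is: for $i\in S$ and $j$ a generating cofibration, Proposition \ref{prop:martina} (Ozornova--Rovelli) shows $i\ \hat{\times}\ j$ is a sequence of pushouts along elementary anodyne extensions in $\Lambda\cup S$; since $i\ \hat{\times}\ j$ is entire (identity on underlying simplicial sets), no pushouts along the non-entire complicial horn inclusions can occur, so it is in fact a sequence of pushouts along morphisms of $S$, and every $S$-saturated object therefore lifts against it. Your observation that the morphisms of $S$ are entire is exactly the right ingredient, but you use it to argue that lifting against $S$ is a ``pointwise closure condition'' feeding a classifier construction, rather than to control the cell decomposition of the Leibniz products $S\ \hat{\times}\ Cel$; without invoking \ref{prop:martina} (or some substitute computation of these pushout products) the nontrivial implication is unproved, and the ``from scratch'' programme you sketch (stability of $S$-closedness under pullback along simplicial operators, a stratified classifying object absorbing non-regular maps) proves a different statement than the one at hand.
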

\begin{proof}
We have to show that a stratified simplicial set is $S$-saturated if and only if it is $\Gamma S$-saturated.
Let $i:K\to L$ be a morphism in $S$ and $j:M\to N$ a cofibration. 
The proposition \ref{prop:martina} implies that $i \hat{\times} j$ is a sequence of pushouts along morphisms in $\Lambda\cup S$. This cofibration is the identity on underlying simplicial sets. This implies that this morphism is a sequence of pushouts along morphisms in $S$.
\end{proof}

The interval $I$ will be the marked simplicial set $\Delta[1]_t$. 
We use the notion of local model structure as defined in \ref{defi:local_model_structure}.

\begin{theorem}[Ozornova, Rovelli, Verity]
There exists a $(\Lambda\cup S,I)$-local model structure on $\stratSset$, and a $(\Lambda,I)$-local model structure on $\mSset$, such that the adjoint pair $(R,i)$ is a Quillen equivalence. Fibrant objects of these model structures are $\infty$-categories.
\end{theorem}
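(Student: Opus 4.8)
The plan is to obtain both model structures as instances of the general construction of local model structures recalled in \ref{defi:local_model_structure}, and then to compare them through the adjunction $(R,i)$. Both $\stratSset$ and, by the equivalence $\mSset\cong\tPsh{\Delta}_{/\Gamma S}$ established above, the category $\mSset$ are of the form to which that construction applies; in both the cofibrations will be the monomorphisms, and the interval is $I=\Delta[1]_t$. The elementary properties of $I$ as a cylinder are routine to check: the inclusions $\{0\},\{1\}\hookrightarrow I$ are disjoint, their union $\Delta[0]\amalg\Delta[0]\to I$ is a cofibration, and together with $I\to\Delta[0]$ they exhibit $I\times(-)$ as an exact cylinder. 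The one substantial hypothesis to verify is the compatibility of $I$ with the chosen generating anodyne extensions: that the pushout--product of a generating anodyne extension with a cofibration is again anodyne.

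For $\stratSset$ with generating set $\Lambda\cup S$ this is exactly Proposition~\ref{prop:martina}, which asserts that the pushout--product of an elementary anodyne extension with a cofibration is a transfinite composite of pushouts of elementary anodyne extensions, hence anodyne; by the standard closure arguments the pushout--product of an arbitrary anodyne extension with a cofibration is then anodyne as well, and since the endpoint inclusions $\{0\},\{1\}\hookrightarrow I$ are themselves complicial horn inclusions the remaining cylinder axioms follow from the same statement. For $\mSset$ with generating set $\Lambda$ one applies the reflector $R$ to the conclusion of \ref{prop:martina}: as $\mSset$ is the category of $S$-local objects and $R$ preserves colimits, pushouts along maps of $S$ become isomorphisms while those along complicial horn inclusions survive, so the pushout--product of a map of $\Lambda$ with a cofibration of $\mSset$ is again anodyne there. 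The appendix then yields the $(\Lambda\cup S,I)$-local model structure on $\stratSset$ and the $(\Lambda,I)$-local model structure on $\mSset$. In both, the fibrant objects are the local objects, and lifting against the saturated class generated by the generators is equivalent to lifting against the generators: for $\stratSset$ this says the fibrant objects are those lifting against all complicial horn inclusions and all maps of $S$, i.e.\ against every elementary anodyne extension, which are the $\infty$-categories; for $\mSset$, every marked simplicial set already lifts against $S$, so the fibrant objects are again precisely the $\infty$-categories.

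It then remains to see that $(R,i)$ is a Quillen equivalence. The functor $R$ is the identity on underlying simplicial sets, hence preserves monomorphisms, so it preserves cofibrations; and by the previous paragraph it sends $(\Lambda\cup S)$-anodyne maps to anodyne maps of $\mSset$, so it preserves trivial cofibrations and is left Quillen. Every object of $\stratSset$ is cofibrant, and the unit $X\to iRX$ is obtained from $X$ by successively attaching maps of $S$ (this is the explicit description of $R$), hence is an anodyne extension, in particular a weak equivalence; composing with the image under $i$ of a fibrant replacement of $RX$ in $\mSset$, which is likewise built from the generating anodyne extensions of $\mSset$ and so is anodyne in $\stratSset$, one sees the derived unit is a weak equivalence. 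Since moreover the fibrant objects of both structures are the $\infty$-categories, the weak equivalences between them are in each case the $I$-homotopy equivalences, and $i$ is fully faithful, the functor $i$ reflects weak equivalences between fibrant objects. These two facts give the Quillen equivalence.

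The main obstacle is, in a sense, already resolved: all of the genuine content lies in Proposition~\ref{prop:martina}, and granting it the existence of both model structures is a formal consequence of the appendix. The step that still requires care is the last one: one must make sure the unit $X\to iRX$ is genuinely built from pushouts of maps of $S$ rather than something weaker, and that the two classes of weak equivalences restrict to the same thing on $\infty$-categories, so that the Quillen pair is actually a Quillen equivalence.
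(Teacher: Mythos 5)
Your proposal is correct and rests on exactly the same two pillars as the paper: the Cisinski-style machinery of the appendix to produce the local model structures, and Proposition \ref{prop:martina} as the one substantive input identifying the $\Gamma$-saturated anodyne class with the class generated by $\Lambda\cup S$ (resp.\ $\Lambda$). The difference is one of packaging. The paper gets the structure on $\mSset$ \emph{and} the Quillen equivalence in a single stroke from the transfer theorem \ref{theo:transfered_model_structure} (via \ref{theo:local_model_structure_on_stratified_presheave} and \ref{theo:local_model_structure_on_saturated_stratified_presheave}), and only afterwards uses \ref{prop:martina} to upgrade $(\Gamma(\Lambda\cup S),I)$- and $(\Gamma\Lambda,I)$-locality to $(\Lambda\cup S,I)$- and $(\Lambda,I)$-locality; you instead re-derive the Quillen equivalence by hand from the unit $X\to iRX$ being a sequence of pushouts along $S$ (this is \ref{lem:X_to_iSX_is_a_weak_equivalence} in the paper). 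That is a perfectly viable route, but be aware of two points that the transfer theorem checks and that you pass over quickly: first, "$R$ sends anodyne maps to anodyne maps, hence preserves trivial cofibrations" is not a valid inference — anodyne extensions are in general a proper subclass of the trivial cofibrations; the paper instead proves that the reflector preserves \emph{all} weak equivalences (\ref{lem:F_preserves_weak_equivalence}), using the homotopy-classes characterization and the fact that the unit is a weak equivalence. Second, your step "apply $R$ to the conclusion of \ref{prop:martina}" to treat the Leibniz products in $\mSset$ implicitly uses that $R$ commutes with finite products, so that the pushout--product of marked simplicial sets is $R$ of the pushout--product of their underlying stratified sets; this is \ref{lem:S_commutes_with_finite_product} and should be said explicitly (it is also what makes the structure on $\mSset$ monoidal/cartesian, i.e.\ compatible with the cylinder $I$). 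With those two points filled in, your argument reproduces the paper's proof in expanded form; what the paper's citation of the transfer theorem buys is precisely that these verifications are done once and for all in the appendix.
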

\begin{proof}
Theorem \ref{theo:local_model_structure_on_stratified_presheave} and \ref{theo:local_model_structure_on_saturated_stratified_presheave} induces the existence of a $(\Gamma (\Lambda\cup S),I)$-local model structure on $\stratSset$, and of a $(\Gamma T,I)$-local model structure on $\mSset$ such that $(R,i)$ is a Quillen equivalence. Eventually, proposition \ref{prop:martina} implies that the structure on $\stratSset$ is $(\Lambda\cup S,I)$-local, and the one on $\mSset$ is $(\Lambda,I)$-local.
\end{proof}

\begin{construction}
\label{cons:truncation functor}
Let $n$ be an integer, and $(X,tX)$ a marked simplicial set. We define $\tau_n(tX)$ as the reunion of $tX$ and all simplices of dimension superior or equal to $n$. This induces a functor, called the \textit{$n$-truncation}:
$$\begin{array}{rcll}
\tau_n :& \mSset&\mapsto &\mSset\\
 &(X,tX)&\mapsto &(X, \overline{\tau_n(tX)}).
\end{array}$$
For every anodyne extension $i:K\to L$, we have a pushout 
\[\begin{tikzcd}
	K & L \\
	{\tau_n(K)} & {\tau_n(L).}
	\arrow[from=1-1, to=2-1]
	\arrow[from=2-1, to=2-2]
	\arrow[from=1-2, to=2-2]
	\arrow[from=1-1, to=1-2]
	\arrow["\lrcorner"{anchor=center, pos=0.125, rotate=180}, draw=none, from=2-2, to=1-1]
\end{tikzcd}\]
The $n$-truncation is then a left Quillen functor.
\end{construction}

\subsection{Gray tensor product}

\begin{construction}[{\cite[Notation 5]{verity2008weak}}] For any $n,p,q\geq 0$ such that $n=p+q$, we define:
\begin{itemize}
\item the \textit{degeneration partition operator:}
$$
\begin{array}{rclllrrclll}
\invamalg^1_{p,q}:&[n]&\to&[p]&&~~~~~~&\invamalg^2_{p,q}:&[n]&\to&[q]&\\
&k&\mapsto &k &\mbox{if $k\leq p$} &&&k&\mapsto &0& \mbox{if $k\leq p$}\\
&k&\mapsto &p 	&\mbox{if $k>p$} &&&k&\mapsto &k-p& \mbox{if $k> p$}.
\end{array}
$$
\item the \textit{face partition operator:}
$$
\begin{array}{rcllrrcll}
\amalg^1_{p,q}:&[p]&\to&[n]&~~~~~~&\amalg^2_{p,q}:&[q]&\to&[n]\\
&k&\mapsto &k &&&k&\mapsto &k+p.
\end{array}
$$
\end{itemize}
\end{construction}

\begin{definition}[{\cite[Definition 125]{verity2008weak}}]
Let $(X,tX)$ and $(Y,tY)$ be two stratified simplicial sets. 
We define the \textit{Gray tensor product} of $(X,tX)$ and $(Y,tY)$ as the stratified simplicial set 
$(X\times Y,tX\otimes tY)$ where $tX\otimes tY$ is the set of pairs $(x,y)$ such that for all partitions $(p,q)$ of $n$ either  $\amalg^1_{p,q}x$ or $\amalg^2_{p,q}y$ is thin. 
\end{definition}

\begin{remark}
Let $X,Y$ be two stratified simplicial sets such that all simplices of $X$ are thin. The functor 
$$X\otimes Y\to X\times Y$$ is then an isomorphism.
\end{remark}

In \cite{verity2008weak}, it is shown that the Gray tensor is associative. The problem of this operation comes from the fact that it doesn't commute with colimits. Verity then defines an other binary operation,  which is cocontinuous, \textit{the Gray pretensor} $(X,M)\boxtimes(Y,N):=(X\times Y, M\boxtimes N)$, together with a natural transformation: 
$$\uvar\boxtimes\uvar\to \uvar\otimes\uvar$$
that is a pointwise sequence of pushouts along morphisms in $S$. Moreover, in \cite{ozornova2020gray}, it is shown that this pretensor is a Quillen bifunctor for the model category on $\stratSset$. 

\begin{definition}[Gray tensor product for marked simplicial sets]
If $(X,tX)$ and $(Y,tY)$ be two marked simplicial sets.  We define the \textit{Gray tensor product} of $(X,tX)$ and $(Y,tY)$ as the marked simplicial set 
$(X\times Y,\overline{tX\otimes tY})$.
Remark that we have an equality: 
$(X\times Y,\overline{tX\otimes tY})=(X\times Y,\overline{tX\boxtimes tY})$.
\end{definition}

\begin{prop}
\label{prop:R_commutes_with_gray_tensor}
The functor $R$ commutes with Gray pretensor product. 
\end{prop}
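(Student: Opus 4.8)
The plan is to exploit the fact that $R$ is a left adjoint, hence preserves colimits, together with the explicit description of $R$ as adding the minimal set of thin simplices needed to make a stratified simplicial set $S$-saturated. Concretely, I would argue that for marked simplicial sets $(X,tX)$ and $(Y,tY)$, the Gray tensor product in $\mSset$ was \emph{defined} to be $(X\times Y, \overline{tX\otimes tY})$, and that the claim is equivalent to the statement that
$$R\bigl((X,tX)\otimes(Y,tY)\bigr) \;\cong\; R(X,tX)\otimes R(Y,tY)$$
as objects of $\mSset$, where on the left $\otimes$ is the Gray (pre)tensor of stratified simplicial sets. Since both sides have underlying simplicial set $X\times Y$ and both are marked, it suffices to check that they carry the \emph{same} stratification, i.e. that the operations "$S$-saturate" and "Gray tensor" commute at the level of graded sets of thin simplices.

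The key steps, in order, would be: (1) Recall that $R(X,tX) = (X,\overline{tX})$ where $\overline{tX}$ is the smallest $S$-saturated stratification containing $tX$; equivalently, $\overline{tX}$ is obtained by transfinitely closing $tX$ under the thinness and saturation extensions in $S$. (2) Use the natural transformation $\boxtimes \to \otimes$, which is a pointwise sequence of pushouts along morphisms in $S$, to replace $\otimes$ by the cocontinuous $\boxtimes$: since $R$ preserves colimits and sends morphisms in $S$ to isomorphisms (being the localization at $S$), $R$ applied to a pushout along a map in $S$ is a pushout along an isomorphism, hence $R(X\boxtimes Y) \cong R(X\otimes Y)$ and likewise $\overline{tX\boxtimes tY} = \overline{tX\otimes tY}$ as already noted in the excerpt. (3) Now use cocontinuity of $\boxtimes$ in each variable together with cocontinuity of $R$: writing $X = \Colim \Delta[n_i]$ and $Y = \Colim \Delta[m_j]$ as colimits of representables with their given stratifications, reduce to checking the claim on the generating cofibrations, i.e. compare $R$ of $X\boxtimes Y$ built from pieces with $R(X)\boxtimes R(Y)$. (4) Finally observe that $R$ only \emph{enlarges} the stratification and that $\boxtimes$ is defined by a partition condition that is monotone in the stratifications of $X$ and $Y$; combining with the universal property of $\overline{(-)}$ gives inclusions in both directions between $\overline{tX\boxtimes tY}$ and $\overline{\overline{tX}\boxtimes\overline{tY}}$, hence equality.

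The main obstacle I expect is step (4): making precise that saturating first and then taking the pretensor gives the same graded set of thin simplices as taking the pretensor and then saturating. The subtlety is that $\overline{tX}$ may contain thin simplices of $X$ that were not forced "partition-wise" from $tX$, so one must check that feeding these extra thin simplices into the partition condition defining $\boxtimes$ does not produce thin simplices of $X\times Y$ outside $\overline{tX\boxtimes tY}$ — equivalently, that every such new thin simplex is already forced to be thin in any $S$-saturated stratification extending $tX\boxtimes tY$. I would handle this by noting that the maps in $S$ are stable under Gray (pre)tensoring with an arbitrary object (this is essentially the content that $\boxtimes$ is a left Quillen bifunctor, or can be extracted from \cref{prop:martina} applied to the relevant cofibrations), so that $(X,tX)\boxtimes(Y,tY) \to (X,\overline{tX})\boxtimes(Y,\overline{tY})$ is a sequence of pushouts along morphisms in $S$; applying the colimit-preserving $R$ then collapses this to an isomorphism, which is exactly the asserted commutation. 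The remaining verifications — that the relevant maps are genuinely pushouts along $S$, and bookkeeping the transfinite composition — are routine.
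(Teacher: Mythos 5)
Your overall architecture parallels the paper's: everything reduces to the equality $\overline{M\boxtimes N}=\overline{\overline{M}\boxtimes\overline{N}}$, and the pivotal input in both arguments is that the entire cofibration $(X,M)\boxtimes(Y,N)\to(X,\overline{M})\boxtimes(Y,\overline{N})$ is a sequence of pushouts along morphisms of $S$. You then conclude by applying the cocontinuous reflector $R$, where the paper instead produces mutually inverse lifts $h$ and $k$; that difference is cosmetic and your way of finishing is fine.

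The genuine gap is in how you justify the pivotal input, which you ultimately dismiss as routine. Neither of your two proposed sources delivers it. Proposition \ref{prop:martina} concerns the cartesian product $\times$, not the pretensor $\boxtimes$, so nothing about $\boxtimes$ can be extracted from it directly. And the fact that $\boxtimes$ is a left Quillen bifunctor on $\stratSset$ only tells you that the map above is an (entire) acyclic cofibration, which is strictly weaker than what your argument needs: $R$ inverts relative $S$-cell maps because it preserves colimits and kills the generators of $S$, but it does not invert arbitrary entire acyclic cofibrations --- an entire acyclic cofibration between $S$-saturated stratified sets need not be an isomorphism (marking a $1$-simplex that is invertible up to homotopy but whose thinness is not forced by $S$-closure gives such a map), so ``acyclic cofibration'' cannot be fed into your ``apply $R$ and collapse'' step. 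The statement you need is precisely \cite[Theorem 2.1]{ozornova2020gray}, which is what the paper cites at this point; it is a substantive combinatorial theorem about the pretensor, not bookkeeping. With that citation substituted for your step (4) justification, your proof goes through; your step (3) reduction to representables is unnecessary (and as phrased shaky, since $\otimes$ is not cocontinuous and cocontinuity of $\boxtimes$ is never actually used) and can simply be dropped.
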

\begin{proof}
One have to show that $(X\times Y, \overline{M\boxtimes N}) = (X\times Y, \overline{\overline{M}\boxtimes \overline{N}})$.
First of all we have a morphism $h:(X\times Y, \overline{M\boxtimes N})\to (X\times Y, \overline{\overline{M}\boxtimes \overline{N}})$  as a lift in the following diagram: 
\[\begin{tikzcd}
	{(X\times Y, M\boxtimes  N)} & {(X\times Y, \overline{\overline{M}\boxtimes  \overline{N}})} \\
	{(X\times Y,\overline{M\boxtimes  N}).}
	\arrow[from=1-1, to=2-1]
	\arrow[from=1-1, to=1-2]
	\arrow["h"', dotted, from=2-1, to=1-2]
\end{tikzcd}\]
Secondly,  \cite[Theorem 2.1]{ozornova2020gray} implies that the morphism $(X\times Y, M\boxtimes N)\to (X\times Y, \overline{M}\boxtimes \overline{N})$ is a $S$-anodyne extension. We then have liftings in the following diagram:
\[\begin{tikzcd}
	{(X\times Y, M\boxtimes  N)} & {(X\times Y,\overline{M\boxtimes  N})} \\
	{(X\times Y,\overline{M}\boxtimes\overline{N})} \\
	{(X\times Y, \overline{\overline{M}\boxtimes  \overline{N}}).}
	\arrow[from=1-1, to=1-2]
	\arrow["k"', dotted, from=3-1, to=1-2]
	\arrow[from=1-1, to=2-1]
	\arrow[from=2-1, to=3-1]
	\arrow[dotted, from=2-1, to=1-2]
\end{tikzcd}\]
Both $kh$ and $hk$ are identities on  underlying simplicial sets, and so are equal to identity. 
\end{proof}

We can then deduce the following proposition:
\begin{prop}
\label{prop:gray_product_is_a_left_Quillen_bifunctor}
The Gray tensor product is a left Quillen bifunctor in $\mSset$.
\end{prop}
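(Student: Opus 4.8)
The plan is to deduce this from the corresponding facts about the Gray \emph{pretensor} on $\stratSset$ together with the two compatibility results just established, namely that $R$ commutes with the Gray pretensor (Proposition~\ref{prop:R_commutes_with_gray_tensor}) and that $R \colon \stratSset \to \mSset$ is the left adjoint of a Quillen equivalence. Recall that $\boxtimes$ on $\stratSset$ is cocontinuous in each variable and, by \cite{ozornova2020gray}, is a left Quillen bifunctor for the $(\Lambda\cup S, I)$-local model structure; the Gray tensor on $\mSset$ is by definition $(X\times Y, \overline{tX\boxtimes tY}) = R\bigl(i(X,tX)\boxtimes i(Y,tY)\bigr)$, so the whole point is to transport Quillen-bifunctoriality across $R$.

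First I would record that the Gray tensor on $\mSset$ is cocontinuous in each variable: since $i$ is a right adjoint it does not obviously preserve colimits, but one can instead observe that $\mSset$ is the full reflective subcategory of $S$-saturated objects, that colimits in $\mSset$ are computed by applying $R$ to the colimit in $\stratSset$, and that $R(-\boxtimes i(-))$ therefore sends colimits in $\mSset$ to colimits in $\mSset$ because $\boxtimes$ is cocontinuous on $\stratSset$ and $R$ is a left adjoint. Hence $-\otimes-$ on $\mSset$ preserves colimits in each variable, so by the standard criterion it suffices to check the pushout-product axiom on generating cofibrations, i.e.\ that for cofibrations $M\to N$ and $M'\to N'$ in $\mSset$ the induced map $M\otimes N' \cup_{M\otimes M'} N\otimes M' \to N\otimes N'$ is a cofibration, and an acyclic one if either input is acyclic.

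Next I would lift everything to $\stratSset$. A cofibration in $\mSset$ is a monomorphism; applying $i$ gives a monomorphism in $\stratSset$ which need not be a cofibration for the $(\Lambda\cup S,I)$-structure, but its image under $R$ is again the original map, and here is where I use that $(R,i)$ is a Quillen equivalence: cofibrations and acyclic cofibrations in $\mSset$ are, up to the saturation functor $R$, exactly the images of cofibrations (resp.\ acyclic cofibrations) in $\stratSset$. Concretely, for a monomorphism $f$ in $\mSset$ choose a factorization in $\stratSset$ through a cofibration followed by the $S$-localization map, apply $R$, and use that $R$ of an $S$-anodyne map is an isomorphism. Then the pushout-product of two cofibrations in $\mSset$ is obtained by applying $R$ to the pushout-product (formed with $\boxtimes$) of two cofibrations in $\stratSset$ — using Proposition~\ref{prop:R_commutes_with_gray_tensor} and cocontinuity of $R$ to move $R$ past the defining pushout — and the latter is a cofibration in $\stratSset$ because $\boxtimes$ is a left Quillen bifunctor there; since $R$ is left Quillen it preserves cofibrations, giving the first half. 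For the acyclic case, if one input is an acyclic cofibration in $\mSset$ one arranges that the corresponding $\stratSset$-map is an acyclic cofibration, its $\boxtimes$-pushout-product is an acyclic cofibration in $\stratSset$ by the Quillen-bifunctor property, and $R$ preserves acyclic cofibrations.

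The main obstacle I anticipate is the bookkeeping in the previous paragraph: matching up cofibrations and weak equivalences across the reflective localization $R$, and making sure that the pushout-product formed with $\boxtimes$ in $\stratSset$ really maps, under $R$, to the pushout-product formed with $\otimes$ in $\mSset$ — this requires knowing that $R$ takes the relevant pushout square to a pushout square (true since $R$ is a left adjoint) and that the saturations do not interfere, which is exactly the content of Proposition~\ref{prop:R_commutes_with_gray_tensor}. Once those identifications are in place the statement follows formally from the fact, due to \cite{ozornova2020gray}, that $\boxtimes$ is a left Quillen bifunctor on $\stratSset$, and no new combinatorial input about thin simplices is needed.
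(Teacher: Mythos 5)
Your proposal is correct and follows essentially the same route as the paper's (very terse) proof: reduce to the statement that the Gray pretensor $\boxtimes$ is a left Quillen bifunctor on $\stratSset$ (from \cite{ozornova2020gray}) and transport it along $R$ using Proposition \ref{prop:R_commutes_with_gray_tensor}, cocontinuity, and the fact that $R$ is left Quillen. One inaccuracy in your write-up: in the $(\Lambda\cup S,I)$-local structure on $\stratSset$ the cofibrations are exactly the monomorphisms, so $i$ of a cofibration of $\mSset$ is already a cofibration and your detour through a factorization by an $S$-localization map is unnecessary; likewise, for the acyclic case the clean justification is that $i$ preserves and detects weak equivalences by the transfer theorem of the appendix, so $i$ of an acyclic cofibration in $\mSset$ is an acyclic cofibration in $\stratSset$, and the rest of your argument goes through as written.
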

\begin{proof}
The Gray tensor product of marked simplicial sets is cocontinuous and preserves cofibrations. Furthermore, \cite[Corollary 2.3]{ozornova2020gray} and proposition \ref{prop:R_commutes_with_gray_tensor} implies that it preserves acyclic cofibrations in both variables. 
\end{proof}

\begin{construction}
Let $X$ be a simplicial set. We define \textit{the suspension of $X$} to be the colimit of the following diagram:
\[\begin{tikzcd}
	{X\otimes\partial\Delta[1]} & {X\otimes \Delta[1]} \\
	{\partial\Delta[1]} & {\Sigma X.}
	\arrow[from=1-2, to=2-2]
	\arrow["\lrcorner"{anchor=center, pos=0.125, rotate=180}, draw=none, from=2-2, to=1-1]
	\arrow[from=1-1, to=2-1]
	\arrow[from=2-1, to=2-2]
	\arrow[from=1-1, to=1-2]
\end{tikzcd}\]
This assignation defines a cocontinuous functor $\Sigma:\mSset\to \mSset_{\partial\Delta[1]/}.$ For every acyclic cofibration $K\to L$
, we have a colimit diagram: 
\[\begin{tikzcd}
	{L\otimes\partial\Delta[1]} & {K\otimes\Delta[1]\cup L\otimes\partial\Delta[1]} & {L\otimes\Delta[1]} \\
	{\partial\Delta[1]} & {\Sigma K} & {\Sigma L.}
	\arrow[from=1-1, to=2-1]
	\arrow[""{name=0, anchor=center, inner sep=0}, from=1-1, to=1-2]
	\arrow[from=2-1, to=2-2]
	\arrow[from=1-2, to=2-2]
	\arrow[from=1-3, to=2-3]
	\arrow[from=2-2, to=2-3]
	\arrow[""{name=1, anchor=center, inner sep=0}, from=1-2, to=1-3]
	\arrow["\lrcorner"{anchor=center, pos=0.125, rotate=180}, draw=none, from=2-2, to=0]
	\arrow["\lrcorner"{anchor=center, pos=0.125, rotate=180}, draw=none, from=2-3, to=1]
\end{tikzcd}\]
This shows that the suspension is a left Quillen functor.

Furthermore, this functor admits a right adjoint, that sends a pair $(a,b,C)$ to $C(a,b)$ where $a,b$ are two $0$-simplices of $C$. If $p:C\to D$ is a morphism between $\infty$-categories, and $a,b$ two $0$-simplices of $C$, we denote 
$$p(a,b):C(a,b)\to D(pa,pb)$$
the induced morphism.
\end{construction}

\subsection{Diamond operation}
We introduce an other operation, the diamond product, that makes the link between the Gray tensor product and the join.

\begin{construction}
Let $X$ and $Y$ be two marked simplicial sets. We define $X\diamond Y$ as the colimit in the following diagram:
\[\begin{tikzcd}
	{X\otimes \{0\}\otimes Y} & {X\otimes\Delta[1]\otimes Y} & {X\otimes \{1\}\otimes Y} \\
	X & {X\diamond Y} & Y.
	\arrow[from=1-3, to=1-2]
	\arrow[from=1-2, to=2-2]
	\arrow[from=2-3, to=2-2]
	\arrow[from=1-3, to=2-3]
	\arrow[from=1-1, to=2-1]
	\arrow[from=2-1, to=2-2]
	\arrow[from=1-1, to=1-2]
\end{tikzcd}\]
Functors 
$$\uvar\diamond X:\mSset\to \mSset_{/X} ~~~~ X\diamond \uvar:\mSset\to \mSset_{/X}.$$
are colimit preserving. Furthermore, for every acyclic cofibration $K\to L$, we have a diagram: 
\[\begin{tikzcd}
	{K\amalg X} & {K\otimes \partial\Delta[1]\otimes X} & {K\otimes \Delta[1]\otimes X} \\
	{L\amalg X} & {L\otimes \partial\Delta[1]\otimes X} & {L\otimes \Delta[1] \otimes X.}
	\arrow[from=1-2, to=1-1]
	\arrow[from=1-2, to=1-3]
	\arrow[from=2-2, to=2-3]
	\arrow[from=2-2, to=2-1]
	\arrow[from=1-2, to=2-2]
	\arrow[from=1-3, to=2-3]
	\arrow[from=1-1, to=2-1]
\end{tikzcd}\]
Taking colimit of lines induces a morphism
$$K\diamond X\to L\diamond X.$$
However, these two colimits are homotopy colimits, and all the horizontal maps of the previous diagram are weak equivalences. This morphism is then an acyclic cofibration. This shows that 
 $\uvar\diamond X$ is   a left Quillen functor. We show analogously that $X\diamond \uvar$ is a left Quillen functor.
These functors then admit right Quillen adjoints, that  are denoted:
$$(\uvar)_{//X}:\mSset_{/X}\to  \mSset~~~~ (\uvar)_{X//}:\mSset_{/X}\to  \mSset.$$
\end{construction}

\begin{lemma}
There exists a unique natural transformation $\gamma_{X,Y}:X\diamond Y\to X\star  Y$ that fits in the following diagram: 
\[\begin{tikzcd}
	{X\coprod Y} & {X\star  Y} \\
	{X\diamond Y} & {\Delta[1].}
	\arrow[from=1-1, to=2-1]
	\arrow[from=1-1, to=1-2]
	\arrow[from=1-2, to=2-2]
	\arrow[from=2-1, to=2-2]
	\arrow["{\gamma_{X,Y}}", from=2-1, to=1-2]
\end{tikzcd}\]
\end{lemma}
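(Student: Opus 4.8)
The plan is to exploit the universal property of $X\diamond Y$ as a colimit. By construction $X\diamond Y$ is the colimit of
\[ X \longleftarrow X\otimes\{0\}\otimes Y \hookrightarrow X\otimes\Delta[1]\otimes Y \hookleftarrow X\otimes\{1\}\otimes Y \longrightarrow Y, \]
i.e. the pushout of $X\coprod Y$ and $X\otimes\Delta[1]\otimes Y$ under $(X\otimes\{0\}\otimes Y)\coprod(X\otimes\{1\}\otimes Y)$, and the canonical map $X\diamond Y\to\Delta[1]$ is the one whose restrictions to $X$, $Y$, $X\otimes\Delta[1]\otimes Y$ are the constant maps at $0$, at $1$, and the projection. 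So to produce $\gamma_{X,Y}\colon X\diamond Y\to X\star Y$ it suffices to produce a morphism $h\colon X\otimes\Delta[1]\otimes Y\to X\star Y$ whose restriction along $X\otimes\{0\}\otimes Y\hookrightarrow X\otimes\Delta[1]\otimes Y$ is the composite $X\otimes\{0\}\otimes Y\to X\hookrightarrow X\star Y$, and whose restriction along $X\otimes\{1\}\otimes Y$ is $X\otimes\{1\}\otimes Y\to Y\hookrightarrow X\star Y$. The two remaining conditions of the square — that $\gamma_{X,Y}$ is a morphism under $X\coprod Y$ and over $\Delta[1]$ — then hold automatically for the first, and for the second reduce to the single requirement that $h$ composed with the canonical $X\star Y\to\Delta[1]$ is the projection $X\otimes\Delta[1]\otimes Y\to\Delta[1]$.

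To build $h$ I would write it down on simplices. Recall $(X\star Y)_n\cong X_n\sqcup Y_n\sqcup\coprod_{p+q=n-1}X_p\times Y_q$, and that an $n$-simplex of the underlying set $X\times\Delta[1]\times Y$ is a triple $(x,\epsilon,y)$ in which the map $\epsilon\colon[n]\to[1]$ is the same datum as a partition $[n]=\{0,\dots,p\}\cup\{p+1,\dots,n\}$ with $p=|\epsilon^{-1}(0)|-1\in\{-1,\dots,n\}$. Set
\[ h(x,\epsilon,y):=(x|_{\epsilon^{-1}(0)},\, y|_{\epsilon^{-1}(1)}), \]
where $x|_{\epsilon^{-1}(0)}$ denotes the restriction of $x$ along the face inclusion onto $\epsilon^{-1}(0)$; this lands in the summand $X_p\times Y_q$, and in $X_n$, resp. $Y_n$, when $\epsilon$ is constant $0$, resp. $1$. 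That this is simplicial is a direct check: reindexing along $\theta\colon[m]\to[n]$ carries the partition of $\epsilon$ to that of $\epsilon\theta$ compatibly with the restrictions. By inspection $h$ restricts as required on $X\otimes\partial\Delta[1]\otimes Y$ and lies over $\Delta[1]$. It then remains to verify that $h$ is a morphism of marked simplicial sets, i.e. sends thin simplices to thin simplices: one runs the iterated partition condition defining thinness in $X\otimes\Delta[1]\otimes Y$ against the rule that a simplex $x\star y$ of $X\star Y$ is thin precisely when $x$ or $y$ is thin. Feeding $h$ into the pushout produces $\gamma_{X,Y}$, and the two triangles of the statement are checked by restricting to the three pieces $X$, $Y$, $X\otimes\Delta[1]\otimes Y$ of $X\diamond Y$.

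For uniqueness I would use that the canonical map $X\star Y\to\Delta[1]$ is, on $n$-simplices, exactly the projection of the coproduct $X_n\sqcup Y_n\sqcup\coprod_{p+q=n-1}X_p\times Y_q$ onto the indexing set $\Delta[1]_n$: the fibre over the map $\epsilon$ with $p=|\epsilon^{-1}(0)|-1$ is exactly $X_p\times Y_q$ (resp. $X_n$ or $Y_n$). Hence any morphism $X\otimes\Delta[1]\otimes Y\to X\star Y$ lying over $\Delta[1]$ must send $(x,\epsilon,y)$ into that fibre, so has the form $(x,\epsilon,y)\mapsto(\alpha(x,\epsilon,y),\beta(x,\epsilon,y))$; restricting along the monotone injections $[p]\hookrightarrow[n]$ and $[q]\hookrightarrow[n]$ onto $\epsilon^{-1}(0)$ and $\epsilon^{-1}(1)$, and combining naturality with the prescribed values on $X\otimes\partial\Delta[1]\otimes Y$, forces $\alpha(x,\epsilon,y)=x|_{\epsilon^{-1}(0)}$ and $\beta(x,\epsilon,y)=y|_{\epsilon^{-1}(1)}$. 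Thus $h$ is the unique morphism with the stated properties, and since $X\diamond Y$ is a pushout, $\gamma_{X,Y}$ is uniquely determined by $h$ and by its prescribed restrictions to $X$ and $Y$. Naturality in $X$ and $Y$ then comes for free: given $f\colon X\to X'$ and $g\colon Y\to Y'$, both $\gamma_{X',Y'}\circ(f\diamond g)$ and $(f\star g)\circ\gamma_{X,Y}$ satisfy the characterising properties, hence coincide.

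The step I expect to be the real work is the stratification check for $h$: one has to match the combinatorics of thin simplices in the threefold Gray tensor $X\otimes\Delta[1]\otimes Y$ — where thinness is tested by partitions passed through an iterated tensor, and is sensitive to how thin simplices of $X$ and $Y$ sit under faces — with the much coarser description of thin simplices of the join. Here it is convenient that in $\mSset$ one works with saturated markings, which leaves enough room that it suffices to detect the thin simplices produced directly by the partition condition.
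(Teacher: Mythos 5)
Your proposal is correct and takes essentially the same route as the paper: the paper defines $\gamma_{X,Y}$ by exactly your formula (on representables, $p(k,0,l)=k$, $p(k,1,l)=l$, i.e.\ $(x,\epsilon,y)\mapsto x|_{\epsilon^{-1}(0)}\star y|_{\epsilon^{-1}(1)}$) and then verifies preservation of thin simplices by a short case disjunction on the $\Delta[1]$-coordinate. The one step you leave schematic, the stratification check, is precisely that case analysis (the nontrivial case being $v_0=0$, $v_n=1$, where the partition at the first jump forces $\amalg^1_{p-1,n-p+1}(x)$ or $\amalg^2_{p,n-p}(y)$ to be thin, hence so is their join), so your outlined strategy goes through as expected.
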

\begin{proof}
We begin by defining this morphism on simplicial sets, and for this we can suppose that both $X$ and $Y$ are representables, ie $X:=\Delta[n]$, $Y:=\Delta[m]$.
On object, this morphism is induced by the assignation:
$$p(k,0,l) := k~~~p(k,1,l) := l.$$ 

We need to verify that  this morphism preserves thin cells. Suppose now that $(x,v,y)$ is a thin $n$-simplex of $X\diamond Y$. There are several cases to consider. \textbf{Case $v_n=0$.} The simplex $x$ is then thin, and is sent to $x\star \emptyset$ which is also thin. \textbf{Case $v_0=1$.} Similar. \textbf{Case $v_0=0$ and $v_n=1$.} Let $p$ be the smaller integer such that $v_p=1$. Either $\amalg_{p-1,n-p+1}^1(x)$ or $\amalg_{p,n-p}^2(y)$ is thin. This implies that $\phi_{X,Y}(x,v,y)= \amalg_{p-1,n-p+1}^1(x)\star \amalg_{p,n-p}^2(y)$ is  thin. 
\end{proof}

\begin{prop}
\label{prop:equivalence between diamond and join product}
For all $X,Y$, the morphism $\gamma_{X,Y}$ is a weak equivalence. 
\end{prop}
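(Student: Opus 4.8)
The plan is to reduce, by a standard cell-induction, to the case where $X$ and $Y$ are standard simplices with arbitrary stratifications, and there to compute $\gamma$ by hand.

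\textbf{Reduction to representables.} Fix $Y$. By the construction of $\diamond$ the functor $\uvar\diamond Y$ is cocontinuous and, being left Quillen (by \cref{prop:gray_product_is_a_left_Quillen_bifunctor}), it sends cofibrations to cofibrations; likewise $\uvar\star Y$ sends monomorphisms to monomorphisms and preserves the colimits through which a marked simplicial set is presented from its cells — the pushouts of the generating cofibrations $\partial\Delta[n]\to\Delta[n]$ and of the entire inclusions marking a single nondegenerate simplex, together with their transfinite compositions. Since $\emptyset\diamond Y = Y = \emptyset\star Y$ with $\gamma_{\emptyset,Y}$ the identity, an induction over such a presentation of $X$, using the gluing lemma for weak equivalences (pushouts of cofibrations being homotopy pushouts; all objects are cofibrant) and left properness, reduces the statement to showing that $\gamma_{\Delta[n]^{\tau},Y}$ is a weak equivalence for every standard simplex with an arbitrary stratification $\tau$ and every marked $Y$. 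Running the identical argument in the second variable (using cocontinuity of $\Delta[n]^{\tau}\diamond\uvar$ and the analogous properties of $\Delta[n]^{\tau}\star\uvar$) further reduces to the case $X=\Delta[n]^{\tau}$, $Y=\Delta[m]^{\sigma}$.

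\textbf{The representable case.} Unwinding the colimit defining $\diamond$, the object $\Delta[n]^{\tau}\diamond\Delta[m]^{\sigma}$ is obtained from the Gray tensor $\Delta[n]^{\tau}\otimes\Delta[1]\otimes\Delta[m]^{\sigma}$ by collapsing the face $\Delta[n]\otimes\{0\}\otimes\Delta[m]$ onto $\Delta[n]$ along the projection $Y\to\Delta[0]$ and the face $\Delta[n]\otimes\{1\}\otimes\Delta[m]$ onto $\Delta[m]$ along the projection $X\to\Delta[0]$. On underlying simplicial sets this is the quotient of the nerve of the poset $[n]\times[1]\times[m]$ by these two collapses; a direct computation of the quotient poset identifies it with $[n]\star[m]=[n+1+m]$ and shows that no new nondegenerate simplices appear, so the underlying simplicial set of $\Delta[n]^{\tau}\diamond\Delta[m]^{\sigma}$ is $\Delta[n+1+m]$ and $\gamma$ is the identity on underlying simplicial sets. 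It then remains to match stratifications: using the description of the Gray-tensor marking via the partition operators $\amalg^{i}_{p,q}$ and $\invamalg^{i}_{p,q}$, one verifies that a simplex of the quotient is thin precisely when, in its unique decomposition $x'\star x''$ inside $\Delta[n]\star\Delta[m]$, either $x'$ is thin in $\Delta[n]^{\tau}$ or $x''$ is thin in $\Delta[m]^{\sigma}$ — that is, $\gamma_{\Delta[n]^{\tau},\Delta[m]^{\sigma}}$ is an isomorphism, in particular a weak equivalence, and we are done.

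\textbf{Main obstacle.} The reduction step is routine once one has recorded which colimits each side preserves. The technical heart is the representable case: identifying the quotient of the Gray tensor with $\Delta[n+1+m]$ on the nose and, above all, checking that the Gray-tensor stratification transported along this identification coincides with the join stratification, which is where the bookkeeping with the partition operators is delicate. (If one prefers not to argue for a strict isomorphism, the alternative is to filter $\Delta[n+1+m]$ over the image of $\gamma_{\Delta[n]^{\tau},\Delta[m]^{\sigma}}$ by finitely many stages, each an attachment of complicial horns and thinness extensions, so that $\gamma$ becomes an anodyne extension; the combinatorial cost is comparable.)
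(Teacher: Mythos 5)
Your reduction to representables is fine in spirit (the paper does the same: the class of pairs for which $\gamma_{X,Y}$ is a weak equivalence is saturated by monomorphisms, so one reduces to $\Delta[n]$, $\Delta[m]$ and their thin variants). The problem is the representable case itself: the claim that the quotient of $\Delta[n]\otimes\Delta[1]\otimes\Delta[m]$ by the two collapses has underlying simplicial set $\Delta[n+1+m]$, so that $\gamma$ is an isomorphism, is false. A quotient of a product of simplices by subcomplexes is not the nerve of the quotient poset, and new nondegenerate simplices survive. Already for $n=1$, $m=0$: the underlying simplicial set of $\Delta[1]\diamond\Delta[0]$ is $\Delta[1]\times\Delta[1]$ with the edge $\Delta[1]\times\{1\}$ collapsed, which has two nondegenerate $2$-simplices (the images of $[00,01,11]$ and $[00,10,11]$ remain distinct and nondegenerate, since the edges $[00,01]$ and $[00,11]$ stay distinct in the quotient), whereas $\Delta[2]$ has one; compare the paper's own picture of $\Delta[1]\costar\Delta[0]$, which is exactly this phenomenon for the co-join. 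So $\gamma_{\Delta[n],\Delta[m]}$ is a genuinely non-invertible map (it is surjective but far from injective), and your parenthetical fallback of exhibiting it as a composite of anodyne extensions cannot work either, since $\gamma$ is not even a monomorphism.

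What is actually needed at the representable stage is a homotopy-theoretic argument rather than a combinatorial identification. The paper constructs an explicit section $s:\Delta[n]\star\Delta[m]\to\Delta[n]\diamond\Delta[m]$ with $\gamma s=\mathrm{id}$ (given on vertices by $k\star\emptyset\mapsto(k,0,0)$ and $\emptyset\star l\mapsto(n,1,l)$), an auxiliary endomorphism $\eta(k,\epsilon,l)=(k,\epsilon,\epsilon l)$, and explicit simplicial homotopies $\eta\Rightarrow s\gamma$ and $\eta\Rightarrow\mathrm{id}$, then checks that all of these preserve the markings in the cases $\Delta[n]_t$, $\Delta[m]_t$. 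Your proof would need to be repaired by replacing the ``strict isomorphism'' step with such a homotopy inverse (or some other genuine weak-equivalence argument); the bookkeeping with the partition operators alone cannot close this gap, because the two objects are simply not isomorphic.
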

\begin{proof}
The set of couples $(X,Y)$ such that $\gamma_{X,Y}$ is a weak equivalence is saturated by monomorphisms. It is then enough to show it for all the couples of representables.  

Let's start by the case $(X,Y)=(\Delta[n],\Delta[m])$. Let $s:X\star Y\to X\diamond Y$ be the morphism defined on objects by the formula: 
$$s(k\star \emptyset) = (k,0,0)~~~s(\emptyset \star l) = (n,1,l)$$
The  marking on $\Delta[n]\star\Delta[m]$ is trivial, $s$ is then a morphism of marked simplicial sets , we have
$$\gamma_{X,Y}s = id ~~~~s\gamma_{X,Y} (k,\epsilon,l) =(k + \epsilon (n-k), \epsilon,\epsilon l).$$

Let $\eta:\Delta[n]\diamond \Delta[m]\to \Delta[n]\diamond \Delta[m]$ be induced by the application
$$(k,\epsilon,l)\mapsto (k,\epsilon,\epsilon l).$$
We are now going to construct two transformations $\eta\Rightarrow s\gamma_{X,Y}$ and $\eta \Rightarrow id$.
The first is induced on the level of simplicial sets by
$$(k,\epsilon,l,\alpha)\mapsto (k + \alpha\epsilon (n-k),\epsilon,\epsilon l ),$$
and the second one by
$$(k,\epsilon,l,\alpha)\mapsto (k,\epsilon,(\epsilon\vee\alpha)l),$$
where $\epsilon\vee\alpha := \epsilon+\alpha - \epsilon\alpha.$
These two natural transformations extend to marked simplicial sets. 

To deal with cases $(X,Y) = (\Delta[n]_t,\Delta[m]), (\Delta[n],\Delta[m]_t)$ or $(\Delta[n]_t,\Delta[m]_t)$, we have to check that morphisms $s$, $\eta$, and the other natural transformations preserve thin cells. We left this computation to the reader. 
\end{proof}

In particular, that shows that for all marked simplicial sets $X$, functors $\uvar\star X$ and $X\star \uvar$ are left Quillen functors. Furthermore, for all $X$, $\gamma$ induces  weakly invertible comparison morphisms:
$$(\uvar)_{/X}\to (\uvar)_{//X},~~~~(\uvar)_{X/}\to (\uvar)_{X//}.$$

\subsection{Co-joint}

\begin{definition}
We define the \textit{co-joint} of $X$ and $Y$, denoted $X\costar  Y$  as the colimit in the following diagram:
\[\begin{tikzcd}
	{Y\otimes \{1\}\otimes X} & {Y\otimes \Delta[1]\otimes X} & {Y\otimes \{0\}\otimes X} \\
	Y & {X\costar  Y} & X.
	\arrow[from=1-1, to=2-1]
	\arrow[from=1-2, to=2-2]
	\arrow[from=1-1, to=1-2]
	\arrow[from=2-1, to=2-2]
	\arrow[from=1-3, to=2-3]
	\arrow[from=1-3, to=1-2]
	\arrow[from=2-3, to=2-2]
\end{tikzcd}\]
Functors 
$$\costar X:\mSset\to \mSset_{/X} ~~~~ X\costar \uvar:\mSset\to \mSset_{/X}.$$
are colimit preserving. Furthermore, for every acyclic cofibration $K\to L$, we have a diagram: 
\[\begin{tikzcd}
	{K\amalg X} & {X\otimes \partial\Delta[1]\otimes K} & {X\otimes \Delta[1]\otimes K} \\
	{L\amalg X} & {X\otimes \partial\Delta[1]\otimes L} & {X\otimes \Delta[1] \otimes K.}
	\arrow[from=1-2, to=1-1]
	\arrow[from=1-2, to=1-3]
	\arrow[from=2-2, to=2-3]
	\arrow[from=2-2, to=2-1]
	\arrow[from=1-2, to=2-2]
	\arrow[from=1-3, to=2-3]
	\arrow[from=1-1, to=2-1]
\end{tikzcd}\]
Taking colimit of lines induces a morphism
$$K\costar X\to L\costar X.$$
However, these two colimits are homotopy colimits, and all the horizontal maps of the previous diagram are weak equivalences. This morphism is then an acyclic cofibration.
This shows that $\uvar\costar X$ is  a left Quillen functor. We show analogously that $X\costar \uvar$ is a left Quillen functor.
\end{definition}

We have  $\Delta[0]\costar  \Delta[0] = \Delta[1]$.
However, this operation is not associative. For example, the two marked simplicial sets $K := \Delta[1]\costar \Delta[0]$ and $L:= \Delta[0]\costar \Delta[1]$ can be pictured by:	
\[\begin{tikzcd}
	00 & 01 && 00 & 01 \\
	10 & 11 && 10 & 11
	\arrow[from=1-1, to=1-2]
	\arrow["{L=}"', shorten <=2pt, shorten >=2pt, Rightarrow, no head, from=1-4, to=2-4]
	\arrow[""{name=0, anchor=center, inner sep=0}, from=1-1, to=2-2]
	\arrow[""{name=1, anchor=center, inner sep=0}, from=1-4, to=2-5]
	\arrow[from=1-5, to=2-5]
	\arrow[from=1-4, to=1-5]
	\arrow[shorten <=3pt, shorten >=3pt, from=2-4, to=2-5]
	\arrow["{K=}"', from=1-1, to=2-1]
	\arrow[from=1-2, to=2-2]
	\arrow[shorten <=3pt, shorten >=3pt, Rightarrow, no head, from=2-1, to=2-2]
	\arrow["\sim"{description}, Rightarrow, draw=none, from=0, to=1-2]
	\arrow["\sim"{description}, Rightarrow, draw=none, from=1, to=1-5]
	\arrow[shorten <=2pt, Rightarrow, from=0, to=2-1]
	\arrow[shorten <=2pt, shorten >=2pt, Rightarrow, from=1, to=2-4]
\end{tikzcd}\]
where simplices labeled by $=$ are degenerate, and simplices labeled by $\sim$ are thin.

The rest of the section is dedicated to the proof of the following proposition:
\begin{prop}
\label{prop:almost_associativity_of_star_co}
There is a zigzag of acyclic cofibrations functorial in $X,Y,Z$: 
$$X \costar  (Y \costar  Z) \leftrightsquigarrow (X \costar  Y) \costar  Z.$$
\end{prop}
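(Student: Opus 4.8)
The plan is to reduce the statement to an associativity property of the diamond product and then to deduce it from the comparison between the diamond and the (strictly associative) join provided by \cref{prop:equivalence between diamond and join product}. The first move is a natural isomorphism $X \costar Y \cong Y \diamond X$. It is induced by the automorphism of $\Delta[1]$ that exchanges the two vertices: applying $\uvar \otimes \Delta[1] \otimes \uvar$ to this automorphism carries the colimit diagram defining $Y \diamond X$ onto the one defining $X \costar Y$, because it exchanges the two face inclusions $\{0\}, \{1\} \hookrightarrow \Delta[1]$ and matches up the two collapse maps (note that $Y \otimes \{0\} \otimes X = Y \otimes X = Y \otimes \{1\} \otimes X$). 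Substituting twice yields natural isomorphisms $(X \costar Y) \costar Z \cong Z \diamond (Y \diamond X)$ and $X \costar (Y \costar Z) \cong (Z \diamond Y) \diamond X$, so it suffices to produce a functorial zigzag of acyclic cofibrations $(A \diamond B) \diamond C \leftrightsquigarrow A \diamond (B \diamond C)$.

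To obtain a zigzag of \emph{weak equivalences} between these two objects I would use that $\gamma_{X,Y} \colon X \diamond Y \to X \star Y$ is a weak equivalence for all $X, Y$, that $\uvar \star C$, $A \star \uvar$, $\uvar \diamond C$ and $A \diamond \uvar$ are left Quillen functors, and that every object is cofibrant: then the composite $(A \diamond B) \diamond C \xrightarrow{\gamma} (A \diamond B) \star C \xrightarrow{\gamma \star \id} (A \star B) \star C$ is a weak equivalence, and symmetrically $A \diamond (B \diamond C) \to A \star (B \star C)$ is one; since the join is strictly associative, these two maps have the same target, which gives the zigzag. To promote it to a zigzag of \emph{acyclic cofibrations} I would apply to each of its maps the functorial mapping cylinder built from the cylinder functor $\uvar \otimes \Delta[1]_t$ — a legitimate cylinder functor because $\uvar \otimes \Delta[1]_t$ is left Quillen, $\Delta[1]_t \to \Delta[0]$ is a weak equivalence, and all objects are cofibrant — so that any weak equivalence $f \colon P \to Q$ between cofibrant objects is naturally replaced by two acyclic cofibrations $P \hookrightarrow M_f \hookleftarrow Q$.

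The real friction is not in this assembly but in the preliminaries it rests on: that the Gray tensor product of $\mSset$ is associative (provable as in \cref{prop:R_commutes_with_gray_tensor}), which is needed to unfold the nested diamonds and to make the displayed isomorphisms meaningful; that the join is strictly associative on marked simplicial sets; and that every comparison map and every cylinder used above is natural in the three variables at once. A more hands-on variant, which avoids the mapping cylinder, is to introduce an explicit ternary diamond $A \diamond B \diamond C$, defined as a single colimit built — via cocontinuity and associativity of the Gray tensor — around the core $A \otimes \Delta[1] \otimes B \otimes \Delta[1] \otimes C$, into which both $(A \diamond B) \diamond C$ and $A \diamond (B \diamond C)$ map by monomorphisms (being colimits of subdiagrams of the diagram computing $A \diamond B \diamond C$); two-out-of-three against the weak equivalences of the previous paragraph then shows these monomorphisms are acyclic. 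In that variant the main obstacle becomes the bookkeeping for the large index diagrams together with the verification, on representable inputs, that the comparison formulas preserve thin simplices.
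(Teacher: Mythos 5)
There is a genuine gap, and it is at the very first step. The claimed natural isomorphism $X \costar Y \cong Y \diamond X$ does not exist: the vertex-swapping map $[1]\to[1]$ is not order-preserving, hence is not a morphism in $\Delta$, and $\Delta[1]$ has no non-identity automorphism as a (marked) simplicial set. More substantively, the co-join is \emph{not} the diamond with its arguments reversed. Both $\Delta[1]\costar\Delta[0]$ and $\Delta[0]\diamond\Delta[1]$ are quotients of the Gray square $\Delta[1]\otimes\Delta[1]$, but one collapses the edge at the $\{1\}$-end of the interval and the other the edge at the $\{0\}$-end; since the $2$-cell of the Gray square is directed, the two quotients are a $2$-simplex and its ``co''-dual (the $2$-cell reversed), which are neither isomorphic nor related by any natural equivalence. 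Indeed, if your isomorphism held, the even duality $(\uvar)_{co}$ of Section \ref{section:The even duality}, which is built by iterating $\uvar\costar\Delta[0]$, would become naturally weakly equivalent to the identity, contradicting \ref{theo:co_is_an_duality} together with the computation of its action on homotopy categories (it reverses even-dimensional cells). Consequently there is no analogue of the comparison map $\gamma$ of \ref{prop:equivalence between diamond and join product} from $\costar$ to the strictly associative join, and the entire reduction of associativity of $\costar$ to associativity of $\diamond$ collapses; the mapping-cylinder upgrade and the ternary-diamond variant are both moot, since they only address the $\diamond$ statement, which is not the one to be proved.

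For comparison, the paper cannot take any such shortcut precisely because $\costar$ is a genuinely new operation: it proves the zigzag by hand, first establishing $(\Delta[0]\costar X)\costar\Delta[0] \leftrightsquigarrow \Delta[0]\costar(X\costar\Delta[0])$ through the explicit marked objects $K$, $L$, $P$, $Q$, $R$ and the anodyne-extension lemmas for $\oslash$ and $\invoslash$ from the appendix, and then deducing the three-variable statement by tensoring this middle associativity with $X\otimes\uvar\otimes Z$ and forming the pushouts $C_{X,Y,Z}$, which are homotopy colimits. If you want to salvage your strategy, you would need an \emph{a priori} identification of $\costar$ with a strictly associative operation twisted by a duality, but the only duality available at this stage is $(\uvar)^{op}$, and $X\costar Y$ is not expressible through $\star$ and $op$ alone; the relevant duality is the even one, whose construction depends on this very proposition, so the argument would be circular.
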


Until the end of this section, we will use notations and results of appendix $B$.
\begin{construction}
We define three marked simplicial sets:
\[\begin{tikzcd}
	00 & 01 && {\underline{10}} & {\overline{10}} & {\underline{10}} & {\overline{10}} \\
	& 11 && 00 & 11 & 00 & 11
	\arrow[""{name=0, anchor=center, inner sep=0}, from=1-1, to=2-2]
	\arrow["{P := ~~~~~~~~~~~~~~~~}"', from=1-2, to=2-2]
	\arrow[from=1-1, to=1-2]
	\arrow[curve={height=12pt}, from=1-1, to=2-2]
	\arrow["{Q:=~~}", shorten <=2pt, shorten >=2pt, Rightarrow, no head, from=2-4, to=1-4]
	\arrow[from=1-4, to=1-5]
	\arrow[""{name=1, anchor=center, inner sep=0}, from=2-4, to=2-5]
	\arrow[""{name=2, anchor=center, inner sep=0}, from=2-4, to=1-5]
	\arrow[from=1-6, to=1-7]
	\arrow[""{name=3, anchor=center, inner sep=0}, from=1-6, to=2-7]
	\arrow[shorten <=2pt, shorten >=2pt, Rightarrow, no head, from=1-7, to=2-7]
	\arrow[""{name=4, anchor=center, inner sep=0}, shorten <=2pt, shorten >=2pt, Rightarrow, no head, from=1-5, to=2-5]
	\arrow[""{name=5, anchor=center, inner sep=0}, shorten <=2pt, shorten >=2pt, Rightarrow, no head, from=2-6, to=1-6]
	\arrow[""{name=6, anchor=center, inner sep=0}, from=2-6, to=2-7]
	\arrow["\sim"{description}, Rightarrow, draw=none, from=0, to=1-2]
	\arrow["{=}"{description}, Rightarrow, draw=none, from=2, to=1-4]
	\arrow["{=}"{description}, Rightarrow, draw=none, from=3, to=1-7]
	\arrow[shorten <=6pt, shorten >=6pt, Rightarrow, from=4, to=5]
	\arrow[shorten <=2pt, shorten >=2pt, Rightarrow, from=1, to=2]
	\arrow[shorten <=2pt, shorten >=2pt, Rightarrow, from=6, to=3]
\end{tikzcd}\]
where arrows labeled by $=$ are degenerate, and arrows labeled by $\sim$ are thin. We also set:
\[\begin{tikzcd}
	{\Delta[1]\underset{[00]\amalg[11]}{\coprod}\Delta[1]} & P \\
	Q & R.
	\arrow[from=1-1, to=1-2]
	\arrow[from=1-1, to=2-1]
	\arrow[from=2-1, to=2-2]
	\arrow[from=1-2, to=2-2]
	\arrow["\lrcorner"{anchor=center, pos=0.125, rotate=180}, draw=none, from=2-2, to=1-1]
\end{tikzcd}\]
We then have two inclusions
$i:K\to R$ and $j:L\to R$ where $i$ sends $10$ on $\overline{10}$ and $j$ sends $10$ on $\underline{10}$. Let $X$ be a marked simplicial set. We then define several marked simplicial sets: \newline
$\mathbf{(K\times X,M_{K\times X})}$. The marking $M_{K\times X}$ is obtained as the reunion of $M'_{K\times X}$ and all $n$-simplices $(v,x)$ such that $v$ is a thin simplex of $[00,10]$ and $x$ is thin, where  $M'_{K\times X}$ is the marking of the colimit:
\[\begin{tikzcd}
	{[1]\otimes X\otimes[0,1]} & {[0,1]\otimes X\otimes[0,1]} \\
	{[1]\otimes X\otimes \Delta[0]} & {(K\times M,M'_{K\times M}).}
	\arrow[from=1-1, to=2-1]
	\arrow[from=2-1, to=2-2]
	\arrow[from=1-1, to=1-2]
	\arrow[from=1-2, to=2-2]
	\arrow["\lrcorner"{anchor=center, pos=0.125, rotate=180}, draw=none, from=2-2, to=1-1]
\end{tikzcd}\]

$\mathbf{(L\times X,M_{L\times X})}$. The marking $M_{L\times X}$ is obtained as the reunion of $M'_{L\times X}$ and all $n$-simplices $(v,x)$ such that $v$ is a thin simplex of $[10,11]$ and $x$ is thin, where  $M'_{L\times X}$ is the marking of the colimit:
\[\begin{tikzcd}
	{[0,1]\otimes X\otimes[0]} & {[0,1]\otimes X\otimes[0,1]} \\
	{\Delta[0]\otimes X\otimes [0]} & {(L\times M,M'_{L\times M}).}
	\arrow[from=1-1, to=2-1]
	\arrow[from=2-1, to=2-2]
	\arrow[from=1-1, to=1-2]
	\arrow[from=1-2, to=2-2]
	\arrow["\lrcorner"{anchor=center, pos=0.125, rotate=180}, draw=none, from=2-2, to=1-1]
\end{tikzcd}\]

$\mathbf{(R\times X,M_{R\times X})}$. The marking $M_{R\times X}$ is the reunion of $M_{K\times X}$, $M_{L\times X}$, and $M_{Q\times X}$, where  $M_{Q\times X}$ is the marking of the colimit:
\[\begin{tikzcd}
	{\Delta[3]\times X} & {Q\times X} & {\Delta[3]\times X} \\
	{\Delta^2[3]\oslash X} & {(Q\times X,M_{Q\times X})} & {\Delta^1[3]\invoslash X.}
	\arrow[from=1-1, to=1-2]
	\arrow[from=1-1, to=2-1]
	\arrow[from=2-1, to=2-2]
	\arrow[from=1-3, to=2-3]
	\arrow[from=2-3, to=2-2]
	\arrow[from=1-3, to=1-2]
	\arrow[from=1-2, to=2-2]
\end{tikzcd}\]

$\mathbf{(P\times X,M_{P\times X})}$. The marking $M_{P\times X}$ is the restriction of $M_{R\times X}$ to $P\times X$.
\end{construction}

\begin{lemma}
The cofibration $(K\times X,M_{K\times X})\to (R\times X,M_{R\times X})$ is an acyclic cofibration.
\end{lemma}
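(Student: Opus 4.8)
The plan is to realise the cofibration $(K\times X,M_{K\times X})\to (R\times X,M_{R\times X})$ as a finite composite of pushouts of elementary anodyne extensions (and of Gray pretensors of such with cofibrations), and then to conclude with Proposition~\ref{prop:martina} together with the fact, used in the proof of Proposition~\ref{prop:gray_product_is_a_left_Quillen_bifunctor}, that such pretensors are themselves sequences of pushouts along anodyne extensions. The ingredients are: the explicit description of $R$ as the pushout of $P\leftarrow\Delta[1]\amalg_{\{00\}\amalg\{11\}}\Delta[1]\rightarrow Q$ and of the inclusion $i\colon K\to R$; the colimit presentations of the markings $M_{K\times X}$, $M_{L\times X}$ and $M_{Q\times X}$; and the results of appendix~B concerning the operations $\oslash$ and $\invoslash$ applied to $\Delta^{2}[3]$ and $\Delta^{1}[3]$.

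First I would unwind the combinatorics. On underlying simplicial sets, $R$ is obtained from $i(K)$ by adjoining the vertex $\underline{10}$, the second long edge $00\to 11$, and the $2$- and $3$-cells of $Q$; after taking the product with $X$ and equipping everything with the prescribed stratifications, $R\times X$ becomes a finite tower $K\times X=W_{0}\hookrightarrow W_{1}\hookrightarrow\cdots\hookrightarrow W_{n}=R\times X$, each stage of which is a pushout of one of the generating maps entering the colimits that define $M_{K\times X}$, $M_{L\times X}$ and $M_{Q\times X}$. I would then identify the stages one by one: those coming from the $[0,1]\otimes X\otimes[0,1]$-presentation of $M_{L\times X}$ are pushouts of complicial horn inclusions and complicial thinness extensions pretensored with $\emptyset\to X$ or with boundary inclusions $\partial\Delta[m]\to\Delta[m]$ --- here one recognises that one is, in effect, adjoining an equivalence between the two parallel edges $00\to 11$ together with its coherence data --- whereas those coming from the colimit defining $M_{Q\times X}$ are, by the appendix~B analysis of $\Delta^{2}[3]\oslash X$ and $\Delta^{1}[3]\invoslash X$, pushouts of elementary anodyne extensions. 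In each case Proposition~\ref{prop:martina} makes the stage an acyclic cofibration, hence so is the composite.

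The hard part will be the marking bookkeeping rather than the underlying homotopy-theoretic input. Two points need care. First, one must check that $i\times X$ is a regular monomorphism, i.e.\ that the restriction of $M_{R\times X}$ to $K\times X$ is exactly $M_{K\times X}$, so that the tower above genuinely consists of maps of marked simplicial sets. Second, one must verify that at each stage $W_{k}$ the simplices forced to be thin --- by $M_{K\times X}$, by the thinness clauses in the colimit defining $M_{Q\times X}$, and by the cells already attached --- are exactly the thin faces of the horn being filled, so that the relevant horn inclusion is a complicial one and not a bare inner or outer horn inclusion, and that no leftover non-thin simplex obstructs a thinness extension $(\Delta^{k}[n])'\to(\Delta^{k}[n])''$. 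The colimit definitions of $M_{K\times X}$ (with the slice $[1]\otimes X\otimes[0,1]$ collapsed onto $[1]\otimes X\otimes\Delta[0]$) and of $M_{Q\times X}$ (through $\Delta^{2}[3]\oslash X$ and $\Delta^{1}[3]\invoslash X$) are tailored precisely so that both points hold, provided the cells are attached in the right order; determining that order and checking the face matching at every stage is the technical core of the proof.
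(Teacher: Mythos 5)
Your general strategy---exhibit the map as a composite of pushouts of anodyne maps and then control the markings---is the same route the paper takes, and the paper's own engine, Lemma \ref{lemma:oslash}, is itself proved by exactly the horn-by-horn filtration you sketch. But your proposal stops where the paper's proof actually begins. The paper performs a \emph{single} pushout of the anodyne extension $\Lambda^2[3]\oslash X\to \Delta^2[3]\oslash X$ along the evident map to $(K\times X,M_{K\times X})$; this yields an acyclic cofibration $(K\times X,M_{K\times X})\to (R\times X,M)$ where $M$ is the saturated marking generated by the attachment, and a priori $M$ may be strictly smaller than $M_{R\times X}$. The leftover entire map $(R\times X,M)\to(R\times X,M_{R\times X})$, which only adds marks, is \emph{not} automatically a weak equivalence, so the real content of the lemma is the containment $M_{R\times X}\subseteq M$. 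That is what the bulk of the paper's proof establishes, by a case analysis on the simplices of $M_{Q\times X}$ (those coming from $\Delta^1[3]\invoslash X$) and of $M_{L\times X}$ (in particular simplices $(v,x)$ with $v$ a degeneracy of $[00,\underline{10},11]$), using the saturation lemmas \ref{lemma:oslash_saturation_one_way} and \ref{lemma:oslash_saturation_two way} to propagate thinness from the attached cells.

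Your plan acknowledges this only as ``the technical core'' to be determined later, and frames it as choosing an attaching order so that each horn is complicial and each thinness extension applies; but you give no argument that such an order exists, and your identification of the $M_{L\times X}$-stages as elementary anodynes pretensored with $\emptyset\to X$ or $\partial\Delta[m]\to\Delta[m]$ (Proposition \ref{prop:martina}) is unjustified: the bespoke markings $M_{K\times X}$, $M_{L\times X}$, $M_{Q\times X}$, with clauses such as ``$(v,x)$ with $v$ thin in $[00,10]$ and $x$ thin'', are precisely not Gray-pretensor markings, which is why the $\oslash$ and $\invoslash$ gadgets and their saturation lemmas were introduced. So the proposal is in the right spirit but omits the decisive step---proving that every mark required by $M_{R\times X}$ is generated from the attached cells---and as written it does not constitute a proof. (A minor point: the regularity of $i\times X$ you worry about is not needed; a cofibration of marked simplicial sets only has to preserve marks.)
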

\begin{proof}
We define $M$ to be the marking on $R\times X$ obtained as the colimit:
\[\begin{tikzcd}
	{\Lambda^2[3]\oslash X} & {(K\times X,M_{K\times X})} \\
	{\Delta^2[3]\oslash X} & {(R\times X,M).}
	\arrow[from=1-1, to=1-2]
	\arrow[from=1-1, to=2-1]
	\arrow[from=2-1, to=2-2]
	\arrow[from=1-2, to=2-2]
\end{tikzcd}\]
Lemma \ref{lemma:oslash} states that the right hand morphism is an acyclic cofibration.  Furthermore, we have an obvious inclusion $M\subset M_{R\times X}$, and we then have to show $M_{R\times X}\subset M$ to conclude.

First, we remark that $M$ includes $M_{P\times X}$. Let
$(v,s^{p-1}x)$ be a $n$-simplex in $\Delta^1[3]\invoslash X$. \textbf{Case $v_0=00$ and $v_1=11$.} The simplex $(v,s^{p-1}x)$ is in $\Delta^2[3]\oslash X$, and so is $M$. \textbf{Case $v_0\neq 00$ or $v_1\neq 11$.}  The simplex $(v,s^{p-1}x)$ is in $M_{P\times X}$ and so in $M$. The set $M$ then includes  $M_{Q\times X}$.

Let $(v,x)$ be a $n$-simplex in $M_{L\times X}$. \textbf{Case $(v,x)\in P\times X$.} The simplex $(v,x)$ is in $M_{P\times X}$ 
and so in $M$. \textbf{Case $(v,x)\notin P\times X$.} The simplex $v$ is then a degeneracy of $[00,\underline{10},11]$. First we remark that $(v[\underline{10}/\overline{10}],x)$ is in $M$.
As this set includes both $\Delta^1[3]\invoslash X$ and $\Delta^2[3]\oslash X$, we can apply lemma \ref{lemma:oslash_saturation_two way} that states that $(v,x)$ is in $M$.

The set $M$ then includes $M_{R\times X}$.
\end{proof}

Similarly, one can show: 
\begin{lemma}
The cofibration $(L\times X,M_{L\times X})\to (R\times X,M_{R\times X})$ is an acyclic cofibration.
\end{lemma}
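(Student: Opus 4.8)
The plan is to carry out the mirror image of the proof of the preceding lemma, exchanging the roles of $K$ and $L$, of $\oslash$ and $\invoslash$, of $\Delta^2[3]$ and $\Delta^1[3]$ (and the corresponding horns $\Lambda^2[3]$ and $\Lambda^1[3]$), and of the two new vertices $\overline{10}$ and $\underline{10}$. Concretely, I would let $M$ be the marking on $R\times X$ obtained as the pushout of $(L\times X,M_{L\times X})$ along the entire inclusion $\Lambda^1[3]\invoslash X\to\Delta^1[3]\invoslash X$. Lemma~\ref{lemma:oslash}, in its $\invoslash$-variant, then shows that the induced map $(L\times X,M_{L\times X})\to(R\times X,M)$ is an acyclic cofibration, so it remains only to prove $M=M_{R\times X}$. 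The inclusion $M\subseteq M_{R\times X}$ is immediate from the definitions, and since $M_{R\times X}=M_{K\times X}\cup M_{L\times X}\cup M_{Q\times X}$ with $M_{L\times X}\subseteq M$ by construction, the task reduces to checking $M_{Q\times X}\subseteq M$ and $M_{K\times X}\subseteq M$.

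For the first inclusion, one checks as in the previous proof that $M\supseteq M_{P\times X}$, and then, given an $n$-simplex $(v,s^{p-1}x)$ of $\Delta^2[3]\oslash X$, one distinguishes the case in which $v$ is already supported on the sub-object shared by the two copies glued into $R$ (the edge $[00,11]$), where $(v,s^{p-1}x)$ lies in $\Delta^1[3]\invoslash X\subseteq M$, from the complementary case, where it lies in $M_{P\times X}\subseteq M$; since $M_{Q\times X}$ is generated over $Q\times X$ by the images of $\Delta^2[3]\oslash X$ and $\Delta^1[3]\invoslash X$, this gives $M_{Q\times X}\subseteq M$. For the second inclusion, given $(v,x)\in M_{K\times X}$: if $(v,x)\in P\times X$ it lies in $M_{P\times X}\subseteq M$; otherwise $v$ is a degeneracy of $[00,\overline{10},11]$, the relabelled simplex $(v[\overline{10}/\underline{10}],x)$ lies in $M$, and because $M$ now contains both $\Delta^1[3]\invoslash X$ and $\Delta^2[3]\oslash X$ one concludes $(v,x)\in M$ by Lemma~\ref{lemma:oslash_saturation_two way}. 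This yields $M_{R\times X}\subseteq M$, hence $M=M_{R\times X}$, and the lemma follows.

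The only genuine point requiring care — and the main obstacle — is verifying that the construction of $R$ together with the markings $M_{R\times X}$, $M_{Q\times X}$, $M_{P\times X}$, and the two appendix-$B$ lemmas invoked above, are all invariant under the reflection that swaps $(\oslash,\Delta^2[3],\Lambda^2[3],\overline{10},[00,10])$ with $(\invoslash,\Delta^1[3],\Lambda^1[3],\underline{10},[10,11])$, i.e. under the symmetry of the pushout square defining $R$ that exchanges the data coming from $K$ with that coming from $L$. This appears to hold on the nose, so the work is essentially bookkeeping: ensuring that each case distinction in the previous proof is reflected correctly, in particular that the "relabel $\overline{10}$ as $\underline{10}$" step indeed lands in $M$ and that Lemma~\ref{lemma:oslash_saturation_two way} is available in the orientation needed here.
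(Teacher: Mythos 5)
Your proposal is exactly what the paper intends: the paper gives no separate argument for this lemma, stating only that it is proved ``similarly'' to the preceding one, and your mirrored construction (pushout along $\Lambda^1[3]\invoslash X\to\Delta^1[3]\invoslash X$ onto $(L\times X,M_{L\times X})$, then the inclusion checks for $M_{P\times X}$, $M_{Q\times X}$ and $M_{K\times X}$ via Lemma~\ref{lemma:antioslash} and Lemma~\ref{lemma:oslash_saturation_two way}) is precisely that symmetric argument. So the proposal is correct and takes essentially the same approach as the paper.
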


\begin{construction}
We define $C_{\Delta[0],X,\Delta[0]}$ as the pushout: 
\[\begin{tikzcd}
	{[\underline{10},\overline{10}]\times X} & {(R\times X,M_{R\times X})} \\
	{[\underline{10},\overline{10}]} & {C_{\Delta[0],X,\Delta[0]}.}
	\arrow[from=1-1, to=1-2]
	\arrow[from=1-2, to=2-2]
	\arrow[from=1-1, to=2-1]
	\arrow[from=2-1, to=2-2]
	\arrow["\lrcorner"{anchor=center, pos=0.125, rotate=180}, draw=none, from=2-2, to=1-1]
\end{tikzcd}\]
\end{construction}

\begin{lemma}
There is a zigzag of acyclic cofibrations, natural in $X$:
$$(\Delta[0] \costar  X )\costar  \Delta[0] \leftrightsquigarrow \Delta[0] \costar  (X \costar  \Delta[0]) .$$
\end{lemma}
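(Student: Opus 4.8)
The plan is to use $C_{\Delta[0],X,\Delta[0]}$ as the apex of a zigzag, i.e.\ to produce natural acyclic cofibrations
$$(\Delta[0]\costar X)\costar\Delta[0]\;\longrightarrow\;C_{\Delta[0],X,\Delta[0]}\;\longleftarrow\;\Delta[0]\costar(X\costar\Delta[0]),$$
and then compose. The two key inputs are the preceding two lemmas, which assert that $(K\times X,M_{K\times X})\to(R\times X,M_{R\times X})$ and $(L\times X,M_{L\times X})\to(R\times X,M_{R\times X})$ are acyclic cofibrations, together with the stability of acyclic cofibrations under cobase change and \ref{prop:R_commutes_with_gray_tensor}.

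First I would unwind the iterated co-joints. Since the underlying simplicial set of a Gray tensor is the cartesian product, it commutes with the colimits defining $\costar$, so one obtains an explicit description of $(\Delta[0]\costar X)\costar\Delta[0]$ and of $\Delta[0]\costar(X\costar\Delta[0])$ as quotients of $\Delta[1]\times X\times\Delta[1]$. I would then check that $(\Delta[0]\costar X)\costar\Delta[0]$, with its marking, is obtained from $(K\times X,M_{K\times X})$ by a cobase change along a map collapsing an $X$-fibre, the relevant cell of $K$ being identified by $i$ with a cell contracted inside the edge $[\underline{10},\overline{10}]$; and symmetrically $\Delta[0]\costar(X\costar\Delta[0])$ is such a cobase change of $(L\times X,M_{L\times X})$, using $j$ and the edge $[\underline{10},\overline{10}]$. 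The substance of this step is the marking comparison: both objects at stake are presented as the smallest marking containing a prescribed set, and the colimit presenting $M_{K\times X}$ (the pushout of $[1]\otimes X\otimes\Delta[0]$ and $[0,1]\otimes X\otimes[0,1]$ along $[1]\otimes X\otimes[0,1]$) is precisely the one that running the $\costar$-construction twice produces; matching the two saturations is where \ref{prop:R_commutes_with_gray_tensor} enters, letting one commute $\overline{(\,\cdot\,)}$ through the colimits and reduce to a finite check on generating thin simplices.

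Granting these identifications, the collapse $[\underline{10},\overline{10}]\times X\to[\underline{10},\overline{10}]$ defining $C_{\Delta[0],X,\Delta[0]}$ is compatible, along $i$ and $j$, with the collapses of the previous step, so forming the two corresponding cobase changes of the acyclic cofibrations $(K\times X,M_{K\times X})\to(R\times X,M_{R\times X})$ and $(L\times X,M_{L\times X})\to(R\times X,M_{R\times X})$ yields natural acyclic cofibrations into $C_{\Delta[0],X,\Delta[0]}$; naturality in $X$ is automatic since every construction used ($\uvar\times X$, the markings, the pushouts) is functorial in $X$.

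The step I expect to be the main obstacle is the marking comparison: one must track precisely which simplices of $\Delta[1]\times X\times\Delta[1]$ are forced thin by iterating the $\costar$-colimits versus by $M_{K\times X}$ and $M_{L\times X}$, and --- for the cobase changes to remain acyclic --- confirm that collapsing the $X$-fibre over the edge $[\underline{10},\overline{10}]$ is harmless, i.e.\ that the extra markings engineered into $M_{R\times X}$ via the $\oslash$ and $\invoslash$ constructions and the thin simplices of $[00,10]$ are exactly what make that collapse a composite of cobase changes of anodyne extensions.
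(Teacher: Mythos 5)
Your proposal is essentially the paper's own argument: the paper likewise presents $(\Delta[0]\costar X)\costar\Delta[0]$ and $\Delta[0]\costar(X\costar\Delta[0])$ as pushouts of $(K\times X,M_{K\times X})$ and $(L\times X,M_{L\times X})$ along the collapse of the relevant $X$-fibre, and then combines the two preceding lemmas with the fact that these pushouts are homotopy pushouts to get the zigzag into $C_{\Delta[0],X,\Delta[0]}$. Your cobase-change formulation (and the extra care about markings) is just a more explicit rendering of the paper's ``these two colimits are homotopy colimits'' step, so the approach is the same.
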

\begin{proof}
We have two pushouts:
\[\begin{tikzcd}
	{[00,10]\times X} & {(K\times X,M_{K\times X})} & {[10,11]\times X} & {(L\times X,M_{L\times X})} \\
	{[00,10]} & {(\Delta[0] \costar  X )\costar  \Delta[0],} & {[10,11]} & {\Delta[0] \costar  (X \costar  \Delta[0]).}
	\arrow[from=1-1, to=1-2]
	\arrow[from=1-2, to=2-2]
	\arrow[from=1-1, to=2-1]
	\arrow[from=2-1, to=2-2]
	\arrow["\lrcorner"{anchor=center, pos=0.125, rotate=180}, draw=none, from=2-2, to=1-1]
	\arrow[from=1-4, to=2-4]
	\arrow["\lrcorner"{anchor=center, pos=0.125, rotate=180}, draw=none, from=2-4, to=1-3]
	\arrow[from=1-3, to=2-3]
	\arrow[from=1-3, to=1-4]
	\arrow[from=2-3, to=2-4]
\end{tikzcd}\]
These two colimits are homotopy colimits, and so we have a zigzag of acyclic cofibrations:
$$ (\Delta[0] \costar  X )\costar  \Delta[0]\to C_{\Delta[0],X,\Delta[0]} \leftarrow \Delta[0] \costar  (X \costar  \Delta[0]).$$
\end{proof}

\begin{construction}
Let $X,Y,Z$ be three marked simplicial sets. We define $C_{X,Y,Z}$ as the following pushout: 
\[\begin{tikzcd}
	{X\otimes(\Delta[0]\costar   Y \amalg_Y Y\costar \Delta[0])\otimes Z} & {X\otimes C_{\Delta[0],Y,\Delta[0]}\otimes Z} \\
	{X\costar Y \coprod_Y Y\costar Z} & {C_{X,Y,Z}.}
	\arrow[from=1-1, to=1-2]
	\arrow[from=1-1, to=2-1]
	\arrow[from=2-1, to=2-2]
	\arrow[from=1-2, to=2-2]
\end{tikzcd}\]
\end{construction}

\begin{proof}[Proof of the proposition \ref{prop:almost_associativity_of_star_co}]
We have two pushouts: 
\[\begin{tikzcd}
	{X\otimes(\Delta[0]\costar   Y \amalg_Y Y\costar \Delta[0])\otimes Z} & {X\otimes (\Delta[0] \costar  (Y \costar  \Delta[0]))\otimes Z} \\
	{X\costar Y \coprod_Y Y\costar Z} & {X \costar  (Y \costar  Z),} \\
	{X\otimes(\Delta[0]\costar   Y \amalg_Y Y\costar \Delta[0])\otimes Z} & {X\otimes (\Delta[0] \costar  Y) \overset{co}{\star} \Delta[0])\otimes Z} \\
	{X\costar Y \coprod_Y Y\costar Z} & {(X \costar  Y )\costar  Z.}
	\arrow[""{name=0, anchor=center, inner sep=0}, from=1-1, to=1-2]
	\arrow[from=1-2, to=2-2]
	\arrow[from=1-1, to=2-1]
	\arrow[from=2-1, to=2-2]
	\arrow[from=3-2, to=4-2]
	\arrow[""{name=1, anchor=center, inner sep=0}, from=3-1, to=3-2]
	\arrow[from=3-1, to=4-1]
	\arrow[from=4-1, to=4-2]
	\arrow["\lrcorner"{anchor=center, pos=0.125, rotate=180}, draw=none, from=2-2, to=0]
	\arrow["\lrcorner"{anchor=center, pos=0.125, rotate=180}, draw=none, from=4-2, to=1]
\end{tikzcd}\]
These two colimits are homotopy colimits, and so we have a zigzag of acyclic cofibrations:
$$ (X \costar  Y )\costar Z\to C_{X,Y,Z} \leftarrow X\costar  (Y \costar  Z).$$
\end{proof}

\subsection{Wedge}
\label{subsection:wedge}

\begin{construction}

Let $X$ be a simplicial set. We define \textit{the wedge} of $\Sigma X$ and $\Delta[1]$, noted $\Sigma X\fwedge \Delta[1]$, as the colimit of the following diagram:
\[\begin{tikzcd}
	{X\otimes[0,1]} & {X\otimes\Delta[2]_t} & {X\otimes[1,2]} \\
	{\Sigma X} & {X\fwedge\Delta[1]} & {[1,2].}
	\arrow[from=1-1, to=2-1]
	\arrow[from=1-3, to=2-3]
	\arrow[from=1-1, to=1-2]
	\arrow[from=1-3, to=1-2]
	\arrow[from=1-2, to=2-2]
	\arrow[from=2-3, to=2-2]
	\arrow[from=2-1, to=2-2]
\end{tikzcd}\]
This assignation defines a cocontinuous functor $\uvar\fwedge \Delta[1]:\mSset\to \mSset_{\Delta[0]\amalg\Delta[1]/}.$ For every acyclic cofibration $K\to L$, we have a diagram: 
\[\begin{tikzcd}
	{\Delta[0]\coprod\Delta[1]} & {K\otimes([0]\coprod[1,2])} & {K\otimes\Delta[2]_t} \\
	{K\otimes\Delta[2]_t} & {L\otimes\Delta[2]_t} & {L\otimes\Delta[2]_t.}
	\arrow[from=1-2, to=1-1]
	\arrow[from=1-2, to=1-3]
	\arrow[from=1-1, to=2-1]
	\arrow[from=2-2, to=2-1]
	\arrow[from=2-2, to=2-3]
	\arrow[from=1-3, to=2-3]
	\arrow[from=1-2, to=2-2]
\end{tikzcd}\]
Taking colimit of lines induces a morphism
$$K\fwedge \Delta[1]\to L\fwedge \Delta[1].$$
However, these two colimits are homotopy colimits, and all the horizontal maps of the previous diagram are weak equivalences. This morphism is then an acyclic cofibration.
This shows that this functor is a left Quillen functor. We denote $$\triangledown:\Sigma X\to \Sigma X\triangledown \Delta[1]$$ the morphism induced by the inclusion $X\otimes [0,2]\subset X\otimes \Delta[2]_t$. We define similarly the left Quillen functor $$\Delta[1]\fwedge\uvar:\mSset\to \mSset_{\Delta[1]\amalg \Delta[0]}$$ and the morphism
$$\triangledown:\Sigma X\to \Delta[1]\fwedge\Sigma X.$$
\end{construction}

\begin{prop}
Morphisms 
$$ \Sigma X\coprod_{\Delta[0]}\Delta[1]\to \Sigma X\fwedge \Delta[1]~~~~\mbox{and}~~~~ \Delta[1]\coprod_{\Delta[0]}\Sigma X\to \Delta[1]\fwedge \Sigma X$$
are acyclic cofibrations. 
\end{prop}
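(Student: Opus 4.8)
The plan is to exhibit the morphism $\Sigma X\coprod_{\Delta[0]}\Delta[1]\to\Sigma X\fwedge\Delta[1]$ as a cobase change of $X\otimes j$, where $j\colon\Lambda^1[2]\to\Delta^1[2]$ is the inner complicial horn inclusion (note that $\Delta^1[2]=\Delta[2]_t$), and then to conclude by left Quillen-ness of $X\otimes(\uvar)$ together with stability of acyclic cofibrations under pushout. Throughout I take $X$ nonempty, the empty case being degenerate.

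First I would note that $\Lambda^1[2]$ is the pushout $[0,1]\coprod_{\{1\}}[1,2]$ of the two faces of $\Delta[2]$ meeting at the vertex $1$, a pushout of (trivially stratified) marked simplicial sets; since the Gray tensor product on $\mSset$ is cocontinuous, $X\otimes\Lambda^1[2]=(X\otimes[0,1])\coprod_{X\otimes\{1\}}(X\otimes[1,2])$, the structure maps being the face inclusions tensored with $X$. Next I would revisit the colimit diagram $D$ of \cref{subsection:wedge} computing $\Sigma X\fwedge\Delta[1]$, namely
$$\Sigma X\xleftarrow{\ q\ }X\otimes[0,1]\hookrightarrow X\otimes\Delta[2]_t\hookleftarrow X\otimes[1,2]\xrightarrow{\ p\ }[1,2],$$
with $q$ the suspension quotient and $p$ the projection collapsing $X$, and consider the diagram $D'$ obtained by replacing $X\otimes\Delta[2]_t$ by $X\otimes\Lambda^1[2]$ along $X\otimes j$. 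Using the decomposition of $X\otimes\Lambda^1[2]$, the colimit of $D'$ is that of the fence $\Sigma X\xleftarrow{q}X\otimes[0,1]\hookleftarrow X\otimes\{1\}\hookrightarrow X\otimes[1,2]\xrightarrow{p}[1,2]$, i.e. the pushout $\Sigma X\coprod_{X\otimes\{1\}}[1,2]$. Both legs out of $X\otimes\{1\}$ factor through the projection $X\otimes\{1\}\cong X\to\Delta[0]$, which is a split epimorphism since $X$ has a $0$-simplex, so that $\Sigma X\coprod_{X\otimes\{1\}}[1,2]=\Sigma X\coprod_{\Delta[0]}\Delta[1]$, with $\Delta[0]$ mapping to the target object of $\Sigma X$ and to the source vertex of $\Delta[1]$ as in the statement. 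Finally a routine universal-property check (the modified object has no outgoing arrows in the indexing fence) shows that changing a single object of a diagram along a map turns the colimit into a pushout, whence
$$\Sigma X\fwedge\Delta[1]=\mathrm{colim}(D)=\Big(\Sigma X\coprod_{\Delta[0]}\Delta[1]\Big)\coprod_{X\otimes\Lambda^1[2]}(X\otimes\Delta[2]_t),$$
and the comparison morphism is the cobase change of $X\otimes j$ along $X\otimes\Lambda^1[2]\to\Sigma X\coprod_{\Delta[0]}\Delta[1]$.

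To finish: $j$ is a complicial horn inclusion, hence an elementary anodyne extension, hence an acyclic cofibration; as every object of $\mSset$ is cofibrant and the Gray tensor product is a left Quillen bifunctor (\cref{prop:gray_product_is_a_left_Quillen_bifunctor}), the functor $X\otimes(\uvar)$ is left Quillen, so $X\otimes j$ is an acyclic cofibration; and acyclic cofibrations are stable under cobase change. The second morphism is handled symmetrically, exchanging the roles of the faces $[0,1]$ and $[1,2]$ of $\Delta[2]_t$ (filling the inner horn from the other side).

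The main obstacle I anticipate is the bookkeeping in the second paragraph: checking that the stratifications on $X\otimes[0,1]$, $X\otimes[1,2]$, $X\otimes\Lambda^1[2]$ and $X\otimes\Delta[2]_t$ are exactly the ones occurring in \cref{subsection:wedge}, and carefully justifying the two colimit manipulations — the identification $\mathrm{colim}(D')=\Sigma X\coprod_{\Delta[0]}\Delta[1]$, which genuinely uses that $X\to\Delta[0]$ is epic, and the "replace one object" move. These steps are elementary, but they are the only place where an error could creep in.
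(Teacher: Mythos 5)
Your proof is correct and is essentially the paper's own argument: the paper exhibits $\Sigma X\coprod_{\Delta[0]}\Delta[1]$ as the pushout of $\Delta[0]\coprod\Delta[1]\leftarrow X\otimes([0]\coprod[1,2])\to X\otimes\Lambda^{1}[2]$ and then obtains $\Sigma X\fwedge\Delta[1]$ by a further pushout along $X\otimes(\Lambda^{1}[2]\to\Delta[2]_t)$, concluding exactly as you do from the Gray tensor product being left Quillen and stability of acyclic cofibrations under cobase change. Your additional bookkeeping (the decomposition of $X\otimes\Lambda^{1}[2]$, the ``replace one object'' colimit move, and excluding $X=\emptyset$, where the identification genuinely uses that $X\to\Delta[0]$ is epic) only makes explicit steps the paper leaves implicit.
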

\begin{proof}
We have a colimit diagram
\[\begin{tikzcd}
	{X\otimes([0]\coprod[1,2])} & {X\otimes \Lambda^{1}[2]} & {X\otimes\Delta[2]_t} \\
	{\Delta[0]\coprod\Delta[1]} & {\Sigma X\coprod_{\Delta[0]} \Delta[1]} & {\Sigma X\triangledown \Delta[1].}
	\arrow[from=1-1, to=2-1]
	\arrow[from=2-1, to=2-2]
	\arrow[from=1-1, to=1-2]
	\arrow[from=1-2, to=1-3]
	\arrow[from=1-3, to=2-3]
	\arrow[from=1-2, to=2-2]
	\arrow["\lrcorner"{anchor=center, pos=0.125, rotate=180}, draw=none, from=2-3, to=1-2]
	\arrow["\lrcorner"{anchor=center, pos=0.125, rotate=180}, draw=none, from=2-2, to=1-1]
	\arrow[from=2-2, to=2-3]
\end{tikzcd}\]
The upper right horizontal morphism is an acyclic cofibration, and so is the downer right horizontal one. We proceed similarly for the morphism $$\Delta[1]\coprod_{\Delta[0]}\Sigma X\to \Delta[1]\fwedge \Sigma X.$$
\end{proof}

\subsection{Street nerve}

In \cite{street1987algebra}, Street defines a cosimplicial object in $\infcat$, that associates to $n$, the \textit{$n^{th}$ oriental} $O_n$. The $\omega$-category $O_n$ is a $n$-category admitting a unique non identity $n$-cell. We then define $(O_n)_t$ to be the following pushout: 
\[\begin{tikzcd}
	{\Gb_n} & {O_n} \\
	{\Gb_{n-1}} & {(O_n)_t.}
	\arrow[from=1-1, to=1-2]
	\arrow["id"', from=1-1, to=2-1]
	\arrow[from=2-1, to=2-2]
	\arrow[from=1-2, to=2-2]
	\arrow["\lrcorner"{anchor=center, pos=0.125, rotate=180}, draw=none, from=2-2, to=1-1]
\end{tikzcd}\]
This induces a colimit preserving realization 
$$R:\stratSset\to \infcat.$$
This realization functor preserves the join, the suspension and the wedge. Moreover, \cite[Theorem 249]{verity2008complicial} states that this functor sends complicial horn inclusions and complicial thinness extensions  to isomorphisms. 
However, this is not true for saturation extensions. We then have to consider Gaunt $\omega$-categories, first introduced in \cite{barwick2021unicity}.

\begin{definition}
Let $C$ be a $\omega$-category $C$. A $n$-cell is an \textit{equivalence} if there exists two cells $g,h:b\to a$ such that $f*_{n-1}g=id$ and $h*_{n-1}f =id$.  An $\omega$-category is  \textit{Gaunt} if all the equivalences are identities. We denote $\satinfcat$ the full subcategory of $\infcat$ whose objects are Gaunt $\omega$-categories. This category admits all colimits.
\end{definition}

All the orientals are Gaunt categories. Furthermore the category $\satinfcat$ is closed by Gray tensor product, suspension, join and wedge. The realization functor $\tilde{R}:\stratSset\to \satinfcat$ sends all elementary anodyne extensions to isomorphisms. This functor lifts to a realization $\mSset\to \satinfcat$ that we also denote $\tilde{R}$.

Principal results of  \cite{gagna2021nerves} and \cite{ozornova2020suspension} states that 
for all $l,n$, the following morphism is an acyclic cofibration: 
$$\Sigma^l\Delta[n]\to N_{str}(\tilde{R}(\Sigma^l\Delta[n])).$$

\begin{definition}
Let $l$ be an integer. We define the \textit{Street endofunctor} $i^l_{str}$ to be the colimit preserving functor defined on representables by: 
$$i^l_{str}([n]) := N_{str}(\tilde{R}(\Sigma^l\Delta[n]))~~~\mbox{ and  }~~~i^l_{str}([n]_t) :=\tau_{n+l} (i^l_{str}([n]))$$
\end{definition}

\begin{prop}
\label{prop:i_str_is_Quillen}
 The functor $i^l_{srt}$ is left Quillen and 
the natural transformation 
$$\Sigma^l\to i^l_{srt}$$ 
is weakly invertible.
\end{prop}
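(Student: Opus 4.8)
The plan is to prove both claims by reducing to representables, where everything becomes combinatorially explicit, and by invoking the structural results already established. First I would observe that $i^l_{str}$ is defined as a colimit-preserving functor on the presheaf category, hence automatically cocontinuous, so to show it is left Quillen it suffices (by the standard characterization for left Bousfield localizations / the local model structures of \ref{defi:local_model_structure}) to check that it sends cofibrations to cofibrations and acyclic cofibrations to weak equivalences. Preservation of cofibrations: a generating cofibration is $\partial\Delta[n]\to\Delta[n]$ or an entire inclusion $\Delta[n]\to\Delta[n]_t$; applying $i^l_{str}$ to the first gives $N_{str}(\tilde R(\partial\Sigma^l\Delta[n]))\to N_{str}(\tilde R(\Sigma^l\Delta[n]))$, and since $\Sigma^l$ preserves monomorphisms of marked simplicial sets and $N_{str}\tilde R$ on the relevant objects is the fibrant replacement appearing in the Gagna--Ozornova--Rovelli result, this is a monomorphism; applying it to the entire inclusion gives a truncation-type entire inclusion, again a cofibration.

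Next I would handle the weak equivalence claims. The key input is the cited result that $\Sigma^l\Delta[n]\to N_{str}(\tilde R(\Sigma^l\Delta[n]))$ is an acyclic cofibration for all $l,n$; this is precisely the component of the natural transformation $\Sigma^l\to i^l_{str}$ at the representable $[n]$, so the transformation is a pointwise weak equivalence on representables (for the unmarked ones, and by the truncation compatibility — $\tau_{n+l}$ being left Quillen by \ref{cons:truncation functor} — also on the $[n]_t$). To upgrade "pointwise weak equivalence on representables between two left Quillen functors" to "pointwise weak equivalence everywhere," I would use the criterion mentioned in the introduction (the statement that a natural transformation between left Quillen adjoints which is a pointwise weak equivalence on some generators is one everywhere — this is the tool announced for section \ref{section:A criterium to be a weakly invertible transformation}); since $\Sigma^l$ is already known to be left Quillen (iterate the suspension construction) and $i^l_{str}$ is cocontinuous with both functors agreeing up to weak equivalence on a set of generators, the transformation $\Sigma^l\to i^l_{str}$ is weakly invertible, and in particular $i^l_{str}$ preserves all weak equivalences between cofibrant objects, hence preserves acyclic cofibrations, completing the proof that $i^l_{str}$ is left Quillen.

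The main obstacle, I expect, is the logical ordering: showing $i^l_{str}$ is left Quillen and showing $\Sigma^l\to i^l_{str}$ is weakly invertible are entangled, since the cleanest proof of the second uses the criterion which presupposes both functors are left Quillen. I would break the circularity by first establishing directly (without the criterion) that $i^l_{str}$ preserves cofibrations and that it sends the generating \emph{anodyne} extensions $\Lambda^k[n]\to\Delta^k[n]$ and the thinness/saturation extensions to weak equivalences — for the horn and thinness extensions this follows because $\tilde R$ sends them to isomorphisms and $N_{str}$ of an isomorphism composed with the fibrant-replacement maps is a weak equivalence by two-out-of-three against the GOR acyclic cofibrations; for the saturation extensions one uses that both source and target have the same $\tilde R$-image. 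Once $i^l_{str}$ is known to be left Quillen on its own, the criterion of \ref{section:A criterium to be a weakly invertible transformation} applies to $\Sigma^l\to i^l_{str}$, whose components on representables are the GOR acyclic cofibrations, yielding that it is weakly invertible. A secondary technical point is checking the marked representables $[n]_t$: here one uses that $i^l_{str}([n]_t)=\tau_{n+l}(i^l_{str}([n]))$ and that the analogous truncation of $\Sigma^l\Delta[n]$ is $\Sigma^l\Delta[n]_t$ up to the entire inclusion, so naturality of the GOR equivalence together with left Quillen-ness of $\tau_{n+l}$ gives the equivalence on these objects too.
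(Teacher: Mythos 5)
Your ingredients (the GOR acyclic cofibrations on representables, the truncation compatibility $\Sigma^l\Delta[n]_t=\tau_{n+l}\Sigma^l\Delta[n]$ and $i^l_{str}([n]_t)=\tau_{n+l}i^l_{str}([n])$, preservation of cofibrations) are the right ones, but your way of breaking the circularity has a genuine gap. Your fallback argument claims that $i^l_{str}$ sends the elementary anodyne extensions to weak equivalences ``because $\tilde R$ sends them to isomorphisms''; this conflates $i^l_{str}(K)$ with $N_{str}(\tilde R(\Sigma^l K))$ for non-representable $K$. The functor $i^l_{str}$ is the cocontinuous extension of its values on representables, while $N_{str}\circ\tilde R$ is not cocontinuous, so $i^l_{str}(\Lambda^k[n])$ is a colimit of the objects $N_{str}(\tilde R(\Sigma^l\Delta[m]))$ and is not $N_{str}(\tilde R(\Sigma^l\Lambda^k[n]))$; the identification of $\tilde R\,i^l_{str}$ with $\tilde R\,\Sigma^l$ is itself the content of the much later Proposition \ref{prop:existence_of_comparaison_with_street}, and even granted it, $\tilde R$ does not reflect weak equivalences. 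Moreover, in the $(\Lambda\cup S,I)$-local model structure the acyclic cofibrations are not generated by the elementary anodyne extensions, so even a correct verification on $\Lambda^k[n]\to\Delta^k[n]$ and the thinness/saturation extensions would not by itself yield left Quillen-ness without further input (the Leibniz products with the interval, or a fibrations-between-fibrant-objects criterion). Your primary route, the criterion of \ref{prop:criterimu_to_be_an_weak_equivalence}, is indeed unavailable here, both for the circularity you note and because it is proved later in the paper.

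The missing idea is that no globular criterion is needed at all: once the comparison $\Sigma^l X\to i^l_{str}X$ is known to be a weak equivalence on the representables $\Delta[n]$ and $\Delta[n]_t$ (exactly as you argue, using that $\tau_{n+l}$ is left Quillen, cf.\ \ref{cons:truncation functor}), one observes that the class of objects $X$ for which this comparison is a weak equivalence is saturated by monomorphisms --- both functors are cocontinuous and preserve cofibrations, so the comparison is a map of homotopy colimits along cellular decompositions --- hence contains all marked simplicial sets. This gives the weak invertibility of $\Sigma^l\to i^l_{str}$ first. Left Quillen-ness is then immediate: for an acyclic cofibration $K\to L$ the naturality square with the two weak equivalences $\Sigma^lK\to i^l_{str}(K)$ and $\Sigma^lL\to i^l_{str}(L)$ and the acyclic cofibration $\Sigma^lK\to\Sigma^lL$ forces $i^l_{str}(K)\to i^l_{str}(L)$ to be an acyclic cofibration by two-out-of-three. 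So the correct logical order is the opposite of yours: pointwise equivalence by cellular induction first, left Quillen-ness second.
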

\begin{proof}
We recall the truncation functor $\tau_n:\mSset\to \mSset$ is a left Quillen functor, and so preserves weak equivalences between cofibrant objects. Furthermore the $l$-suspension fulfills:
$$\Sigma^l \Delta[n]_t = \tau_{n+l}\Sigma^l\Delta[n]$$
and the morphism  $\Sigma^l\Delta[n]_t\to i_{str}([n]_t)$ is then a weak equivalence.
The set of objects $X$ such that the morphism $\Sigma^lX\to i_{srt}^lX$ is a weak equivalence is saturated by monomorphisms, includes all representables and then consists of all marked simplicial sets. Now let $K\to L$ be an acyclic cofibration. We have a commutative square:
\[\begin{tikzcd}
	\Sigma^lK & {i^l_{str}(K)} \\
	\Sigma^lL & {i^l_{str}(L)}
	\arrow["\sim"', from=1-1, to=2-1]
	\arrow["\sim", from=1-1, to=1-2]
	\arrow["\sim"', from=2-1, to=2-2]
	\arrow[from=1-2, to=2-2]
\end{tikzcd}\]
By two out of three, $i^l_{str}(K)\to i^l_{str}(L)$ is then an acyclic cofibration. The functor $i^l_{srt}$ is then left Quillen. 
\end{proof}

We conclude this section by a technical lemma.

\begin{lemma}
\label{lemma:unicity_of_composition}
Let $n,m$ be two integers and $X$ a category admitting a loop free and atomic basis. Let $$f:\Sigma^m(\Delta[n]\star \Sigma X)\to\Sigma^m( \Delta[n]\star (\Delta[1]\vee \Sigma X))$$ be a morphism fitting in the following diagram:
\[\begin{tikzcd}
	{\Sigma^m(\Delta[n]\star\Sigma \emptyset)} && {\Sigma^m( \Delta[n]\star (\Delta[1]\vee \Sigma X))} \\
	{\Sigma^m(\Delta[n]\star \Sigma X)} && {\Sigma^m(\Delta[n]\star\Sigma X)}
	\arrow["f", from=2-1, to=1-3]
	\arrow[from=1-1, to=2-1]
	\arrow["{\Sigma^m(\Delta[n]\star \triangledown_{\emptyset})}", from=1-1, to=1-3]
	\arrow[from=1-3, to=2-3]
	\arrow["id"', from=2-1, to=2-3]
\end{tikzcd}\]
Then $f =\Sigma^m(\Delta[n]\star\triangledown_X)$.
\end{lemma}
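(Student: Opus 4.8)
The plan is to reduce the statement to a uniqueness property for maps out of a suspension, exploiting the hypothesis that $X$ has a loop free and atomic basis. First I would unwind the definitions: a morphism $f\colon \Sigma^m(\Delta[n]\star\Sigma X)\to\Sigma^m(\Delta[n]\star(\Delta[1]\vee\Sigma X))$ is, after applying the $(\Sigma^m,\text{``})$-adjunction repeatedly, the same as a morphism between the corresponding $\Hom$-objects relative to the appropriate pairs of $0$-cells; so one reduces to the case $m=0$, at the cost of tracking which $0$-cells are involved. Similarly, using the fact that $\Delta[n]\star(-)$ has a right adjoint (a slice-type construction, cf. the discussion of $(\uvar)_{/X}$ versus $(\uvar)_{//X}$) together with the comparison $\gamma$ between diamond and join, one peels off the $\Delta[n]\star$ prefix. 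This leaves the essential content: a morphism $g\colon \Sigma X\to \Delta[1]\vee\Sigma X$ which restricts to $\triangledown_\emptyset$ on $\Sigma\emptyset = \partial\Delta[1]$ and which composes with the projection $\Delta[1]\vee\Sigma X\to \Sigma X$ (collapsing $\Delta[1]$) to the identity must equal $\triangledown_X$.

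For this reduced statement the key point is that $\Sigma X$ is generated, as a complicial set, by its two $0$-cells together with the cells of $X$ sitting in $\Hom_{\Sigma X}(\perp,\invperp)$, and that a map out of $\Sigma X$ is determined by where it sends these generating data. The two constraints in the hypothesis pin down the behavior on the $0$-cells (they go to the two outer $0$-cells of $\Delta[1]\vee\Sigma X$, namely the source of $\Delta[1]$ and the target of $\Sigma X$, because the composite with the collapse is the identity and the restriction to $\partial\Delta[1]$ is $\triangledown_\emptyset$), so $g$ is classified by a map $X\to \Hom_{\Delta[1]\vee\Sigma X}(\text{left end},\text{right end})$. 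Now the ``loop free and atomic basis'' hypothesis enters: it guarantees that $\Hom_{\Delta[1]\vee\Sigma X}(\text{left},\text{right})$, computed on the nose, is a colimit-type object assembled from $X$ in such a way that the only map $X\to$ it compatible with the collapse-to-$X$ map is the canonical inclusion — intuitively, whiskering by the generating $1$-cell of $\Delta[1]$ is the unique ``section'' available, because atomicity forbids exotic factorizations of the generators of $X$ and loop-freeness forbids the generating cells from being sent around a nontrivial cycle. I would make this precise by induction on the dimension of the atomic generators of $X$: for each atomic generator $a$ of dimension $k$, its source and target are already handled by the inductive hypothesis, so $g(a)$ is determined up to the choice of a cell in a $\Hom$-object which, by the loop free/atomic structure, contains exactly one cell with the prescribed boundary mapping correctly under the collapse — namely $\triangledown_X(a)$.

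The main obstacle I anticipate is the bookkeeping in the first reduction step, specifically controlling how the $0$-cells behave when one strips the $\Sigma^m$ and the $\Delta[n]\star(-)$ — one must check that the diagram forces $f$ to respect the relevant basepoints/endpoints at every stage, and that the right adjoints to $\Sigma^m$ and to $\Delta[n]\star(-)$ interact cleanly (this is where Proposition \ref{prop:equivalence between diamond and join product} and the identification of $(\uvar)_{/X}$ with $(\uvar)_{//X}$ are needed, to know that join-slices are the ``correct'' Hom-objects). A secondary subtlety is that everything must be done with strict (on-the-nose) constructions, not up to homotopy, since the conclusion is an equality $f = \Sigma^m(\Delta[n]\star\triangledown_X)$; so I would avoid model-categorical arguments here and argue directly on generators and relations of the complicial sets involved, using only that these particular objects (suspensions of joins of a nice $X$) have an explicit combinatorial description. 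Once the reduction is in place, the final uniqueness is a routine, if tedious, induction on the atomic basis of $X$.
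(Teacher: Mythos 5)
Your reduction strategy has a genuine gap at its central step. Reducing to $m=0$ is fine (and is all the paper does on that front), but the proposal to peel off the $\Delta[n]\star(-)$ prefix via its right adjoint and conclude that the essential content is the uniqueness of a map $g:\Sigma X\to\Delta[1]\vee\Sigma X$ discards exactly the data the lemma is about. The join $\Delta[n]\star\Sigma X$ is not generated by the images of $\Delta[n]$ and $\Sigma X$: it contains atomic mixed cells $x\star y$ (with $x$ in $\Delta[n]$ and $y$ a cell of $X$) of dimension $|x|+|y|+2$ that are not composites of cells of the two factors, and the right adjoint of $\Delta[n]\star(-)$ evaluated on $\Delta[n]\star(\Delta[1]\vee\Sigma X)$ is a slice-type object much larger than $\Delta[1]\vee\Sigma X$, so the adjunction does not produce the clean reduced statement you rely on. The hypotheses only pin down $f$ on $\Delta[n]$, on the cells $x\star\perp$ and $x\star\invperp$, and modulo the collapse onto $\Sigma^m(\Delta[n]\star\Sigma X)$; the whole content of the lemma is that $f$ is nevertheless forced on the mixed cells $x\star y$, and your final step ("the Hom-object contains exactly one cell with the prescribed boundary mapping correctly under the collapse") asserts precisely this nontrivial point without proving it.

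There is also a setting issue: the wedge $\vee$ and the hypothesis of a loop-free atomic basis place this lemma in the world of strict ($\omega$-)categories, not complicial sets, and this matters because the conclusion is an on-the-nose equality; invoking the comparison $\gamma$ between $\diamond$ and $\star$ or the identification of $(\uvar)_{/X}$ with $(\uvar)_{//X}$ only gives weak equivalences, which cannot yield $f=\Sigma^m(\Delta[n]\star\triangledown_X)$ as an equality, and "arguing on generators and relations of the complicial sets" is not a meaningful move for marked simplicial sets. The paper's proof instead uses the loop-free atomic basis to transport the whole problem into augmented directed complexes via Steiner's theory: it writes down the explicit basis of $\Delta[n]\star(\Delta[1]\vee\Sigma X)$, observes that the hypotheses force $f(x\star y)=x\star y+r_{x,y}$ with $r_{x,y}$ a positive sum of basis elements killed by the collapse, and then shows by induction on $|x|+|y|$, using only compatibility of $f$ with the differential $\partial$, that $r_{x,y}$ is uniquely determined (the whiskering term in the lowest degree, zero above). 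That chain-level induction on the mixed cells is the ingredient your outline is missing.
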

\begin{proof}
We can suppose without lost of generality that $m=0$.
All these categories admit loop free and atomic basis. We can then show this lemma in the category of  augmented directed complexes using Steiner theory developed in \cite{steiner2004omega}. 
Let $K_\emptyset, K_X$ and $K_{\Delta[1],X}$ be the augmented directed complexes associated to $\Delta[n]\star\Sigma \emptyset$, $\Delta[n]\star \Sigma X$ and 
$\Delta[n]\star (\Delta[1]\vee \Sigma X)$. For sake of readability, we will denote $f$ the non trivial one cell of $\Delta[1]$, and $\perp,\invperp$ respectively the $0$-source and the $0$-target of cells of $\Sigma X$, and we then have $t(f)=\perp$.

Basis of $K_\emptyset, K_X$ and $K_{\Delta[1],X}$ are defined by formulas:
$$
\begin{array}{rcl}
(B_{K_\emptyset})_k &:=& \{x\star \perp, x\star \invperp , x\in \Delta[n]_l  , l+1 = k\}\cup (\Delta[n])_k\\
(B_{K_X})_k &:=& \{x\star y, x\in \Delta[n]_l  , y\in (B_K)_p, l+p+2 = k\}\cup (B_{K_\emptyset})_k\\
(B_{K_{\Delta[1],X}})_k &:=& \{x\star f, x\in \Delta[n]_l  , l+2 = k\}\cup(B_{K_X})_k 
\end{array}$$

The commutativity of the diagram  implies that $$
\begin{array}{rclr}
f(x\star \perp)&=& x\star s(f)\\
f(x\star \invperp)&=& x\star \invperp\\
f(x\star y)&=&  x\star y +r_{x,y} &\mbox{ if $|y|\geq 1$}\\
\end{array}
$$
where $r_{x,y}$ is a positive sum of  elements of $(\Delta[n]\star\Delta[1])_{|x|+|y|}$.
We will show by induction on $|x|+|y|$ that: 
$$\begin{array}{rcll}
r_{x,y}&=& x\star f &\mbox{ if $|y|= 1$}\\
&=&0&\mbox{ if $|y|< 1$} .\\
\end{array}$$

Suppose the result when the sum of dimensions of $x$ and $y$ is $k-1$. Let $x, y$ be two cells such that $|x|+|y|=k$.
\textbf{Case $|y|=1$.} The commutativity of $f$ with $\partial$ implies that 
$$\begin{array}{rcl}
\partial r_{x,y} &=&  f(\partial (x\star y)) - \partial (x\star y)\\
&=& r_{\partial x,y} + (-1)^{|x|+1} r_{x,\partial y} \\
&=& (\partial x)\star f + (-1)^{|x|+1}(x\star t(f) -x \star s(f))
\end{array}$$
and $r_{x,y}$ is then equal to $x\star f$. \textbf{Case $|y|>1$.} The commutativity of $f$ with $\partial$ implies that 
$$ \partial r_{x,y} = 0$$
and $r_{x,y}$ is then equal to $0$.
\end{proof}

\section{About the links between suspension, join and Gray tensor product with the directed interval}

\subsection{Formulas for tensor product with the directed interval}

\begin{construction}

Let $C$ be the following colimit:
\[\begin{tikzcd}
	{\Delta[3]\times\{0\}\coprod \Delta[3]\times\{1\}} & {\Delta[3]\times\Delta[1]} \\
	{\Delta[1]\coprod\Delta[1]} & {C.}
	\arrow["{s^0s^0\coprod s^2s^3}"', from=1-1, to=2-1]
	\arrow[""{name=0, anchor=center, inner sep=0}, from=1-1, to=1-2]
	\arrow[from=1-2, to=2-2]
	\arrow[from=2-1, to=2-2]
	\arrow["\lrcorner"{anchor=center, pos=0.125, rotate=180}, draw=none, from=2-2, to=0]
\end{tikzcd}\]

We define several marked simplicial sets whose underlying simplicial sets are sub objects of C: 
\[\begin{tikzcd}
	{} & 00 & 01 && 00 & 01 \\
	& 10 & 11 && 20 & 21 \\
	& 20 & 21 && 00 & 01 \\
	& 30 & 31 & {} & 30 & 31
	\arrow[from=1-3, to=2-3]
	\arrow["{\large{A_1:=~~~~~~}}"', Rightarrow, no head, from=2-2, to=3-2]
	\arrow[Rightarrow, no head, from=2-3, to=3-3]
	\arrow["{\large{A_0:=~~~~~~}}"', Rightarrow, no head, from=1-2, to=2-2]
	\arrow[from=1-2, to=1-3]
	\arrow[from=3-2, to=3-3]
	\arrow[from=2-2, to=2-3]
	\arrow[""{name=0, anchor=center, inner sep=0}, from=1-2, to=2-3]
	\arrow[""{name=1, anchor=center, inner sep=0}, from=2-2, to=3-3]
	\arrow["{\large{A_3:=~~~~~~}}"', Rightarrow, no head, from=1-5, to=2-5]
	\arrow[from=1-6, to=2-6]
	\arrow[from=2-5, to=2-6]
	\arrow[""{name=2, anchor=center, inner sep=0}, from=1-5, to=2-6]
	\arrow[from=4-2, to=4-3]
	\arrow["{\large{A_4:=~~~~~~}}"', from=3-5, to=4-5]
	\arrow[from=4-5, to=4-6]
	\arrow[from=3-6, to=4-6]
	\arrow[from=3-5, to=3-6]
	\arrow[""{name=3, anchor=center, inner sep=0}, from=3-5, to=4-6]
	\arrow[from=1-5, to=1-6]
	\arrow["{\large{A_2:=~~~~~~}}"', from=3-2, to=4-2]
	\arrow[""{name=4, anchor=center, inner sep=0}, from=3-2, to=4-3]
	\arrow[Rightarrow, no head, from=3-3, to=4-3]
	\arrow["\sim"{description}, Rightarrow, draw=none, from=0, to=1-3]
	\arrow["\sim"{description}, Rightarrow, draw=none, from=1, to=2-3]
	\arrow["\sim"{description}, Rightarrow, draw=none, from=0, to=2-2]
	\arrow["\sim"{description}, Rightarrow, draw=none, from=2, to=1-6]
	\arrow[shorten <=2pt, Rightarrow, from=2, to=2-5]
	\arrow["\sim"{description}, Rightarrow, draw=none, from=3, to=3-6]
	\arrow[shorten <=2pt, Rightarrow, from=3, to=4-5]
	\arrow[shorten <=2pt, Rightarrow, from=1, to=3-2]
	\arrow["\sim"{description}, Rightarrow, draw=none, from=4, to=4-2]
	\arrow["\sim"{description}, Rightarrow, draw=none, from=4, to=3-3]
\end{tikzcd}\]
where arrows labeled by $=$ are degenerate and arrow labeled by $\sim$ are thin.

Let $B_0$ be the sub object corresponding to the image of $[0,1,2]\times[0,1]$ where the marking includes all cells of dimension $\leq 2$, except $[10,20,21]$ and $[00,20,21]$.

Let $B_1$ be the sub object corresponding to the image of $[0,2,3]\times[0,1]$  where the marking includes all cells of dimension $\leq 2$, except $[00,20,21]$, $[00,30,31]$ and $[00,20,31]$.

Let $B$ be the reunion of $[0,1,2]\times[0,1]$ and $[0,2,3]\times[0,1]$ where the marking is the reunion of $B_0$ and $B_1$.
\end{construction}

\begin{lemma}
Morphisms $A_0\cup A_1\to B_0$ and $A_3\to B_0$ are acyclic cofibrations. 
\end{lemma}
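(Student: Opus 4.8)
The statement asserts that the two inclusions $A_0\cup A_1\to B_0$ and $A_3\to B_0$ are acyclic cofibrations, where $B_0$ is the marked simplicial set built on the image of $[0,1,2]\times[0,1]\subset \Delta[3]\times\Delta[1]$ with a specified marking (all cells of dimension $\le 2$ thin except $[10,20,21]$ and $[00,20,21]$). Both maps are clearly monomorphisms, so cofibrations; the content is acyclicity. The plan is to exhibit each of them as a finite composite of pushouts of elementary anodyne extensions (complicial horn inclusions and complicial thinness extensions), which are weak equivalences by the $(\Lambda,I)$-local model structure.

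\textbf{Step 1: Analyze $A_0\cup A_1\to B_0$.} The prism $[0,1,2]\times[0,1]$ decomposes into three nondegenerate $3$-simplices by the standard shuffle triangulation; $A_0$ and $A_1$ are two adjacent such $3$-simplices (sharing a $2$-face), glued along that face, with their internal $2$-faces marked as indicated by the $\sim$ labels in the picture. First I would identify $A_0\cup A_1$ inside $B_0$ as the subcomplex obtained by omitting the third $3$-simplex together with the two not-yet-thin triangles $[10,20,21]$ and $[00,20,21]$ and whatever faces are interior to the missing region. I then factor the inclusion as: first fill the missing $3$-simplex via a complicial horn inclusion $\Lambda^k[3]\to\Delta^k[3]$ (choosing the inner index $k$ so the admissible horn is exactly the union of the already-present faces — this is where the orientation of the prism triangulation matters), then apply complicial thinness extensions $(\Delta^k[3])'\to(\Delta^k[3])''$ to correct the marking of the faces that the horn filler left in the wrong stratification, and finally, if needed, a thinness extension on a $2$-simplex to mark $[10,20,21]$ or $[00,20,21]$ appropriately. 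Each step is a pushout of an elementary anodyne extension along the attaching map, hence acyclic.

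\textbf{Step 2: Analyze $A_3\to B_0$.} Here $A_3$ is a single $3$-simplex (the picture shows $00\to 01$, $00\to 20$ with $[00,01,20,21]$-type data and a $\sim$-marked triangle). Wait — more carefully, $A_3$ sits inside the prism as one corner simplex, and the inclusion into $B_0$ must add the other two $3$-simplices of the triangulation plus all their faces with the correct marking. I would factor this as a sequence of three complicial horn fillers $\Lambda^{k_i}[3]\to\Delta^{k_i}[3]$ (one per added $3$-simplex, each time the relevant horn being present because the previous fillers supplied the needed faces), interleaved with the requisite thinness extensions to reconcile markings, ending with thinness extensions promoting the remaining $2$-cells (other than the two excluded triangles) to thin. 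Again every step is a pushout of an elementary anodyne extension.

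\textbf{Main obstacle.} The delicate point in both steps is bookkeeping: verifying at each stage that the horn being filled is genuinely \emph{admissible} — i.e. that the subcomplex constructed so far contains exactly a $\Delta^k[n]$-horn (all faces but the $k$-th, with the prescribed thin simplices among $\{k-1,k,k+1\}\cap[n]$) and not more or less — and that after filling, a bounded sequence of thinness extensions $(\Delta^k[n])'\to(\Delta^k[n])''$ suffices to reach the target marking without ever needing to "unmark" a cell. This requires tracking the orientation of each of the three $3$-simplices in the prism triangulation and matching it against the thin-face pattern in the diagram defining $B_0$. This is a routine but error-prone combinatorial check, so I would organize it by listing the vertices of each $3$-simplex in order, reading off which $2$-faces are thin in $B_0$, and confirming these coincide with the faces $\{k-1,k,k+1\}\cap[3]$ for a consistent choice of $k$. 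Once the admissibility pattern is pinned down, the rest follows formally; analogous lemmas for $A_2\cup A_4\to B_1$ and the other prism follow by the same template, and then $A\to B$ by composing.
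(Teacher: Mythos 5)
There is a genuine gap, and it starts with a misreading of the objects involved. The $A_i$ are not $3$-simplices of the shuffle triangulation of the prism: $A_0$ and $A_1$ are the (images in $C$ of the) squares $[0,1]\times[0,1]$ and $[1,2]\times[0,1]$, and $A_3$ is the square $[0,2]\times[0,1]$; they are $2$-dimensional. Consequently your Step 1 plan ("fill the missing $3$-simplex" via a single $\Lambda^k[3]\to\Delta^k[3]$) cannot work: to pass from $A_0\cup A_1$ to $B_0$ one must add the long diagonal $[00,21]$, four triangles, and \emph{all three} tetrahedra $[00,01,11,21]$, $[00,10,11,21]$, $[00,10,20,21]$, and none of the needed $3$-horns is admissible in $A_0\cup A_1$ before lower-dimensional cells have been supplied. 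The same issue affects Step 2: $A_3$ is missing edges and triangles, so a schedule consisting only of three $3$-dimensional horn fillers (plus thinness extensions) is not available; one must first attach $2$-simplices by $2$-dimensional complicial horns. Your closing remark about marking $[10,20,21]$ or $[00,20,21]$ thin is also backwards: these are exactly the two triangles \emph{excluded} from the marking of $B_0$, which is a sign that the bookkeeping you defer as the "main obstacle" — and which is the entire content of the lemma — has not actually been pinned down.

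For comparison, the paper handles the two maps differently and more economically. The inclusion $A_0\cup A_1\to B_0$ is exhibited as a single pushout of the Leibniz Gray tensor
$$\Lambda^{1}[2]\otimes \Delta[1]\,\cup\,\Delta[2]_t\otimes \partial\Delta[1]\;\longrightarrow\;\Delta[2]_t\otimes \Delta[1],$$
which is anodyne because the Gray (pre)tensor is a left Quillen bifunctor; no cell-by-cell filling is needed, and the marking of $B_0$ (everything of dimension $\le 2$ thin except $[10,20,21]$ and $[00,20,21]$) is exactly the Gray tensor marking, which is why those two triangles stay unmarked. The inclusion $A_3\to B_0$ is then written as an explicit filtration of six pushouts of complicial horn inclusions: first the three $2$-horns adding $[00,01,11]$, $[00,10,11]$, $[00,10,21]$ (which also supply the missing edges), then the three $3$-horns adding $[00,01,11,21]$, $[00,10,11,21]$, $[00,10,20,21]$; no thinness extensions are required. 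If you want to salvage your approach, you would need to redo it at this level of explicitness, starting from the correct description of $A_0$, $A_1$, $A_3$ as quotiented squares.
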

\begin{proof}
The cofibration $A_0\cup A_1\to B_0$  fits in the following pushout square:
\[\begin{tikzcd}
	{\Lambda^{1}[2]\otimes \Delta[1]\cup\Delta[2]_t\otimes \partial\Delta[1]} & {A_1\cup A_2} \\
	{\Delta[2]_t\otimes \Delta[1]} & {B_0.}
	\arrow[""{name=0, anchor=center, inner sep=0}, from=1-1, to=1-2]
	\arrow[from=1-1, to=2-1]
	\arrow[from=1-2, to=2-2]
	\arrow["{[012]\times[01]}"', from=2-1, to=2-2]
	\arrow["\lrcorner"{anchor=center, pos=0.125, rotate=180}, draw=none, from=2-2, to=0]
\end{tikzcd}\]

The cofibration $A_3\to B_0$ is a sequence of inclusions:
$$A_3=:(D_0,M_0)\subset (D_1,M_1)\subset (D_2,M_2)\subset(D_3,M_3)\subset(D_4,M_4)\subset (D_5,M_5)\subset (D_6,M_6):= B_0,$$ where 

\begin{itemize}[leftmargin=* ,parsep=0cm,itemsep=0cm,topsep=0cm]
\item $D_1 = D_0\cup [00,{01},11]$ ;
\item $D_2 = D_1\cup [ {00},10,11]$ ;
\item $D_2 = D_1\cup [ {00},10,21]$ ;
\item $D_4 = D_3\cup [00, {01},11,21]$; 
\item $D_5 = D_4\cup [ {00},10,11,21]$; 
\item $D_6 = D_5\cup [ {00},10,20,21]$; 
\end{itemize} and
\begin{itemize}[leftmargin=* ,parsep=0cm,itemsep=0cm,topsep=0cm]
\item $(D_0,M_0)\to (D_1,M_1)$ is a pushout of $\Lambda^1[2]\to \Delta^1[2]$;
\item $(D_1,M_1)\to (D_2,M_2)$ is a pushout of $\Lambda^0[2]\to \Delta^0[2]$;
\item $(D_2,M_2)\to (D_3,M_3)$ is a pushout of $\Lambda^0[2]\to \Delta^0[2]$;
\item $(D_3,M_3)\to (D_4,M_4)$ is a pushout of $\Lambda^1[3]\to \Delta^1[3]$;
\item $(D_4,M_4)\to (D_5,M_5)$ is a pushout of $\Lambda^0[3]\to \Delta^0[3]$;
\item $(D_5,M_5)\to (D_6,M_6)$ is a pushout of $\Lambda^0[3]\to \Delta^0[3]$.
\end{itemize}
\end{proof}

\begin{lemma}
Morphisms $A_2\cup A_3\to B_1$ and $A_4\to B_1$ are acyclic cofibrations. 
\end{lemma}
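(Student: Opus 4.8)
The plan is to reproduce, for the prism over the triangle $[0,2,3]$, the two–part argument used in the previous lemma for the prism over $[0,1,2]$; the situation is the exact analogue of that one, with the central vertex of the triangle now being $2$ instead of $1$.

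\textit{The morphism $A_2\cup A_3\to B_1$.} Let $\delta\colon[2]\hookrightarrow[3]$ be the face $0\mapsto 0$, $1\mapsto 2$, $2\mapsto 3$, so that $\Delta[2]\times\Delta[1]\xrightarrow{\delta\times\mathrm{id}}\Delta[3]\times\Delta[1]\to C$ has image $B_1$ and, suitably marked, is a characteristic map $\Delta[2]_t\otimes\Delta[1]\to C$ for the cell $[0,2,3]\times[0,1]$. Under it, $A_2$ and $A_3$ are the two vertical faces of the prism lying over the edges $[2,3]$ and $[0,2]$, which meet along the vertical edge over the central vertex $2$; as in the previous proof, one checks — using the collapsings defining $C$ — that the two horizontal triangles over $[0,2,3]\times\{0\}$ and $[0,2,3]\times\{1\}$ are degenerate and already contained in $A_2\cup A_3$, so that $A_2\cup A_3$ is the image of $\Lambda^1[2]\otimes\Delta[1]\cup\Delta[2]_t\otimes\partial\Delta[1]$. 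Hence there is a pushout square
\[\begin{tikzcd}
  {\Lambda^1[2]\otimes\Delta[1]\cup\Delta[2]_t\otimes\partial\Delta[1]} & {A_2\cup A_3} \\
  {\Delta[2]_t\otimes\Delta[1]} & {B_1.}
  \arrow[from=1-1, to=1-2]
  \arrow[from=1-1, to=2-1]
  \arrow[from=1-2, to=2-2]
  \arrow["{[023]\times[01]}"', from=2-1, to=2-2]
  \arrow["\lrcorner"{anchor=center, pos=0.125, rotate=180}, draw=none, from=2-2, to=1-1]
\end{tikzcd}\]
Its left vertical morphism is the Leibniz Gray tensor of the complicial horn inclusion $\Lambda^1[2]\to\Delta^1[2]=\Delta[2]_t$ with the cofibration $\partial\Delta[1]\to\Delta[1]$, hence an acyclic cofibration by \cref{prop:martina} (equivalently by \cref{prop:gray_product_is_a_left_Quillen_bifunctor}); therefore so is its pushout $A_2\cup A_3\to B_1$.

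\textit{The morphism $A_4\to B_1$.} As for $A_3\to B_0$, I would exhibit a finite filtration
\[
A_4=:(D_0,M_0)\subset(D_1,M_1)\subset\cdots\subset(D_N,M_N):=B_1
\]
in which each inclusion adjoins a single non-degenerate simplex of $B_1$ not already present and is a pushout of a complicial horn inclusion $\Lambda^k[m]\to\Delta^k[m]$ with $m\in\{2,3\}$. The combinatorial pattern is the same as there: $A_4$ is the vertical face over the edge $[0,3]$, which is the edge of the triangle $[0,2,3]$ opposite the central vertex $2$, so the simplices to be attached are those meeting the vertical edge over $2$ — first the remaining vertical $2$-simplices over $[0,2]$ and $[2,3]$, then the non-degenerate $3$-simplices of the prism — each inserted at a moment when the face opposite the horn index is the only one still missing. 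The indices $k$ are then read off the filtration exactly as in the displayed sequence for $A_3\to B_0$.

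The only genuine work is the bookkeeping in this second part. At each step one must verify that the newly adjoined simplex, together with the faces already in $D_i$, constitutes a \emph{complicial} horn $\Lambda^k[m]$ and not merely an ordinary horn — i.e. that the thinness data prescribed on $B_1$ (all cells of dimension $\le 2$ thin except $[00,20,21]$, $[00,30,31]$ and $[00,20,31]$) restricts correctly along the filtration. Since $B_1$ carries a different prescribed marking than $B_0$ (three exceptional $2$-cells instead of two), this check is not quite a formal translation of the previous one and must be carried out case by case on the finitely many missing simplices; as in the companion lemmas I would leave the most mechanical marking verifications to the reader. Everything else — the two pushout squares and the appeal to \cref{prop:martina} — is immediate.
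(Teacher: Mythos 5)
Your overall strategy coincides with the paper's: the first map is treated by a single pushout along the Leibniz Gray tensor of the complicial horn $\Lambda^1[2]\to\Delta^1[2]$ with $\partial\Delta[1]\to\Delta[1]$, evaluated on the prism over $[0,2,3]$, and the second by a finite filtration by pushouts of complicial horns. Two remarks on your first half. The paper's square uses the marking $\Delta[2]'$ where you use $\Delta[2]_t=\Delta^1[2]$; with the markings prescribed for $A_2$, $A_3$ and $B_1$ (where $[00,20,21]$ and $[00,20,31]$ must remain non-thin, while the extra non-thin Gray cell $[20,30,31]$ is already thin in $A_2$) your choice is coherent, so this discrepancy is not a defect. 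However, the acyclicity of the left vertical map should be quoted from Proposition \ref{prop:gray_product_is_a_left_Quillen_bifunctor} (the Gray tensor is a left Quillen bifunctor); Proposition \ref{prop:martina} concerns the cartesian product, not $\otimes$.

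The genuine gap is the second half. In the paper, the proof that $A_4\to B_1$ is anodyne consists precisely of the data you omit: the explicit chain $D_1=D_0\cup[00,21,31]$, $D_2=D_1\cup[20,30,31]$, $D_3=D_2\cup[20,21,31]$, $D_4=D_3\cup[00,01,21,31]$, $D_5=D_4\cup[00,20,30,31]$, $D_6=D_5\cup[00,20,21,31]$, each step being a pushout of, respectively, $\Lambda^2[2]$, $\Lambda^1[2]$, $\Lambda^2[2]$, $\Lambda^3[3]$, $\Lambda^2[3]$, $\Lambda^3[3]$. You produce no filtration, and your one concrete instruction --- that the horn indices are read off ``exactly as in the displayed sequence for $A_3\to B_0$'' --- is misleading: that earlier filtration attaches simplices through the initial vertex $00$ using low horns ($\Lambda^1[2],\Lambda^0[2],\Lambda^0[2],\Lambda^1[3],\Lambda^0[3],\Lambda^0[3]$), whereas here the missing simplices all contain the terminal vertex $31$ and must be attached in the mirrored order with horns at the top indices; literally copying the earlier indices would not yield complicial horns at the stages in question. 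Since you yourself identify this bookkeeping as the only genuine work of the lemma, deferring it (and pointing to the wrong template) leaves the statement for $A_4\to B_1$ unproved: you need to exhibit the six attachments and check at each stage that the single missing face, together with the thinness data of $B_1$, makes the inclusion a pushout of the indicated complicial horn.
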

\begin{proof}
The cofibration $A_2\cup A_3\to B_1$ fits in the pushout square:
\[\begin{tikzcd}
	{\Lambda^{1}[2]\otimes \Delta[1]\cup\Delta[2]'\otimes \partial\Delta[1]} & {A_2\cup A_3} \\
	{\Delta[2]'\otimes \Delta[1]} & {B_1}
	\arrow[from=1-1, to=1-2]
	\arrow["{[023]\times[01]}"', from=2-1, to=2-2]
	\arrow[from=1-1, to=2-1]
	\arrow[from=1-2, to=2-2]
\end{tikzcd}\]

The cofibration $A_4\to B_1$ is a sequence of inclusions:
$$A_4=:(D_0,M_0)\subset (D_1,M_1)\subset (D_2,M_2)\subset(D_3,M_3)\subset(D_4,M_4)\subset (D_5,M_5)\subset (D_6,M_6):= B_1$$ where 
\begin{itemize}[leftmargin=* ,parsep=0cm,itemsep=0cm,topsep=0cm]
\item $D_1 = D_0\cup [00,21, {31}]$ ;
\item $D_2 = D_1\cup [20, {30},31]$ ;
\item $D_3 = D_2\cup [20,21, {31}]$;
\item $D_4 = D_3\cup [00,01,21, {31}]$;
\item $D_5 = D_4\cup [00,20, {30},31]$ ;
\item $D_6 = D_5\cup [00,20,21, {31}]$ ;
\end{itemize}
and
\begin{itemize}[leftmargin=* ,parsep=0cm,itemsep=0cm,topsep=0cm]
\item $(D_0,M_0)\to (D_1,M_1)$ is a pushout of $\Lambda^2[2]\to \Delta^2[2]$;
\item $(D_1,M_1)\to (D_2,M_2)$ is a pushout of $\Lambda^1[2]\to \Delta^1[2]$;
\item $(D_2,M_2)\to (D_3,M_3)$ is a pushout of $\Lambda^2[2]\to \Delta^2[2]$;
\item $(D_3,M_3)\to (D_4,M_4)$ is a pushout of $\Lambda^3[3]\to \Delta^3[3]$;
\item $(D_4,M_4)\to (D_5,M_5)$ is a pushout of $\Lambda^2[3]\to \Delta^2[3]$;
\item $(D_5,M_5)\to (D_6,M_6)$ is a pushout of $\Lambda^3[3]\to \Delta^3[3]$.
\end{itemize}
\end{proof}

\begin{lemma}
The maps $A_0\cup A_1\cup A_2\to B$ and $A_4\to B$ are acyclic cofibrations. 
\end{lemma}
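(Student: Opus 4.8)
The plan is to obtain both statements by gluing the two preceding lemmas along the decomposition $B = B_0\cup B_1$.

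First I would identify the intersection $B_0\cap B_1$. On underlying simplicial sets $B_0$ is the image of $[0,1,2]\times[0,1]$ and $B_1$ the image of $[0,2,3]\times[0,1]$, so $B_0\cap B_1$ is the image of $[0,2]\times[0,1]$, which is the underlying simplicial set of $A_3$. Inspecting markings shows that $B_0$ and $B_1$ restrict to the same marking on this square and that this is exactly the marking of $A_3$; hence $B_0\cap B_1 = A_3$ as marked simplicial sets, and $B$ is the pushout of $B_0\hookleftarrow A_3\hookrightarrow B_1$. Since cofibrations of marked simplicial sets, and likewise acyclic cofibrations, are stable under pushout and composition, it remains to exhibit the two maps in the statement as composites of pushouts of the acyclic cofibrations produced in the previous two lemmas.

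For $A_4\to B$, I would factor it as $A_4\hookrightarrow B_1\hookrightarrow B$: the first map is the acyclic cofibration of the previous lemma (note that $A_4$, the image of the face $[0,3]\times[0,1]$, lies in $B_1$), and the second is the pushout of the acyclic cofibration $A_3\to B_0$ along $A_3 = B_0\cap B_1\hookrightarrow B_1$, hence again an acyclic cofibration. For $A_0\cup A_1\cup A_2\to B$, I would factor it as $A_0\cup A_1\cup A_2\hookrightarrow B_0\cup A_2\hookrightarrow B$. Since $A_2\cap B_0$ is the edge $[20,21]$, which is contained in $A_1$, one has $(A_0\cup A_1\cup A_2)\cap B_0 = A_0\cup A_1$, so the first map is the pushout of the acyclic cofibration $A_0\cup A_1\to B_0$ along $A_0\cup A_1\hookrightarrow A_0\cup A_1\cup A_2$. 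Since $A_2\subseteq B_1$, one has $(B_0\cup A_2)\cap B_1 = (B_0\cap B_1)\cup A_2 = A_2\cup A_3$ and $B_0\cup A_2\cup B_1 = B$, so the second map is the pushout of the acyclic cofibration $A_2\cup A_3\to B_1$ along $A_2\cup A_3\hookrightarrow B_0\cup A_2$. In both cases the composite is an acyclic cofibration.

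The only genuine work is the combinatorial bookkeeping inside $C$: checking that $B_0\cap B_1$ equals $A_3$ with matching marking, that $A_2\cap B_0$ reduces to the edge $[20,21]\subseteq A_1$, and that $A_4\subseteq B_1$. The subtlest point is the marking-compatibility on the shared square $A_3$, since $C$ itself carries no marking and each $A_i$, $B_i$ is marked by hand; everything after that is a formal manipulation with pushouts of (acyclic) cofibrations.
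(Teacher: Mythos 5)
Your proof is correct and is precisely the gluing argument the paper leaves implicit (its proof is just ``a direct consequence of the last two lemmas''): decompose $B=B_0\cup B_1$ along the common square, realize $B_1\to B$ and $B_0\cup A_2\to B$ as cobase changes of the acyclic cofibrations $A_3\to B_0$ and $A_2\cup A_3\to B_1$ from the previous lemmas, and compose. One caveat: your claim that $B_0$ and $B_1$ restrict on the shared square to \emph{exactly} the marking of $A_3$ is stronger than needed and is doubtful with the markings as literally stated in the paper (there all $1$-cells and almost all $2$-cells of $B_0$ and $B_1$ are thin, so the nondegenerate edges $[00,01]$, $[01,21]$, $[20,21]$, $[00,21]$ of the square are thin in $B_0\cap B_1$ but not in $A_3$; the inclusion $A_3\subseteq B_0\cap B_1$ is in general only entire, not an equality of marked objects). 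This does not damage the argument: for $B$ to be the pushout of $B_0\hookleftarrow A_3\hookrightarrow B_1$, and likewise for your two intermediate pushout squares, you only need that the underlying simplicial set of $A_3$ equals the intersection of those of $B_0$ and $B_1$ and that the marking of $B$ is by definition the union of the markings of $B_0$ and $B_1$; both hold, so your factorizations and cobase-change steps go through as written.
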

\begin{proof}
This is a direct consequence of the last two lemmas.
\end{proof}

\subsubsection*{First formula}
\label{subsubsection:First_formula} We now prove that there is a zigzag of acyclic cofibrations, natural in $X$, between the colimit of
$$\Delta[1]\fwedge\Sigma X\xleftarrow{\triangledown} \Sigma X\hookrightarrow \Sigma(X\otimes\Delta[1])\hookleftarrow \Sigma X\xrightarrow{\triangledown} \Sigma X\fwedge\Delta[1]$$
and $(\Sigma X)\otimes \Delta[1]$.

\begin{notation}
In the remaining part of this section, $(X\times A_{ij},M_{X\times A_{ij}})$ stands for $(X\times(A_i\cup A_j),M_{X\times A_i}\cup M_{X\times A_j})$.
\end{notation}

\begin{construction}
The marked simplicial set
$\overline{X\otimes B}$ is the pushout:
\[\begin{tikzcd}
	{X\otimes([00,01]\coprod [30,31])} & {X\otimes B} \\
	{[00,01]\coprod [30,31]} & {\overline{X\otimes B}.}
	\arrow[from=1-1, to=2-1]
	\arrow[from=2-1, to=2-2]
	\arrow[from=1-2, to=2-2]
	\arrow[""{name=0, anchor=center, inner sep=0}, from=1-1, to=1-2]
	\arrow["\lrcorner"{anchor=center, pos=0.125, rotate=180}, draw=none, from=2-2, to=0]
\end{tikzcd}\]

Let $\overline{X\otimes A_i}$ and $\overline{X\otimes B_i}$  be the sub-objects of $\overline{X\otimes B}$ corresponding to image of ${X\otimes A_i}$  and $	{X\otimes B_i}$.

\end{construction}

\begin{lemma}
Morphisms $\overline{X\otimes A_0} \to \Delta[1]\fwedge \Sigma X$ and $\overline{X\otimes A_2} \to \Sigma X\fwedge \Delta[1]$ are acyclic cofibrations. 
\end{lemma}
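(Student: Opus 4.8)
The plan is to exhibit $\overline{X\otimes A_0}$ as a sub-object of $\overline{X\otimes B}$ whose inclusion into $\Delta[1]\fwedge\Sigma X$ is built by a transfinite composition of pushouts of elementary anodyne extensions, exactly mirroring the two preceding lemmas on $A_0\cup A_1\to B_0$, $A_3\to B_0$, etc. First I would unwind the definitions: $\Delta[1]\fwedge\Sigma X$ is the colimit of $\Sigma X\leftarrow X\otimes[0,1]\rightarrow X\otimes\Delta[2]_t\leftarrow X\otimes[1,2]\rightarrow[1,2]$ (with the roles of the two ends swapped from the wedge construction in \ref{subsection:wedge}), while $\overline{X\otimes A_0}$ is the marked simplicial set obtained from $X\otimes A_0$ by collapsing $X\otimes([00,01])$ onto $[00,01]$ via the pushout in the construction above. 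The point is that $A_0$ is, up to relabelling of vertices, the image of $[0,1]\times[0,1]$ inside $C$ with a suitable marking, so $\overline{X\otimes A_0}$ is a marked model for $X\otimes\Delta[1]$ with one edge collapsed — which is exactly the data appearing in the colimit defining $\Delta[1]\fwedge\Sigma X$ before the anodyne extension $\Lambda^1[2]\to\Delta[2]_t$ is performed on the $\Delta[2]_t$ factor.

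Concretely, I would factor the map $\overline{X\otimes A_0}\to\Delta[1]\fwedge\Sigma X$ through $\overline{X\otimes B_0}$ (or whatever the relevant intermediate sub-object of $\overline{X\otimes B}$ is): the composite $\overline{X\otimes A_0}\to\overline{X\otimes B_0}$ is obtained by applying $X\otimes(\uvar)$, then the collapsing pushout, to the acyclic cofibration $A_0\cup A_1\to B_0$ restricted appropriately — and since $X\otimes(\uvar)$ is a left Quillen functor (Proposition \ref{prop:gray_product_is_a_left_Quillen_bifunctor}) and collapsing along $X\otimes Z\to Z$ preserves acyclic cofibrations (this is the standard homotopy-colimit argument used repeatedly above, e.g. in the suspension and wedge constructions), this composite is an acyclic cofibration. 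The remaining map $\overline{X\otimes B_0}\to\Delta[1]\fwedge\Sigma X$ I would identify as a pushout of $X\otimes(A_3\to B_0)$-type data, or directly recognise $\Delta[1]\fwedge\Sigma X$ as the colimit one gets by gluing $\overline{X\otimes B_0}$ to $\Sigma X$ and $[1,2]$ along the boundary edges; the needed acyclicity then again follows from $X\otimes(\uvar)$ being left Quillen together with the lemma $A_3\to B_0$ acyclic. The statement for $\overline{X\otimes A_2}\to\Sigma X\fwedge\Delta[1]$ is entirely symmetric, using $A_4\to B_1$ and $A_2\cup A_3\to B_1$ in place of their $B_0$-counterparts, and I would simply say "one proceeds similarly, exchanging $0\leftrightarrow 3$".

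The main obstacle I anticipate is bookkeeping rather than conceptual: one must check carefully that the markings match up — that the marking $\overline{X\otimes A_0}$ inherits from $\overline{X\otimes B}$ really is the image of $M_{X\otimes A_0}$, that it agrees on the nose with the marking on the relevant face of $\Delta[1]\fwedge\Sigma X$, and that after collapsing the edge the pushouts of horn and thinness extensions used in the $A_0\cup A_1\to B_0$ and $A_3\to B_0$ decompositions remain pushouts (the collapsing is an entire map, so it does not disturb the underlying simplicial-set pushouts, but one should confirm it sends the anodyne extensions to anodyne extensions, using that entire anodyne extensions are stable under the relevant operations). I would therefore structure the proof as: (i) unwind $\Delta[1]\fwedge\Sigma X$ as an explicit colimit; (ii) identify $\overline{X\otimes A_0}$ and $\overline{X\otimes B_0}$ with marked sub-objects of that colimit; (iii) invoke the two earlier lemmas together with Proposition \ref{prop:gray_product_is_a_left_Quillen_bifunctor} and the homotopy-colimit argument to conclude each inclusion in the chain $\overline{X\otimes A_0}\to\overline{X\otimes B_0}\to\Delta[1]\fwedge\Sigma X$ is an acyclic cofibration; (iv) remark that the case of $\overline{X\otimes A_2}\to\Sigma X\fwedge\Delta[1]$ is symmetric. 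Most of the genuine work has already been done in the lemmas about $A_i\to B_j$; this lemma is the step that transports those computations through $X\otimes(\uvar)$ and the edge-collapse.
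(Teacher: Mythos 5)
Your proposed route has a genuine gap, and it is not the one the paper takes. The factorization $\overline{X\otimes A_0}\to\overline{X\otimes B_0}\to\Delta[1]\fwedge\Sigma X$ does not exist: there is no natural map $\overline{X\otimes B_0}\to\Delta[1]\fwedge\Sigma X$ (the relation between the wedge and the $\overline{X\otimes B}$-world is exactly what this lemma is meant to establish, so invoking it here is circular), and the first leg is not even a weak equivalence. Indeed, the earlier lemma concerns $A_0\cup A_1\to B_0$, not $A_0\to B_0$; after tensoring and collapsing, $\overline{X\otimes B_0}$ is equivalent to $\overline{X\otimes A_0}\cup\overline{X\otimes A_1}$, i.e.\ it also contains the piece $\overline{X\otimes A_1}=\Sigma(X\otimes\Delta[1])$ glued along $\Sigma X$, and since $\Sigma X\to\Sigma(X\otimes\Delta[1])$ is not an equivalence (already for $X=\Delta[0]$), the inclusion $\overline{X\otimes A_0}\to\overline{X\otimes B_0}$ is not acyclic. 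More generally, the $A_i\to B_j$ lemmas are inputs to Proposition \ref{prop:interval_first_formula}, not to the present statement; this lemma is the independent ingredient that identifies the collapsed corner pieces with the wedges, so "most of the work is already done" is a misreading of its role. You have also mis-unwound $\overline{X\otimes A_0}$: as a sub-object of $\overline{X\otimes B}$ it has not only $X\otimes[00,01]$ collapsed but also $X\otimes\{11\}$ collapsed to a point (because $11=31$ lies on the collapsed edge $[30,31]$ in $C$); without this the comparison map to $\Delta[1]\fwedge\Sigma X$ would not even be a monomorphism.

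The paper's argument is short and direct, and uses the wedge subsection rather than the $A_i\to B_j$ lemmas: the inclusion $X\otimes[00,01]\coprod_{X\otimes[01]}X\otimes[01,11]\to X\otimes A_0$ is an acyclic cofibration, and pushing it out along the collapse of $X\otimes([00,01]\coprod[11])$ exhibits $\Delta[1]\coprod_{\Delta[0]}\Sigma X\to\overline{X\otimes A_0}$ as an acyclic cofibration; since $\Delta[1]\coprod_{\Delta[0]}\Sigma X\to\Delta[1]\fwedge\Sigma X$ is an acyclic cofibration by the proposition of \ref{subsection:wedge}, two-out-of-three shows $\overline{X\otimes A_0}\to\Delta[1]\fwedge\Sigma X$ is a weak equivalence, hence an acyclic cofibration, and the case of $\overline{X\otimes A_2}\to\Sigma X\fwedge\Delta[1]$ is symmetric. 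If you want to salvage your write-up, replace the factorization through $\overline{X\otimes B_0}$ by this comparison with $\Delta[1]\coprod_{\Delta[0]}\Sigma X$.
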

\begin{proof}
We have a colimit diagram:
\[\begin{tikzcd}
	{X\otimes ([00,01]\coprod [11])} & {X\otimes [00,01]\coprod_{X\otimes[01]} X\otimes[01,11]} & {X\otimes A_0} \\
	{[00,01]\coprod [11]} & {\Delta[1]\coprod_{\Delta[0]}\Sigma X} & {\overline{X\otimes A_0}.}
	\arrow[from=1-1, to=2-1]
	\arrow[""{name=0, anchor=center, inner sep=0}, "\sim", from=1-2, to=1-3]
	\arrow[""{name=1, anchor=center, inner sep=0}, from=1-1, to=1-2]
	\arrow[from=2-1, to=2-2]
	\arrow["\sim", from=2-2, to=2-3]
	\arrow[from=1-2, to=2-2]
	\arrow[from=1-3, to=2-3]
	\arrow["\lrcorner"{anchor=center, pos=0.125, rotate=180}, draw=none, from=2-2, to=1]
	\arrow["\lrcorner"{anchor=center, pos=0.125, rotate=180}, draw=none, from=2-3, to=0]
\end{tikzcd}\]
That shows that $\Delta[1]\coprod_{\Delta[0]} \Sigma X \to \overline{X\otimes A_0}$ is an acyclic cofibration. We then have a commutative diagram: 
\[\begin{tikzcd}
	{\Delta[1]\coprod_{\Delta[0]}\Sigma X} & {\overline{X\otimes A_0}} & {\Delta[1]\fwedge\Sigma X}
	\arrow["\sim", from=1-1, to=1-2]
	\arrow[from=1-2, to=1-3]
	\arrow["\sim", curve={height=-24pt}, from=1-1, to=1-3]
\end{tikzcd}\]
and by two out of three, that shows that $\overline{X\otimes A_0} \to \Delta[1]\fwedge \Sigma X$ is an acyclic cofibration. 
We proceed similarly for the second morphism. 
\end{proof}

\begin{lemma}
Marked simplicial sets $\overline{X\otimes A_1}$ and  $\overline{X\otimes A_4}$ are respectively equal to $\Sigma (X\otimes \Delta[1])$ and $(\Sigma X)\otimes \Delta[1]$.
\end{lemma}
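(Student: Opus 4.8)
The plan is to establish both identities by unwinding the colimit presentations on the two sides and matching them, dealing with the underlying simplicial sets first and with the stratifications afterwards. I first record the presentations I will compare. By definition $\Sigma Y$ is the pushout of $\partial\Delta[1]\leftarrow Y\otimes\partial\Delta[1]\to Y\otimes\Delta[1]$; since the Gray tensor product on $\mSset$ is cocontinuous in each variable and associative, $\Sigma(X\otimes\Delta[1])$ is the pushout of $\partial\Delta[1]\leftarrow X\otimes\Delta[1]\otimes\partial\Delta[1]\to X\otimes\Delta[1]\otimes\Delta[1]$, whereas $(\Sigma X)\otimes\Delta[1]$ is the pushout of $\partial\Delta[1]\otimes\Delta[1]\leftarrow X\otimes\partial\Delta[1]\otimes\Delta[1]\to X\otimes\Delta[1]\otimes\Delta[1]$. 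On the other side, $\overline{X\otimes A_i}$ is by construction the subobject spanned by the image of $X\otimes A_i$ in the pushout $\overline{X\otimes B}=(X\otimes B)\amalg_{X\otimes([00,01]\amalg[30,31])}([00,01]\amalg[30,31])$.

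For $\overline{X\otimes A_4}=(\Sigma X)\otimes\Delta[1]$ I would first identify $A_4$, as a marked simplicial set, with $\Delta[1]\otimes\Delta[1]$, arranged so that the edges $[00,01]$ and $[30,31]$ are the two ends $\{0\}\otimes\Delta[1]$ and $\{1\}\otimes\Delta[1]$ of the first tensor factor: the underlying simplicial sets agree because $A_4$ is the image of $[0,3]\times\Delta[1]\subseteq\Delta[3]\times\Delta[1]$ and the quotient defining $C$ does nothing to this piece beyond realising it as $\Delta[1]\times\Delta[1]$, and the markings agree by comparing the explicit description of the marking of $A_4$ with that of the Gray tensor square (under which exactly one of the two non-degenerate $2$-simplices is thin). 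Then associativity of the Gray tensor gives $X\otimes A_4\cong X\otimes\Delta[1]\otimes\Delta[1]$, carrying $X\otimes([00,01]\amalg[30,31])$ onto $X\otimes\partial\Delta[1]\otimes\Delta[1]$, so the pushout defining $\overline{X\otimes A_4}$ is literally the one defining $(\Sigma X)\otimes\Delta[1]$ above, up to the question of markings. For that I would use \cref{prop:R_commutes_with_gray_tensor} together with associativity of the Gray (pre)tensor and idempotence of the saturation functor $R$: the marking on $(\Sigma X)\otimes\Delta[1]$ is the saturation of the Gray product of the already-saturated marking of $\Sigma X$ with that of $\Delta[1]$, and these rearrange to the saturation of the image of the marking of $X\otimes\Delta[1]\otimes\Delta[1]$, which is exactly what $\overline{X\otimes A_4}$ carries.

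The identity $\overline{X\otimes A_1}=\Sigma(X\otimes\Delta[1])$ is proved in the same spirit but with heavier bookkeeping. One identifies $A_1$ inside $C$ so that $X\otimes A_1$ is the relevant copy of $X\otimes\Delta[1]\otimes\Delta[1]$ and so that the collapse built into $\overline{X\otimes B}$ — which replaces $X\otimes[00,01]$ and $X\otimes[30,31]$ by $[00,01]$ and $[30,31]$ — is precisely the collapse $X\otimes\Delta[1]\otimes\partial\Delta[1]\to\partial\Delta[1]$ occurring in the defining pushout of $\Sigma(X\otimes\Delta[1])$; the point is that, inside $C$, the two end faces of $A_1$ in this coordinate have already been made degenerate by the quotient, so that contracting the $X$-fibres over them has the effect of the suspension collapse. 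As before, the underlying simplicial sets match by inspection of the explicit descriptions, and the stratifications match by the same combination of \cref{prop:R_commutes_with_gray_tensor}, associativity of the Gray tensor, and idempotence of $R$.

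The hard part will be showing that the stratifications agree on the nose; everything else is a comparison of explicit colimit diagrams. I would carry this out not by enumerating thin simplices but by reorganising the nested saturations with the cocontinuity and associativity of the Gray tensor and the fact that saturation commutes with it. The most delicate case is $A_1$, where one must additionally check that the degeneracies forced by the quotient defining $C$ are exactly the ones produced by the suspension pushout.
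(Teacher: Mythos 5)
Your proposal is correct and is essentially the paper's own argument: the paper dismisses this lemma with ``this is a direct computation,'' and your unwinding --- identifying $A_4$ with $\Delta[1]\otimes\Delta[1]$ and $A_1$ with the suspension of $\Delta[1]$ sitting inside $C$, then using cocontinuity and associativity of the marked Gray tensor together with the saturation functor to match the defining pushouts and their markings --- is exactly that computation carried out. The only checks with real content (that the thin triangle of the Gray square is the one through the vertex $01$, matching the drawn markings of $A_1$ and $A_4$, and that the collapse already internal to $C$ followed by the collapse of the $X$-fibres in $\overline{X\otimes B}$ composes to the suspension collapse) both come out as you predict.
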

\begin{proof}
This is a direct computation.
\end{proof} 

\begin{prop}
\label{prop:interval_first_formula}
There is a zigzag of acyclic cofibrations: 
$$\Delta[1]\fwedge\Sigma X\coprod_{\Sigma X} \Sigma(X\otimes\Delta[1])\coprod_{\Sigma X}\Sigma X\fwedge\Delta[1] \leftrightsquigarrow (\Sigma X)\otimes \Delta[1].$$
\end{prop}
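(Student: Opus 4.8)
The strategy is to use the auxiliary marked simplicial sets built above --- the objects $A_0,\dots,A_4$, $B_0$, $B_1$, $B$ and their ``over/under'' versions $\overline{X\otimes A_i}$, $\overline{X\otimes B_i}$, $\overline{X\otimes B}$ --- as a scaffold interpolating between the pushout-corner object on the left and $(\Sigma X)\otimes\Delta[1]$ on the right. The three preceding lemmas do almost all the work: the last lemma of the previous block identifies $\overline{X\otimes A_1}=\Sigma(X\otimes\Delta[1])$ and $\overline{X\otimes A_4}=(\Sigma X)\otimes\Delta[1]$ by direct computation; the lemma before it identifies $\overline{X\otimes A_0}\to\Delta[1]\fwedge\Sigma X$ and $\overline{X\otimes A_2}\to\Sigma X\fwedge\Delta[1]$ as acyclic cofibrations; and the combinatorial lemmas show $A_0\cup A_1\cup A_2\to B$ and $A_4\to B$ are acyclic cofibrations. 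So what remains is to transport all of this through the functor $(\uvar)\mapsto\overline{X\otimes(\uvar)}$ and assemble the zigzag.

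\textbf{Key steps.} First I would observe that, since the Gray tensor product with $X$ is a left Quillen bifunctor (Proposition~\ref{prop:gray_product_is_a_left_Quillen_bifunctor}) and cofibrations are monomorphisms stable under the collapsing pushout defining $\overline{X\otimes(\uvar)}$, applying $X\otimes(\uvar)$ and then the pushout along $X\otimes([00,01]\amalg[30,31])\to[00,01]\amalg[30,31]$ sends the acyclic cofibrations $A_0\cup A_1\cup A_2\to B$ and $A_4\to B$ (which are between cofibrant objects) to acyclic cofibrations
$$\overline{X\otimes(A_0\cup A_1\cup A_2)}\to\overline{X\otimes B}\qquad\text{and}\qquad \overline{X\otimes A_4}\to\overline{X\otimes B}.$$
One must check that $\overline{X\otimes(\uvar)}$ takes such weak equivalences to weak equivalences; this follows because the defining pushout of $\overline{X\otimes(\uvar)}$ is a homotopy pushout (the top map is a cofibration) and left-Quillen-ness of the Gray tensor, exactly as in the constructions of $\diamond$, $\costar$ and $\fwedge$ earlier. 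Next, since $\overline{X\otimes(A_0\cup A_1\cup A_2)}$ is visibly the pushout $\overline{X\otimes A_0}\coprod_{\overline{X\otimes A_1}\cap\overline{X\otimes A_0}}\overline{X\otimes A_1}\coprod\cdots$ along the shared copies of $\Sigma X$ (the intersections $A_0\cap A_1$ and $A_1\cap A_2$ are exactly the middle edge $[00,11]$, i.e.\ a copy of $\Delta[1]$, whose $X$-tensor/collapse is $\Sigma X$), I would rewrite it as
$$\overline{X\otimes A_0}\coprod_{\Sigma X}\overline{X\otimes A_1}\coprod_{\Sigma X}\overline{X\otimes A_2},$$
where the two gluing maps $\Sigma X\to\overline{X\otimes A_0}$ and $\Sigma X\to\overline{X\otimes A_2}$ are identified with $\triangledown$ after transporting along the weak equivalences $\overline{X\otimes A_0}\xrightarrow{\sim}\Delta[1]\fwedge\Sigma X$ and $\overline{X\otimes A_2}\xrightarrow{\sim}\Sigma X\fwedge\Delta[1]$ (here one uses that the inclusion $[00,11]\hookrightarrow A_0$ is the map inducing $\triangledown$, compatibly with the identification of the last lemma). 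Finally, using $\overline{X\otimes A_1}=\Sigma(X\otimes\Delta[1])$ and $\overline{X\otimes A_4}=(\Sigma X)\otimes\Delta[1]$, the two acyclic cofibrations land as
$$\Delta[1]\fwedge\Sigma X\coprod_{\Sigma X}\Sigma(X\otimes\Delta[1])\coprod_{\Sigma X}\Sigma X\fwedge\Delta[1]\ \xrightarrow{\ \sim\ }\ \overline{X\otimes B}\ \xleftarrow{\ \sim\ }\ (\Sigma X)\otimes\Delta[1],$$
which is exactly the desired zigzag; naturality in $X$ is automatic since every object and map in the construction is natural in $X$.

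\textbf{Main obstacle.} The routine-but-delicate point is the marking bookkeeping: one must verify that $\overline{X\otimes A_0}$, $\overline{X\otimes A_2}$ really carry the markings of $\Delta[1]\fwedge\Sigma X$ and $\Sigma X\fwedge\Delta[1]$ (not just the same underlying simplicial sets), that the pushout presentations of $\overline{X\otimes(A_0\cup A_1\cup A_2)}$ respect markings, and that the gluing maps are honestly $\triangledown$ on the nose --- i.e.\ that the ``direct computation'' lemmas do identify the stratifications correctly. This is the same kind of verification the paper defers to the reader elsewhere, and the cleanest way to handle it is to note that all the objects involved are defined as colimits of representables-with-standard-markings, so it suffices to check the marking identifications on generating simplices, which is a finite check using the explicit descriptions of $B_0$, $B_1$ and the truncation-free nature of the collapse maps.
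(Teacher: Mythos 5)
Your proposal follows essentially the same route as the paper: tensor the acyclic cofibrations $A_0\cup A_1\cup A_2\to B$ and $A_4\to B$ with $X$, apply the collapsing pushout defining $\overline{X\otimes(\uvar)}$, and combine the preceding lemmas ($\overline{X\otimes A_0}\to\Delta[1]\fwedge\Sigma X$ and $\overline{X\otimes A_2}\to\Sigma X\fwedge\Delta[1]$ acyclic cofibrations, $\overline{X\otimes A_1}=\Sigma(X\otimes\Delta[1])$, $\overline{X\otimes A_4}=(\Sigma X)\otimes\Delta[1]$) to get the zigzag through $\overline{X\otimes B}$, which is exactly the paper's argument. One small inaccuracy worth fixing: the gluing locus $A_0\cap A_1$ (resp.\ $A_1\cap A_2$) is the edge $[10,11]$ (resp.\ $[20,21]$), not the thin diagonal $[00,11]$, though after collapsing it still yields the copy of $\Sigma X$ you need.
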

\begin{proof}
Acyclic cofibrations $X\otimes A_{012}\to X\otimes B$ and $X\otimes A_4\to X\otimes B$ induce acyclic cofibrations
$$\Delta[1]\fwedge\Sigma X\coprod \Sigma(X\otimes\Delta[1])\coprod \Sigma X\fwedge\Delta[1]\xrightarrow{\sim} \overline{X\otimes B}\xleftarrow{\sim} (\Sigma X)\otimes \Delta[1].$$
\end{proof}

\subsubsection*{Second formula}
\label{subsubsection:second_formula}
We now prove that there is a zigzag of acyclic cofibrations, natural in $X$, between the colimit of
$$\Sigma X\fwedge\Delta[1]\xleftarrow{\triangledown} \Sigma X \hookrightarrow\Sigma(\Delta[1]\otimes X) \hookleftarrow \Sigma X \xrightarrow{\triangledown} \Delta[1]\fwedge\Sigma X$$
and $\Delta[1]\otimes (\Sigma X)$.

\begin{construction}
We define several marked simplicial sets:\newline
$\mathbf{(\Delta[n]\times A_0,M_{X\times A_{0}})}$. 
The marking $M_{X\times A_0}$  is the reunion of  all simplices $(s^px,v)$ such that $[v_p,v_{p+1}] = [00,10]$, of  all simplices $(x,v)$ such that $v$ is a thin simplex of $[01,11]$ and $x$ is thin,   and of    $M'_{X\times A_{0}}$  which is  the marking of the following pushout:
\[\begin{tikzcd}
	{[0,1]\otimes \Delta[n]\otimes \{0\}} & {[0,1]\otimes \Delta[n]\otimes[0,1]} \\
	{X\otimes \{0\}} & {(X\times A_0,M'_{A_{0}}).}
	\arrow[from=1-1, to=2-1]
	\arrow[from=2-1, to=2-2]
	\arrow[from=1-2, to=2-2]
	\arrow[""{name=0, anchor=center, inner sep=0}, from=1-1, to=1-2]
	\arrow["\lrcorner"{anchor=center, pos=0.125, rotate=180}, draw=none, from=2-2, to=0]
\end{tikzcd}\]

$\mathbf{( X\times A_1,M_{X\times A_{1}})}$. The marking $M_{X\times A_1}$ is obtained as the colimit:
\[\begin{tikzcd}
	{[12]\otimes  X\otimes (\{0\}\coprod\{1\})} & {[1,2]\otimes  X\otimes[0,1]} \\
	{ X\otimes( \{0\}\coprod\{1\})} & {( X\times A_1,M_{X\times A_{1}}).}
	\arrow[""{name=0, anchor=center, inner sep=0}, from=1-1, to=1-2]
	\arrow[from=1-2, to=2-2]
	\arrow[from=1-1, to=2-1]
	\arrow[from=2-1, to=2-2]
	\arrow["\lrcorner"{anchor=center, pos=0.125, rotate=180}, draw=none, from=2-2, to=0]
\end{tikzcd}\]

$\mathbf{( X\times A_2,M_{X\times A_{2}})}$.
The marking $M_{X\times A_2}$  is the reunion of  all simplices $(x,v)$ such that $v$ is a thin simplex of $[20,30]$ and $x$ is thin, and of   $M'_{X\times A_{2}}$  which is the marking of the following pushout:
\[\begin{tikzcd}
	{[2,3]\otimes  X\otimes \{1\}} & {[2,3]\otimes X\otimes [0,1]} \\
	{ X\otimes \{1\}} & {( X\times A_2,M'_{X\times A_{2}}).}
	\arrow[from=1-1, to=2-1]
	\arrow[from=2-1, to=2-2]
	\arrow[from=1-2, to=2-2]
	\arrow[""{name=0, anchor=center, inner sep=0}, from=1-1, to=1-2]
	\arrow["\lrcorner"{anchor=center, pos=0.125, rotate=180}, draw=none, from=2-2, to=0]
\end{tikzcd}\]

$\mathbf{( X\times A_3,M_{X\times A_{3}})}$. 
 The marking $M_{X\times A_3}$ is the reunion of  all simplices $(x,v)$ such that $v$ is a thin simplex of $[01,21]$ and $x$ is thin, and  $M_{X\times A_{3}}$ which
 is the marking of the following pushout:
\[\begin{tikzcd}
	{[0,2]\otimes X\otimes \{0\}} & {[0,2]\otimes X\otimes[0,1]} \\
	{X\otimes \{0\}} & {(X\times A_3,M'_{X\times A_{3}}).}
	\arrow[from=1-1, to=2-1]
	\arrow[""{name=0, anchor=center, inner sep=0}, from=1-1, to=1-2]
	\arrow[from=2-1, to=2-2]
	\arrow[from=1-2, to=2-2]
	\arrow["\lrcorner"{anchor=center, pos=0.125, rotate=180}, draw=none, from=2-2, to=0]
\end{tikzcd}\]

$\mathbf{( X\times A_4,M_{X\times A_{4}})}$. 
The marking $M_{X\times A_4}$  is the reunion of the marking  of  $[0,3]\otimes  X\otimes [0,1]$, of  all simplices $(x,v)$ such that $v$ is a thin simplex of $[00,30]$ and $x$ is thin, and all simplices $(x,v)$ such that $v$ is a thin simplex of $[01,31]$ and $x$ is thin, .
\newline

$\mathbf{( X\times B_0,M_{X\times B_{0}})}$.
The marking $M_{X\times B_0}$  is the reunion of all simplices $(s^px,v)$ such that $[v_p,v_{p+1}] = [00,10]$,  of  all simplices $(x,v)$ such that $v$ is a thin simplex of $[01,11]$ and $x$ is thin,   and of   $M'_{X\times B_{0}}$  which is the marking of the following pushout: 
\[\begin{tikzcd}
	{[0,1,2]\otimes X\otimes ([0] \coprod [1])} & {[0,1,2]_t\otimes X\otimes[0,1]} \\
	{[20]\coprod[01,11]} & {(X\times B_0,M'_{B_{0}}).}
	\arrow[from=1-1, to=2-1]
	\arrow[""{name=0, anchor=center, inner sep=0}, from=1-1, to=1-2]
	\arrow[from=2-1, to=2-2]
	\arrow[from=1-2, to=2-2]
	\arrow["\lrcorner"{anchor=center, pos=0.125, rotate=180}, draw=none, from=2-2, to=0]
\end{tikzcd}\]

$\mathbf{( X\times B_1,M_{X\times B_{1}})}$.
The marking $M_{X\times B_1}$  is the reunion of   all simplices $(x,v)$ such that $v$ is a thin simplex of $[00,30]$ and $x$ is thin, and all simplices $(x,v)$ such that $v$ is a thin simplex of $[01,31]$ and $x$ is thin, and of $M'_{X\times B_1}$ which is the marking of the following pushout: 
\[\begin{tikzcd}
	{[0,2,3]\otimes X\otimes [1]} & {[0,2,3]_t\otimes X\otimes[0,1]} \\
	{[21,31]} & {(X\times B_1,M'_{B_{1}}).}
	\arrow[from=1-1, to=2-1]
	\arrow[""{name=0, anchor=center, inner sep=0}, from=1-1, to=1-2]
	\arrow[from=2-1, to=2-2]
	\arrow[from=1-2, to=2-2]
	\arrow["\lrcorner"{anchor=center, pos=0.125, rotate=180}, draw=none, from=2-2, to=0]
\end{tikzcd}\]

$\mathbf{( X\times B,M_{X\times B})}$.
The marking $M_{X\times B}$ is the reunion of  $M_{B_0}$ and $M_{B_1}$.
\end{construction}

\begin{lemma}
The morphism $( X\times A_{01},M_{X\times A_{01}})\to ( X\times B_0,M_{X\times B_0})$ is an acyclic cofibration.
\end{lemma}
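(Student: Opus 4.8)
The plan is to exhibit $(X\times A_{01},M_{X\times A_{01}})\to(X\times B_0,M_{X\times B_0})$ as a pushout of an acyclic cofibration manufactured from the inner complicial horn inclusion $\Lambda^1[2]\to\Delta^1[2]$ (recall that $\Delta^1[2]=\Delta[2]_t$) by two applications of \cref{prop:gray_product_is_a_left_Quillen_bifunctor}, in the spirit of the treatment of the analogous map in the first formula. First I would unwind the definitions at the level of underlying simplicial sets: writing the $\Delta[3]$-factor of $C$ on $\{0,1,2,3\}$, the underlying simplicial set of $A_0$ (resp.\ $A_1$) is the image in $C$ of $[0,1]\otimes X\otimes\Delta[1]$ (resp.\ $[1,2]\otimes X\otimes\Delta[1]$), so that of $A_0\cup A_1$ is the image of $\Lambda^1[2]\otimes X\otimes\Delta[1]$, while that of $B_0$ is the image of $[0,1,2]_t\otimes X\otimes\Delta[1]=\Delta^1[2]\otimes X\otimes\Delta[1]$; the map in question is the one induced by the inclusion $\Lambda^1[2]\hookrightarrow\Delta^1[2]$.

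Next I would match the markings. Comparing the ``reunion'' descriptions of $M_{X\times A_0}$, $M_{X\times A_1}$ and $M_{X\times B_0}$ with the marking of the Gray tensor product, one checks simplex by simplex that $(X\times B_0,M_{X\times B_0})$ is the pushout in $\mSset$ of $\Delta^1[2]\otimes X\otimes\Delta[1]$ (with its Gray-tensor marking) along the collapsing map $[0,1,2]\otimes X\otimes\partial\Delta[1]\to[20]\amalg[01,11]$ coming from the quotient defining $C$, and that $(X\times A_{01},M_{X\times A_{01}})$ is the pushout of the same collapsing map along the inclusion $[0,1,2]\otimes X\otimes\partial\Delta[1]\hookrightarrow D$, where $D:=(\Lambda^1[2]\otimes X\otimes\Delta[1])\cup_{\Lambda^1[2]\otimes X\otimes\partial\Delta[1]}(\Delta^1[2]\otimes X\otimes\partial\Delta[1])$. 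These two pushout squares being compatible, $A_{01}\to B_0$ is identified with the pushout of $D\hookrightarrow\Delta^1[2]\otimes X\otimes\Delta[1]$ along $D\to(X\times A_{01})$.

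It then suffices to see that $D\hookrightarrow\Delta^1[2]\otimes X\otimes\Delta[1]$ is an acyclic cofibration. It is the pushout-product, for the Gray tensor product, of the map $\Lambda^1[2]\otimes X\to\Delta^1[2]\otimes X$ with the cofibration $\partial\Delta[1]\to\Delta[1]$; and $\Lambda^1[2]\otimes X\to\Delta^1[2]\otimes X$ is itself the pushout-product of the complicial horn inclusion $\Lambda^1[2]\to\Delta^1[2]$ with the cofibration $\emptyset\to X$. Since $\Lambda^1[2]\to\Delta^1[2]$ is an elementary anodyne extension, two applications of \cref{prop:gray_product_is_a_left_Quillen_bifunctor} show that $D\hookrightarrow\Delta^1[2]\otimes X\otimes\Delta[1]$, and hence its pushout $A_{01}\to B_0$, is an acyclic cofibration.

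The main obstacle is the marking identification of the second step: reconciling the explicit markings $M_{X\times A_0}$, $M_{X\times A_1}$, $M_{X\times B_0}$ with the markings induced on the respective pushouts from the Gray-tensor markings of $\Delta[2]_t$ (resp.\ $\Lambda^1[2]$), $X$ and $\Delta[1]$, and checking that the two pushout squares assemble into one. The ``extra'' families of thin simplices listed in those definitions — those degenerate along the edge $[00,10]$, and those of the form $(x,v)$ with $v$ thin in $[01,11]$ and $x$ thin — are among the simplices already thin in $\Delta[2]_t\otimes X\otimes\Delta[1]$ or rendered degenerate by the collapsing quotient, so this should go through, but it is the only step that is not formal. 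If convenient, one may first reduce to $X=\Delta[n]$ representable, using the cocontinuity of the Gray tensor product on $\mSset$; the same template will then also cover the companion lemmas of this subsection.
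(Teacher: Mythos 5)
Your formal skeleton coincides with the paper's: the map is obtained by pushing out the Gray pushout--product of the complicial horn inclusion $\Lambda^1[2]\to\Delta^1[2]$ with $\emptyset\to X$ and $\partial\Delta[1]\to\Delta[1]$, which is an acyclic cofibration by \cref{prop:gray_product_is_a_left_Quillen_bifunctor}. The gap is exactly in the step you flag as ``should go through''. What that pushout produces on the underlying object $X\times B_0$ is the marking $\overline{M'_{X\times B_0}\cup M_{X\times A_{01}}}$, and the whole content of the lemma is that this saturation contains the two extra families decreed in the definition of $M_{X\times B_0}$. Your justification --- that these simplices are already thin in $\Delta[2]_t\otimes X\otimes\Delta[1]$ or are rendered degenerate by the collapsing quotient --- is false. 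Take $x'$ a non-degenerate, non-thin $1$-simplex of $X$ and consider the $2$-simplex with coordinates $\bigl((0,1,2),\,s^0x',\,(0,0,1)\bigr)$, i.e.\ $(s^0x',v)$ with $v=[00,10,21]$. It belongs to the first extra family (since $v_{[0,1]}=[00,10]$); it is not thin in the Gray tensor product, because for the partition $(1,1)$ the front face in the $\Delta[2]_t$-factor is the non-thin edge $(0,1)$ while the back face in the remaining factors is $(x',\,0\to 1)$, which is not thin in $X\otimes\Delta[1]$; its $\Delta[1]$-coordinate is not constant, so it does not lie in the collapsed subobject and its image in the quotient is non-degenerate; and it does not lie in $X\times A_{01}$, since $[00,10,21]$ is contained in neither square. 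So its thinness has to be produced inside the saturation, and this is precisely the non-formal second half of the paper's proof: thinness is transported from simplices marked in $M'_{X\times A_1}$ (marked there because of the $A_1$-collapse) to the decreed simplices, through the inclusions $\Delta^1[3]\oslash X\to X\times B_0$ along $[00,10,20,21]$ and $\Delta^1[2]\oslash X\to X\times B_0$, using \cref{lemma:oslash_saturation_one_way}. Without an argument of this kind your proof does not establish the lemma.

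A secondary point: the suggested reduction to representable $X$ does not rescue this step. The problematic thin simplices involve an arbitrary non-thin simplex of $X$ (for $X=\Delta[n]$ the clauses ``$x$ thin'' only see degeneracies), the assignment $X\mapsto(X\times B_0,M_{X\times B_0})$ is not obviously cocontinuous because of the saturation, and acyclicity of a natural cofibration is not detected on representables without an additional saturation argument; so the marking comparison has to be carried out for general $X$, as in the paper.
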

\begin{proof}
We have a pushout product:
\[\begin{tikzcd}
	{\Lambda^{1}[2]\otimes \Delta[1]\cup\Delta[2]_t\otimes \partial\Delta[1]} & {(X\times A_{12},M_{X\times A_{12}})} \\
	{\Delta[2]_t\otimes \Delta[1]} & {(X\times B_0,\overline{M'_{X\times B_{0}}\cup M_{X\times A_{12}}}).}
	\arrow[""{name=0, anchor=center, inner sep=0}, from=1-1, to=1-2]
	\arrow["{[012]\times[01]}"', from=2-1, to=2-2]
	\arrow[from=1-1, to=2-1]
	\arrow[from=1-2, to=2-2]
	\arrow["\lrcorner"{anchor=center, pos=0.125, rotate=180}, draw=none, from=2-2, to=0]
\end{tikzcd}\]
We then have to show that $${M_{X\times B_0}}\subset\overline{M'_{X\times B_0}\cup M_{X\times A_{12}}}.$$
Let $(v,s^pv)$ be a $n$-simplex such that $v_{[p,p+1]}=[00,10]$.  There are inclusions: 
\[\begin{tikzcd}
	{\Delta[3]\times X} & {(X\times B_0,\overline{M'_{X\times B_{0}}\cup M_{X\times A_{12}}})} & {\Delta[2]\times X} \\
	{\Delta^1[3]\oslash X} && {\Delta^1[2]\oslash X}
	\arrow["{[00,10,20,21]\times id}"', from=2-1, to=1-2]
	\arrow[from=1-1, to=1-2]
	\arrow[from=1-1, to=2-1]
	\arrow["{[00,11,21]\times id}", from=2-3, to=1-2]
	\arrow[from=1-3, to=1-2]
	\arrow[from=1-3, to=2-3]
\end{tikzcd}\]

According to lemma \ref{lemma:oslash_saturation_one_way}, the fact that  $(v[20/10,21/11],x)$ is in $M'_{X\times A_1}$ implies that $(v,x)$ is in 
$\overline{M'_{X\times B_0}\cup M_{X\times A_{12}}}$.
This implies the desired inclusion.
\end{proof}
\begin{lemma}
The maps  $( X\times A_3,M_{X\times A_3})\to ( X\times B_0,M_{X\times B_0})$ is an acyclic cofibration.
\end{lemma}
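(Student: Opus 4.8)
The plan is to mirror, in the $X$-fattened setting, the proof that $A_3 \to B_0$ is an acyclic cofibration from the general combinatorial lemma used for the first formula, with the role of ordinary horn inclusions now played by the operations $\Lambda^k[m]\oslash X \to \Delta^k[m]\oslash X$ of Appendix~B. Concretely, I would exhibit the cofibration as a finite composite
$$(X\times A_3, M_{X\times A_3}) = (E_0, N_0) \hookrightarrow (E_1, N_1) \hookrightarrow \cdots \hookrightarrow (E_6, N_6) = (X\times B_0, M_{X\times B_0}),$$
whose underlying simplicial sets grow exactly as the chain $D_0 \subset \cdots \subset D_6$ in that argument — successively adjoining the $X$-fattened images of $[00,01,11]$, $[00,10,11]$, $[00,10,21]$, $[00,01,11,21]$, $[00,10,11,21]$, $[00,10,20,21]$ — and where each $N_i$ is taken to be the marking produced by the corresponding pushout, so that $(E_{i-1},N_{i-1}) \hookrightarrow (E_i, N_i)$ is literally a pushout of one of the maps $\Lambda^k[m]\oslash X \to \Delta^k[m]\oslash X$ with $(k,m)$ running through $(1,2),(0,2),(0,2),(1,3),(0,3),(0,3)$, i.e.\ the horn types appearing in the first-formula proof. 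By \cref{prop:martina} (equivalently \cref{lemma:oslash}) each of these is an acyclic cofibration, and acyclic cofibrations are closed under pushout and composition, which yields the statement modulo the identification $N_6 = M_{X\times B_0}$.

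The substance of the argument is that last identification of markings. First I would check that $N_0 = M_{X\times A_3}$, i.e.\ that starting from the union of $M'_{X\times A_3}$ with the simplices $(x,v)$ where $v$ is thin in $[01,21]$ and $x$ is thin is the correct initial datum, and that at every stage $N_i \subset M_{X\times B_0}$, so that no spurious thin simplex is introduced. The only genuinely new work is to absorb the two extra families in the definition of $M_{X\times B_0}$, namely the degenerate simplices $(s^p x, v)$ with $[v_p, v_{p+1}] = [00,10]$ and the simplices $(x,v)$ with $v$ thin in $[01,11]$ and $x$ thin: one shows that once the horns $\Lambda^0[2]$ and $\Lambda^0[3]$ of the filtration have been filled, each such simplex becomes thin by a saturation argument. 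As in the proof of the preceding lemma, this is where \cref{lemma:oslash_saturation_one_way} is applied — along the inclusions $[00,10,20,21]\times\mathrm{id}$ and $[00,11,21]\times\mathrm{id}$ into the appropriate stage $(E_i,N_i)$ — and, where a two-sided version is needed, \cref{lemma:oslash_saturation_two way}.

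The main obstacle is precisely this marking bookkeeping: one must verify simultaneously that every thin simplex of $M_{X\times B_0}$ is accounted for (either already present in $M_{X\times A_3}$, created by one of the $\oslash$-horn fillings, or forced by a saturation lemma) and that nothing outside $M_{X\times B_0}$ ever gets marked. The underlying combinatorics — the decomposition of the prism $[0,1,2]\times[0,1]$ and the matching of horn types — is identical to the first-formula computation and can be cited from there; only the interaction of the degeneracy-indexed part of the marking with the $\oslash$-saturation lemmas requires genuine care.
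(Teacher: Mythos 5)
Your skeleton does match the paper's: the same six-step filtration of the underlying simplicial sets, each step a pushout of a fattened horn inclusion, followed by an identification of the resulting marking with $M_{X\times B_0}$. The gap is in the second phase, which you describe as ``absorbing two extra families'' with the rest ``citable from the first-formula computation''. The source marking $M_{X\times A_3}$ only records the Gray-tensor marking of $[0,2]\otimes X\otimes[0,1]$ (plus the thin-$[01,21]$ family), whereas $M_{X\times B_0}$ contains the entire Gray-tensor marking $M'_{X\times B_0}$ of $[0,1,2]_t\otimes X\otimes[0,1]$ with ends collapsed. That containment cannot be imported from the first-formula computation: there the objects $\overline{X\otimes A_i}$, $\overline{X\otimes B_i}$ were honest Gray tensor products and the markings came along automatically, while here the filtration only creates the lean $\oslash$/$\invoslash$ markings, so proving $M_{X\times B_0}\subset M_6$ is the bulk of the proof rather than a residual check. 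Moreover, the two families you single out (the degenerate $(s^px,v)$ with $[v_p,v_{p+1}]=[00,10]$, and thin $[01,11]$ paired with thin $x$) are essentially the easy part — the first is supplied directly by the markings of the attached cells — and the preceding lemma's mechanism (lemma \ref{lemma:oslash_saturation_one_way} applied along $[00,10,20,21]\times id$ and $[00,11,21]\times id$) does not transfer, because for $A_{01}\to B_0$ the source already contained the relevant Gray-tensor pieces, which is exactly what fails for $A_3$.

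The missing idea is the substitution-and-propagation argument: first show that the degenerate simplices over the edges $[00,10]$ and $[11,21]$ lie in $M_6$, so that lemma \ref{lemma:oslash_saturation_two way_case_extremum} makes membership in $M_6$ invariant under the vertex substitutions $00\leftrightarrow 10$ and $11\leftrightarrow 21$, and use lemma \ref{lemma:oslash_saturation_one_way_variation} to handle the substitution $01\mapsto 11$; then, for an arbitrary $(v,x)\in M_{X\times B_0}$, observe that $(v[10/00][11/21],x)$ lies in $X\times A_3$ and in $M_{X\times A_3}$, hence in $M_6$, and walk back to $(v,x)$ using lemma \ref{lemma:oslash_saturation_two way}. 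Without this step the inclusion $M_{X\times B_0}\subset M_6$ is simply not established. Relatedly, your uniform use of $\Lambda^k[m]\oslash X\to\Delta^k[m]\oslash X$ is not innocuous: the paper attaches several of the cells along $\invoslash$-decorated horns (and uses a $\Delta^1[2]\invoslash X$ step where the unfattened proof had $\Delta^0[2]$) precisely so that the degeneracy-indexed markings created point towards $[00,10]$ and $[11,21]$ as required above, while keeping $M_6$ inside $M_{X\times B_0}$; with all-$\oslash$ attachments this bookkeeping would have to be redone and may fail.
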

\begin{proof}
The cofibration $(X\times A_3,M_{X\times A_3})\to (X\times B_0,M_{X\times B_0})$ is a sequence of inclusions:
$$
\begin{array}{l}
(X\times A_3,M_{X\times A_3}):=(D_0,M_0)\subset (D_1,M_1)\subset (D_2,M_2)\subset(D_3,M_3)\\
~~~~~~~~~~~~~~~~~~~~~~~~~~~~~\subset(D_4,M_4)\subset (D_5,M_5)\subset (D_6,M_6)\to(X\times B_0,M_{X\times B_0})
\end{array}$$ where 

\begin{itemize}[leftmargin=* ,parsep=0cm,itemsep=0cm,topsep=0cm]
\item $D_1 = D_0\cup X\times [00, {01},11]$ ;
\item $D_2 = D_1\cup X\times [ {00},10,11]$ ;
\item $D_2 = D_1\cup X\times [ {00},10,21]$ ;
\item $D_4 = D_3\cup X\times [00, {01},11,21]$; 
\item $D_5 = D_4\cup X\times [ {00},10,11,21]$; 
\item $D_6 = D_5\cup X\times [ {00},10,20,21]$; 
\end{itemize} and
\begin{itemize}[leftmargin=* ,parsep=0cm,itemsep=0cm,topsep=0cm]
\item $(D_0,M_0)\to (D_1,M_1)$ is a pushout of $\Lambda^1[2]\oslash X\to \Delta^1[2]\oslash X$;
\item $(D_1,M_1)\to (D_2,M_2)$ is a pushout of $\Lambda^0[2]\invoslash X\to \Delta^0[2]\oslash X$;
\item $(D_2,M_2)\to (D_3,M_3)$ is a pushout of $\Lambda^1[2] \invoslash X\to \Delta^1[2]\invoslash X$;
\item $(D_3,M_3)\to (D_4,M_4)$ is a pushout of $\Lambda^1[3]\oslash X\to \Delta^1[3]\oslash X$;
\item $(D_4,M_4)\to (D_5,M_5)$ is a pushout of $\Lambda^0[3]\invoslash X\to \Delta^0[3]\invoslash X$;
\item $(D_5,M_5)\to (D_6,M_6)$ is a pushout of $\Lambda^0[3]\invoslash X\to \Delta^0[3]\invoslash X$;
\end{itemize}

Now, we have to show that $M_{X\times B_0}\subset M_6$.
First we notice that all simplices $(v,s^px)$ with $v_{[p,p+1]}= [00,10]$ are in $M_6$. According to \ref{lemma:oslash_saturation_two way_case_extremum}, a simplex $(v,x)$ is in $M_6$ if and only if $(v[00/10],x)$ is in $M_6$, if and only if  $(v[10/00],x)$ is  in $M_6$.

The marking $M_4\cup M_6$ and \ref{lemma:oslash_saturation_one_way_variation} imply that a simplex $(x,v)$ such that $v$ includes $00$, $11$ and $21$, is in $M_6$, whenever $(x,v[11/01])$ is in $M_6$. 

Eventually let's see by case disjunction that every simplex $(v,s^{p-1}x)$ where $[v_{p-1},v_p]=[11,21]$   is in $M_6$. 
\textbf{Case $v_0\in\{01,11\}$.} The simplex $(v,s^{p-1}x)$ is $M_{X\times A_3}\subset M_6$. \textbf{Case $v_0=00$ and $10\notin v$.} The simplex  $(v[11/01],s^{p-1}x)$ is in $M_{X\times A_3}$. According to the last paragraph, the simplex $(v,s^{p-1}x)$ is then in $M_6$. \textbf{Case $v_0=00$ and $10\in v$.} The last case implies that $(v[10/00],s^{p-1}x)$ is in $M_6$. According to the first paragraph, this implies that $(v,s^{p-1}x)$ is in $M_6$. 
 According to lemma \ref{lemma:oslash_saturation_two way_case_extremum} a simplex $(v,x)$ is in $M_6$ thin if and only if $(v[21/11],x)$ is in $M_6$, if and only if  $(v[11/21],x)$ is in $M_6$.

If $v$ doesn't include $00$, $10$ or $01$, it is in $A_3$, where it is thin. If $v$ doesn't include $10$, the last paragraph implies that $(v,s^{p-1}x)$  is in $D_6$ whenever $(v[11/01],s^{p-1}x)$ is thin. But this second simplex is in $A_3$, where it is thin. If it includes $10$, then  $(v,s^{p-1}x)$ is thin if and only if $(v[10/00],s^{p-1}x)$ is thin, and so we come back to the last case. 

Now let $(v,x)$ be in $M_{B_0}$. Then $(v[10/00][11/21],x)$ is in $M_{A_3}$, and so in $M_6$. The two last paragraphs and \ref{lemma:oslash_saturation_two way} then imply that $(v,x)$ is in 
$D_6$.
\end{proof}

\begin{lemma}
The morphism 
$( X\times A_{23},M_{X\times A_{23}}) \to ( X\times B_1,M_{X\times B_1})$  is an acyclic cofibration.
\end{lemma}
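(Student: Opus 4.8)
The plan is to imitate, with the two ends of the interval direction interchanged, the proof that $(X\times A_{01},M_{X\times A_{01}})\to(X\times B_0,M_{X\times B_0})$ is an acyclic cofibration; that proof is the $X$-enriched avatar of the $X$-free statement that $A_2\cup A_3\to B_1$ is an acyclic cofibration, and the present statement is in turn its $X$-enriched avatar. Concretely, I would first exhibit a pushout square in which $(X\times A_{23},M_{X\times A_{23}})\to(X\times B_1,M')$ — for an auxiliary marking $M'$ on $X\times B_1$ containing $M_{X\times A_{23}}$ — is realised as the pushout, along the obvious structure map, of the ($X$-enriched) pushout product
\[
\Lambda^1[2]\otimes X\otimes\Delta[1]\cup\Delta[2]'\otimes X\otimes\partial\Delta[1]\longrightarrow\Delta[2]'\otimes X\otimes\Delta[1],
\]
the bottom horizontal map being induced by $[023]\times[01]$. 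Here $X$ occupies the middle Gray-tensor slot, as in the colimits defining $M'_{X\times B_1}$ and the $M_{X\times A_i}$, and $\Delta[2]_t=\Delta^1[2]$ is replaced by $\Delta[2]'=(\Delta^1[2])'$, precisely as in the $X$-free lemma for $B_1$.

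From this square, $(X\times A_{23},M_{X\times A_{23}})\to(X\times B_1,M')$ is an acyclic cofibration, by the same reasoning as in the $X$-free lemma for $B_1$: it is a cofibration, being a pushout of one, and the collapse $[023]\times[01]$ identifies the two triangular ends of the prism in such a way that — together with the Gray tensor being a left Quillen bifunctor (\ref{prop:gray_product_is_a_left_Quillen_bifunctor}) and $\Lambda^1[2]\to\Delta^1[2]$ being a complicial horn inclusion — the pushout becomes a weak equivalence.

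It then remains to check $M_{X\times B_1}\subset M'$, so that $M'=M_{X\times B_1}$. As in the $B_0$ case this is a short case distinction, organised by which of the distinguished edges $[00,30]$, $[01,31]$ and non-thin $2$-cells $[00,20,21]$, $[00,30,31]$, $[00,20,31]$ of $B_1$ the image of a given simplex of $X\times B_1$ meets: using the notation of appendix~$B$, such a simplex lies, after replacing a marked edge by a thin one, inside $\Delta^1[3]\oslash X$ or $\Delta^1[3]\invoslash X$, whence its membership in $M'$ follows from \ref{lemma:oslash_saturation_one_way}, respectively \ref{lemma:oslash_saturation_two way}. I expect the main obstacle to be the combinatorial bookkeeping — pinning down the pushout square so that it matches the explicit description of $M_{X\times B_1}$, and selecting, for each family of simplices, the correct $\oslash$- or $\invoslash$-saturation lemma; once those identifications are made, the conclusion is formal.
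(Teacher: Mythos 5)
Your proposal follows the paper's proof: the paper's entire argument for this lemma is exactly the pushout square along $[023]\times[01]$ of the Leibniz Gray tensor of $\Lambda^1[2]\to\Delta[2]'$ with $\partial\Delta[1]\to\Delta[1]$ (with $X$ in the middle slot, as you make explicit), its acyclicity being justified by the same considerations you invoke. The extra containment check $M_{X\times B_1}\subset M'$ that you plan, modelled on the $B_0$ case with the $\oslash$-saturation lemmas, is not carried out in the paper for this particular lemma — the pushout is asserted to yield $M_{X\times B_1}$ on the nose — so your sketch is, if anything, slightly more cautious than the printed proof.
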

\begin{proof}
The cofibration $( X\times A_{23},M_{X\times A_{23}}) \to ( X\times B_1,M_{X\times B_1})$ fits in the pushout square:
\[\begin{tikzcd}
	{\Lambda^{1}[2]\otimes \Delta[1]\cup\Delta[2]'\otimes \partial\Delta[1]} & {( X\times A_{23},M_{X\times A_{23}})} \\
	{\Delta[2]'\otimes \Delta[1]} & {( X\times B_1,M_{X\times B_1}).}
	\arrow[from=1-1, to=1-2]
	\arrow["{[023]\times[01]}"', from=2-1, to=2-2]
	\arrow[from=1-1, to=2-1]
	\arrow[from=1-2, to=2-2]
\end{tikzcd}\]
\end{proof}

\begin{lemma}
The morphism 
$( X\times A_{4},M_{X\times A_{4}}) \to ( X\times B_1,M_{X\times B_1})$  is an acyclic cofibration.
\end{lemma}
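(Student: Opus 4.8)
The plan is to reproduce, on the mirror side, the argument used for the companion lemma asserting that $(X\times A_3,M_{X\times A_3})\to(X\times B_0,M_{X\times B_0})$ is an acyclic cofibration. First I would exhibit $(X\times A_4,M_{X\times A_4})\to(X\times B_1,M_{X\times B_1})$ as the composite of a finite filtration
$$(X\times A_4,M_{X\times A_4})=:(D_0,M_0)\subset(D_1,M_1)\subset\cdots\subset(D_6,M_6)\to(X\times B_1,M_{X\times B_1}),$$
where the underlying simplicial sets $D_i$ grow exactly as in the non-parametrised lemma for $A_4\to B_1$: one adjoins successively the $2$-simplices $[00,21,31]$, $[20,30,31]$, $[20,21,31]$, and then the $3$-simplices $[00,01,21,31]$, $[00,20,30,31]$, $[00,20,21,31]$. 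The point is that each inclusion $(D_i,M_i)\to(D_{i+1},M_{i+1})$ is a pushout of an $\oslash$- or $\invoslash$-version $\Lambda^{k}[n]\oslash X\to\Delta^{k}[n]\oslash X$ (respectively with $\invoslash$) of the corresponding complicial horn inclusion, the index $k$ and the choice between $\oslash$ and $\invoslash$ being forced by which faces of the newly adjoined simplex are required to be thin in $M_{X\times B_1}$ — precisely as in the $A_3\to B_0$ argument. By \ref{lemma:oslash} and its variants each such pushout is an acyclic cofibration, hence so is $(D_0,M_0)\to(D_6,M_6)$.

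It then remains to prove the marking inclusion $M_{X\times B_1}\subset M_6$, the reverse inclusion being immediate. As for $A_3\to B_0$, I would first dispatch the cheap cases: the degenerate simplices $(v,s^{p}x)$ whose distinguished edge $[v_p,v_{p+1}]$ is one of the relevant edges lie in $M_6$ by construction, and \ref{lemma:oslash_saturation_two way_case_extremum} then allows one to freely replace, on such an edge, a vertex by its partner (e.g.\ $00$ by $30$, or $11$ by $21$). Next, a case disjunction on the position of $v_0$ — an analogue of the trichotomy $v_0\in\{01,11\}$ / $v_0=00$ and $10\notin v$ / $v_0=00$ and $10\in v$ employed there, reflected to the vertices of $[0,2,3]\times[0,1]$ — reduces every thin simplex of $M_{X\times B_1}$ to one already known to lie in $M_6$, transporting thinness across the adjoined simplices by \ref{lemma:oslash_saturation_one_way}, \ref{lemma:oslash_saturation_one_way_variation}, and finally \ref{lemma:oslash_saturation_two way}. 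This yields $M_{X\times B_1}\subset M_6$ and concludes the proof.

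The main obstacle is exactly this last bookkeeping. One cannot simply invoke a formal duality with the $A_3\to B_0$ lemma, because the markings $M_{X\times B_0}$ and $M_{X\times B_1}$ are built from genuinely different choices of thin faces on $\Delta[2]$ and $\Delta[3]$; consequently the precise filtration, the $\oslash$-versus-$\invoslash$ alternative at each stage, and the exact sequence of saturation lemmas in the case analysis all have to be reconstructed by hand for this side. Once those choices are pinned down, however, the verification is routine, just as in the companion lemma.
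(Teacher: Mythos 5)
Your first half is exactly the paper's argument: the paper filters $(X\times A_4,M_{X\times A_4})\to(X\times B_1,M_{X\times B_1})$ through $(D_0,M_0)\subset\cdots\subset(D_6,M_6)$ by adjoining precisely the six simplices you list, in that order, each step being a pushout of a decorated horn (in the paper: $X\invoslash\Lambda^2[2]\to X\invoslash\Delta^2[2]$, $X\invoslash\Lambda^1[2]\to X\invoslash\Delta^1[2]$, $X\invoslash\Lambda^2[2]\to X\invoslash\Delta^2[2]$, $X\invoslash\Lambda^3[3]\to X\invoslash\Delta^3[3]$, $X\oslash\Lambda^2[3]\to X\oslash\Delta^2[3]$, $X\invoslash\Lambda^3[3]\to X\invoslash\Delta^3[3]$), so up to that point you and the paper coincide, and your remark that the indices are read off the unparametrised $A_4\to B_1$ lemma is accurate.

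The gap is the second half, which you explicitly defer as ``bookkeeping'' to be ``reconstructed by hand'': that verification, $M_{X\times B_1}\subset M_6$, is the actual content of the lemma, and the route you sketch is not the one that works. There is no trichotomy on $v_0$ mirroring the $A_3\to B_0$ case (note also that your sample swaps are off: $11$ is not even a vertex of $B_1$, whose vertices are $00,01,20,21,30,31$, and $00\leftrightarrow 30$ is not one of the substitutions used). The paper's argument is shorter and differently structured: (i) every simplex $(v,s^px)$ with $v_{[p,p+1]}=[21,31]$ lies in $M_6$, so by \ref{lemma:oslash_saturation_two way_case_extremum} membership in $M_6$ is invariant under swapping $21$ and $31$; (ii) every simplex degenerate along $[20,30]$ lies in $M_6$, by splitting on whether $31\in v$ (using $M_4$ in one case and $M_5$ in the other), so the same lemma gives invariance under swapping $20$ and $30$; (iii) for any $(v,x)\in M_{X\times B_1}$, the substituted simplex $(v[20/00][21/31],x)$ lies in $M_{X\times A_4}\subset M_6$, and the two invariances transport this back to $(v,x)$. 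The one-way lemmas \ref{lemma:oslash_saturation_one_way} and \ref{lemma:oslash_saturation_one_way_variation}, which drive the $A_3\to B_0$ case, play no role here. So your proposal correctly identifies the filtration but stops short of, and mispredicts, the saturation argument that closes the proof.
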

\begin{proof}

The cofibration $( X\times A_4,M_{X\times A_4})\to ( X\times B_1,M_{X\times B_1})$ is a sequence of inclusions:
$$
\begin{array}{l}
(X\times A_4,M_{X\times A_3}):=(D_0,M_0)\subset (D_1,M_1)\subset (D_2,M_2)\subset(D_3,M_3)\\
~~~~~~~~~~~~~~~~~~~~~~~~~~~~~\subset(D_4,M_4)\subset (D_5,M_5)\subset (D_6,M_6)\subset (X\times B_1,M_{X\times B_1})
\end{array}$$
where 
\begin{itemize}[leftmargin=* ,parsep=0cm,itemsep=0cm,topsep=0cm]
\item $D_1 = D_0\cup X\times [00,21, {31}]$ ;
\item $D_2 = D_1\cup X\times [20, {30},31]$ ;
\item $D_3 = D_2\cup X\times [20,21, {31}]$;
\item $D_4 = D_3\cup X\times [00,01,21, {31}]$;
\item $D_5 = D_4\cup X\times [00,20, {30},31]$ ;
\item $D_6 = D_5\cup X\times [00,20,21, {31}]$ ;
\end{itemize}
and
\begin{itemize}[leftmargin=* ,parsep=0cm,itemsep=0cm,topsep=0cm]
\item $(D_0,M_0)\to (D_1,M_1)$ is a pushout of $X~\rotatebox{90}{$\oslash$}~\Lambda^2[2]\to X~\rotatebox{90}{$\oslash$}~\Delta^2[2]$;
\item $(D_1,M_1)\to (D_2,M_2)$ is a pushout of $X~\rotatebox{90}{$\oslash$}~\Lambda^1[2]\to X~\rotatebox{90}{$\oslash$}~\Delta^1[2]$;
\item $(D_2,M_2)\to (D_3,M_3)$ is a pushout of $X~\rotatebox{90}{$\oslash$}~\Lambda^2[2]\to X~\rotatebox{90}{$\oslash$}~\Delta^2[2]$;
\item $(D_3,M_3)\to (D_4,M_4)$ is a pushout of $X~\rotatebox{90}{$\oslash$}~\Lambda^3[3]\to X~\rotatebox{90}{$\oslash$}~\Delta^3[3]$;
\item $(D_4,M_4)\to (D_5,M_5)$ is a pushout of $X\oslash\Lambda^2[3]\to X\oslash\Delta^2[3]$;
\item $(D_5,M_5)\to (D_6,M_6)$ is a pushout of $X~\rotatebox{90}{$\oslash$}~\Lambda^3[3]\to X~\rotatebox{90}{$\oslash$}~\Delta^3[3]$.
\end{itemize}

Now, we have to show that   $ M_{X\times B_1}\subset M_6$. 
First let's remarks that all  simplices $(v,s^{p}x)$ where $v_{[p,p+1]} = [21,31]$ are in $M_6$.  According to lemma \ref{lemma:oslash_saturation_two way_case_extremum}, a simplex $(v,x)$ is in $M_6$ thin if and only if $(v[21/31],x)$ is in $M_6$, if and only if  $(v[31/21],x)$ is in $M_6$.

Secondly, we show by case disjunction that all simplices $(v,s^{p-1}x)$ where $v_{[p-1,p]}=[20,30]$ are thin. If $v$ doesn't include $31$, it is in $M_4\subset M_6$. If $v$ includes $31$, then it is in $M_5\subset M_6$. According to lemma \ref{lemma:oslash_saturation_two way_case_extremum}  a simplex $(v,x)$ is in $M_6$ thin if and only if $(v[20/30],x)$ is in $M_6$, if and only if  $(v[30/20],x)$ is in $M_6$.

Now let $(v,x)$ be in $M_{B_1}$. Then $(v[20/00][21/31],x)$ is in $M_{A_4}\subset M_6$. The two last paragraphs then imply that $(v,x)$ is in 
$M_6$.
\end{proof}

\begin{lemma}
Maps $( X\times A_{012},M_{A_{012}})\to ( X\times B,M_{X\times B})\leftarrow ( X\times A_4,M_{X\times A_4})$
are acyclic cofibrations.
\end{lemma}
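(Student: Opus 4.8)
The plan is to deduce both statements from the four acyclic cofibrations established immediately above --- $(X\times A_{01},M_{X\times A_{01}})\to(X\times B_0,M_{X\times B_0})$, $(X\times A_3,M_{X\times A_3})\to(X\times B_0,M_{X\times B_0})$, $(X\times A_{23},M_{X\times A_{23}})\to(X\times B_1,M_{X\times B_1})$ and $(X\times A_4,M_{X\times A_4})\to(X\times B_1,M_{X\times B_1})$ --- by the same purely formal gluing as in the first formula, where the analogous claim was a direct consequence of the two corresponding lemmas. The combinatorial input is the collection of elementary identities of subobjects of $C$:
$$A_{01}=A_{012}\cap B_0,\qquad B_0\cap B_1=A_3,\qquad A_{23}=(B_0\cup A_2)\cap B_1,$$
together with $A_4\subseteq B_1$, from which one reads off $A_{012}\cup B_0=B_0\cup A_2$, then $(B_0\cup A_2)\cup B_1=B$, and $B_1\cup B_0=B$.

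For the first map I would factor $(X\times A_{012},M_{X\times A_{012}})\to(X\times B,M_{X\times B})$ as
$$(X\times A_{012},M_{X\times A_{012}})\hookrightarrow(X\times(B_0\cup A_2),M_{X\times B_0}\cup M_{X\times A_2})\hookrightarrow(X\times B,M_{X\times B}),$$
where the first arrow is the pushout of $(X\times A_{01},M_{X\times A_{01}})\to(X\times B_0,M_{X\times B_0})$ along $X\times A_{01}\hookrightarrow X\times A_{012}$ (legitimate since $A_{01}=A_{012}\cap B_0$), and the second is the pushout of $(X\times A_{23},M_{X\times A_{23}})\to(X\times B_1,M_{X\times B_1})$ along $X\times A_{23}\hookrightarrow X\times(B_0\cup A_2)$ (legitimate since $A_{23}=(B_0\cup A_2)\cap B_1$). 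Both arrows are pushouts of acyclic cofibrations, hence acyclic cofibrations, and so is their composite. For the second map I would factor $(X\times A_4,M_{X\times A_4})\to(X\times B,M_{X\times B})$ as
$$(X\times A_4,M_{X\times A_4})\hookrightarrow(X\times B_1,M_{X\times B_1})\hookrightarrow(X\times B,M_{X\times B}),$$
the first arrow being the acyclic cofibration already established, and the second being the pushout of $(X\times A_3,M_{X\times A_3})\to(X\times B_0,M_{X\times B_0})$ along $X\times A_3=X\times(B_0\cap B_1)\hookrightarrow X\times B_1$, hence again an acyclic cofibration.

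The one point that needs care --- and which I expect to be the (mild) main obstacle --- is checking that these squares really are pushouts in $\mSset$, i.e.\ that the relevant markings are compatible: concretely, the containments $M_{X\times A_{01}}\subseteq M_{X\times B_0}$, $M_{X\times A_2}\subseteq M_{X\times B_1}$, $M_{X\times A_3}\subseteq M_{X\times B_0}$ and $M_{X\times A_3}\subseteq M_{X\times B_1}$, which guarantee both that the displayed maps out of the various $X\times A_i$ are stratified and that the pushout marking on $X\times B$ is the union $M_{X\times B_0}\cup M_{X\times B_1}=M_{X\times B}$. Each of these is immediate by unwinding the defining pushouts of the $M_{X\times A_i}$ and $M_{X\times B_i}$ from the construction above --- the $A_i$ arising from the same Gray tensors as the $B_i$, only with fewer cells declared thin and restricted to smaller prisms --- and once this routine bookkeeping is done the two factorizations above deliver the claim.
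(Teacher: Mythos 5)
Your proposal is correct and is essentially the paper's argument made explicit: the paper's proof of this lemma is literally ``a direct consequence of the last four lemmas,'' and your two pushout factorizations, based on the identities $A_{01}=A_{012}\cap B_0$, $A_{23}=(B_0\cup A_2)\cap B_1$, $A_3=B_0\cap B_1$ and $A_4\subseteq B_1$, are exactly the gluing that phrase leaves implicit. The marking containments you single out are indeed the only bookkeeping point, and they are already guaranteed by the four preceding lemmas being morphisms of marked simplicial sets together with the definition of $M_{X\times B}$ as the union of $M_{X\times B_0}$ and $M_{X\times B_1}$.
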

\begin{proof}
This is a direct consequence of last four lemmas.
\end{proof}

\begin{construction}
The marked simplicial set
$\underline{X\times B}$ is the pushout:
\[\begin{tikzcd}
	{[0,3]\otimes X\otimes ([0]\coprod [1])} & {(X\times B, M_B)} \\
	{[00,30]\coprod [01,31]} & {\underline{X\times B}.}
	\arrow[from=1-1, to=2-1]
	\arrow[from=2-1, to=2-2]
	\arrow[from=1-2, to=2-2]
	\arrow[""{name=0, anchor=center, inner sep=0}, from=1-1, to=1-2]
	\arrow["\lrcorner"{anchor=center, pos=0.125, rotate=180}, draw=none, from=2-2, to=0]
\end{tikzcd}\]

Let $\underline{X\times A_i}$ and $\underline{X\times B_i}$  be the sub-objects of $\underline{X\times B}$ corresponding to image of $(X\times A_i,M_{X\times A_i})$  and $(X\times B_i,M_{X\times B_i})$.

\end{construction}

\begin{lemma}
Morphisms $\underline{X\times A_0} \to \Sigma X\fwedge \Delta[1]$ and $\underline{X\times A_2} \to \Delta[1]\fwedge\Sigma X$ are acyclic cofibrations. 
\end{lemma}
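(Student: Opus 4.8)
The argument is the exact analogue of the proof given above for the morphisms $\overline{X\otimes A_0}\to\Delta[1]\fwedge\Sigma X$ and $\overline{X\otimes A_2}\to\Sigma X\fwedge\Delta[1]$; I would carry out the case of $\underline{X\times A_0}\to\Sigma X\fwedge\Delta[1]$, the morphism $\underline{X\times A_2}\to\Delta[1]\fwedge\Sigma X$ being symmetric (one exchanges the two ends of the interval, which is why the roles of $\Sigma X\fwedge\Delta[1]$ and $\Delta[1]\fwedge\Sigma X$ are swapped relative to the first formula).

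First I would exhibit $\underline{X\times A_0}$ as the colimit of a diagram of the same shape as in the twin lemma: its two outer columns are the collapse maps $X\otimes(-)\to(-)$ entering the definition of $\underline{X\times B}$, and the essential middle column is a monomorphism $\Sigma X\coprod_{\Delta[0]}\Delta[1]\hookrightarrow\underline{X\times A_0}$. The only nonformal ingredient is that the corresponding partial-square inclusion into $(X\times A_0,M_{X\times A_0})$ is an acyclic cofibration; this holds because, as in the surrounding lemmas, it is obtained by pushout from a marked horn inclusion of $\oslash$-type (in the notation of Appendix B) glued along an endpoint, just as in the twin lemma the map $X\otimes[00,01]\coprod_{X\otimes[01]}X\otimes[01,11]\xrightarrow{\sim}X\otimes A_0$ did. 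Since the remaining legs of the diagram are identities or collapses matching on both sides, the colimit is a homotopy colimit, and pushing the above acyclic cofibration forward gives that $\Sigma X\coprod_{\Delta[0]}\Delta[1]\to\underline{X\times A_0}$ is an acyclic cofibration.

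Then I would form the commutative triangle
$$\Sigma X\coprod_{\Delta[0]}\Delta[1]\longrightarrow\underline{X\times A_0}\longrightarrow\Sigma X\fwedge\Delta[1],$$
whose composite is the canonical comparison map and is an acyclic cofibration by the Proposition following the wedge construction. By two out of three, $\underline{X\times A_0}\to\Sigma X\fwedge\Delta[1]$ is a weak equivalence; being a monomorphism it is an acyclic cofibration, as desired.

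The hard part is the marking bookkeeping in the first step: one has to check that the marking carried by the colimit agrees on the nose with the marking $M_{X\times A_0}$ fixed in the Construction --- the three-fold reunion of the simplices $(s^p x,v)$ with $[v_p,v_{p+1}]=[00,10]$, the thin simplices over $[01,11]$, and $M'_{X\times A_0}$ --- and that the restriction of $M_{X\times B}$ to the image of $X\times A_0$ does not enlarge it. This is a finite but laborious verification of the same flavour as the marking arguments in the adjacent lemmas; once it is settled, the two-out-of-three step is immediate.
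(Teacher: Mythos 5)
Your argument coincides with the paper's: you present $\underline{X\times A_0}$ via a pushout which exhibits $\Sigma X\coprod_{\Delta[0]}\Delta[1]\to\underline{X\times A_0}$ as an acyclic cofibration, and then conclude by two out of three against the acyclic cofibration $\Sigma X\coprod_{\Delta[0]}\Delta[1]\to\Sigma X\fwedge\Delta[1]$ from the wedge proposition, handling $\underline{X\times A_2}$ symmetrically. The marking bookkeeping you defer is likewise left implicit in the paper, which simply records the relevant pushout squares with the inclusion into $(X\times A_0,M_{X\times A_0})$ asserted to be an acyclic cofibration.
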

\begin{proof}
We have a diagram:
\[\begin{tikzcd}
	{[0]\otimes X\otimes[0]\cup [0,1]\otimes X\otimes[1]} & {[00]\cup[01,11]} \\
	{[0]\otimes X\otimes[0,1]\cup [0,1]\otimes X\otimes [1]} & {\Sigma X\coprod_{\Delta[0]}\Delta[1]} \\
	{(X\times A_0,M_{X\times A_0})} & {\underline{X\times A_0}.}
	\arrow[""{name=0, anchor=center, inner sep=0}, from=1-1, to=1-2]
	\arrow["\sim", from=2-1, to=3-1]
	\arrow[from=1-1, to=2-1]
	\arrow[from=1-2, to=2-2]
	\arrow["\sim", from=2-2, to=3-2]
	\arrow[""{name=1, anchor=center, inner sep=0}, from=2-1, to=2-2]
	\arrow[from=3-1, to=3-2]
	\arrow["\lrcorner"{anchor=center, pos=0.125, rotate=180}, draw=none, from=2-2, to=0]
	\arrow["\lrcorner"{anchor=center, pos=0.125, rotate=180}, draw=none, from=3-2, to=1]
\end{tikzcd}\]
That shows that $\Delta[1]\coprod_{\Delta[0]} \Sigma X \to \underline{X\otimes A_0}$ is an acyclic cofibration. We then have a commutative diagram: 
\[\begin{tikzcd}
	{\Sigma  X\coprod_{\Delta[0]}\Delta[1]} & {\underline{X\times A_0}} & {\Sigma  X\fwedge\Delta[1]}
	\arrow["\sim", from=1-1, to=1-2]
	\arrow[from=1-2, to=1-3]
	\arrow["\sim", curve={height=-24pt}, from=1-1, to=1-3]
\end{tikzcd}\]
and by two out of three, $\underline{X\times A_0} \to \Sigma X\fwedge \Delta[1]$ is an acyclic cofibration. 
We proceed similarly for the second morphism. 	
\end{proof}

\begin{lemma}
Marked simplicial sets $\overline{X\otimes A_1}$ and  $\overline{X\otimes A_4}$ are respectively equal to $\Sigma (X\otimes \Delta[1])$ and $\Delta[1]\otimes \Sigma X$.
\end{lemma}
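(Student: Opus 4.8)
The plan is to prove this by a direct computation, exactly parallel to the corresponding identification in the first-formula section: in each case I would first determine the underlying simplicial set and then the marking. Recall that $A_1$ and $A_4$ are the sub-objects of $C$ supported on the images of $[1,2]\times[0,1]$ and $[0,3]\times[0,1]$, and that the relevant objects are the sub-objects of the pushout $\underline{X\times B}$, which is obtained from $(X\times B,M_B)$ by contracting $[0,3]\otimes X\otimes\{0\}$ and $[0,3]\otimes X\otimes\{1\}$ onto the edges $[00,30]$ and $[01,31]$. Using the associativity of the Gray tensor product to read $[1,2]\otimes X\otimes[0,1]$ and $[0,3]\otimes X\otimes[0,1]$ as the threefold tensor $\Delta[1]\otimes X\otimes\Delta[1]$, the point is that the sub-object $\underline{X\times A_i}$ is obtained from $\Delta[1]\otimes X\otimes\Delta[1]$ by performing the collapses encoded in $M_{X\times A_i}$ together with those built into $\underline{X\times B}$.

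First I would read off the underlying simplicial sets by unwinding the iterated pushouts. In the case of $A_1$ the collapses combine to contract each of the two faces $(\Delta[1]\otimes X)\otimes\{0\}$ and $(\Delta[1]\otimes X)\otimes\{1\}$ of $\Delta[1]\otimes X\otimes\Delta[1]$ to a point; recognising the defining pushout of a suspension, the result is the underlying simplicial set of $\Sigma(\Delta[1]\otimes X)$. In the case of $A_4$ the collapses contract only the $X$-factor of those two faces, leaving the first $\Delta[1]$-factor intact, so that the two faces become copies of $\Delta[1]$; this is the underlying simplicial set of $\Delta[1]\otimes\Sigma X$.

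It then remains to identify the markings, and this is where essentially all the work lies. In each case the marking is, by construction, the union of the Gray-tensor marking $t\Delta[1]\otimes tX\otimes t\Delta[1]$ of the threefold tensor, of all simplices rendered degenerate by the collapses (which are thin in any stratified set), and of the explicit families of thin cells adjoined in the definition of $M_{X\times A_i}$, namely the simplices $(x,v)$ with $x$ thin and $v$ a thin simplex of a distinguished boundary edge. I would then check, by going through the partition conditions defining the Gray tensor product, that this union coincides with the marking that $\Sigma(\Delta[1]\otimes X)$, respectively $\Delta[1]\otimes\Sigma X$, inherits from the definitions of the suspension and of the Gray tensor product. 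The main obstacle is precisely this bookkeeping of thin simplices: one must verify that restricting the partition conditions along the collapses neither creates nor destroys a thin simplex relative to the target marking. This verification is elementary but tedious, and, as for the parallel first-formula statement, I would leave it to the reader.
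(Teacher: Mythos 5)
Your proposal is correct and takes essentially the same route as the paper, whose entire proof of this lemma is the one-line remark that it is an ``analogue computation'': you unwind the defining pushouts to identify the underlying simplicial sets (correctly, in both cases) and reduce the rest to a bookkeeping check that the markings agree, which the paper likewise leaves unwritten. Note only that your reading of the statement is the intended one: the objects are the sub-objects $\underline{X\times A_1}$, $\underline{X\times A_4}$ of $\underline{X\times B}$, and the first identification should indeed be with $\Sigma(\Delta[1]\otimes X)$ (as used in Proposition \ref{prop:interval_second_formula}), the statement's $\Sigma(X\otimes\Delta[1])$ and overline notation being carried over from the first-formula lemma.
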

\begin{proof}
Analogue computation.
\end{proof} 

\begin{prop}
\label{prop:interval_second_formula}
There is a zigzag of acyclic cofibrations:
$$\Sigma X\fwedge\Delta[1]\coprod_{\Sigma X} \Sigma(\Delta[1]\otimes X)\coprod_{\Sigma X} \Delta[1]\fwedge\Sigma X \leftrightsquigarrow \Delta[1]\otimes \Sigma X.$$
\end{prop}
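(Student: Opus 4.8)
The plan is to run essentially the same argument as for \cref{prop:interval_first_formula}, with the object $\overline{X\otimes B}$ replaced by $\underline{X\times B}$. Almost all the homotopical work has already been carried out in the preceding lemmas, which supply the acyclic cofibrations $(X\times A_{012},M_{A_{012}})\to(X\times B,M_{X\times B})\leftarrow(X\times A_4,M_{X\times A_4})$, together with the identifications $\underline{X\times A_0}\simeq\Sigma X\fwedge\Delta[1]$, $\underline{X\times A_2}\simeq\Delta[1]\fwedge\Sigma X$, $\underline{X\times A_1}=\Sigma(\Delta[1]\otimes X)$ and $\underline{X\times A_4}=\Delta[1]\otimes\Sigma X$.

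First I would push the two acyclic cofibrations above out along the collapse $[0,3]\otimes X\otimes([0]\amalg[1])\to[00,30]\amalg[01,31]$ defining $\underline{X\times B}$, checking that the restriction of this collapse to $X\times A_{012}$ and to $X\times A_4$ is compatible, so as to obtain
$$\underline{X\times A_{012}}\xrightarrow{\ \sim\ }\underline{X\times B}\xleftarrow{\ \sim\ }\underline{X\times A_4}.$$
Since every object of $\mSset$ is cofibrant and the collapsed subobjects are cofibrations, the pushouts involved are homotopy pushouts, so acyclicity of these two maps follows exactly as in the constructions used throughout the paper.

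Next I would unwind the two ends. On the right, $\underline{X\times A_4}=\Delta[1]\otimes\Sigma X$, which is one side of the claimed zigzag. On the left, $\underline{X\times A_{012}}$ is the colimit of the span
$$\underline{X\times A_0}\leftarrow(\underline{X\times A_0}\cap\underline{X\times A_1})\to\underline{X\times A_1}\leftarrow(\underline{X\times A_1}\cap\underline{X\times A_2})\to\underline{X\times A_2},$$
and each intersection is a copy of $\Sigma X$ sitting inside its two neighbours via the whiskering maps $\triangledown$. Feeding in the identifications of $\underline{X\times A_0}$, $\underline{X\times A_1}$ and $\underline{X\times A_2}$ (those of $A_0$ and $A_2$ only up to the two-out-of-three diagrams appearing in their proofs) turns this colimit, up to a zigzag of acyclic cofibrations, into $\Sigma X\fwedge\Delta[1]\coprod_{\Sigma X}\Sigma(\Delta[1]\otimes X)\coprod_{\Sigma X}\Delta[1]\fwedge\Sigma X$. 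Composing with the span above yields the statement.

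The main obstacle I expect is purely combinatorial: verifying that collapsing the long edge $[0,3]$ inside $X\times B$ leaves the subobjects $X\times A_0$, $X\times A_1$, $X\times A_2$ — and, crucially, their markings — unchanged where needed, and that the marking induced on each intersection $\underline{X\times A_i}\cap\underline{X\times A_{i+1}}$ is indeed the standard one on $\Sigma X$, so that the whiskerings $\triangledown$ are recovered. This is the same sort of direct computation invoked in the lemma identifying $\underline{X\times A_1}$ and $\underline{X\times A_4}$; no new homotopy-theoretic input is required once the acyclic cofibrations $A_i\cup A_j\to B_k$ at the marked level are in hand.
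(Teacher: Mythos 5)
Your proposal is correct and follows essentially the same route as the paper: the paper's proof simply pushes the already-established acyclic cofibrations $(X\times A_{012},M_{A_{012}})\to (X\times B,M_{X\times B})\leftarrow (X\times A_4,M_{X\times A_4})$ out along the collapse defining $\underline{X\times B}$, and then invokes the preceding lemmas identifying $\underline{X\times A_0}$, $\underline{X\times A_2}$ (up to acyclic cofibration) and $\underline{X\times A_1}$, $\underline{X\times A_4}$ (on the nose) to read off the two ends of the zigzag. The extra combinatorial checks you flag (compatibility of the collapse with the subobjects and recovery of the whiskerings $\triangledown$ on the intersections) are exactly the "analogue computation" the paper leaves implicit.
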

\begin{proof}
The acyclic cofibration $X\otimes A_{012}\to X\otimes B$ induces an acyclic cofibration
$$\Sigma X\fwedge\Delta[1]\coprod_{\Sigma X} \Sigma(\Delta[1]\otimes X)\coprod_{\Sigma X} \Delta[1]\fwedge\Sigma X\to \underline{X\times B},$$
and the acyclic cofibration  $X\otimes A_{4}\to X\otimes B$, an acyclic cofibration
$$\Delta[1]\otimes(\Sigma X)\to \underline{X\times B}.$$
\end{proof}

\subsubsection*{Summary of results}
\label{section:result_for_interval}
Propositions \ref{prop:interval_first_formula}, \ref{prop:interval_second_formula} and \ref{prop:lifting_property_zigzag_of_acyclic_cofibration} imply the following results:

Let $f:X\to Y$ be a fibration between $\infty$-categories and $K\to L$ a cofibration. Then:
\begin{itemize}
\item $f$ has the right lifting property against:
$$(\Sigma K)\otimes \Delta[1]\coprod_{(\Sigma K)\otimes \partial\Delta[1]} (\Sigma L)\otimes \partial\Delta[1]\to (\Sigma L)\otimes \Delta[1]$$ if and only if $f$ has the right lifting property against:
$$\Sigma (K\otimes \Delta[1])\coprod_{\Sigma (K\otimes \partial\Delta[1])} \Sigma (L\otimes \partial\Delta[1])\to \Sigma (L\otimes \Delta[1]).$$
\item $f$ has the right lifting property against:
$$\Delta[1]\otimes(\Sigma K)\coprod_{\partial\Delta[1]\otimes(\Sigma K)} \partial \Delta[1]\otimes (\Sigma L)\to \Delta[1]\otimes(\Sigma L) $$ if and only if $f$ has the right lifting property against:
$$\Sigma (\Delta[1]\otimes K )\coprod_{\Sigma (\Delta[1]\otimes K	)}\Sigma (\partial\Delta[1]\otimes L)\to \Sigma (\Delta[1]\otimes L).$$
\end{itemize}

\begin{construction}
\label{cons:composedwedge}
Let $K\to L$ be a cofibration. We define two important cofibrations: 
$\Delta[1]\fwedge \Sigma K\cup_{\triangledown} \Sigma L\to \Delta[1]\fwedge\Sigma L$ is obtained from the universal property of the pushout in the following diagram:
\[\begin{tikzcd}
	{\Sigma K} & {\Delta[1]\fwedge\Sigma K} \\
	{\Sigma L} & \Delta[1]\fwedge \Sigma K\cup_{\triangledown} \Sigma L \\
	&& {\Delta[1]\fwedge\Sigma L,}
	\arrow["\triangledown", from=1-1, to=1-2]
	\arrow[from=1-1, to=2-1]
	\arrow[from=1-2, to=2-2]
	\arrow[from=2-1, to=2-2]
	\arrow[from=2-2, to=3-3]
	\arrow[curve={height=-12pt}, from=1-2, to=3-3]
	\arrow["\triangledown"', curve={height=12pt}, from=2-1, to=3-3]
	\arrow["\lrcorner"{anchor=center, pos=0.125, rotate=180}, draw=none, from=2-2, to=1-1]
\end{tikzcd}\]
and the acyclic cofibration $\Sigma K\fwedge \Delta[1]\cup_{\triangledown} \Sigma L\to \Sigma L\fwedge\Delta[1]$ is obtained from the universal property of the pushout in the following diagram:
\[\begin{tikzcd}
	{\Sigma K} & {\Sigma K\fwedge\Delta[1]} \\
	{\Sigma L} & \Sigma K\fwedge \Delta[1]\cup_{\triangledown} \Sigma L \\
	&& {\Sigma L\fwedge\Delta[1].}
	\arrow["\triangledown", from=1-1, to=1-2]
	\arrow[from=1-1, to=2-1]
	\arrow[from=1-2, to=2-2]
	\arrow[from=2-1, to=2-2]
	\arrow[from=2-2, to=3-3]
	\arrow[curve={height=-12pt}, from=1-2, to=3-3]
	\arrow["\triangledown"', curve={height=12pt}, from=2-1, to=3-3]
	\arrow["\lrcorner"{anchor=center, pos=0.125, rotate=180}, draw=none, from=2-2, to=1-1]
\end{tikzcd}\]
\end{construction}

Suppose now that $f$ has the right lifting property against  $\Delta[1]\fwedge \Sigma K\cup_{\triangledown} \Sigma L\to \Delta[1]\fwedge\Sigma L$.
\begin{itemize}
\item 
$f$ has the right lifting property against:
$$(\Sigma K)\otimes \Delta[1]\coprod_{(\Sigma K)\otimes \{0\}} (\Sigma L)\otimes \{0\}\to (\Sigma L)\otimes \Delta[1]$$ if and only if $f$ has the right lifting property against:
$$\Sigma (K\otimes \Delta[1])\coprod_{\Sigma (K\otimes \{1\}} \Sigma (L\otimes \{1\})\to \Sigma (L\otimes \Delta[1]).$$
 
\item 
$f$ has the right lifting property against:
$$\Delta[1]\otimes(\Sigma K)\coprod_{\{0\}\otimes(\Sigma K)} \{0\}\otimes (\Sigma L)\to \Delta[1]\otimes(\Sigma L) $$  if and only if $f$ has the right lifting property against:
$$\Sigma (\Delta[1]\otimes K )\coprod_{\Sigma (\{0\}\otimes K)}\Sigma (\{0\}\otimes L)\to \Sigma (\Delta[1]\otimes L).$$
\end{itemize}

If $f$ has the right lifting property against $\Sigma K\fwedge \Delta[1]\cup_{\triangledown} \Sigma L\to \Sigma L\fwedge\Delta[1]$,
then 
\begin{itemize}
\item
$f$ has the right lifting property against:
$$(\Sigma K)\otimes \Delta[1]\coprod_{(\Sigma K)\otimes \{1\}} (\Sigma L)\otimes \{1\}\to (\Sigma L)\otimes \Delta[1]$$ if and only if $f$ has the right lifting property against:
$$\Sigma (K\otimes \Delta[1])\coprod_{\Sigma (K\otimes \{0\}} \Sigma (L\otimes \{0\})\to \Sigma (L\otimes \Delta[1]).$$
\item 
$f$ has the right lifting property against:
$$\Delta[1]\otimes(\Sigma K)\coprod_{\{1\}\otimes(\Sigma K)} \{1\}\otimes (\Sigma L)\to \Delta[1]\otimes(\Sigma L) $$ if and only if $f$ has the right lifting property against:
$$\Sigma (\Delta[1]\otimes K )\coprod_{\Sigma (\{1\}\otimes K)}\Sigma (\{1\}\otimes L)\to \Sigma (\Delta[1]\otimes L).$$
\end{itemize}

\subsection{Formulas for join}
 The purpose of this section is to show that we have zigzags of acyclic cofibrations, natural in $X$, fitting in:
$$
\begin{array}{rcl}
\Delta[0]\star\Sigma X & \leftrightsquigarrow & \Sigma (X\star \Delta[0]) \coprod_{\Sigma X} \Delta[1]\fwedge\Sigma X\\
\\
\Sigma X\costar \Delta[0] & \leftrightsquigarrow &  \Sigma X\fwedge \Delta[1] \coprod_{\Sigma X} \Sigma (X \costar \Delta[0]) \\
\\
\Delta[0]\costar \Sigma X & \leftrightsquigarrow & \Sigma(X\star\Delta[0]) \coprod_{\Sigma X}\Delta[1]\fwedge\Sigma X\\
\\
\Sigma X \star\Delta[0]  & \leftrightsquigarrow &  \Sigma X\fwedge \Delta[1]\coprod_{\Sigma X} \Sigma(\Delta[0]\costar  X)\\
\end{array} 
$$

\begin{proof}[Proof of the first formula]
We consider the diagram:
\[\begin{tikzcd}
	{\Delta[1]} & {\Sigma X\coprod_{\Delta[0]} \Delta[1]} & {\Sigma X\fwedge\Delta[1]\coprod_{\Sigma X} \Sigma(  \Delta[1]\otimes X) \coprod_{\Sigma X}\Delta[1]\fwedge\Sigma X} \\
	{\Delta[1]} & {\Sigma X\fwedge \Delta[1]} & {\Sigma X\fwedge\Delta[1]\coprod_{\Sigma X} \Sigma(  \Delta[1]\otimes X) \coprod_{\Sigma X}\Delta[1]\fwedge\Sigma X.}
	\arrow["id", from=1-3, to=2-3]
	\arrow[from=1-2, to=1-1]
	\arrow[from=2-2, to=2-1]
	\arrow[from=1-2, to=1-3]
	\arrow[from=2-2, to=2-3]
	\arrow["\sim"', from=1-2, to=2-2]
	\arrow["id"', from=1-1, to=2-1]
\end{tikzcd}\]
All vertical morphisms are weak equivalences.
We denote $A$ the colimit of the first line. The proposition \ref{prop:interval_first_formula} implies that there is a zigzag of acyclic cofibrations between $A$  and $\Delta[0]\diamond X$. Colimits of the two lines are homotopy colimits, and the comparison morphism is then an acyclic cofibration. 
We then have a zigzag of acyclic cofibrations: 

$$\Delta[0]\star X \leftarrow \Delta[0]\diamond X\leftrightsquigarrow A\to \Sigma(\Delta[0]\diamond  X)\coprod_{\sigma X} \Delta[1]\fwedge\Sigma X\to \Sigma(\Delta[0]\star  X)\coprod_{\sigma X} \Delta[1]\fwedge\Sigma X.$$

\end{proof}

The three other formulas are showed similarly.

\subsubsection*{Summary of results}
\label{section:recap_of_result_for_joint}
 Let $f:X\to Y$ be a fibration between $\infty$-categories and $K\to L$ a cofibration. Then:
\begin{itemize}
\item $f$ has the right lifting property against
$$\Delta[0]\star (\Sigma K)\coprod \Sigma L\to \Delta[0]\star (\Sigma L)$$
if and only if it has the right lifting property against
$$\Sigma (\Delta[0]\star K\coprod L)\to \Sigma (\Delta[0]\star L).$$
\item $f$ has the right lifting property against
$$\Delta[0]\costar   (\Sigma K)\coprod \Sigma L\to \Delta[0]\costar   (\Sigma L)$$
if and only if it has the right lifting property against
$$\Sigma (K\star \Delta[0]\coprod L)\to \Sigma  L\star\Delta[0]).$$
\item $f$ has the right lifting property against
$$(\Sigma K)\star \Delta[0]\coprod (\Sigma L)\to \Delta[0]\star\Sigma L$$
if and only if it has the right lifting property against
$$\Sigma (\Delta[0]\costar  K\coprod L)\to \Sigma (\Delta[0]\costar  L).$$
\item $f$ has the right lifting property against
$$(\Sigma K)\costar  \Delta[0]\coprod (\Sigma L)\to \Delta[0]\costar \Sigma L$$
if and only if it has the right lifting property against
$$\Sigma (K\costar  \Delta[0]\coprod L)\to \Sigma  L\costar \Delta[0]).$$
\end{itemize}

\subsection{Formula for the $\circ$-suspension}

\begin{construction}
Let $X$ be a simplicial set. We define the \textit{$\circ$-suspension of $X$} to be the colimit of the following diagram:
\[\begin{tikzcd}
	{\partial\Delta[1]\otimes X} & { \Delta[1]\otimes X} \\
	{\partial\Delta[1]} & {\Sigma^{\circ} X.}
	\arrow[from=1-2, to=2-2]
	\arrow["\lrcorner"{anchor=center, pos=0.125, rotate=180}, draw=none, from=2-2, to=1-1]
	\arrow[from=1-1, to=2-1]
	\arrow[from=2-1, to=2-2]
	\arrow[from=1-1, to=1-2]
\end{tikzcd}\]
This assignation defines a cocontinuous functor $\Sigma^\circ:\mSset\to \mSset_{\partial\Delta[1]/}.$ For every acyclic cofibration, we have a colimit diagram: 
\[\begin{tikzcd}
	{\partial\Delta[1]\otimes L} & {\Delta[1]\otimes K\cup \partial\Delta[1]\otimes L} & {\Delta[1]\otimes L} \\
	{\partial\Delta[1]} & {\Sigma^{\circ} K} & {\Sigma^{\circ} L.}
	\arrow[from=1-1, to=2-1]
	\arrow[""{name=0, anchor=center, inner sep=0}, from=1-1, to=1-2]
	\arrow[from=2-1, to=2-2]
	\arrow[from=1-2, to=2-2]
	\arrow[from=1-3, to=2-3]
	\arrow[from=2-2, to=2-3]
	\arrow[""{name=1, anchor=center, inner sep=0}, from=1-2, to=1-3]
	\arrow["\lrcorner"{anchor=center, pos=0.125, rotate=180}, draw=none, from=2-2, to=0]
	\arrow["\lrcorner"{anchor=center, pos=0.125, rotate=180}, draw=none, from=2-3, to=1]
\end{tikzcd}\]
This shows that the $\circ$-suspension is a left Quillen functor.

Furthermore, this functor admits a right adjoint, that sends a pair $(a,b,C)$ to $C(a,b)$ where $a,b$ are two $0$-simplices of $C$.
\end{construction}

\begin{prop}
\label{prop:formula_for_the_coop_suspension}
There is a zigzag of acyclic cofibrations between $\Sigma \Sigma^\circ X$ and $\Sigma^{\circ}\Sigma X$ natural in $X$.
\end{prop}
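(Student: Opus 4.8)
The plan is to derive the proposition from the ``second formula'' of Proposition~\ref{prop:interval_second_formula} by pushing the end-collapse that defines $\Sigma^{\circ}$ through the zigzag of acyclic cofibrations it provides. By definition $\Sigma^{\circ}\Sigma X$ is the pushout of $\partial\Delta[1]\leftarrow\partial\Delta[1]\otimes\Sigma X\to\Delta[1]\otimes\Sigma X$, and since the Gray tensor product is cocontinuous on $\mSset$ (Proposition~\ref{prop:gray_product_is_a_left_Quillen_bifunctor}) we have $\partial\Delta[1]\otimes\Sigma X=(\{0\}\otimes\Sigma X)\amalg(\{1\}\otimes\Sigma X)$. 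Proposition~\ref{prop:interval_second_formula} gives natural acyclic cofibrations
\[
\Delta[1]\otimes\Sigma X \;\xrightarrow{\ \sim\ }\; \underline{X\times B}\;\xleftarrow{\ \sim\ }\; M,\qquad M:=\Sigma X\fwedge\Delta[1]\coprod_{\Sigma X}\Sigma(\Delta[1]\otimes X)\coprod_{\Sigma X}\Delta[1]\fwedge\Sigma X .
\]
First I would check — by inspecting the marked subdivision $\underline{X\times A_{\bullet}}\subseteq\underline{X\times B}$ of \ref{subsubsection:second_formula}, with the faces $\{0\}\otimes\Sigma X$ and $\{1\}\otimes\Sigma X$ of $\Delta[1]\otimes\Sigma X=\underline{X\times A_{4}}$ sitting inside $\underline{X\times A_{0}}$ and $\underline{X\times A_{2}}$ — that these two faces factor through $M=\underline{X\times A_{012}}$ compatibly with the zigzag. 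Granting this, set $N:=\underline{X\times B}\coprod_{\partial\Delta[1]\otimes\Sigma X}\partial\Delta[1]$ and $M':=M\coprod_{\partial\Delta[1]\otimes\Sigma X}\partial\Delta[1]$; since acyclic cofibrations are stable under pushout, we obtain a natural zigzag of acyclic cofibrations $\Sigma^{\circ}\Sigma X \xrightarrow{\sim} N \xleftarrow{\sim} M'$.

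It then remains to identify $M'$ with $\Sigma\Sigma^{\circ}X$ up to a further natural zigzag. Under the comparison of Proposition~\ref{prop:interval_second_formula} the face $\{0\}\otimes\Sigma X$ corresponds to the \emph{canonical} copy of $\Sigma X$ inside $\Sigma X\fwedge\Delta[1]$ coming from the colimit defining the wedge in \ref{subsection:wedge} — not the copy $\triangledown(\Sigma X)$ along which the wedge is glued to $\Sigma(\Delta[1]\otimes X)$ — and symmetrically $\{1\}\otimes\Sigma X$ is the canonical $\Sigma X$ in $\Delta[1]\fwedge\Sigma X$; this mirrors the strict picture of the introduction, where the time-$0$ slice $\{0\}\otimes\Sigma A$ of $\Db_{1}\otimes\Sigma A$ is the ``short'' suspension sitting inside $\Sigma A\vee\Db_{1}$. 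Collapsing these two copies kills, via the gluing maps, the homs of $\triangledown(\Sigma X)$ and hence collapses $\Sigma(\{0\}\otimes X)$ and $\Sigma(\{1\}\otimes X)$ inside the middle term $\Sigma(\Delta[1]\otimes X)$; using that $\Sigma$, $\Sigma^{\circ}$ and the wedge functors are cocontinuous, together with the description of the wedge in \ref{subsection:wedge} (so that $\Sigma X\fwedge\Delta[1]$ with its canonical $\Sigma X$ collapsed is weakly equivalent to $\Delta[1]$, whose residual edge is absorbed into the gluing), the three summands of $M'$ reassemble, up to a zigzag of acyclic cofibrations, into $\Sigma$ applied to $\Delta[1]\otimes X$ with $\{0\}\otimes X$ and $\{1\}\otimes X$ collapsed, i.e.\ into $\Sigma(\Sigma^{\circ}X)=\Sigma\Sigma^{\circ}X$. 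Everything is natural in $X$ by construction, and each comparison morphism is a pointwise acyclic cofibration: since all the functors in sight are cocontinuous and left Quillen, the class of $X$ for which a given natural comparison is a weak equivalence is saturated by monomorphisms and contains the representables $\Delta[n]$, $\Delta[n]_{t}$ (as in the proof of Proposition~\ref{prop:i_str_is_Quillen}), which reduces every verification to those building blocks.

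The main obstacle is the bookkeeping in the second step: locating the two faces $\{i\}\otimes\Sigma X$ precisely inside the explicit zigzag of Proposition~\ref{prop:interval_second_formula}, and checking that collapsing them — together with the induced collapses on the gluing copies $\triangledown(\Sigma X)$ and on $\Sigma(\{0\}\otimes X)$, $\Sigma(\{1\}\otimes X)$ — reassembles exactly $\Sigma\Sigma^{\circ}X$ without spurious cells, the collapsed wedges contributing only degenerate or thin data and the relevant squares being homotopy pushouts. This is the same flavour of marked-simplex computation as in \ref{subsubsection:second_formula}, part of which is there left to the reader; it must be carried out once more, carefully, for the prism $\Delta[1]\otimes X\otimes\Delta[1]$.
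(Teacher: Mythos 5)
Your proposal is correct and takes essentially the same route as the paper: both deduce the statement from Proposition \ref{prop:interval_second_formula} by collapsing the two endpoint copies of $\Sigma X$ (which, as you say, correspond to the canonical copies of $\Sigma X$ inside $\Sigma X\fwedge\Delta[1]$ and $\Delta[1]\fwedge\Sigma X$) and comparing the resulting homotopy pushouts. The extra prism computation you fear in your last paragraph is not needed: the paper observes that $\Sigma\Sigma^{\circ}X$ is on the nose the colimit of $\Delta[1]\coprod\Delta[1]\leftarrow \Sigma X\fwedge\Delta[1]\coprod \Delta[1]\fwedge\Sigma X\to M$, and compares this with your $M'$ (its object $A$) through the acyclic cofibration $\Sigma X\coprod_{\Delta[0]}\Delta[1]\to \Sigma X\fwedge\Delta[1]$ of \ref{subsection:wedge}, so your ``reassembly'' step is a single homotopy-colimit comparison rather than a new marked-simplex calculation.
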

\begin{proof}
We consider the following diagram:
\[\begin{tikzcd}
	{\Delta[1]\coprod\Delta[1]} & {\Sigma X\coprod_{\Delta[0]}\Delta[1]\coprod  \Delta[1]\coprod_{\Delta[0]}\Sigma X} & {\Sigma X\fwedge\Delta[1]\coprod_{\Sigma X} \Sigma(\Delta[1]\otimes X)\coprod_{\Sigma X} \Delta[1]\fwedge\Sigma X} \\
	{\Delta[1]\coprod\Delta[1]} & {\Sigma X\fwedge\Delta[1]\coprod  \Delta[1]\fwedge\Sigma X} & {\Sigma X\fwedge\Delta[1]\coprod_{\Sigma X} \Sigma(\Delta[1]\otimes X)\coprod_{\Sigma X} \Delta[1]\fwedge\Sigma X.}
	\arrow[from=2-2, to=2-3]
	\arrow[from=2-2, to=2-1]
	\arrow["\sim", from=1-2, to=2-2]
	\arrow[from=1-2, to=1-1]
	\arrow[from=1-2, to=1-3]
	\arrow[from=1-3, to=2-3]
	\arrow[from=1-1, to=2-1]
\end{tikzcd}\]
All vertical morphisms are weak equivalences.
We denote $A$ the colimit of the first line. The proposition \ref{prop:interval_second_formula} implies that there is a zigzag of acyclic cofibrations between $A$  and $\Sigma^{\circ} \Sigma X$. The colimit of the second line is $\Sigma\Sigma^{\circ}X$. Colimits of the two lines are homotopy colimits, and the comparison morphism is then an acyclic cofibration. 
We then have a zigzag of acyclic cofibrations: 

$$\Sigma^{\circ} \Sigma X \leftrightsquigarrow A\to \Sigma \Sigma^{\circ} X.$$

\end{proof}

\section{Application I: Globular equivalences}

\subsection{Homotopy categories}

\begin{definition}
The \textit{simplicial $n$-globe} is the marked simplicial set $\Gb_n:=\Sigma^n \Delta[0]$.
$\Gb_0:=\Delta[0]$ and  $\Gb_{n+1}:= \Sigma \Gb_n$ 
This defines a globular object in $\mSset$:
\[\begin{tikzcd}
	{\Gb_0} & {\Gb_1} & \cdots & \cdots & {\Gb_n} & {\Gb_{n+1}} & \cdots
	\arrow["{\delta^0_1}", curve={height=-12pt}, from=1-1, to=1-2]
	\arrow["{\delta^1_1}"', curve={height=12pt}, from=1-1, to=1-2]
	\arrow["{\delta^0_{n+1}}", curve={height=-12pt}, from=1-5, to=1-6]
	\arrow["{\delta^1_{n+1}}"', curve={height=12pt}, from=1-5, to=1-6]
	\arrow["{\delta^0_{n+2}}", curve={height=-12pt}, from=1-6, to=1-7]
	\arrow["{\delta^1_{n+2}}"', curve={height=12pt}, from=1-6, to=1-7]
	\arrow["{\delta^0_2}", curve={height=-12pt}, from=1-2, to=1-3]
	\arrow["{\delta^0_2}"', curve={height=12pt}, from=1-2, to=1-3]
	\arrow["{\delta^0_n}", curve={height=-12pt}, from=1-4, to=1-5]
	\arrow["{\delta^0_n}"', curve={height=12pt}, from=1-4, to=1-5]
\end{tikzcd}\]
and we have equalities:
$$\delta^0_{n+1} \delta^0_n=\delta^1_{n+1} \delta^0_n~~~~\delta^1_{n+1} \delta^0_n=\delta^1_{n+1} \delta^1_n.$$
 We also set $(\Gb_n)_t:= \tau_n(\Gb_n)$. 
\end{definition}

\begin{definition}
Let $C$ be a $\infty$-category. A \textit{$n$-cell} $a$ of $C$ is a morphism $a:\Gb_n\to C$. If $n$ is non null, the \textit{source} of $a$ (resp. the \textit{target} of $a$)
is the $(n-1)$-cell $\delta^0_{n-1}(a)$ (resp. $\delta^1_{n-1}(a)$). The cell $a$ is thin if the corresponding morphism $\Gb_n\to C$ factorizes via $(\Gb_n)_t$.
\end{definition}

From now on, and until the end of this section, we fix an $\infty$-category $C$. All considered cells are cells of $C$.

\begin{definition}
Let $n$ be a non null integer, and $a,b$ two $n$-cells.
Cells $a$ and $b$  are \textit{parallel} if they share the same source and  the same target. They are \textit{composable} if the source of $a$ is the target of $b$.
\end{definition}

\begin{definition}
Let $a$ and $b$ be two parallel cells. The cell $a$ is \textit{equivalent} to the cell $b$ if there exists a thin $(n+1)$-cell $d:a\to b$, or equivalently, if there exists a homotopy $\Db_n\otimes I$ between $a$ and $b$, and  constant on $\partial \Db_n\otimes I$. This relation is denoted by $\sim$.
\end{definition}

\begin{lemma}
The relation $\sim$ is reflexive, symmetric and transitive. 
\end{lemma}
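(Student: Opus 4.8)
The plan is to reduce, via the suspension adjunction, to the case of thin $1$-cells in an $\infty$-category, and then to settle that case by two horn-fillings and two thinness extensions.

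\textbf{Reduction.} Fix parallel $n$-cells; for $n\ge 1$ they share a source $(n-1)$-cell and a target $(n-1)$-cell, hence a common boundary $\partial\colon\partial\Gb_n\to C$, and the claim is that $\sim$ is an equivalence relation on the $n$-cells with that boundary. I would use the identities $\Gb_{n+1}=\Sigma^{n}\Gb_1$, $\partial\Gb_{n+1}=\Sigma^{n}\partial\Delta[1]$ and $(\Gb_{n+1})_t=\tau_{n+1}\Gb_{n+1}=\Sigma^{n}I$ — the last because $\Sigma$ raises the truncation degree by one, exactly as in the proof of \cref{prop:i_str_is_Quillen} — together with the fact that the right adjoint $(a',b',D)\mapsto D(a',b')$ of the left Quillen functor $\Sigma$ preserves $\infty$-categories. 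Iterating this adjoint $n$ times along the boundary data of $a$ and $b$ produces an $\infty$-category $D$ (an $n$-fold iterated hom of $C$) and two $0$-cells $\bar a,\bar b$ of $D$, in such a way that $n$-cells of $C$ with boundary $\partial$ correspond to $0$-cells of $D$, compatibly with source and target, and thin $(n+1)$-cells $a\to b$ of $C$ correspond to thin $1$-cells $\bar a\to\bar b$ of $D$. Hence $a\sim b$ in $C$ iff there is a thin $1$-cell $\bar a\to\bar b$ in $D$, and it remains to prove: for any $\infty$-category $D$, the relation ``$x\approx y$ iff there is a thin $1$-cell $x\to y$'' on the $0$-cells of $D$ is reflexive, symmetric and transitive. (For $n=0$ this is already the lemma, with $D=C$.)

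\textbf{The $1$-cell case.} Reflexivity is immediate: the degenerate $1$-simplex $s_0x$ is thin, so $x\approx x$. For transitivity, given thin $1$-cells $g\colon x\to y$ and $h\colon y\to z$, the pair $(g,h)$ determines a map $\Lambda^1[2]\to D$; filling the complicial horn inclusion $\Lambda^1[2]\to\Delta^1[2]$ yields a thin $2$-simplex $\theta$ with $d_2\theta=g$ and $d_0\theta=h$. Since $\theta$, $d_0\theta$ and $d_2\theta$ are thin, $\theta$ defines a map $(\Delta^1[2])'\to D$, and filling the thinness extension $(\Delta^1[2])'\to(\Delta^1[2])''$ shows that $d_1\theta\colon x\to z$ is thin, so $x\approx z$. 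For symmetry, given a thin $1$-cell $g\colon x\to y$, prescribe $d_2=g$ and $d_1=s_0x$; this is a map $\Lambda^0[2]\to D$ sending the marked edge of $\Delta^0[2]$ to the thin cell $g$, so filling the (outer) complicial horn inclusion $\Lambda^0[2]\to\Delta^0[2]$ gives a thin $2$-simplex $\theta$ with $d_2\theta=g$, $d_1\theta=s_0x$ and $d_0\theta\colon y\to x$. As $\theta$, $d_1\theta$ and $d_2\theta$ are thin, $\theta$ defines a map $(\Delta^0[2])'\to D$, and filling $(\Delta^0[2])'\to(\Delta^0[2])''$ makes $d_0\theta\colon y\to x$ thin, so $y\approx x$. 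Here it is essential that $\infty$-categories lift against the \emph{outer} complicial horn inclusion $\Lambda^0[2]\to\Delta^0[2]$, which they do by definition.

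\textbf{Main obstacle.} The $1$-cell case is short and formal; the real care goes into the reduction, i.e. checking that the right adjoint of $\Sigma$, iterated, transports the boundary and thinness data correctly — concretely, the identities $(\Gb_{n+1})_t=\Sigma^{n}I$ and $\partial\Gb_{n+1}=\Sigma^{n}\partial\Delta[1]$ and the naturality of the suspension–hom adjunction. An alternative that avoids the reduction is to observe that $\Gb_n\otimes I\amalg_{\partial\Gb_n\otimes I}\partial\Gb_n$, with its two end-inclusions, is a cylinder object for $\Gb_n$ in the model category $\mSset_{\partial\Gb_n/}$ — the monomorphism $\Gb_n\amalg_{\partial\Gb_n}\Gb_n\to\Gb_n\otimes I\amalg_{\partial\Gb_n\otimes I}\partial\Gb_n$ being a cofibration since $\otimes$ is a left Quillen bifunctor, and $\Gb_n\otimes I\amalg_{\partial\Gb_n\otimes I}\partial\Gb_n\to\Gb_n$ a weak equivalence by left properness — and then to invoke the standard fact that left homotopy with respect to a cylinder object is an equivalence relation on maps into a fibrant object; the homotopy description of $\sim$ given in the definition identifies it with precisely this left-homotopy relation, and $C$ is fibrant.
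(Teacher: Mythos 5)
Your proposal is correct, but your main argument is not the paper's. The paper disposes of this lemma in one line — ``this comes from usual properties of fibrant objects'' — which is exactly your closing alternative: the definition identifies $\sim$ with left homotopy relative to $\partial\Gb_n$, the object $\Gb_n\otimes I\amalg_{\partial\Gb_n\otimes I}\partial\Gb_n$ is a good cylinder for the cofibrant object $\Gb_n$ in $\mSset_{\partial\Gb_n/}$ (cofibration by the Leibniz construction, weak equivalence by left properness plus two-out-of-three), and homotopy of maps from a cofibrant object into the fibrant object $C$ with respect to a fixed good cylinder is an equivalence relation. Your primary route is genuinely different: you reduce along the suspension--hom adjunction, using $\Sigma^n\Delta[1]_t=\tau_{n+1}\Sigma^n\Delta[1]=(\Gb_{n+1})_t$ and the fact that the right adjoint of the left Quillen functor $\Sigma$ preserves $\infty$-categories, so that parallel $n$-cells with fixed boundary become $0$-cells of an iterated hom and $\sim$ becomes ``there is a thin $1$-cell''; you then verify reflexivity, symmetry and transitivity by hand with the fillings $\Lambda^1[2]\to\Delta^1[2]$, $\Lambda^0[2]\to\Delta^0[2]$ and the thinness extensions $(\Delta^k[2])'\to(\Delta^k[2])''$, and those three fillings are carried out correctly (in particular the outer-horn step for symmetry legitimately uses that $g$ is thin, as the regular marking on $\Lambda^0[2]$ requires). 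What each approach buys: the model-categorical argument is shorter and matches the paper's later conventions (it is the same mechanism the paper invokes for compositions and for \cref{lemma:homotopycategory_are_idenpendant_of}); your explicit route gives concrete witnesses for the inverse and the composite thin cell, in the spirit of \cref{prop:in_the_homotopy_category_thin_is_iso}, at the price of the adjunction bookkeeping in the reduction — which you could avoid entirely by running the same three fillings directly in $C$ against the suspended inclusions $\Sigma^{n-1}\Lambda^k[2]\to\Sigma^{n-1}\Delta^k[2]$ and $\Sigma^{n-1}(\Delta^k[2])'\to\Sigma^{n-1}(\Delta^k[2])''$, exactly as the paper does in \cref{cons:composition_homotopy_category}.
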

\begin{proof}
This comes from usual properties of fibrant objects.
\end{proof}

\begin{construction}
\label{cons:composition_homotopy_category}
Let $a,b$ be two composable $n$-cells . A composition of ${a}$ and ${b}$ is a $n$-cell $a\circ b$ that fits in a diagram:
\[\begin{tikzcd}
	{\Gb_n\coprod_{\Gb_{n-1}}\Gb_n} \\
	{\Sigma^{n-1}(\Delta[2]_t)} & C \\
	{\Gb_{n}.}
	\arrow[from=1-1, to=2-1]
	\arrow["{a\coprod b}", from=1-1, to=2-2]
	\arrow["{a\circ b}"', from=3-1, to=2-2]
	\arrow[from=3-1, to=2-1]
	\arrow[from=2-1, to=2-2]
\end{tikzcd}\]
As $C$ is a fibrant object, if $(a\circ b)'$ is any other composition, $(a\circ b)'\sim a\circ b$.
\end{construction}

\begin{lemma}
\label{lemma:associativity_of_composition_in_homotopy_category}
Let $a,b,c$ be three composable cells. There exists compositions such that $(a\circ b)\circ c = a\circ (b\circ c)$.
\end{lemma}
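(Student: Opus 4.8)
The plan is to witness the associativity by a single map out of a thin $3$-simplex. I would work with the marked simplicial set $(\Delta^1[3])''$ — that is, $\Delta[3]$ with every simplex of dimension $\geq 2$ thin — and its spine $\mathrm{Sp}:=[0,1]\cup_{[1]}[1,2]\cup_{[2]}[2,3]$. The first step is to check that the inclusion $\mathrm{Sp}\hookrightarrow(\Delta^1[3])''$ is a finite composite of pushouts of elementary anodyne extensions: one fills in turn the inner horns on the vertex triples $\{0,1,2\}$, $\{1,2,3\}$, $\{0,1,3\}$ (each a pushout of the complicial horn inclusion $\Lambda^1[2]\to\Delta^1[2]=\Delta[2]_t$), which adjoins all six edges of $\Delta[3]$ together with three thin $2$-faces; then fills $\Lambda^1[3]\to\Delta^1[3]$, whose three faces $d_0,d_2,d_3$ are by now present, to adjoin the thin $3$-simplex; and finally applies the thinness extension $(\Delta^1[3])'\to(\Delta^1[3])''$ to make the last $2$-face $[0,2,3]$ thin. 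Since the suspension is a left Quillen functor, so is $\Sigma^{n-1}$, and therefore $\Sigma^{n-1}(\mathrm{Sp})\to\Sigma^{n-1}((\Delta^1[3])'')$ is an acyclic cofibration.

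Because $\Sigma^{n-1}$ preserves colimits and $\Sigma^{n-1}[0,1]=\Gb_n$, $\Sigma^{n-1}[1]=\Gb_{n-1}$, the source $\Sigma^{n-1}(\mathrm{Sp})$ is the iterated pushout $\Gb_n\coprod_{\Gb_{n-1}}\Gb_n\coprod_{\Gb_{n-1}}\Gb_n$. I would define a morphism $\Sigma^{n-1}(\mathrm{Sp})\to C$ sending the three edges $[0,1],[1,2],[2,3]$ to $c$, $b$, $a$ respectively; the two gluing conditions at the vertices $1$ and $2$ are exactly the statements that the source of $b$ is the target of $c$ and the source of $a$ is the target of $b$, i.e.\ that $a,b,c$ are composable. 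As $C$ is an $\infty$-category it is fibrant, hence has the right lifting property against the acyclic cofibration of the previous paragraph, so this morphism extends to a map $F:\Sigma^{n-1}((\Delta^1[3])'')\to C$.

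It then remains to read the required compositions off the faces of $F$. Each of the thin $2$-faces $[0,1,2]$, $[1,2,3]$, $[0,1,3]$, $[0,2,3]$ of $(\Delta^1[3])''$, equipped with its inherited marking, is a copy of $\Delta[2]_t$, so precomposing $F$ with the suspension of the corresponding inclusion produces four diagrams of precisely the shape appearing in \ref{cons:composition_homotopy_category}. They exhibit $F|_{\Sigma^{n-1}[0,2]}$ as a composition $b\circ c$, $F|_{\Sigma^{n-1}[1,3]}$ as a composition $a\circ b$, $F|_{\Sigma^{n-1}[0,3]}$ (via the face $[0,1,3]$) as a composition $(a\circ b)\circ c$, and $F|_{\Sigma^{n-1}[0,3]}$ (via the face $[0,2,3]$) as a composition $a\circ(b\circ c)$. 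Since the last two are one and the same edge of $F$, we obtain $(a\circ b)\circ c=a\circ(b\circ c)$ on the nose.

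The one step requiring genuine verification is the first: that $\mathrm{Sp}\hookrightarrow(\Delta^1[3])''$ decomposes as claimed into pushouts of complicial horn inclusions followed by a thinness extension. This amounts to careful bookkeeping of which $2$- and $3$-dimensional simplices are present and which are thin after each stage, and in particular to checking that the accumulated marking just before the last step is exactly $(\Delta^1[3])'$. Everything else is formal: the computation of $\Sigma^{n-1}$ on the relevant colimits, the identification of the four faces with $\Delta[2]_t$, and the matching of the restricted diagrams with the definition of composition in \ref{cons:composition_homotopy_category}.
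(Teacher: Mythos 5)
Your proposal is correct and follows essentially the same route as the paper: both lift the spine map $\Gb_n\coprod_{\Gb_{n-1}}\Gb_n\coprod_{\Gb_{n-1}}\Gb_n\to C$ along the suspension of the acyclic cofibration from the spine into $\Delta[3]$ with all simplices of dimension $\geq 2$ thin, and read the compositions off the thin $2$-faces and the edge $[0,3]$. The only difference is that you spell out the anodyne decomposition of the spine inclusion (which checks out), whereas the paper simply asserts its acyclicity.
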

\begin{proof}
Let $M$ be the marking on $\Delta[3]$ that includes all simplices of dimension superior or equal to $2$. The cofibration $Sp^3\to (\Delta[3],M)$ is acyclic.
\[\begin{tikzcd}
	{\Gb_n\coprod_{\Gb_{n-1}}\Gb_n\coprod_{\Gb_{n-1}}\Gb_n} \\
	{\Sigma^{n-1}(\Delta[3],M)} & C \\
	{\Gb_{n}}
	\arrow[from=1-1, to=2-1]
	\arrow["{a\coprod b\coprod c}", from=1-1, to=2-2]
	\arrow["{a\circ b\circ c}"', from=3-1, to=2-2]
	\arrow["{\Sigma^{n-1}d^1d^1}", from=3-1, to=2-1]
	\arrow["f", from=2-1, to=2-2]
\end{tikzcd}\]
The morphism $f$ provides all desired compositions.
\end{proof}

\begin{definition}
We define the category $\pi_0(C)$ whose objects are $0$-cells $x:s\to t$, and edges between $x,y:s\to t$ are equivalence classes of the set of $1$-cells $f:x\to y$ quotiented by the relation $\sim$.  Let $n>0$ be an integer, and $s,t$ two parallel $(n-1)$-cells. We define the category $\pi_n(s,t,C)$ whose objects are $n$-cells $x:s\to t$, and edges between $x,y:s\to t$ are equivalence classes of the set of $n+1$-cells $f:x\to y$ quotiented by the relation $\sim$. 

The composition is given by construction \ref{cons:composition_homotopy_category} which is associative according to lemma \ref{lemma:associativity_of_composition_in_homotopy_category}.
\end{definition}

\begin{prop}
\label{prop:in_the_homotopy_category_thin_is_iso}
Let $x,y:s\to t$ be two parallel $n$-cells, and $f:x\to y$ a $n+1$-cell. The cell $f$ is thin if and only if $[f]:x\to y$ is a isomorphism in $\pi_n(s,t,C)$.
\end{prop}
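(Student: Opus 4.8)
Throughout I use that every elementary anodyne extension is an acyclic cofibration, that the iterated suspension $\Sigma^n$ is left Quillen, and that an $\infty$-category has the right lifting property against all acyclic cofibrations; since the morphisms of $\pi_n(s,t,C)$ are composed as in \ref{cons:composition_homotopy_category} applied to $(n+1)$-cells, the relevant gadgets are $\Sigma^n$ of the various marked standard simplices $\Delta[1]$, $\Delta[2]$, $\Delta[3]$.

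\emph{Suppose $f$ is thin.} I would exhibit a two-sided inverse of $[f]$ by two outer-horn fillings, one dimension up. Because $f$ is thin and the edge $[0,1]$ is marked in $\Delta^0[2]$, the data $[0,1]\mapsto f$, $[0,2]\mapsto \id_x$ define a stratified map $\Sigma^n\Lambda^0[2]\to C$; lifting it along the acyclic cofibration $\Sigma^n(\Lambda^0[2]\to\Delta^0[2])$ yields a thin $2$-simplex which, by \ref{cons:composition_homotopy_category}, exhibits $\id_x$ as a composition $g\circ f$ with $g:=[1,2]\colon y\to x$. Hence $[g]\circ[f]=[\id_x]$ in $\pi_n(s,t,C)$, so $[g]$ is a left inverse of $[f]$. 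Dually, lifting $[1,2]\mapsto f$, $[0,2]\mapsto \id_y$ along $\Sigma^n(\Lambda^2[2]\to\Delta^2[2])$ produces $g':=[0,1]$ with $[f]\circ[g']=[\id_y]$, a right inverse. A morphism of a category possessing both a left and a right inverse is an isomorphism, so $[f]$ is invertible in $\pi_n(s,t,C)$.

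\emph{Suppose $[f]$ is an isomorphism,} with inverse represented by an $(n+1)$-cell $g\colon y\to x$. I would first record an auxiliary fact: if $a\sim b$ are parallel $m$-cells ($m\geq 1$) and $b$ is thin, then $a$ is thin. To see this, unwind the definition of $\sim$: a witnessing thin $(m+1)$-cell, using the description of $\Gb_{m+1}$ as the $(m-1)$-fold suspension of $\Gb_2=\Sigma\Delta[1]$, consists of two thin $2$-simplices (each suspended $m-1$ times) having a degenerate — hence thin — face, one relating $a$ to an auxiliary $m$-cell $h$ and the other relating $h$ to $b$. Since $b$ is thin the first $2$-simplex has two thin faces, so $\Sigma^{m-1}$ of the appropriate complicial thinness extension $(\Delta^k[2])'\to(\Delta^k[2])''$ shows $h$ is thin; applied to the second $2$-simplex it then shows $a$ is thin. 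Granting this, from $[g]\circ[f]=[\id_x]$ and $[f]\circ[g]=[\id_y]$ we get $g\circ f\sim\id_x$ and $f\circ g\sim\id_y$, so both composites are thin. Now I would place $f$ inside a thin $3$-simplex and invoke saturation: lifting the $(n+1)$-cell spine $[f,g,f]$ along $\Sigma^n$ of the acyclic cofibration $Sp^3\to(\Delta[3],M)$ from the proof of \ref{lemma:associativity_of_composition_in_homotopy_category} yields a thin $3$-simplex $\rho$ with $[0,1]=f$, $[1,2]=g$, $[2,3]=f$, whose faces $[0,1,2]$ and $[1,2,3]$ are thin $2$-simplices exhibiting the edges $[0,2]$ and $[1,3]$ as compositions of $(f,g)$ and of $(g,f)$ respectively. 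As compositions are unique up to $\sim$, $[0,2]\sim g\circ f\sim\id_x$ and $[1,3]\sim f\circ g\sim\id_y$, so by the auxiliary fact $[0,2]$ and $[1,3]$ are thin. The marked simplices of $\Delta[3]^{eq}$ are the $3$-simplex, $[0,2]$ and $[1,3]$, each sent by $\rho$ to a thin cell, so $\rho$ determines a stratified map $\Sigma^n\Delta[3]^{eq}\to C$; lifting it along the acyclic cofibration $\Sigma^n(\Delta[3]^{eq}\to\Delta[3]^\sharp)$ forces every simplex of $\rho$ to be thin, in particular the edge $[0,1]=f$.

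The substantial direction is $(\Leftarrow)$, and its heart is the use of the saturation extension $\Delta[3]^{eq}\to\Delta[3]^\sharp$: the complicial thinness extensions are only two-out-of-three statements for the faces of a thin $2$-simplex, whereas invertibility of $[f]$ only supplies control of a single such face (the composite $g\circ f$, resp. $f\circ g$), so they do not suffice by themselves. The remaining verifications — that $\Sigma^n\Delta^0[2]$, $\Sigma^n\Delta^2[2]$, $\Sigma^n(\Delta[3],M)$ and $\Sigma^n\Delta[3]^{eq}$ carry the cells and markings used above, and that the combinatorics of $\rho$ places $f$, $g$ and the two relevant composites where claimed — are routine, using \ref{cons:composition_homotopy_category} and the explicit markings.
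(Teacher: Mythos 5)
Your proof is correct and follows essentially the same route as the paper: the forward direction via the two suspended outer complicial horns $\Lambda^0[2]\subset\Delta^0[2]$ and $\Lambda^2[2]\subset\Delta^2[2]$, and the converse by filling the spine of $\Sigma^n(\Delta[3],M)$ with $f$ and its inverse, checking the edges $[0,2]$ and $[1,3]$ are thin so the map extends over $\Sigma^n\Delta[3]^{eq}$, and then applying the saturation extension to $\Sigma^n\Delta[3]^{\sharp}$. The only differences are cosmetic: your spine $[f,g,f]$ versus the paper's $[f^{-1},f,f^{-1}]$, and your explicit auxiliary lemma (equivalent to a thin cell implies thin, via the suspended thinness extensions), a step the paper's proof uses tacitly when it declares the composites "equal to identities, and so thin."
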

\begin{proof}
Suppose first that $f$ is thin. There are liftings in the following diagrams:
\[\begin{tikzcd}
	{\Sigma^{n-1}\Lambda^0[2]} & C & {\Sigma^{n-1}\Lambda^2[2]} & C \\
	{\Sigma^{n-1}\Delta^0[2],} && {\Sigma^{n-1}\Delta^0[2].}
	\arrow["{f\amalg id}", from=1-1, to=1-2]
	\arrow[from=1-1, to=2-1]
	\arrow["h"', dotted, from=2-1, to=1-2]
	\arrow["{id\amalg f}", from=1-3, to=1-4]
	\arrow["k"', dotted, from=2-3, to=1-4]
	\arrow[from=1-3, to=2-3]
\end{tikzcd}\]	
Let $g:y\to z$ be the restriction of $h$ to $\Sigma^{n-1}[1,2]$ and $l:y\to z$ be the restriction of $h$ to $\Sigma^{n-1}[0,1]$. We then have $[f][g]= id$, and $[h][f]=id$,  and $[f]$ is then an isomorphism. 

In the other direction, suppose that $[f]$ is an isomorphism. Let $M$ be the marking on $\Delta[3]$ that includes all simplices of dimension superior or equal to $2$. There is a lifting in the following diagram:
\[\begin{tikzcd}
	{\Sigma^n([0,1]\coprod_{[1]}[1,2]\coprod_{[2]}[2,3])} && C \\
	{\Sigma^n(\Delta[3],M).}
	\arrow["{f^{-1}\amalg f\amalg f^{-1}}", from=1-1, to=1-3]
	\arrow[from=1-1, to=2-1]
	\arrow["h"', dotted, from=2-1, to=1-3]
\end{tikzcd}\]
Now $h(\Sigma^n[0,3])$ and $h(\Sigma^n[0,2])$ are respectively compositions of $(f,f^{-1})$ and $(f^{-1},f)$. Hypotheses imply that these compositions are equal to identities, and so are thin. The morphism then lifts to $\Sigma^n \Delta[3]^{eq}$. The object $C$ being fibrant, $h$ lifts to $\Sigma^n \Delta[3]^\sharp$, and $f$ is then thin.
\end{proof}

\begin{lemma}
\label{lemma:homotopycategory_are_idenpendant_of}
Let $s,t$ and $s',t'$ be two pairs of parallel cells, and $\psi:\partial Gb_n\times I\to C$ a homotopy between $s\cup t$ and $s'\cup t'$. Then 
$$\pi_n(s,t,C)\cong \pi_n(s',t',C)$$
\end{lemma}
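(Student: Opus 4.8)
The statement is a homotopy-invariance property: the category $\pi_n(s,t,C)$ of $n$-cells with fixed source $s$ and target $t$ (modulo $\sim$) depends only on the equivalence class of the boundary $(s,t)$, via an explicitly given homotopy $\psi:\partial\Gb_n\times I\to C$. The natural strategy is to use $\psi$ to \emph{transport} cells and homotopies across the boundary, and then check that this transport is well-defined on equivalence classes and respects composition. I will set up the transport using the lifting property of $C$ against the pushout-product maps coming from the left Quillen structure on the Gray tensor product (\cref{prop:gray_product_is_a_left_Quillen_bifunctor}), together with the fact that $\Gb_n = \Sigma^n\Delta[0]$ and the elementary anodyne extensions of the complicial model structure.

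\textbf{Construction of the functor.} Given an $n$-cell $x:s\to t$, i.e.\ a map $\Gb_n\to C$ restricting to $s\cup t$ on $\partial\Gb_n$, I form the map $\partial\Gb_n\times I \cup \Gb_n\times\{0\}\to C$ given by $\psi$ on the first piece and $x$ on the second, and solve the lifting problem against $\Gb_n\times\{0\}\cup\partial\Gb_n\times I\hookrightarrow \Gb_n\times I$; this is an acyclic cofibration (it is the pushout-product of $\partial\Gb_n\hookrightarrow\Gb_n$ with the acyclic cofibration $\{0\}\hookrightarrow I$, hence acyclic by the Quillen bifunctor property, noting $\Gb_n\otimes I \cong \Gb_n\times I$ up to the marking subtleties, which here are harmless since $I=\Delta[1]_t$). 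Restricting the lift to $\Gb_n\times\{1\}$ produces a cell $\psi_*x:s'\to t'$. To see this is independent of the chosen lift (up to $\sim$) and descends to $\sim$-classes, I use that any two such lifts, together with a chosen homotopy $x\sim y$ on the source, assemble into a lifting problem against a higher-dimensional acyclic cofibration of the same pushout-product shape, whose solution exhibits $\psi_*x\sim\psi_*y$; this is the same kind of argument as in \cref{cons:composition_homotopy_category} and \cref{lemma:associativity_of_composition_in_homotopy_category}. Functoriality on morphisms of $\pi_n$ (the $(n+1)$-cells) is obtained by running the transport one dimension up, i.e.\ applying the same procedure to $f:x\to y$ viewed as a map $\Gb_{n+1}\to C$ with boundary controlled by $\psi$ and by $\psi_*x,\psi_*y$; compatibility with composition of $(n+1)$-cells follows because compositions are themselves obtained by lifting against acyclic cofibrations and the two lifting problems can be solved simultaneously.

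\textbf{The functor is an equivalence.} Running the same construction with the reversed homotopy $\psi^{-1}:\partial\Gb_n\times I\to C$ (reverse the $I$-coordinate) yields a functor $(\psi^{-1})_*:\pi_n(s',t',C)\to\pi_n(s,t,C)$. The composites $(\psi^{-1})_*\psi_*$ and $\psi_*(\psi^{-1})_*$ are, by construction, transports along the concatenations $\psi^{-1}\cdot\psi$ and $\psi\cdot\psi^{-1}$, which are homotopic rel endpoints to the constant homotopies at $(s,t)$ and $(s',t')$ respectively; transport along a constant homotopy is the identity up to natural isomorphism (indeed the identity lift is a valid solution). Hence $\psi_*$ is an equivalence of categories. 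By \cref{prop:in_the_homotopy_category_thin_is_iso} one could even upgrade ``equivalence'' to ``isomorphism'' if one checks that transport sends thin cells to thin cells and is bijective on objects up to $\sim$, which again follows from the lifting characterizations; this gives the claimed $\pi_n(s,t,C)\cong\pi_n(s',t',C)$.

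\textbf{Main obstacle.} The routine part is the lifting-problem bookkeeping; the delicate point is \emph{marking management}. The isomorphism $\Gb_n\otimes I\cong\Gb_n\times I$ and the identification of $\Gb_n\times\{0\}\cup\partial\Gb_n\times I\hookrightarrow\Gb_n\times I$ as a genuine acyclic cofibration in $\mSset$ require care because the Gray tensor product's marking on a product is not simply the product marking (cf.\ the remark following \cref{prop:R_commutes_with_gray_tensor} and the definition of $tX\otimes tY$); one must check that the thin simplices introduced are exactly those one wants, so that the transported cell $\psi_*x$ has the correct thinness when $x$ is thin, and that the higher-cell transports used for well-definedness land against the right elementary anodyne extensions. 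I expect this verification — essentially re-deriving the pushout-product acyclicity in the precise marked setting relevant to $\Gb_n$ and $I$ — to be the real work, while the categorical skeleton of the argument is straightforward.
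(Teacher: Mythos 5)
Your construction of the forward transport is exactly the paper's: lift $x\cup\psi$ against $\Gb_n\times\{0\}\cup\partial\Gb_n\times I\hookrightarrow \Gb_n\times I$, restrict to $\Gb_n\times\{1\}$, transport $(n+1)$-cells by the analogous lifting one dimension up, and handle compositions by a further lifting over $\Sigma^n\Delta[2]_t$. A side remark: the ``marking management'' you single out as the main obstacle is not where the work is. The paper runs the whole argument with the cartesian product, and the acyclicity of these pushout-products is immediate from \cref{prop:martina} together with the $(\Lambda\cup S,I)$-locality of the model structure (alternatively, since every simplex of $I=\Delta[1]_t$ is thin, $I\otimes X\to I\times X$ is an isomorphism, so your Gray-tensor phrasing is also fine).

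The genuine gap is in the quasi-inverse step. First, the ``reversed homotopy $\psi^{-1}$ (reverse the $I$-coordinate)'' does not exist: the swap of the two vertices of $\Delta[1]$ is not order-preserving, hence is not a simplicial map. The paper avoids this by lifting from the other endpoint with the \emph{same} $\psi$, using that both $\{0\}\hookrightarrow\Delta[1]_t$ and $\{1\}\hookrightarrow\Delta[1]_t$ are complicial horn inclusions. Second, and more substantially, your argument that the two composites are the identity up to natural isomorphism rests on unproven claims (that the composite transport is transport along a concatenated homotopy, and that transports along homotopies which are homotopic rel endpoints agree up to natural isomorphism), and it never addresses the point that actually makes the unit invertible: one must exhibit a \emph{thin} comparison cell $x\to GF(x)$ (and its naturality square) and then invoke \cref{prop:in_the_homotopy_category_thin_is_iso}. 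The paper does this in one stroke by solving the lifting problem against $\Gb_n\times\Lambda^2[2]^\sharp\cup\partial\Gb_n\times\Delta[2]^\sharp\to \Gb_n\times\Delta[2]^\sharp$ with inputs $h_x$, $h_{F(x)}$ and the degenerate homotopy, the missing face being the desired thin cell, and similarly one dimension higher for naturality. Your concatenation route could be repaired, but each of its ingredients is itself a lifting argument of exactly this kind, so as written the decisive content of the proof is deferred rather than supplied.
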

\begin{proof}
For each $x:s\to t$, there exists a lifting   $h_x$  in the following diagram:
\[\begin{tikzcd}
	{\Gb_n\times\{0\}\cup\partial\Gb_n\times I} & C \\
	{\Gb_n\times I}
	\arrow[from=1-1, to=2-1]
	\arrow["x\cup\psi", from=1-1, to=1-2]
	\arrow["h"', dotted, from=2-1, to=1-2]
\end{tikzcd}\]
and we define $F(x)$ as the restriction of $h_x$ to $\Db_n\times \{1\}$. For a $(n+1)$-cell $f:x\to y$,  there exists a lifting $h_f$ in the following diagram:
\[\begin{tikzcd}
	{\Gb_{n+1}\times \{0\}\cup \partial\Gb_{n+1}\times I} & C \\
	{\Gb_{n+1}\times I}
	\arrow["{f\cup h_x\cup h_y}", from=1-1, to=1-2]
	\arrow[from=1-1, to=2-1]
	\arrow["{h_f}"', from=2-1, to=1-2]
\end{tikzcd}\]
and we define $F(f)$ as the restriction of $h_f$ to $\Db_{n+1}\times \{1\}$. Furthermore, we can check that $[F(f)]$ is independent of the choice of the lifting, and if $f\sim g$, $[F(f)]=[F(g)]$.
If $g:y\to z$ is an other morphism, and $\psi:\Sigma^n\Delta[2]_t \to C$ expressing the composition of $f$ and $g$, 
there is a lift in the following diagram:
\[\begin{tikzcd}
	{\Sigma^n \Delta[2]_t\cup (\Sigma^n\partial\Delta[2])\times I} && C \\
	{\Sigma^n \Delta[2]_t\times I.}
	\arrow["{h_f\cup h_g\cup h_{f\circ g}\cup \phi}", from=1-1, to=1-3]
	\arrow[from=1-1, to=2-1]
	\arrow[dotted, no head, from=2-1, to=1-3]
\end{tikzcd}\]
Restricted to $\Sigma^n \Delta[2]_t\times \{1\}$ this shows that $F$ commutes with compositions. We yet have to define a functor 
$$F:\pi_n(s,t,C)\to \pi_n(s',t',C).$$

Using exactly the same procedure, where we just inverse the role of $0$ and $1$, we define a functor:
$$G:\pi_n(s',t',C)\to \pi_n(s,t,C).$$
Now, we have a lift in the following diagram:
\[\begin{tikzcd}
	{\Gb_{n}\times \Lambda^{2}[2]^\sharp\cup\partial\Gb_n\times\Delta[2]^\sharp} &&& C \\
	{\Gb_n\times\Delta[2]^\sharp.}
	\arrow["{h_x\cup h_{F(x)}\cup\psi(id\times s^0)}", from=1-1, to=1-4]
	\arrow[from=1-1, to=2-1]
	\arrow["{k_x}"', dotted, from=2-1, to=1-4]
\end{tikzcd}\]
The restriction of $k_x$ to $\Db_n\times [0,1]_t$ provides a  thin cell $x\to G(F(x))$, which corresponds to an isomorphism in $\pi_n(s,t,C)$, according to proposition \ref{prop:in_the_homotopy_category_thin_is_iso}. If $f:x\to y$ is a $(n+1)$-cells, there is a lifting in the following diagram:
\[\begin{tikzcd}
	{\Gb_{n+1}\times \Lambda^{2}[2]^\sharp\cup\partial\Gb_{n+1}\times\Delta[2]^\sharp} &&& C \\
	{\Gb_{n+1}\times\Delta[2]^\sharp.}
	\arrow["{h_f\cup h_{F(f)}\cup k_x\cup k_y}", from=1-1, to=1-4]
	\arrow[from=1-1, to=2-1]
	\arrow["{k_f}"', dotted, from=2-1, to=1-4]
\end{tikzcd}\]
The restriction of $k_f$ to $\Gb_{n+1}\times[0,1]_t$ induces in $\pi_n(s,t,C)$ a commutative diagram:
\[\begin{tikzcd}
	x & GFx \\
	y & GFy.
	\arrow["{[GFf]}", from=1-2, to=2-2]
	\arrow["{[f]}"', from=1-1, to=2-1]
	\arrow[from=2-1, to=2-2]
	\arrow[from=1-1, to=1-2]
\end{tikzcd}\]
We then have a invertible natural transformation $\psi: id\to GF$. Similarly we can construct an other natural transformation $id\to GF$, which shows the desired equivalence of categories.
\end{proof}

\begin{definition}
Let $a$ be an element of the set of equivalence classes $[\partial \Gb_n, C]$. We define 
$$\pi_n(a, X) := \pi_n(s,t,C)$$
where $s,t$ is a pair of parallel arrows such that $s\cup t$ represents $a$.
The previous proposition shows that this is well defined.
\end{definition}
 
\subsection{A criterium to be a weak equivalence}

\begin{definition}
A maps $p:C\to D$ between $\infty$-categories is a \textit{$\Gb$-equivalence} if 
$$\pi_0(C)\to \pi_0(D)$$
is an equivalence of categories, and for all $n>0$ and pair of parallel arrow $s,t$, the induced functor
$$\pi_n(s,t,C)\to \pi_n(ps,pt,D)$$
is an equivalence of categories. 

A \textit{$\Gb$-trivial fibration} is a fibration having the right lifting property against $\partial\Gb_n\to \Gb_n$ and $\Gb_n\to (\Gb_{n})_t$.
\end{definition}

\begin{lemma}
\label{lemma:fibration_are_isofibration}
Let $\alpha\in\mathbb{Z}/2\mathbb{Z}$.
The morphism $\delta^\alpha_{n+1}:\Gb_n\to (\Gb_{n+1})_t$ is an acyclic cofibration. 
\end{lemma}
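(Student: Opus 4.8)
The plan is to reduce the statement to the base case $n=0$ by pushing it through the suspension functor. First I would observe that, unwinding the recursive definition $\Gb_{n+1}=\Sigma\Gb_n$ of the globular object together with the globularity relations, the face map $\delta^\alpha_{n+1}\colon\Gb_n\to\Gb_{n+1}$ is nothing but $\Sigma^n$ applied to $\delta^\alpha_1\colon\Gb_0=\Delta[0]\to\Delta[1]=\Gb_1$, the inclusion of the vertex $\alpha$. For the marked target, I would invoke the identity $\Sigma^l\Delta[m]_t=\tau_{m+l}\Sigma^l\Delta[m]$ recorded in the proof of \cref{prop:i_str_is_Quillen}; taking $m=1$, $l=n$, and using that $(\Gb_1)_t=\tau_1\Delta[1]=\Delta[1]_t$, this gives $\Sigma^n(\Gb_1)_t=\Sigma^n\Delta[1]_t=\tau_{n+1}\Sigma^n\Delta[1]=\tau_{n+1}\Gb_{n+1}=(\Gb_{n+1})_t$. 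Hence $\delta^\alpha_{n+1}\colon\Gb_n\to(\Gb_{n+1})_t$ is obtained from $\delta^\alpha_1\colon\Gb_0\to(\Gb_1)_t$ by applying the functor $\Sigma^n$.

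Second, I would identify $\delta^\alpha_1\colon\Gb_0\to(\Gb_1)_t$ with a complicial horn inclusion: by inspection of the stratified structures on $\Delta[1]$, one has $(\Gb_1)_t=\Delta[1]_t=\Delta^0[1]=\Delta^1[1]$, while the vertex $\{0\}$ is $\Lambda^0[1]$ and the vertex $\{1\}$ is $\Lambda^1[1]$. Thus $\delta^\alpha_1$ is the complicial horn inclusion $\Lambda^\alpha[1]\to\Delta^\alpha[1]$, an elementary anodyne extension, hence an element of $\Lambda$ and in particular an acyclic cofibration of the model structure on $\mSset$.

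Finally, since $\Sigma$ is a left Quillen functor (by the construction of the suspension recalled above), so is $\Sigma^n$, and therefore $\Sigma^n$ preserves acyclic cofibrations. Applying it to $\delta^\alpha_1\colon\Gb_0\to(\Gb_1)_t$ yields that $\delta^\alpha_{n+1}\colon\Gb_n\to(\Gb_{n+1})_t$ is an acyclic cofibration, which is the claim.

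The argument is essentially formal and presents no real obstacle; the only points that require a moment's care are the two identifications in the first paragraph — that the globular face maps are iterated suspensions of $\delta^\alpha_1$ (straight from the definitions and the globularity relations) and that $\Sigma^n$ carries the marked interval $\Delta[1]_t$ to the correctly truncated globe $(\Gb_{n+1})_t$ (the cited identity). Once these are in place, everything follows from the fact that $\Sigma$ is left Quillen and that $\delta^\alpha_1$ is a complicial horn inclusion.
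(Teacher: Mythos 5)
Your argument is correct, but it is genuinely different from the one in the paper. The paper proves the lemma in one step, without mentioning the suspension at all: it exhibits $\delta^\alpha_{n+1}:\Gb_n\to(\Gb_{n+1})_t$ as a pushout of the Leibniz map $\Gb_n\times\{\alpha\}\cup\partial\Gb_n\times I\to\Gb_n\times I$, which is anodyne by proposition \ref{prop:martina}, so the conclusion follows from stability of acyclic cofibrations under pushout. You instead reduce to dimension one: you identify $\delta^\alpha_{n+1}$ with $\Sigma^n\delta^\alpha_1$, identify $(\Gb_{n+1})_t$ with $\Sigma^n\Delta[1]_t$ via the identity $\Sigma^l\Delta[m]_t=\tau_{m+l}\Sigma^l\Delta[m]$, recognize $\delta^\alpha_1:\{\alpha\}\to\Delta[1]_t$ as the complicial horn inclusion $\Lambda^\alpha[1]\to\Delta^\alpha[1]$, and transport along the left Quillen functor $\Sigma^n$. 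Each step checks out within the paper's framework, so there is no gap; but note what you are leaning on: the coface maps $\delta^\alpha_{n+1}$ are never defined in the text beyond the globular diagram, so the identification with $\Sigma^n\delta^\alpha_1$ is an (entirely reasonable) reading of the intended definition, and the identity $\Sigma^l\Delta[m]_t=\tau_{m+l}\Sigma^l\Delta[m]$ is only asserted, not proved, inside the proof of proposition \ref{prop:i_str_is_Quillen}, so your proof inherits whatever justification that combinatorial statement has. The trade-off is clear: the paper's pushout argument is self-contained and needs no explicit control of how $\Sigma$ interacts with markings, while your route is more modular and conceptual (everything is pushed onto the $1$-dimensional horn and the Quillen property of $\Sigma$), at the price of those two identifications.
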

\begin{proof}
We have a pushout diagram
\[\begin{tikzcd}
	{\Gb_n\times\{\alpha\}\cup\partial\Gb_n\times I} & {\Gb_n\times \{\alpha\}} \\
	{\Gb_n\times I} & {(\Gb_n)_t.}
	\arrow[""{name=0, anchor=center, inner sep=0}, "{id\cup \partial\times s^0}", from=1-1, to=1-2]
	\arrow[from=1-1, to=2-1]
	\arrow[from=2-1, to=2-2]
	\arrow["{\delta^\alpha_{n+1}}", from=1-2, to=2-2]
	\arrow["\lrcorner"{anchor=center, pos=0.125, rotate=180}, draw=none, from=2-2, to=0]
\end{tikzcd}\]
The left hand morphism being an acyclic cofibration, this concludes the proof.
\end{proof}

\begin{lemma}
\label{lemma:acyclic_cofibration_are_G_equivalence}
Acyclic cofibrations between $\infty$-categories are $\Gb$-equivalences.
\end{lemma}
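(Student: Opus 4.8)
The plan is to use that a trivial cofibration $i\colon C\to D$ between fibrant objects is an explicit strong deformation retract, and then to check the three defining conditions — essential surjectivity, fullness, faithfulness — of each functor $\pi_n(s,t,C)\to\pi_n(is,it,D)$ by a direct manipulation of cells, without ever having to discuss naturality.

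First I would build the retraction and the homotopy. Since $C$ is an $\infty$-category, hence fibrant, lifting $\mathrm{id}_C$ against $i$ and $C\to\ast$ produces $r\colon D\to C$ with $ri=\mathrm{id}_C$. Next, as $i$ is a trivial cofibration and the Gray tensor product is a left Quillen bifunctor (Proposition \ref{prop:gray_product_is_a_left_Quillen_bifunctor}), the pushout–product $C\otimes I\cup_{C\otimes\partial\Delta[1]}D\otimes\partial\Delta[1]\to D\otimes I$ is a trivial cofibration; the map out of its source defined by $ir$ on $D\otimes\{0\}$, by $\mathrm{id}_D$ on $D\otimes\{1\}$, and by $i\circ\mathrm{pr}_C$ on $C\otimes I$ is well defined (the compatibility checks use $ri=\mathrm{id}_C$), so since $D$ is fibrant it extends to a homotopy $H\colon D\otimes I\to D$ from $ir$ to $\mathrm{id}_D$ with $H|_{C\otimes I}=i\circ\mathrm{pr}_C$. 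The one consequence I will use repeatedly is: if $y$ is an $n$-cell of $D$ whose restriction to $\partial\Gb_n$ factors through $i$, then $H\circ(y\otimes I)$ is a homotopy from $iry$ to $y$ that is constant on $\partial\Gb_n\otimes I$; hence $iry$ and $y$ are joined by a thin $(n{+}1)$-cell, which by Proposition \ref{prop:in_the_homotopy_category_thin_is_iso} represents an isomorphism $[iry]\xrightarrow{\ \sim\ }[y]$ in the relevant homotopy category.

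Then I would fix $n\ge 1$ and parallel $(n{-}1)$-cells $s,t$ of $C$ and verify the conditions for $i_\ast\colon\pi_n(s,t,C)\to\pi_n(is,it,D)$, using throughout that $r$, being a morphism of marked simplicial sets, preserves thin cells. \emph{Essential surjectivity:} for an $n$-cell $y\colon is\to it$ of $D$, the cell $ry\colon s\to t$ satisfies $i_\ast(ry)=iry\cong y$ by the previous paragraph. \emph{Fullness:} a morphism of $\pi_n(is,it,D)$ between $ia$ and $ib$ (for $a,b$ objects of $\pi_n(s,t,C)$) is represented by an $(n{+}1)$-cell $h\colon ia\to ib$; applying the above to $h$, whose restriction to $\partial\Gb_{n+1}$ is $ia\cup ib=i\circ(a\cup b)$, gives $irh\sim h$, so $i_\ast[rh]=[irh]=[h]$. \emph{Faithfulness:} if $f,g\colon a\to b$ in $C$ satisfy $if\sim ig$, witnessed by a thin $(n{+}2)$-cell $d\colon if\to ig$, then $rd\colon f\to g$ is thin, so $f\sim g$, i.e. $[f]=[g]$. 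The case of $\pi_0(C)\to\pi_0(D)$ is the same argument with $n=0$ (the boundary condition on $H$ being then vacuous), using the $n=0$ analogue of Proposition \ref{prop:in_the_homotopy_category_thin_is_iso} that a thin edge represents an isomorphism in $\pi_0$. This establishes that $i$ is a $\Gb$-equivalence.

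I do not expect a genuine obstacle: the argument is short once $H$ is in hand. The only points needing a little care are the well-definedness of the map out of $C\otimes I\cup_{C\otimes\partial\Delta[1]}D\otimes\partial\Delta[1]$ (a routine diagram check) and, for the $\pi_0$ statement, invoking the $n=0$ version of "thin $\Leftrightarrow$ isomorphism in the homotopy category", which holds by the same proof as Proposition \ref{prop:in_the_homotopy_category_thin_is_iso}.
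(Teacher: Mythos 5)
Your proof is correct and takes essentially the same route as the paper: both construct a retraction $r$ with $ri=\mathrm{id}$ and a homotopy between $ir$ and the identity, then deduce faithfulness from the retraction and essential surjectivity and fullness from the homotopy together with Proposition \ref{prop:in_the_homotopy_category_thin_is_iso}. Your explicit construction of the homotopy as a strong deformation retract (constant on $C$, via the pushout--product with $\partial\Delta[1]\to I$) is a useful precision the paper leaves implicit, since this constancy is exactly what makes the induced homotopies constant on boundaries and hence yields thin cells in the relevant $\pi_n(s,t,-)$.
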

\begin{proof}
Let $i:A\to B$ be an acyclic cofibration. Let $r:B\to A$ be a retraction: 
\[\begin{tikzcd}
	A & A \\
	B.
	\arrow["id", from=1-1, to=1-2]
	\arrow["i"', from=1-1, to=2-1]
	\arrow["r"', from=2-1, to=1-2]
\end{tikzcd}\]
There exists a homotopy  $\psi$  between $id_A$ and $ir$. 
The morphism $\pi_n(s,t,A)\xrightarrow{i_!} \pi_n(a,B)$  admits a retraction, and is then faithfull.  Let $y$ be an object of $\pi_n(is,it,B)$. The homotopy $\psi$ induces  the existence of a thin cell $ir(y)\to y$ which corresponds to an isomorphism in $\pi_n(is,it,B)$, according to proposition \ref{prop:in_the_homotopy_category_thin_is_iso}. The functor $i_!$ is then essentially surjective. Eventually, for all $f:i(x)\to i(y)$, $\psi$ induces an equivalence $ir(f)\sim f$. The functor $i_!$ is then fully faithfull. 
\end{proof}

\begin{lemma}
\label{lemma:2_out_of_3_for_G_equivalence}
If we have a commutative diagram: 
\[\begin{tikzcd}
	& B \\
	A && C
	\arrow["g"', from=2-1, to=2-3]
	\arrow["f", from=1-2, to=2-3]
	\arrow["i", from=2-1, to=1-2]
\end{tikzcd}\]
where $i$ is an acyclic cofibration, and $g$ is a $\Gb$-equivalence, then $f$ is a $\Gb$-equivalence.
\end{lemma}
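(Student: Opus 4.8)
The plan is to deduce the statement from the ordinary $2$-out-of-$3$ property for equivalences of categories, once one knows that $i$ is itself a $\Gb$-equivalence (Lemma \ref{lemma:acyclic_cofibration_are_G_equivalence}) and uses the retraction and homotopy attached to the acyclic cofibration $i$ to move arbitrary basepoints of $B$ to basepoints coming from $A$.

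For $\pi_0$: applying $\pi_0$ to the triangle gives a commutative triangle of functors whose composite $\pi_0(g)$ and whose first leg $\pi_0(i)$ are both equivalences of categories, so $\pi_0(f)$ is an equivalence by $2$-out-of-$3$ (which holds in every position for equivalences of categories). Now fix $n>0$ and a pair of parallel $(n-1)$-cells $s,t$ of $B$; I must show that $\pi_n(s,t,B)\to\pi_n(fs,ft,C)$ is an equivalence. Choose a retraction $r\colon B\to A$ of $i$ and a homotopy $\psi$ from $\id_B$ to $ir$, as produced in the proof of Lemma \ref{lemma:acyclic_cofibration_are_G_equivalence}. Restricting $\psi$ along $s\cup t\colon\partial\Gb_n\to B$ gives a homotopy in $B$ between $s\cup t$ and $i(rs)\cup i(rt)$, so Lemma \ref{lemma:homotopycategory_are_idenpendant_of} yields an equivalence $\Phi\colon\pi_n(s,t,B)\xrightarrow{\ \sim\ }\pi_n(i(rs),i(rt),B)$; similarly, restricting the homotopy $f\psi$, which runs from $f$ to $fir=gr$ (using $fi=g$), along $s\cup t$ gives a homotopy in $C$ between $fs\cup ft$ and $g(rs)\cup g(rt)$, hence an equivalence $\Psi\colon\pi_n(fs,ft,C)\xrightarrow{\ \sim\ }\pi_n(g(rs),g(rt),C)$.

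Since $rs$ and $rt$ are parallel $(n-1)$-cells of $A$, both $\pi_n(rs,rt,A)\to\pi_n(i(rs),i(rt),B)$ and $\pi_n(rs,rt,A)\to\pi_n(g(rs),g(rt),C)$ are equivalences, the first because $i$ is a $\Gb$-equivalence and the second because $g$ is; as the second factors as the first followed by the functor $f_\ast\colon\pi_n(i(rs),i(rt),B)\to\pi_n(fi(rs),fi(rt),C)=\pi_n(g(rs),g(rt),C)$ induced by $f$, that functor $f_\ast$ is an equivalence by $2$-out-of-$3$. It then remains to check that the square formed by $\Phi$, $\Psi$, $f_\ast$ and $\pi_n(s,t,B)\to\pi_n(fs,ft,C)$ commutes up to natural isomorphism: applying $f$ to a lift of $x\cup(\psi|_{\partial\Gb_n})$ of the kind used to define $\Phi(x)$ produces a lift of $fx\cup(f\psi|_{\partial\Gb_n})$ of the kind used to define the value of $f_\ast$ on the $C$-side comparison, and by Lemma \ref{lemma:homotopycategory_are_idenpendant_of} these functors do not depend, up to natural isomorphism, on such choices. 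Granting this, $\pi_n(s,t,B)\to\pi_n(fs,ft,C)$ is naturally isomorphic to $\Psi^{-1}\circ f_\ast\circ\Phi$, a composite of equivalences, and hence is an equivalence; therefore $f$ is a $\Gb$-equivalence. I expect the only genuinely delicate point to be this last compatibility of the two comparison equivalences with the functor $f$, i.e. the naturality implicit in Lemma \ref{lemma:homotopycategory_are_idenpendant_of}; everything else is a formal application of $2$-out-of-$3$ for equivalences of categories.
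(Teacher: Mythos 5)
Your proof is correct and follows essentially the same route as the paper: the paper also replaces $s,t$ by cells $is',it'$ in the image of $i$ (your $irs,irt$, obtained from a retraction and homotopy exactly as you do), invokes Lemma \ref{lemma:homotopycategory_are_idenpendant_of} for the two comparison equivalences, and concludes by two-out-of-three through $\pi_n(s',t',A)$ using Lemma \ref{lemma:acyclic_cofibration_are_G_equivalence}. The only difference is that you make explicit the compatibility of the comparison functors with $f$ (the commutativity up to natural isomorphism of the square), a point the paper's proof leaves implicit.
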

\begin{proof}
Let $s,t$ be any pair of parallel arrows in $B$. There exists a pair of parallel arrows $s',t'$ in $A$ such that $s\cup t$ and $is'\cup it'$ correspond to the same element in $[\partial\Gb_n,B]$.  We then have a diagram:
\[\begin{tikzcd}
	& { \pi(s,t,B)} & {\pi(fs,ft,C)} \\
	{\pi(s,t,B)} & { \pi(is,it,B)} & {\pi(gs,gt,C).}
	\arrow["\sim", from=2-1, to=2-2]
	\arrow[from=2-2, to=2-3]
	\arrow["\sim", from=1-3, to=2-3]
	\arrow["\sim", from=1-2, to=2-2]
	\arrow[from=1-2, to=1-3]
	\arrow["\sim"', curve={height=18pt}, from=2-1, to=2-3]
\end{tikzcd}\]
where  arrows labeled by $\sim$ are isomorphisms according to \ref{lemma:homotopycategory_are_idenpendant_of} and \ref{lemma:acyclic_cofibration_are_G_equivalence}. By two out of three, that shows that $ \pi(s,t,B)\to  \pi(fs,ft,C)$ is an isomorphism, and $f$ is then a $\Gb$ equivalence.
\end{proof}

\begin{prop}
\label{prop:caracterisation_of_G_fibration}
A map is a $\Gb$-trivial fibration if and only if it is a fibration and a $\Gb$-equivalence.
\end{prop}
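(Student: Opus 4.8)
The plan is to prove the two implications separately, translating in each case the lifting problems against $\partial\Gb_n\to\Gb_n$ and $\Gb_n\to(\Gb_n)_t$ into statements about the homotopy categories $\pi_n(s,t,C)$. The two workhorses will be Proposition~\ref{prop:in_the_homotopy_category_thin_is_iso}, to pass between thin cells and isomorphisms, and Lemma~\ref{lemma:fibration_are_isofibration}, to transport equivalences along the fibration $p$. Throughout, $p\colon C\to D$ is a map of $\infty$-categories.

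Assume first that $p$ is a $\Gb$-trivial fibration; it is a fibration by definition, so it remains to show it is a $\Gb$-equivalence. Fix $n>0$ and parallel $(n-1)$-cells $s,t$ of $C$ (the argument for $\pi_0$ is the same, with $\partial\Gb_0=\emptyset$). I claim $\pi_n(s,t,C)\to\pi_n(ps,pt,D)$ is surjective on objects, full, and faithful, hence an equivalence. An object of $\pi_n(ps,pt,D)$ is an $n$-cell $\bar y\colon ps\to pt$; together with $s\cup t\colon\partial\Gb_n\to C$ it forms a square against $\partial\Gb_n\to\Gb_n$, and any lift is an $n$-cell $y\colon s\to t$ with $py=\bar y$, giving surjectivity on objects. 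Given $x,y\colon s\to t$ and a morphism $[\bar f]\colon px\to py$ represented by an $(n+1)$-cell $\bar f$, a lift of the corresponding square against $\partial\Gb_{n+1}\to\Gb_{n+1}$ is an $(n+1)$-cell $g\colon x\to y$ with $pg=\bar f$, so $[g]\mapsto[\bar f]$; this gives fullness. For faithfulness, if $f,g\colon x\to y$ satisfy $pf\sim pg$, witnessed by a thin $(n+2)$-cell $H$, lift the square $(f\cup g,\,H)$ against $\partial\Gb_{n+2}\to\Gb_{n+2}$ to $\tilde H\colon f\to g$ with $p\tilde H=H$; since $H$ is thin it factors through $(\Gb_{n+2})_t$, and a lift of the ensuing square against $\Gb_{n+2}\to(\Gb_{n+2})_t$ exhibits $\tilde H$ as thin, so $f\sim g$ and $[f]=[g]$.

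Conversely, assume $p$ is a fibration and a $\Gb$-equivalence. For the lifting property against $\Gb_n\to(\Gb_n)_t$ (vacuous for $n=0$, since $0$-simplices cannot be marked): a square amounts to an $n$-cell $c\colon s\to t$ of $C$ whose image $pc$ is thin in $D$, where for $n\geq 2$ we write $\sigma,\tau$ for the common source and target of $s$ and $t$. By Proposition~\ref{prop:in_the_homotopy_category_thin_is_iso} in dimension $n-1$, thinness of $pc$ means $[pc]$ is an isomorphism in $\pi_{n-1}(p\sigma,p\tau,D)$ (resp.\ in $\pi_0(D)$ when $n=1$); since $p$ induces an equivalence $\pi_{n-1}(\sigma,\tau,C)\to\pi_{n-1}(p\sigma,p\tau,D)$ and equivalences reflect isomorphisms, $[c]$ is an isomorphism, i.e.\ $c$ is thin, and factoring $c$ through $(\Gb_n)_t$ supplies the lift. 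For the lifting property against $\partial\Gb_n\to\Gb_n$: a square gives parallel $(n-1)$-cells $s,t$ in $C$ and an $n$-cell $\bar y\colon ps\to pt$ in $D$; by essential surjectivity of $\pi_n(s,t,C)\to\pi_n(ps,pt,D)$ there is an $n$-cell $y\colon s\to t$ of $C$ and an isomorphism $[e]\colon py\to\bar y$, represented by a thin $(n+1)$-cell $e$ (Proposition~\ref{prop:in_the_homotopy_category_thin_is_iso}). Since $\delta^0_{n+1}\colon\Gb_n\to(\Gb_{n+1})_t$ is an acyclic cofibration (Lemma~\ref{lemma:fibration_are_isofibration}) and $p$ is a fibration, the square with top map $y$ and bottom map $e$ admits a lift $\tilde e\colon(\Gb_{n+1})_t\to C$; its target $\delta^1_{n+1}(\tilde e)$ is an $n$-cell parallel to $y$, hence still with source $s$ and target $t$, mapping to $\delta^1_{n+1}(e)=\bar y$, which is the required lift. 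The cases $n=0,1$ reduce to $\pi_0$ and are handled identically.

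I expect the main obstacle to be the final step of the converse: essential surjectivity of the homotopy-category comparison functors only yields a lift up to equivalence, so one genuinely needs the fibration hypothesis — packaged as Lemma~\ref{lemma:fibration_are_isofibration} — to promote it to a strict lift, and one must check that the target of the lifted equivalence still has the prescribed boundary. The remaining delicate point, present in both directions, is the dimension bookkeeping: the shift between ``$c$ is a thin $n$-cell'' and ``$[c]$ is an isomorphism in $\pi_{n-1}$'', and correspondingly the appearance of $\partial\Gb_{n+1},\partial\Gb_{n+2}$ and $(\Gb_{n+2})_t$ when analyzing morphisms and $2$-morphisms of $\pi_n(s,t,C)$.
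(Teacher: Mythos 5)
Your proof is correct and follows essentially the same route as the paper: the converse is exactly the paper's argument (essential surjectivity plus a lift along the acyclic cofibration $\delta^0_{n+1}:\Gb_n\to(\Gb_{n+1})_t$ of Lemma~\ref{lemma:fibration_are_isofibration} for $\partial\Gb_n\to\Gb_n$, and Proposition~\ref{prop:in_the_homotopy_category_thin_is_iso} together with reflection of isomorphisms for $\Gb_n\to(\Gb_n)_t$). The only difference is that you spell out the forward direction, which the paper dismisses as obvious, and you are somewhat more careful with the dimension indexing.
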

\begin{proof}
If $p$ is a $\Gb$-trivial fibration, it is obvious that it is both a fibration and a $\Gb$-equivalence. For the converse, suppose $p$ is a fibration and a $\Gb$-equivalence, and consider a diagram 
\[\begin{tikzcd}
	{\partial\Gb_n} & X \\
	{\Gb_n} & Y
	\arrow[from=1-1, to=2-1]
	\arrow[from=1-1, to=1-2]
	\arrow["x"', from=2-1, to=2-2]
	\arrow["p", from=1-2, to=2-2]
\end{tikzcd}\]
As $p$ is an $\Gb$-equivalence this implies that there exists a cell $\overline{x}:\Gb_n\to X$ together with a thin $(n+1)$-cell $y:p(\overline{x})\to y$. All this data corresponds to a diagram:
\[\begin{tikzcd}
	{\Gb_n} & X \\
	{(\Gb_{n+1})_t} & Y
	\arrow["p", from=1-2, to=2-2]
	\arrow["{\delta^0_{n+1}}"', from=1-1, to=2-1]
	\arrow["{\bar{x}}", from=1-1, to=1-2]
	\arrow["y"', from=2-1, to=2-2]
\end{tikzcd}\]
The left hand morphism being an acyclic cofibration according to \ref{lemma:fibration_are_isofibration}, this diagram admits a lift $h:(\Gb_{n+1})_t\to X$. The restriction of $h$ to $\delta^1_{n+1}$ provides a lift in the first diagram.  Now, we consider a diagram of shape: 

\[\begin{tikzcd}
	{\Gb_n} & X \\
	{(\Gb_n)_t} & Y
	\arrow[from=1-1, to=2-1]
	\arrow["g", from=1-1, to=1-2]
	\arrow[from=2-1, to=2-2]
	\arrow["p", from=1-2, to=2-2]
\end{tikzcd}\]
with $n>1$.
Let $s,t$ be respectively the $(n-2)$-source and the $(n-2)$-target of $g$. Hypotheses imply that $[p(g)]$ is an isomorphism in $\pi_n(s,t,C)$ and because $p$ is a $\Gb$-equivalence, so is $[g]$. According to lemma \ref{prop:in_the_homotopy_category_thin_is_iso}, this implies that $g$ is thin. There exists then a lifting in the previous diagram. The case $n=1$ is similar.
  The morphism $f$ is then a $\Gb$-trivial fibration.
\end{proof}

\begin{lemma}
\label{lemma:slice_G_fibrations}
Let $p:X\to Y$ be a $\Gb$-trivial fibration between $\infty$-categories. Then for all $x\in X$, the induced fibration: 
$$X_{/x}\to X\times_Y Y_{/p(x)}$$
is a $\Gb$-trivial fibration.
\end{lemma}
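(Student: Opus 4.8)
The plan is to reduce the statement, via the characterization \ref{prop:caracterisation_of_G_fibration}, to showing that the map $X_{/x}\to X\times_Y Y_{/p(x)}$ is both a fibration and a $\Gb$-equivalence; the fibration part is already asserted in the statement (and follows from $\uvar\star X$, hence $(\uvar)_{X/}$, being a left Quillen functor together with $p$ being a fibration), so the real content is the $\Gb$-equivalence. First I would recall that $p$ is a $\Gb$-trivial fibration, hence by \ref{prop:caracterisation_of_G_fibration} a fibration and a $\Gb$-equivalence, and in particular — being a fibration with the right lifting property against $\partial\Gb_n\to\Gb_n$ and $\Gb_n\to(\Gb_n)_t$ — a trivial fibration in the underlying model structure, so $p$ is an acyclic fibration between fibrant objects and thus a weak equivalence.

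Next I would analyze the slice. Using the identification of $(\uvar)_{X/}$ as the right adjoint to the left Quillen functor $X\star\uvar$ (\cref{prop:equivalence between diamond and join product} and the discussion after it), the object $X_{/x}$ is the fibre of $X\to X\star\Delta[0]$-type data; more concretely, $X_{/x}$ computes the comma object, and the square
\[\begin{tikzcd}
	{X_{/x}} & {Y_{/p(x)}} \\
	X & Y
	\arrow[from=1-1, to=1-2]
	\arrow[from=1-1, to=2-1]
	\arrow["p", from=2-1, to=2-2]
	\arrow[from=1-2, to=2-2]
\end{tikzcd}\]
is a pullback of fibrations over fibrant objects, in which the right vertical map $Y_{/p(x)}\to Y$ is a fibration. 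The target $X\times_Y Y_{/p(x)}$ is precisely the pullback of the bottom-right cospan, so the comparison map $X_{/x}\to X\times_Y Y_{/p(x)}$ is exactly the induced map between the actual pullback and the homotopy-type pullback. Since $p:X\to Y$ is a weak equivalence and a fibration, its base change along $Y_{/p(x)}\to Y$ is again a weak equivalence (acyclic fibrations are stable under pullback), and this base change is the map $X\times_Y Y_{/p(x)}\to Y_{/p(x)}$; combined with the fact that $X_{/x}\to Y_{/p(x)}$ and $X\times_Y Y_{/p(x)}\to Y_{/p(x)}$ have the same composite behaviour, a two-out-of-three argument shows $X_{/x}\to X\times_Y Y_{/p(x)}$ is a weak equivalence between $\infty$-categories. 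Finally, a weak equivalence between $\infty$-categories is a $\Gb$-equivalence: this is \cref{lemma:acyclic_cofibration_are_G_equivalence} together with \cref{lemma:2_out_of_3_for_G_equivalence} applied to a factorization into an acyclic cofibration followed by an acyclic fibration (the acyclic fibration being a $\Gb$-equivalence since it is in particular a $\Gb$-trivial fibration by the right lifting properties, so \cref{prop:caracterisation_of_G_fibration} applies). Hence $X_{/x}\to X\times_Y Y_{/p(x)}$ is a fibration and a $\Gb$-equivalence, so by \cref{prop:caracterisation_of_G_fibration} it is a $\Gb$-trivial fibration.

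The main obstacle I anticipate is making the identification of $X_{/x}$ rigorous enough that the square above is genuinely a pullback of the cospan $X\to Y\leftarrow Y_{/p(x)}$ up to the asserted fibration, i.e. checking that the slice construction $(\uvar)_{X/}$ is compatible with base change along $p$ in the strict (not merely homotopical) sense needed to identify $X\times_Y Y_{/p(x)}$ as the target; one must be careful that $X_{/x}$ is formed using $x:\Delta[0]\to X$ and that pulling the $Y$-slice back along $p$ recovers an $X$-slice only up to the comparison map in question, which is the whole point. If the strict identification fails, the fallback is purely homotopical: all four objects are fibrant, $Y_{/p(x)}\to Y$ and $X\to Y$ are fibrations, so $X\times_Y Y_{/p(x)}$ already is the homotopy pullback, and since $p$ is a weak equivalence the projection to $Y_{/p(x)}$ is a weak equivalence, while $X_{/x}\to Y_{/p(x)}$ is also a weak equivalence (base change of the weak equivalence $p$ along the fibration $Y_{/p(x)}\to Y$ computes $X_{/x}$ up to weak equivalence), whence two-out-of-three concludes.
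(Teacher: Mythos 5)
Your proposal is circular in the context of this paper. The pivotal step---passing from ``$p$ is a $\Gb$-trivial fibration'' to ``$p$ is a trivial fibration in the underlying model structure, hence a weak equivalence''---is not available at this point: a fibration having the right lifting property against $\partial\Gb_n\to\Gb_n$ and $\Gb_n\to(\Gb_n)_t$ is not yet known to lift against all cofibrations (in particular against $\partial\Delta[n]\to\Delta[n]$ and $\Delta[n]\to\Delta[n]_t$). That implication is exactly the content of Lemmas \ref{lemma:G_fibration_right lifting property_against_partial} and \ref{lemma:G_fibration_right lifting property_against_sat} and of Theorem \ref{theo:f_weak_equivalence_ssi_f_G_equivalence}, and the proof of Lemma \ref{lemma:G_fibration_right lifting property_against_partial} invokes precisely the statement you are proving: it needs the class of cofibrations with the left lifting property against $\Gb$-trivial fibrations to be closed under Leibniz join with $\Delta[0]$, which is what Lemma \ref{lemma:slice_G_fibrations} supplies. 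So you may not assume that a $\Gb$-trivial fibration is a weak equivalence here; this lemma is an input to that theorem, not a consequence of it. (The unproblematic direction of Theorem \ref{theo:f_weak_equivalence_ssi_f_G_equivalence}, ``weak equivalence $\Rightarrow$ $\Gb$-equivalence'', does not help, since your argument needs the converse for $p$ itself.)

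Even granting the circular step, the homotopical part is incomplete: to run two-out-of-three you need $X_{/x}\to Y_{/p(x)}$ to be a weak equivalence, and your justification (``base change of $p$ along $Y_{/p(x)}\to Y$ computes $X_{/x}$ up to weak equivalence'') is exactly the comparison the lemma is about, not something that can be assumed. The paper's proof proceeds quite differently and non-circularly: by adjunction between the join and slice constructions, the lemma is equivalent to $p$ having the right lifting property against the Leibniz joins of $\Delta[0]$ with the globular generators; the formulas summarized in \ref{section:recap_of_result_for_joint} translate such a lifting problem in dimension $n+1$ against $p$ into the analogous problem in dimension $n$ against the hom-object maps $p(a,b)$, which are again $\Gb$-trivial fibrations, and an induction on $n$ (starting from the evident case $n=0$) concludes. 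To salvage your approach you would have to prove directly, without the model-structure characterization of weak equivalences, that the slice comparison map is a $\Gb$-equivalence---which in practice forces you back onto the paper's inductive argument via the join/suspension formulas.
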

\begin{proof}
We define $\mathbb{P}(p,n)$ to be the statement that $p$ has the right lifting property against 
$$\Delta[0]\star \partial \Gb_n\cup \Gb_n\to \Delta[0]\star  \Gb_{n+1} \mbox{ and }\Delta[0]\star \Gb_n\cup (\Gb_n)_t\to \Delta[0]\star (\Gb_n)_t.$$
First, it is obvious that each $\Gb$-equivalence $p$ satisfies $\mathbb{P}(p,0)$. 
Secondly, according to results \ref{section:recap_of_result_for_joint}, $\mathbb{P}(p,n+1)$ is equivalent to $\mathbb{P}(p(a,b),n)$ for all $a,b\in C_0$. 

Using the fact that $p(a,b)$ is $\Gb$-trivial fibration as soon as $p$ is, a direct induction on $n$ shows the desired result.
\end{proof}

\begin{lemma}
\label{lemma:G_fibration_right lifting property_against_partial}
 $\Gb$-Trivial fibrations  between $\infty$-categories have the right lifting property against $\partial\Delta[n]\to \Delta[n]$.
\end{lemma}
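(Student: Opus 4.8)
The plan is to induct on $n$. The inductive step will turn a lifting problem for $\partial\Delta[n]\to\Delta[n]$ into one for $\partial\Delta[n-1]\to\Delta[n-1]$ against a slice of the given fibration, where Lemma~\ref{lemma:slice_G_fibrations} applies. The combinatorial input is the decomposition obtained by coning off the vertex $0$, so that $\Delta[n]=\Delta[0]\star\Delta[n-1]$:
$$\partial\Delta[n]\;=\;\bigl(\Delta[0]\star\partial\Delta[n-1]\bigr)\coprod_{\partial\Delta[n-1]}\Delta[n-1]\,,$$
where $\Delta[0]\star\partial\Delta[n-1]=\Lambda^0[n]$ is the union of the faces through $0$ and the right-hand $\Delta[n-1]$ is the face $d^0$. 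This exhibits $\partial\Delta[n]\to\Delta[n]$ as the Leibniz join of $\emptyset\to\Delta[0]$ with $\partial\Delta[n-1]\to\Delta[n-1]$. (All the simplicial sets occurring here carry only their degenerate simplices as thin, and the join of two such is again of this kind, so the identity holds in $\mSset$.) The base case $n=0$ is immediate: $\partial\Delta[0]\to\Delta[0]$ is the map $\partial\Gb_0\to\Gb_0$, against which $\Gb$-trivial fibrations lift by definition.

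For the inductive step, let $p\colon X\to Y$ be a $\Gb$-trivial fibration, $n\ge 1$, and fix a lifting problem for $\partial\Delta[n]\to\Delta[n]$ against $p$; write $x\in X$ for the image of the cone vertex $0$. By the join--slice adjunction — the correspondence by which lifting against the Leibniz join of $\emptyset\to\Delta[0]$ with a cofibration is, over a chosen cone vertex, the same as lifting against the induced slice map, and which, applied to globe boundaries, is precisely the content of Lemma~\ref{lemma:slice_G_fibrations} — this lifting problem becomes a lifting problem for $\partial\Delta[n-1]\to\Delta[n-1]$ against
$$X_{/x}\longrightarrow X\times_Y Y_{/p(x)}\,.$$
By Lemma~\ref{lemma:slice_G_fibrations} this map is again a $\Gb$-trivial fibration, so the induction hypothesis supplies a lift; transporting it back through the adjunction solves the original problem, which completes the induction.

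The only step demanding care, and it is purely formal, is the translation in the second paragraph. The datum $\partial\Delta[n]\to X$ is not wholly "under" the cone vertex $x$, since the face $d^0\cong\Delta[n-1]$ opposite $0$ is not coned; so one must check that the adjunction identifies this datum with the pair consisting of a map $\partial\Delta[n-1]\to X_{/x}$ and its underlying map $\Delta[n-1]\to X$, matched over $Y_{/p(x)}$ — and this matching is exactly what makes $X\times_Y Y_{/p(x)}$, rather than $Y_{/p(x)}$, the target of the displayed map. This is the same bookkeeping already built into the statement of Lemma~\ref{lemma:slice_G_fibrations}, and I do not anticipate any difficulty beyond unwinding it carefully.
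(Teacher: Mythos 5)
Your proof is correct and is essentially the paper's own argument: both reduce $\partial\Delta[n]\to\Delta[n]$ to the Leibniz join of $\emptyset\to\Delta[0]$ with $\partial\Delta[n-1]\to\Delta[n-1]$ and invoke Lemma~\ref{lemma:slice_G_fibrations} through the join--slice adjunction, the paper merely packaging the induction as the statement that the class of cofibrations with the left lifting property against $\Gb$-trivial fibrations is closed under Leibniz join with $\Delta[0]$ and contains the base case. The only differences are presentational (explicit transposition of the lifting problem versus closure of a saturated class, and base case $n=0$ versus $n=1$).
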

\begin{proof}
Let $C$ be the class of cofibrations having the left lifting property against $\Gb$-equivalences. The lemma \ref{lemma:slice_G_fibrations} implies that for all 
 $K\to L$ in $C$, the induced morphism:
$$\Delta[0]\star K\cup L\to \Delta[0]\star L$$
is in $C$. 
The class $C$ is then closed by Leibniz join. Furthermore, it includes  $\partial\Delta[1]\to \Delta[1]$, and then, by induction, it includes  $\partial\Delta[n]\to\Delta[n]$ for all integer $n$.
\end{proof}

\begin{lemma}
\label{lemma:G_fibration_right lifting property_against_sat}
 $\Gb$-Trivial fibrations  between $\infty$-categories have the right lifting property against $\Delta[n]\to \Delta[n]_t$.
\end{lemma}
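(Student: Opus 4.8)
The plan is to follow the pattern of the proof of \ref{lemma:G_fibration_right lifting property_against_partial}, with one genuinely new ingredient: a $\Gb$-equivalence induces equivalences — hence conservative functors — on all the globular homotopy categories. Unwinding the lifting problem, a fibration $p\colon X\to Y$ between $\infty$-categories has the right lifting property against $\Delta[n]\to\Delta[n]_t$ exactly when every $n$-simplex $\sigma$ of $X$ whose image $p\sigma$ is thin in $Y$ is itself thin in $X$: the lift, if it exists, is forced to be $\sigma$ viewed as a map out of $\Delta[n]_t$, and the bottom triangle then commutes automatically. So it suffices to establish this \emph{reflection of thinness}, and the induction starts because for $n=1$ the map $\Delta[1]\to\Delta[1]_t$ is precisely $\Gb_1\to(\Gb_1)_t$, against which a $\Gb$-trivial fibration lifts by definition.

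The first step is to translate thinness of a simplex into globular data. Writing $\Delta[n]=\Delta[0]\star\Delta[n-1]$ and using that the slice (and, via the $\costar$-variants, the coslice) of a $\Gb$-trivial fibration over a vertex is again a $\Gb$-trivial fibration (\ref{lemma:slice_G_fibrations}), together with the comparisons between $\Delta[0]\star\Sigma(-)$, the wedges and $\Sigma(-)$ recorded in \ref{section:recap_of_result_for_joint}, one attaches to each $n$-simplex $\sigma$ of an $\infty$-category $X$ a globular $n$-cell $g(\sigma)\colon\Gb_n\to X$, obtained by iteratively filling the simplicial shape into suspended marked $2$-simplices and finally into the globe. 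The two properties needed are that this construction is compatible with $p$ (that is, $g(p\sigma)$ is $p\circ g(\sigma)$ together with the $p$-images of the chosen fillers), and that $\sigma$ is thin in $X$ if and only if $g(\sigma)$ is a thin globular cell.

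Granting this, the conclusion is immediate. By \ref{prop:in_the_homotopy_category_thin_is_iso}, $g(\sigma)$ is thin if and only if its class is an isomorphism in the relevant category $\pi_{n-1}(s,t,X)$; since $p$ is a $\Gb$-equivalence, the functor $\pi_{n-1}(s,t,X)\to\pi_{n-1}(ps,pt,Y)$ is an equivalence of categories, hence reflects isomorphisms, so from $p\sigma$ thin (i.e.\ $[g(p\sigma)]$ an isomorphism) we obtain $[g(\sigma)]$ an isomorphism, and therefore $\sigma$ thin. Combined with \ref{lemma:G_fibration_right lifting property_against_partial} this gives the right lifting property against all generating cofibrations, and with \ref{prop:caracterisation_of_G_fibration} it identifies the $\Gb$-trivial fibrations with the trivial fibrations.

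The main obstacle is exactly the detection step in the second paragraph. A purely formal route — trying to place $\Delta[n]\to\Delta[n]_t$ in the saturated class generated by the maps $\partial\Delta[k]\to\Delta[k]$ and $\Gb_k\to(\Gb_k)_t$ and closed under Leibniz join — does not work: joining with a marked simplicial set necessarily makes extra faces thin, so the best one produces formally is the over-marked variant of $\Delta[n]\to\Delta[n]_t$ in which the $(n-1)$-face opposite the initial vertex is already marked thin on both sides, and no formal cancellation removes that spurious marking. It is precisely to delete it that one must use the fibrancy of $X$ and $Y$ together with the conservativity of $p$ on the globular homotopy categories.
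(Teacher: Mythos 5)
Your reduction of the lifting problem to ``$p$ reflects thinness of $n$-simplices'' is correct, and the idea of feeding this into conservativity of the functors $\pi_{n-1}(s,t,X)\to\pi_{n-1}(ps,pt,Y)$ via \ref{prop:in_the_homotopy_category_thin_is_iso} is the right engine. But the proof has a genuine gap at exactly the step you call the ``detection step'', and flagging it as the main obstacle is not the same as closing it. You never construct the globularization $\sigma\mapsto g(\sigma)$ (the phrase ``iteratively filling the simplicial shape into suspended marked $2$-simplices and finally into the globe'' is a plan, not a construction compatible with $p$ and independent of choices up to the relation $\sim$), and above all you never prove the equivalence ``$\sigma$ is thin in $X$ if and only if $g(\sigma)$ is thin''. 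The backward implication of that equivalence is essentially the content of the lemma itself: it asserts that thinness of an arbitrary simplex in a fibrant object is detected by a single globular cell, which is not a consequence of \ref{lemma:slice_G_fibrations}, of the join formulas in \ref{section:recap_of_result_for_joint}, or of \ref{prop:in_the_homotopy_category_thin_is_iso} (the latter only concerns maps out of globes). Even the forward implication is not routine, since $g(\sigma)$ is built from chosen fillers. So the argument as written assumes a statement at least as strong as what is to be proved.

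For comparison, the paper avoids any such simplicial-to-globular dictionary. It introduces, for a $\Gb$-trivial fibration $p$, the class $C_{n,p}$ of objects $A$ such that $p$ has the right lifting property against $A\to\tau_n(A)$; this class is closed under colimits and, by \ref{prop:lifting_property_zigzag_of_acyclic_cofibration}, under zigzags of acyclic cofibrations. One then proves by induction the statements $\mathbb{P}(k,n)$ that $\Delta[k-1]\star\Sigma\Delta[n-k]$ lies in $C_{n+1,p}$: the base case is $\Gb_2\in C_{2,p}$ (part of the definition of $\Gb$-trivial fibration), the step $k-1\rightsquigarrow k$ uses the zigzag of \ref{section:recap_of_result_for_joint} relating $\Delta[k-1]\star\Sigma\Delta[n-k]$ to the pushout of $\Delta[k-2]\star(\Sigma\Delta[n-k+1])\leftarrow\Delta[k-2]\star(\Sigma\Delta[n-k])\to\Delta[k-2]\star(\Delta[0]\fwedge\Sigma\Delta[n-k])$ together with $(n+1)$-triviality of the last two objects, and the passage to higher $n$ replaces $p$ by the hom-fibrations $p(a,b)$, which are again $\Gb$-trivial. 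Note also that this shows your closing claim --- that no ``formal'' route can delete the spurious marking --- is overstated: the paper's cancellation is exactly of that formal kind, only performed up to zigzags of acyclic cofibrations and with the truncation functors $\tau_{n+1}$, rather than by literal Leibniz joins of the generators. If you want to salvage your route, you would need to state and prove the detection equivalence as a separate result, and that is a substantial theorem, not a remark.
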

\begin{proof}
Let $p$ be $\Gb$-trivial fibrations  between $\infty$-categories, and
 $C_{n,p}$ be the set of objects $A$ such that $p$ has the right lifting property against:
$$A\to \tau_n(A).$$
This set is then closed by colimits, and by zigzags of acyclic cofibrations.
Let $k\leq n$ be two integers. We define  $\mathbb{P}(k,n)$ to be the statement that 
$$\Delta[k-1]\star \Sigma \Delta[n-k]$$
is in $C_{n+1,p}$.
The statement  $\mathbb{P}(0,1,f)$ corresponds to the belonging of $\Gb_2$ to $C_{2,p}$, which is obviously true. Suppose that $0<k$ and $\mathbb{P}(k-1,n,p)$. 
According to \ref{section:recap_of_result_for_joint}, the object $\Delta[k-1]\star \Sigma \Delta[n-k]$ is  linked by a zigzag of acyclic cofibrations to the colimit of 
\[\begin{tikzcd}
	{\Delta[k-2]\star(\Sigma \Delta[n-k+1])} & {\Delta[k-2]\star (\Sigma\Delta[n-k])} & {\Delta[k-2]\star (\Delta[0]\fwedge\Sigma\Delta[n-k]).}
	\arrow[from=1-2, to=1-3]
	\arrow[from=1-2, to=1-1]
\end{tikzcd}\]
The center object and the right hand object are in $C_{n+1,p}$ because there are $(n+1)$-trivial, and the 
 left hand object is in $C_{n+1,p}$ by induction hypothesis. This then implies $P(k,n,p)$. Eventually, $P(0,n+1,p)$ is equivalent to $\mathbb{P}(n,n,p(a,b))$ for all pair of objects $(a,b)\in X_0$.
The statement $\mathbb{P}(k,n,p)$ is then true for all $k,n$ and $\Gb$-trivial fibrations between $\infty$-categories $p$. This implies that $p$  has the right lifting property against $\Delta[n]\to \Delta[n]_t$.
\end{proof}

\begin{theorem}
\label{theo:f_weak_equivalence_ssi_f_G_equivalence}
Let $p$ be a map between $\infty$-categories. Then $p$ is a weak equivalence if and only if it is a $\Gb$-equivalence.
\end{theorem}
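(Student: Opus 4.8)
The plan is to reduce both implications to the case of a fibration by means of a functorial factorization, and then to feed everything through the characterization of $\Gb$-trivial fibrations in \cref{prop:caracterisation_of_G_fibration}. Before doing that I would record two elementary closure facts. First, the class of $\Gb$-equivalences between $\infty$-categories is stable under composition: this is clear for $\pi_0$, and for $n>0$ and a parallel pair $s,t$ in $C$ the functor induced by $p'p$ on $\pi_n(s,t,C)\to\pi_n(p'ps,p'pt,E)$ factors through $\pi_n(ps,pt,D)$, so it is a composite of two equivalences of categories. Second, every trivial fibration between $\infty$-categories is a $\Gb$-trivial fibration: it is a fibration with the right lifting property against every cofibration, in particular against the monomorphism $\partial\Gb_n\to\Gb_n$ and the entire monomorphism $\Gb_n\to(\Gb_n)_t$.

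For the implication ``weak equivalence $\Rightarrow$ $\Gb$-equivalence'', I would factor $p$ as $C\xrightarrow{i}E\xrightarrow{q}D$ with $i$ an acyclic cofibration and $q$ a fibration. Since $D$ is fibrant and $q$ is a fibration, $E$ is fibrant, hence an $\infty$-category. Two-out-of-three makes $q$ a weak equivalence, hence a trivial fibration, hence a $\Gb$-trivial fibration, hence a $\Gb$-equivalence by \cref{prop:caracterisation_of_G_fibration}; and $i$ is a $\Gb$-equivalence by \cref{lemma:acyclic_cofibration_are_G_equivalence}. By the composition fact above, $p=q\circ i$ is a $\Gb$-equivalence.

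For the converse I would use the same factorization $p=q\circ i$, with $E$ again an $\infty$-category. Since $i$ is an acyclic cofibration it is a $\Gb$-equivalence by \cref{lemma:acyclic_cofibration_are_G_equivalence}, so \cref{lemma:2_out_of_3_for_G_equivalence} applied to the triangle $p=q\circ i$ (with $i$ the acyclic cofibration and $p$ the $\Gb$-equivalence) shows that $q$ is a $\Gb$-equivalence. Being also a fibration, $q$ is a $\Gb$-trivial fibration by \cref{prop:caracterisation_of_G_fibration}. Then \cref{lemma:G_fibration_right lifting property_against_partial} and \cref{lemma:G_fibration_right lifting property_against_sat} give that $q$ has the right lifting property against $\partial\Delta[n]\to\Delta[n]$ and $\Delta[n]\to\Delta[n]_t$ for all $n$; since these two families generate the cofibrations of $\mSset$ as a saturated class, $q$ has the right lifting property against every cofibration, so $q$ is a trivial fibration and in particular a weak equivalence. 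Two-out-of-three then yields that $p=q\circ i$ is a weak equivalence.

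The logical scaffolding here is routine; the only real content sits in \cref{lemma:G_fibration_right lifting property_against_partial} and \cref{lemma:G_fibration_right lifting property_against_sat}, which between them must cover a generating set of cofibrations. The step I expect to be the delicate one is therefore invoking the standard description of the cofibrations of the complicial model structure as the saturation of the boundary inclusions together with the elementary thinness inclusions $\Delta[n]\to\Delta[n]_t$, so that right lifting against those two families legitimately upgrades to right lifting against all cofibrations; everything else is bookkeeping with two-out-of-three and the already-established behaviour of acyclic cofibrations and $\Gb$-trivial fibrations.
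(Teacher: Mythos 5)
Your proof is correct and follows essentially the same route as the paper: factor $p$ into an acyclic cofibration followed by a fibration (using \cref{lemma:acyclic_cofibration_are_G_equivalence} and \cref{lemma:2_out_of_3_for_G_equivalence} to reduce to the fibration case), then conclude via \cref{prop:caracterisation_of_G_fibration} together with \cref{lemma:G_fibration_right lifting property_against_partial} and \cref{lemma:G_fibration_right lifting property_against_sat}. The points you make explicit (closure of $\Gb$-equivalences under composition, and the fact that boundary inclusions and thinness extensions generate the cofibrations) are exactly what the paper's terser argument leaves implicit.
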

\begin{proof}
According to lemmas   \ref{lemma:acyclic_cofibration_are_G_equivalence} and \ref{lemma:2_out_of_3_for_G_equivalence} we can restrict ourselves to the case where $p$ is a fibration. If it is a weak equivalence, $p$ is then a trivial fibration and then a $\Gb$-equivalence. Suppose now that $p$ is a $\Gb$-trivial fibration. According to proposition \ref{prop:caracterisation_of_G_fibration}, $p$ is then a $\Gb$-trivial fibration. Lemmas \ref{lemma:G_fibration_right lifting property_against_partial} and \ref{lemma:G_fibration_right lifting property_against_sat} imply that $p$ is a trivial fibration.
\end{proof}

\begin{definition}
Let $p:X\to Y$ be a morphism between $\infty$-categories. The morphism $p$ is \textit{essentially surjective} if for all $x\in Y_0$, there exists $\bar{x}\in X_0$ together with a thin cell $\bar{x}\to x$.
The morphism $f$ is \textit{fully faithfull} if the induced morphisms: 
$$X(a,b)\to Y(pa,pb)$$
are weak equivalences for all $a,b\in X_0$.
\end{definition}

\begin{cor}
Let $p$ be a map between $\infty$-categories. Then $p$ is fully faithfull and essentially surjective if and only if it is a weak equivalence.
\end{cor}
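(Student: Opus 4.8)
The plan is to reduce the statement to Theorem~\ref{theo:f_weak_equivalence_ssi_f_G_equivalence}, which already identifies weak equivalences between $\infty$-categories with $\Gb$-equivalences. So it suffices to show that a map $p\colon X\to Y$ between $\infty$-categories is a $\Gb$-equivalence if and only if it is essentially surjective and fully faithful.

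The first step is to record how the homotopy categories interact with the hom-$\infty$-categories $C\mapsto C(a,b)$ through the adjunction between $\Sigma$ and the hom functor. For $a,b\in C_0$, a morphism $\Gb_n\to C(a,b)$ is, by adjunction together with $\Gb_{n+1}=\Sigma\Gb_n$, the same as an $(n+1)$-cell of $C$ with $0$-source $a$ and $0$-target $b$; moreover this bijection is compatible with thinness, since $\Sigma(\Gb_m)_t=(\Gb_{m+1})_t$ for $m\ge 1$. Consequently one obtains isomorphisms, natural in $C$,
\[\pi_0(C(a,b))\;\cong\;\pi_1(a,b,C),\qquad \pi_n\bigl(s,t,C(a,b)\bigr)\;\cong\;\pi_{n+1}(\widetilde s,\widetilde t,C)\quad(n>0),\]
where $\widetilde s,\widetilde t$ are the $n$-cells of $C$ corresponding to a parallel pair $s,t$ of $(n-1)$-cells of $C(a,b)$. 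Note also that every parallel pair of $(n-1)$-cells of $C$ with $n\ge 2$ has a well-defined $0$-source $a$ and $0$-target $b$ in $C_0$, and arises in this way from a parallel pair of $(n-2)$-cells of $C(a,b)$.

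Using this, the second step is to show that $p$ is a $\Gb$-equivalence if and only if (i) $\pi_0(X)\to\pi_0(Y)$ is essentially surjective and (ii) $p(a,b)\colon X(a,b)\to Y(pa,pb)$ is a $\Gb$-equivalence for every $a,b\in X_0$. The implication $(\Rightarrow)$ is immediate from the displayed isomorphisms applied to $X$ and to $Y$. For $(\Leftarrow)$: condition~(ii) in degree $0$ says that $\pi_1(a,b,X)\to\pi_1(pa,pb,Y)$ is an equivalence for all $a,b\in X_0$; since by Proposition~\ref{prop:in_the_homotopy_category_thin_is_iso} the hom-sets of $\pi_0$ are the sets of isomorphism classes of these categories, this forces $\pi_0(X)\to\pi_0(Y)$ to be fully faithful, and together with~(i) it is then an equivalence of categories. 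For $n>0$ and a parallel pair $s,t$ of $(n-1)$-cells of $X$, the displayed isomorphisms and the last remark of the previous paragraph turn the $\Gb$-equivalence condition for the appropriate $p(a,b)$ into the condition that $\pi_n(s,t,X)\to\pi_n(ps,pt,Y)$ be an equivalence. Hence $p$ is a $\Gb$-equivalence.

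Finally, apply Theorem~\ref{theo:f_weak_equivalence_ssi_f_G_equivalence} to each $p(a,b)$: this is a map between $\infty$-categories because the hom functor is right Quillen, so condition~(ii) is equivalent to $p(a,b)$ being a weak equivalence for all $a,b$, i.e.\ to $p$ being fully faithful. And condition~(i) is equivalent to $p$ being essentially surjective: by Proposition~\ref{prop:in_the_homotopy_category_thin_is_iso} in degree $0$, a $0$-cell $y$ of $Y$ is isomorphic in $\pi_0(Y)$ to some $p(\bar x)$ exactly when there is a thin $1$-cell $p(\bar x)\to y$. Therefore $p$ is a $\Gb$-equivalence if and only if it is essentially surjective and fully faithful, and combining this with Theorem~\ref{theo:f_weak_equivalence_ssi_f_G_equivalence} yields the corollary. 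The main obstacle is the bookkeeping of the first step — establishing the hom/suspension compatibilities with the homotopy-category constructions and with thinness, including the low-dimensional edge cases — but this is routine once set up carefully; the remaining arguments are formal.
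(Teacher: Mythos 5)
Your proof is correct and follows essentially the same route as the paper: both arguments identify $\pi_n(s,t,X)$ with $\pi_{n-1}$ of the hom-$\infty$-categories $X(a,b)$ via the suspension/hom adjunction, translate "fully faithful and essentially surjective" into the $\Gb$-equivalence condition (applying Theorem~\ref{theo:f_weak_equivalence_ssi_f_G_equivalence} to the maps $p(a,b)$ as well as to $p$), and conclude with that theorem. Your write-up just makes explicit the bookkeeping (thinness compatibility of $\Sigma$, the role of Proposition~\ref{prop:in_the_homotopy_category_thin_is_iso} in degree $0$) that the paper leaves implicit.
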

\begin{proof}
If $p$ is a weak equivalence, it is then fully faithfull and essentially surjective. Conversely, suppose $p$ is fully faithfull and essentially surjective. 
The morphism $\pi_0(X)\to \pi_0(Y)$ is fully faithfull and essentially surjective, and then an equivalence of category. For $(a,b)$ a pair of $0$-cells, we have equalities:
\[\begin{tikzcd}
	{\pi_1(a,b,X)} & {\pi_0(X(a,b))} \\
	{\pi_1(pa,pb,Y)} & {\pi_0(Y(pa,pb)).}
	\arrow["{\pi_0p(a,b)}", from=1-2, to=2-2]
	\arrow["{\pi_1p}"', from=1-1, to=2-1]
	\arrow[Rightarrow, no head, from=2-1, to=2-2]
	\arrow[Rightarrow, no head, from=1-1, to=1-2]
\end{tikzcd}\]
The morphism $\pi_1(a,b,p)$ is then an equivalence of categories. For $(s,t)$ a pair of parallel arrows of dimension $>1$, if we denote $a$ and $b$ the $0$-source  and the $0$-target of $s$ and $t$, we have a diagram:
\[\begin{tikzcd}
	{\pi_n(s,t,X)} & {\pi_{n-1}(s,t,X(a,b))} \\
	{\pi_n(pa,pb,Y)} & {\pi_{n-1}(s,t,Y(pa,pb)).}
	\arrow["{\pi_{n-1}(s,t ,p(a,b))}", from=1-2, to=2-2]
	\arrow["{\pi_np}"', from=1-1, to=2-1]
	\arrow[Rightarrow, no head, from=2-1, to=2-2]
	\arrow[Rightarrow, no head, from=1-1, to=1-2]
\end{tikzcd}\]
The morphism $\pi_n(a,b,p)$ is then an equivalence of categories.
The morphism $p$ is then a $\Gb$-equivalence, and according to \ref{theo:f_weak_equivalence_ssi_f_G_equivalence}, a weak equivalence.
\end{proof}

\subsection{A criterium to be a weakly invertible transformation}
\label{section:A criterium to be a weakly invertible transformation}
The purpose of this section is to show   the next proposition:
\begin{prop}
\label{prop:criterimu_to_be_an_weak_equivalence}
Let $i$ and $j$ be two left Quillen adjoints and $\psi:i\to j$ a natural transformation. If 
$\psi(\Gb_n):i (\Gb_n) \to j (\Gb_n)$ is a weak equivalence, then $\psi(X):i(X)\to j(X)$ is a weak equivalence for all $X$.
\end{prop}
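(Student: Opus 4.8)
The plan is to run a standard "saturation" argument on $X$: the class $\mathcal{W}$ of marked simplicial sets $X$ for which $\psi(X)\colon i(X)\to j(X)$ is a weak equivalence should be shown to contain all representables $\Delta[n]$ and $\Delta[n]_t$ and to be closed under the colimit constructions out of which every object is built, hence to be everything. Since $i$ and $j$ are left Quillen, they preserve cofibrations and (by Ken Brown) weak equivalences between cofibrant objects; as every object of $\mSset$ is cofibrant, $i$ and $j$ send any weak equivalence to a weak equivalence, and they send a pushout along a cofibration to a homotopy pushout. Consequently $\mathcal{W}$ is closed under: filtered colimits, pushouts along cofibrations whenever the other three corners are in $\mathcal{W}$ (use the gluing lemma on the two homotopy pushout squares connected by $\psi$), retracts, and zigzags of acyclic cofibrations. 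So the whole burden is the representables.

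First I would reduce $\Delta[n]_t$ to $\Delta[n]$: since $\Delta[n]_t = \tau_n(\Delta[n])$ and by Construction \ref{cons:truncation functor} the truncation $\tau_n$ is obtained from $\Delta[n]$ by pushouts along anodyne extensions, membership of $\Delta[n]$ in $\mathcal{W}$ forces membership of $\Delta[n]_t$ (apply closure under pushout along cofibrations, noting $i,j$ send anodynes to acyclic cofibrations). Next, I would reduce $\Delta[n]$ for all $n$ to the globes $\Gb_k$, which are in $\mathcal{W}$ by hypothesis. The key is the join formula: by Proposition \ref{prop:equivalence between diamond and join product} and the summary of \ref{section:recap_of_result_for_joint}, there is a zigzag of acyclic cofibrations relating $\Delta[k-1]\star\Sigma\Delta[n-k]$ to a pushout built from $\Delta[k-2]\star(\Sigma\Delta[n-k+1])$, $\Delta[k-2]\star(\Sigma\Delta[n-k])$ and $\Delta[k-2]\star(\Delta[0]\fwedge\Sigma\Delta[n-k])$ — exactly as used in Lemma \ref{lemma:G_fibration_right lifting property_against_sat}. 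Feeding $\Delta[n]=\Delta[n-1]\star\Delta[0]$ and $\Delta[1]=\Sigma\Delta[0]=\Gb_1$ into this, a double induction (on $n$, and on the "split point" $k$) expresses every $\Delta[n]$ up to a zigzag of acyclic cofibrations as an iterated pushout of suspensions $\Sigma^l\Delta[m]$; and each such $\Sigma^l\Delta[m]$, in turn, is reached from $\Gb_{l+m}=\Sigma^l\Delta[m]$... wait — more precisely one peels suspensions using Proposition \ref{prop:interval_first_formula}/\ref{prop:interval_second_formula} and the wedge constructions, bottoming out at $\Sigma^l\Delta[0]=\Gb_l$. Because $\Sigma$, $\fwedge\Delta[1]$, and $\star\Delta[0]$ are all left Quillen and $\psi$ is natural, $\mathcal{W}$ is automatically closed under applying any of these operations; combined with closure under pushouts along cofibrations and zigzags of acyclic cofibrations, the induction closes.

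The main obstacle I anticipate is bookkeeping the double induction cleanly: one must be careful that at each stage the three "smaller" corners feeding the pushout are genuinely of strictly lower complexity in a fixed well-founded order (e.g. lexicographic in $(n,k)$ or in the pair (total dimension, number of star-factors)), and that the zigzags of acyclic cofibrations supplied by \ref{section:recap_of_result_for_joint} and \ref{section:result_for_interval} are natural in the variable simplicial set so that $\psi$ maps one zigzag to the other compatibly — this naturality is what lets the gluing lemma be applied square-by-square. Everything else (gluing lemma, Ken Brown, closure of $\mathcal{W}$ under the relevant colimits) is routine once the reduction to globes is set up.
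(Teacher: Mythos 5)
Your reduction of everything to the representables is fine in outline (the class $\mathcal{W}=\{X \mid \psi(X)\ \text{is a weak equivalence}\}$ is indeed closed under homotopy pushouts along cofibrations, retracts, transfinite composition and weak equivalences, since $i$ and $j$ are left Quillen and every object is cofibrant), but the step you use to close the induction is false. You assert that ``because $\Sigma$, $\fwedge\Delta[1]$ and $\star\Delta[0]$ are left Quillen and $\psi$ is natural, $\mathcal{W}$ is automatically closed under applying any of these operations.'' Naturality of $\psi$ only produces commutative squares for \emph{morphisms} of marked simplicial sets; it gives no comparison whatsoever between $i(\Sigma X)$ and $\Sigma\, i(X)$, or between $i(\Delta[k]\star X)$ and $\Delta[k]\star i(X)$, for an arbitrary left Quillen $i$. (That such compatibilities are hard to come by is exactly the content of the later Theorem \ref{theo:criterium_to_be_linked_to_identity} and Proposition \ref{prop:existence_of_comparaison_with_street}, which need Steiner theory and the Street nerve even for functors agreeing with $\Sigma^l$ on globes.) Since your double induction feeds pieces of the form $\Sigma^m(\Delta[k-2]\star\Sigma\Delta[n-k])$ and $\Sigma^m(\Delta[k-2]\star(\Delta[1]\fwedge\Sigma\Delta[n-k]))$ back into $\mathcal{W}$ precisely through this claimed closure, the induction does not close as written; to salvage it you would have to re-express each such piece, by hand, as an iterated homotopy colimit of objects already known to lie in $\mathcal{W}$ (using that $\Delta[k-2]\star(-)$ and $\Sigma$ preserve pushouts and cofibrations, and the acyclic cofibration $\Delta[1]\coprod_{\Delta[0]}\Sigma X\to\Delta[1]\fwedge\Sigma X$), which is a genuinely different and more delicate bookkeeping than the one you describe. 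A second, smaller error: $\Delta[n]_t=\tau_n(\Delta[n])$ is \emph{not} obtained from $\Delta[n]$ by pushouts along anodyne extensions --- $\Delta[n]\to\Delta[n]_t$ is a generating cofibration and is not a weak equivalence (already $\Delta[1]\to\Delta[1]_t=I$ fails); Construction \ref{cons:truncation functor} only says that $\tau_n$ sends anodyne extensions to anodyne extensions, i.e.\ that $\tau_n$ is left Quillen, so membership of $\Delta[n]_t$ in $\mathcal{W}$ needs its own argument.

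For comparison, the paper sidesteps both problems by never inducting over objects at all: after reducing to the case where $\psi$ is a pointwise cofibration, it shows (using only $\psi(\Gb_n)$, $\psi(\partial\Gb_n)$, $\psi((\Gb_n)_t)$ being weak equivalences) that the induced map of right adjoints $N_jY\to N_iY$ has the right lifting property against $\partial\Gb_n\to\Gb_n$ and $\Gb_n\to(\Gb_n)_t$ for every $\infty$-category $Y$, hence is a trivial fibration by Theorem \ref{theo:f_weak_equivalence_ssi_f_G_equivalence}; then $[jX,Y]\cong[X,N_jY]\cong[X,N_iY]\cong[iX,Y]$ for all $X$ and all fibrant $Y$, which is exactly the definition of $\psi(X)$ being a weak equivalence. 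The globular decomposition of simplices is thus done only once, on the fibration side, inside Theorem \ref{theo:f_weak_equivalence_ssi_f_G_equivalence}, where the suspension--hom adjunction replaces the closure property your argument is missing.
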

For the remaining of this section, we fix two left Quillen functors $i$, $j$ and a natural transformation $\psi:i\to j$ satisfying previous hypotheses. We denote $N_i$ and $N_j$ the associated right adjoints.

\begin{lemma}
\label{lemma:psipartial_is_a_weak_equivalence}
Morphisms $\psi(\partial\Gb_n):i(\partial\Gb_n)\to j(\partial\Gb_n)$ are weak equivalences. 
\end{lemma}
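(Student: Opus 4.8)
The plan is to prove $\psi(\partial\Gb_n)$ is a weak equivalence by induction on $n$, using that $\partial\Gb_n$ is built from lower globes by a pushout along a cofibration, and exploiting the fact that both $i$ and $j$ are left Quillen (hence preserve cofibrations, pushouts, and weak equivalences between cofibrant objects).

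First I would set up the induction. The base case $n = 0$ is trivial since $\partial\Gb_0 = \emptyset$ and both functors preserve the initial object, so $\psi(\emptyset)$ is an isomorphism. For the inductive step, recall that $\partial\Gb_{n+1}$ is the pushout of $\Gb_n \coprod \Gb_n$ along $\partial\Gb_n \coprod \partial\Gb_n$ (the two copies glued along their common boundary), i.e. there is a pushout square
\[\begin{tikzcd}
	{\partial\Gb_n \coprod \partial\Gb_n} & {\Gb_n\coprod\Gb_n} \\
	{\partial\Gb_n} & {\partial\Gb_{n+1}.}
	\arrow[from=1-1, to=1-2]
	\arrow[from=1-1, to=2-1]
	\arrow[from=1-2, to=2-2]
	\arrow[from=2-1, to=2-2]
\end{tikzcd}\]
Here the left vertical map is the fold map and the top horizontal map is a cofibration, so this is a homotopy pushout. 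Applying $i$ and $j$ (both of which preserve pushouts, cofibrations, and coproducts) gives two homotopy pushout squares, and $\psi$ provides a natural map between them. By the inductive hypothesis $\psi(\partial\Gb_n)$ is a weak equivalence, by assumption $\psi(\Gb_n)$ is a weak equivalence, and coproducts of weak equivalences between cofibrant objects are weak equivalences; hence the comparison map $i(\partial\Gb_{n+1}) \to j(\partial\Gb_{n+1})$ is a weak equivalence of homotopy pushouts, and therefore $\psi(\partial\Gb_{n+1})$ is a weak equivalence.

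The only point requiring care is the homotopy-invariance argument: I need that a natural transformation of homotopy pushout squares which is a weak equivalence on all three corners (away from the pushout) induces a weak equivalence on the pushouts. This is standard in a left proper model category; in the marked/stratified simplicial set model all objects are cofibrant, so every pushout along a cofibration is a homotopy pushout, and left properness holds. Concretely one factors the comparison through an intermediate pushout (replacing one corner at a time) and applies the gluing lemma twice. I expect this gluing step — together with verifying that the relevant maps $\partial\Gb_n \to \Gb_n$ and the fold maps are cofibrations so that everything in sight is a homotopy pushout — to be the main (though routine) obstacle; the pushout description of $\partial\Gb_{n+1}$ itself follows directly from $\Gb_{n+1} = \Sigma\Gb_n$ and the defining colimit of the suspension.
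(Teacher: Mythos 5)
Your proof is correct, but it follows a different route from the paper. The paper's argument is a one-step homotopy-colimit argument: it observes that $i(\partial\Gb_n)$ and $j(\partial\Gb_n)$ are homotopy colimits of the canonical diagrams $\Gb_{/\partial\Gb_n}\to\mSset$ obtained by postcomposing with $i$ resp.\ $j$, and that $\psi$ is a pointwise weak equivalence between these diagrams (its components are the maps $\psi(\Gb_k)$, which are weak equivalences by hypothesis), so the induced map on homotopy colimits is a weak equivalence. You instead decompose $\partial\Gb_{n+1}$ inductively as the homotopy pushout $\Gb_n\coprod_{\partial\Gb_n}\Gb_n$ and conclude via the gluing (cube) lemma, using that $i$ and $j$ preserve pushouts, coproducts and cofibrations and that all objects are cofibrant. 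Both arguments rest on the same underlying fact that the boundary globe is built homotopy-invariantly out of globes; your version is more elementary and self-contained in that it never has to justify that the canonical colimit over the category of globes mapping to $\partial\Gb_n$ is a homotopy colimit (a point the paper asserts without proof), at the cost of an explicit induction and of pinning down the pushout presentation of $\partial\Gb_{n+1}$ (which indeed follows from $\partial\Gb_{n+1}=\Sigma\partial\Gb_n$ and cocontinuity of the suspension, or from the subobject description of the boundary). Two small remarks: left properness is not needed, since every object is cofibrant and the gluing lemma already applies under your hypotheses; and note that the analogous Lemma for $(\Gb_n)_t$ in the paper is handled separately by a two-out-of-three argument, so your inductive scheme covers only the boundary case, exactly as the statement requires.
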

\begin{proof}
Objects $i(\partial\Gb_n)$ (resp $i(\partial\Gb_n)$ ) are homotopy colimits of $\Gb_{/\partial Gb_n}\xrightarrow{i} mSSet$ (resp. $\Gb_{/\partial Gb_n}\xrightarrow{j} mSSet$) and $\psi$ induces a natural transformation which is a pointwise weak equivalence between these two diagrams. The induced map 
$\psi(\partial\Gb_n):i(\partial\Gb_n)\to j(\partial\Gb_n)$ is then a weak equivalence.
\end{proof}

\begin{lemma}
\label{lemma:psisat_is_a_weak_equivalence}
Morphisms $\psi((\Gb_n)_t):i((\Gb_n)_t)\to j((\Gb_n)_t)$ are weak equivalences. 
\end{lemma}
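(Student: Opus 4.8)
The plan is to realize $(\Gb_n)_t$ as an iterated homotopy pushout of objects on which $\psi$ is already known to be a weak equivalence, and then to conclude by the gluing lemma, using that the left Quillen functors $i$ and $j$ preserve homotopy pushouts. The building blocks will be the globes $\Gb_m$, on which $\psi$ is a weak equivalence by the standing hypothesis of Proposition~\ref{prop:criterimu_to_be_an_weak_equivalence}; their boundaries $\partial\Gb_m$, handled by Lemma~\ref{lemma:psipartial_is_a_weak_equivalence}; and the cylinders $\Gb_m\times I$ and $\partial\Gb_m\times I$.

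First I would prove the auxiliary claim that $\psi(Y\times I)$ is a weak equivalence whenever $\psi(Y)$ is. For $\alpha\in\Zb/2\Zb$ the inclusion $\{\alpha\}\hookrightarrow I$ is the complicial horn inclusion $\Lambda^\alpha[1]\to\Delta^\alpha[1]$ (note $\Delta^\alpha[1]=\Delta[1]_t=I$), hence an acyclic cofibration; by Proposition~\ref{prop:martina} the cartesian product is a left Quillen bifunctor, so $Y\times\{\alpha\}\hookrightarrow Y\times I$ is an acyclic cofibration, and applying the left Quillen functors $i$ and $j$ keeps it so. Two-out-of-three in the naturality square of $\psi$ along $Y\hookrightarrow Y\times I$ then gives the claim. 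Applying it to $Y=\Gb_m$ and $Y=\partial\Gb_m$ shows that $\psi(\Gb_m\times I)$ and $\psi(\partial\Gb_m\times I)$ are weak equivalences for all $m$.

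Next I would invoke the pushout square from the proof of Lemma~\ref{lemma:fibration_are_isofibration}, which for $n\geq 1$ exhibits $(\Gb_n)_t$ as the pushout of $\Gb_{n-1}\leftarrow\bigl(\Gb_{n-1}\times\{0\}\cup\partial\Gb_{n-1}\times I\bigr)\hookrightarrow\Gb_{n-1}\times I$ along a monomorphism; in turn $\Gb_{n-1}\times\{0\}\cup\partial\Gb_{n-1}\times I$ is the pushout of $\Gb_{n-1}\hookleftarrow\partial\Gb_{n-1}\hookrightarrow\partial\Gb_{n-1}\times I$ along a monomorphism. Both are homotopy pushouts, and $i$, $j$ carry them to homotopy pushouts. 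Since $\psi$ is a weak equivalence on each of $\Gb_{n-1}$, $\partial\Gb_{n-1}$, $\Gb_{n-1}\times I$, $\partial\Gb_{n-1}\times I$, the gluing lemma applied twice yields that $\psi((\Gb_n)_t)$ is a weak equivalence. The case $n=0$ is trivial, since $(\Gb_0)_t=\Gb_0$.

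The step requiring the most care is the treatment of the cylinder factor: the pushout of Lemma~\ref{lemma:fibration_are_isofibration} does not present $(\Gb_n)_t$ purely in terms of globes and boundaries, so the auxiliary claim about $\psi(Y\times I)$ is genuinely needed, and it rests on the (easy but essential) identification of $\{\alpha\}\hookrightarrow I$ as a complicial horn inclusion, so that the Quillen-bifunctor property of the cartesian product from Proposition~\ref{prop:martina} can be applied.
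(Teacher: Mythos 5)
Your argument is correct, but it takes a longer route than the paper. The paper's proof is a one-line application of the same 2-out-of-3 trick you use for your auxiliary claim, only applied directly to the map $\delta^{0}_{n}:\Gb_{n-1}\to(\Gb_n)_t$: by the statement of Lemma~\ref{lemma:fibration_are_isofibration} this is an acyclic cofibration, so $i(\delta^0_n)$ and $j(\delta^0_n)$ are weak equivalences (left Quillen functors, all objects cofibrant), and since $\psi(\Gb_{n-1})$ is a weak equivalence by hypothesis, 2-out-of-3 in the naturality square gives that $\psi((\Gb_n)_t)$ is one too. You instead go back to the pushout square appearing in the \emph{proof} of that lemma, present $(\Gb_n)_t$ as an iterated homotopy pushout of $\Gb_{n-1}$, $\partial\Gb_{n-1}$ and their cylinders, prove cylinder-invariance of $\psi$ via the anodyne inclusions $\{\alpha\}\hookrightarrow I$, and conclude twice by the gluing lemma; all of these steps are sound (the endpoint inclusions are indeed elementary anodyne, Leibniz products with cofibrations stay anodyne, both pushouts are along cofibrations of cofibrant objects, and $i$, $j$ preserve such pushouts), so nothing is missing. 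What your route buys is that it only uses the pushout presentation, not the statement that $\Gb_{n-1}\to(\Gb_n)_t$ is an acyclic cofibration; what it costs is the extra machinery (cylinder stability plus two applications of the cube lemma) to re-derive, corner by corner, what that single acyclic cofibration already encodes. Noticing that your cylinder argument applies verbatim to the composite inclusion $\Gb_{n-1}\hookrightarrow(\Gb_n)_t$ would collapse your proof to the paper's.
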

\begin{proof}
There is a diagram:
\[\begin{tikzcd}
	& {i\Gb_{n-1}} && {j\Gb_{n-1}} \\
	{} & {i(\Gb_n)_t} && {j(\Gb_n)_t.}
	\arrow["{\psi(\Gb_n)}", from=1-2, to=1-4]
	\arrow["{i\delta^0_n}"', from=1-2, to=2-2]
	\arrow["{j\delta^0_n}", from=1-4, to=2-4]
	\arrow["{\psi((\Gb_n)_t)}"', from=2-2, to=2-4]
	\arrow["\sim"', draw=none, from=1-2, to=1-4]
	\arrow["\sim"', draw=none, from=1-4, to=2-4]
	\arrow["\sim", draw=none, from=1-2, to=2-2]
\end{tikzcd}\]
By two out of three, this shows that $\psi((\Gb_n)_t)$ is a weak equivalence.
\end{proof}

\begin{lemma}
If $\psi$ is a pointwise cofibrations , $N_jY\to N_i Y$ is a fibration for all fibrant $Y$.
\end{lemma}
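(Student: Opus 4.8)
The plan is to verify directly that $N_jY\to N_iY$ has the right lifting property against every acyclic cofibration $u\colon K\to L$, by transposing the lifting problem across the two adjunctions $i\dashv N_i$ and $j\dashv N_j$. The key point is that $N_jY\to N_iY$ is the mate of $\psi$: a commutative square with $u$ on the left, $N_jY\to N_iY$ on the right, a map $K\to N_jY$ on top and a map $L\to N_iY$ on the bottom is, after transposing the top map along $j\dashv N_j$ and the bottom one along $i\dashv N_i$, the same datum as a single map $jK\coprod_{iK}iL\to Y$ out of the pushout, and a diagonal lift in the original square corresponds exactly to an extension of this map along the induced (Leibniz) map
\[
 jK\coprod_{iK}iL\longrightarrow jL .
\]
Since $Y$ is fibrant, it has the right lifting property against every acyclic cofibration; hence it suffices to prove that this Leibniz map is itself an acyclic cofibration, and then transport the lift back.

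For the ``weak equivalence'' half I would argue by two out of three. The square with edges $iK\xrightarrow{\psi_K}jK$, $iK\xrightarrow{iu}iL$, $jK\to jK\coprod_{iK}iL$, $iL\to jK\coprod_{iK}iL$ is a pushout, and $iu$ is an acyclic cofibration because $i$ is left Quillen, so its cobase change $jK\to jK\coprod_{iK}iL$ is again an acyclic cofibration; on the other hand $ju\colon jK\to jL$ is an acyclic cofibration because $j$ is left Quillen, and it factors as $jK\to jK\coprod_{iK}iL\to jL$, so the Leibniz map is a weak equivalence. For the ``cofibration'' half I would use that $i$ and $j$ preserve cofibrations and that $\psi$ is a \emph{pointwise} cofibration: all four edges of the naturality square of $\psi$ along $u$ are then cofibrations, and (cofibrations in $\mSset$ being the monomorphisms) one checks that this square is cartesian, whence the map $jK\coprod_{iK}iL\to jL$ out of the pushout is again a monomorphism, i.e.\ a cofibration. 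Combining the two halves gives that the Leibniz map is an acyclic cofibration; since $Y$ is fibrant it lifts against it, and transposing back yields the required diagonal filler, so $N_jY\to N_iY$ is a fibration.

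The main obstacle is the ``cofibration'' half, namely ensuring that gluing $jK$ and $iL$ along $iK$ introduces no new coincidences inside $jL$, so that the pushout $jK\coprod_{iK}iL$ still injects into $jL$. This is precisely where the assumption that $\psi$ be a pointwise cofibration — rather than merely a weak equivalence on the globes $\Gb_n$ — is genuinely used, via the cartesianness of the naturality squares of $\psi$ that it forces; once that is in hand the verification is a routine inspection of simplices, and the rest is the standard mate/Leibniz bookkeeping together with a single application of two out of three.
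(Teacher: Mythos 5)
Your proof follows the paper's argument exactly: transpose the lifting problem along the two adjunctions to the Leibniz map $jK\coprod_{iK}iL\to jL$, show this map is an acyclic cofibration (weak equivalence by the pushout-plus-two-out-of-three argument you give), and conclude by lifting against the fibrant object $Y$. The paper's proof is precisely this, stated more tersely.

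The one step I would push back on is your justification of the cofibration half. The inference ``all four edges of the naturality square are cofibrations, hence one checks the square is cartesian'' is not valid: a commutative square of monomorphisms need not be a pullback, and the naturality squares of a pointwise cofibration between left Quillen endofunctors genuinely can fail to be cartesian in $\mSset$ — for instance the canonical transformation $\mathrm{id}\to\tau_n$ is a pointwise cofibration, but its naturality square at the cofibration $\Delta[n]\to\Delta[n]_t$ is not a pullback (the failure is at the level of markings). What the argument actually requires is weaker, namely that the corner map $jK\coprod_{iK}iL\to jL$ be a monomorphism of \emph{underlying} simplicial sets, i.e.\ that the images of $iL$ and $jK$ in $jL$ meet exactly in $iK$; this is not a formal consequence of $\psi$ being a pointwise cofibration and needs a separate argument or an arrangement of $\psi$ for which it is clear (for example when $\psi$ is a mapping-cylinder inclusion, which is how the ``without loss of generality'' in the application produces a pointwise cofibration). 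To be fair, the paper asserts the same point with no more justification (``all maps are cofibrations. That shows that $iL\cup jK\to jL$ is a trivial cofibration''), so your write-up matches the paper's proof in substance; but the cartesianness claim as you state and justify it is false in general and should be replaced by a direct verification (or hypothesis) that the corner map is a monomorphism.
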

\begin{proof}
First, for all trivial cofibration $K\to L$, we have a square:
\[\begin{tikzcd}
	{iK} & {iL} \\
	{jK} & {jL}
	\arrow[from=1-1, to=2-1]
	\arrow["\sim"', from=2-1, to=2-2]
	\arrow["\sim", from=1-1, to=1-2]
	\arrow[from=1-2, to=2-2]
\end{tikzcd}\]
where all horizontal morphisms are weak equivalences and all map are cofibrations. That shows that $iL\cup jK\to j L$ is a trivial cofibration. 
Now, there is a lifting in the left diagram if and only if there is a lifting in the right diagram.
\[\begin{tikzcd}
	K & {N_jY} & {iL\cup jK} & Y \\
	L & {N_i Y} & {jL.}
	\arrow[""{name=0, anchor=center, inner sep=0}, from=1-2, to=2-2]
	\arrow[from=1-1, to=1-2]
	\arrow[from=1-1, to=2-1]
	\arrow[from=2-1, to=2-2]
	\arrow["\exists"{description}, dotted, from=2-1, to=1-2]
	\arrow[""{name=1, anchor=center, inner sep=0}, from=1-3, to=2-3]
	\arrow[from=1-3, to=1-4]
	\arrow["\exists"{description}, dotted, from=2-3, to=1-4]
	\arrow[shorten <=13pt, shorten >=13pt, Rightarrow, 2tail reversed, from=0, to=1]
\end{tikzcd}\]
\end{proof}

\begin{lemma}
\label{lemma:j*is_a_trivial_fibration}
If $\psi$ is a pointwise cofibration, and $\psi(\Gb_n)$ is a weak equivalence for all $n$,  $N_jY\to N_i Y$ is a trivial fibration for all fibrant $Y$.
\end{lemma}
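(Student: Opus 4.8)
The plan is to show that $N_jY\to N_iY$ is a $\Gb$-trivial fibration and then invoke the characterisations of $\Gb$-trivial fibrations established above. We have just seen (previous lemma) that $N_jY\to N_iY$ is a fibration, and since $N_i,N_j$ are right Quillen, both $N_iY$ and $N_jY$ are $\infty$-categories; so it will suffice to check that this map has the right lifting property against $\partial\Gb_n\to\Gb_n$ and $\Gb_n\to(\Gb_n)_t$. As in the proof of the previous lemma, for any cofibration $f\colon A\to B$ a lifting problem of $N_jY\to N_iY$ against $f$ transposes, along $i\dashv N_i$ and $j\dashv N_j$, to a lifting problem of $Y$ against the Leibniz map $\hat\psi_f\colon iB\amalg_{iA}jA\to jB$. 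Since $Y$ is fibrant, it is therefore enough to prove that $\hat\psi_{(\partial\Gb_n\to\Gb_n)}$ and $\hat\psi_{(\Gb_n\to(\Gb_n)_t)}$ are acyclic cofibrations.

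They are cofibrations: $\psi$ is a pointwise cofibration and $i,j$ preserve cofibrations, and in $\mSset$ the pushout $iB\amalg_{iA}jA$ is computed as the union of the subobjects $iB$ and $jA$ of $jB$, so the Leibniz map is a monomorphism. To see they are weak equivalences I would argue more generally: if $f\colon A\to B$ is a cofibration such that $\psi_A$ and $\psi_B$ are weak equivalences, consider the morphism of spans
\[
\bigl(iB\xleftarrow{\,if\,}iA\xrightarrow{\,\psi_A\,}jA\bigr)\ \longrightarrow\ \bigl(jB\xleftarrow{\,jf\,}jA\xrightarrow{\ \mathrm{id}\ }jA\bigr),
\]
with components $\psi_B$, $\psi_A$ and $\mathrm{id}$, which commutes by naturality of $\psi$. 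All three components are weak equivalences, the left legs $if$ and $jf$ are cofibrations, and all objects are cofibrant, so both pushouts are homotopy pushouts and the induced map on pushouts $\hat\psi_f\colon iB\amalg_{iA}jA\to jB$ is a weak equivalence. Applying this with $f=(\partial\Gb_n\to\Gb_n)$ — using the hypothesis on $\psi(\Gb_n)$ together with Lemma \ref{lemma:psipartial_is_a_weak_equivalence} — and with $f=(\Gb_n\to(\Gb_n)_t)$ — using the hypothesis on $\psi(\Gb_n)$ together with Lemma \ref{lemma:psisat_is_a_weak_equivalence} — produces the two required acyclic cofibrations.

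Consequently $N_jY\to N_iY$ is a fibration with the right lifting property against $\partial\Gb_n\to\Gb_n$ and $\Gb_n\to(\Gb_n)_t$, i.e.\ a $\Gb$-trivial fibration. Lemmas \ref{lemma:G_fibration_right lifting property_against_partial} and \ref{lemma:G_fibration_right lifting property_against_sat} then show it has the right lifting property against $\partial\Delta[n]\to\Delta[n]$ and $\Delta[n]\to\Delta[n]_t$, which generate the cofibrations, so it is a trivial fibration. (Alternatively one can conclude by combining Proposition \ref{prop:caracterisation_of_G_fibration} with Theorem \ref{theo:f_weak_equivalence_ssi_f_G_equivalence}.) The only points requiring a little care are that $\Gb_n\to(\Gb_n)_t$ is indeed a cofibration and that the pushout–product formalism for the natural transformation $\psi$ behaves as claimed; both are routine in this model category, and the latter is already used in the proof of the preceding lemma.
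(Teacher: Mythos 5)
Your proof is correct and follows essentially the same route as the paper: reduce to showing the map is a $\Gb$-trivial fibration, transpose the lifting problems against $\partial\Gb_n\to\Gb_n$ and $\Gb_n\to(\Gb_n)_t$ along the adjunctions to lifting problems of $Y$ against the pushout map $iL\cup jK\to jL$, and show that map is an acyclic cofibration using Lemmas \ref{lemma:psipartial_is_a_weak_equivalence} and \ref{lemma:psisat_is_a_weak_equivalence} together with the hypothesis on $\psi(\Gb_n)$. Your homotopy-pushout justification of the acyclicity of the Leibniz map is just a slightly more explicit version of the paper's gluing argument, so there is no substantive difference.
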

\begin{proof}
According to theorem \ref{theo:f_weak_equivalence_ssi_f_G_equivalence}, it is enough
to show that for all fibrant $Y$,  $N_j Y\to N_i Y$ is a $\Gb$-trivial fibration.
Let $K\to L$ be either $\partial\Gb_n\to \Gb_n$ or $\Gb_n\to (\Gb_n)_t$.
We have a diagram:
\[\begin{tikzcd}
	{iK} & {iL} \\
	{jK} & {jL}
	\arrow["\sim"', from=1-1, to=2-1]
	\arrow[from=1-1, to=1-2]
	\arrow[from=2-1, to=2-2]
	\arrow["\sim", from=1-2, to=2-2]
\end{tikzcd}\]
where all maps are cofibrations, and vertical ones are weak equivalences according to lemma \ref{lemma:psipartial_is_a_weak_equivalence} and \ref{lemma:psisat_is_a_weak_equivalence}. We then have a trivial cofibration $iL\cup jK\to jL$.
Now, there is a lifting in the left diagram if and only if there is a lifting in the right diagram.
\[\begin{tikzcd}
	K & {N_j Y} & {iL\cup jK} & Y \\
	L & {N_i Y} & {jL}
	\arrow[""{name=0, anchor=center, inner sep=0}, from=1-2, to=2-2]
	\arrow[from=1-1, to=1-2]
	\arrow[from=1-1, to=2-1]
	\arrow[from=2-1, to=2-2]
	\arrow["\exists"{description}, dotted, from=2-1, to=1-2]
	\arrow[""{name=1, anchor=center, inner sep=0}, from=1-3, to=2-3]
	\arrow[from=1-3, to=1-4]
	\arrow["\exists"{description}, dotted, from=2-3, to=1-4]
	\arrow[shorten <=12pt, shorten >=12pt, Rightarrow, 2tail reversed, from=0, to=1]
\end{tikzcd}\]
which is the case because the left hand morphism is an acyclic cofibration, and  $Y$ is fibrant. 

\end{proof}

\begin{proof}[Proof of the  proposition \ref{prop:criterimu_to_be_an_weak_equivalence}]
We can suppose without lost of generality that $\psi$ is a pointwise cofibration. 
Let $X$ be any marked simplicial set and $Y$ an $\infty$-category. We have equalities:
\[\begin{tikzcd}
	{[jX,Y]} & {[X,N_j Y]} \\
	{[iX,Y]} & {[X,N_i Y].}
	\arrow[from=1-2, to=2-2]
	\arrow[from=1-1, to=2-1]
	\arrow[Rightarrow, no head, from=1-1, to=1-2]
	\arrow[Rightarrow, no head, from=2-1, to=2-2]
\end{tikzcd}\]
Lemma \ref{lemma:j*is_a_trivial_fibration} implies that  the right hand morphism is a bijection, and so is the left hand morphism. 
For all $X$, $\psi(X)$ is then a weak equivalence.
\end{proof}

\subsection{Weak characterization of the $l$-suspension}

Let $l$ be an integer. For the rest of this section, we fix a left Quillen functor $i:\textbf{mSset}\to \textbf{mSset}$ such that there exists a zigzag of weakly invertible natural transformations:
$$i(\Gb_{\uvar}) \leftrightsquigarrow \Gb_{l+\uvar}.$$

\begin{lemma}
Let $n$ be any integer, the following natural transformations are pointwise acyclic cofibrations:
$$i\tau_n\to \tau_{n+l}i\tau_n \leftarrow \tau_{n+l}i.$$
\end{lemma}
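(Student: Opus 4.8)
The plan is first to dispose of the cofibration part and reduce everything to globes, then to carry out an explicit computation on globes. The map $i\tau_nX\to\tau_{n+l}i\tau_nX$ is the component at $i\tau_nX$ of the canonical entire inclusion $Z\to\tau_{n+l}Z$, hence a monomorphism, i.e.\ a cofibration; and $\tau_{n+l}iX\to\tau_{n+l}i\tau_nX$ is the image under the left Quillen functor $\tau_{n+l}\circ i$ of the cofibration $X\to\tau_nX$, hence a cofibration. So only acyclicity is at stake. Each of $i\tau_n$, $\tau_{n+l}i\tau_n$ and $\tau_{n+l}i$ is a composite of the cocontinuous left Quillen functors $i$, $\tau_n$ and $\tau_{n+l}$, hence a left Quillen adjoint, and the two transformations above go between such functors; so by Proposition~\ref{prop:criterimu_to_be_an_weak_equivalence} it will be enough to prove that both transformations become weak equivalences after evaluation at each globe $\Gb_m$.

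For that I would isolate two facts. (1) If $A$ is weakly equivalent to some $r$-skeletal marked simplicial set and $N>r$, then $A\to\tau_NA$ is a weak equivalence: taking an $r$-skeletal $B$ with a zigzag $A\leftrightsquigarrow B$ and applying the left Quillen functor $\tau_N$ gives $\tau_NA\leftrightsquigarrow\tau_NB=B$ (the equality because $B$ is $r$-skeletal and $N>r$), so $A\leftrightsquigarrow\tau_NA$, and two-out-of-three along the naturality squares of $\mathrm{id}\Rightarrow\tau_N$ (whose $B$-component is an identity) concludes. Since $\Gb_m=\Sigma^m\Delta[0]$ is $m$-skeletal, this gives $\tau_N\Gb_m=\Gb_m$ for $N>m$. (2) A globular strengthening of Lemma~\ref{lemma:fibration_are_isofibration}: for $M\geq N$ the canonical map $\Gb_M\to\tau_N\Gb_M$ is a weak equivalence, there is a weak equivalence $\Gb_{N-1}\to\tau_N\Gb_M$, and these are compatible with the structure maps $\delta^\alpha$ as $M$ varies. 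The case $M=N$ is exactly the acyclic cofibration $\delta^0_N\colon\Gb_{N-1}\to(\Gb_N)_t$ of Lemma~\ref{lemma:fibration_are_isofibration}; the general case I would get by induction on $M-N$, the step from $\tau_N\Gb_M$ to $\tau_N\Gb_{M+1}$ amounting to adjoining the two extra cells of $\Gb_{M+1}$ (both now thin, of dimensions $M$ and $M+1$) along acyclic cofibrations.

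Granting (1) and (2), evaluate at $\Gb_m$. If $m<n$, then $\tau_n\Gb_m=\Gb_m$, so the second transformation is the identity there, and the first is $i\Gb_m\to\tau_{n+l}i\Gb_m$, a weak equivalence by (1) since $i\Gb_m\leftrightsquigarrow\Gb_{m+l}$ and $n+l>m+l$. If $m\geq n$, then using (2), the hypothesis $i\Gb_\bullet\leftrightsquigarrow\Gb_{l+\bullet}$, and that $i$ and $\tau_{n+l}$ preserve weak equivalences between (always cofibrant) objects, we get $i\tau_n\Gb_m\leftrightsquigarrow i\Gb_{n-1}\leftrightsquigarrow\Gb_{n+l-1}$, $\tau_{n+l}i\tau_n\Gb_m\leftrightsquigarrow\tau_{n+l}\Gb_{n+l-1}=\Gb_{n+l-1}$, $\tau_{n+l}i\Gb_m\leftrightsquigarrow\tau_{n+l}\Gb_{m+l}$, and $\tau_{n+l}\Gb_{m+l}\leftrightsquigarrow\Gb_{n+l-1}$ by (2) for the pair $(n+l,m+l)$. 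Hence the first transformation at $\Gb_m$ is $i\tau_n\Gb_m\to\tau_{n+l}i\tau_n\Gb_m$, a weak equivalence by (1) since $n+l>n+l-1$; and, under the identifications just made, the second transformation at $\Gb_m$ is the collapse $\tau_{n+l}\Gb_{m+l}\to\Gb_{n+l-1}$ of (2), a weak equivalence because by (2) it admits a weak equivalence $\Gb_{n+l-1}\to\tau_{n+l}\Gb_{m+l}$ as a section. That settles both transformations.

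The cofibration part and the reduction to globes via Proposition~\ref{prop:criterimu_to_be_an_weak_equivalence} are routine; the real content will be fact (2), namely pinning down $\tau_n\Gb_m$ up to weak equivalence (it is $\Gb_{n-1}$) coherently enough that the maps appearing in the statement are recognised as the expected collapses. I expect the inductive step there — showing that adjoining a thin top cell to a marked globe whose top cell is already thin yields an acyclic cofibration — together with the (mechanical but lengthy) check that all the zigzags above respect the structure maps, to be the main obstacle.
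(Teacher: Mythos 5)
Your overall strategy is the paper's: dispose of the cofibration part, reduce to globes via Proposition \ref{prop:criterimu_to_be_an_weak_equivalence}, and then argue on each $\Gb_m$ by comparison with lower-dimensional globes and two-out-of-three. But two points need repair. First, the opening claim of your fact (2) is false: for $M\geq N$ the map $\Gb_M\to\tau_N\Gb_M$ is \emph{not} a weak equivalence --- already for $M=N=1$ it is $\Delta[1]\to\Delta[1]_t$, and in general $\tau_N\Gb_M$ is weakly equivalent to $\Gb_{N-1}$ (that is the correct half of your fact (2), which is what Lemma \ref{lemma:fibration_are_isofibration} and your induction give) while $\Gb_M$ is not. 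You do not use this half in the main chain of identifications, but you lean on it implicitly when you invoke ``the collapse $\tau_{n+l}\Gb_{m+l}\to\Gb_{n+l-1}$ of (2)'', which fact (2) does not actually provide.

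Second, and more seriously, the decisive step for the second transformation when $m\geq n$ --- ``under the identifications just made, the second transformation at $\Gb_m$ is the collapse $\tau_{n+l}\Gb_{m+l}\to\Gb_{n+l-1}$'' --- is not a proof. Transporting the map $\tau_{n+l}i\Gb_m\to\tau_{n+l}i\tau_n\Gb_m$ across the zigzags $\tau_{n+l}i\Gb_m\leftrightsquigarrow\tau_{n+l}\Gb_{m+l}$ and $\tau_{n+l}i\tau_n\Gb_m\leftrightsquigarrow\Gb_{n+l-1}$ requires commuting squares, i.e.\ compatibility of the hypothesis zigzag $i(\Gb_{\uvar})\leftrightsquigarrow\Gb_{l+\uvar}$ with a collapse-type map; the hypothesis only gives naturality with respect to the cofaces $\delta^\alpha$, and you explicitly defer exactly this check. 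The paper avoids the issue: for $k\geq n$ it maps $i(\Gb_{n-1})$ into all three objects $i\tau_n(\Gb_k)$, $\tau_{n+l}i\tau_n(\Gb_k)$ and $\tau_{n+l}i(\Gb_k)$ by the $\delta$-induced maps, checks that these three maps are weak equivalences (in substance, your fact (1) together with the correct half of your fact (2), transported along the coface-natural zigzag for $i$), and concludes by two-out-of-three in the resulting commutative triangles; no compatibility with collapse maps is ever needed. Recasting your $m\geq n$ case in that form closes the gap; the cofibration part, the reduction to globes, the case $m<n$, and your fact (1) are fine and agree with the paper.
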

\begin{proof}
These are natural transformations between left Quillen functors. They induce weak equivalences on globes of dimension strictly inferior to $n$.  Let $k\geq n$. We have a commutative diagram: 
\[\begin{tikzcd}
	{i\tau_n(\Gb_k)} & {\tau_{n+l}i \tau_n(\Gb_k)} & {\tau_{n+l}i(\Gb_k)} \\
	& {i(\Gb_{n-1})}
	\arrow["\sim"', from=2-2, to=1-1]
	\arrow["\sim"', from=2-2, to=1-2]
	\arrow["\sim", from=2-2, to=1-3]
	\arrow[from=1-1, to=1-2]
	\arrow[from=1-3, to=1-2]
	\arrow["{\delta^0_k}", draw=none, from=2-2, to=1-1]
	\arrow["{\delta^0_k}", draw=none, from=2-2, to=1-2]
	\arrow["{\delta^0_k}"', draw=none, from=2-2, to=1-3]
\end{tikzcd}\]
By two out of three, this implies that theses natural transformations induce weak equivalences on all globes, and  proposition \ref{prop:criterimu_to_be_an_weak_equivalence} concludes the proof.
\end{proof}

\begin{prop}
\label{prop:modification_of_the_value_on_thin_representables}
There exist a left Quillen functor $j$ such that $j(\Delta[n])=i(\Delta[n])$ and $j(\Delta[n]_t)=\tau_{n+l}i(\Delta[n])$ together with a zigzag of weakly invertible natural transformations 
$$i\leftrightsquigarrow j.$$
\end{prop}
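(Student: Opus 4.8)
The plan is to build $j$ by modifying, at the thin representables, the functorial data that defines $i$, and then to deduce the Quillen property and the zigzag from the previous Lemma.

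Since $i$ is cocontinuous it is determined by the objects $i(\Delta[n])$, $i(\Delta[n]_t)$ together with the action of $i$ on maps between these — the simplicial operators, the entire inclusions $e_n\colon\Delta[n]\to\Delta[n]_t$ and the ``thin degeneracies'' $\Delta[n]_t\to\Delta[n-1]$, which are $\tau_n$ applied to the elementary degeneracies $\Delta[n]\to\Delta[n-1]$. I would first define $j$ on this data: $j(\Delta[n]):=i(\Delta[n])$ with the same action on simplicial operators, $j(\Delta[n]_t):=\tau_{n+l}(i\Delta[n])$, $j(e_n)$ the canonical entire inclusion $i(\Delta[n])\hookrightarrow\tau_{n+l}(i\Delta[n])$, and $j$ of a thin degeneracy the map $\tau_{n+l}(i\Delta[n])\to i(\Delta[n-1])$ obtained by extending $i(\sigma)$ along $j(e_n)$. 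The whole content of the construction is that this extension exists and that the resulting assignment respects all relations; concretely one must check that $i(\sigma)\colon i(\Delta[n])\to i(\Delta[n-1])$ carries the simplices of dimension $\ge n+l$ — i.e.\ exactly those newly marked by $\tau_{n+l}$ — to thin simplices of $i(\Delta[n-1])$, and similarly for the remaining generating maps. This is where the previous Lemma enters: it supplies, naturally in $X$, the entire weakly invertible maps $i\tau_n(X)\to\tau_{n+l}i\tau_n(X)\leftarrow\tau_{n+l}i(X)$, and its naturality with respect to the thin degeneracies pins down which markings $i(\sigma)$ must send to thin simplices. I expect this verification — checking the full functoriality, the degeneracy relations being the delicate ones — to be the main obstacle.

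Granting that $j$ is a cocontinuous functor, the zigzag and the Quillen property follow formally. For the zigzag I would introduce the auxiliary cocontinuous functor $k$ with $k(\Delta[n]):=i(\Delta[n])$ and $k(\Delta[n]_t):=\tau_{n+l}(i\Delta[n]_t)$ (the same verification, now harmless since the target already has every simplex of dimension $\ge n+l$ thin), and the natural transformations $i\to k$ and $j\to k$ that are the identity on non-thin representables and, on $\Delta[n]_t$, respectively the canonical inclusion $i(\Delta[n]_t)\hookrightarrow\tau_{n+l}(i\Delta[n]_t)$ and the map $\tau_{n+l}(i\Delta[n])\to\tau_{n+l}(i\Delta[n]_t)$ induced by $i(e_n)$; by the previous Lemma both are weak equivalences on all representables, hence — all four functors being cocontinuous and preserving cofibrations — pointwise weak equivalences, giving $i\to k\leftarrow j$. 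Finally $j$ is left Quillen: being cocontinuous and sending $\partial\Delta[n]\to\Delta[n]$ (because $i$ is left Quillen, and $j$ agrees with $i$ on non-thin representables) and $\Delta[n]\to\Delta[n]_t$ (an entire inclusion) to cofibrations, it preserves all cofibrations; and for an acyclic cofibration $K\to L$, two-out-of-three applied to the square relating $j(K)\to j(L)$ and $i(K)\to i(L)$ through the zigzag shows that $j(K)\to j(L)$ is a weak equivalence, hence an acyclic cofibration.
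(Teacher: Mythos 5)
Your proposal is correct and takes essentially the same route as the paper: your auxiliary functor $k$ is exactly the paper's $\tilde{i}$ (with $\tilde{i}(\Delta[n])=i(\Delta[n])$ and $\tilde{i}(\Delta[n]_t)=\tau_{n+l}i(\Delta[n]_t)$), the zigzag $i\to k\leftarrow j$ of entire, pointwise acyclic cofibrations is obtained from the previous lemma plus saturation over representables just as in the paper, and left Quillen-ness of $j$ is deduced in the same way. The well-definedness of $j$ (and $k$) on the thin degeneracies $\Delta[n]_t\to\Delta[n-1]$, which you single out as the main obstacle, is not addressed in the paper's proof at all, which simply declares $j$ and $\tilde{i}$ to be the colimit-preserving functors determined by those values on representables, so your treatment is if anything more careful on this point.
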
 
\begin{proof}
We define $\tilde{i}$ (resp. $j$) to be the colimit preserving functor defines on representables by  $\tilde{i}(\Delta[n]):=i(\Delta[n])$ and $\tilde{i}:=(\Delta[n]_t)=\tau_{n+l}i(\Delta[n]_t)$ (resp.  $j(\Delta[n]):=i(\Delta[n])$ and $j(\Delta[n]_t):=\tau_{n+l}i(\Delta[n])$). We then have a zigzag of natural transformations that are pointwise acyclic cofibrations:
$$i\xrightarrow{\sim} \tilde{i}\xleftarrow{\sim} j.$$
This implies that both $\tilde{i}$ and $j$ are left Quillen functors.
\end{proof}

Let $C$ be the subcategory of marked simplicial set such that $\tilde{R}(iX) =\tilde{R}(\Sigma^lX)$.
We define the statement $\mathbb{P}(k,n,m)$ as the conjunction of the following assertions:
\begin{enumerate}
\item  $\Sigma^m(\Delta[k-1]\star \Sigma \Delta[n-k])$ is in $C$.
\item All monomorphisms:
$\Sigma^m(\Delta[l]\star \Sigma \Delta[p])\to \Sigma^m(\Delta[k-1]\star \Sigma \Delta[n-k])$
are in $C$.
\item All monomorphisms:
$\Sigma^m(\Delta[l]\star   \{\alpha\}) \to \Sigma^m(\Delta[k-1]\star \Sigma\Delta[n-k])$
are in $C$, where $\alpha\in\mathbb{Z}/2\mathbb{Z}$.
\end{enumerate}

\begin{lemma}
\label{lemma:about_P}
If for all $k,n,m$ $\mathbb{P}(k,n,m)$, $C$ includes $\Delta^+$. 
\end{lemma}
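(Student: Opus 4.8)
The plan is a saturation argument: I will show that $C$ contains every representable marked simplicial set $\Delta[n]$ and $\Delta[n]_t$, together with the coface operators, the codegeneracies and the thinness maps $\Delta[n]\to\Delta[n]_t$ of $\Delta^+$, and then conclude by the fact that $C$ is closed under the colimits needed to build $\mSset$ from $\Delta^+$, exactly as in the ``saturated by monomorphisms'' arguments used repeatedly above. The first thing to record is that $C$ has the necessary closure properties: since $i$ and $\Sigma^l$ are left Quillen and $\tilde R$ inverts every elementary anodyne extension, both $\tilde R\circ i$ and $\tilde R\circ\Sigma^l$ send acyclic cofibrations to isomorphisms, so $C$ is stable under zigzags of acyclic cofibrations; and since $\tilde R$, $i$, $\Sigma^l$ are cocontinuous, $C$ is closed under colimits. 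Moreover the hypothesis ``$\mathbb{P}(k,n,m)$ for all $k,n,m$'' already places, by definition, each object $\Sigma^m(\Delta[k-1]\star\Sigma\Delta[n-k])$ and each of the monomorphisms named in its three clauses inside $C$.

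For the non-thin simplices I would use that $\Delta[0]$ is the monoidal unit for $\otimes$, so $\Sigma\Delta[0]=\Gb_1=\Delta[1]$ and hence $\Delta[k-1]\star\Sigma\Delta[0]=\Delta[k-1]\star\Delta[1]=\Delta[k+1]$; thus clause (1) of $\mathbb{P}(k,k,0)$ puts $\Delta[k+1]$ in $C$ for every $k\geq 0$, so $\Delta[n]\in C$ for all $n\geq 1$. And $\Delta[0]=\Gb_0\in C$ because the standing hypothesis gives a zigzag $i(\Gb_n)\leftrightsquigarrow\Gb_{l+n}=\Sigma^l\Gb_n$ of weak equivalences that $\tilde R$ inverts; this also shows $\Gb_n\in C$ for all $n$, which is used below. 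With the same substitution $n=k$, $m=0$, clauses (2) and (3) of $\mathbb{P}$ assert precisely that the face inclusions $\Delta[l+2]\hookrightarrow\Delta[k+1]$ and $\Delta[l+1]\hookrightarrow\Delta[k+1]$ --- and, taking $l=-1$, the vertex inclusions --- lie in $C$; two-out-of-three along the zigzags already in play then yields every coface operator of $\Delta^+$, and since each $\partial\Delta[n]$ is the colimit of the proper faces of $\Delta[n]$ along coface operators, also every boundary inclusion $\partial\Delta[n]\hookrightarrow\Delta[n]$.

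For the thin simplices I would normalize $i$ first: by the lemma preceding Proposition~\ref{prop:modification_of_the_value_on_thin_representables}, $i$ commutes with the truncation functors up to pointwise acyclic cofibration, and by that proposition one may replace $i$ by a left Quillen functor $j$ with $j(\Delta[n]_t)=\tau_{n+l}\,i(\Delta[n])$ and $i\leftrightsquigarrow j$ --- which leaves $C$ unchanged up to the isomorphisms supplied by the zigzag. Then $\Delta[n]_t=\tau_n\Delta[n]$ while $\Sigma^l\Delta[n]_t=\tau_{n+l}\Sigma^l\Delta[n]$, and since $\tilde R$ intertwines the functors $\tau$ with truncations on $\satinfcat$, the identity $\tilde R(i\Delta[n])=\tilde R(\Sigma^l\Delta[n])$ forces $\tilde R(i\Delta[n]_t)=\tilde R(\Sigma^l\Delta[n]_t)$, i.e. $\Delta[n]_t\in C$, and by naturality the thinness map $\Delta[n]\to\Delta[n]_t$ as well. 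A more self-contained route, bypassing the compatibility of $\tilde R$ with $\tau$, is to imitate the proof of Lemma~\ref{lemma:G_fibration_right lifting property_against_sat}: the object $\Delta[k-1]\star\Sigma\Delta[n-k]$ is linked by a zigzag of acyclic cofibrations to a colimit built from $\Delta[k-2]\star\Sigma\Delta[n-k+1]$, $\Delta[k-2]\star\Sigma\Delta[n-k]$ and the wedge term $\Delta[k-2]\star(\Delta[0]\fwedge\Sigma\Delta[n-k])$, whose top cells are thin, and iterating this $k$ times --- the free parameter $m$ of $\mathbb{P}$ allowing the recursion at every suspension level --- produces the $\tau$-truncated simplices inside $C$.

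The step I expect to require real care is the passage from ``$\Delta[n]\in C$'' to ``$\Delta[n]_t\in C$ with $(\Delta[n]\to\Delta[n]_t)\in C$'' and, relatedly, to the codegeneracies $\Delta[n+1]\to\Delta[n]$: the clauses of $\mathbb{P}$ only ever speak of genuine monomorphisms and of objects with at least two $0$-cells, whereas the thin structure and the collapsing operators of $\Delta^+$ live outside the image of the globe category, so they have to be extracted from the join--suspension formulas of Section~\ref{section:recap_of_result_for_joint} together with the truncation machinery of the previous subsection --- for the codegeneracies, by writing each of them as a join of a single elementary codegeneracy with identities and tracing it through those formulas, using the already-established membership of all simplices and coface operators. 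The delicate part is the bookkeeping of which colimits and zigzags of acyclic cofibrations actually assemble these maps, not any new idea. Once this is in hand, $C$ contains every object and every generating operator of $\Delta^+$, hence, being closed under colimits, all of $\Delta^+$.
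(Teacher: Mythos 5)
Your treatment of the monomorphism part is essentially the paper's: clauses (1)--(3) of $\mathbb{P}(k,n,m)$ (with $n=k$, $m=0$, using $\Sigma\Delta[0]=\Delta[1]$) put all simplices and all face inclusions into $C$, and together with closure under colimits this is the easy half of the argument. The genuine gap is the non-injective morphisms of $\Delta^+$ (the codegeneracies and, more generally, every map that is not a monomorphism). The hypotheses $\mathbb{P}$ speak only of monomorphisms, and closure of $C$ under colimits and zigzags of acyclic cofibrations cannot by itself produce agreement of $\tilde{R}i$ and $\tilde{R}\Sigma^l$ on a degeneracy. Your proposed route --- writing $s^i$ as a join of an elementary codegeneracy with identities and ``tracing it through'' the formulas of \ref{section:recap_of_result_for_joint} --- does not go through: $C$ is not known to be stable under $\uvar\star\uvar$ (the functor $i$ is an arbitrary left Quillen functor, and neither $i$ nor the equality $\tilde{R}i=\tilde{R}\Sigma^l$ defining $C$ interacts with the join), and those formulas only compare certain cofibrations up to zigzags of acyclic cofibrations; they never identify the value of the two realizations on a non-monomorphism. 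So this step is not ``bookkeeping''; it is precisely where the content of the lemma lies.

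The paper's argument at this point is different and is the key idea your proposal is missing: one works inside $\satinfcat$. Given a non-injective $j:\Delta[n]\to\Delta[q]$, induction on $n$ gives that all maps out of lower-dimensional simplices are already in $C$, so $\tilde{R}(ij)$ and $\tilde{R}(\Sigma^l j)$ agree on the image of $\tilde{R}(i\partial\Delta[n])$. Now $\tilde{R}(i\Delta[n])=\tilde{R}(\Sigma^l\Delta[n])$ is generated by that boundary together with a single atomic top cell, and because $j$ is not injective both composites are forced to send this top cell to a unit: the map factors through a lower-dimensional simplex, whose gaunt realization has no nontrivial cell of the relevant dimension with the boundary-prescribed source and target. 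Hence the two maps agree on the generator as well and therefore coincide. Without this top-cell-to-unit argument (or an equivalent one, e.g.\ via Steiner complexes as in \ref{lemma:unicity_of_composition}), your proof never places the degeneracies in $C$, so the conclusion that $C$ includes all of $\Delta^+$ is not reached; your handling of $\Delta[n]_t$, which rests on the unverified claim that $\tilde{R}$ intertwines $\tau_{n+l}$ with a truncation on $\satinfcat$, is a secondary but related soft spot.
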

\begin{proof}
It is clear that hypotheses imply that $C$ includes  $(\Delta^+)_m$, the sub category of $\Delta^+$ whose morphisms are monomorphisms. We yet suppose that for all $k<n$, morphisms $\Delta[k]\to \Delta[l]$ are in $C$. Let $j:\Delta[n]\to \Delta[l]$ be any morphism which is not a monomorphism.  We have a \textit{a priori} non commutative diagram:
\[\begin{tikzcd}
	{\tilde{R}(i\partial \Delta[n-1])} & {\tilde{R}(\partial\Delta[n-1])} \\
	{\tilde{R}(i\Delta[n])} & {\tilde{R}(\Delta[n])} \\
	{\tilde{R}(i\Delta[l])} & {\tilde{R}(\Delta[l])}
	\arrow[from=2-1, to=3-1]
	\arrow[Rightarrow, no head, from=2-1, to=2-2]
	\arrow[from=1-1, to=2-1]
	\arrow[from=1-2, to=2-2]
	\arrow[from=2-2, to=3-2]
	\arrow[Rightarrow, no head, from=3-1, to=3-2]
	\arrow[Rightarrow, no head, from=1-1, to=1-2]
\end{tikzcd}\]
by induction hypothesis, the outer diagram commutes. For the downer diagram to commutes, we have to check that the top cell of $\tilde{R}(i\Delta[n])$  is sent on the same element on $\tilde{R}(\Delta[l])$. This is the case because the two paths send it to an unity. 
\end{proof}

\begin{prop}
\label{prop:existence_of_comparaison_with_street}
We have the equality
 $\tilde{R}i = \tilde{R}\Sigma^l$.
\end{prop}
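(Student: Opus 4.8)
The plan is to exploit that $\tilde R i$ and $\tilde R \Sigma^l$ are both colimit‑preserving functors $\mSset\to\satinfcat$, being composites of colimit‑preserving functors. Hence it suffices to exhibit, naturally in the representables $\Delta[n]$, $\Delta[n]_t$ and in all maps between them, an identification $\tilde R i(-)=\tilde R \Sigma^l(-)$; that is, to show the subcategory $C$ contains all representables together with all maps of the representable subcategory. Two preliminary remarks: $C$ is closed under colimits, since all three functors involved preserve them; and $C$ is closed under zigzags of anodyne extensions, since $i$ and $\Sigma^l$ are left Quillen (hence send anodyne extensions to anodyne extensions) and $\tilde R$ sends anodyne extensions — in fact all weak equivalences — to isomorphisms. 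In particular $C$ is stable under the homotopy pushouts produced by the formulas of Section 2, since in these formulas one leg is always a cofibration and so $\tilde R$ preserves the pushout on the nose.

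First I would treat the non‑thin representables by proving $\mathbb{P}(k,n,m)$ for all admissible $k,n,m$, after which Lemma \ref{lemma:about_P} gives that $C$ contains the augmented simplex category $\Delta^+$; in particular $\tilde R i\,\Delta[n]=\tilde R\Sigma^l\Delta[n]$, naturally in all maps of $\Delta^+$. The proof of $\mathbb{P}$ is a nested induction — on $m$, then on $n$, then downward on $k$. For $k=0$ one reaches $\Sigma^m(\Delta[-1]\star\Sigma\Delta[n])=\Sigma^{m+1}\Delta[n]$, which one unwinds further with the suspension/join zigzag formulas until landing on globes $\Gb_p=\Sigma^p\Delta[0]$; these lie in $C$ by the standing hypothesis $i\Gb_\bullet\leftrightsquigarrow\Gb_{l+\bullet}=\Sigma^l\Gb_\bullet$. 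For $k>0$ one uses the join–suspension zigzag formulas of \ref{section:recap_of_result_for_joint} (and the wedge/interval formulas of \ref{section:result_for_interval}) to link $\Sigma^m(\Delta[k-1]\star\Sigma\Delta[n-k])$, by a zigzag of anodyne extensions, to a pushout of strictly simpler objects of the same shape — lowering $k$ at the cost of raising $n$ or of inserting a wedge — whose gluing legs are exactly the monomorphisms recorded in clauses (2)–(3) of $\mathbb{P}$. By the induction hypothesis (all three clauses carried along simultaneously, which is why they are packaged together) these legs are in $C$, so the pushout is in $C$, hence so is $\Sigma^m(\Delta[k-1]\star\Sigma\Delta[n-k])$, together with the relevant subobject inclusions, establishing $\mathbb{P}(k,n,m)$.

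Next I would handle the thin representables using Proposition \ref{prop:modification_of_the_value_on_thin_representables}: replace $i$ by the left Quillen functor $j$ with $j\Delta[n]=i\Delta[n]$ and $j\Delta[n]_t=\tau_{n+l}i\Delta[n]$, noting $\tilde R i=\tilde R j$ because the zigzag $i\leftrightsquigarrow j$ is pointwise a weak equivalence and $\tilde R$ inverts weak equivalences. On $\Delta[n]$ one already has $\tilde R j\,\Delta[n]=\tilde R i\,\Delta[n]=\tilde R\Sigma^l\Delta[n]$ from the previous step. For $\Delta[n]_t$ I would use that $\tilde R\circ\tau_q=G_q\circ\tilde R$, where $G_q$ denotes the $q$‑truncation of Gaunt $\omega$‑categories (the colimit‑preserving endofunctor forcing all cells of dimension $\geq q$ to be identities): both sides are colimit‑preserving, and on a representable $\Delta[k]$, marking all simplices of dimension $\geq q$ forces precisely the generating $\geq q$‑cells of the oriental $\tilde R\Delta[k]=O_k$ to become equivalences, hence identities, which is $G_q O_k$ (this is consistent with $\tilde R(\Gb_n)_t=\Db_{n-1}$ and Lemma \ref{lemma:fibration_are_isofibration}). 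Then, using $\Sigma^l\Delta[n]_t=\tau_{n+l}\Sigma^l\Delta[n]$,
\[
\tilde R j\,\Delta[n]_t=\tilde R\tau_{n+l}i\,\Delta[n]=G_{n+l}\tilde R i\,\Delta[n]=G_{n+l}\tilde R\Sigma^l\Delta[n]=\tilde R\tau_{n+l}\Sigma^l\Delta[n]=\tilde R\Sigma^l\Delta[n]_t,
\]
and naturality in maps $\Delta[n]\to\Delta[m]_t$ and $\Delta[n]_t\to\Delta[m]_t$ follows from the naturality already obtained over $\Delta^+$. Hence $\tilde R i=\tilde R j=\tilde R\Sigma^l$ on all representables and all maps between them, and therefore on all of $\mSset$.

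The main obstacle is the induction proving $\mathbb{P}(k,n,m)$: the whole apparatus of join, co‑join, diamond and wedge formulas, and the three‑clause bookkeeping of $\mathbb{P}$ keeping track of the subobjects that occur as gluing data, exists exactly so that this reduction closes up. The delicate points are to pick a well‑founded induction order in which the pieces produced by the join–wedge zigzag are genuinely simpler, and to verify that the gluing legs appearing really are among the monomorphisms covered by clauses (2)–(3). The thin‑representable step is comparatively soft once the identity $\tilde R\tau_q=G_q\tilde R$ is established.
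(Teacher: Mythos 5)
Your global strategy coincides with the paper's: reduce to representables via $\mathbb{P}(k,n,m)$ and Lemma \ref{lemma:about_P}, using closure of $C$ under colimits and the join/wedge zigzag formulas, with a nested induction. However, there is a genuine gap in your inductive step. The decisive gluing leg of the span produced by the formulas of \ref{section:recap_of_result_for_joint} is the whiskering map $\Sigma^m(\Delta[k-2]\star\Sigma\Delta[n-k])\to \Sigma^m(\Delta[k-2]\star(\Delta[1]\fwedge\Sigma\Delta[n-k]))$, and this is \emph{not} one of the monomorphisms recorded in clauses (2)--(3) of $\mathbb{P}$ (those are inclusions into $\Sigma^m(\Delta[k-1]\star\Sigma\Delta[n-k])$). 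Knowing that its source and target lie in $C$ does not yet put the \emph{map} in $C$: one has to show that $\tilde{R}i$ and $\tilde{R}\Sigma^l$ send this particular morphism to the \emph{same} morphism of Gaunt $\omega$-categories, otherwise the two realized pushouts need not agree, nor the comparison be natural. This is precisely what the paper's technical Lemma \ref{lemma:unicity_of_composition} provides: via Steiner's theory of augmented directed complexes with loop-free atomic basis, any morphism fitting in the relevant commutative square is forced to equal $\Sigma^m(\Delta[k-2]\star\triangledown)$; applied to the two candidate realizations (whose restrictions agree by the induction hypothesis --- which is exactly why clauses (2)--(3) are carried along), this yields the required identity. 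Your proposal contains no such rigidity argument, so the induction does not close; your stated ``delicate point'' (checking the legs are covered by (2)--(3)) is in fact false for this leg.

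Two further remarks. Your claim that $\tilde{R}$ inverts all weak equivalences is unjustified: $\tilde{R}$ is only known to invert the elementary anodyne extensions (hence maps obtained from them by pushout and composition), and in any case the proposition asserts an \emph{equality} of functors, so ``isomorphic after realization'' would not suffice --- again the reason the uniqueness lemma is indispensable. Your treatment of the thin representables via $\tilde{R}\tau_q$ is a sensible supplement (the paper's proof is silent on $\Delta[n]_t$, relying on the fact that in \ref{theo:criterium_to_be_linked_to_identity} the proposition is applied only after $i$ has been normalized by \ref{prop:modification_of_the_value_on_thin_representables}), but as written it leans on the same unsupported inversion claim when you argue $\tilde{R}i=\tilde{R}j$.
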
 
\begin{proof}
The sub category $C$ is closed by colimits and by zigzags of acyclic cofibrations. According to \ref{lemma:about_P}, it is enough to show $\mathbb{P}(k,n,m)$ for all $k,n,m$. We will proceed by induction.
The property $\forall_m\mathbb{P}(0,1,m)$ corresponds to the belonging of globes to $C$, which is true.

Property  $\forall_m\mathbb{P}(0,n,m)$ is equivalent to $\forall_m\mathbb{P}(n-1,n-1,m+1)$. Suppose $\forall_m\mathbb{P}(k-1,n,m)$ and $\forall_m\mathbb{P}(k-1,n-1,m)$  for $0<k\leq n$.
Induction hypothesis and lemma \ref{lemma:unicity_of_composition} imply that the morphism   
$$\Sigma^m ( \Delta[k-2]\star (\Sigma\Delta[n-k]))\to \Sigma^m (\Delta[k-2]\star (\Delta[0]\fwedge\Sigma\Delta[n-k]))$$ 
is in $C$. 
 
Results \ref{section:recap_of_result_for_joint}, implies that the object $\Sigma^m(\Delta[k-1]\star \Sigma \Delta[n-k])$ is  linked by a zigzag of acyclic cofibrations to the colimit of 
$$\Sigma^m (\Delta[k-2]\star(\Sigma \Delta[n-k+1]))\leftarrow  \Sigma^m (\Delta[k-2]\star (\Sigma\Delta[n-k])) \to \Sigma^m (\Delta[k-2]\star (\Delta[0]\fwedge\Sigma\Delta[n-k]))$$
This implies  $\mathbb{P}(k,n,m)$.
\end{proof}

\begin{theorem}
\label{theo:criterium_to_be_linked_to_identity}
Let $i:\textbf{mSset}\to \textbf{mSset}$ be a left Quillen functor. Suppose that there exists a zigzag of weakly invertible natural transformations:
$$i(\Gb_{\uvar}) \leftrightsquigarrow \Gb_{l+\uvar}.$$
Then, there exists a zigzag of weakly invertible natural transformations between $i$ and $\Sigma^l$.
\end{theorem}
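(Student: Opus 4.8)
The plan is to deduce \cref{theo:criterium_to_be_linked_to_identity} from \cref{prop:existence_of_comparaison_with_street} and \cref{prop:criterimu_to_be_an_weak_equivalence}, by comparing the given functor $i$ with the Street-type functor $i^l_{str}$ of \cref{prop:i_str_is_Quillen}. First I would replace $i$ by the functor $j$ produced in \cref{prop:modification_of_the_value_on_thin_representables}: since $j$ is linked to $i$ by a zigzag of weakly invertible natural transformations, it suffices to build a zigzag of weakly invertible natural transformations between $j$ and $\Sigma^l$. The point of this reduction is that $j$ is colimit-preserving and determined on representables by $j(\Delta[n]) = i(\Delta[n])$ and $j(\Delta[n]_t) = \tau_{n+l}i(\Delta[n])$, which matches exactly the shape of the defining formulas for $i^l_{str}$, namely $i^l_{str}([n]) = N_{str}(\tilde R(\Sigma^l\Delta[n]))$ and $i^l_{str}([n]_t) = \tau_{n+l}(i^l_{str}([n]))$.

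Next I would use \cref{prop:existence_of_comparaison_with_street}, which gives the equality of realizations $\tilde R i = \tilde R \Sigma^l$; since $j$ is linked to $i$ by acyclic cofibrations and $\tilde R$ sends acyclic cofibrations to isomorphisms, we also have $\tilde R j = \tilde R\Sigma^l$. Now recall from the Street nerve subsection that for all $l,n$ the canonical map $\Sigma^l\Delta[n]\to N_{str}(\tilde R(\Sigma^l\Delta[n]))$ is an acyclic cofibration. Applying this with $\tilde R(\Sigma^l\Delta[n])$ replaced by $\tilde R(j\Delta[n])$, and using that $j$ preserves the relevant structure, one obtains for each representable a natural comparison map $j(\Delta[n]) \to i^l_{str}(\Delta[n])$ which is a weak equivalence, compatible with faces and degeneracies, and similarly on the thin representables $\Delta[n]_t$ after applying $\tau_{n+l}$. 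Left Kan extending along the Yoneda embedding turns this into a natural transformation $j \Rightarrow i^l_{str}$ (or a zigzag, if one needs to interpose a cofibrant replacement to make it strictly natural) which is a weak equivalence on all representables; by \cref{prop:criterimu_to_be_an_weak_equivalence} it is then a pointwise weak equivalence, i.e.\ a weakly invertible natural transformation. Finally, \cref{prop:i_str_is_Quillen} gives a weakly invertible natural transformation $\Sigma^l \Rightarrow i^l_{str}$, and composing the two zigzags
$$\Sigma^l \leftrightsquigarrow i^l_{str} \leftrightsquigarrow j \leftrightsquigarrow i$$
yields the desired zigzag of weakly invertible natural transformations between $i$ and $\Sigma^l$.

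The main obstacle I expect is the passage from a compatible family of weak equivalences on representables to an honest natural transformation of colimit-preserving functors: the comparison maps $j(\Delta[n])\to i^l_{str}(\Delta[n])$ are canonical but one must check they commute with all simplicial operators (this is where the acyclicity of the Street comparison and the naturality of the nerve are used), and one must make sure the induced map on thin representables is correctly normalised by the truncation $\tau_{n+l}$ on both sides. A subtler point is that, strictly speaking, a pointwise-weak-equivalence comparison at the level of representables need only exist as a zigzag (passing through a cofibrant replacement functor, or through the intermediate $\tilde i$ of \cref{prop:modification_of_the_value_on_thin_representables}), so the conclusion is naturally phrased as a zigzag rather than a single natural transformation — which is exactly what the statement asks for. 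Everything else is formal: \cref{prop:criterimu_to_be_an_weak_equivalence} does the heavy lifting of propagating the equivalence from globes (hence from all representables) to arbitrary marked simplicial sets, and \cref{prop:existence_of_comparaison_with_street} supplies the key input that $\tilde R i$ and $\tilde R\Sigma^l$ agree.
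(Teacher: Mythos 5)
Your proposal follows essentially the same route as the paper: reduce to the normalised functor $j$ via \cref{prop:modification_of_the_value_on_thin_representables}, use \cref{prop:existence_of_comparaison_with_street} together with the Street nerve to produce a comparison $j\to i^l_{str}$, promote it to a weakly invertible transformation via \cref{prop:criterimu_to_be_an_weak_equivalence}, and conclude with the zigzag through $i^l_{str}$ given by \cref{prop:i_str_is_Quillen}. One caveat: your claim that the comparison map $j(\Delta[n])\to i^l_{str}(\Delta[n])$ is a weak equivalence on \emph{all} representables is not justified at that stage (it would presuppose what the theorem is proving) and is also not what \cref{prop:criterimu_to_be_an_weak_equivalence} asks for; the criterion needs the transformation to be a weak equivalence on the globes $\Gb_n$, which is exactly what the hypothesis $i(\Gb_{\uvar})\leftrightsquigarrow\Gb_{l+\uvar}$ (combined with $\Sigma^l\to i^l_{str}$ being weakly invertible) supplies, as in the paper's proof. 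With that correction the argument is the paper's.
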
 
\begin{proof} 
Accorded to proposition \ref{prop:modification_of_the_value_on_thin_representables}, we can suppose that $i(\Delta[n]_t)=\tau_{n+l}(i(\Delta[n]))$.
The proposition \ref{prop:existence_of_comparaison_with_street} then  implies that we have a natural transformation $\psi:i\to i^l_{str}$. 
Furthermore, hypotheses imply that this natural transformation is a weak equivalence on globes. According to proposition \ref{prop:criterimu_to_be_an_weak_equivalence}, $\psi$ is then a weakly invertible natural transformation.
We then have a zigzag of weakly invertible natural transformations: 
$$i\xrightarrow{\sim} i^l_{str}\xleftarrow{\sim}  \Sigma^l.$$
\end{proof}

\section{Application II: Others dualities}
\subsection{Globular objects}
\begin{definition}
We define several globular objects. 
\begin{itemize}[leftmargin=* ,parsep=0cm,itemsep=0cm,topsep=0cm]
\item $(\Gb_{co})_{n} :=  (\Sigma^\circ)^n(\Delta[0])$. 
\item $\Gb^{op}_{n} := (\Gb_{n})^{op}$.
\item $(\Gb_{\circ})_n := (\Gb_{co})_n^{op}$.
\end{itemize}
\end{definition}

\begin{lemma}
\label{lemma:inversion_on_globes}
For $\alpha\in\mathbb{Z}/2\mathbb{Z}$, we have a zigzag of acyclic cofibrations:
\[\begin{tikzcd}
	{\Sigma\Delta[0]} & {\Sigma^{\circ}\Delta[0]} \\
	{\Sigma\Sigma^{\circ}\Delta[0]} & {\Sigma^{\circ}\Sigma\Delta[0].}
	\arrow["{\Sigma d^{\alpha}}"', from=1-1, to=2-1]
	\arrow["{\Sigma d^{\alpha+1}}", from=1-2, to=2-2]
	\arrow[""{name=0, anchor=center, inner sep=0}, curve={height=-12pt}, draw=none, from=1-1, to=2-1]
	\arrow[""{name=1, anchor=center, inner sep=0}, curve={height=12pt}, draw=none, from=1-2, to=2-2]
	\arrow[squiggly, tail reversed, from=0, to=1]
\end{tikzcd}\]
\end{lemma}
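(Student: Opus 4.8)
The plan is to bootstrap the zigzag directly from Proposition~\ref{prop:formula_for_the_coop_suspension}. Since $\Delta[0]$ has only thin positive-dimensional simplices, a short computation with the Gray tensor product gives $\Sigma^{\circ}\Delta[0]=\Sigma\Delta[0]=\Gb_1$, hence $\Sigma\Sigma^{\circ}\Delta[0]=\Sigma^{2}\Delta[0]=\Gb_2$ and $\Sigma^{\circ}\Sigma\Delta[0]=(\Sigma^{\circ})^{2}\Delta[0]=(\Gb_{co})_2$; evaluating the natural zigzag of Proposition~\ref{prop:formula_for_the_coop_suspension} at $X=\Delta[0]$ then already supplies a zigzag of acyclic cofibrations $\Gb_2\leftrightsquigarrow(\Gb_{co})_2$. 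The only remaining content is that this zigzag is compatible with the two vertical maps, i.e.\ that it carries $\Sigma d^{\alpha}$ to $\Sigma^{\circ}d^{\alpha+1}$ (the right-hand vertical map); in particular one has to account for the index shift $\alpha\mapsto\alpha+1$.

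First I would observe that both vertical maps are inclusions of $\Gb_1$ as one of the two parallel $1$-cells of a marked simplicial set weakly equivalent to $\Gb_2$. After fibrant replacement the zigzag becomes a weak equivalence of $\infty$-categories, hence a $\Gb$-equivalence by Theorem~\ref{theo:f_weak_equivalence_ssi_f_G_equivalence}, so it sends the source of the generating $2$-cell to the source and the target to the target; it therefore suffices to identify which of the two $1$-cells each vertical picks out. Unwinding $\Sigma d^{\alpha}$: by the definition of $\Sigma$ on morphisms it is induced by $d^{\alpha}\otimes\mathrm{id}$, namely the inclusion of the sub-$\Delta[1]$ $\{\alpha\}\otimes\Delta[1]$ sitting at vertex $\alpha$ of the first tensor factor of $(\Sigma^{\circ}\Delta[0])\otimes\Delta[1]=\Delta[1]\otimes\Delta[1]$, followed by the collapse of $\Delta[1]\otimes\partial\Delta[1]$; symmetrically $\Sigma^{\circ}d^{\beta}$ is the inclusion of the sub-$\Delta[1]$ at vertex $\beta$ of the \emph{second} factor of $\Delta[1]\otimes(\Sigma\Delta[0])=\Delta[1]\otimes\Delta[1]$, followed by the collapse of $\partial\Delta[1]\otimes\Delta[1]$. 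Now the unique non-thin $2$-simplex of $\Delta[1]\otimes\Delta[1]$ is $[(0,0),(1,0),(1,1)]$, the one whose middle vertex advances the \emph{first} coordinate; inspecting each collapse one finds that in $\Gb_2$ the $1$-cell coming from vertex $0$ of the first factor is the source of the generating $2$-cell and the one from vertex $1$ is the target, whereas in $(\Gb_{co})_2$ — where it is now the first ($\Sigma^{\circ}$-)coordinate that gets collapsed — the $1$-cell coming from vertex $0$ of the second factor is the \emph{target} and the one from vertex $1$ is the source. Hence $\Sigma d^{\alpha}$ and $\Sigma^{\circ}d^{\alpha+1}$ represent the same $1$-cell (the source when $\alpha=0$, the target when $\alpha=1$), which is exactly the claimed compatibility, and the asymmetry between the left and right Gray-tensor factors is precisely what forces the shift $\alpha\mapsto\alpha+1$.

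The step I expect to be the main obstacle is promoting ``both hit the source $1$-cell'' to literal commutativity of the square \emph{through} the zigzag. Two workable options: (i)~unwind the explicit model of \S\ref{subsubsection:second_formula} at $X=\Delta[0]$, where the auxiliary marked simplicial sets $X\times A_i$, $X\times B_i$, $\overline{X\times B}$, $\underline{X\times B}$ collapse to small explicit marked subsets of $\Delta[3]\times\Delta[1]$ and each acyclic cofibration occurring in the zigzags of Propositions~\ref{prop:interval_second_formula} and \ref{prop:formula_for_the_coop_suspension} is determined on its $0$- and $1$-simplices, so the matching of the relevant edges can simply be read off; or (ii)~invoke, for a fibrant replacement $C$, that a map $\Gb_1\to C$ is pinned down up to the relation $\sim$ by the $1$-cell it represents (Proposition~\ref{prop:in_the_homotopy_category_thin_is_iso} together with the machinery of \S\ref{section:A criterium to be a weakly invertible transformation}), which with the computation above finishes the proof. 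Either way the only genuinely delicate point is the orientation bookkeeping: keeping straight, at every stage, which of the two parallel $1$-cells is being hit.
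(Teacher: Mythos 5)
Your argument is essentially the paper's own: the paper likewise observes $\Sigma^{\circ}\Delta[0]=\Sigma\Delta[0]$, applies Proposition~\ref{prop:formula_for_the_coop_suspension} at $X=\Delta[0]$, and then verifies by ``a closer look'' at that proof that the zigzag exchanges sources and targets --- which is exactly your option~(i) of unwinding the explicit construction of \S\ref{subsubsection:second_formula} at $X=\Delta[0]$. Your Gray-tensor orientation bookkeeping (the flip $\alpha\mapsto\alpha+1$ coming from which tensor factor gets collapsed) spells out what the paper leaves implicit; just note that option~(ii) on its own would only give the compatibility up to homotopy, whereas the lemma asserts a zigzag in the arrow category, so the explicit unwinding is the way to finish.
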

\begin{proof}
First of all, $\Sigma^{\circ} \Delta[0]=\Sigma \Delta[0]$. The proposition \ref{prop:formula_for_the_coop_suspension}  implies that there is a zigzag of acyclic cofibrations between $\Sigma^{\circ}\Sigma\Delta[0]$ and $\Sigma\Sigma^{\circ}\Delta[0]$. A closer look on the proof shows that this zigzag of acyclic cofibrations inverses sources and targets.
\end{proof}

\begin{prop}
\label{prop:link_between_globes_and_coglobes}
For $\alpha\in\mathbb{Z}/2\mathbb{Z}$,
there are zigzags of acyclic cofibrations $\Gb_n\leftrightsquigarrow (\Gb_{co})_n$ and commutative diagrams:
\[\begin{tikzcd}
	{\Gb_n} & {\Gb_{n}^{co}} \\
	{\Gb_{n+1}} & {\Gb_{n+1}^{co}.}
	\arrow["{\delta^{\alpha+n}_{n+1}}", from=1-2, to=2-2]
	\arrow["{\delta^{\alpha}_{n+1}}"', from=1-1, to=2-1]
	\arrow[""{name=0, anchor=center, inner sep=0}, curve={height=-12pt}, draw=none, from=1-1, to=2-1]
	\arrow[""{name=1, anchor=center, inner sep=0}, curve={height=12pt}, draw=none, from=1-2, to=2-2]
	\arrow[squiggly, tail reversed, from=0, to=1]
\end{tikzcd}\]
\end{prop}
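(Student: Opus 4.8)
The plan is to induct on $n$. For $n=0$ both objects equal $\Delta[0]$ and $\Gb_1=\Sigma\Delta[0]=\Sigma^\circ\Delta[0]=(\Gb_{co})_1$, so both zigzags may be taken to be identities and, since $\alpha+0=\alpha$, the square commutes on the nose. For the inductive step recall $\Gb_{n+1}=\Sigma\Gb_n$ and $(\Gb_{co})_{n+1}=\Sigma^\circ(\Gb_{co})_n$. Since $\Sigma$ and $\Sigma^\circ$ are left Quillen functors and every object in sight is cofibrant, applying either of them to a zigzag of acyclic cofibrations again yields one; in particular applying $\Sigma$ to the zigzag $Z_n\colon\Gb_n\leftrightsquigarrow(\Gb_{co})_n$ of the induction hypothesis gives $\Gb_{n+1}=\Sigma\Gb_n\leftrightsquigarrow\Sigma(\Gb_{co})_n$. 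It therefore suffices to produce a zigzag of acyclic cofibrations $\Sigma(\Gb_{co})_n\leftrightsquigarrow\Sigma^\circ(\Gb_{co})_n=(\Gb_{co})_{n+1}$; writing $(\Gb_{co})_n=\Sigma^\circ(\Gb_{co})_{n-1}$ and using Proposition \ref{prop:formula_for_the_coop_suspension} (which is natural in $X$, hence lets one interchange $\Sigma$ and $\Sigma^\circ$ up to a zigzag) together with $\Sigma^\circ$ applied to the analogous comparison one dimension lower, one obtains such a zigzag by a short secondary induction. Concatenating the pieces produces $Z_{n+1}\colon\Gb_{n+1}\leftrightsquigarrow(\Gb_{co})_{n+1}$, so that the induction yields a compatible system $(Z_n)_n$.

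The real content is the commutativity of the square relating $\delta^\alpha_{n+1}$ to $\delta^{\alpha+n}_{n+1}$. Here one uses that, by construction of the globular objects $\Gb_\bullet$ and $(\Gb_{co})_\bullet$, the coface $\delta^\alpha_{n+1}$ is $\Sigma\delta^\alpha_n$ on the globe side and $\Sigma^\circ\delta^\alpha_n$ on the coglobe side (for $n\ge 1$; for $n=0$ it is the relevant endpoint inclusion of the suspension of a point). Consequently the square at level $n$ is built from the square at level $n-1$ by applying $\Sigma$ and $\Sigma^\circ$ and splicing in a single further instance of the zigzag of Proposition \ref{prop:formula_for_the_coop_suspension}. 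Lemma \ref{lemma:inversion_on_globes} says precisely that this zigzag is the one that swaps the two endpoint inclusions: it intertwines $\delta^\beta$ on the $\Sigma$-side with $\delta^{\beta+1}$ on the $\Sigma^\circ$-side. Hence each stage of the induction flips the parity exactly once, and after $n$ stages the accumulated shift is $\alpha\mapsto\alpha+n$, as claimed. Naturality of all the ingredients (Proposition \ref{prop:formula_for_the_coop_suspension} in $X$, and the zigzags obtained by applying $\Sigma$ or $\Sigma^\circ$ to the $Z_m$) makes the individual squares paste into the single square of the statement.

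I expect the main obstacle to be the careful bookkeeping of this parity shift: one has to check that at each inductive stage exactly one application of Proposition \ref{prop:formula_for_the_coop_suspension} is ``seen'' by the coface $\delta^\alpha_{n+1}$, while the remaining manipulations used to assemble $Z_{n+1}$ act on suspension layers not met by that face, and that Lemma \ref{lemma:inversion_on_globes} then applies there verbatim. A minor technical point is to fix the meaning of ``commutative diagram'' in the presence of zigzags: it is enough to require commutativity in the homotopy category (equivalently, that both composites represent the same map after inverting the acyclic cofibrations), which is all that the later uses of this proposition require, and which is automatic from the construction.
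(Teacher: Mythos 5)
Your proposal is correct in outline and is essentially the paper's argument: both proofs induct on $n$ and use exactly the two inputs you cite, the natural interchange zigzag $\Sigma\Sigma^{\circ}X\leftrightsquigarrow\Sigma^{\circ}\Sigma X$ of Proposition \ref{prop:formula_for_the_coop_suspension} and the parity flip of Lemma \ref{lemma:inversion_on_globes}. The only real difference is the order of assembly: the paper writes $\Gb_{n+1}=\Sigma^{n-1}\Sigma\Delta[0]$, performs the flip first at the innermost layer (Lemma \ref{lemma:inversion_on_globes} under $\Sigma^{n-1}$), commutes the resulting $\Sigma^{\circ}$ outward by iterated naturality of the interchange, and finishes with $\Sigma^{\circ}$ applied to the induction hypothesis, whereas you apply $\Sigma$ to the induction hypothesis first and then trade the outer $\Sigma$ for $\Sigma^{\circ}$ via your secondary induction. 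One point of your bookkeeping should be stated more carefully: the instance of the interchange that is newly spliced in at stage $n$, namely $\Sigma\Sigma^{\circ}(\Gb_{co})_{n-1}\leftrightsquigarrow\Sigma^{\circ}\Sigma(\Gb_{co})_{n-1}$, does \emph{not} flip anything --- by naturality it matches $\Sigma\Sigma^{\circ}(\delta)$ with $\Sigma^{\circ}\Sigma(\delta)$ for the \emph{same} coglobe face $\delta$ --- and Lemma \ref{lemma:inversion_on_globes} says nothing about it, since the lemma only treats $X=\Delta[0]$. The flip is inherited through the secondary induction and occurs at its base case, the interchange for $X=\Delta[0]$ sitting under $(\Sigma^{\circ})^{n-1}$, which is where the lemma genuinely applies; so your secondary induction must carry the face-compatibility of the zigzags $\Sigma(\Gb_{co})_m\leftrightsquigarrow\Sigma^{\circ}(\Gb_{co})_m$ along with their construction, not just produce them. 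With that adjustment your parity count $\alpha\mapsto\alpha+n$ is exactly the paper's. Finally, beware of weakening the conclusion to commutativity in the homotopy category: the later applications (for instance via Theorem \ref{theo:criterium_to_be_linked_to_identity}) use these squares as honest ladders of acyclic cofibrations in the arrow category, which is what your construction, carried out strictly, does provide.
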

\begin{proof}
We proceed by induction on $n$. We then have zigzags:
\[\begin{tikzcd}
	{\Gb_{n+1}=\Sigma^{n-1}\Sigma\Delta[0]} & {\Sigma^{n-1}\Sigma^{\circ}\Delta[0]} & {\Sigma^{\circ}\Sigma^{n-1}\Delta[0] = \Sigma^{\circ}\Gb_{n}} & {\Gb_{n+1}^{co}} \\
	\\
	{\Gb_{n+2}=\Sigma^{n-1}\Sigma \Delta[1]} & {\Sigma^{n-1}\Sigma^{\circ} \Delta[1]} & {\Sigma^{\circ}\Sigma^{n-1}\Delta[1]=\Sigma^{\circ}\Gb_{n+1}} & {\Gb_{n+1}^{co}.}
	\arrow["{\delta_{n+1}^{\alpha}}"', from=1-1, to=3-1]
	\arrow["{\Sigma^{n-1}\delta_{2}^{\alpha+1}}"{description}, from=1-2, to=3-2]
	\arrow["{\Sigma^{\circ}\delta_{n+1}^{\alpha+1}}"{description}, from=1-3, to=3-3]
	\arrow["{\delta_{n+1}^{\alpha+n+1}}", from=1-4, to=3-4]
	\arrow[""{name=0, anchor=center, inner sep=0}, curve={height=-18pt}, draw=none, from=1-1, to=3-1]
	\arrow[""{name=1, anchor=center, inner sep=0}, curve={height=30pt}, draw=none, from=1-2, to=3-2]
	\arrow[""{name=2, anchor=center, inner sep=0}, curve={height=-30pt}, draw=none, from=1-2, to=3-2]
	\arrow[""{name=3, anchor=center, inner sep=0}, curve={height=30pt}, draw=none, from=1-3, to=3-3]
	\arrow[""{name=4, anchor=center, inner sep=0}, curve={height=-30pt}, draw=none, from=1-3, to=3-3]
	\arrow[""{name=5, anchor=center, inner sep=0}, curve={height=18pt}, draw=none, from=1-4, to=3-4]
	\arrow[squiggly, tail reversed, from=0, to=1]
	\arrow[squiggly, tail reversed, from=2, to=3]
	\arrow[squiggly, tail reversed, from=4, to=5]
\end{tikzcd}\]
The left zigzag comes from proposition \ref{prop:formula_for_the_coop_suspension}, the center one from  \ref{lemma:inversion_on_globes}, and the right one from induction hypothesis.
\end{proof}

We can show similarly:

\begin{prop}
\label{prop:link_between_globes_and_opglobes}
For $\alpha\in\mathbb{Z}/2\mathbb{Z}$,
there are zigzags of acyclic cofibrations $\Gb_n\leftrightsquigarrow \Gb_n^{op}$ and commutative diagrams:
\[\begin{tikzcd}
	{\Gb_n} & {\Gb_{n}^{op}} \\
	{\Gb_{n+1}} & {\Gb_{n+1}^{op}.}
	\arrow["{\delta^{\alpha+n+1}_{n+1}}", from=1-2, to=2-2]
	\arrow["{\delta^{\alpha}_{n+1}}"', from=1-1, to=2-1]
	\arrow[""{name=0, anchor=center, inner sep=0}, curve={height=-12pt}, draw=none, from=1-1, to=2-1]
	\arrow[""{name=1, anchor=center, inner sep=0}, curve={height=12pt}, draw=none, from=1-2, to=2-2]
	\arrow[squiggly, tail reversed, from=0, to=1]
\end{tikzcd}\]
\end{prop}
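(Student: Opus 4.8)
The plan is to mirror the inductive argument in the proof of Proposition \ref{prop:link_between_globes_and_coglobes}, with the $\op$-duality $D$ of $\mSset$ taking over the structural rôle played there by the $\circ$-suspension. Since $D$ is an isomorphism of the category which preserves the model structure, it is exact (it preserves and reflects acyclic cofibrations) and commutes with all colimits, and $\Gb_n^{op}=D(\Sigma^n\Delta[0])$; so the whole earlier argument transports, the one genuinely new input being the behaviour of $D$ on a single suspension.

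That input is the expected identification $D(\Sigma X)\cong\Sigma^\circ(DX)$, natural in $X$, under which the two endpoint inclusions $\delta^0,\delta^1$ are exchanged — the $\op$-analogue of Lemma \ref{lemma:inversion_on_globes}. I would obtain it by applying $D$ to the colimit defining $\Sigma X$: using the interaction of $D$ with the Gray tensor product (reversal of the two factors) together with $D(\Delta[1])\cong\Delta[1]$, the defining diagram $X\otimes\partial\Delta[1]\leftarrow X\otimes\Delta[1]$, $\partial\Delta[1]\to\Sigma X$ is carried to $\partial\Delta[1]\otimes DX\leftarrow\Delta[1]\otimes DX$, $\partial\Delta[1]\to D(\Sigma X)$, which is exactly the diagram computing $\Sigma^\circ(DX)$, the vertex swap of $D(\Delta[1])\cong\Delta[1]$ forcing the exchange of $\delta^0$ and $\delta^1$. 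Combined with Proposition \ref{prop:formula_for_the_coop_suspension} (and the remark in its proof that the zigzag there already inverts sources and targets, together with the consequence that $\Sigma^\circ$ and $\Sigma$ agree up to zigzag of acyclic cofibrations on globes), this yields for every $Y$ a zigzag of acyclic cofibrations $D(\Sigma Y)\leftrightsquigarrow\Sigma(DY)$ exchanging $\delta^0$ and $\delta^1$. The base cases $n=0,1$ are then immediate: $\Gb_0^{op}=\Delta[0]=\Gb_0$ and $\Gb_1^{op}=(\Sigma\Delta[0])^{op}=\Sigma\Delta[0]=\Gb_1$ via the automorphism of $\Delta[1]$ swapping its vertices, realising the asserted compatibility $\delta^{\alpha+0+1}_1=\delta^{\alpha+1}_1$.

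For the inductive step, assuming the zigzag $\Gb_n\leftrightsquigarrow\Gb_n^{op}$ together with its compatibility with the $\delta^\bullet_n$, one concatenates
$$\Gb_{n+1}^{op}=D(\Sigma\Gb_n)\ \leftrightsquigarrow\ \Sigma(D\Gb_n)=\Sigma(\Gb_n^{op})\ \leftrightsquigarrow\ \Sigma\Gb_n=\Gb_{n+1},$$
the first zigzag being the comparison above and the second being $\Sigma$ (left Quillen, hence preserving acyclic cofibrations) applied to the inductive hypothesis; all maps are acyclic cofibrations. Reading off the effect on the endpoint inclusions $\delta^\bullet_{n+1}$: the first zigzag contributes one parity flip and $\Sigma$ of the inductive hypothesis contributes the flip by $n$ inherited from $\Gb_n\leftrightsquigarrow\Gb_n^{op}$, for a total shift of $n+1$ — precisely the commuting square of the proposition, with $\delta^\alpha_{n+1}$ on the left and $\delta^{\alpha+n+1}_{n+1}$ on the right. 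Equivalently, the same analysis identifies $\Gb_n^{op}$ with $(\Gb_{co})_n$ up to a one-step shift of the coface indexing, and the proposition can then be read off directly from Proposition \ref{prop:link_between_globes_and_coglobes}.

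The main obstacle is making the interaction of $D$ with the Gray tensor product precise enough to get the comparison $D(\Sigma X)\leftrightsquigarrow\Sigma(DX)$ together with the correct sign — that $\delta^0$ and $\delta^1$ are genuinely exchanged and not merely permuted up to homotopy — since, exactly as in Proposition \ref{prop:link_between_globes_and_coglobes}, it is this parity datum accumulated through the induction, rather than the bare existence of a weak equivalence, that produces the shift $\alpha\mapsto\alpha+n+1$. Everything past that observation is formal.
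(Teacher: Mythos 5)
Your proposal is correct and follows essentially the route the paper intends, since the paper proves this proposition ``similarly'' to Proposition \ref{prop:link_between_globes_and_coglobes}: an induction on $n$ driven by Proposition \ref{prop:formula_for_the_coop_suspension} and Lemma \ref{lemma:inversion_on_globes}, with the op-specific input being precisely your identification $(\Sigma X)^{op}\cong\Sigma^{\circ}(X^{op})$ (from reversal of the Gray tensor factors and the vertex swap of $\Delta[1]$), which yields the extra parity flip turning the shift $\alpha+n$ of the $co$ case into $\alpha+n+1$. The only small caution is that your intermediate claim of a zigzag $D(\Sigma Y)\leftrightsquigarrow\Sigma(DY)$ ``for every $Y$'' is stronger than what the cited results give directly; it is only needed (and only directly available) for $Y$ a globe or, via the inductive hypothesis, an op-globe, which is exactly how it is used.
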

\begin{prop}
\label{prop:link_between_globes_and_coopglobes}
For $\alpha\in\mathbb{Z}/2\mathbb{Z}$,
there are zigzags of acyclic cofibrations $\Gb_n\leftrightsquigarrow \Gb_n^{\circ}$ and commutative diagrams:
\[\begin{tikzcd}
	{\Gb_n} & {\Gb_{n}^{\circ}} \\
	{\Gb_{n+1}} & {\Gb_{n+1}^{\circ}.}
	\arrow["{\delta^{\alpha+1}_{n+1}}", from=1-2, to=2-2]
	\arrow["{\delta^{\alpha}_{n+1}}"', from=1-1, to=2-1]
	\arrow[""{name=0, anchor=center, inner sep=0}, curve={height=-12pt}, draw=none, from=1-1, to=2-1]
	\arrow[""{name=1, anchor=center, inner sep=0}, curve={height=12pt}, draw=none, from=1-2, to=2-2]
	\arrow[squiggly, tail reversed, from=0, to=1]
\end{tikzcd}\]
\end{prop}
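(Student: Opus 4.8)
The plan is to deduce this proposition from the two preceding ones by applying the duality $(-)^{op}$, using the defining identity $\Gb_n^{\circ}=(\Gb_{co})_n^{op}$; the only new input needed is that $(-)^{op}$ respects acyclic cofibrations and the globular structure.

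First I would record that $(-)^{op}$ is a strict automorphism of $\mSset$. On simplicial sets it is induced by the order-reversing automorphism $[n]\mapsto[n]$, $i\mapsto n-i$ of $\Delta$, and it extends in the evident way to stratifications and markings. It carries the complicial horn inclusion $\Lambda^k[n]\to\Delta^k[n]$ to $\Lambda^{n-k}[n]\to\Delta^{n-k}[n]$, the thinness extensions $(\Delta^k[n])'\to(\Delta^k[n])''$ to $(\Delta^{n-k}[n])'\to(\Delta^{n-k}[n])''$, and the saturation extensions to saturation extensions (using $(A\star B)^{op}\cong B^{op}\star A^{op}$ and $(\Delta[3]^{eq})^{op}=\Delta[3]^{eq}$), and it fixes the interval $I=\Delta[1]_t$. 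Hence $(-)^{op}$ preserves the $(\Lambda\cup S,I)$-local model structure; in particular it sends acyclic cofibrations to acyclic cofibrations and preserves commuting squares, so it sends a zigzag of acyclic cofibrations to a zigzag of acyclic cofibrations.

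Applying $(-)^{op}$ to the data of Proposition \ref{prop:link_between_globes_and_coglobes} then yields, for every $n$, a zigzag of acyclic cofibrations $\Gb_n^{op}\leftrightsquigarrow(\Gb_{co})_n^{op}=\Gb_n^{\circ}$ together with the square obtained from the one given there. The point is that $(-)^{op}$ reverses the orientations of exactly the odd-dimensional cells of $\Gb_{n+1}$ and of $(\Gb_{co})_{n+1}$, hence induces the same relabelling of the pair $\{\delta^0_{n+1},\delta^1_{n+1}\}$ on the two sides of that square (swapping them precisely when $n+1$ is odd), so the source/target shift between the two sides is unchanged, still $\alpha\mapsto\alpha+n$. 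Concatenating with the zigzag $\Gb_n\leftrightsquigarrow\Gb_n^{op}$ of Proposition \ref{prop:link_between_globes_and_opglobes}, whose comparison square has shift $\alpha\mapsto\alpha+n+1$, produces a zigzag of acyclic cofibrations $\Gb_n\leftrightsquigarrow\Gb_n^{\circ}$ whose comparison square has total shift $\alpha\mapsto(\alpha+n+1)+n=\alpha+2n+1\equiv\alpha+1\pmod 2$, which is the assertion.

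The main obstacle is the bookkeeping of the source/target labels in the previous paragraph: one must check, against the conventions fixing the globular structure on $\Gb^{op}_\bullet$ and $(\Gb_\circ)_\bullet$, that passing to $(-)^{op}$ introduces no extra shift in the square of Proposition \ref{prop:link_between_globes_and_coglobes}. This holds because $(-)^{op}$ acts on both globular objects by reversing the same (odd-dimensional) cells, but it is the step that requires care. An alternative route, which is what ``we can show similarly'' suggests and which sidesteps this bookkeeping, is to re-run the inductive argument of Proposition \ref{prop:link_between_globes_and_coglobes} directly for $\Gb^{\circ}_\bullet$, using Proposition \ref{prop:formula_for_the_coop_suspension} (which gives $\Sigma\Sigma^\circ X\leftrightsquigarrow\Sigma^\circ\Sigma X$) and the orientation-reversal statement of Lemma \ref{lemma:inversion_on_globes}, and reading off the shift $\alpha\mapsto\alpha+1$ at each inductive step.
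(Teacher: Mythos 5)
Your argument is correct, and it is worth noting that the paper itself gives no written proof of this proposition: it is stated right after \ref{prop:link_between_globes_and_opglobes}, whose own proof is only indicated by ``we can show similarly'', i.e.\ the intended route is to re-run the induction of \ref{prop:link_between_globes_and_coglobes} (via \ref{prop:formula_for_the_coop_suspension} and \ref{lemma:inversion_on_globes}). Your route is genuinely different: you exploit the definition $(\Gb_{\circ})_n=((\Gb_{co})_n)^{op}$ and the fact that $(\uvar)^{op}$ is a strict involutive automorphism of $\mSset$ preserving cofibrations, anodyne extensions and $I$ (hence acyclic cofibrations and commutative ladders), apply it to the zigzag and squares of \ref{prop:link_between_globes_and_coglobes}, and concatenate with those of \ref{prop:link_between_globes_and_opglobes}; the shift arithmetic $\alpha\mapsto(\alpha+n+1)+n=\alpha+1$ is right, and your concatenation implicitly (and correctly) instantiates \ref{prop:link_between_globes_and_coglobes} at the parameter $\alpha+n+1$ so that the vertical maps agree at the junction $\Gb_n^{op}$. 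This buys a short formal deduction instead of another induction, at the price of the labelling issue you flag; that issue in fact evaporates once one fixes the (only sensible, and implicitly used) convention that the boundary maps $\delta^{\beta}_{n+1}$ on $\Gb^{op}_\bullet$ and $\Gb^{\circ}_\bullet$ are the images under the duality functors of the maps with the same label on $\Gb_\bullet$ and $(\Gb_{co})_\bullet$ — then applying $(\uvar)^{op}$ to the square of \ref{prop:link_between_globes_and_coglobes} preserves its labels for trivial functoriality reasons, and your semantic discussion of which cells get reversed is not needed. Your fallback of redoing the induction is exactly the paper's implied proof, so either way the statement is established.
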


\subsection{The even duality}
\label{section:The even duality}
\begin{definition}
The category of endomorphisms of marked simplicial set has a monoidal structure given by the composition. The endomorphism $\uvar\costar $ admits a monoid structure, where the multiplication is the natural transformation:
$(X\costar \Delta[0])\costar \Delta[0]\to X \costar \Delta[0]$, induces by the pairing: 
$$
\begin{array}{rcl}
I\otimes I\otimes X&\to& I\otimes X\\
(i,j,x)&\mapsto& (i\vee j, x).
\end{array}$$

This defines a cosimplicial object in $End(\mSset)$, which evaluated on $\emptyset$, provides a cosimplicial object in $\mSset$: 
$$\begin{array}{rrcl}
(\uvar)_{co}:&\Delta &\to & \mSset\\
&n&\mapsto& (((\Delta[0]\costar  \Delta[0])...)\costar \Delta[0].
\end{array}$$
We define $(\Delta[n]_t)_{co} := \tau_n(\Delta[n]_{co})$. This data extends in a colimit preserving functor:
$$(\uvar)_{co}:\stratSset \to \mSset.$$ 
\end{definition}

\begin{definition}
Let $p,q,r,n$ be four integers such that $p+q+r = n$. We define  three simplicial operators:
$$\begin{array}{rrcl}
\amalg^1_{p,q,r} &[p]&\to& [n]\\
&k&\mapsto & k
\end{array}
~~~~
\begin{array}{rrcl}
\amalg^2_{p,q,r} &[q]&\to& [n]\\
&k&\mapsto &k+p
\end{array}
~~~~
\begin{array}{rrcl}
\amalg^1_{p,q,r} &[r]&\to& [n]\\
&k&\mapsto &k+p+q
\end{array}
$$
\end{definition}

\begin{lemma}
Let $(X,N_0)$ and $(Y,N_1)$ be two stratified simplicial sets and $n,m$ two integers such 
\begin{enumerate}
\item $(X,N_0)$ admits a unique non thin $n$-simplex $x$ and simplices of dimension $>n$ are thin.
\item $(Y,N_1)$ admits a unique non thin $m$-simplex $y$ and simplices of dimension  $>m$ are thin.
\end{enumerate}

Let $M$ be the marking of $(X,N_0)\otimes (\Delta[2]_t)_{co})\otimes (Y,N_1)$, we define two other sets of cell of $X\times (\Delta[2]_{co})\times Y$:
$$
\begin{array}{rcl}
M_0 &:=& \{(x',v,y'),~ \amalg^1_{n,2,m}(x')=x, ~\amalg^2_{n,2,m}(v)\in (\Lambda^1[2])_{co}~, \amalg^2_{n,2,m}(y')=y\}\\
M_1 &:=& \{(x',v,y'),~ \amalg^1_{n,2,m}(x')=x, ~\amalg^2_{n,2,m}(v)\in (\Delta[2]_t)_{co}~, \amalg^2_{n,2,m}(y')=y\}\\
\end{array}$$

Then $$\overline{M\cup M_0} = \overline{M\cup M_1}.$$
\end{lemma}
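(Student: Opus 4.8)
The plan is to bypass the saturation entirely, by showing that the two markings $M\cup M_0$ and $M\cup M_1$ are literally equal: both $M_0$ and $M_1$ are already contained in $M$.

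First I would pin down the cells involved. Since the operators $\amalg^1_{n,2,m}$, $\amalg^2_{n,2,m}$, $\amalg^3_{n,2,m}$ are defined on simplices of dimension $N:=n+2+m$, every element of $M_0$ and of $M_1$ is an $N$-simplex of $(X,N_0)\otimes (\Delta[2]_t)_{co}\otimes (Y,N_1)$. So it suffices to prove the stronger (and cleaner) claim that \emph{every} $N$-simplex $(x',v,y')$ of this Gray tensor is thin. To analyse $M$ I would use associativity of the stratified Gray tensor product and unwind the definition by cutting once inside $X\otimes (\Delta[2]_t)_{co}$ and once between $(\Delta[2]_t)_{co}$ and $Y$: the simplex $(x',v,y')$ lies in $M$ if and only if, for every pair of cut points $0\le p\le q\le N$, at least one of the three restrictions $\amalg^1_{p,N-p}(x')$, the face $v|_{[p,q]}$ of $v$ on the vertices $p,\dots,q$, and $\amalg^2_{q,N-q}(y')$ is thin in its respective factor.

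Then I would verify this condition by a pure dimension count. If $q-p\ge 2$, the face $v|_{[p,q]}$ is a simplex of $(\Delta[2]_t)_{co}=\tau_2(\Delta[2]_{co})$ of dimension at least $2$, hence thin by the definition of the truncation functor. If $q-p\le 1$, then either $p\ge n+1$, so that $\amalg^1_{p,N-p}(x')$ is a simplex of $X$ of dimension $p>n$, thin by hypothesis~(1); or $p\le n$, so that $q\le n+1$ and $\amalg^2_{q,N-q}(y')$ is a simplex of $Y$ of dimension $N-q\ge N-n-1=m+1>m$, thin by hypothesis~(2). In every case one of the three restrictions is thin, so $(x',v,y')\in M$. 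This gives $M_0,M_1\subseteq M$, whence $M\cup M_0=M=M\cup M_1$ and therefore $\overline{M\cup M_0}=\overline M=\overline{M\cup M_1}$.

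The only point that needs care is the middle step: correctly translating the twofold Gray marking into the ``for all $0\le p\le q\le N$, one of three restrictions is thin'' statement, and noticing that the budget $q-p\le 1$ is precisely what is needed to push one of the outer restrictions past the top non-thin dimension of $X$ (namely $n$) or of $Y$ (namely $m$). No complicial thinness extension, no saturation extension, and none of the $\oslash$-lemmas of the appendix enter the argument; the inclusion $(\Lambda^1[2])_{co}\hookrightarrow(\Delta[2]_t)_{co}$ is only there to make the (unused) observation $M_0\subseteq M_1$ transparent.
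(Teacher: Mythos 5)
You correctly translate the marking of the triple Gray tensor into the criterion ``for every pair of cut points $0\le p\le q\le N$, one of $x'|_{[0,p]}$, $v|_{[p,q]}$, $y'|_{[q,N]}$ is thin'', and your dimension count does show that every simplex of dimension $n+m+2$ of $(X,N_0)\otimes(\Delta[2]_t)_{co}\otimes(Y,N_1)$ lies in $M$. The gap is in what you take $M_0$ and $M_1$ to be. The indices $\amalg^i_{n,2,m}$ in the statement are a typo of this draft: the cells of $M_0$ and $M_1$ have dimension $n+m+1$, with front $n$-face of $x'$ equal to $x$, back $m$-face of $y'$ equal to $y$, and middle \emph{edge} $v|_{[n,n+1]}$ required to lie in $(\Lambda^1[2])_{co}$ for $M_0$, resp.\ unrestricted for $M_1$. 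This is forced by the paper's own proof (it treats the middle datum as an edge such as $[00,11]$ or $[00,10]$ and produces an $(n+m+2)$-simplex $\bar w$ with $d^{n+1}\bar w=w$, so $w$ has dimension $n+m+1$) and by the application in the following lemma, where the cells that must become thin lie over the $k$-face of $\Delta[n]$ and have dimension $(n-k)+1+(k-2)=n-1$; under your reading the lemma is trivially true ($M_0,M_1\subseteq M$) but has no content and could not do that job.

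For the intended cells your argument fails at exactly one cut, $(p,q)=(n,n+1)$: there the middle restriction is a $1$-simplex, which the $\tau_2$-marking of $(\Delta[2]_t)_{co}$ does not see. Concretely, the diagonal $[00,11]$ of $\Delta[2]_{co}=\Delta[1]\costar\Delta[0]$ is \emph{not} thin (only the triangle $[00,01,11]$ is marked there), so a cell with $x'|_{[0,n]}=x$, $y'|_{[n+1,n+m+1]}=y$ and middle edge the diagonal violates the criterion at that cut and genuinely fails to be in $M$. That is precisely why the conclusion concerns the saturations $\overline{M\cup M_0}$ and $\overline{M\cup M_1}$ rather than the markings themselves: thinness of such cells has to be produced by anodyne means. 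The paper does this by inserting the vertex $01$ to build the $(n+m+2)$-simplex $\bar w$ --- which is thin by exactly your dimension count, so that part of your work is in fact the first step of the real argument --- and then checks that enough faces of $\bar w$ lie in $M\cup M_0$ for a complicial thinness extension to mark $w=d^{n+1}\bar w$; the remaining case of middle edge $[00,10]$ is handled with the $\oslash$-saturation lemmas of the appendix. Your closing remark that no thinness or saturation extensions were needed should have been the warning sign that the statement you proved is not the one the paper states, proves, and uses.
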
 
\begin{proof}
Let $w:=(x',v,y')$ be a simplex of $M_1\diagdown M_0$. Suppose that  $\Pi^2_{n,2,m}(v)=[00,11]$. We define  the  $(n+m+2)$-simplex $\bar{v}$:
$$\begin{array}{rcll}
\bar{v}_k & =& v_k &\mbox{if $k< n+1$}\\
 & = &01 &\mbox{if $k= n+1$}\\
& = &v_{k-1} &\mbox{if $n+1<k$}\\
\end{array}
$$
We set $\bar{w}:=(s^{n}x,\bar{v},s^{n+1}y)$. We remark that $d^{n+1}\bar{w}=w$. The simplex $\bar{w}$ is in $M$ and for all $d:\Delta[k]\to\Delta[n+m+2]$ reaching $n,n+1$ and $n+2$, so is $d^*\bar{w}$. Furthermore, $d^{n-1}\bar{w}$ and $d^{n+1}\bar{w}$ are in $M_0$. The simplex $d^{n+1}\bar{w}=w$ is then in $\overline{M\cup M_0}$. 

Suppose now that  $\Pi^2_{n,2,m}(v)=[00,10]$. Every simplex of shape $(s^pa,v,s^pb)$ with $v_{[p,p+1]}=[10,11]$ are in $M$. We can then apply lemma \ref{lemma:oslash_saturation_two way_case_extremum} that implies that  $w$ is in $\overline{M\cup M_0}$ if and only if $(x',v[10/11],y')$ is, which is the case. 
\end{proof}

\begin{lemma}
\label{lemma:compatibility_of_co_with_saturation1}
In $\stratSset$,
the cofibration
$$(\Delta^k[n]')_{co}\to (\Delta^k[n]'')_{co}$$
is an acyclic cofibration.
\end{lemma}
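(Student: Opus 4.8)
The plan is to reduce the statement to a pushout of one elementary extension and then to an understanding of the top‑dimensional marking of the co‑joint construction.

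First I would notice that the complicial thinness extension $(\Delta^k[n])'\to(\Delta^k[n])''$ is \emph{entire} and that the only new non‑degenerate thin simplex it creates is the $k$‑th face $d^k\Delta[n]$ of the top simplex (every other newly thin simplex being a degeneracy of that face, hence thin already). Consequently it is the pushout, in $\stratSset$, of $\Delta[n-1]\to\Delta[n-1]_t$ along the map $\Delta[n-1]\to(\Delta^k[n])'$ that names $d^k\Delta[n]$. Since $(\uvar)_{co}$ preserves colimits, applying it produces a pushout square in $\mSset$, so that $(\Delta^k[n]')_{co}\to(\Delta^k[n]'')_{co}$ is the pushout of
$$(\Delta[n-1])_{co}\longrightarrow(\Delta[n-1]_t)_{co}=\tau_{n-1}\big((\Delta[n-1])_{co}\big)$$
along $(\Delta[n-1])_{co}\to(\Delta^k[n]')_{co}$. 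It therefore suffices to control this last map, i.e. the passage from $(\Delta[m])_{co}$ to $\tau_m\big((\Delta[m])_{co}\big)$ with $m=n-1$ (recall $n\ge 2$, so $m\ge 1$).

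Next I would unwind $(\Delta[m])_{co}$ through its description as the iterated co‑joint, i.e. as the quotient of the Gray tensor $\Delta[1]\otimes(\Delta[m-1])_{co}$ collapsing the end over $\{1\}$. A short induction on $m$ shows that for $m\ge 2$ every non‑degenerate $m$‑simplex of $(\Delta[m])_{co}$ is already thin: it is the image of a shuffle $(u,w)$ in the Gray tensor whose interval component $u$ is constant on every initial interval $\{0,\dots,p\}$, so by the very definition of the Gray‑tensor marking the pair $(u,w)$ is thin. Hence $\tau_m\big((\Delta[m])_{co}\big)=(\Delta[m])_{co}$ for $m\ge 2$, and for $n\ge 3$ the map $(\Delta^k[n]')_{co}\to(\Delta^k[n]'')_{co}$ is an isomorphism — in particular an acyclic cofibration. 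This leaves the single case $n=2$, where $(\Delta[1])_{co}\to(\Delta[1]_t)_{co}$ is $\Delta[1]\to\Delta[1]_t$. Here I would observe that inside $(\Delta^k[2]')_{co}$ the face $d^k\Delta[2]$ sits as one face of the thin $2$‑cell coming from the thin top simplex of $\Delta^k[2]$, the two other faces of that $2$‑cell being the $(\uvar)_{co}$‑images of the two one‑dimensional thin faces of $\Delta^k[2]$. Thus thinning $d^k\Delta[2]$ is exactly a complicial thinness extension applied to that $2$‑cell, and the preceding lemma — applied with $(X,N_0)=(Y,N_1)=\Delta[0]$ — is what certifies that the faces match as stated, so that the pushout is $S$‑anodyne and in particular an acyclic cofibration.

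The main obstacle is the middle step: one has to push the Gray‑tensor marking through the collapses defining $\costar$ precisely enough to verify that all top simplices of $(\Delta[m])_{co}$ are thin, and, in the base case $n=2$, to locate $d^k\Delta[2]$ inside $(\Delta^k[2]')_{co}$ and identify the relevant faces of the thin $2$‑cell with the images of the two thin edges of $\Delta^k[2]$. These are finite but delicate combinatorial verifications; once they are in place the rest of the argument — the pushout reduction and the two‑out‑of‑three/pushout stability of acyclic cofibrations — is purely formal.
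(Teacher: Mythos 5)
Your opening reduction is fine: the thinness extension $(\Delta^k[n])'\to(\Delta^k[n])''$ is indeed the pushout of $\Delta[n-1]\to\Delta[n-1]_t$ along the map classifying the $k$-face, and since $(\uvar)_{co}$ is cocontinuous, $(\Delta^k[n]')_{co}\to(\Delta^k[n]'')_{co}$ is a pushout of $(\Delta[n-1])_{co}\to(\Delta[n-1]_t)_{co}=\tau_{n-1}\bigl((\Delta[n-1])_{co}\bigr)$. But the central claim that carries all cases $n\geq 3$ — that every non-degenerate $m$-simplex of $(\Delta[m])_{co}$ is thin for $m\geq 2$, so that $\tau_m\bigl((\Delta[m])_{co}\bigr)=(\Delta[m])_{co}$ — is false. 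Already for $m=2$, $(\Delta[2])_{co}=\Delta[1]\costar\Delta[0]$ is the object pictured as $K$ in the paper, and its lower triangle $[00,10,11]$ is a non-degenerate, non-thin $2$-simplex: it is precisely the cell realizing the (reversed) non-trivial $2$-cell of the co-dual $2$-simplex. Indeed, if your claim held, then $(\Delta[m]_t)_{co}=(\Delta[m])_{co}$ for all $m\geq 2$ and the even duality would not see markings in dimensions $\geq 2$ at all, contradicting for instance the fact that $(\Gb_n)_{co}$ must have a non-trivial top cell.

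Worse, even the repaired version of your strategy cannot succeed as stated: $(\Delta[m])_{co}\to(\Delta[m]_t)_{co}$ is \emph{not} an acyclic cofibration (it marks a genuinely non-invertible top cell, just as $\Delta[1]\to\Delta[1]_t$ is not acyclic), so acyclicity of its pushout cannot follow from pushout-stability alone. The acyclicity of $(\Delta^k[n]')_{co}\to(\Delta^k[n]'')_{co}$ depends essentially on \emph{where} the $k$-face is attached: inside $(\Delta^k[n]')_{co}$ the ambient thin simplices (coming from the faces containing $\{k-1,k,k+1\}$ and the $(k\pm1)$-faces) force the image of the newly marked face to be reachable by thinness and saturation extensions. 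This is exactly what the paper's proof does: it exhibits the relevant piece as $\bigl(\Delta[n-k]_{co}\times(\Delta[2]_t)_{co}\times\Delta[k-2]_{co},\,M\cup M_0\bigr)$, invokes the preceding combinatorial lemma to get $\overline{M\cup M_0}=\overline{M\cup M_1}$ (so passing to the saturation already marks the co-dual of the $k$-face), and then presents $(\Delta^k[n]')_{co}\to(\Delta^k[n]'')_{co}$ as a pushout of the entire, $S$-anodyne saturation map. Your separate treatment of $n=2$ gestures at this mechanism, but the general case needs it too; as written, the proof does not go through.
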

\begin{proof}
Taking the same notation than the previous lemma, we 
 have a pushout square: 
\[\begin{tikzcd}
	{(\Delta[n-k]_{co}\times(\Delta[2]_t)_{co}\times\Delta[k-2]_{co},M\cup M_0)} & {(\Delta^k[n]')_{co}} \\
	{(\Delta[n-k]_{co}\times(\Delta[2]_t)_{co}\times\Delta[k-2]_{co},\overline{M\cup M_0})} & {(\Delta^k[n]'')_{co}}
	\arrow[""{name=0, anchor=center, inner sep=0}, from=1-1, to=1-2]
	\arrow[from=1-1, to=2-1]
	\arrow[from=1-2, to=2-2]
	\arrow[from=2-1, to=2-2]
	\arrow["\lrcorner"{anchor=center, pos=0.125, rotate=180}, draw=none, from=2-2, to=0]
\end{tikzcd}\]
The previous lemma implies that the left hand morphism is an acyclic cofibration. 
\end{proof}

\begin{lemma}
\label{lemma:compatibility_of_co_with_saturation2}
In $\stratSset$,
The cofibration: 
$$(\Delta[l]\star \Delta[3]^{eq}\star \Delta[q])_{co}\to (\Delta[l]\star \Delta[3]^{\sharp}\star \Delta[q])_{co}$$
is an acyclic cofibration.
\end{lemma}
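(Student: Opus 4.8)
The statement to prove is \cref{lemma:compatibility_of_co_with_saturation2}: that the image under $(\uvar)_{co}$ of a saturation extension $\Delta[l]\star \Delta[3]^{eq}\star \Delta[q]\to \Delta[l]\star \Delta[3]^{\sharp}\star \Delta[q]$ is an acyclic cofibration in $\stratSset$. The natural approach mirrors the one just used for \cref{lemma:compatibility_of_co_with_saturation1}: realize this map as a pushout of a map to which the earlier combinatorial lemma (the one relating $\overline{M\cup M_0}$ and $\overline{M\cup M_1}$) applies, after identifying the relevant ``$n$-simplex / $m$-simplex'' data. Here the roles of the two factors $X$ and $Y$ are played by $\Delta[l]$ and $\Delta[q]$ (both with trivial stratification, so vacuously satisfying hypotheses (1)–(2) with $n=l$, $m=q$ in the degenerate sense that there is no non-degenerate top simplex forcing anything — more precisely, one works with the sub-object of $\Delta[l]\star\Delta[3]^{eq}\star\Delta[q]$ on which the $\Delta[3]$-coordinate is the relevant block), and the middle factor is $\Delta[3]$ rather than $\Delta[2]$.

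First I would set up the analogue of the previous lemma's pushout square: exhibit $(\Delta[l]\star \Delta[3]^{eq}\star \Delta[q])_{co}\to (\Delta[l]\star \Delta[3]^{\sharp}\star \Delta[q])_{co}$ as the pushout of a map
$$(\Delta[l]_{co}\times (\Delta[3]^{eq})_{co}\times \Delta[q]_{co},\, M\cup M_0)\to (\Delta[l]_{co}\times (\Delta[3]^{eq})_{co}\times \Delta[q]_{co},\, \overline{M\cup M_0})$$
where now $M_0$ picks out simplices whose middle $\Delta[3]$-block lands in the $eq$-marking and $M_1$ those whose middle block lands in $\Delta[3]^\sharp$. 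The key point is that the difference between the $eq$-stratification and the full $\sharp$-stratification of $\Delta[3]$ consists of the two $2$-faces $[0,1,3]$ and $[0,2,3]$ together with the top $3$-simplex; one then needs the co-version of the statement that adjoining these is an acyclic cofibration. Rather than re-proving the combinatorial core, I would factor the saturation extension through intermediate stratifications (first adjoin the top cell via a thinness extension or a saturation-type move on a sub-simplex, then the remaining $2$-faces), reducing everything to finitely many instances of \cref{lemma:compatibility_of_co_with_saturation1} and of the $\overline{M\cup M_0}=\overline{M\cup M_1}$ lemma applied to appropriate sub-configurations, plus the fact (used repeatedly in the paper) that $C$-membership, equivalently the class of acyclic cofibrations, is closed under composition and pushout.

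The main obstacle I anticipate is bookkeeping rather than conceptual: making the factorization of $\Delta[3]^{eq}\to\Delta[3]^\sharp$ into elementary pieces precise in a way that each piece, after applying $(\uvar)_{co}$ and the $\otimes$-with-$\Delta[l],\Delta[q]$ decomposition, is visibly of the form handled by the preceding lemma — in particular checking that the auxiliary markings $M_0,M_1$ one writes down for each intermediate step genuinely satisfy the hypotheses (unique non-thin top simplex in each outer factor, higher simplices thin) after restricting to the relevant block. A secondary subtlety is that $(\uvar)_{co}$ does \emph{not} obviously commute with the join, so one must work with the explicit description of $(\Delta[l]\star\Delta[3]^{eq}\star\Delta[q])_{co}$ as built from the $\costar$-iterates; I would lean on the formulas for the interaction of $\costar$ with $\otimes$ and suspension established in \cref{section:recap_of_result_for_joint} to push the argument through, exactly as the analogous lemma for thinness extensions did. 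Once all pieces are in place, two-out-of-three (or simply composition of acyclic cofibrations) finishes the proof.
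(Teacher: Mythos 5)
Your outer reduction (pushing out along the Gray tensor with the two outer factors so that everything hinges on the middle $\Delta[3]$-block) is indeed how the paper finishes: its last step is a pushout of $\Delta[q]_{co}\otimes (\Delta[3]^{eq})_{co}\otimes\Delta[l]_{co}\to \Delta[q]_{co}\otimes (\Delta[3]^{\sharp})_{co}\otimes\Delta[l]_{co}$, which is acyclic because $\otimes$ is a left Quillen bifunctor. The genuine gap is in your treatment of the base case $(\Delta[3]^{eq})_{co}\to(\Delta[3]^{\sharp})_{co}$. You propose to factor $\Delta[3]^{eq}\to\Delta[3]^{\sharp}$ into elementary pieces and reduce to \ref{lemma:compatibility_of_co_with_saturation1} and the preceding $\overline{M\cup M_0}=\overline{M\cup M_1}$ lemma, but that machinery only handles complicial thinness extensions: its proofs work by exhibiting degenerate simplices and invoking the $\oslash$-lemmas, i.e.\ they produce markings that are \emph{thinness-forced}. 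The saturation extension is an independent generating anodyne map: marking the edges $[0,1]$, $[1,2]$, $[2,3]$, $[0,3]$ and the $2$-faces of $\Delta[3]^{eq}$ is precisely the content of the saturation axiom and cannot be obtained as a sequence of pushouts of thinness extensions, so no factorization of $\Delta[3]^{eq}\to\Delta[3]^{\sharp}$ into ``finitely many instances'' of the previous lemma exists. At the base of your induction you would still need an argument that the $co$-dual of the saturation extension is acyclic, and your proposal supplies none; invoking ``saturation-type moves'' inside $(\Delta[3]^{eq})_{co}$ would be circular, since the acyclicity of exactly such a move is what the lemma asserts.

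The paper's key idea, which your plan is missing, is not combinatorial at all: it first identifies $(\Delta[3]^{eq})_{co}$ explicitly as the diamond product $\Delta[0]\diamond(\Delta[0]\diamond\Delta[1])$ equipped with a suitable marking, and then uses the comparison morphism $\gamma\colon \uvar\diamond\uvar\to\uvar\star\uvar$ of \ref{prop:equivalence between diamond and join product}, which is a natural weak equivalence, to compare the square whose verticals are $(\Delta[3]^{eq})_{co}\to(\Delta[3]^{\sharp})_{co}$ and the generating saturation anodyne $\Delta[3]^{eq}\to\Delta[3]^{\sharp}$. Since the horizontal maps are weak equivalences and the right vertical is anodyne, two-out-of-three gives acyclicity of the left vertical, and then the Gray-tensor pushout handles general $l,q$. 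If you want to repair your proof, you need to replace the thinness-extension bookkeeping for the middle block by some such transport of the saturation axiom through $(\uvar)_{co}$; the diamond/join comparison is exactly the tool that makes this possible.
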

\begin{proof}
First, we remark that we have an equality: 
$$(\Delta[3]^{eq})_{co} = (\Delta[0]\diamond (\Delta[0]\diamond \Delta[1])),M)$$
where $M$ is the saturation that includes all simplices of dimension superior to $2$, together with $\emptyset\diamond(\Delta[0]\diamond \{1\})$ and $\Delta[0]\diamond(\emptyset\diamond \{0\})$.

Eventually, the comparison morphism $\gamma: \uvar\diamond\uvar\to \uvar\star\uvar$ from \ref{prop:equivalence between diamond and join product}, induces a diagram:
\[\begin{tikzcd}
	{ (\Delta[3]^{eq})_{co}} & { \Delta[3]^{eq}} \\
	{( \Delta[3]^{\sharp})_{co}} & { \Delta[3]^{\sharp}.}
	\arrow["\sim", from=1-2, to=2-2]
	\arrow["\sim", from=1-1, to=1-2]
	\arrow["\sim", from=2-1, to=2-2]
	\arrow[from=1-1, to=2-1]
	\arrow["\gamma"', draw=none, from=1-1, to=1-2]
	\arrow["\gamma"', draw=none, from=2-1, to=2-2]
\end{tikzcd}\]
where horizontal morphisms are weak equivalences. This
 implies that $\Delta[3]^{eq}\to  (\Delta[3]^{\sharp})_{co}$ is an acyclic cofibration. For the general case, we have a pushout diagram:
\[\begin{tikzcd}
	{ \Delta[q]_{co}\otimes (\Delta[3]^{eq})_{co}\otimes\Delta[l]_{co}} & {(\Delta[l]\star \Delta[3]^{eq}\star \Delta[q])_{co}} \\
	{ \Delta[q]_{co}\otimes (\Delta[3]^{\sharp})_{co}\otimes\Delta[l]_{co}} & {(\Delta[l]\star \Delta[3]^{\sharp}\star \Delta[q])_{co}}
	\arrow[from=1-1, to=2-1]
	\arrow[""{name=0, anchor=center, inner sep=0}, from=1-1, to=1-2]
	\arrow[from=1-2, to=2-2]
	\arrow[from=2-1, to=2-2]
	\arrow["\lrcorner"{anchor=center, pos=0.125, rotate=180}, draw=none, from=2-2, to=0]
\end{tikzcd}\]
where the left hand morphism is an acyclic cofibration.
\end{proof}

\begin{construction}
Lemmas \ref{lemma:compatibility_of_co_with_saturation1} and \ref{lemma:compatibility_of_co_with_saturation2} imply that the composite functor:
$$\stratSset\xrightarrow{(\uvar)_{co}}\stratSset\to \mSset$$
sends complicial thinness extensions and saturation extensions to identities. This implies that the even duality lifts to marked simplicial sets, and we yet have endofunctor:
$$(\uvar)_{co}: \mSset\to \mSset.$$ 
\end{construction}

\begin{prop}
The endofunctor $(\uvar)_{co}$ is a left Quillen functor. 
\end{prop}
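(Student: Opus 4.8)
The plan is to reuse the strategy that was already deployed for the ordinary suspension and for the functors $i^l_{str}$: to show a cocontinuous endofunctor $F$ of $\mSset$ is left Quillen, it suffices to check that $F$ preserves cofibrations and acyclic cofibrations; since $F$ is determined by its values on representables, and cofibrations are generated by the boundary inclusions $\partial\Delta[n]\to\Delta[n]$ together with the entire inclusions $\Delta[n]\to\Delta[n]_t$, the preservation of cofibrations is formal: $(\uvar)_{co}$ is colimit preserving by construction, and it sends $\partial\Delta[n]\to\Delta[n]$ and $\Delta[n]\to\Delta[n]_t$ to monomorphisms (for the latter note $(\Delta[n]_t)_{co}=\tau_n(\Delta[n]_{co})$ is an entire extension of $\Delta[n]_{co}$).

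The heart of the matter is preservation of acyclic cofibrations. First I would observe that the composite $\stratSset\xrightarrow{(\uvar)_{co}}\stratSset\to\mSset$ sends the generating elementary anodyne extensions into acyclic cofibrations: the complicial thinness extensions and saturation extensions go to identities by Lemmas~\ref{lemma:compatibility_of_co_with_saturation1} and~\ref{lemma:compatibility_of_co_with_saturation2} (this is exactly the Construction preceding the statement), so what remains is to handle the complicial horn inclusions $\Lambda^k[n]\to\Delta^k[n]$. For these I would argue that $(\Lambda^k[n])_{co}\to(\Delta^k[n])_{co}$ is an acyclic cofibration by an explicit description of $\Delta[n]_{co}$ as an iterated co-joint/diamond of intervals, mirroring the computation that shows $\Sigma^{n-1}\Lambda^k[2]\to\Sigma^{n-1}\Delta^k[2]$ and its higher analogues are acyclic; alternatively, since $(\uvar)_{co}$ evaluated on $\Delta[n]$ is built from the monoid structure on $\uvar\costar$, one can express $(\Lambda^k[n])_{co}\to(\Delta^k[n])_{co}$ as a composite of pushouts of $\oslash$- and $\invoslash$-products of complicial horn inclusions with the pieces of $\Delta[n]$, invoking the lemmas of Appendix~B as was done repeatedly in Section~2.

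With the generators handled, I would then run the usual saturation argument: the class of acyclic cofibrations $K\to L$ such that $(K)_{co}\to(L)_{co}$ is again an acyclic cofibration is closed under pushouts, transfinite composition, and retracts (because $(\uvar)_{co}$ is cocontinuous and the acyclic cofibrations form a saturated class), hence contains all acyclic cofibrations once it contains the generating ones. The main obstacle is precisely the complicial horn case: unlike the thinness and saturation extensions, the horns are not sent to isomorphisms, so one genuinely must produce a filtration of $(\Delta^k[n])_{co}$ over $(\Lambda^k[n])_{co}$ by pushouts along (images of) elementary anodyne extensions — this is the combinatorially delicate step, and it is where the $\oslash$/$\invoslash$ technology and the identification $(\Delta[n]_t)_{co}=\tau_n(\Delta[n]_{co})$ do the real work. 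Once that filtration is in place, two-out-of-three together with the already-established weak equivalences $\Gb_n\leftrightsquigarrow(\Gb_{co})_n$ of Proposition~\ref{prop:link_between_globes_and_coglobes} also confirm that the functor is homotopically well-behaved, completing the proof.
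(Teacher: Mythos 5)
There is a genuine gap: the only nontrivial content of the proposition --- that $(\Lambda^k[n])_{co}\to(\Delta^k[n])_{co}$ is an acyclic cofibration --- is precisely the step you leave open (``the combinatorially delicate step''), and neither of the two strategies you sketch is carried out or even shown to be workable. The paper does not need any new filtration or appendix-B combinatorics here; it argues by induction on $n$ using the very definition of $(\uvar)_{co}$ as an iterated co-join. For $k\le n$ one has identifications $(\Lambda^k[n+1])_{co}=(\Lambda^k[n])_{co}\costar \Delta[0]$ and $(\Delta^k[n+1])_{co}=(\Delta^k[n])_{co}\costar \Delta[0]$, so the induction hypothesis transports through the left Quillen functor $\uvar\costar \Delta[0]$; the one case this misses, $k=n+1$, is handled by the homotopy associativity of $\costar$ (Proposition \ref{prop:almost_associativity_of_star_co}), which gives a zigzag of acyclic cofibrations comparing $(\Lambda^{n+1}[n+1])_{co}\to(\Delta^{n+1}[n+1])_{co}$ with $\Delta[0]\costar (\Lambda^n[n])_{co}\to \Delta[0]\costar (\Delta^n[n])_{co}$, an acyclic cofibration because $\Delta[0]\costar \uvar$ is left Quillen; one then concludes with Proposition \ref{prop:lifting_property_zigzag_of_acyclic_cofibration}. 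Your proposal never identifies this mechanism (the equalities for $k\le n$, the left Quillen property of the two co-join functors, and the zigzag for the top horn), so the key assertion remains unproved.

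Two further remarks. Your closing sentence, that two-out-of-three together with $\Gb_n\leftrightsquigarrow(\Gb_{co})_n$ (Proposition \ref{prop:link_between_globes_and_coglobes}) ``completes the proof,'' is circular: the comparison of $(\Gb_{\uvar})_{co}$ with $(\Gb_{co})_{\uvar}$ is only constructed later, and the criterion of Proposition \ref{prop:criterimu_to_be_an_weak_equivalence} applies to natural transformations between functors that are already known to be left Quillen; good behaviour on globes does not by itself yield preservation of arbitrary acyclic cofibrations. The parts of your argument that are correct (cofibrations are preserved formally, and the thinness and saturation extensions are sent to identities by Lemmas \ref{lemma:compatibility_of_co_with_saturation1} and \ref{lemma:compatibility_of_co_with_saturation2}) match the paper's setup, but they are the easy half.
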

\begin{proof}
We show by induction on $n$ that for all $k\leq n$, the morphism 
$$(\Lambda^k[n])_{co}\to (\Delta^k[n])_{co}$$
is an acyclic cofibration. 
Suppose the result true for $n$, and suppose $k\leq n$. We have a diagram:
\[\begin{tikzcd}
	{(\Lambda^k[n+1])_{co}} & {(\Lambda^k[n])_{co}\costar \Delta[0]} \\
	{ (\Delta^k[n+1])_{co}} & {(\Delta^k[n])_{co}\costar \Delta[0]}
	\arrow[from=1-2, to=2-2]
	\arrow[from=1-1, to=2-1]
	\arrow[Rightarrow, no head, from=1-1, to=1-2]
	\arrow[Rightarrow, no head, from=2-1, to=2-2]
\end{tikzcd}\]
The right hand morphism is an acyclic cofibration. We still have to deal the case $k=n+1$. Using proposition \ref{prop:almost_associativity_of_star_co}, we have a zigzag of acyclic cofibrations
\[\begin{tikzcd}
	{(\Lambda^{n+1}[n+1])_{co}} & {\Delta[0]\costar (\Lambda^n[n])_{co}} \\
	{ (\Delta^{n+1}[n+1])_{co}} & {\Delta[0]\costar (\Delta^n[n])_{co}}
	\arrow[from=1-2, to=2-2]
	\arrow[from=1-1, to=2-1]
	\arrow[""{name=0, anchor=center, inner sep=0}, curve={height=12pt}, draw=none, from=1-2, to=2-2]
	\arrow[""{name=1, anchor=center, inner sep=0}, curve={height=-12pt}, draw=none, from=1-1, to=2-1]
	\arrow[squiggly, tail reversed, from=1, to=0]
\end{tikzcd}\]
where the right hand morphism is an acyclic cofibration.
\end{proof}

\begin{construction}
\label{cons:of_the_comparaison_between_sigma_and_sigma_co}
Let $\Sigma^\star X$ be the \textit{joint-suspension}, define as the following pushout:
\[\begin{tikzcd}
	X & {X\diamond\Delta[0]} & {X\star\Delta[0]} \\
	{\Delta[0]} & {\Sigma X} & {\Sigma^{\star} X.}
	\arrow[from=1-1, to=2-1]
	\arrow[from=1-3, to=2-3]
	\arrow[from=1-2, to=2-2]
	\arrow[""{name=0, anchor=center, inner sep=0}, "\sim", from=1-2, to=1-3]
	\arrow[""{name=1, anchor=center, inner sep=0}, from=1-1, to=1-2]
	\arrow[from=2-1, to=2-2]
	\arrow["\sim"', from=2-2, to=2-3]
	\arrow["\lrcorner"{anchor=center, pos=0.125, rotate=180}, draw=none, from=2-2, to=1]
	\arrow["\lrcorner"{anchor=center, pos=0.125, rotate=180}, draw=none, from=2-3, to=0]
\end{tikzcd}\]
We then have a weakly invertible natural transformation: 
$$\Sigma \to \Sigma^\star$$
Furthermore, $\Sigma^{\circ}(X^{co})$ fits in the following pushout:
\[\begin{tikzcd}
	{X_{co}} & {X_{co}\costar \Delta[0]} \\
	{\Delta[0]} & {\Sigma^{\circ} X^{co}.}
	\arrow[from=1-1, to=2-1]
	\arrow[from=1-2, to=2-2]
	\arrow[""{name=0, anchor=center, inner sep=0}, from=1-1, to=1-2]
	\arrow[from=2-1, to=2-2]
	\arrow["\lrcorner"{anchor=center, pos=0.125, rotate=180}, draw=none, from=2-2, to=0]
\end{tikzcd}\]
The functor $co$ commutes with colimits, we then have a
weakly invertible natural transformation: 
$$(\Sigma\uvar)_{co}\to (\Sigma^\star\uvar)_{co}= \Sigma^{\circ}((\uvar)_{co}).$$
In particular we have an invertible natural transformation: 
$$(\Gb_{\uvar})_{co}\to (\Gb_{co})_{\uvar}.$$
\end{construction}

\begin{theorem}
\label{theo:co_is_an_duality}
There is a zigzag of weakly invertible natural transformations:
$$((\uvar)_{co})_{co}\leftrightsquigarrow id.$$
\end{theorem}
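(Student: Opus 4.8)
The plan is to reduce the statement to Theorem~\ref{theo:criterium_to_be_linked_to_identity} applied with $l=0$. First I would note that $((\uvar)_{co})_{co}$ is a left Quillen functor, being the composite of the left Quillen functor $(\uvar)_{co}$ with itself. By Theorem~\ref{theo:criterium_to_be_linked_to_identity} it then suffices to produce a zigzag of weakly invertible natural transformations $((\Gb_{\uvar})_{co})_{co}\leftrightsquigarrow\Gb_{\uvar}$ between globular objects: once this is done, the theorem yields a zigzag of weakly invertible natural transformations between $((\uvar)_{co})_{co}$ and $\Sigma^{0}=\mathrm{id}$.

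To build the globe-level zigzag I would combine two ingredients already at hand. Construction~\ref{cons:of_the_comparaison_between_sigma_and_sigma_co} provides an invertible natural transformation $\phi\colon(\Gb_{\uvar})_{co}\xrightarrow{\ \sim\ }(\Gb_{co})_{\uvar}$; applying the functor $(\uvar)_{co}$ to $\phi$ gives an isomorphism $((\Gb_{\uvar})_{co})_{co}\cong((\Gb_{co})_{\uvar})_{co}$. Proposition~\ref{prop:link_between_globes_and_coglobes} provides a zigzag of acyclic cofibrations $(\Gb_{co})_{\uvar}\leftrightsquigarrow\Gb_{\uvar}$, and since $(\uvar)_{co}$ is left Quillen it sends this zigzag to a zigzag of acyclic cofibrations $((\Gb_{co})_{\uvar})_{co}\leftrightsquigarrow(\Gb_{\uvar})_{co}$. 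Concatenating these and then using $\phi$ and Proposition~\ref{prop:link_between_globes_and_coglobes} once more, I obtain the chain
$$((\Gb_{\uvar})_{co})_{co}\;\cong\;((\Gb_{co})_{\uvar})_{co}\;\leftrightsquigarrow\;(\Gb_{\uvar})_{co}\;\xrightarrow{\ \phi\ }\;(\Gb_{co})_{\uvar}\;\leftrightsquigarrow\;\Gb_{\uvar},$$
every arrow of which is a weak equivalence, hence a candidate zigzag of weakly invertible natural transformations.

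The hard part will be checking that each arrow in this chain is a morphism of \emph{globular} objects, i.e.\ is compatible with the maps $\delta^{0}_{n+1},\delta^{1}_{n+1}$. Both $\phi$ (through the identifications $\Gb_{n+1}=\Sigma\Gb_{n}$ and $(\Gb_{co})_{n+1}=\Sigma^{\circ}(\Gb_{co})_{n}$) and the zigzag of Proposition~\ref{prop:link_between_globes_and_coglobes} intertwine the $\delta^{\alpha}$'s only up to a dimension-dependent interchange of source and target: the squares in Proposition~\ref{prop:link_between_globes_and_coglobes} relate $\delta^{\alpha}_{n+1}$ to $\delta^{\alpha+n}_{n+1}$, and the corresponding interchange for the even duality is the one recorded in Lemma~\ref{lemma:inversion_on_globes}. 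My way of handling this is to observe that each such interchange is an involution of $\mathbb{Z}/2\mathbb{Z}$, and that in the chain above the interchange coming from $\phi$ occurs twice and the one coming from Proposition~\ref{prop:link_between_globes_and_coglobes} occurs twice; since these interchanges all have the form $\alpha\mapsto\alpha+c$ they commute, so the total interchange is the identity and the chain really is an honest zigzag of morphisms of globular objects. The remaining work — pinning down the precise twist formulas from the suspension and co-join formulas, and confirming that $\phi$ is compatible with the coface maps up to exactly such a twist — is routine but is where all the bookkeeping lives.
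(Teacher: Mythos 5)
Your proposal is correct and takes essentially the same route as the paper: its proof likewise combines the comparison $(\Gb_{\uvar})_{co}\to(\Gb_{co})_{\uvar}$ from Construction \ref{cons:of_the_comparaison_between_sigma_and_sigma_co} with Proposition \ref{prop:link_between_globes_and_coglobes} used twice to obtain $((\Gb_{\uvar})_{co})_{co}\leftrightsquigarrow \Gb_{\uvar}$, and then concludes by Theorem \ref{theo:criterium_to_be_linked_to_identity} with $l=0$. Your explicit chain and the observation that the $\mathbb{Z}/2\mathbb{Z}$ twists on the coface maps cancel because each occurs twice merely spell out bookkeeping the paper leaves implicit.
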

\begin{proof}
In construction \ref{cons:of_the_comparaison_between_sigma_and_sigma_co} we remark that we have a zigzag of weakly invertible natural transformations:
$$(\Gb_{\uvar})_{co}\to (\Gb_{co})_{\uvar}.$$
Using proposition \ref{prop:link_between_globes_and_coglobes} twice, this implies that there is a zigzag of weakly invertible natural transformations:
$$((\Gb_{\uvar})_{co})_{co}\leftrightsquigarrow \Gb_{\uvar}.$$
The theorem \ref{theo:criterium_to_be_linked_to_identity} then implies the result.
\end{proof}

\begin{cor}
\label{cor:Sigma_and_duality}
There are  zigzags of weakly invertible natural transformations:
$$(\Sigma \uvar)_{co}\leftrightsquigarrow \Sigma (\uvar)^{op},~~~~(\Sigma \uvar)^{op}\leftrightsquigarrow \Sigma (\uvar)_{co},~~~(\Sigma \uvar)^{\circ}\leftrightsquigarrow \Sigma (\uvar)_{\circ}.$$
\end{cor}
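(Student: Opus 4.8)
The plan is to match each of the three comparisons to the template used for Theorem~\ref{theo:co_is_an_duality}: reduce it, by whiskering with a suitable invertible duality, to a statement of the form ``such-and-such left Quillen functor is linked to $\Sigma^{1}$'', and then invoke Theorem~\ref{theo:criterium_to_be_linked_to_identity}. First one checks that all six functors occurring are composites of left Quillen functors ($\Sigma$ and the three dualities), hence left Quillen. For the first formula, $(\uvar)^{op}$ is a strict involution and $(\uvar)_{co}$ is a weak involution ($((\uvar)_{co})_{co}\leftrightsquigarrow id$ by Theorem~\ref{theo:co_is_an_duality}), so whiskering ``$(\Sigma\uvar)_{co}\leftrightsquigarrow\Sigma(\uvar)^{op}$'' on the right with $(\uvar)^{op}$ makes it equivalent to the statement that the left Quillen functor $X\mapsto(\Sigma(X^{op}))_{co}$ is linked by a zigzag of weakly invertible natural transformations to $\Sigma=\Sigma^{1}$. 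The second formula reduces, by whiskering with the weak involution $(\uvar)_{co}$, to ``$X\mapsto(\Sigma(X_{co}))^{op}$ is linked to $\Sigma$'', and the third is handled analogously with $(\uvar)_{\circ}$, or, more cheaply, follows formally from the first two by composing zigzags, since the full duality is the composite of the even and the $op$ dualities.

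By Theorem~\ref{theo:criterium_to_be_linked_to_identity}, each reduced statement follows once we exhibit, for the functor $i$ in question, a zigzag of weakly invertible natural transformations of \emph{globular objects} between $i(\Gb_\uvar)$ and $\Gb_{1+\uvar}$. For the first formula this asks for $(\Sigma(\Gb_\uvar^{op}))_{co}\leftrightsquigarrow\Gb_{1+\uvar}$, which I would build in three stages. Proposition~\ref{prop:link_between_globes_and_opglobes} gives a zigzag $\Gb_\uvar^{op}\leftrightsquigarrow\Gb_\uvar$ compatible with the globular structure up to the stated reindexing of source/target. Applying $\Sigma$ shifts all dimensions by one and produces $\Sigma\Gb_\uvar^{op}\leftrightsquigarrow\Sigma\Gb_\uvar=\Gb_{1+\uvar}$. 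Finally one applies the even duality and uses Construction~\ref{cons:of_the_comparaison_between_sigma_and_sigma_co} --- which supplies the invertible comparison $(\Gb_\uvar)_{co}\to(\Gb_{co})_\uvar$ together with the weakly invertible $(\Sigma\uvar)_{co}\leftrightsquigarrow\Sigma^{\circ}((\uvar)_{co})$ --- and Proposition~\ref{prop:link_between_globes_and_coglobes} to transport $(\Gb_{1+\uvar})_{co}$ back to $\Gb_{1+\uvar}$, again up to a reindexing. The analogous chains for the second and third formulas use Propositions~\ref{prop:link_between_globes_and_coglobes}, \ref{prop:link_between_globes_and_opglobes} and \ref{prop:link_between_globes_and_coopglobes} in exactly the same pattern.

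The crux --- and the only non-formal point --- is the bookkeeping of orientations. Propositions~\ref{prop:link_between_globes_and_coglobes}--\ref{prop:link_between_globes_and_coopglobes} identify $\Gb_n$ with its dual only up to \emph{twisting} the globular structure (the map $\delta^{\alpha}_{n+1}$ corresponds to $\delta^{\alpha+n}_{n+1}$, $\delta^{\alpha+n+1}_{n+1}$, resp. $\delta^{\alpha+1}_{n+1}$), and the suspension contributes a further degree shift. One must verify that for the particular composites occurring here these twists cancel exactly, so that the level-wise zigzags of globes assemble into a zigzag of \emph{globular objects} --- which is precisely the hypothesis of Theorem~\ref{theo:criterium_to_be_linked_to_identity}. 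Since the twists are affine in $n$ with coefficients in $\{0,1\}$, this is a short finite parity check, but it is where the three formulas acquire their content and where the distinction between $(\uvar)_{co}$, $(\uvar)^{op}$ and $(\uvar)^{\circ}$ actually enters. Once it is done, Theorem~\ref{theo:criterium_to_be_linked_to_identity} yields the three reduced statements, and undoing the whiskerings gives the three displayed zigzags.
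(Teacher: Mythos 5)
Your proposal is correct and follows essentially the same route as the paper: it likewise uses Propositions \ref{prop:link_between_globes_and_coglobes} and \ref{prop:link_between_globes_and_opglobes} (with Construction \ref{cons:of_the_comparaison_between_sigma_and_sigma_co}) to identify the globular objects $(\Sigma(\Gb_{\uvar})^{op})_{co}$ and $(\Sigma(\Gb_{\uvar})_{co})^{op}$ with $\Gb_{1+\uvar}$, applies Theorem \ref{theo:criterium_to_be_linked_to_identity} to link these composite left Quillen functors to $\Sigma$, and then whiskers with $((\uvar)_{co})_{co}\leftrightsquigarrow id$ and $((\uvar)^{op})^{op}=id$ — which is your reduction read in the opposite order, with the third formula obtained as a consequence. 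The parity bookkeeping you flag (and correctly identify as the only substantive point) is left implicit in the paper as well.
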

\begin{proof}
Propositions \ref{prop:link_between_globes_and_coglobes} and \ref{prop:link_between_globes_and_opglobes} imply that we
 have zigzags of weakly invertible natural transformations:
$$(\Sigma (\Gb_{\uvar})^{op})_{co}\leftrightsquigarrow \Gb_{\uvar+1} \leftrightsquigarrow(\Sigma (\Gb_{\uvar})_{co})_{op}.$$
According to theorem \ref{theo:criterium_to_be_linked_to_identity} we then have  zigzags of weakly invertible natural transformations:
$$(\Sigma (\uvar)^{op})_{co}\leftrightsquigarrow \Sigma \leftrightsquigarrow(\Sigma (\uvar)_{co})_{op}.$$
Using the fact that $((\uvar)_{co})_{co}\leftrightsquigarrow id$,  and $((\uvar)^{op})^{op}= id$, this proves the two first formula. The last one is a direct consequence.
\end{proof}

\subsection{The full duality}

\begin{prop}
\label{prop:co_and_op_commutes}
There are a zigzag of equivalences: 
$$(X_{co})^{op} \leftrightsquigarrow (X^{op})_{co}$$
natural in $X$.
\end{prop}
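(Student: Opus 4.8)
The plan is to follow exactly the strategy used for the even duality in Theorem~\ref{theo:co_is_an_duality}: reduce the claim to a statement about globes, verify it there, and invoke the characterization Theorem~\ref{theo:criterium_to_be_linked_to_identity} of functors weakly equivalent to a suspension power. Concretely, both $(\uvar_{co})^{op}$ and $(\uvar^{op})_{co}$ are composites of left Quillen functors, hence left Quillen; and $\uvar^{op}$ visibly commutes with $\Sigma^\circ$ up to a dimension shift (the $op$ of the co-suspension is again a co-suspension of the $op$, because reversing the directions of odd cells is compatible with the colimit defining $\Sigma^\circ$), while $\uvar_{co}$ satisfies the comparison $(\Sigma\uvar)_{co}\leftrightsquigarrow \Sigma^\circ((\uvar)_{co})$ established in Construction~\ref{cons:of_the_comparaison_between_sigma_and_sigma_co}. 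Combining these, one gets a zigzag of weakly invertible natural transformations on the generating globes $\Gb_n$, comparing $(\Gb_n)_{co}^{op}$ and $(\Gb_n^{op})_{co}$ with some fixed globe-like object.

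First I would compute both sides on $\Delta[0]$ and on globes. Using Construction~\ref{cons:of_the_comparaison_between_sigma_and_sigma_co} we have $(\Gb_{\uvar})_{co}\leftrightsquigarrow (\Gb_{co})_{\uvar}=(\Sigma^\circ)^{\uvar}\Delta[0]$, and by Propositions~\ref{prop:link_between_globes_and_coglobes} and~\ref{prop:link_between_globes_and_opglobes} there are zigzags $\Gb_n\leftrightsquigarrow (\Gb_{co})_n$ and $\Gb_n\leftrightsquigarrow\Gb_n^{op}$, compatible with the cosource/cotarget inclusions up to the usual index shifts in $\mathbb{Z}/2\mathbb{Z}$. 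Chasing these through, $((\Gb_{\uvar})_{co})^{op}$ and $((\Gb_{\uvar})^{op})_{co}$ are each connected by a zigzag of acyclic cofibrations to $\Gb_{\uvar}$ (the two $op/co$ index shifts combine to a shift by $\alpha + \text{(something)}$, but since we only need \emph{some} zigzag of weakly invertible transformations and the boundary inclusions $\delta^\alpha$ and $\delta^{\alpha'}$ differ only by relabeling $0\leftrightarrow 1$, which is immaterial for the hypotheses of Theorem~\ref{theo:criterium_to_be_linked_to_identity}, this is enough). This yields a zigzag of weakly invertible natural transformations $(\Gb_{\uvar}_{co})^{op}\leftrightsquigarrow \Gb_{\uvar}\leftrightsquigarrow (\Gb_{\uvar}^{op})_{co}$.

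Then I would apply Theorem~\ref{theo:criterium_to_be_linked_to_identity} to each of the two left Quillen functors $(\uvar_{co})^{op}$ and $(\uvar^{op})_{co}$ (with $l=0$), concluding that each is connected by a zigzag of weakly invertible natural transformations to $\Sigma^0=\id$. Composing the two zigzags (and reversing one) gives the desired $(X_{co})^{op}\leftrightsquigarrow (X^{op})_{co}$, natural in $X$.

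The main obstacle is bookkeeping the source/target indices: $op$ reverses odd-dimensional cells, $co$ reverses even-dimensional cells, so on an $n$-globe one must track how $\delta^\alpha_{n+1}$ transforms under each, and check that the two composites $op\circ co$ and $co\circ op$ produce boundary maps that match the hypotheses of Theorem~\ref{theo:criterium_to_be_linked_to_identity} (which only sees globes and the zigzag, not the precise $\alpha$). I expect this to be a routine but careful parity computation with $\alpha\in\mathbb{Z}/2\mathbb{Z}$, entirely parallel to the proof of Corollary~\ref{cor:Sigma_and_duality}; once it is pinned down, the rest of the argument is formal.
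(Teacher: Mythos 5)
Your reduction to globes is in the right spirit, but the key step is invalid: you cannot apply Theorem~\ref{theo:criterium_to_be_linked_to_identity} with $l=0$ to the single composite $((\uvar)_{co})^{op}$ (nor to $((\uvar)^{op})_{co}$). The hypothesis of that theorem is a zigzag of weakly invertible \emph{natural} transformations $i(\Gb_{\uvar})\leftrightsquigarrow \Gb_{\uvar}$, where naturality is over the globe category, hence in particular compatibility with the coface maps $\delta^\alpha_{n+1}$. By Propositions~\ref{prop:link_between_globes_and_coglobes} and~\ref{prop:link_between_globes_and_opglobes}, the even duality shifts the index $\alpha$ by $n$ and the odd duality by $n+1$, so the composite $op\circ co$ shifts it by $2n+1\equiv 1 \pmod 2$: the objectwise zigzag $((\Gb_n)_{co})^{op}\leftrightsquigarrow \Gb_n$ intertwines $\delta^{\alpha}_{n+1}$ with $\delta^{\alpha+1}_{n+1}$, i.e.\ it swaps sources and targets, and therefore is \emph{not} a zigzag of natural transformations of coglobular objects. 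This parity is exactly what you dismiss as ``immaterial,'' and it is the crux: if your argument were valid it would show that the full duality $(\uvar)_{\circ}=((\uvar)_{co})^{op}$ is weakly equivalent to the identity (and, by the same reasoning, that $(\uvar)^{op}$ and $(\uvar)_{co}$ are too), which is false — the full duality reverses all cells, as reflected in the formula $\pi_n(s\sqcup t, X^{\circ})=\pi_n(t\sqcup s, X)^{op}$ in the section on the action of dualities on homotopy categories.

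The paper sidesteps this by applying the criterion not to $((\uvar)_{co})^{op}$ but to the fourfold composite $((((\uvar)_{co})^{op})_{co})^{op}$, whose total index shift is $2n+2(n+1)\equiv 0\pmod 2$, so Propositions~\ref{prop:link_between_globes_and_coglobes} and~\ref{prop:link_between_globes_and_opglobes} genuinely give a zigzag of coglobular natural transformations $((((\Gb_{\uvar})_{co})^{op})_{co})^{op}\leftrightsquigarrow \Gb_{\uvar}$; Theorem~\ref{theo:criterium_to_be_linked_to_identity} then yields $((((\uvar)_{co})^{op})_{co})^{op}\leftrightsquigarrow id$, and one concludes by cancelling with $((\uvar)_{co})_{co}\leftrightsquigarrow id$ (Theorem~\ref{theo:co_is_an_duality}) and $((\uvar)^{op})^{op}=id$. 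To repair your write-up you should replace the two separate applications of the criterion by this single application to the even-parity composite, followed by the cancellation step.
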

\begin{proof}
Proposition \ref{prop:link_between_globes_and_coglobes} and \ref{prop:link_between_globes_and_opglobes} implies that we
 have zigzag of weakly invertible natural transformations:
$$((((\Gb_{\uvar})_{co})^{op})_{co})^{op}\leftrightsquigarrow \Gb_{\uvar}.$$
According to theorem \ref{theo:criterium_to_be_linked_to_identity} we then have  zigzag of weakly invertible natural transformations:
$$((((\uvar)_{co})^{op})_{co})^{op}\leftrightsquigarrow id.$$
Using the fact that $((\uvar)_{co})_{co}\leftrightsquigarrow id$,  and $((\uvar)^{op})^{op}= id$, we conclude the proof.
\end{proof}

\begin{definition}
We define the \textit{full duality} to be the composite functor 
$$(\uvar)_{\circ}: X\mapsto (X_{co})^{op}.$$
This functor is then a left Quillen functor and according to \ref{theo:co_is_an_duality} and \ref{prop:co_and_op_commutes}, satisfies
$$((\uvar)_{\circ})_{\circ}\leftrightsquigarrow id.$$
\end{definition}

\subsection{Action of dualities on homotopy categories}

We denote  $(\uvar)^{co}$  the right adjoint of $(\uvar)_{co}$ and  $(\uvar)^{\circ}$  the right adjoint of $(\uvar)_{\circ}$. If $s\sqcup t$ is an element of $[\partial \Gb_n,X]$ and  $s,t$ two parallel cells such that $s\cup t$ represents $s\sqcup t$, we note $t\sqcup s$ the equivalence class corresponding to $t\cup s$. Eventually, we also denote $s\sqcup t$ the equivalence class corresponding to: 
\[\begin{tikzcd}
	{\partial\Gb_n} & {(\partial\Gb_n)^{co}} & {X^{co},} & {\partial\Gb_n} & {(\partial\Gb_n)^{op}} & {X^{op},}
	\arrow["{(s\cup t)^{co}}", from=1-2, to=1-3]
	\arrow[squiggly, tail reversed, from=1-1, to=1-2]
	\arrow["{(s\cup t)^{op}}", from=1-5, to=1-6]
	\arrow[squiggly, tail reversed, from=1-4, to=1-5]
\end{tikzcd}\]
\[\begin{tikzcd}
	{\partial\Gb_n} & {(\partial\Gb_n)^{\circ}} & {X^{\circ}.}
	\arrow["{(s\amalg t)^{\circ}}", from=1-2, to=1-3]
	\arrow[squiggly, tail reversed, from=1-1, to=1-2]
\end{tikzcd}\]
\begin{prop}
Let $X$ be a $\infty$-category.
$$\pi_0(X) = \pi_0(X^{co}) =\pi_0(X^{op})=\pi_0(X^{\circ})$$
Let $s\sqcup t \in [\partial \Gb_n,X]$. 
If $n$ is odd, 
$$\pi_n(s\sqcup t,X^{co}) := \pi_n(t\sqcup s,X),~~~~ \pi_n(s\sqcup t,X^{op}) := \pi_n(s\sqcup t,X)^{op},$$
if $n$ is even:
$$\pi_n(s\sqcup t,X^{co}) := \pi_n(s\sqcup t,X)^{op},~~~~ \pi_n(s\sqcup t,X^{op}) := \pi_n(t\sqcup s,X),$$
for all $n$,
$$\pi_n(s\sqcup t,X^{\circ}) := \pi_n(t\sqcup s,X)^{op}.$$
\end{prop}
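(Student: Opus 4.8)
The plan is to reduce everything to the case $n=0$ and then to induct on $n$, using at each stage the adjunction between $\Sigma$ and the functor $(a,b,C)\mapsto C(a,b)$ to trade $\pi_n$ of an $\infty$-category for $\pi_{n-1}$ of a hom-$\infty$-category, and feeding in the compatibilities between the three dualities and the suspension proved in \ref{cor:Sigma_and_duality} and \ref{cons:of_the_comparaison_between_sigma_and_sigma_co}.

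First, since $(\uvar)_{co}$, $(\uvar)^{op}$ and $(\uvar)_\circ$ (and the right adjoints $(\uvar)^{co}$, $(\uvar)^\circ$) are Quillen functors, the objects $X^{co}$, $X^{op}$ and $X^\circ$ are again $\infty$-categories whenever $X$ is, so all the $\pi_k$'s apply to them; moreover Lemma \ref{lemma:homotopycategory_are_idenpendant_of} lets us transport boundary data along zigzags of weak equivalences without changing the homotopy categories. For the base case, a $0$-cell of $X^{co}$ is, by adjunction, a map $(\Gb_0)_{co}=\Gb_0\to X$, and a $1$-cell of $X^{co}$ corresponds, via adjunction, construction \ref{cons:of_the_comparaison_between_sigma_and_sigma_co} and the zigzag $(\Gb_{co})_1\leftrightsquigarrow\Gb_1$ of \ref{prop:link_between_globes_and_coglobes}, to a homotopy class of map $\Gb_1\to X$; likewise for $op$ and $\circ$. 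I would then check that the net effect of ``adjunction followed by the globe zigzag'' on the two endpoints of a $1$-cell is precisely the one encoded by the homotopy-commutative squares of \ref{prop:link_between_globes_and_coglobes}, \ref{prop:link_between_globes_and_opglobes} and \ref{prop:link_between_globes_and_coopglobes}, and that composition of $1$-cells is preserved --- for the $co$-duality this uses that the zigzags restrict suitably to the composition witness $\Delta[2]_t$, which is the content of \ref{lemma:compatibility_of_co_with_saturation1} and \ref{lemma:compatibility_of_co_with_saturation2}. Assembling these facts yields the claimed identifications of the categories $\pi_0$.

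For the inductive step, write $a,b$ for the $0$-source and $0$-target attached to the chosen boundary datum. That adjunction, applied to $\Gb_n=\Sigma\Gb_{n-1}$, gives a natural identification $\pi_n(s\sqcup t,C)\cong\pi_{n-1}(-,C(a,b))$ (for $n=1$ this is the equality $\pi_1(a,b,C)=\pi_0(C(a,b))$). Taking right adjoints in the equivalences of \ref{cor:Sigma_and_duality} --- for instance $(\Sigma\uvar)_{co}\leftrightsquigarrow\Sigma(\uvar)^{op}$ --- produces, on $\infty$-categories, zigzags of weak equivalences relating $X^{co}(a,b)$ to $(X(a,b))^{op}$, and analogously $X^{op}(a,b)$ to $(X(a,b))^{co}$ and (using $(\uvar)_\circ=((\uvar)_{co})^{op}$ together with \ref{prop:co_and_op_commutes}) $X^\circ(a,b)$ to $(X(a,b))^\circ$, up to a parity-dependent interchange of $a$ and $b$. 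Plugging these into the looping identification and applying the induction hypothesis to the hom-$\infty$-category $X(a,b)$ in dimension $n-1$ --- whose parity is opposite to that of $n$ --- one reads off the displayed formulas: an $op$ on a $\pi_{n-1}$ becomes an $op$ on $\pi_n$ when $n$ is even, and a swap $s\sqcup t\rightsquigarrow t\sqcup s$ when $n$ is odd, and symmetrically with the roles of $co$ and $op$ exchanged; the $\circ$-case then follows by composing the $co$- and $op$-computations.

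The step I expect to be the main obstacle is the bookkeeping of boundary data: one must verify that ``adjunction, then the globe zigzag of \ref{prop:link_between_globes_and_coglobes}'' carries the hemisphere decomposition of a map $\partial\Gb_n\to X^{co}$ to exactly the hemisphere decomposition of $\partial\Gb_n\to X$ that the definitions of $s\sqcup t$ in $X^{co}$, $X^{op}$, $X^\circ$ prescribe. This reduces to iterating, down the skeleta of $\partial\Gb_n$, the squares of \ref{prop:link_between_globes_and_coglobes}, \ref{prop:link_between_globes_and_opglobes} and \ref{prop:link_between_globes_and_coopglobes}, whose coface maps are shifted by $\delta^\alpha\mapsto\delta^{\alpha+n}$, $\delta^{\alpha+n+1}$ and $\delta^{\alpha+1}$ respectively; it is exactly these parity shifts that create the odd/even dichotomy in the statement. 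Along the way one must also check that these transports respect the relation $\sim$ --- using Proposition \ref{prop:in_the_homotopy_category_thin_is_iso} to identify thin cells with isomorphisms --- and are compatible with composition, so that they upgrade from bijections on objects and morphisms to genuine (iso)morphisms of categories.
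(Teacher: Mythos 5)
Your argument is essentially correct, but it is organized quite differently from the paper's. The paper deduces the proposition in one step from Propositions \ref{prop:link_between_globes_and_coglobes}, \ref{prop:link_between_globes_and_opglobes} and \ref{prop:link_between_globes_and_coopglobes}: an $n$-cell (resp.\ $(n{+}1)$-cell) of $X^{co}$, $X^{op}$, $X^{\circ}$ is by adjunction a map out of the dualized globe, and the zigzags $\Gb_k\leftrightsquigarrow (\Gb_{co})_k$, $\Gb_k^{op}$, $\Gb_k^{\circ}$ together with the commutative squares recording how $\delta^\alpha_{k+1}$ is shifted ($\alpha\mapsto\alpha+n$, $\alpha+n+1$, $\alpha+1$) transport these cells, their sources/targets and the thin composition witnesses directly into $X$, which is exactly where the odd/even dichotomy is read off. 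You instead induct on $n$ through the looping identification $\pi_n(s\sqcup t,C)\cong\pi_{n-1}(-,C(a,b))$ and the suspension--duality compatibilities of Corollary \ref{cor:Sigma_and_duality}; this works, but it is heavier machinery for the same conclusion, since \ref{cor:Sigma_and_duality} is itself obtained from the same globe propositions via Theorem \ref{theo:criterium_to_be_linked_to_identity}, and your boundary bookkeeping in the end re-invokes those propositions anyway. Two points in your plan deserve to be made explicit if you carry it out: first, the zigzags of \ref{cor:Sigma_and_duality} live between endofunctors of $\mSset$, not of the coslice under $\partial\Delta[1]$, so passing to right adjoints only gives equivalences $X^{co}(a,b)\simeq (X(a,b))^{op}$ (etc.) after you track where the two cone points go --- which is precisely the parity data of \ref{prop:link_between_globes_and_coglobes} and \ref{prop:link_between_globes_and_opglobes} --- and after replacing the natural transformations by pointwise cofibrations so that the induced maps of right adjoints are trivial fibrations on fibrant objects, as in Lemma \ref{lemma:j*is_a_trivial_fibration}; second, the independence of the boundary datum used at each stage should be justified by Lemma \ref{lemma:homotopycategory_are_idenpendant_of}, as you indicate. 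With those details filled in, your inductive route is a legitimate alternative; the paper's direct transport of globes is shorter and avoids the adjoint-level detour.
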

\begin{proof}
This is a consequence of the propositions \ref{prop:link_between_globes_and_coglobes}, \ref{prop:link_between_globes_and_opglobes} and \ref{prop:link_between_globes_and_coopglobes}.	
\end{proof}

\section{Application III: Grothendieck fibrations of $\infty$-categories.}

\subsection{Model structure on bimarked simplicial sets}
\begin{definition}
A \textit{bistratified simplicial set} is a triple $(X,tX,cX)$ where  $X$ is a simplicial set and  $tX := \cup_{n>0}tX_n$, $cX := \cup_{n>0}cX_n$ are graded sets such that for all $n\geq 1$, $tX_n$ and $cX_n$ are  subsets of $X_n$ that include all degenerate simplices, and $tX\subset cX$. A simplex in $tX$ is called \textit{thin}, and a simplex in $cX$ is called \textit{cartesian}. 
A \textit{bistratified morphism} $f:(X,tX,cX)\to (Y,tY,cX)$ is the data of a morphism on the underlying simplicial sets  such that $f(tX)\subset tY$ and $f(cX)\subset cY$.
The category of bistratified simplicial sets is denoted $\bstratSset$.  
\end{definition}

If $(X,tX)$ is a stratified simplicial set, $(X,tX,tX)$ is a bistratified simplicial set, and if $(X,tX,cX)$ is a bistratified simplicial set, it's underlying stratified simplicial set is $(X,tX)$.  This assignation shows that stratified simplicial sets can be identified as a full subcategory of bistratified simplicial sets.

\begin{definition}
Let $n$ and $0<k<n$ be two integers. We define several bistratified structures on $\Delta[n]$:
\begin{enumerate}
\item $\Delta[n]_c$. The top $n$-simplices is cartesian.
\item $\Delta^{0}[n]^\centerdot$. All simplices that include $\{0,1\}$ are cartesian.
\item $\Delta^{n}[n]^\centerdot$. All simplices that include $\{n-1,n\}$ are cartesian.
\item $\Delta^k[n]^{\ast}$. All simplices that include $\{k-1,k,k+1\}\cap[n]$ are thin.  The $(k-1)$-face and the $(k+1)$ face are cartesian.
\item $\Delta^k[n]^{\ast,\ast}$. All simplices that include $\{k-1,k,k+1\}\cap[n]$ are thin.  The $(k-1)$-face, the $k$-face and the $(k+1)$ face are cartesian.
\end{enumerate}
\end{definition}

\begin{definition}
We define several classes of cofibrations.
\begin{enumerate}
\item The \textit{cartesian terminal horn inclusions}:
$$\Lambda^{0}[n]^\centerdot\to^e \Delta^{0}[n]^\centerdot.$$
\item  The \textit{cartesian co-initial horn inclusions}:
$$\Lambda^{n}[n]^\centerdot\to^e \Delta^{n}[n]^\centerdot.$$
\item  The \textit{cartesian thinness extensions}:
$$\Delta^k[n]^{\ast}\to \Delta^k[n]^{\ast,\ast}.$$
\item  The \textit{cartesian trivializations}:
$$\Delta[n]_c\to \Delta[n]_t.$$
\end{enumerate}
The set of cartesian trivializations is denoted $T$.
\end{definition}

\begin{definition}
A \textit{bimarked simplicial set} is a bistratified simplicial set having the right lifting property against complicial thinness extensions, saturation extensions and cartesian thinness extensions.

The category of bimarked simplicial sets is denoted $\mSset$.
\end{definition}

We can extend $\star$ and $\otimes$ to bimarked simplicial sets: 
$$(X,tX,cX)\star(Y,tY,cY) := (X\star Y, \overline{tX\star tY}, \overline{cX\star cY}),$$ 
$$(X,tX,cX)\otimes(Y,tY,cY) := (X\otimes Y, \overline{tX\otimes tY}, \overline{cX\otimes cY}).$$
Functors $\Sigma, \costar , (\uvar)_{co}, (\uvar)_{\circ}, \fwedge$ being defined using these operations, they can be promoted in functors on bimarked simplicial sets. 
The interval $I$ is again $\Delta[1]_t$.

\begin{definition}
A \textit{marked $\infty$-category} is a bimarked simplicial set  $(X,tX,cX)$ such that $(X,tX)$ is an $\infty$-category.
\end{definition}

\begin{theorem}
There exists a $(\Lambda,I)$-local model structure on $\bmSset$, called the \emph{blind model structure}, where a morphism $f:(X,tX,cX)\to (Y,tY,cY)$ is a cofibration (resp. a fibration) if and only if the underlying morphism $(X,tX)\to (Y,tY)$ is a cofibration (resp. a fibration). A morphism is a weak equivalence if and only if the underlying morphism $(X,tX)\to (Y,tY)$ is and  if $\overline{f(cX)} = cY$ and $f^{-1}(cY)=cX$. Fibrant objects are marked $\infty$-categories.
\end{theorem}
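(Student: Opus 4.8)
The plan is to obtain the blind model structure by the mechanism already used for $\mSset$: realise $\bmSset$ as a reflective localisation of a presheaf topos, invoke the general existence theorem of the appendix, and then identify the three distinguished classes of maps by hand. The category $\bstratSset$ is a presheaf topos over the evident small category $\Delta^{++}$ (objects $[n]$, $[n]_t$, $[n]_c$, with arrows recording $tX\subseteq cX$ and that degeneracies are thin and cartesian). Let $T'$ be the set of complicial thinness extensions, saturation extensions and cartesian thinness extensions $\Delta^k[n]^{\ast}\to\Delta^k[n]^{\ast,\ast}$; exactly as in the proof that $\mSset\cong\tPsh\Delta_{/\Gamma S}$ — using the evident bistratified analogue of \ref{prop:martina} for the pushout-product with $I$ — one gets $\bmSset\cong\bstratSset_{/\Gamma T'}$. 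Viewing the complicial horns $\Lambda$ and the interval $I=\Delta[1]_t$ as bistratified with the minimal (purely thin) cartesian marking, Theorems \ref{theo:local_model_structure_on_stratified_presheave} and \ref{theo:local_model_structure_on_saturated_stratified_presheave} produce a $(\Gamma(\Lambda\cup T'),I)$-local model structure on $\bstratSset$ and, after localisation, a $(\Lambda,I)$-local model structure on $\bmSset$; its cofibrations are the monomorphisms and its fibrant objects are those with the right lifting property against $\Gamma(\Lambda\cup T')$, namely the marked $\infty$-categories.

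A morphism of $\bstratSset$ is a monomorphism iff it is injective on underlying simplicial sets, and likewise in $\mSset$; since bistratified morphisms preserve thin and cartesian simplices, $f$ is a cofibration iff its underlying morphism of $\mSset$ is one. The forgetful functor $U\colon\bmSset\to\mSset$ is cocontinuous, preserves monomorphisms, commutes with $\otimes$, and sends the generating anodynes (built from $\Lambda$ and the endpoints of $I$) to those of $\mSset$, hence is left Quillen; its fully faithful left adjoint $L$ equips a marked simplicial set with the minimal cartesian marking, its right adjoint $R$ with the maximal one. As every object is cofibrant, a weak equivalence of $\bmSset$ has underlying a weak equivalence of $\mSset$.

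For the converse, note that the functorial fibrant replacement $X\to\widehat X$ is obtained by cobase change along maps in $\Gamma(\Lambda\cup T')$, none of which enlarge the cartesian marking beyond the bistratified saturation of what is already present, so $c\widehat X=\overline{(\text{image of }cX)}$ and $X\to\widehat X$ is a weak equivalence of the claimed form; by two-out-of-three it then suffices to describe weak equivalences between marked $\infty$-categories, where they coincide with $I$-homotopy equivalences. An $I$-homotopy equivalence $f$ between marked $\infty$-categories has underlying map an $I$-homotopy equivalence, and, using that the generating edge of $I=\Delta[1]_t$ is thin — hence cartesian — so that the track of a cartesian simplex along the Gray cylinder $X\otimes I$ is cartesian, one sees that $gf$ and $fg$ are cartesian-conservative for any $I$-homotopy inverse $g$; combined with $\overline{f(cX)}\subseteq cY$ and $\overline{g(cY)}\subseteq cX$ this forces $\overline{f(cX)}=cY$ and $f^{-1}(cY)=cX$. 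Conversely, since the local objects are \emph{all} marked $\infty$-categories, with every admissible cartesian marking, the mapping spaces out of the most "discrete" ones detect cartesian markings, so a local weak equivalence must satisfy these conditions. Finally, for the fibrations: if $i\colon A\to B$ is an acyclic cofibration then $cB=\overline{i(cA)}$ because $i$ is a weak equivalence, so any lift of the underlying stratified square automatically carries $cB$ into $cX$ — a cartesian simplex of $B$ is forced by ones in $i(A)$, which land in $cX$ since $A\to X$ is bistratified and $cX$ is saturated — so every stratified lift is already a bistratified lift; hence $f$ lifts against $i$ iff $Uf$ lifts against $Ui$, and together with $L$ being left Quillen this gives that $f$ is a fibration iff $Uf$ is, and likewise for acyclic fibrations.

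The delicate step is the identification of the weak equivalences, that is, proving that the cartesian marking is \emph{homotopically rigid}: localising at the complicial horns (together with the structural extensions $T'$) must never identify two bistratified sets carrying different cartesian markings. This rests on a careful analysis of how the Gray tensor with $I=\Delta[1]_t$ and cobase change along horns act on the cartesian part of the bistratification, the decisive inputs being that every thin simplex is cartesian and that $I$ itself carries no cartesian data beyond its thin edge.
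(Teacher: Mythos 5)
Your construction of the model structure itself is the paper's own argument: the paper simply invokes Theorem \ref{theo:local_model_structure_on_saturated_bistratified_presheave} (the bistratified version of the appendix machinery; you cite the stratified Theorems \ref{theo:local_model_structure_on_stratified_presheave} and \ref{theo:local_model_structure_on_saturated_stratified_presheave}, but your presheaves-on-$\Delta^{++}$ set-up is exactly how the appendix produces the bistratified case, so this is only a misattribution) and then uses Proposition \ref{prop:martina} to pass from $(\Gamma\Lambda,I)$-locality to $(\Lambda,I)$-locality. Up to that point you coincide with the paper, and your identification of the cofibrations and of the fibrant objects is unproblematic.

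The gap is in the part of the statement that goes beyond bare existence, namely the description of the weak equivalences (and, derived from it, of the fibrations) in terms of the cartesian marking. You assert that the fibrant replacement, built by pushouts along $\Gamma(\Lambda\cup T')$, ``never enlarges the cartesian marking beyond the bistratified saturation of what is already present.'' But $\Gamma\Lambda$ is the Leibniz product of the complicial horns with the cellular model of the bistratified site, which contains the markers $\Delta[m]\to\Delta[m]_c$ and $\Delta[m]_c\to\Delta[m]_t$; pushouts along these products do create new cartesian simplices, and the claim that they all lie in $\overline{f(cX)}$ is precisely the point that has to be verified (by the kind of cartesian-thinness bookkeeping the paper carries out in appendix B.3 for products with $\Delta[n]_c$). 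The converse direction---that a map inducing bijections on homotopy classes into every marked $\infty$-category, with every admissible cartesian marking, must satisfy $\overline{f(cX)}=cY$ and $f^{-1}(cY)=cX$---is only gestured at (``mapping spaces out of the most discrete ones detect cartesian markings''), and your closing paragraph explicitly concedes that the decisive rigidity analysis is not carried out. Since your characterization of the acyclic cofibrations, and hence of the fibrations, is deduced from this description of the weak equivalences, the gap propagates to those claims as well. (The paper's own two-sentence proof also leaves this verification implicit in ``satisfying all the desired conditions,'' so you have essentially reproduced its argument while flagging, but not closing, the step it takes for granted; as a self-contained proof of the stated theorem, however, the proposal is incomplete at exactly that step.)
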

\begin{proof}
Theorem $\ref{theo:local_model_structure_on_saturated_bistratified_presheave}$ provides a $(\Gamma(\Lambda),I)$-local model structure satisfying all the desired conditions. We can use proposition \ref{prop:martina} to show that this model structure is $(\Lambda,I)$-local.
\end{proof}
Functor $\Sigma, \costar , (\uvar)_{co}, (\uvar)_{\circ}, \fwedge$ are left Quillen functor for this model structure.

\subsection{Cartesian fibrations}

\begin{definition}
For $\alpha\in\{0,1\}$, we set $\partial^c_\alpha:\{\alpha\}\to\Delta[1]_t$.
We define several class of cofibration:
\begin{enumerate}
\item $J_{R}$ is the set of morphisms of shape $i~\hat{\otimes}~\partial_0^c$ or $j~\hat{\times}~ \partial_0^c$ ;
\item $J_{L}$ is the set of morphisms of shape $i~\hat{\otimes}~\partial_1^c$ or  $j~\hat{\times}~\partial_1^c$ ;
\item $J_{coR}$ is the set of morphisms of shape $\partial_0^c~\hat{\otimes}~i$ or $\partial_0^c~\hat{\times}~j$ ;
\item $J_{coL}$ is the set of morphisms of shape $\partial_1^c~\hat{\otimes}~i$ or $\partial_1^c~\hat{\times}~j$ ;
\end{enumerate}
where $i$ is any generating cofibration of marked simplicial set, and $j$ any cartesian trivialization.
\end{definition}

\begin{definition}
Let $f$ be a blind fibration.
\begin{enumerate}
\item $f$ is a \textit{right fibration} if it has the right lifting property against morphisms of $J_{R}\cup T$.
\item $f$ is a \textit{left fibration} if it has the right lifting property against morphisms of $J_{L}\cup T$.
\item $f$ is a \textit{co-right fibration} if it has the right lifting property against morphisms of $J_{coR}\cup T$.
\item $f$ is a \textit{co-left fibration} if it has the right lifting property against morphisms of $J_{coL}\cup T$.
\end{enumerate}
\end{definition}

\begin{theorem}
\label{theo:the_foor_model_structure}
Let $C$ be a marked $\infty$-category. There exists several model structures on $\bmSset_{/C}$, which are localizations of the induced blind model structure:
\begin{enumerate}
\item The \emph{right model structure}, which is $(J_{R}\cup T,I)$-local,   whose fibrant objects and fibrations between fibrant objects are right fibrations and weak equivalences between fibrant objects are  blind weak equivalences. 
\item The \emph{left model structure}, which is $(J_{L}\cup T,I)$-local,   whose fibrant objects and fibrations between fibrant objects are left fibrations and weak equivalences between fibrant objects are  blind weak equivalences. 
\item The \emph{co-right model structure}, which is $(J_{cR}\cup T,I)$-local,   whose fibrant objects and fibrations between fibrant objects are co-right fibrations and weak equivalences between fibrant objects are  blind weak equivalences. 
\item The \emph{co-left model structure}, which is $(J_{cL}\cup T,I)$-local,  whose fibrant objects and fibrations between fibrant objects are co-left fibrations and weak equivalences between fibrant objects are  blind weak equivalences. 
\end{enumerate}
\end{theorem}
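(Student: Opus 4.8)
The plan is to obtain all four structures as left Bousfield localizations of the blind model structure on $\bmSset_{/C}$, using the general theory of local model structures recalled in the appendix (the $(\mathcal{A},I)$-local machinery). Concretely, I would take $\mathcal{A}$ to be $J_R\cup T$ (resp.\ $J_L\cup T$, $J_{coR}\cup T$, $J_{coL}\cup T$) together with the generating (acyclic) cofibrations of the blind structure, and invoke the existence theorem for $(\mathcal{A},I)$-local model structures; the hypotheses to check are that $\mathcal{A}$ is a set of cofibrations (clear, since each element is a pushout-product or Gray-pushout-product of a cofibration with a cofibration, hence a cofibration, and $T$ consists of entire cofibrations), and that $I=\Delta[1]_t$ is a suitable interval, which already holds in the blind structure. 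This immediately yields a model structure on $\bmSset_{/C}$ that is a localization of the blind one, in which the fibrant objects are the blind-fibrant objects $X\to C$ having the right lifting property against $\mathcal{A}$, i.e.\ precisely the right (resp.\ left, co-right, co-left) fibrations into $C$.

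The nontrivial content is the identification of fibrations between fibrant objects and of weak equivalences between fibrant objects. For the fibrations: a morphism $f:X\to Y$ over $C$ with $Y$ fibrant is a fibration in the localized structure iff it is a blind fibration with the right lifting property against the localizing set; since $Y$ already has the RLP against $J_R\cup T$, a standard retract/pullback argument (lifting against $i\,\hat\otimes\,\partial_0^c$ reduces, using that $Y\to C$ has the RLP, to lifting $f$ against $i$ after the relevant pushout) shows this is equivalent to $f$ itself being a right fibration. This is the step I expect to be the main obstacle: one must verify carefully that the class $J_R\cup T$ (and its saturation) interacts correctly with pullback along $Y\to C$, i.e.\ that the ``fibrations between fibrant objects are right fibrations'' characterization is not spoiled by the slice. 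I would handle this by the usual argument that in a localization, $f:X\to Y$ with $Y$ local is a local fibration iff it has the RLP against the localizing maps, combined with the observation that $J_R\cup T$ is closed (up to the relevant saturation) under the operations needed.

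For the weak equivalences between fibrant objects: in any left Bousfield localization, a map between local (here: $\mathcal{A}$-fibrant) objects is a local weak equivalence iff it is a weak equivalence in the original structure, so a blind weak equivalence. Finally, to see the structures are genuinely localizations and that the interval-based description matches, I would note that each generating class $J_R,J_L,J_{coR},J_{coL}$ is built from $\partial_\alpha^c:\{\alpha\}\to\Delta[1]_t=I$ via $\hat\otimes$ and $\hat\times$ with blind cofibrations, so the localized structures are exactly the $(J_{\bullet}\cup T,I)$-local ones; the cartesian trivializations $T$ are thrown in to force the correct behavior of the cartesian marking on fibrant objects. The four cases are entirely parallel, differing only by $\alpha\in\{0,1\}$ and by whether one tensors on the left or the right, so it suffices to carry out one case in detail and remark that the others follow by the same argument (or by applying the dualities $(\uvar)_{co}$ and $(\uvar)^{op}$ of Section \ref{section:The even duality}, which exchange left/right and $R$/$coR$).
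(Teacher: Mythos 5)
Your overall strategy---localize the blind structure on $\bmSset_{/C}$ via the appendix machinery---is the same as the paper's, but you have misplaced the difficulty and omitted the step that carries the actual content. The existence theorem you want to invoke (Theorem \ref{theo:local_model_structure_on_saturated_bistratified_presheave}) does not produce an $(\mathcal{A},I)$-local structure for an arbitrary set $\mathcal{A}$ of cofibrations: it produces a $(\Gamma \mathcal{A},I)$-local structure, where $\Gamma\mathcal{A}=\mathcal{A}\,\hat{\times}\,Cel$ is the Leibniz product with the cellular model. Since the theorem asserts $(J_R\cup T,I)$-locality, the whole point of the proof is to show that the $\Gamma$-closure adds nothing, i.e.\ that for $i$ a generating cofibration of bistratified simplicial sets, $j\in J_R\cup T$ and $k$ a cartesian trivialization, the maps $i\,\hat{\times}\,(j\,\hat{\otimes}\,\partial^c_0)$ and $i\,\hat{\times}\,k\,\hat{\times}\,\partial^c_0$ are $(J_R\cup T)$-anodyne. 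This is exactly where the paper works: the first family is treated by comparing the cartesian-product marking with the Gray-product marking (the pushout-product $i\,\hat{\otimes}\,j\,\hat{\otimes}\,\partial^c_0$ is $J_R$-anodyne and the remaining comparison is a $T$-anodyne extension), and the second by a case distinction whose nontrivial case rests on Proposition \ref{prop:behavious_of_times_with_cofibration} about $\Delta[n]_c\times K\cup\Delta[n]\times L\to\Delta[n]_c\times L$. Your proposal checks only that the localizing maps are cofibrations and that $I$ is an interval, which is not sufficient; as written it yields at best a $(\Gamma(J_\bullet\cup T),I)$-local structure and does not establish the stated $(J_\bullet\cup T,I)$-locality.

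Conversely, the part you single out as ``the main obstacle''---that fibrations between fibrant objects are right fibrations and that weak equivalences between fibrant objects are blind equivalences---comes for free in this framework: by Definition \ref{defi:local_model_structure} fibrations between fibrant objects are the naive fibrations, i.e.\ the maps with the right lifting property against the $(J_R\cup T,I)$-anodyne extensions, and weak equivalences are defined via homotopy classes into fibrant objects, so between fibrant objects they agree with the blind ones. No retract/pullback argument is needed there, whereas the product-stability computation you skip is indispensable. Your suggestion to deduce the remaining three cases from the first by the dualities (or by symmetry) is reasonable---the paper simply treats them ``similarly''---but it does not repair the main gap.
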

\begin{proof}
We will prove the assertion for the right model structure, the other ones are proved similarly. We denote $(J_{R}\cup T)_{/C}$ the set of morphisms of $\bmSset_{/C}$ of shape
\[\begin{tikzcd}
	K \\
	L & C.
	\arrow["{ J_R\cup T  \ni}"', from=1-1, to=2-1]
	\arrow[from=2-1, to=2-2]
	\arrow[from=1-1, to=2-2]
\end{tikzcd}\]

Theorem \ref{theo:local_model_structure_on_saturated_bistratified_presheave} implies the existence of a $(\Gamma((J_{R}\cup T)_{/C}),I)$-local model structure verifying all the desired properties. We then have to check that for all couples of morphisms $i$, $j$, $k$ where $i$ is a generating cofibration of bistratified simplicial sets, $j$ a morphism in $(J_{R}\cup T)$, and $k$ a cartesian trivialization, the induced morphisms:
$$i~ \hat{\times}~ (j~\hat{\otimes}~\partial^c_0)~~~~\mbox{and}~~~~ i~ \hat{\times}~ k~\hat{\times}~\partial^c_0$$
are $(J_R\cup T)$-anodyne extensions. For the first one, we have a pushout:
\[\begin{tikzcd}
	A & {A'} \\
	B & {B''} \\
	&& {B'.}
	\arrow["{i~ \hat{\times}~ (j~\hat{\otimes}~\partial^c_0)}", curve={height=-12pt}, from=1-2, to=3-3]
	\arrow["{i~ \hat{\otimes}~ j~\hat{\otimes}~\partial^c_0}"', from=1-1, to=2-1]
	\arrow[curve={height=12pt}, from=2-1, to=3-3]
	\arrow[from=1-2, to=2-2]
	\arrow[from=1-1, to=1-2]
	\arrow[from=2-1, to=2-2]
	\arrow["\lrcorner"{anchor=center, pos=0.125, rotate=180}, draw=none, from=2-2, to=1-1]
	\arrow[from=2-2, to=3-3]
\end{tikzcd}\]
The morphism $A\to B$ is a $J_R$-anodyne extension, so is $A'\to B''$. The underlying simplicial set of $B''$ and $B'$ are the same, and they also share the same set of cartesian simplices. The  morphism $B''\to B'$ is  then a $T$-anodyne extension. For the second one, two cases are possible. Either $i$ is $\emptyset\to\Delta[0]$, but $i~ \hat{\times}~ k~\hat{\times}~\partial^c_0$ is then equal to $k~\hat{\times}~\partial^c_0$ which is in $J_R$. If $i$ is any other generating cofibration, the maps  $i~ \hat{\times}~\partial^c_0$ is a monomorphism surjective on $0$-simplices. The proposition \ref{prop:behavious_of_times_with_cofibration} then implies that  $i~ \hat{\times}~ k~\hat{\times}~\partial^c_0$ is an equality.
\end{proof}

\begin{prop}
\label{prop:suspension_and_right_model_structure}
The $\circ$-suspension is a Quillen functor for  the right and the left model structure. The  suspension is a Quillen functor for the co-right and the co-left model structure.
\end{prop}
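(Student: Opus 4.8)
The plan is to use that, by Theorem~\ref{theo:the_foor_model_structure}, each of the four model structures on $\bmSset_{/C}$ is a left Bousfield localization of the (induced) blind model structure, which is left proper since every object is cofibrant. Both $\Sigma$ and $\Sigma^{\circ}$ are left Quillen for the blind model structure: this is recorded in the excerpt for $\Sigma$, and it follows for $\Sigma^{\circ}$ by the same argument, $\Sigma^{\circ}$ being built from the cocontinuous left Quillen Gray tensor product of Proposition~\ref{prop:gray_product_is_a_left_Quillen_bifunctor} and from colimits, with its construction already exhibiting $\Sigma^{\circ}(K)\to\Sigma^{\circ}(L)$ as an acyclic cofibration whenever $K\to L$ is. Hence, by the universal property of left Bousfield localization, it is enough to check that $\Sigma^{\circ}$ sends every generating map of $J_{R}\cup T$ to a weak equivalence of the right model structure (the image is automatically a cofibration), and likewise for $J_{L}\cup T$, $J_{coR}\cup T$, $J_{coL}\cup T$ and the other three assertions. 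All four verifications are identical up to exchanging $\Sigma^{\circ}$ with $\Sigma$ and/or $\partial^{c}_{0}$ with $\partial^{c}_{1}$ --- one may also deduce the $\Sigma$-statements from the $\Sigma^{\circ}$-statements through the comparison $(\Sigma\,\uvar)_{co}\leftrightsquigarrow\Sigma^{\circ}((\uvar)_{co})$ of Construction~\ref{cons:of_the_comparaison_between_sigma_and_sigma_co} --- so I would only treat $\Sigma^{\circ}$ and the right model structure.

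For a generator $i\,\hat{\otimes}\,\partial^{c}_{0}$ with $i\colon K\to L$ a generating cofibration of marked simplicial sets, the first observation is that $\Sigma^{\circ}$ commutes with Gray tensoring by $\Delta[1]_t$ up to a natural weak equivalence. Indeed, since the Gray tensor product is cocontinuous, both $(\Sigma^{\circ}X)\otimes\Delta[1]_t$ and $\Sigma^{\circ}(X\otimes\Delta[1]_t)$ are quotients of $\Delta[1]\otimes X\otimes\Delta[1]_t$: the former collapses the two suspension ends $\{\alpha\}\otimes X\otimes\Delta[1]_t$ to copies of $\Delta[1]_t$, while the latter collapses them to points. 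The resulting natural map $(\Sigma^{\circ}X)\otimes\Delta[1]_t\to\Sigma^{\circ}(X\otimes\Delta[1]_t)$ therefore collapses two contractible subcomplexes isomorphic to $\Delta[1]_t$ along cofibrations, hence is a weak equivalence of the blind model structure by left properness. Feeding this natural weak equivalence through the Leibniz construction and the gluing lemma yields a commutative square comparing $\Sigma^{\circ}(i\,\hat{\otimes}\,\partial^{c}_{0})$ with $(\Sigma^{\circ}i)\,\hat{\otimes}\,\partial^{c}_{0}$ whose two remaining sides are blind weak equivalences. Since $\Sigma^{\circ}i$ is a cofibration and the right model structure is the localization at $J_{R}\cup T$, the map $(\Sigma^{\circ}i)\,\hat{\otimes}\,\partial^{c}_{0}$ is an acyclic cofibration for it; by two-out-of-three so is $\Sigma^{\circ}(i\,\hat{\otimes}\,\partial^{c}_{0})$.

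The remaining generators --- $j\,\hat{\times}\,\partial^{c}_{0}$ for a cartesian trivialisation $j$, and the cartesian trivialisations $\Delta[n]_{c}\to\Delta[n]_{t}$ themselves --- are entire cofibrations (identities on underlying simplicial sets), and so are their images under $\Sigma^{\circ}$, which moreover leave the cartesian marking unchanged and only enlarge the thin marking. For these I would construct an explicit dimension-ordered cell-by-cell filtration of the newly thin simplices, writing each such entire cofibration as a finite composite of pushouts of cartesian trivialisations (interspersed, where necessary, with pushouts of complicial thinness, saturation, and cartesian thinness extensions --- all of which are isomorphisms in $\bmSset$), exactly in the style of the cellular filtrations $D_{0}\subset D_{1}\subset\cdots$ used throughout Section~2 and, for the duality $(\uvar)_{co}$, in Lemmas~\ref{lemma:compatibility_of_co_with_saturation1} and~\ref{lemma:compatibility_of_co_with_saturation2}. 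This places the images in the saturated class generated by $T$, hence among the acyclic cofibrations of the right model structure, and finishes all four cases.

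I expect the last step to be the main obstacle: arranging the filtration so that every stage of the decomposition of $\Sigma^{\circ}(\Delta[n]_{c}\to\Delta[n]_{t})$, or of $\Sigma^{\circ}(j\,\hat{\times}\,\partial^{c}_{0})$, really is a pushout of a single cartesian trivialisation (or of one of the extensions invertible in $\bmSset$) requires a careful analysis of which simplices of the Gray cylinder $\Delta[1]\otimes\Delta[n]_{c}$ acquire a thin marking under $\Sigma^{\circ}$ and of the order in which they may be adjoined --- the same bookkeeping as the Gray-cylinder computations of Section~2. By contrast, the interval-tensor step is short once one notices that collapsing the contractible ends is a weak equivalence, and the Bousfield-localization reduction is purely formal.
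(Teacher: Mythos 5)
Your route is genuinely different from the paper's and is viable in outline, but as written it stops exactly where you yourself locate the difficulty. The formal reduction is fine: the four structures of Theorem \ref{theo:the_foor_model_structure} are localizations of the blind structure, in which every object is cofibrant and $\Sigma$, $\Sigma^{\circ}$ are left Quillen, and weak equivalences of the localized structures are detected on local fibrant objects, so it suffices to send the localizing maps to local equivalences; your treatment of the generators $i\,\hat{\otimes}\,\partial^{c}_{0}$ (the collapse map $(\Sigma^{\circ}X)\otimes\Delta[1]_t\to\Sigma^{\circ}(X\otimes\Delta[1]_t)$ is a blind weak equivalence by left properness, then Leibniz, gluing and two-out-of-three) is also correct, granting the standard saturation argument that $c\,\hat{\otimes}\,\partial^{c}_{0}$ is a localized acyclic cofibration for an arbitrary cofibration $c$. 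The gap is your third paragraph: the localizing set also contains $T$ and the maps $j\,\hat{\times}\,\partial^{c}_{0}$, and for these you only announce a filtration you ``would construct''; until it is carried out the proof is incomplete. It is, however, easier than you fear: for these entire cofibrations every generating simplex that becomes thin in the target is already cartesian in the source (the top simplex is thin in $\Delta[n]_t$ but already cartesian in $\Delta[n]_c$, and this persists through the cartesian product and the Gray-tensor marking conditions), while the cartesian markings of source and target coincide; so the new thin simplices can be adjoined one at a time by pushouts of cartesian trivializations, in any order and with no cylinder bookkeeping, and since $\Sigma^{\circ}$ preserves pushouts and composites, decomposing $j\,\hat{\times}\,\partial^{c}_{0}$ this way before suspending reduces everything to $\Sigma^{\circ}(\Delta[n]_c\to\Delta[n]_t)$, handled by the same remark. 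The parenthetical shortcut via $(\uvar)_{co}$ should be avoided here, since the compatibility of the dualities with these localized structures is not yet available at this point of the paper.

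For comparison, the paper argues more directly: from the defining pushout of the ($\circ$-)suspension, $\Sigma i$ (resp.\ $\Sigma^{\circ}i$) is a pushout of the Leibniz tensor $i\,\hat{\otimes}\,(\partial\Delta[1]\to\Delta[1])$ (resp.\ $(\partial\Delta[1]\to\Delta[1])\,\hat{\otimes}\,i$), and one remarks that co-right/co-left (resp.\ right/left) acyclic cofibrations are stable under this Leibniz tensor; on the $J$-generators this stability is just associativity of the Gray tensor, e.g.\ $(\partial^{c}_{0}\,\hat{\otimes}\,u)\,\hat{\otimes}\,c=\partial^{c}_{0}\,\hat{\otimes}\,(u\,\hat{\otimes}\,c)$. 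This handles all acyclic cofibrations at once and dispenses with both your left-properness comparison and most of the generator-by-generator analysis, although the paper leaves its stability remark unproved, and justifying it in full would require marking computations of the same flavor as the ones you postpone. If you keep your route, note that the same defining-pushout identity would already dispose of your $i\,\hat{\otimes}\,\partial^{c}_{0}$ generators without invoking left properness.
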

\begin{proof}
First, remark that if $i:K\to L$ is a co-right or a co-left acyclic cofibration, so is $i\hat{\otimes}~(\partial\Delta[1]\to \Delta[1]))$. Furthermore, we have a colimit diagram
\[\begin{tikzcd}
	{L\otimes\partial\Delta[1]} & {K\otimes \Delta[1]\cup L\otimes\partial\Delta[1]} & {L\otimes \Delta[1]} \\
	{\Delta[1]} & {\Sigma K} & {\Sigma L.}
	\arrow[""{name=0, anchor=center, inner sep=0}, from=1-1, to=1-2]
	\arrow[from=1-1, to=2-1]
	\arrow[from=1-2, to=2-2]
	\arrow[""{name=1, anchor=center, inner sep=0}, "\sim", from=1-2, to=1-3]
	\arrow[from=2-1, to=2-2]
	\arrow["\sim"', from=2-2, to=2-3]
	\arrow[from=1-3, to=2-3]
	\arrow["\lrcorner"{anchor=center, pos=0.125, rotate=180}, draw=none, from=2-3, to=1]
	\arrow["\lrcorner"{anchor=center, pos=0.125, rotate=180}, draw=none, from=2-2, to=0]
\end{tikzcd}\]
that shows that $\Sigma i$ is an acyclic cofibration. We proceed similarly for the $\circ$-suspension.
\end{proof}

Let $f:A\to B$ be a morphism. This induces an adjunction: 
\[\begin{tikzcd}
	{\bmSset_{/A}} && {\bmSset_{/B}.}
	\arrow[""{name=0, anchor=center, inner sep=0}, "{f_!}"', curve={height=12pt}, from=1-1, to=1-3]
	\arrow[""{name=1, anchor=center, inner sep=0}, "{f^*}"', curve={height=12pt}, from=1-3, to=1-1]
	\arrow["\dashv"{anchor=center, rotate=-90}, draw=none, from=1, to=0]
\end{tikzcd}\]

We can easily see that this is a Quillen adjunction for all model structure of theorem \ref{theo:the_foor_model_structure}.

\begin{prop}
Let $f:A\to B$ be a blind weak equivalence between marked $\infty$-categories. The induced adjoint pair $(f_!,f^*)$ is a Quillen equivalence for all model structures of  \ref{theo:the_foor_model_structure}.
\end{prop}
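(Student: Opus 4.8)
The plan is to deduce the statement from the analogous fact for the \emph{blind} model structures and then transport it along the left Bousfield localizations that produce the four model structures of Theorem~\ref{theo:the_foor_model_structure}.

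The first step is to check that $(f_!,f^*)$ is a Quillen equivalence between the \emph{blind} slice model structures on $\bmSset_{/A}$ and $\bmSset_{/B}$. This is the general principle that slicing over a weak equivalence yields a Quillen equivalence: $f_!$ never changes underlying bimarked simplicial sets, so it preserves cofibrations and blind acyclic cofibrations (these being detected on underlying objects); and since $A$ and $B$ are marked $\infty$-categories they are fibrant, so $f$ is a weak equivalence between fibrant objects and admits a homotopy inverse $g$. The homotopies $gf\simeq \mathrm{id}_A$ and $fg\simeq \mathrm{id}_B$ induce natural isomorphisms $\mathbf{L}g_!\circ\mathbf{L}f_!\cong \mathrm{id}$ and $\mathbf{L}f_!\circ\mathbf{L}g_!\cong \mathrm{id}$ on homotopy categories, so $\mathbf{L}f_!$ is an equivalence. (Equivalently, one sees directly that the derived unit $X\to f^*R(f_!X)$ is a weak equivalence, using that the pullback of $f$ along a fibration with fibrant target is again a weak equivalence.)

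Second, recall from the proof of Theorem~\ref{theo:the_foor_model_structure} that each of the four model structures on $\bmSset_{/C}$ is the left Bousfield localization of the blind slice model structure at a set $\mathcal S_C$ of the form $(J_{R}\cup T)_{/C}$ (respectively $(J_{L}\cup T)_{/C}$, $(J_{coR}\cup T)_{/C}$, $(J_{coL}\cup T)_{/C}$): explicitly, the maps $K\to L$ over $C$ whose underlying map of bimarked simplicial sets lies in the corresponding generating set. In particular cofibrations are unchanged, and the fibrant objects are exactly the right (resp.\ left, co-right, co-left) fibrations over $C$.

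The technical heart is to identify the localization of $\bmSset_{/B}$ at $\mathcal S_B$ with its localization at $f_!(\mathcal S_A)$. The inclusion $f_!(\mathcal S_A)\subseteq \mathcal S_B$ is immediate, as $f_!$ does not alter underlying maps. Conversely, given $g=(K\xrightarrow{u}L)$ in $\mathcal S_B$ with structure map $\ell\colon L\to B$, since $B$ is fibrant and $L$ is cofibrant the map $\ell$ is homotopic to $f\circ\ell'$ for some $\ell'\colon L\to A$; then $g$ is isomorphic, in the arrow category of $\mathrm{Ho}(\bmSset_{/B})$, to the image under $f_!$ of the object $(K\xrightarrow{u}L)$ over $A$ with structure maps $\ell'u$ and $\ell'$ (here one uses that homotopic structure maps give isomorphic objects in the homotopy category of a slice over a fibrant base). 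That object lies in $\mathcal S_A$, so every element of $\mathcal S_B$ is an $f_!(\mathcal S_A)$-local equivalence, whence the two localizations coincide. Since every object of $\bmSset$ is cofibrant, $\mathbf{L}f_!(\mathcal S_A)=f_!(\mathcal S_A)$, so the standard transport principle for left Bousfield localizations along a Quillen equivalence (valid because the relevant model structures, arising from the local-model-structure machinery of the appendix, are combinatorial and left proper) yields that $(f_!,f^*)$ is a Quillen equivalence between the $\mathcal S_A$-local structure on $\bmSset_{/A}$ and the $f_!(\mathcal S_A)$-local structure on $\bmSset_{/B}$, i.e.\ the claimed one; applying this to each of the four choices of generating set finishes the proof. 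I expect the main obstacle to be this last step — the identification of the localizations and the verification that the hypotheses of the transport principle are indeed met by the structures produced in the appendix.
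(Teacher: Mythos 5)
Your overall strategy (prove the statement for the blind slice structures, then transport it across the localizations after identifying the localizing sets over $A$ and over $B$) is genuinely different from the paper's and is viable in outline, but the step you yourself flag as the main obstacle is a real gap, not a routine verification, and it is exactly the point where the argument is not yet a proof. The transport principle you invoke (Hirschhorn-style transfer of left Bousfield localizations along a Quillen equivalence) is stated for localizations whose weak equivalences are the $\mathcal S$-local equivalences defined via homotopy function complexes. In this paper the four structures of Theorem \ref{theo:the_foor_model_structure} are \emph{not} produced that way: they come from the Cisinski-type machinery of the appendix, where ``$(Q\cup T,I)$-local'' is the \emph{definition} of localization and the weak equivalences are the maps inducing bijections on $[\,\uvar,X]$ for naive-fibrant $X$. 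To apply the transport theorem you must first prove that these structures coincide with the standard left Bousfield localizations of the blind slice structure at $(J_R\cup T)_{/C}$, e.g.\ by showing that a blind-fibrant object over $C$ is naive fibrant if and only if it is $\mathcal S_C$-local (one direction uses that the anodyne classes are closed under Leibniz product with cofibrations, the other that an acyclic fibration of mapping objects is surjective on vertices), and then that two model structures with the same cofibrations and the same fibrant objects agree. None of this is in your proposal, and without it the transport theorem does not apply to the structures actually appearing in the statement. A second, smaller, repair: ``isomorphic in the arrow category of $\mathrm{Ho}$'' is not the right invariance statement for local equivalences; what you need (and what your construction actually gives) is the explicit cylinder zigzag of commutative squares whose horizontal maps are levelwise blind weak equivalences relating $(u,\ell)$ to $f_!(u,\ell')$, after which two applications of two-out-of-three in the localized structure give the conclusion.

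For comparison, the paper's proof avoids all localization technology: by Brown's factorization lemma in the blind structure one reduces to the case where $f$ is a blind trivial fibration; then $f_!$ preserves fibrant objects, and for fibrant $g:X\to A$ and $h:Y\to B$ the unit $g\to f^*f_!g$ and counit $f_!f^*h\to h$ are blind weak equivalences because $f^*f_!X\to X$ and $f^*Y\to Y$ are pullbacks of the trivial fibration $f$. This handles the four model structures uniformly, using only that their fibrant objects and the weak equivalences between them are blind ones, which is part of the statement of Theorem \ref{theo:the_foor_model_structure}. Your route, once the identification above is supplied, buys a more structural statement (invariance of the localized slice theories under base change along any blind equivalence, phrased in standard Bousfield-localization language), but as written it rests on that unproved identification.
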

\begin{proof}
By the Ken-brown lemma, apply to the blind model structure, it is enough to show this when  $f$ a trivial fibration. The functor $f_!$ then preserves fibrant objects. To show that this is a Quillen equivalence, it is enough to show that for all pair a fibrant  objects $g:X\to A$, $h:Y\to B$, the two induced morphisms:
$$g \to f^*f_! g ~~~~~~ f_!f^* h \to h.$$
are blind weak equivalences. 
For the first one, we consider the following diagram:
\[\begin{tikzcd}
	X & {f^*f_!X} & X \\
	& A & B
	\arrow["g"', from=1-1, to=2-2]
	\arrow[from=1-2, to=2-2]
	\arrow["{ \sim}"', from=1-2, to=1-3]
	\arrow["\sim", from=2-2, to=2-3]
	\arrow["{f_!g}", from=1-3, to=2-3]
	\arrow[dotted, from=1-1, to=1-2]
	\arrow["id", curve={height=-18pt}, from=1-1, to=1-3]
	\arrow["\lrcorner"{anchor=center, pos=0.125}, draw=none, from=1-2, to=2-3]
	\arrow["f"', draw=none, from=2-2, to=2-3]
\end{tikzcd}\]
by diagram chasing it shows that $g \to f^*f_! g$ is a blind weak equivalence. For the second one, we consider
\[\begin{tikzcd}
	{f^*Y} & Y \\
	A & B
	\arrow["h", from=1-2, to=2-2]
	\arrow["f"', from=2-1, to=2-2]
	\arrow["{f^*h}"', from=1-1, to=2-1]
	\arrow["\sim", from=1-1, to=1-2]
	\arrow["\lrcorner"{anchor=center, pos=0.125}, draw=none, from=1-1, to=2-2]
	\arrow["\sim", draw=none, from=2-1, to=2-2]
\end{tikzcd}\]
The morphism $f_!f^* h \to h$ is then a trivial fibration, and so a blind weak equivalence.
\end{proof}

\subsection{Naive cartesian fibrations}

\begin{definition}
Let $C$ be a $\infty$-category, and $p:C\to D$ an blind fibration. A cartesian $1$-simplex $c$ is \textit{right-cancellable}  over $p(c)$ if all diagrams:
\[\begin{tikzcd}
	{(K\fwedge\Delta[1]_c)\cup_{\triangledown} L} & C \\
	{L\fwedge\Delta[1]_c } & D
	\arrow[from=1-1, to=1-2]
	\arrow[from=1-1, to=2-1]
	\arrow["p", from=1-2, to=2-2]
	\arrow[from=2-1, to=2-2]
	\arrow[dotted, from=2-1, to=1-2]
\end{tikzcd}\]
admit liftings, where $K\to L$ is either $\partial\Gb_n\to \Gb_n$, $\Gb_n\to (\Gb_n)_c$ or $(\Gb_n)_c\to (\Gb_n)_t$.
 It is \textit{left cancellable} over $p(c)$ if all diagrams:
\[\begin{tikzcd}
	{L\cup_{\triangledown}(\Delta[1]_c\fwedge K)} & C \\
	{\Delta[1]_c\fwedge L} & D
	\arrow[from=1-1, to=1-2]
	\arrow[from=1-1, to=2-1]
	\arrow["p", from=1-2, to=2-2]
	\arrow[from=2-1, to=2-2]
	\arrow[dotted, from=2-1, to=1-2]
\end{tikzcd}\]
admits  liftings, where $K\to L$ is either $\partial\Gb_n\to \Gb_n$, $\Gb_n\to (\Gb_n)_c$ or $(\Gb_n)_c\to (\Gb_n)_t$. We recall that on each diagram, left hand morphisms are defined as in \ref{cons:composedwedge}.
\end{definition}

\begin{definition}
Let $X, Y$ be two marked $\infty$-categories and $p:X\to Y$  a blind fibration having the right lifting property against cartesian trivializations.
\begin{enumerate}
\item The morphism $p$ is a \textit{naive right $1$-fibration} if it has the right lifting property against $\partial^c_1$, and if all cartesian $1$-simplices $c$ of $X$ are right cancellable over $p(c)$.
\item The morphism $p$ is a \textit{naive left $1$-fibration} if it has the right lifting property against $\partial^c_0$, and if all cartesian $1$-simplices $c$ of $X$ are left cancellable over $p(c)$.
\end{enumerate}
\end{definition}

\begin{lemma}
\label{lemma:duality_and_wedge}
There are zigzags of acyclic cofibrations:
\[\begin{tikzcd}
	{(\Gb_n)_{co}} & {\Gb_n} & {(\Gb_n)_{co}} & {\Gb_n} \\
	{(\Gb^n\vee \Delta[1]_t)_{co}} & {\Gb^n\vee \Delta[1]_t,} & {(\Delta[1]_t\vee \Gb^n)_{co}} & {\Delta[1]_t\vee \Gb^n,} \\
	{(\Gb_n)^{op}} & {\Gb_n} & {(\Gb_n)^{op}} & {\Gb_n} \\
	{(\Gb^n\vee \Delta[1]_t)^{op}} & {\Delta[1]_t\vee \Gb^n,} & {(\Delta[1]_t\vee \Gb^n)^{op}} & {\Gb^n\vee \Delta[1]_t.}
	\arrow["{\triangledown_{co}}"', from=1-1, to=2-1]
	\arrow["\triangledown", from=1-2, to=2-2]
	\arrow["{\triangledown_{co}}"', from=1-3, to=2-3]
	\arrow["\triangledown", from=1-4, to=2-4]
	\arrow["{\triangledown^{op}}"', from=3-1, to=4-1]
	\arrow["\triangledown", from=3-2, to=4-2]
	\arrow["{\triangledown^{op}}"', from=3-3, to=4-3]
	\arrow["\triangledown", from=3-4, to=4-4]
	\arrow[""{name=0, anchor=center, inner sep=0}, curve={height=-12pt}, draw=none, from=1-1, to=2-1]
	\arrow[""{name=1, anchor=center, inner sep=0}, curve={height=-12pt}, draw=none, from=3-1, to=4-1]
	\arrow[""{name=2, anchor=center, inner sep=0}, curve={height=12pt}, draw=none, from=1-2, to=2-2]
	\arrow[""{name=3, anchor=center, inner sep=0}, curve={height=12pt}, draw=none, from=3-2, to=4-2]
	\arrow[""{name=4, anchor=center, inner sep=0}, curve={height=-12pt}, draw=none, from=1-3, to=2-3]
	\arrow[""{name=5, anchor=center, inner sep=0}, curve={height=12pt}, draw=none, from=1-4, to=2-4]
	\arrow[""{name=6, anchor=center, inner sep=0}, curve={height=-12pt}, draw=none, from=3-3, to=4-3]
	\arrow[""{name=7, anchor=center, inner sep=0}, curve={height=12pt}, draw=none, from=3-4, to=4-4]
	\arrow[squiggly, tail reversed, from=0, to=2]
	\arrow[squiggly, tail reversed, from=1, to=3]
	\arrow[squiggly, tail reversed, from=4, to=5]
	\arrow[squiggly, tail reversed, from=6, to=7]
\end{tikzcd}\]
\end{lemma}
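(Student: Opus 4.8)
The plan is to present each wedge occurring in the statement as a homotopy pushout of a globe and a copy of the interval, and then to apply the relevant duality: since $(\uvar)_{co}$ and $(\uvar)^{op}$ are colimit-preserving left Quillen endofunctors of bimarked simplicial sets, they preserve such homotopy pushouts, and the behaviour of the three resulting vertices is governed by Propositions \ref{prop:link_between_globes_and_coglobes} and \ref{prop:link_between_globes_and_opglobes}.

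Concretely, I would first recall from the proposition of Section \ref{subsection:wedge} that the comparison map $\Gb_n\coprod_{\Delta[0]}\Delta[1]_t\to\Gb^n\vee\Delta[1]_t$, gluing $\Delta[1]_t$ along its $0$-source onto the $0$-target of $\Gb_n=\Sigma\Gb_{n-1}$, is an acyclic cofibration, and symmetrically that $\Delta[1]_t\coprod_{\Delta[0]}\Gb_n\to\Delta[1]_t\vee\Gb^n$ is one; both legs of these pushouts are cofibrations between cofibrant objects, so the pushouts are homotopy pushouts. Write $D$ for either $(\uvar)_{co}$ — left Quillen by the results of Section \ref{section:The even duality} — or $(\uvar)^{op}$, which is left Quillen because it is an isomorphism on underlying stratified simplicial sets and merely permutes the cartesian marking, hence preserves cofibrations, fibrations and blind weak equivalences. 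As $D$ preserves colimits and acyclic cofibrations, applying $D$ to these diagrams yields, up to a zigzag of acyclic cofibrations, the homotopy pushout of $(\Gb_n)_D\leftarrow\Delta[0]\to(\Delta[1]_t)_D$, with the map induced by $\triangledown$ identified with the inclusion of the $(\Gb_n)_D$-summand.

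It then remains to identify the vertices. One has $(\Delta[0])_D=\Delta[0]$, while $(\Delta[1]_t)_{co}=\tau_1((\Delta[1])_{co})=\tau_1(\Delta[0]\costar\Delta[0])=\Delta[1]_t$ with both endpoints unchanged (the even duality is trivial on $1$-dimensional objects), whereas $(\Delta[1]_t)^{op}=\Delta[1]_t$ with its two endpoints interchanged. For the globe, Proposition \ref{prop:link_between_globes_and_coglobes} supplies a zigzag $(\Gb_n)_{co}\leftrightsquigarrow\Gb_n$ whose compatibility squares read $\delta^{\alpha}\leftrightsquigarrow\delta^{\alpha+n}$; chaining these squares down to degree $1$, the $0$-source and $0$-target of $\Gb_n$ are matched without interchange, while Proposition \ref{prop:link_between_globes_and_opglobes}, with its squares $\delta^{\alpha}\leftrightsquigarrow\delta^{\alpha+n+1}$, matches them with an interchange. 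Feeding these identifications into the homotopy pushout: in the $co$ case $\Delta[1]_t$ is reattached at the same $0$-target of $\Gb_n$, so the colimit is again $\Gb^n\vee\Delta[1]_t$; in the $op$ case the attaching $0$-cell is the $0$-source of $\Gb_n$ and the interval is reversed, so the colimit is $\Delta[1]_t\vee\Gb^n$. The six remaining squares follow by the same argument with the two factors of the wedge, or the roles of source and target, exchanged, and the commutativity of each square with $\triangledown$, $\triangledown_{co}$, $\triangledown^{op}$ is automatic because every zigzag used is a structural one — the wedge inclusion, the defining colimits of $\Sigma$ and $\costar$, and the naturality squares of Propositions \ref{prop:link_between_globes_and_coglobes} and \ref{prop:link_between_globes_and_opglobes}.

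The main obstacle is precisely this orientation bookkeeping: one must check that the even duality preserves, whereas the simplicial opposite reverses, the direction in which $\Delta[1]_t$ is grafted onto $\Gb_n$, and that this is compatible with $\triangledown$ through all the zigzags — that is, that the eight displayed squares commute, and not merely that their corners are linked by zigzags. This is controlled by the exact index shifts in the comparison squares of Propositions \ref{prop:link_between_globes_and_coglobes} and \ref{prop:link_between_globes_and_opglobes} specialised to $0$-cells, together with the fact that $(\uvar)_{co}$ restricts to the identity on $1$-categories.
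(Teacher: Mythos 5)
Your argument hinges on the sentence ``with the map induced by $\triangledown$ identified with the inclusion of the $(\Gb_n)_D$-summand'', and this identification is false: in \ref{subsection:wedge} the whiskering $\triangledown:\Sigma X\to \Sigma X\fwedge\Delta[1]$ is induced by the long edge $X\otimes[0,2]\subset X\otimes\Delta[2]_t$, whereas the summand inclusion coming from the pushout presentation $\Sigma X\coprod_{\Delta[0]}\Delta[1]\to \Sigma X\fwedge\Delta[1]$ is induced by $X\otimes[0,1]$. These are different morphisms, and the whole content of the lemma (and of its use in \ref{prop:action_of_duality_on_naiv_fib}) is the behaviour of the dualities on the whiskering legs, not on the summand inclusions; for the summand inclusions the statement would follow from cocontinuity of $(\uvar)_{co}$ and $(\uvar)^{op}$ and would be essentially empty. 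Consequently your closing claim that ``the commutativity of each square with $\triangledown$, $\triangledown_{co}$, $\triangledown^{op}$ is automatic because every zigzag used is a structural one'' skips exactly the hard point: one has to control how $(\uvar)_{co}$ and $(\uvar)^{op}$ act on the triangle $X\otimes\Delta[2]_t$ that defines the wedge and carries the whiskering, and nothing in the pushout formalism gives this for free. A secondary gap of the same kind: Propositions \ref{prop:link_between_globes_and_coglobes} and \ref{prop:link_between_globes_and_opglobes} compare $\Gb_n$ with $(\Gb_{co})_n=(\Sigma^{\circ})^n\Delta[0]$ and with $\Gb_n^{op}$, not with $(\Gb_n)_{co}$; to transport their face-compatibility squares to $(\Gb_n)_{co}$ you need the comparison $(\Gb_{\uvar})_{co}\to(\Gb_{co})_{\uvar}$ of \ref{cons:of_the_comparaison_between_sigma_and_sigma_co} together with its compatibility with the globular faces, which you do not address.

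For comparison, the paper's proof avoids all of this bookkeeping by strictification: the Street realization $\tilde{R}$ commutes with the even and odd dualities, the wedges of globes are gaunt, and the results of \cite{ozornova2020suspension} and \cite{ozornova2020fundamental} identify $\Gb^n\vee\Delta[1]_t$ (with its whiskering) with the nerve of the corresponding strict wedge by acyclic cofibrations; in the strict world the dualities visibly carry whiskerings to whiskerings, with the parity of the reversal giving the exchange (or not) of the two sides of the wedge. If you want to salvage your pushout approach, you would need an explicit analysis of $(\uvar)_{co}$ and $(\uvar)^{op}$ applied to $X\otimes\Delta[2]_t$ (or to the $\costar$-presentation of the wedge) producing honest commuting squares against $\triangledown$; as written, the proof does not establish the lemma.
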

\begin{proof}
We remark
that  the realization functor $\tilde{R}$ commutes with the odd and the even dualities. We can then apply the results of \cite{ozornova2020suspension} and \cite{ozornova2020fundamental}.
\end{proof}

\begin{definition}
Let $X, Y$ be two marked $\infty$-categories and $p:X\to Y$  a blind fibration having the right lifting property against cartesian trivializations.
\begin{enumerate}
\item $p$ is a \textit{naive right fibration} if and only if for all pairs of paralleled $n$-cells $a,b$, when $n$ is odd (resp. when $n$ is even) the map $X(a,b)\to Y(pa,pb)$ is a naive right $1$-fibration (resp. naive  left $1$-fibration).
\item $p$ is a \textit{naive left fibration} if and only if for all pairs of paralleled $n$-cells $a,b$, when $n$ is odd (resp. when $n$ is even) the map $X(a,b)\to Y(pa,pb)$ is a naive left $1$-fibration (resp. naive  right $1$-fibration).
\item $p$ is a \textit{naive co-right fibration} if and only if for all pairs of paralleled $n$-cells $a,b$, the map $X(a,b)\to Y(pa,pb)$ is a naive right $1$-fibration. 
\item $p$ is a \textit{naive co-left fibration} if and only if for all pairs of parallel $n$-cells $a,b$, the map $X(a,b)\to Y(pa,pb)$ is a naive left $1$-fibration. 
\end{enumerate}
\end{definition}

\begin{prop}
\label{prop:action_of_duality_on_naiv_fib}
Let $X, Y$ be two marked $\infty$-categories and $p:X\to Y$  a blind fibration having the right lifting property against cartesian trivializations.
\begin{enumerate}
\item $p$ is a naive left fibration if and only if $p^{\circ}$ is a naive right fibration.
\item $p$ is a naive co-right fibration if and only if $p^{co}$ is a naive right fibration.
\item $p$ is a naive co-left fibration if and only if $p^{op}$ is a naive right fibration.
\end{enumerate}
\end{prop}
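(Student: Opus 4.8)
The plan is to lift, one level down, the analysis of the previous subsection --- which computed the effect of $(\uvar)_{co}$, $(\uvar)^{op}$ and $(\uvar)_{\circ}$ on the homotopy categories $\pi_n(s\sqcup t,X)$ --- to the hom $\infty$-categories $X(a,b)$ themselves, together with the lifting problems defining naive $1$-fibrations. Two structural facts do all the work. The first is a hom-level comparison: for a blind fibration $p:X\to Y$ with the right lifting property against cartesian trivializations and a pair of parallel $n$-cells $a,b$, applying the right adjoint $C\mapsto C(a,b)$ of $\Sigma$ to the weakly invertible natural transformations of Corollary \ref{cor:Sigma_and_duality} and iterating --- using also the identifications of cells from Propositions \ref{prop:link_between_globes_and_coglobes}--\ref{prop:link_between_globes_and_coopglobes} --- yields a zigzag of acyclic cofibrations, compatible with $p$, from $X^{co}(a,b)$ to $(X(a',b'))^{op}$ when $n$ is even and to $(X(a',b'))^{co}$ when $n$ is odd: peeling the $co$ through the nested homs toggles it between $(\uvar)^{op}$ and $(\uvar)_{co}$ at each step, and $(a',b')$ is $(a,b)$ up to a parity-dependent swap of source and target, exactly as in the $\pi_n$-bookkeeping. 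For $(\uvar)^{op}$ the analogous comparison twists by $(\uvar)_{co}$ at even $n$ and by $(\uvar)^{op}$ at odd $n$; and the full duality $(\uvar)_{\circ}$ commutes with $\Sigma$ without a toggle, so there one simply has $X^{\circ}(a,b)\leftrightsquigarrow (X(b,a))^{\circ}$ for every $n$.

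The second fact is the action of the dualities on naive $1$-fibrations: $(\uvar)^{op}$ interchanges naive left and naive right $1$-fibrations, $(\uvar)_{co}$ preserves each of them, and $(\uvar)_{\circ}$ therefore interchanges them. Here $(\uvar)^{op}$ reverses $\Delta[1]_c$, hence swaps $\partial^c_0$ with $\partial^c_1$, while $(\uvar)_{co}$ fixes the direction of the odd-dimensional cartesian $1$-cell; and, by Lemma \ref{lemma:duality_and_wedge}, $(\uvar)^{op}$ interchanges the left and the right wedge with $\Delta[1]_t$ (and with $(\Gb_n)_c$) compatibly with the whiskerings $\triangledown$, whereas $(\uvar)_{co}$ preserves each wedge compatibly with $\triangledown$. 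Since the dualities commute with $\fwedge$, with $\triangledown$ and with pushouts, they carry the pushouts of Construction \ref{cons:composedwedge} to the corresponding ones, so the test families $(K\fwedge\Delta[1]_c)\cup_{\triangledown}L\to L\fwedge\Delta[1]_c$ and $L\cup_{\triangledown}(\Delta[1]_c\fwedge K)\to\Delta[1]_c\fwedge L$ --- for $K\to L$ one of $\partial\Gb_n\to\Gb_n$, $\Gb_n\to(\Gb_n)_c$, $(\Gb_n)_c\to(\Gb_n)_t$ --- are exchanged by $(\uvar)^{op}$ and respected by $(\uvar)_{co}$, up to zigzags of acyclic cofibrations; this is exactly the interchange, resp.\ preservation, of the left and right cancellability conditions.

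Combining the two facts proves the three items at once. For item 2, $p^{co}$ is a naive right fibration iff for all parallel $n$-cells $a,b$ the map $X^{co}(a,b)\to Y^{co}(p^{co}a,p^{co}b)$ is a naive right $1$-fibration for $n$ odd and a naive left $1$-fibration for $n$ even; by the first fact this map is $(X(a',b'))^{co}\to(Y(pa',pb'))^{co}$ for $n$ odd and carries a $(\uvar)^{op}$-twist for $n$ even, and by the second fact the condition becomes, in both cases, ``$X(a',b')\to Y(pa',pb')$ is a naive right $1$-fibration'' --- and ranging over all $a,b$ this is precisely the definition of a naive co-right fibration. For item 3 one runs the same argument with the $(\uvar)^{op}$-version of the first fact, and both parities now yield ``naive left $1$-fibration'', i.e.\ a naive co-left fibration. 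For item 1 one uses the $(\uvar)_{\circ}$-version (no toggle, handedness reversed), and the condition reads off as ``naive left $1$-fibration'' at odd $n$ and ``naive right $1$-fibration'' at even $n$, which is the definition of a naive left fibration.

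The step I expect to be the main obstacle is the proof of the second fact at the level of naive $1$-fibrations, specifically the behaviour of the dualities on cancellability: this is a right lifting property not against single morphisms but against the composite morphisms of Construction \ref{cons:composedwedge}, assembled from $\fwedge$ and two whiskerings $\triangledown$ via a pushout, and one must check carefully that a duality transports such a morphism, up to a zigzag of acyclic cofibrations, to the one of the appropriate handedness. This rests on the compatibility of $(\uvar)_{co}$, $(\uvar)^{op}$ and $(\uvar)_{\circ}$ with $\fwedge$, with $\triangledown$ and with pushouts, together with Lemma \ref{lemma:duality_and_wedge} to identify the legs of the pushout. A secondary point is that the comparisons $X^{co}(a,b)\leftrightsquigarrow(X(a',b'))^{?}$ extracted from Corollary \ref{cor:Sigma_and_duality} are only zigzags of acyclic cofibrations over the base, so one needs the classes of naive right and left $1$-fibrations over a fixed marked $\infty$-category to be stable under such zigzags; this holds because the maps they are tested against are cofibrations between cofibrant objects and the relevant model structures are left proper.
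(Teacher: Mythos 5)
Your global bookkeeping (the parity toggle of $co$/$op$ through iterated homs, the action of the dualities on the handedness of the $1$-dimensional conditions, and the way the three items follow from combining the two facts) matches the content of the paper's argument, which likewise rests on Corollary \ref{cor:Sigma_and_duality} and Lemma \ref{lemma:duality_and_wedge}. But there is a genuine gap in how you transport the conditions. You replace the map $X^{co}(a,b)\to Y^{co}(p^{co}a,p^{co}b)$ by $(X(a',b'))^{op}$ or $(X(a',b'))^{co}$ mapping to the corresponding dual of $Y(pa',pb')$, \emph{through a zigzag of weak equivalences in the arrow category}, and then assert that the property of being a naive right (or left) $1$-fibration passes across this zigzag. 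That invariance is not established anywhere, and your stated justification --- that the test maps are cofibrations between cofibrant objects and the model structures are left proper --- does not give it: having the right lifting property against a prescribed set of cofibrations is not a weak-equivalence-invariant property of a map, even for fibrations between fibrant objects; left properness says nothing about lifting properties of the codomain side of a weak equivalence of arrows. (One direction of the transfer along a levelwise trivial fibration of arrows can be done by lifting the test data, but the other direction requires the comparison map to the pullback to be a trivial fibration, which is not automatic, and a zigzag uses both directions.)

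The paper sidesteps exactly this issue by never moving the fibration: using the adjunction $\Sigma^n\dashv(\uvar)(a,b)$ it rewrites ``each hom of $p^{\circ}$ (resp.\ $p^{co}$, $p^{op}$) is a naive right/left $1$-fibration'' as the right lifting property of the single map $p^{\circ}$ against the explicit cofibrations $\Sigma^n\{\epsilon\}\to\Sigma^n\Delta[1]_c$ and $\Sigma^n((K\fwedge\Delta[1]_c)\cup_{\triangledown}L)\to\Sigma^n(L\fwedge\Delta[1]_c)$ (and their mirror images), then uses the adjunction between the duality and its right adjoint to convert this into a lifting property of $p$ itself against the duals of these cofibrations, and finally identifies those duals with the test cofibrations of the opposite (or same) handedness via the zigzags of acyclic cofibrations provided by Corollary \ref{cor:Sigma_and_duality} and Lemma \ref{lemma:duality_and_wedge}, invoking Proposition \ref{prop:lifting_property_zigzag_of_acyclic_cofibration} --- which applies precisely because all the zigzag manipulation happens on the cofibration side while the fibration $p$ stays fixed. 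To repair your write-up, replace the hom-level ``first fact plus invariance'' step by this adjunction argument (your ``second fact'' then becomes the identification of dual test cofibrations, handled the same way); as it stands, the invariance claim your plan hinges on is unproved and its justification is incorrect.
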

\begin{proof}
According to corollary \ref{cor:Sigma_and_duality}, lemma \ref{lemma:duality_and_wedge} and adjunction property, if $n$ is even, $p^{\circ}$  has the right lifting property against 
$$\Sigma^n \{0\}\to \Sigma^n \Delta[1]_c~~~ \mbox{  and  }~~~\Sigma^n((K\fwedge\Delta[1]_c)\cup_{\triangledown} L)\to \Sigma^n (K\fwedge\Delta[1]_c)$$
if and only if 
$p$ has the right lifting property against:
$$\Sigma^n \{1\}\to \Sigma^n \Delta[1]_c ~~~\mbox{  and  }~~~\Sigma^n((\Delta[1]_c\fwedge K)\cup_{\triangledown}L)\to \Sigma^n (\Delta[1]_c\fwedge L)$$
where $K\to L$ is $\partial\Gb_n\to\Gb_n$, $\Gb_n\to (\Gb_n)_c$ and   $(\Gb_n)_c\to(\Gb_n)_t$. If $n$ is odd, $p^{\circ}$  has the right lifting property against 
$$\Sigma^n \{1\}\to \Sigma^n \Delta[1]_c~~~ \mbox{  and  }~~~\Sigma^n((\Delta[1]_c\fwedge K)\cup_{\triangledown}L)\to \Sigma^n (\Delta[1]_c\fwedge L)$$
if and only if 
$p$ has the right lifting property against:
$$\Sigma^n \{0\}\to \Sigma^n \Delta[1]_c ~~~\mbox{  and  }~~~\Sigma^n((K\fwedge\Delta[1]_c)\cup_{\triangledown} L)\to \Sigma^n (K\fwedge\Delta[1]_c)$$
where $K\to L$ is $\partial\Gb_n\to\Gb_n$, $\Gb_n\to (\Gb_n)_c$ and   $(\Gb_n)_c\to(\Gb_n)_t$. This proves the first assertion and the others are demonstrated with similar means.
\end{proof}

\begin{theorem}
\label{theo:naive_fib_are_fib}
Let $X, Y$ be two marked $\infty$-categories and $p:X\to Y$  a blind fibration having the right lifting property against cartesian trivializations. Then $p$ is a right (resp. left, co-right, co-left) fibration if and only if it is a naive right (resp. left, co-right, co-left) fibration.
\end{theorem}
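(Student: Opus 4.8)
The plan is to reduce the four cases (right, left, co-right, co-left) to a single one, and then to prove that case by an induction on dimension that runs in parallel with the clause-by-clause definition of a naive right fibration.

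\textbf{Step 1: reduction to the right case.} I would first observe that the dualities $(\uvar)_{co}$, $(\uvar)^{op}$ and $(\uvar)_{\circ}$ are left Quillen for the blind model structure, preserve the right lifting property against cartesian trivializations, and interact with the Gray tensor product, the suspension and $\fwedge$ via the formulas of \ref{cor:Sigma_and_duality} and \ref{lemma:duality_and_wedge} in such a way that they carry the generating set $J_R$ to, respectively, the generating sets $J_{coR}$, $J_{coL}$ and $J_L$ of the three other model structures — matching, on the naive side, Proposition \ref{prop:action_of_duality_on_naiv_fib}. Hence it suffices to prove: a blind fibration $p\colon X\to Y$ between marked $\infty$-categories with the right lifting property against $T$ is a right fibration if and only if it is a naive right fibration.

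\textbf{Step 2: the translation dictionary.} Both implications rest on the computations summarised in \ref{section:result_for_interval}, \ref{section:recap_of_result_for_joint} and on Construction \ref{cons:composedwedge}. The point is that whenever $p$ lifts against the composed cofibration $\Sigma K\fwedge\Delta[1]_c\cup_{\triangledown}\Sigma L\to\Sigma L\fwedge\Delta[1]_c$ of \ref{cons:composedwedge} — which is exactly one of the suspended cancellability clauses — the equivalences of \ref{section:result_for_interval} become available, so that lifting against $(\Sigma K)\otimes\Delta[1]\coprod_{(\Sigma K)\otimes\{\epsilon\}}(\Sigma L)\otimes\{\epsilon\}\to(\Sigma L)\otimes\Delta[1]$ is turned into lifting against the suspension of a pushout-product of the same shape but with the parity of the distinguished endpoint reversed. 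By the suspension–hom adjunction, this is a lifting property for $p(a,b)\colon X(a,b)\to Y(pa,pb)$; iterating $n$ times reduces an $n$-fold-suspended lifting problem for $p$ to the one-dimensional axioms for the hom-maps, which is precisely why odd and even $n$ produce, respectively, the naive right and naive left $1$-fibration conditions on $X(a,b)\to Y(pa,pb)$.

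\textbf{Step 3: from globular to simplicial generators, closing the induction, and the reverse direction.} It remains to upgrade from the globular cofibrations $\partial\Gb_n\to\Gb_n$, $\Gb_n\to(\Gb_n)_c$, $(\Gb_n)_c\to(\Gb_n)_t$ (to which the $\Sigma$-machinery applies) to the simplicial generators $\partial\Delta[n]\to\Delta[n]$, $\Delta[n]\to\Delta[n]_t$, $\Delta[n]_c\to\Delta[n]_t$ actually occurring in $J_R$, exactly as in the proofs of \ref{lemma:G_fibration_right lifting property_against_partial} and \ref{lemma:G_fibration_right lifting property_against_sat}: the class of cofibrations $m$ such that every naive right fibration lifts against $m\,\hat{\otimes}\,\partial^c_0$ — and, for $m$ a cartesian trivialization, against $m\,\hat{\times}\,\partial^c_0$, which by the pushout-product computations in the proof of \ref{theo:the_foor_model_structure} is either an identity or lies in $T$ — is saturated, is stable under Leibniz join with $\Delta[0]$ and under the suspension operation governed by \ref{section:recap_of_result_for_joint}, and contains $\partial\Delta[1]\to\Delta[1]$, hence contains every generating cofibration and every cartesian trivialization; this yields ``naive right $\Rightarrow$ right''. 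The reverse implication is the easier one: a right fibration manifestly lifts against the globular members of $J_R$, and running the dictionary of Step 2 forwards produces the endpoint, cancellability and hom-object conditions that define a naive right fibration. I expect the main obstacle to be the bookkeeping in Step 2: one must verify that at every suspension step the precise lifting property supplied by the naive axioms — including the cancellability clauses against $\Gb_n\to(\Gb_n)_c$ and $(\Gb_n)_c\to(\Gb_n)_t$, not merely against $\partial\Gb_n\to\Gb_n$ — is exactly what the formulas of \ref{section:result_for_interval} require as a hypothesis, and that the endpoint parities are tracked correctly through the interaction of $\Sigma$ with $\fwedge$ and with the dualities.
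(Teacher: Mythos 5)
Your overall strategy --- translating the naive endpoint/cancellability conditions into suspended lifting problems via \ref{section:result_for_interval}, \ref{section:recap_of_result_for_joint} and \ref{cons:composedwedge}, with the odd/even parity bookkeeping, and handling the remaining three cases by symmetry --- is the same as the paper's, and your Step 2 together with the easy direction is essentially the paper's argument. The genuine gap is in Step 3, the upgrade from globular cofibrations to arbitrary ones. You claim that the class of cofibrations $m$ such that every naive right fibration lifts against $m~\hat{\otimes}~\partial^c_0$ is ``stable under Leibniz join with $\Delta[0]$ \dots exactly as in the proofs of \ref{lemma:G_fibration_right lifting property_against_partial} and \ref{lemma:G_fibration_right lifting property_against_sat}''. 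But in those lemmas the closure under Leibniz join is not formal: it rests on lemma \ref{lemma:slice_G_fibrations}, i.e.\ on the stability of $\Gb$-trivial fibrations under slicing, and you have no analogue of this for your class --- one would need to know that a suitable slice of a naive right fibration is again one, or some compatibility of $\star$ with $\hat{\otimes}~\partial^c_0$, neither of which is established in the paper or supplied by you. The paper sidesteps this entirely: it transposes the lifting problems along the Gray tensor/hom adjunction, so that the right lifting property of $p$ against all $m~\hat{\otimes}~\partial^c_0$ becomes the right lifting property of the single map $\Hom_{\otimes}(\Delta[1]_c,X)\to\Hom_{\otimes}(\Delta[1]_c,Y)\times_{\Hom_{\otimes}(\{1\},Y)}\Hom_{\otimes}(\{1\},X)$ against $m$; the dictionary of \ref{section:result_for_interval} shows this map has the right lifting property against $\partial\Gb_n\to\Gb_n$ and $\Gb_n\to(\Gb_n)_t$ (resp.\ $(\Gb_n)_c$), and then Theorem \ref{theo:f_weak_equivalence_ssi_f_G_equivalence} is invoked wholesale to get the right lifting property against all cofibrations. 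With that transposition your Step 3 collapses to a citation and the gap disappears; without it, your closure claims require a new slice-type argument that is missing.

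A secondary issue is Step 1: the dualities do not carry the generating sets $J_R$, $J_L$, $J_{coR}$, $J_{coL}$ into one another on the nose (for instance $(\Delta[n])_{co}$ is an iterated co-join, not $\Delta[n]$), and the statement that $p$ is a left (co-right, co-left) fibration if and only if $p^{\circ}$ (resp.\ $p^{co}$, $p^{op}$) is a right fibration is Corollary \ref{prop:action_of_duality_on_fib}, which the paper deduces \emph{from} this theorem together with \ref{prop:action_of_duality_on_naiv_fib}; using it here would be circular. The safe move, and the paper's, is simply to run the same argument in each of the four cases, only the parities and the side of the wedge changing.
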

\begin{proof}
Suppose $f$ is a right fibration. It is obvious that it is a naive right $1$-fibration. According to corollary \ref{cor:Sigma_and_duality}, lemma \ref{lemma:duality_and_wedge} if $n$ is even, 
$$\Sigma^m \{1\}\to \Sigma^m \Delta[1]_t~~~ \mbox{  and  }~~~\Sigma^m(K\fwedge\Delta[1]_c)\cup_{\triangledown} L\to \Sigma^m L\fwedge\Delta[1]_c$$
is a right acyclic cofibration, and if $n$ is odd, 
$$\Sigma^m \{0\}\to \Sigma^m \Delta[1]_t~~~ \mbox{  and  }~~~\Sigma^m(L\cup_{\triangledown} \Delta[1]_c\fwedge K)\to \Delta[1]_c\fwedge\Sigma^m L$$
is a right acyclic cofibration.
That shows that $f$ is a right naive fibration.

For the converse, suppose that $f$ is a naive right fibration. Results of \ref{section:result_for_interval} then imply that $f$ has the right lifting property against: 
$$K\otimes \Delta[1]_c\cup L\otimes \{1\}\to L\otimes \Delta[1]_c~~~\mbox{and}~~~
\Gb_n\times \Delta[1]_c\cup (\Gb_n)_t\times \{1\}\to (\Gb_n)_t\times \Delta[1]_c
$$
where $K\to L$ is $\partial\Gb_n\to \Gb_n$ or $\Gb_n\to (\Gb_n)_t$.
The morphism
$$\Hom_{\otimes}(\Delta[1]_c,X)\to \Hom_{\otimes}(\Delta[1]_c,Y)\times_{\Hom_{\otimes}(\{1\},Y)}\Hom_{\otimes}(\{1\},X)$$
then has the right lifting property against $\partial\Gb_n\to \Gb_n$ and $\Gb_n\to (\Gb_n)_t$. Theorem \ref{theo:f_weak_equivalence_ssi_f_G_equivalence}
then implies that it has the right lifting property against all cofibrations of marked simplicial sets. The morphism
$$\Hom(\Delta[1]_c,X)\to \Hom(\Delta[1]_c,Y)\times_{\Hom(\{1\},Y)}\Hom(\{1\},X)$$
has the right lifting property against $\partial\Gb_n\to \Gb_n$ and $\Gb_n\to (\Gb_n)_c$. Theorem \ref{theo:f_weak_equivalence_ssi_f_G_equivalence}
then implies that it has the right lifting property against all cofibrations of marked simplicial sets.

The other assertions are proved similarly.
\end{proof}

\begin{cor}
\label{prop:action_of_duality_on_fib}
Let $X, Y$ be two marked $\infty$-categories and $p:X\to Y$  a blind fibration having the right lifting property against cartesian trivializations.
\begin{enumerate}
\item $f$ is a  left fibration if and only if $f^{\circ}$ is a  right fibration.
\item $f$ is a  co-right fibration if and only if $f^{co}$ is a  right fibration.
\item $f$ is a  co-left fibration if and only if $f^{op}$ is a  right fibration.
\end{enumerate}
\end{cor}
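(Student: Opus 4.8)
The plan is to obtain this corollary as a purely formal consequence of the two preceding results: Theorem~\ref{theo:naive_fib_are_fib}, which identifies each of the four classes of fibrations with the corresponding class of \emph{naive} fibrations, and Proposition~\ref{prop:action_of_duality_on_naiv_fib}, which records how the three dualities interchange the naive notions. In particular no new lifting computation should be needed; the corollary is simply the ``non-naive'' reformulation of Proposition~\ref{prop:action_of_duality_on_naiv_fib}.

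Concretely, for the first assertion I would argue along the chain of equivalences
\begin{align*}
f \text{ is a left fibration}
&\iff f \text{ is a naive left fibration} \quad (\text{Theorem } \ref{theo:naive_fib_are_fib})\\
&\iff f^{\circ} \text{ is a naive right fibration} \quad (\text{Proposition } \ref{prop:action_of_duality_on_naiv_fib})\\
&\iff f^{\circ} \text{ is a right fibration} \quad (\text{Theorem } \ref{theo:naive_fib_are_fib}),
\end{align*}
and then repeat verbatim for the remaining two items, replacing the pair ``left$/\circ$'' by ``co-right$/co$'' and by ``co-left$/op$'' respectively and invoking the matching clause of Proposition~\ref{prop:action_of_duality_on_naiv_fib}. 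The one point to check before running each chain is that the dualized map $f^{\circ}$ (resp. $f^{co}$, $f^{op}$) still satisfies the standing hypotheses of Theorem~\ref{theo:naive_fib_are_fib}, i.e. that it is again a blind fibration with the right lifting property against cartesian trivializations: it is a blind fibration because $(\uvar)_{co}$, $(\uvar)_{\circ}$ and the (strictly involutive) $\op$ duality are left Quillen for the blind model structure, so their right adjoints $(\uvar)^{co}$, $(\uvar)^{\circ}$, $(\uvar)^{op}$ preserve fibrations; and the right lifting property against the cartesian trivializations $T$ is already built into both the hypotheses and the conclusions of Proposition~\ref{prop:action_of_duality_on_naiv_fib}, whose statement presupposes that $f^{\circ}$, $f^{co}$, $f^{op}$ are of this form.

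Since both Theorem~\ref{theo:naive_fib_are_fib} and Proposition~\ref{prop:action_of_duality_on_naiv_fib} are stated as bi-implications, each chain is traversed only once, and in particular the fact that the $co$ and full dualities are merely \emph{weak} involutions never intervenes. I do not expect a genuine obstacle: the only mildly delicate point is the bookkeeping just mentioned --- stability of ``blind fibration with the right lifting property against $T$'' under the dualities --- and that has already been carried out in the course of establishing Proposition~\ref{prop:action_of_duality_on_naiv_fib}, so once that proposition is in hand the corollary is immediate.
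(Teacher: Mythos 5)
Your proposal is correct and takes essentially the same route as the paper, which also deduces the corollary directly from Theorem~\ref{theo:naive_fib_are_fib} and Proposition~\ref{prop:action_of_duality_on_naiv_fib} by exactly this chain of equivalences. Your extra bookkeeping that the dualized maps remain blind fibrations with the right lifting property against cartesian trivializations is a point the paper leaves implicit, but it does not change the argument.
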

\begin{proof}
This is a direct consequence of theorem  \ref{theo:naive_fib_are_fib} and proposition \ref{prop:action_of_duality_on_naiv_fib}.
\end{proof}

\begin{definition}
For $A$ a marked $\infty$-category,  we define several categories of fibrant objects:
\begin{enumerate}
\item $RFib(A)$ is the category of fibrant objects of the right model structure on $\bmSset_{/X}$.
\item $LFib(A)$ is the category of fibrant objects of the left model structure on $\bmSset_{/X}$.
\item $cRFib(A)$ is the category of fibrant objects of the co-right model structure on $\bmSset_{/X}$.
\item $cLFib(A)$ is the category of fibrant objects of the co-left model structure on $\bmSset_{/X}$.
\end{enumerate}
\end{definition}

\begin{lemma}	
Let $i,j$ be two left adjoints, $N_i,N_i$ their associated right adjoints, and $\psi:i\to j$ an invertible natural transformation that is a pointwise acyclic cofibration. Then, the induced natural transformation $N_j\to N_i$ is a trivial fibration on all fibrant objects.
\end{lemma}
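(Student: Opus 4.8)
The plan is to run the standard adjunction/transposition argument, exactly parallel to the proof of Lemma~\ref{lemma:j*is_a_trivial_fibration}, but now without having to restrict the test objects to globes. Fix a fibrant object $Y$ and write $\psi^{*}\colon N_{j}Y\to N_{i}Y$ for the mate of $\psi$. I want to show that $\psi^{*}$ has the right lifting property against every cofibration $u\colon K\to L$; this is precisely the assertion that $\psi^{*}$ is a trivial fibration.

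First I would transpose. A lifting problem given by maps $K\to N_{j}Y$, $L\to N_{i}Y$ and right-hand edge $\psi^{*}$ is, via the adjunctions $i\dashv N_{i}$ and $j\dashv N_{j}$, the same as a map out of the pushout $iL\cup_{iK}jK$ of $iL\xleftarrow{iu}iK\xrightarrow{\psi(K)}jK$ into $Y$; here one uses that the composite $K\to N_{j}Y\xrightarrow{\psi^{*}}N_{i}Y$ transposes under $i\dashv N_{i}$ to $iK\xrightarrow{\psi(K)}jK\to Y$. Moreover a lift $L\to N_{j}Y$ corresponds to an extension of this map along the induced gap morphism
\[
g_{u}\colon\ iL\cup_{iK}jK\longrightarrow jL,
\]
whose restriction to $iL$ is $\psi(L)$ and whose restriction to $jK$ is $ju$ (these agree on $iK$ by naturality of $\psi$). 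Hence it suffices to prove that $g_{u}$ is an acyclic cofibration: then fibrancy of $Y$ supplies the lift, and since $u$ was arbitrary, $\psi^{*}$ is a trivial fibration.

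It remains to check that $g_{u}$ is an acyclic cofibration. Since $i$ and $j$ are left Quillen, $iu$ and $ju$ are cofibrations, and $\psi(K),\psi(L)$ are acyclic cofibrations by hypothesis; therefore $g_{u}$ is a cofibration, being the gap morphism of a commutative square of cofibrations — the same routine fact used repeatedly above. For acyclicity, observe that $iL\to iL\cup_{iK}jK$ is the cobase change of the acyclic cofibration $\psi(K)$, hence is itself an acyclic cofibration, while its composite with $g_{u}$ is $\psi(L)$; two-out-of-three then forces $g_{u}$ to be a weak equivalence.

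The only mild obstacle is bookkeeping: pinning down the transposition identity above, and, in the bimarked setting of this section, recording that "trivial fibration" is still characterised by the right lifting property against all cofibrations of $\bmSset_{/A}$ in each of the model structures of Theorem~\ref{theo:the_foor_model_structure}, which it is. Note that invertibility of $\psi$ is not actually needed for this argument. Alternatively, one may simply remark that a pointwise acyclic cofibration is in particular a pointwise cofibration that is a weak equivalence on every globe $\Gb_{n}$, so the hypotheses of Lemma~\ref{lemma:j*is_a_trivial_fibration} are met and its conclusion already yields the statement.
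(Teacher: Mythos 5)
Your proof is correct and follows essentially the same route as the paper: transpose the lifting problem along the adjunctions, observe that the gap map $iL\cup_{iK}jK\to jL$ is an acyclic cofibration because $\psi(K)$ and $\psi(L)$ are (pushout stability plus two-out-of-three), and then use fibrancy of $Y$ to produce the lift. The closing alternative via Lemma~\ref{lemma:j*is_a_trivial_fibration} is a reasonable side remark but is not needed, and the main argument matches the paper's proof step for step.
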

\begin{proof}
Let $i:K\to L$ be a cofibration.
We have a diagram:
\[\begin{tikzcd}
	{i(K)} & {j(K)} \\
	{i(L)} & {i(L)\cup j(K)} & {j(l)}
	\arrow[from=1-1, to=2-1]
	\arrow["\sim", from=1-1, to=1-2]
	\arrow["\sim", from=2-1, to=2-2]
	\arrow[from=1-2, to=2-2]
	\arrow["{ \sim}"', curve={height=18pt}, from=2-1, to=2-3]
	\arrow[from=1-2, to=2-3]
	\arrow[from=2-2, to=2-3]
\end{tikzcd}\]
showing that $j(K)\cup i(L)\to j(L)$ is a trivial cofibration. Now, let $B$ be a fibrant object. We can see that the two following lifting problems are equivalent:
\[\begin{tikzcd}
	K & {N_j(B)} & {i(L)\cup j(K)} & B \\
	L & {N_i(B)} & {j(l).}
	\arrow[""{name=0, anchor=center, inner sep=0}, from=1-3, to=2-3]
	\arrow[from=1-1, to=1-2]
	\arrow[from=2-1, to=2-2]
	\arrow[from=1-1, to=2-1]
	\arrow[""{name=1, anchor=center, inner sep=0}, from=1-2, to=2-2]
	\arrow[from=1-3, to=1-4]
	\arrow["\exists"', dotted, from=2-1, to=1-2]
	\arrow["\exists"', dotted, from=2-3, to=1-4]
	\arrow[shorten <=9pt, shorten >=9pt, Rightarrow, 2tail reversed, from=1, to=0]
\end{tikzcd}\]
\end{proof}

\begin{prop}
There is an equivalence of categories of fibrant objects:
\[\begin{tikzcd}
	{RFib(A)} & {cRFib(A^{co})} \\
	{cLFib(A^{op})} & {LFib(A^{\circ}).}
	\arrow["op"', from=1-1, to=2-1]
	\arrow["co", from=1-1, to=1-2]
	\arrow["op", from=1-2, to=2-2]
	\arrow["co"', from=2-1, to=2-2]
	\arrow["\circ"{description}, from=1-1, to=2-2]
\end{tikzcd}\]
\end{prop}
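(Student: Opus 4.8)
The plan is to realise all five arrows of the square from the three dualities, to identify the four model structures of Theorem~\ref{theo:the_foor_model_structure} under them by means of Corollary~\ref{prop:action_of_duality_on_fib}, and to deduce that the resulting functors are equivalences from the (weak) involutivity of the dualities. First I would record that for $D$ any of $op$, $co$, $\circ$ the functor $(\uvar)^{D}$ (for $\circ$, the composite $(\uvar)^{co}\circ(\uvar)^{op}$, i.e.\ the right adjoint of $(\uvar)_{\circ}$) is a right Quillen self-equivalence of $\bmSset$ for the blind model structure: for $D=op$ it is even an isomorphism of categories equal to its own inverse, while for $D=co$ and $D=\circ$ it is a weak involution, by Theorem~\ref{theo:co_is_an_duality} together with Proposition~\ref{prop:co_and_op_commutes} and $(\uvar)^{op}\circ(\uvar)^{op}=\id$. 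Each $(\uvar)^{D}$ is computed slice-wise, sending $p\colon X\to A$ to $p^{D}\colon X^{D}\to A^{D}$, and on marked $\infty$-categories it preserves fibrant objects and blind weak equivalences between them.

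Next I would run the fibration classes through Corollary~\ref{prop:action_of_duality_on_fib}: for a blind fibration $p$ over a marked $\infty$-category $A$ with the right lifting property against cartesian trivializations, $p$ is a right fibration iff $p^{co}$ is a co-right fibration over $A^{co}$, iff $p^{op}$ is a co-left fibration over $A^{op}$, iff $p^{\circ}$ is a left fibration over $A^{\circ}$ (using $A^{\circ}\leftrightsquigarrow(A^{co})^{op}\leftrightsquigarrow(A^{op})^{co}$ from Proposition~\ref{prop:co_and_op_commutes}). Since the fibrant objects of the right (respectively left, co-right, co-left) model structure over a marked $\infty$-category $B$ are exactly the right (resp.\ \dots) fibrations $X\to B$ with $X$ a marked $\infty$-category, and the dualities preserve marked $\infty$-categories, this produces well-defined functors $co\colon RFib(A)\to cRFib(A^{co})$, $op\colon RFib(A)\to cLFib(A^{op})$ and $\circ\colon RFib(A)\to LFib(A^{\circ})$ between categories of fibrant objects. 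The remaining two edges are then forced, because the diagonal $\circ$ factors — up to the natural equivalence of Proposition~\ref{prop:co_and_op_commutes} — both as $RFib(A)\xrightarrow{co}cRFib(A^{co})\xrightarrow{op}LFib(A^{\circ})$ and as $RFib(A)\xrightarrow{op}cLFib(A^{op})\xrightarrow{co}LFib(A^{\circ})$, so the right edge $op\colon cRFib(A^{co})\to LFib(A^{\circ})$ and the bottom edge $co\colon cLFib(A^{op})\to LFib(A^{\circ})$ are well defined and the square commutes.

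Finally I would check that each functor is an equivalence. A quasi-inverse is given by applying the opposite duality: for $op$ this is a strict inverse, and for $co$ and $\circ$ the natural zigzags $(\uvar)^{D}\circ(\uvar)^{D}\leftrightsquigarrow\id$ are compatible with the slicing over $A$ and, being pointwise blind weak equivalences between fibrant objects, restrict to natural isomorphisms in the categories of fibrant objects (thin cells becoming isomorphisms in the homotopy categories, Proposition~\ref{prop:in_the_homotopy_category_thin_is_iso}). The main obstacle is exactly the bookkeeping forced by $co$ and $\circ$ being only weak involutions: one must verify that the comparison transformations are genuine equivalences of marked $\infty$-categories over the prescribed base, so that quasi-invertibility and commutativity hold in the $1$-categories of fibrant objects and not merely up to homotopy. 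This is, however, already the content extracted in Theorem~\ref{theo:naive_fib_are_fib} and Corollary~\ref{prop:action_of_duality_on_fib}; alternatively one could check by hand that the dualities exchange the generating sets $J_R$, $J_L$, $J_{coR}$, $J_{coL}$ and $T$, but those results make that unnecessary.
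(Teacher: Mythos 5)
There is a genuine gap, and it sits exactly where you wave your hands: the claim that a quasi-inverse of $co\colon RFib(A)\to cRFib(A^{co})$ is ``applying the opposite duality'' and that the zigzag $(\uvar)^{co}(\uvar)^{co}\leftrightsquigarrow \id$ ``restricts to natural isomorphisms in the categories of fibrant objects.'' Since $co$ and $\circ$ are only weak involutions, applying the duality a second time produces a functor $cRFib(A^{co})\to RFib(A^{coco})$, whose target is a slice over $A^{coco}$, \emph{not} over $A$; so your candidate quasi-inverse is not even a functor to $RFib(A)$, and there is no sense in which the double dual of $p\colon X\to A$ can be naturally isomorphic to $p$ inside $RFib(A)$ without first comparing the bases. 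This base-change problem is the whole content of the paper's argument: it takes the zigzag $(\uvar)_{coco}\to i_{str}\leftarrow \id$ from the proof of Theorem~\ref{theo:co_is_an_duality}, passes to right adjoints (using the unlabelled lemma just before the proposition, which guarantees that $N_{i_{str}}\to\id$ is a pointwise trivial fibration on fibrant objects), evaluates at $A$ to get a span $A^{coco}\xleftarrow{f}N_{i_{str}}(A)\xrightarrow{g}A$ with $g$ a trivial fibration and $f$ a blind weak equivalence, and then uses the earlier proposition that $(f_!,f^*)$ is a Quillen equivalence for every blind weak equivalence between marked $\infty$-categories to build a comparison $\psi:=g_!f^*\colon RFib(A^{coco})\to RFib(A)$. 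Even with this in hand, the composite $\psi\circ co\circ co$ is only shown to be \emph{weakly} equivalent to the identity (a pointwise blind weak equivalence over $A$), not naturally isomorphic -- your appeal to Proposition~\ref{prop:in_the_homotopy_category_thin_is_iso} does not turn weak equivalences between fibrant objects into isomorphisms of the $1$-category $RFib(A)$ -- and the paper then needs a further bootstrap: from ``$co\colon RFib(A)\to cRFib(A^{co})$ has a one-sided inverse up to weak equivalence'' applied both to $A$ and to $B:=A^{co}$, it deduces that each of the two functors has inverses on both sides and hence is an equivalence.

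The parts of your proposal that do work are the identification of the functors themselves and of why they are well defined (Theorem~\ref{theo:naive_fib_are_fib} and Corollary~\ref{prop:action_of_duality_on_fib} showing that the dualities exchange the four fibration classes, Proposition~\ref{prop:co_and_op_commutes} for the diagonal), and the observation that the $op$ edge is a strict isomorphism; these are consistent with the paper. But for the $co$ and $\circ$ edges you must either reproduce the span/base-change comparison and the inverse bootstrap sketched above, or genuinely carry out your alternative of checking that the dualities exchange the generating sets $J_R$, $J_L$, $J_{coR}$, $J_{coL}$ and $T$ \emph{on the nose over a fixed base}, which again runs into the fact that $(\uvar)_{co}$ does not strictly square to the identity. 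As written, the proof of the key equivalence is missing.
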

\begin{proof}
We will show that $(\uvar)^{co}:RFib(A) \to cRFib(A)$ is an equivalence of categories of fibrant objects. The other are proven similarly. 
In the proof of theorem \ref{theo:co_is_an_duality}, we construct the following zigzag of weakly invertible natural transformations.
$$(\uvar)_{coco}\to i_{str} \leftarrow id.$$
Remark that the right one is a pointwise acyclic cofibration.
This induces a zigzag of weakly invertible natural transformations between right adjoints, where, according to last lemma, the right one is a pointwise  trivial fibration:
$$(\uvar)_{coco}\leftarrow N_{i_{str}}\to id.$$
Evaluating on $A$ this induces a spine: 
$$A^{coco}\xleftarrow{f}N_{i_{str}}(A) \xrightarrow{g} A$$
where $g$ is a trivial fibration and $f$ a weak equivalence. 
We then define the composite functor:
$$ \psi:RFib(A^{coco}) \xrightarrow{f^*} RFib(N_{i_{str}}(A))\xrightarrow{g_!} RFib(A)$$
 Remark that we have a diagram:
\[\begin{tikzcd}
	{X^{coco}} & {f^*X^{coco}} & {N_{i_{str}}(X)} & X \\
	& {A^{coco}} & {N_{i_{str}}(A)} & A.
	\arrow[""{name=0, anchor=center, inner sep=0}, from=1-1, to=2-2]
	\arrow["f", from=2-3, to=2-2]
	\arrow["\sim"', from=1-2, to=1-1]
	\arrow[from=1-2, to=2-3]
	\arrow[from=1-3, to=1-2]
	\arrow["\sim"', curve={height=18pt}, from=1-3, to=1-1]
	\arrow[from=1-3, to=2-3]
	\arrow["g"', from=2-3, to=2-4]
	\arrow["\sim", from=1-3, to=1-4]
	\arrow[from=1-4, to=2-4]
	\arrow["\lrcorner"{anchor=center, pos=0.125, rotate=-90}, draw=none, from=1-2, to=0]
\end{tikzcd}\]
By two out of three, the morphism $N_{i_{str}}(X)\to f^*X^{coco}$ is a weak equivalence, and so  the composite 
$$RFib(A) \xrightarrow{co} cRFib(A^{co}) \xrightarrow{co} RFib(A^{coco})\xrightarrow{\psi} RFib(A)$$ is weakly equivalent to the identity. 
The functor $RFib(A) \xrightarrow{co} cRFib(A^{co})$ admits a right inverse and the functor $cRFib(A^{co}) \xrightarrow{co} RFib(A^{coco})$ admits a left inverse. We show similarly that for every $B$, the functor $cRFib(B) \xrightarrow{co} RFib(B^{co})$ admits a right inverse. If we apply this for $B:=A^{co}$, we deduce that $cRFib(A^{co}) \xrightarrow{co} RFib(A^{coco})$ also admits a right inverse, and so is an equivalence. This implies that  $RFib(A) \xrightarrow{co} cRFib(A^{co})$ is an equivalence.
\end{proof}

\appendix

\section{Cisinski-model structures on stratified presheaves}
\label{section:appendix1}

\subsection{Stratified presheaves} 

\begin{definition}
An \textit{Eilenberg-Zilber category} is a quadruple $(A, A_+ , A_- , d)$
where $A$ is a small category, while $A_+$ and $A_-$ are subcategories of $A$, and
$d : Ob(A) \to \mathbb{N}$ is a function with values in the set of non-negative integers,
such that the following properties are verified:
\begin{enumerate}
\item any isomorphism of $A$ is in both $A_+$ and $A_-$ . Moreover, for any isomorphic
objects $a$ and $b$ in $A$, we have $d(a) = d(b)$;
\item if $a \to b$ is a morphism in $A_+$ (resp. in $A_-$ ) that is not an identity, then  $d(a) < d(b )$ (resp. $d(a) > d(b)$);
\item any morphism $u : a \to b$ in $A$ has a unique factorization of the form
$u = ip$, with $p : a \to c$ in $A_-$ and $i : c \to b$ in $A_+$ ;
\item if a morphism $\pi : a \to b$ belongs to $A_-$ there exists a morphism $\sigma : b \to
a$ in $A$ such that $\pi\sigma = 1_b$; moreover, for any two morphisms in $A_-$ of the
form $\pi_1,\pi_2 : a \to b$, if $\pi_1$ and $\pi_2$ have the same set of sections, then they
are equal.
\end{enumerate}
\end{definition}

\begin{construction}
Let $(A, A_+ , A_- , d)$ be a   Eilenberg-Zilber category. 
Let $tA$ be the category whose object are $ob(A)\cup\{a_t,a\in ob(A_{>0})\}$ and morphisms given by: 
$$\begin{array}{rcl}
\Hom_{tA}(a,b)&:=& \Hom_A(a,b),\\
\Hom_{tA}(a,b_t)&:=& \Hom_A(a,b),\\
\Hom_{tA}(a_t,b)&:=& \Hom_A(a,b)\cap A_- \diagdown \{id_a\}, \\
\Hom_{tA}(a_t,b_t)&:=&\Hom_A(a,b)\cap A_-.\\
\end{array}$$

We also define the subcategory $tA_+$ as the smaller one that includes $A_+$ and all morphisms $a\to a_t$, and $tA_-$  whose morphisms are of shape $a_t\to b$ or $a_t\to b_t.$
Eventually, we define $d_{t}(a):= 2d(a)$ and $d_{t}(a_t) := 2d(a)+1$.

The  Eilenberg-Zilber category $(tA, tA_+ , tA_- , d_{t})$ is the \textit{stratification of $(A, A_+ , A_- , d)$}. 
Let $B$ be a subset of $ob(A_{>0})$. We define the \textit{$B$-saturation} of $(A, A_+ , A_- , d)$ to be the sub  Eilenberg-Zilber category whose objects are $$ob(A)\cup \{a_t,a\in B\},$$ denoted $(t_BA, t_BA_+ , t_BA_- , d_{t})$ .
\end{construction}

\begin{construction}
Let $(A, A_+ , A_- , d)$ be a   Eilenberg-Zilber category. 
Let $ctA$ be the category whose object are $ob(A)\cup\{a_c,a\in ob(A_{>0})\}\cup,\{a_t,a\in ob(A_{>0})\}$ and morphisms given by: 
$$\begin{array}{rcl}
\Hom_{ctA}(a,b)&:=& \Hom_A(a,b),\\
\Hom_{ctA}(a,b_c)&:=& \Hom_A(a,b),\\
\Hom_{btA}(a,b_t)&:=& \Hom_A(a,b),\\
\\
\Hom_{ctA}(a_c,b)&:=& \Hom_A(a,b)\cap A_- \diagdown \{id_a\},\\
\Hom_{ctA}(a_c,b_c)&:=&\Hom_A(a,b)\cap A_-,\\
\Hom_{ctA}(a_c,b_t)&:=&\Hom_A(a,b)\cap A_-,\\
\\	
\Hom_{ctA}(a_t,b)&:=& \Hom_A(a,b)\cap A_- \diagdown \{id_a\},\\
\Hom_{ctA}(a_t,b_c)&:=&\Hom_A(a,b)\cap A_- \diagdown \{id_a\},\\
\Hom_{ctA}(a_t,b_t)&:=&\Hom_A(a,b)\cap A_-.\\
\end{array}$$

We also define the subcategory $ctA_+$ as the smaller one that includes $A_+$ and all morphisms $a\to a_c$ and $a_t\to a_c$. The subcategory $ctA_-$  is the one whose morphisms are of shape $a_c\to \uvar$ and $a_t\to \uvar$.
Eventually, we define $d_{ct}(a):= 3d(a)$, $d_{ct}(a_c) := 3d(a)+1$ and $d_{ct}(a_t) := 3d(a)+2$.

The  Eilenberg-Zilber category $(ctA, ctA_+ , ctA_- , d_{ct})$ is the \textit{ bistratification of $(A, A_+ , A_- , d)$}.
Let $B, C$ be two subsets of $ob(A_{>0})$. We define the \textit{$(B,C)$-saturation} of $(A, A_+ , A_- , d)$ to be the sub  Eilenberg-Zilber category whose objects are $$ob(A)\cup \{a_c,a\in B\}\cup \{a_t, a\in C\},$$  noted $(c_Bt_CA, c_Bt_CA_+ , c_Bt_CA_- , d_{ct})$.
\end{construction}

\begin{remark}
The bistratification of $A$  is the $B$-saturation of $A_t$ where $B=\{a_t,a\in A\}.$
\end{remark}

\begin{definition}
\label{defi:stratified_presheaf}
Let $A$ be a Eilenberg-Zilber category and $B\subset ob(A_{>0})$. The category of \textit{$B$-stratified presheaves on $A$}, denoted $\relativtPsh{A}{B}$, is the full subcategory of $\Psh{t_BA}$ composed of presheaves $X$ such that $X(a_t)\to X(a)$ is a monomorphism.
\end{definition}

An object of $\relativtPsh{A}{B}$ is then just a couple $(X,tX)$ where $X$ is a presheaf on $A$ and $tX :=\{tX_a\}_{a\in B}$ is a family of sets such that, for $a\in B$, $tX_a$ is a subset of  $X_a$ that includes all degenerate simplices. 
There is a functor 
$$\begin{array}{rrcl}
\pi: &\Psh{tA}&\to & \relativtPsh{A}{B}\\
&X&\mapsto & (X_{|A},tX)
\end{array}$$
where $(tX)_a$ is the image of the morphism $X(a_t)\to X(a)$. This functor is the left adjoint to the forgetful functor $\iota: \relativtPsh{A}\to \Psh{tA}$.

\begin{example}
Let $A$ be an   Eilenberg-Zilber category and $X\in \relativtPsh{A}{B}$. The category $(tA)_{/X}$ is a Eilenberg-Zilber category and is the $tX$-saturation of $A_{/X}$. We then have an equivalence of categories: 
$$\relativtPsh{A}{B}_{/X} \cong \relativtPsh{A/X}{tX}.$$
\end{example} 

\begin{definition}
\textit{A saturation set $S$ for $\relativtPsh{A}{B}$} is a set of  morphisms of $\relativtPsh{A}{B}$ that are sequence of pushouts along morphisms of shape $a\to a_t$. A stratified presheaf $X$ is \textit{$S$-saturated} if it has the right lifting property against morphisms in $S$.
The full subcategory of $S$-saturated stratified presheaves is denoted $\relativtPsh{A}{B}_S$.
\end{definition}

Let $S$ be a saturation set. There is a functor
$$\begin{array}{rrcl}
R:\relativtPsh{A}{B}&\to & \relativtPsh{A}{B}_S\\
(X,tX)&\to &(X,\overline{tX})
\end{array}$$
where $\overline{tX}$ is the smaller set making $(X,\overline{tX})$ an object of $\relativtPsh{A}{B}_S$, and such that $tX\subset \overline{tX}$. This functor is the left adjoint to the obvious forgetful functor $i:\relativtPsh{A}{B}_S\to \relativtPsh{A}{B}_S$.

Furthermore the category $\relativtPsh{A}{B}_S$ is complete and cocomplete.
If $F:I\to \relativtPsh{A}{B}_S$ is any diagram, it's colimit is given by the formula:
$$\Colim F := (\Colim X_i, \overline{\Colim tX_i})$$ 
and it's limit by
$$\Lim F := (\Lim X_i, \Lim tX_i)$$ 
where $F(i)=:(X_i,tX_i)$.

\begin{example}
If $(X,tX)$ is a $S$-saturated presheaf, we have an equivalence of categories:
$$(\relativtPsh{A}{B}_S)_{/X}\cong(\relativtPsh{A_{/X}}{tX}_{S/X}).$$ 
\end{example}

All these results have obvious analogues for bistratified presheaves.
\begin{definition}
Let $A$ be a Eilenberg-Zilber category and $B\subset C\subset ob(A_{<0})$. The category of $(A,B)$-bistratified presheaves on $A$, denoted $\relativctPsh{A}{B}{C}$ is the full subcategory of $\Psh{c_Bt_CA}$ composed of presheaf $X$ such that $X(a_t)\to X(a_c)$ and $X(a_c)\to X(a)$ are monomorphism.

\textit{A saturation set $S$ for $\relativctPsh{A}{B}{C}$} is  a set of morphisms of $\relativctPsh{A}{B}{C}$ that are sequences of pushouts along morphisms shape $a\to a_c$ and $a_c\to a_t$. A bistratified presheaves $X$ is \textit{$S$-saturated} if it has the right lifting property against morphism in $S$.
The full subcategory of $S$-saturated stratified presheaves is denoted $\relativctPsh{A}{B}{C}_S$.
These categories are complete and cocomplete.
\end{definition}

\subsection{A criterium to transfer a model structure.}

\begin{definition}
A \textit{cellular model} for a category $C$ is a set of monomorphisms $Cel(C)$ such that every monomorphism is a sequence of pushouts along morphisms in $Cel(C)$.
\end{definition}

\begin{remark}
Presheaves on a Eilenberg-Zilber category  $A$ always admit cellular models: the cellular model for $A$ is 
$$Cel(A):= \{\partial a\to a,~a\in A\},$$
the cellular model of $tA$ is given by: 
$$Cel(tA):= Cel(A)\cup \{a\to a_t,~a\in A_{>0}\}$$ and the cellular model of  $ctA$ is :
$$Cel(ctA):= Cel(A)\cup \{a\to a_c,~a\in A_{>0}\}\cup \{a_c\to a_t,~a\in A_{>0}\}.$$
\end{remark}

We fix an complete and cocomplete category  $C$ admitting a cellular model, and $T$ a set of monomorphisms.

\begin{definition}
An \textit{interval object} is an object $I$ together with morphisms: 
\[\begin{tikzcd}
	1\coprod1 & I & 1
	\arrow["\sigma"', from=1-2, to=1-3]
	\arrow["{\partial_0\amalg\partial_1}"', from=1-1, to=1-2]
	\arrow["{id\coprod id}", curve={height=-12pt}, from=1-1, to=1-3]
\end{tikzcd}\]
such that $\partial_0\coprod\partial_1$ is a monomorphism. For $\epsilon\in\{0,1\}$, the sub-object corresponding to the image of $\partial_\epsilon$ is noted $\{\epsilon\}$.
\end{definition}

\begin{definition}
Let $f,g;A\to X$ be two morphisms. An \textit{$I$-homotopy} between $f$ and $g$ is a lifting in the following diagram:
\[\begin{tikzcd}
	{A\coprod A} & X \\
	{A\times I}
	\arrow[dotted, from=2-1, to=1-2]
	\arrow[from=1-1, to=2-1]
	\arrow["{f\amalg g}", from=1-1, to=1-2]
\end{tikzcd}\]
Let $\sim$ be the smaller transitive, symmetric and reflexive relation on $\Hom(A,X)$ such that $f\sim g$ when there exists a homotopy between $f$ and $g$.
The set of equivalence classes is denoted $[A,X]$.
\end{definition}

The Leibniz  product is defined as usual. 
If $f:K\to L$ and $g:X\to Y$ are two morphisms, then $f\hat{\times}g$ is the canonical arrow:
$$f\hat{\times}g ~:= K\times Y\amalg_{K\times X}L\times X\to L\times Y.$$
\begin{construction}
\label{cons:Lambda}
Let $T$ be a class of monomorphisms. We define 
$$\Gamma T:= T~ \hat{\times}~ Cel(D)$$
\end{construction}

\begin{definition}
\label{defi:cof}
We define several classes of maps of $C$:
\begin{itemize}[leftmargin=* ,parsep=0cm,itemsep=0cm,topsep=0cm]
\item A \textit{cofibration} is a monomorphism.
\item A \textit{trivial fibration} is a morphism having the right lifting property against cofibrations.
\item An \textit{anodyne extension} is map obtained as a sequence of pushouts along  morphisms in $$T \cup \Gamma \{\{\epsilon\}\to I, \epsilon\in \{0,1\}\}.$$
\item A \textit{naive fibration} is a morphism having the right lifting property against anodyne extensions.
\item An object $X$ is \textit{fibrant} if $X\to 1$ is a naive fibration.
\item A morphism $A\to B$ is a \textit{weak equivalence} if for all fibrant objects $X$, the induced morphism is a bijection: 
$$f^*:[B,X]\to [A,X].$$ 
\item An \textit{acyclic cofibration} is both cofibration and a weak equivalence.
\item A \textit{fibration} is a morphism having the right lifting property against acyclic cofibration.
\end{itemize}
\end{definition}

\begin{definition}
\label{defi:local_model_structure}
If it exists,  a \textit{$(T,I)$-local model structure on $C$} is a monoidal model structure on $C$ such that
\begin{enumerate}
\item  cofibrations, fibrations and weak equivalences are defined as in \ref{defi:cof},
\item A object $X$ is fibrant whenever $X\to 1$ is a naive fibration and fibrations between fibrant objects correspond to naive fibrations.
\end{enumerate}
\end{definition}

\begin{definition}
\label{defi:localization_of_model_structure}
Let $Q$ be any set of morphism. A \textit{left Boosfield localization} of $(T,I)$-local model structure is a $(Q\cup T,I)$-local model structure.
\end{definition}

Suppose now that we have a reflexive full subcategory $C$ of $D$ that includes an interval object $I$. We then have an adjoint pair:
\[\begin{tikzcd}
	D & C
	\arrow[""{name=0, anchor=center, inner sep=0}, "G", curve={height=-6pt}, from=1-2, to=1-1]
	\arrow[""{name=1, anchor=center, inner sep=0}, "F", curve={height=-6pt}, from=1-1, to=1-2]
	\arrow["\dashv"{anchor=center, rotate=-90}, draw=none, from=1, to=0]
\end{tikzcd}\]

The rest of the section is devoted to the proof of the following theorem:

\begin{theorem}
\label{theo:transfered_model_structure}
Let $T$ be a set of monomorphisms in $C$ and $S$ a set of monomorphisms in $D$  such that $F(f)=id$ for all $f\in S$ . Suppose that there exists a $(\Gamma(GT\cup S),GI)$-local model structure on $D$ and that:
\begin{enumerate}
\item $C$ admits a cellular model $Cel(C)$, and $G(Cel(C))$ is a cellular model for $D$.
\item $C$ is closed by finite limits and $F$ preserves them.
\item $G$ preserves cofibrations.
\item for every fibration $f:X\to G(Y)$, $F f:F(X)\to Y$ is a fibration.
\item $\eta:id\to GF$ is a pointwise weak equivalence.
\end{enumerate}
Then there exists a $(\Gamma T,I)$-local structure on $C$. The adjunction is then a Quillen equivalence.
\end{theorem}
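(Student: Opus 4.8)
The plan is to transfer the model structure along the adjunction $(F,G)$ by a standard right-induction argument, using the hypothesis that $G(Cel(C))$ is a cellular model for $D$ to control the cofibrations, and the hypothesis $F(f)=\id$ for $f\in S$ to see that the localizing set $S$ becomes invisible after applying $F$. First I would set up the candidate model structure on $C$: cofibrations are monomorphisms, weak equivalences and fibrations are defined as in \ref{defi:cof} relative to the data $(\Gamma T, I)$. The first task is to verify that $F$ and $G$ detect these classes correctly. Since $G$ preserves cofibrations (hypothesis (3)) and $G(Cel(C))$ generates all cofibrations of $D$ (hypothesis (1)), a morphism $p$ in $C$ is a trivial fibration if and only if $G(p)$ is; dually, using that $F$ preserves finite limits (hypothesis (2)) and is left adjoint, one checks that the generating anodyne extensions of $C$ are sent by $G$ into the $(\Gamma(GT\cup S),GI)$-anodyne extensions of $D$, so that $G$ of a naive fibration is a naive fibration. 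Combined with hypothesis (4) — that $F$ sends fibrations over objects in the image of $G$ to fibrations — this will give the two-sided compatibility needed.

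Next I would establish that the weak equivalences of the candidate structure on $C$ coincide with the morphisms $f$ such that $G$ of a fibrant replacement of $f$ is a weak equivalence in $D$; equivalently, that a fibrant object $X$ of $C$ has $G(X)$ fibrant in $D$ and that $[A,X]_C \cong [G(A),G(X)]_D$ naturally via $\eta$. Here hypothesis (5), that $\eta\colon \id\to GF$ is a pointwise weak equivalence, is the crucial input: it identifies the homotopy category of $C$ with a full reflective subcategory of the homotopy category of $D$, and it forces the unit and counit of the derived adjunction to be equivalences, which is exactly what is needed both for the lifting axioms and for the Quillen equivalence conclusion. Concretely, I would show: (a) $G$ preserves and reflects fibrant objects; (b) a map between fibrant objects of $C$ is a weak equivalence iff its image under $G$ is; (c) every object of $C$ admits a fibrant replacement, obtained by applying the small object argument for the anodyne extensions of $C$ (which exist since $C$ is locally presentable, being a localization of presheaves).

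With these in hand, the model structure axioms follow by the usual right-transfer bookkeeping: the lifting axioms reduce to factorizations produced by the small object argument on $C$ together with the retract argument, using that an acyclic cofibration of $C$ is a cofibration whose image under $G$ is an acyclic cofibration of $D$ (by hypothesis (5) and two-out-of-three), hence has the left lifting property against $G$-images of fibrations, hence — since $G$ is fully faithful on the relevant hom-sets — against all fibrations of $C$. That the resulting structure is $(\Gamma T, I)$-local is immediate from the definitions, since fibrant objects and fibrations between fibrant objects were built to match the naive notions. Finally, the Quillen equivalence: $F$ is left Quillen because $G$ preserves fibrations and acyclic fibrations by the compatibility established above; and $(F,G)$ is a Quillen equivalence precisely because $\eta$ is a pointwise weak equivalence (hypothesis (5)) and $G$ reflects weak equivalences between fibrant objects, so the derived unit and counit are isomorphisms in the homotopy categories.

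\textbf{Main obstacle.} The delicate point will be step (b)/(c): showing that $G$ reflects weak equivalences \emph{between fibrant objects} and that fibrant replacement in $C$ is compatible with fibrant replacement in $D$ under $G$. The issue is that the localizing set $S$ lives only in $D$, so a priori $G(X)$ for $X$ fibrant in $C$ need not be $S$-local; one must use that $F(S)$ consists of identities, together with the interplay between the $\Gamma$-construction and the interval $I$ (via Construction \ref{cons:Lambda}), to see that the $G$-image of the anodyne extensions of $C$ already sees the $S$-part of the $D$-anodyne extensions after passing through $\eta$. Getting this comparison exactly right — rather than merely up to homotopy — is where the real work lies; everything else is a formal consequence of the five hypotheses.
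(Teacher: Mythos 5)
Your list of supporting facts is largely the right one, but there is a genuine gap at the heart of the argument: the factorization axiom. Nothing in your sketch explains how an arbitrary morphism of $C$ is to be factored as an acyclic cofibration followed by a fibration (nor, relatedly, why a map that is both a fibration and a weak equivalence in the sense of Definition \ref{defi:cof} is a trivial fibration). The small object argument in $C$, which is all you invoke, only yields (cofibration, trivial fibration) and (anodyne extension, naive fibration) factorizations; a naive fibration is not a fibration unless its codomain is fibrant, and the class of acyclic cofibrations is not known to be generated by a set, so "standard right-transfer bookkeeping plus the retract argument" does not close this. Your observation that an acyclic cofibration of $C$ has the left lifting property against all fibrations of $C$ is true by the very definition of fibration and carries no content. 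The paper closes the gap by a different mechanism: given $f$ in $C$, factor $G(f)$ in the already existing model structure on $D$ as an acyclic cofibration $j$ followed by a fibration $p$ and apply $F$; since $FG=\id$, the pair $(F(j),F(p))$ factors $f$, $F(j)$ is an acyclic cofibration because $F$ preserves cofibrations (via hypothesis (1) and cocontinuity) and weak equivalences (Lemma \ref{lem:F_preserves_weak_equivalence}, which rests on the bijection $[F(A),X]\cong[A,G(X)]$ and hence on hypothesis (2)), and $F(p)$ is a fibration precisely by hypothesis (4), since $p$ has codomain $G(B)$. Hypothesis (4) is thus the engine of the factorization axiom (and, together with Lemmas \ref{lem:G_preserve_weak_equivalence} and \ref{lem:G_detect_trivial_fibration}, of the identification of trivial fibrations), whereas in your proposal it only appears as an unspecified "two-sided compatibility".

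Conversely, the point you single out as the real difficulty --- that $G(X)$ might fail to be $S$-local for $X$ fibrant in $C$ --- is in fact a short adjunction computation (Lemma \ref{lem:G_preserves_naive_fibration}): a lifting problem of a generating anodyne extension $i$ of $D$ against $G(f)$ transposes along $F\dashv G$ to a lifting problem of $F(i)$ against $f$, and since $F$ preserves finite limits (hence Leibniz products as in Construction \ref{cons:Lambda}), preserves colimits, satisfies $FG=\id$ and sends $S$ to identities, the maps $F(i)$ are exactly the generating anodyne extensions of $C$ up to identities; no passage through $\eta$ is needed, and nothing has to be compared "up to homotopy". Note also that the intermediate claim you state for this step --- that $G$ sends the anodyne extensions of $C$ into those of $D$ --- is both the wrong transposition (the adjunction lets you trade $i$ against $G(f)$ for $F(i)$ against $f$, not the other way around) and not obviously true, since $G$ is a right adjoint that does not preserve the pushouts and Leibniz products involved. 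Your treatment of the remaining ingredients (detection of trivial fibrations by $G$, detection of weak equivalences via $\eta$ and homotopy classes, and the Quillen-equivalence conclusion from hypothesis (5) together with $FG=\id$) does agree with the paper.
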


From now on, we suppose that the pair $(F,G)$ satisfies all the hypotheses of the theorem.

\begin{lemma}
\label{lem:G_detect_trivial_fibration}
The functor $G$ preserves  and detects trivial fibrations.  
\end{lemma}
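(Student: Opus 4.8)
The plan is to prove that $G$ preserves trivial fibrations and reflects them, using the adjunction $(F,G)$ together with hypotheses (1)--(4).

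For preservation, I would argue as follows. Suppose $f\colon X\to Y$ is a trivial fibration in $C$, meaning it has the right lifting property against all monomorphisms. I want to show $G(f)\colon G(X)\to G(Y)$ has the right lifting property against every monomorphism of $D$. By hypothesis (1), $G(Cel(C))$ is a cellular model for $D$, so it suffices to check the lifting property against morphisms of the form $G(j)$ with $j\in Cel(C)$ a monomorphism of $C$. Given a square with $G(j)$ on the left and $G(f)$ on the right, I transpose across the adjunction $F\dashv G$ to obtain a square with $F(G(j))$ on the left — but actually the cleaner route is to use that $C$ is a reflexive subcategory, so $F\dashv G$ with $G$ fully faithful; more directly, a lifting problem for $G(j)$ against $G(f)$ transposes to a lifting problem for $F(G(j)) = j$ (after identifying $FG(j)$ with $j$ via the counit, which is an isomorphism since $G$ is fully faithful as a reflective inclusion's right adjoint) against $f$. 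Since $j$ is a monomorphism and $f$ is a trivial fibration, this lifting problem has a solution; transposing back gives the desired lift. Hence $G(f)$ is a trivial fibration.

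For reflection, suppose $G(f)$ is a trivial fibration in $D$; I must show $f$ is one in $C$. Here I would combine hypothesis (4), which says $F$ sends fibrations over objects in the image of $G$ to fibrations, with hypothesis (2), that $F$ preserves finite limits, and hypothesis (5), that $\eta\colon \mathrm{id}\to GF$ is a pointwise weak equivalence. A trivial fibration in $D$ is in particular a fibration and a weak equivalence; applying $F$ and using (4) shows $F(G(f)) = f$ (again using the counit isomorphism $FG\cong \mathrm{id}$) is a fibration in $C$. To see it is a weak equivalence, one uses that $G(f)$ being a weak equivalence in $D$ together with the two-out-of-three property applied to the naturality square of $\eta$ for $f$ gives that $f$ is a weak equivalence in $C$. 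A fibration that is a weak equivalence is a trivial fibration, so $f$ is a trivial fibration.

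The main obstacle I anticipate is bookkeeping around the counit isomorphism $FG\cong\mathrm{id}_C$ and making sure every transposition is natural, so that "lifting against $G(f)$" genuinely matches "lifting against $f$"; in particular one must be careful that the cellular generators $G(j)$ really do detect trivial fibrations in $D$, which is exactly what hypothesis (1) is designed to give, and that hypothesis (4) applies — the fibration $G(f)$ indeed has codomain $G(Y)$ in the image of $G$, so $F(G(f)) = f$ is a fibration. Once these identifications are set up cleanly, the rest is a routine diagram chase, so I would not belabor it.
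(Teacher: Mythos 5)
Your preservation argument is fine and is essentially the paper's: by hypothesis (1) it suffices to test $G(f)$ against $G(Cel(C))$, and full faithfulness of the reflective inclusion $G$ (equivalently, transposition along $F\dashv G$ plus the counit isomorphism $FG\cong\mathrm{id}$) identifies such lifting problems with lifting problems of $j\in Cel(C)$ against $f$, which are solvable since $f$ is a trivial fibration.

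The detection half, however, has a genuine gap, and in fact a circularity. From ``$G(f)$ is a fibration and a weak equivalence in $D$'' you deduce that $f$ is a fibration (via hypothesis (4)) and a weak equivalence in $C$, and then conclude ``fibration $+$ weak equivalence $=$ trivial fibration'' in $C$. But at this stage there is no model structure on $C$: its existence is precisely the content of Theorem \ref{theo:transfered_model_structure}, whose proof establishes that very implication \emph{by citing} Lemma \ref{lem:G_detect_trivial_fibration}; with the classes defined as in \ref{defi:cof}, a map that is both a fibration and a weak equivalence need not a priori have the right lifting property against all monomorphisms. Moreover the intermediate step is itself unjustified: $\eta:\mathrm{id}\to GF$ is a transformation of endofunctors of $D$, so two-out-of-three in its naturality square only produces statements about weak equivalences \emph{in $D$}; passing from ``$G(f)$ is a weak equivalence in $D$'' to ``$f$ is a weak equivalence in $C$'' is exactly Lemma \ref{lem:G_preserve_weak_equivalence}, which is proved only afterwards and needs its own argument comparing $[B,X]$ in $C$ with $[G(B),G(X)]$ in $D$. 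The intended (and much shorter) argument for detection avoids all of this: if $G(f)$ is a trivial fibration in $D$, then for any monomorphism $j:K\to L$ of $C$ the image square $G(j)$ against $G(f)$ admits a lift in $D$ (as $G(j)$ is a monomorphism of $D$); since $G$ is a full and faithful inclusion, that lift is $G(h)$ for a morphism $h$ of $C$ solving the original problem, so $f$ has the right lifting property against all monomorphisms of $C$ and is a trivial fibration. No appeal to hypotheses (4), (5), to weak equivalences, or to two-out-of-three is needed.
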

\begin{proof}
Trivial fibrations are detected by morphisms in the image of $G$, which is a full inclusion.
\end{proof}

\begin{lemma}
\label{lem:G_preserves_naive_fibration}
The functor $G$ preserves  naive fibrations.  
\end{lemma}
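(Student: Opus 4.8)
The statement to prove is Lemma~\ref{lem:G_preserves_naive_fibration}: the functor $G$ preserves naive fibrations. Recall that a naive fibration is a morphism having the right lifting property against anodyne extensions, and anodyne extensions are sequences of pushouts along morphisms in $T \cup \Gamma\{\{\epsilon\}\to I,~\epsilon\in\{0,1\}\}$ (Definition~\ref{defi:cof}). So the plan is: take a naive fibration $p:X\to Y$ in $C$, and show that $Gp:GX\to GY$ in $D$ has the right lifting property against the generating anodyne extensions of $D$, which (by Definition~\ref{defi:cof} applied to $D$ with its set $\Gamma(GT\cup S)\cup\Gamma\{\{\epsilon\}\to GI\}$) are sequences of pushouts along morphisms in $\Gamma(GT\cup S)\cup\Gamma\{\{\epsilon\}\to GI, \epsilon\in\{0,1\}\}$. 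Since RLP against a class is closed under the saturation (pushouts, transfinite composition), it suffices to check RLP against these generators.

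The key move is adjunction: $G$ is right adjoint to $F$, so $Gp$ has the RLP against a morphism $f$ in $D$ if and only if $p$ has the RLP against $Ff$. Thus I would run through the generators. For a generator of the form $f\hat\times(\{\epsilon\}\to GI)$ with $f\in G(Cel(C))$: since $F$ preserves finite limits (hypothesis (2)), $F$ preserves the Leibniz (pushout-)product, and $F(GI)=I$ up to the canonical comparison — more precisely $FG\to \id$ is the counit and on $I$ one uses that $I\in C$ so $FGI\to I$ behaves well; and $F(G(Cel(C)))$ lands among cofibrations of $C$ generating all cofibrations. Hence $F$ of such a generator is (a retract of / equal to) a generator $\Gamma$-type anodyne extension of $C$, against which $p$ lifts. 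For a generator coming from $S$: by hypothesis $F(f)=\id$ for all $f\in S$, and $F$ commutes with the Leibniz product with $Cel$, so $F$ sends $\Gamma S = S\hat\times Cel(D)$ to identities, against which every morphism trivially lifts. For a generator of $\Gamma(GT)$-type: $F(\Gamma(GT)) = F(GT \hat\times Cel(D))$, and since $FG\simeq\id$ on objects hit and $F(Cel(D)) = F(G(Cel(C)))$ generates cofibrations, this reduces to $\Gamma T$-type anodyne extensions of $C$.

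So the proof is essentially: reduce to generators, apply the adjunction isomorphism of lifting problems, and check that $F$ sends each class of generating anodyne extensions of $D$ into anodyne extensions (or identities) of $C$, using (i) $F$ preserves finite limits hence Leibniz products, (ii) $G(Cel(C))$ is a cellular model for $D$ and $FG$ is naturally a weak equivalence / the counit is well-behaved so that $F(G(Cel(C)))$ still generates the cofibrations of $C$, and (iii) $F$ kills $S$. The main obstacle is bookkeeping the comparison maps: $F\Gamma(GT\cup S) $ is not literally $\Gamma T \cup \{\text{identities}\}$ but only becomes so after identifying $FG$ with the identity via the counit, which is not an isomorphism in general — one must argue that $F$ of a pushout-of-$G(\text{cofibration})$ is a pushout of a cofibration of $C$ (this uses that $F$ is a left adjoint, hence preserves pushouts, together with $FGc$ being a cofibration for $c\in Cel(C)$, which follows since $F$ preserves monos being finite-limit-preserving, or alternatively from the factorization $c = F\eta\circ\ldots$). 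I would handle this by noting that it suffices for $F$ to send each generating anodyne extension of $D$ to a morphism in the saturation of the generating anodyne extensions of $C$ together with isomorphisms, and that this is exactly what hypotheses (1), (2) and the condition $F(S)=\{\id\}$ deliver.

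\begin{proof}
Let $p : X \to Y$ be a naive fibration in $C$; we must show $Gp : GX \to GY$ is a naive fibration in $D$, i.e.\ has the right lifting property against anodyne extensions of $D$. Since the class of morphisms having the right lifting property against a fixed set is closed under pushout and transfinite composition, it suffices to show that $Gp$ has the right lifting property against the generating anodyne extensions of $D$, namely the morphisms in $\Gamma(GT\cup S)$ and in $\Gamma\{\{\epsilon\}\to GI,\ \epsilon\in\{0,1\}\}$.

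By the adjunction $F\dashv G$, for any morphism $f$ in $D$ the map $Gp$ has the right lifting property against $f$ if and only if $p$ has the right lifting property against $Ff$. We therefore analyse $Ff$ for each class of generators. Recall $\Gamma(GT\cup S) = (GT\cup S)\,\hat\times\, Cel(D)$ and, by hypothesis (1), $Cel(D) = G(Cel(C))$. Since $F$ preserves finite limits (hypothesis (2)) and is a left adjoint, it preserves pushouts and hence Leibniz products; moreover $FG(Cel(C))$ consists of cofibrations of $C$ and, together with the isomorphisms coming from the counit, generates the cofibrations of $C$. Consequently:
\begin{itemize}
\item For $f \in S\,\hat\times\, Cel(D)$, hypothesis $F(s)=\id$ for $s\in S$ together with compatibility of $F$ with the Leibniz product forces $Ff$ to be an isomorphism; every morphism lifts against an isomorphism.
\item For $f \in GT\,\hat\times\, Cel(D)$, we get $Ff$ in the saturation of $T\,\hat\times\, Cel(C) = \Gamma T$, i.e.\ $Ff$ is (a pushout-composite of) generating $\Gamma T$-anodyne extensions of $C$; $p$ lifts against these since $p$ is a naive fibration.
\item For $f \in \{\{\epsilon\}\to GI\}\,\hat\times\, Cel(D)$, using that $I\in C$ identifies $FGI$ with $I$ via the counit and $F$ sends the generator to a morphism in the saturation of $\{\{\epsilon\}\to I\}\,\hat\times\, Cel(C)$, again an anodyne extension of $C$, against which $p$ lifts.
\end{itemize}
In every case $p$ has the required right lifting property, hence by adjunction $Gp$ has the right lifting property against all generating anodyne extensions of $D$, and therefore against all anodyne extensions of $D$. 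Thus $Gp$ is a naive fibration.
\end{proof}
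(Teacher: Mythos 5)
Your proof is correct and follows essentially the same route as the paper: transpose the lifting problems along the adjunction $F\dashv G$, then use that $F$ preserves finite limits (hence Leibniz products), that $Cel(D)=G(Cel(C))$, and that $F$ kills $S$, to identify the images under $F$ of the generating anodyne extensions of $D$ with anodyne extensions of $C$ (or isomorphisms). Your extra care about the counit is harmless but unnecessary here, since for a full reflective subcategory the counit $FG\to \mathrm{id}$ is an isomorphism (indeed the identity in the paper's applications).
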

\begin{proof}
Let $f$ be a naive fibration. By adjunction, $G(f)$ has the right lifting property against morphisms of $\Gamma(GT\cup S)$ and $\Gamma \{\{\epsilon\}\to I\}$ if and only if $f$ has the right lifting property against morphisms of $F(\Gamma(GT\cup S))$ and $F\Gamma \{\{\epsilon\}\to G I\}$. By hypothesis,  $F$ preserves finite limits, and so Leibniz  products. These two sets are then respectively equal to $\Gamma (T\cup S)$ and $\Gamma \{\{\epsilon\}\to I\}$, which concludes the proof. 
\end{proof}

\begin{lemma}
\label{lem:G_preserve_weak_equivalence}
A morphism $f:A\to B$ in $C$ is a weak equivalence if and only if $G(f):G(A)\to G(B)$ is a weak equivalence in $D$.
\end{lemma}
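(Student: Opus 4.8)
The plan is to exploit that $G$ is a \emph{fully faithful right adjoint} and that the interval of the model structure on $D$ is $GI$, in order to reduce everything to a comparison of homotopy classes. First I would record the purely formal fact that, as a right adjoint, $G$ preserves all limits; in particular $G(1_C)=1_D$ and $G(A\times I)=GA\times GI$, and (since $G$ also preserves monomorphisms) $G$ carries the endpoint inclusions $\{\epsilon\}\to I$ of $C$ to the endpoint inclusions $\{\epsilon\}\to GI$ of $D$. Using full faithfulness, an $I$-homotopy $A\times I\to X$ in $C$ is then sent to a $GI$-homotopy $GA\times GI\to GX$ in $D$, and conversely every $GI$-homotopy $G(A\times I)\to GX$ is $G$ of a unique morphism $A\times I\to X$, which is an $I$-homotopy; so the elementary homotopy relations correspond in both directions, hence so do the equivalence relations they generate. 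This yields a bijection $[A,X]_C\cong[GA,GX]_D$, natural in $A,X\in C$. I would also note that $G$ preserves fibrant objects: if $X\to 1_C$ is a naive fibration, then by Lemma \ref{lem:G_preserves_naive_fibration} and $G(1_C)=1_D$ the morphism $GX\to 1_D$ is a naive fibration.

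With these two observations the direction ($\Leftarrow$) is immediate. If $Gf:GA\to GB$ is a weak equivalence in $D$, then for each fibrant $X$ of $C$ the object $GX$ is fibrant in $D$, so $[GB,GX]_D\to[GA,GX]_D$ is a bijection; transporting along the natural bijection of the first step, $[B,X]_C\to[A,X]_C$ is a bijection for every fibrant $X$, i.e.\ $f$ is a weak equivalence in $C$.

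For ($\Rightarrow$), suppose $f$ is a weak equivalence in $C$. I know that $[GB,GX]_D\to[GA,GX]_D$ is a bijection for every $X$ fibrant in $C$, and I must upgrade this to every fibrant $Y$ of $D$. Given such a $Y$, the morphism $Y\to 1_D=G(1_C)$ is a fibration, so hypothesis $(4)$ gives that $FY\to 1_C$ is a fibration, i.e.\ $FY$ is fibrant in $C$; applying Lemma \ref{lem:G_preserves_naive_fibration} once more, $GFY$ is fibrant in $D$. The unit $\eta_Y:Y\to GFY$ is a weak equivalence by hypothesis $(5)$, and its source and target are both fibrant and cofibrant (every object of a presheaf category being cofibrant, since cofibrations are the monomorphisms); hence $\eta_Y$ is a homotopy equivalence, so $(\eta_Y)_{*}:[Z,Y]_D\to[Z,GFY]_D$ is a bijection for every $Z$. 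Applying this naturality square to $(Gf)^{*}:[GB,-]_D\to[GA,-]_D$, the bottom arrow $[GB,GFY]_D\to[GA,GFY]_D$ is identified, via the bijection of the first step, with $[B,FY]_C\to[A,FY]_C$, which is a bijection because $f$ is a weak equivalence in $C$ and $FY$ is fibrant; therefore the top arrow $[GB,Y]_D\to[GA,Y]_D$ is a bijection, and $Gf$ is a weak equivalence in $D$.

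I expect the ($\Rightarrow$) direction to be the real content: one must produce, for an arbitrary fibrant $Y$ of $D$, a weak equivalence between fibrant objects relating $Y$ to something in the image of $G$, which is exactly what hypotheses $(4)$ and $(5)$ supply, and then invoke the standard fact that a weak equivalence between fibrant--cofibrant objects of a model category (here the model structure on $D$ exists by assumption) is a homotopy equivalence. Everything else is formal manipulation of the fully faithful adjunction $(F,G)$ and the already established Lemma \ref{lem:G_preserves_naive_fibration}.
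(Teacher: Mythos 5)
Your proof is correct and follows essentially the same route as the paper: the key ingredients are the bijection $[A,X]\cong[GA,GX]$ from full faithfulness, the fact that $F$ and $G$ preserve fibrant objects (hypothesis (4) and Lemma \ref{lem:G_preserves_naive_fibration}), and the comparison along $(\eta_Y)_*$ using hypothesis (5), which is exactly the commutative diagram the paper uses. You merely split the two directions (handling the converse directly via $G$-fibrancy) and spell out why post-composition with $\eta_Y$ is bijective, details the paper leaves implicit.
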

\begin{proof}
The functor $G$ being full, it induces isomorphism:
$$[A,X]\cong[GA,GX].$$
Let $X$ be any fibrant object in $C$. The forth condition of \ref{theo:transfered_model_structure} implies that $F$  preserves fibrant objects.
 One then has a commutative diagram:
\[\begin{tikzcd}
	{[G(B),X]} & {[G(B),GF(X)]} & {[B,F(X)]} \\
	{[G(A),X]} & {[G(A),GF(X)]} & {[A,F(X)]}
	\arrow["{(\eta_X)_!}", from=1-1, to=1-2]
	\arrow["{(\eta_X)_!}"', from=2-1, to=2-2]
	\arrow[from=1-1, to=2-1]
	\arrow[from=1-2, to=2-2]
	\arrow["\sim"', from=1-3, to=1-2]
	\arrow["\sim", from=2-3, to=2-2]
	\arrow[from=1-3, to=2-3]
\end{tikzcd}\]
where horizontal applications are bijections. The functor $F$ being surjective, this implies the result.
\end{proof}

\begin{lemma}
\label{lem:F_preserves_weak_equivalence}
The functor $F$ preserves weak equivalences.
\end{lemma}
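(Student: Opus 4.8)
The final statement to prove is \Cref{lem:F_preserves_weak_equivalence}: the functor $F$ preserves weak equivalences. The plan is to reduce a statement about $F$ to the already-established statement about $G$ (\Cref{lem:G_preserve_weak_equivalence}), using the triangle identities of the adjunction $(F,G)$ together with the hypothesis that $\eta\colon \id\to GF$ is a pointwise weak equivalence and the two-out-of-three property of weak equivalences in the $(\Gamma(GT\cup S),GI)$-local model structure on $D$.

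Here is the argument I would carry out. Let $f\colon A\to B$ be a weak equivalence in $C$. By \Cref{lem:G_preserve_weak_equivalence}, $G(f)\colon G(A)\to G(B)$ is a weak equivalence in $D$. Now consider the naturality square for the unit $\eta$:
\[\begin{tikzcd}
	{A} & {GF(A)} \\
	{B} & {GF(B).}
	\arrow["{\eta_A}", from=1-1, to=1-2]
	\arrow["f"', from=1-1, to=2-1]
	\arrow["{GF(f)}", from=1-2, to=2-2]
	\arrow["{\eta_B}"', from=2-1, to=2-2]
\end{tikzcd}\]
By hypothesis (5) of \Cref{theo:transfered_model_structure}, both $\eta_A$ and $\eta_B$ are weak equivalences in $C$; since $f$ is a weak equivalence, two-out-of-three (for weak equivalences in $C$, which follows from the fact that they are defined via bijections on homotopy classes into fibrant objects) shows that $GF(f)$ is a weak equivalence in $C$. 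Applying \Cref{lem:G_preserve_weak_equivalence} once more — this time in the direction "weak equivalence in $C$ implies its image under $G$ is a weak equivalence" would go the wrong way; instead I use the reverse implication of that lemma: a morphism $g$ in $C$ is a weak equivalence \emph{if and only if} $G(g)$ is a weak equivalence in $D$. So $GF(f)$ being a weak equivalence in $C$ tells us nothing directly; rather, I should run the previous square after applying $G$, or better, work entirely in $D$ from the start.

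Let me restate the clean version: apply $G$ to the naturality square above, obtaining in $D$ the square with vertices $G(A), GGF(A), G(B), GGF(B)$ and horizontal maps $G(\eta_A), G(\eta_B)$. By \Cref{lem:G_preserve_weak_equivalence}, $G(\eta_A)$ and $G(\eta_B)$ are weak equivalences in $D$ (since $\eta_A,\eta_B$ are weak equivalences in $C$), and $G(f)$ is a weak equivalence in $D$. Two-out-of-three in the model structure on $D$ then gives that $GGF(f) = G(G F(f))$ is a weak equivalence in $D$, whence by \Cref{lem:G_preserve_weak_equivalence} applied to the morphism $GF(f)$ of $C$, the map $GF(f)$ is a weak equivalence in $C$. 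Finally I use a triangle identity: the composite $F(A)\xrightarrow{F(\eta_A)} FGF(A)\xrightarrow{\varepsilon_{F(A)}} F(A)$ is the identity, and naturality of $\varepsilon$ together with the fact that $F$ applied to the weak equivalence $\eta_A$ — hmm, this still requires knowing $F$ preserves some weak equivalences, which is circular. The cleanest route avoiding circularity: observe that $GF(f)$ is a weak equivalence in $C$ (shown above), so by \Cref{lem:G_preserve_weak_equivalence} $GGF(f)$ is a weak equivalence in $D$; but also $G(F(f))$ sits in the factorization, and since $G$ is fully faithful and detects weak equivalences, $F(f)$ is a weak equivalence in $D$-image — more precisely, $G$ reflects weak equivalences by \Cref{lem:G_preserve_weak_equivalence}, so from $G(F(f))$ being a weak equivalence (which is exactly $GGF(f)$ read correctly as $G$ applied to $F(f)\in C$... wait $F(f)$ is a morphism of $C$) we conclude $F(f)$ is a weak equivalence in $C$.

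The main obstacle, and the thing to get exactly right, is the bookkeeping of which category each morphism lives in and in which direction \Cref{lem:G_preserve_weak_equivalence} is being applied, since that lemma is an "if and only if" bridging $C$ and $D$; the substantive content is entirely contained in that equivalence plus hypothesis (5), and the proof is a two-step two-out-of-three argument. I do not anticipate any genuinely hard step beyond this unwinding.

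\begin{proof}
Let $f\colon A\to B$ be a weak equivalence in $C$. By naturality of the unit $\eta$ we have a commutative square in $C$:
\[\begin{tikzcd}
	{A} & {GF(A)} \\
	{B} & {GF(B).}
	\arrow["{\eta_A}", from=1-1, to=1-2]
	\arrow["f"', from=1-1, to=2-1]
	\arrow["{GF(f)}", from=1-2, to=2-2]
	\arrow["{\eta_B}"', from=2-1, to=2-2]
\end{tikzcd}\]
By hypothesis (5) of \Cref{theo:transfered_model_structure}, $\eta_A$ and $\eta_B$ are weak equivalences in $C$. Applying $G$ to this square and using \Cref{lem:G_preserve_weak_equivalence}, the morphisms $G(\eta_A)$, $G(\eta_B)$ and $G(f)$ are weak equivalences in $D$. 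By two-out-of-three in the $(\Gamma(GT\cup S),GI)$-local model structure on $D$, the morphism $G\big(GF(f)\big)$ is a weak equivalence in $D$. Hence, by \Cref{lem:G_preserve_weak_equivalence} applied to the morphism $GF(f)$ of $C$, the morphism $GF(f)$ is a weak equivalence in $C$.

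Now, since $GF(f)$ is a weak equivalence in $C$, \Cref{lem:G_preserve_weak_equivalence} gives that $G\big(F(f)\big) = G\big(F(f)\big)$ is a weak equivalence in $D$; reading this as $G$ applied to the morphism $F(f)$ of $C$, and invoking the "only if" direction of \Cref{lem:G_preserve_weak_equivalence} in reverse — that is, its "if" direction, which states that $G(g)$ a weak equivalence implies $g$ a weak equivalence — we conclude that $F(f)$ is a weak equivalence in $C$. Thus $F$ preserves weak equivalences.
\end{proof}
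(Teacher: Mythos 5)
There is a genuine gap: as written, your proof establishes the wrong statement. Here $F$ is the reflector $D\to C$, so ``$F$ preserves weak equivalences'' means: if $f$ is a weak equivalence in $D$ (for the assumed $(\Gamma(GT\cup S),GI)$-local structure), then $F(f)$ is a weak equivalence in $C$. You instead start from a weak equivalence $f:A\to B$ in $C$; since the counit $FG\to \mathrm{id}_C$ is invertible, for such an $f$ the conclusion ``$F(f)$ is a weak equivalence in $C$'' is essentially a tautology, and it is not what the lemma is used for in the proof of theorem \ref{theo:transfered_model_structure} — there it is applied to the acyclic-cofibration half $j$ of a factorization of $G(f)$ performed in $D$, and $j$ has no reason to lie in the image of $G$. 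The confusion about where objects live then produces genuine type errors: $\eta$ is a natural transformation of endofunctors of $D$, so $\eta_A$ only makes sense for $A\in D$; one cannot apply $G$ to $\eta_A$ or to $GF(f)$, and ``the morphism $GF(f)$ of $C$'' does not parse ($GF(f)$ lives in $D$, while $F(f)$ lives in $C$), so several steps of your final paragraph are not meaningful as stated.

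The skeleton is salvageable, and once corrected it gives a valid proof that differs from the paper's. Take $f:A\to B$ a weak equivalence in $D$. By hypothesis (5) of \ref{theo:transfered_model_structure}, $\eta_A$ and $\eta_B$ are weak equivalences in $D$; the naturality square of $\eta$ and two-out-of-three in the genuine model structure on $D$ show that $GF(f)=G(F(f))$ is a weak equivalence in $D$; the ``if'' direction of lemma \ref{lem:G_preserve_weak_equivalence}, applied to the $C$-morphism $F(f)$, then gives that $F(f)$ is a weak equivalence in $C$. The paper instead argues directly from the definition of weak equivalences in $C$: since $F$ preserves finite products and $F(G(I))=I$, one has $[F(A),X]\cong[A,G(X)]$ for every $X$; $G(X)$ is fibrant when $X$ is (lemma \ref{lem:G_preserves_naive_fibration}), so a weak equivalence $f$ in $D$ induces a bijection $[B,G(X)]\to[A,G(X)]$, hence $[F(B),X]\to[F(A),X]$ is a bijection for every fibrant $X$ of $C$. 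Your (corrected) route trades this adjunction computation for two-out-of-three plus the reflection property of $G$, which is fine, but you must restate the hypothesis in $D$ and repair the bookkeeping before it counts as a proof.
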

\begin{proof}
By assumption, we have an isomorphism $F(X\times G(I)) = F(X)\times I$. We then have for all object $A$, a bijection :
$$[F(A),X]\cong[ A,G(X)].$$
Let $f:A\to B$ be a weak equivalence in $D$, and $X$ a fibrant object of $C$. We then have a diagram:
\[\begin{tikzcd}
	{[F(B),X]} & {[B,G(X)]} \\
	{[F(A),X]} & {[ A,G(X)]}
	\arrow[from=1-1, to=2-1]
	\arrow["\sim", from=1-1, to=1-2]
	\arrow["\sim"', from=2-1, to=2-2]
	\arrow[from=1-2, to=2-2]
\end{tikzcd}\]
where all horizontal morphisms are bijections. Lemma \ref{lem:G_preserves_naive_fibration} implies that $G(X)$ is fibrant. The right hand morphism is then a bijection, and so is the left one, which implies that $F(f)$ is a weak equivalence.
\end{proof}

\begin{proof}[Proof of theorem \ref{theo:transfered_model_structure}]
Let $f:A\to B$ be any morphism in $C$. We factorize $G(f)$ in an acyclic cofibration $j$ followed by a fibration $p$. There is a factorization in an acyclic cofibration and fibration as follows. The couple $(F(j),F(p))$ is a factorization of $f$,  $F(j)$ is an acyclic cofibration according to \ref{lem:F_preserves_weak_equivalence}, and the forth condition of \ref{theo:transfered_model_structure} implies that $F(p)$ is a fibration.
The factorization in a cofibration followed by an trivial fibration exists thanks to the small object argument. 

Let $f$ be both a fibration and a weak equivalence in $C$. According to \ref{lem:G_preserve_weak_equivalence}  $G(f)$ is a weak equivalence.
Furthermore, according to \ref{lem:F_preserves_weak_equivalence},
$F$ preserves acyclic cofibrations. The maps $G(f)$ is then a trivial fibration, and so is $f$ according to \ref{lem:G_detect_trivial_fibration}.

The fact that we get a monoidal model structure comes from the fact that $F$ commutes with finite products.

Eventually the adjonction is a Quillen adjonction because $F$ preserves cofibrations and weak equivalences. This is a Quillen equivalence because $id\to GF$ is a weak equivalence and $GF = id$.
\end{proof}

\subsection{Model structures on stratified presheaves}

For the rest of this section, we fix an Eilenberg-Zilber category $A$, $B$ a subset of $ob(A_{>0})$, $I$ an interval object in $\relativtPsh{A}{B}$ and  $T$ a class of monomorphisms of $\relativtPsh{A}{B}$.
We then have an adjonction 
\[\begin{tikzcd}
	{\Psh{t_BA}} && {\relativtPsh{A}{B}}
	\arrow[""{name=0, anchor=center, inner sep=0}, "\pi", curve={height=-6pt}, from=1-1, to=1-3]
	\arrow[""{name=1, anchor=center, inner sep=0}, "\iota", curve={height=-6pt}, from=1-3, to=1-1]
	\arrow["\dashv"{anchor=center, rotate=-90}, draw=none, from=0, to=1]
\end{tikzcd}\]

The data of $\iota(I)$ and of the class of $(\iota(I),\iota(\Gamma T))$-anodyne extensions, is a homotopical structure on $\Psh{t_BA}$ in the sense of \cite{cisinski2019higher}.
Theorem \cite[Theorem 2.4.19]{cisinski2019higher} implies that the existence of a $(I,\Gamma T )$-local models structure on $\Psh{t_BA}$.

We will use  the theorem \ref{theo:transfered_model_structure}, apply to the previous adjunction, 
to define one on $\relativtPsh{A}{B}$.

\begin{lemma}
\label{lem:pi_preserve_trivial_fibration}
The functor $\pi$ preserves trivial fibrations.
\end{lemma}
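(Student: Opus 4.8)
The statement to prove is that the functor $\pi : \Psh{t_BA} \to \relativtPsh{A}{B}$ preserves trivial fibrations. Recall that $\pi$ is the left adjoint to the inclusion $\iota$, sending a presheaf $X$ on $t_BA$ to the pair $(X_{|A}, tX)$ where $tX_a$ is the image of $X(a_t) \to X(a)$; and that trivial fibrations in either category are exactly the maps with the right lifting property against all cofibrations (= monomorphisms), equivalently against a cellular model.

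The plan is as follows. First I would recall that a trivial fibration in $\Psh{t_BA}$ is characterized by the right lifting property against the cellular model $Cel(t_BA) = Cel(A) \cup \{a \to a_t : a \in B\}$, and that a trivial fibration in $\relativtPsh{A}{B}$ is characterized by the right lifting property against the image under $\pi$ of that same set (the cellular model of the stratified category, consisting of boundary inclusions $\partial a \to a$ together with entire inclusions making the top cell thin). Then, given a trivial fibration $p : X \to Y$ in $\Psh{t_BA}$, to check that $\pi(p)$ is a trivial fibration I must solve all lifting problems of $\pi(p)$ against these generating cofibrations. Since $\pi \dashv \iota$, a lifting problem of $\pi(p)$ against a map $f$ in $\relativtPsh{A}{B}$ transposes — but here $f$ is itself of the form $\pi(g)$ for $g \in Cel(t_BA)$, so I instead want to compare lifting problems of $p$ against $g$ with lifting problems of $\pi(p)$ against $\pi(g)$ directly, using that the cellular generators $g$ are maps between objects that already lie in $\relativtPsh{A}{B}$ (i.e. representables $a$, boundaries $\partial a$, and the "made-thin" objects $a_t$ are all genuine stratified presheaves, on which $\pi$ acts as the identity up to the canonical comparison).

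The key point, which I would isolate as the main step, is that for the relevant generating cofibrations $g : K \to L$ in $Cel(t_BA)$ the unit $K \to \iota\pi K$ and $L \to \iota\pi L$ are isomorphisms (the domains and codomains are already stratified presheaves in the essential image of $\iota$), so a commutative square from $g$ to $p$ is the same datum as a commutative square from $\pi g$ to $\pi p$, and a diagonal filler for one transposes to a diagonal filler for the other. Concretely: $\partial a \to a$ and $a \to a_t$ have domain and codomain in $\relativtPsh{A}{B}$; applying $\iota$ to a lifting square for $\pi(p)$ against $\pi(g)$ and composing with units gives a lifting square for $p$ against $g$, which has a filler since $p$ is a trivial fibration; and that filler, being a map into $X = \iota\pi X$ out of $L = \iota\pi L$, is the transpose of a filler for the original square. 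Hence $\pi(p)$ lifts against all cellular generators, so it is a trivial fibration.

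The main obstacle is bookkeeping rather than conceptual: one has to verify carefully that the comparison maps $L \to \iota\pi L$ really are isomorphisms for each class of generator — in particular that $\pi(\partial a) \cong \partial a$ as a stratified presheaf (the boundary of a representable, with its inherited stratification, is already an object of $\relativtPsh{A}{B}$) and that $\pi$ sends $a \to a_t$ to the entire inclusion adding the top cell to the thin part — and to confirm that $\pi$ sends the cellular model of $t_BA$ onto (a cellular model's worth of) cofibrations of $\relativtPsh{A}{B}$, so that checking the RLP against $\pi(Cel(t_BA))$ suffices. Once those identifications are in place, the adjunction transposition argument is immediate. I would write this up in a few lines, citing the description of $Cel(t_BA)$ and of trivial fibrations from the preceding subsection.
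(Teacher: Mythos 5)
Your overall strategy --- reduce to the cellular model $\{\partial a\to a\}\cup\{a\to a_t,\ a\in B\}$ of $\relativtPsh{A}{B}$ and compare each lifting problem for $\pi(p)$ with a lifting problem for $p$ in $\Psh{t_BA}$ --- is the right one, and for the boundary generators $\partial a\to a$ it works exactly as you say, since maps out of a minimally stratified object into $\pi X$ and maps of the corresponding presheaf on $t_BA$ into $X$ coincide. The gap is in your key step, the claim that a square from $g$ to $p$ ``is the same datum'' as a square from $\pi g$ to $\pi p$, and explicitly in the assertion ``$X=\iota\pi X$''. The unit $X\to\iota\pi X$ is not an isomorphism for a general object of $\Psh{t_BA}$: at the level $a_t$ it is the corestriction of $X(a_t)\to X(a)$ onto its image, which is surjective but usually not injective (if it were always an isomorphism, $X$ would already lie in $\relativtPsh{A}{B}$ and the lemma would be vacuous). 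Consequently, applying $\iota$ to a square for $\pi(p)$ against $\pi(g)$ and composing with the unit isomorphisms of $K$ and $L$ produces a square against $\iota\pi(p)$, not against $p$, so you cannot invoke the lifting property of $p$ for it. Concretely, for the generator $a\to a_t$ the bottom map of the square only records that a certain cell of $\pi Y$ is thin, whereas a square against $p$ requires an actual element of $Y(a_t)$; these are not the same datum.

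The repair is short, and it is exactly where the content of the lemma sits: given $x\in X(a)=(\pi X)(a)$ whose image $p(x)$ is thin in $\pi Y$, choose, by the very definition of $\pi Y$, a witness $\tilde y\in Y(a_t)$ mapping to $p(x)$ in $Y(a)$; this produces a commutative square in $\Psh{t_BA}$ from the monomorphism $y(a)\to y(a_t)$ (your generator $a\to a_t$ viewed in $\Psh{t_BA}$) to $p$, and the lift supplied by the trivial fibration $p$ is an element $\tilde x\in X(a_t)$ lying over $x$, so $x$ is thin in $\pi X$. Combined with your (correct) treatment of $\partial a\to a$, this gives the right lifting property of $\pi(p)$ against the cellular model and hence the lemma. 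Since the paper's proof is only the word ``Straightforward'', this choice-of-witness step is what your write-up needs in place of the unit-isomorphism claim.
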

\begin{proof}
Straightforward.
\end{proof}

\begin{lemma}
\label{lem:X_to_pi(X)_is_a_trivial_fibration}
The morphism $X\xrightarrow{\pi} \iota\pi(X)$ is a trivial fibration. 
\end{lemma}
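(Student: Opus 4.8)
The plan is to verify directly that the unit map $X \xrightarrow{\pi} \iota\pi(X)$ has the right lifting property against every monomorphism of $\Psh{t_BA}$. Because $Cel(A)\cup\{a\to a_t : a\in B\}$ is a cellular model for $\Psh{t_BA}$, every monomorphism is a sequence of pushouts along these generators, so it suffices to solve lifting problems against the two families $\partial a \to a$ (for $a$ an object of $A$) and $a\to a_t$ (for $a\in B$). Throughout I would use the explicit description of the reflection: $\iota\pi(X)$ agrees with $X$ on the objects of $A$, takes the value $(tX)_c := \mathrm{im}\bigl(X(c_t)\to X(c)\bigr)\subseteq X(c)$ on each object $c_t$ (with $c\in B$), and the unit $X\to\iota\pi(X)$ is the identity on the $A$-components and the corestriction $X(c_t)\twoheadrightarrow (tX)_c$ on the $c_t$-components.

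First I would handle the generators $a\to a_t$. By Yoneda, a lifting square against $a \to a_t$ is the datum of an element $x\in X(a)$ (the top map) and an element $\bar y\in\iota\pi(X)(a_t)=(tX)_a\subseteq X(a)$ (the bottom map), and commutativity of the square unwinds to the single equation $x=\bar y$ in $X(a)$; in particular $x\in(tX)_a=\mathrm{im}(X(a_t)\to X(a))$. Any preimage $y\in X(a_t)$ of $x$ is then a diagonal filler: the upper triangle is its defining property, and the lower triangle holds because $X(a_t)\to(tX)_a$ is exactly the corestriction of the restriction map along $a\to a_t$.

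Next I would treat the generators $\partial a\to a$ for $a$ an object of $A$. The one nontrivial point is the combinatorial observation that $\partial a$, viewed as a subpresheaf of the representable $\mathbf{y}(a)$ in $\Psh{t_BA}$, has no sections over any object of the form $c_t$: every morphism $c_t\to a$ of $t_BA$ lies in $(t_BA)_-$ by construction of the stratification, hence its Eilenberg--Zilber factorization has an isomorphism for its $(t_BA)_+$-part, so it is not a boundary section of $\mathbf{y}(a)$. Thus $\partial a$ is concentrated on the objects of $A$, on which $X\to\iota\pi(X)$ is the identity. Given a lifting square against $\partial a\to a$, the bottom map is an element $\bar x\in\iota\pi(X)(a)=X(a)$; setting $x:=\bar x$ makes the lower triangle commute trivially, and the upper triangle --- an equality of morphisms $\partial a\to X$ involving only $A$-components, on which $X\to\iota\pi(X)$ is injective --- follows from commutativity of the square. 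This exhausts the cellular model, and so $\pi\colon X\to\iota\pi(X)$ is a trivial fibration.

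The only non-formal step is the combinatorial claim that the boundary of an object of $A$ gains no new sections over the objects $c_t$ in the stratified category $t_BA$; granting this, both lifting verifications are immediate applications of Yoneda. Note in particular that $\partial(a_t)\to a_t$ coincides with the generator $a\to a_t$, so no additional generators remain to be checked.
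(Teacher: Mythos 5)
Your overall strategy --- reduce to the cellular model $\{\partial a\to a\}\cup\{a\to a_t\}$ of $\Psh{t_BA}$ and solve the two families of lifting problems by hand --- is exactly the paper's (its proof is a one-line assertion that these liftings exist), and your treatment of the generators $a\to a_t$ is correct: commutativity forces the bottom element to lie in the image $(tX)_a$, and any preimage in $X(a_t)$ is a filler. The gap is in the case $\partial a\to a$, at precisely the step you flag as the only non-formal one: it is \emph{not} true that $\partial a$, computed in $\Psh{t_BA}$, has no sections over the objects $c_t$. Your argument uses the literal hom-set formula $\Hom_{t_BA}(c_t,a)=\Hom_A(c,a)\cap A_-\setminus\{id\}$, but this formula cannot be read literally: already in $t\Delta$ the composite of $s^0\colon[1]_t\to[0]$ with $\delta^0\colon[0]\to[1]$ is a non-surjective operator, so $\Hom([1]_t,[1])$ must contain all operators with non-trivial degenerate part for $t\Delta$ to be a category whose presheaves (with the monomorphism condition) are stratified simplicial sets. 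With the correct hom-sets, a morphism $c_t\to a$ whose Eilenberg--Zilber $+$-part is non-trivial does lie in $\partial a$; concretely, for $A=\Delta$ and $a=[1]$ the constant operators $[2]\to[1]$, which factor through the vertex inclusions $[0]\to[1]$, give sections of $\partial\Delta[1]$ over $[2]_t$ (as a stratified set, $\partial\Delta[1]$ contains all of its degenerate, hence thin, simplices). So the claim that $\partial a$ is concentrated on the objects of $A$ fails, and with it your justification of the upper triangle.

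The case is nevertheless repairable by a short, different argument, because your candidate lift is in fact forced: the unit is the identity on $A$-components, so the lower triangle forces the lift to be $\bar x$, and only the upper triangle needs checking. Over objects of $A$ it follows from commutativity of the square, as you say, since the unit is injective there. Over an object $c_t$, any section $u$ of $\partial a$ factors, by definition of the boundary, as $u=i\circ v$ with $i\colon b\to a$ a non-identity morphism of $(t_BA)_+$; hence $b$ is an object of $A$, $i$ is itself a section of $\partial a$ over $b$, and $u$ is its restriction along $v\colon c_t\to b$. Naturality of the two maps $\partial a\to X$ being compared then propagates the agreement already established over objects of $A$ to all sections over $c_t$. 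With this replacement the verification is complete and agrees in substance with the paper's (unwritten) one.
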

\begin{proof}
This morphism obviously has the right lifting property against $\partial a\to a$ and $a\to a_t$, which is a cellular model of $tA$.
\end{proof}

\begin{lemma}
\label{lem:iota_preserve_fibrant object}
If $X$ is fibrant, so is $\iota \pi X$.
\end{lemma}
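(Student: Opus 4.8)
The plan is to realise $\iota\pi X$ as a retract of $X$ inside $\Psh{t_BA}$ and then invoke the stability of fibrant objects under retracts.

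First I would use Lemma~\ref{lem:X_to_pi(X)_is_a_trivial_fibration}: the canonical morphism $p\colon X\to\iota\pi X$ is a trivial fibration. Since $\emptyset\to\iota\pi X$ is a monomorphism, hence a cofibration by Definition~\ref{defi:cof}, it has the left lifting property against $p$. Solving the lifting problem
\[\begin{tikzcd}
\emptyset & X \\
\iota\pi X & \iota\pi X
\arrow[from=1-1, to=1-2]
\arrow[from=1-1, to=2-1]
\arrow["p", from=1-2, to=2-2]
\arrow["\id"', from=2-1, to=2-2]
\arrow["s"', dotted, from=2-1, to=1-2]
\end{tikzcd}\]
produces a section $s\colon\iota\pi X\to X$ with $p\circ s=\id_{\iota\pi X}$. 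Thus $\iota\pi X$ is a retract of $X$ in $\Psh{t_BA}$, witnessed by $s$ and $p$.

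Next I would record the elementary fact that an object which is a retract of an object having the right lifting property against a given class of maps again has that right lifting property: given an anodyne extension $i\colon K\to L$ and a map $u\colon K\to\iota\pi X$, the composite $s\circ u$ extends along $i$ to some $v\colon L\to X$ since $X$ is fibrant, and then $p\circ v\colon L\to\iota\pi X$ extends $u=p\circ s\circ u$. Applying this to the class of anodyne extensions of $\Psh{t_BA}$, together with the fact that $X\to 1$ is a naive fibration (by fibrancy of $X$), we conclude that $\iota\pi X\to 1$ is a naive fibration, i.e. $\iota\pi X$ is fibrant. There is essentially no obstacle here; the only points to keep straight are that ``fibrant'' and ``trivial fibration'' are both meant in $\Psh{t_BA}$ — which is precisely the setting of Lemma~\ref{lem:X_to_pi(X)_is_a_trivial_fibration} — and that $\emptyset\to\iota\pi X$ is a cofibration, so that the section $s$ is guaranteed to exist.
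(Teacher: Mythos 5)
Your proof is correct and uses the same essential ingredients as the paper: Lemma~\ref{lem:X_to_pi(X)_is_a_trivial_fibration} together with the fibrancy of $X$ to solve lifting problems against anodyne extensions. The only (harmless) difference is packaging: you build a single global section $s$ of the trivial fibration $X\to\iota\pi X$ and run a retract argument, whereas the paper lifts each individual map $K\to\iota\pi X$ through that trivial fibration (using that $K$ is cofibrant) before extending along the anodyne extension.
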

\begin{proof}
Let $X$ be a fibrant object, and $i:K\to L$ an anodyne extension, and $K\to \iota \pi X$ any morphism. We consider the diagram: 
$$\begin{tikzcd}
	K & X \\
	L & {\iota\pi(X).}
	\arrow["{\eta_X}", from=1-2, to=2-2]
	\arrow[from=1-1, to=2-1]
	\arrow[from=1-1, to=2-2]
	\arrow["h"{description}, dotted, from=1-1, to=1-2]
	\arrow[dotted, from=2-1, to=1-2]
\end{tikzcd}$$
There exists a lifting $h:K\to X$ because $\eta_X$ is a trivial fibration, and there exists a lifting $l:L\to X$ because $X$ is fibrant. This shows that $\iota \pi X$ is fibrant.
\end{proof}

\begin{lemma}
\label{lem:iota_preserve_weak_equivalence}
The functor $\iota$ preserves weak equivalences.
\end{lemma}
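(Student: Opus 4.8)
The plan is to prove that $\iota$ preserves weak equivalences by reducing the question to the corresponding statement about mapping sets into fibrant objects, and then using the trivial fibration $X \to \iota\pi(X)$ from \Cref{lem:X_to_pi(X)_is_a_trivial_fibration} together with the compatibility of $\pi$ and $\iota$ with the interval. Concretely, recall that a morphism $f\colon A\to B$ in $\relativtPsh{A}{B}$ is a weak equivalence exactly when $f^*\colon [B,X]\to [A,X]$ is a bijection for every fibrant $X$ in $\relativtPsh{A}{B}$, and that the analogous characterization holds in $\Psh{t_BA}$. So I must show that if $f$ has this property in $\relativtPsh{A}{B}$, then $\iota(f)$ has it in $\Psh{t_BA}$.

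First I would fix a fibrant object $Y$ of $\Psh{t_BA}$ and analyze $[\iota(A),Y]$ and $[\iota(B),Y]$. The key point is that $\iota$ has a left adjoint $\pi$, and by \Cref{lem:iota_preserve_fibrant object} the object $\iota\pi(Y)$ is fibrant in $\Psh{t_BA}$ whenever $Y$ is — but more relevantly, I want to relate homotopy classes out of $\iota(A)$ into $Y$ with homotopy classes out of $A$ into some fibrant object of $\relativtPsh{A}{B}$. Since $\pi\dashv\iota$ and $\iota$ is fully faithful (it is the inclusion of a reflective subcategory), and since the interval $\iota(I)$ is used on both sides, a homotopy $\iota(A)\times_{\Psh{t_BA}}\iota(I)\to Y$ corresponds under adjunction to a homotopy $A\times_{\relativtPsh{A}{B}} I\to \pi(Y)$, provided $\pi(\iota(A)\times \iota(I)) \cong A\times I$; this last isomorphism holds because $\pi$ preserves products with $\iota(I)$ (the interval in $\relativtPsh{A}{B}$ is $\pi$ of $\iota(I)$, and $\pi$ preserves finite products with representable-like objects as in the earlier set-up, cf. the hypotheses of \Cref{theo:transfered_model_structure}). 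Hence $[\iota(A),Y]\cong [A,\pi(Y)]$ naturally in $A$. By \Cref{lem:iota_preserve_fibrant object}, $\iota\pi(Y)$ is fibrant, so $\pi(Y)$ is a fibrant object of $\relativtPsh{A}{B}$ — here I use that fibrancy in the reflective subcategory is detected by lifting against anodyne extensions, which by adjunction corresponds to the fibrancy of $\iota\pi(Y)$ established in that lemma.

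Given this, the proof is immediate: for a weak equivalence $f\colon A\to B$ in $\relativtPsh{A}{B}$ and a fibrant $Y$ in $\Psh{t_BA}$, we get a commutative square
\[
\begin{tikzcd}
{[\iota(B),Y]} & {[B,\pi(Y)]} \\
{[\iota(A),Y]} & {[A,\pi(Y)]}
\arrow["\cong", from=1-1, to=1-2]
\arrow["\cong"', from=2-1, to=2-2]
\arrow["{\iota(f)^*}"', from=1-1, to=2-1]
\arrow["{f^*}", from=1-2, to=2-2]
\end{tikzcd}
\]
where the horizontal maps are the bijections just constructed and the right vertical map is a bijection because $\pi(Y)$ is fibrant and $f$ is a weak equivalence. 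Therefore $\iota(f)^*$ is a bijection for every fibrant $Y$, so $\iota(f)$ is a weak equivalence.

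The main obstacle I anticipate is justifying carefully the natural isomorphism $[\iota(A),Y]\cong [A,\pi(Y)]$ at the level of homotopy classes, i.e. checking that the adjunction $\pi\dashv\iota$ is compatible with the chosen interval objects so that $I$-homotopies correspond on the two sides. This requires knowing that $\pi$ sends $\iota(I)$ to $I$ and preserves the relevant finite products (products of the form $A\times\iota(I)$), which should follow from the way $I$ was fixed in $\relativtPsh{A}{B}$ and the fact that $\pi$ is a left adjoint that is the identity on underlying presheaves over $A$; but this is exactly the kind of bookkeeping that needs to be done with care. Everything else — detecting fibrancy via \Cref{lem:iota_preserve_fibrant object}, and the two-out-of-three-free square argument above — is routine.
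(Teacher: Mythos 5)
Your argument has the same overall shape as the paper's: test $\iota(f)$ against a fibrant object $Y$ of $\Psh{t_BA}$, identify $[\iota(-),Y]$ with $[-,\pi(Y)]$, and then invoke that $f$ is a weak equivalence in $\relativtPsh{A}{B}$. The problem is your justification of the identification, which is where all the content of the lemma lies. You obtain $[\iota(A),Y]\cong[A,\pi(Y)]$ ``under adjunction'', but the adjunction goes the wrong way: $\pi$ is the \emph{left} adjoint and $\iota$ the right adjoint, so $\pi\dashv\iota$ only identifies $\Hom(Z,\iota W)$ with $\Hom(\pi Z,W)$, i.e.\ maps \emph{into} objects of the form $\iota W$. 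A general fibrant $Y$ of $\Psh{t_BA}$ need not satisfy the monomorphism condition, hence is not of this form, and there is no adjunction correspondence between maps $\iota(A)\to Y$ and maps $A\to\pi(Y)$. The comparison map that does exist (apply $\pi$, using $\pi\iota=\mathrm{id}$ and preservation of finite products) is in general not injective on $\Hom$-sets: two maps $\iota(A)\to Y$ that choose different lifts in $Y(a_t)$ of the same element of the image $tY_a$ are identified by $\pi$. So the step as written fails, and the real obstacle is not the interval/product bookkeeping you single out at the end.

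The bijection you want is nevertheless true at the level of homotopy classes, and proving it is exactly where the two lemmas you mention only in your opening plan must actually be deployed: the unit $\eta_Y:Y\to\iota\pi(Y)$ is a trivial fibration (Lemma \ref{lem:X_to_pi(X)_is_a_trivial_fibration}) and $\iota\pi(Y)$ is fibrant (Lemma \ref{lem:iota_preserve_fibrant object}), so postcomposition with $\eta_Y$ gives a bijection $[\iota(A),Y]\to[\iota(A),\iota\pi(Y)]$, while fullness of $\iota$ (together with $\iota$ preserving products with the interval) gives $[\iota(A),\iota\pi(Y)]\cong[A,\pi(Y)]$. This composite is precisely the paper's proof: its three-column diagram is your square factored through $[\iota(-),\iota\pi(Y)]$. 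Once the identification is repaired this way, your concluding square does yield the statement, modulo the remaining point---which the paper also leaves implicit---that $\pi(Y)$ must be fibrant in $\relativtPsh{A}{B}$ for the weak-equivalence hypothesis on $f$ to apply to it; your transposition sketch for that is the right idea, but it needs the extra observation that $\iota$ sends the relevant generating anodyne extensions (Leibniz products, i.e.\ unions of subobjects) to maps against which the fibrant object $\iota\pi(Y)$ lifts.
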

\begin{proof}
The functor $\iota$ being full, it induces isomorphism:
$$[A,X]\cong[\iota (A),\iota (X)].$$
Let $X$ be any fibrant object in $\Psh{t_BA}$, and $i:A\to B$ a weak equivalence.
The last lemma implies that $\iota\pi(X)$ is fibrant.
 One then has a commutative diagram:
\[\begin{tikzcd}
	{[\iota(B),X]} & {[\iota(B),\iota\pi(X)]} & {[B,\pi(X)]} \\
	{[\iota(A),X]} & {[\iota(A),\iota\pi(X)]} & {[A,\pi(X)].}
	\arrow[from=1-1, to=2-1]
	\arrow[from=1-2, to=2-2]
	\arrow["\sim", from=2-3, to=2-2]
	\arrow["\sim"', from=1-3, to=1-2]
	\arrow[from=1-3, to=2-3]
	\arrow["\eta", from=1-1, to=1-2]
	\arrow["{\eta_X}"', from=2-1, to=2-2]
\end{tikzcd}\]
where horizontal applications are bijections. This implies that the left hand morphism is an isomorphism, and the morphism $\iota(A)\to \iota(B)$ is then a weak equivalence.
\end{proof}

\begin{lemma}
\label{lem:pi_preserve_fibration}
If $f:X\to \iota Y$ is a fibration  in $\Psh{t_BA}$, the induced morphism $\pi(f):\pi(X)\to Y$ is a fibration in $\relativtPsh{A}{B}$.
\end{lemma}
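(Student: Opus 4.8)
The plan is to unwind the definition of fibration from \ref{defi:cof}: I want to show that $\pi(f)$ has the right lifting property against every acyclic cofibration of $\relativtPsh{A}{B}$, and I intend to check this by transporting each such lifting problem up to $\Psh{t_BA}$, solving it there using that $f$ is already a fibration, and bringing the solution back down. The two facts that make this work are: $\pi$ is left adjoint to the fully faithful inclusion $\iota$, so the unit component $\eta_{\iota Y}\colon \iota Y\to\iota\pi\iota Y$ is an isomorphism and naturality of $\eta$ gives $\iota\pi f\circ\eta_X=\eta_{\iota Y}\circ f$; and $\eta_X\colon X\to\iota\pi X$ is a trivial fibration by \ref{lem:X_to_pi(X)_is_a_trivial_fibration}, hence lifts against every monomorphism. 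Moreover, if $j\colon A\to B$ is an acyclic cofibration in $\relativtPsh{A}{B}$, then $\iota j$ is a monomorphism, hence a cofibration, and a weak equivalence by \ref{lem:iota_preserve_weak_equivalence}, so $\iota j$ is an acyclic cofibration in $\Psh{t_BA}$.

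Concretely, I would start from an acyclic cofibration $j\colon A\to B$ in $\relativtPsh{A}{B}$ together with a commutative square presenting a lifting problem of $\pi f$ against $j$, with maps $a\colon A\to\pi X$ and $b\colon B\to Y$. Applying $\iota$ yields the corresponding square in $\Psh{t_BA}$ with top map $\iota a\colon \iota A\to \iota\pi X$. First I would lift $\iota a$ through the trivial fibration $\eta_X$ along the cofibration $\emptyset\to\iota A$, obtaining $\tilde a\colon \iota A\to X$ with $\eta_X\circ\tilde a=\iota a$. A short diagram chase using $\iota\pi f\circ\eta_X=\eta_{\iota Y}\circ f$, the commutativity $\pi f\circ a=b\circ j$, and the invertibility of $\eta_{\iota Y}$ shows that $\tilde a$ and $\eta_{\iota Y}^{-1}\circ\iota b$ form a commutative square, i.e. a lifting problem of $f$ against $\iota j$; since $f$ is a fibration and $\iota j$ an acyclic cofibration, this has a solution $\ell\colon \iota B\to X$. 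Then $\eta_X\circ\ell\colon \iota B\to\iota\pi X$ solves the $\iota$-image of the original lifting problem (both maps in the outer square now commute by construction of $\ell$), and because $\iota$ is fully faithful the morphism $\eta_X\circ\ell$ is $\iota$ of a unique morphism $B\to\pi X$, which is then the desired lift in $\relativtPsh{A}{B}$.

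The point one has to notice — and the only place where something slightly non-formal happens — is that "$g\circ h$ a fibration and $h$ a trivial fibration" does \emph{not} imply "$g$ a fibration" in general; what rescues the argument is the naturality square, which makes $\iota\pi f$ behave like $f$ precomposed with the trivial fibration $\eta_X$, so that any lifting problem against an acyclic cofibration can be pushed through $\eta_X$ and handed to $f$. Everything else is formal bookkeeping: the notions of cofibration, weak equivalence and fibration in Definition \ref{defi:cof} do not presuppose the existence of the model structure; $\iota$ is a full subcategory inclusion, hence preserves monomorphisms and, by \ref{lem:iota_preserve_weak_equivalence}, weak equivalences; and full faithfulness of $\iota$ lets one pass lifts freely between $\Psh{t_BA}$ and $\relativtPsh{A}{B}$.
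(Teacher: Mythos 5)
Your proof is correct and follows essentially the same route as the paper: lift the top map through the trivial fibration $\eta_X\colon X\to\iota\pi X$ (\ref{lem:X_to_pi(X)_is_a_trivial_fibration}), solve the resulting lifting problem against $f$ using that $\iota$ of an acyclic cofibration is an acyclic cofibration (\ref{lem:iota_preserve_weak_equivalence}), compose back with $\eta_X$, and descend via fullness of $\iota$. Your extra care with the naturality square and the invertibility of $\eta_{\iota Y}$ just makes explicit what the paper leaves implicit when it says the composite of the two right-hand maps is a fibration.
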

\begin{proof}
Let $i:K\to L$ be a trivial cofibration  in $\relativtPsh{A}{B}$. The functor $\iota$ preserves cofibrations, the lemma  \ref{lem:iota_preserve_weak_equivalence} then implies that  $\iota(i)$ is an acyclic cofibration. Because $\iota$ is full, it is enough to show that $\iota(\pi(f))$ has the right lifting property against $\iota(i)$. 
Consider the following diagram:
\[\begin{tikzcd}
	&& X \\
	{\iota (K)} && {\iota\pi (X)} \\
	{\iota(L)} && {\iota(Y).}
	\arrow[from=1-3, to=2-3]
	\arrow["{~}"{description}, from=2-1, to=2-3]
	\arrow[from=3-1, to=3-3]
	\arrow[from=2-1, to=3-1]
	\arrow[from=2-3, to=3-3]
	\arrow["h", dotted, from=2-1, to=1-3]
	\arrow["l", dotted, from=3-1, to=1-3]
\end{tikzcd}\]
One can find a lifting $h:\iota(K)\to L$ because $X\to \iota\pi(X)$ is a trivial fibration. The composite of the two right hand morphisms is a fibration, and there then exists a lift $l:\iota(L)\to X$. This induces the desired lift.
\end{proof}

\begin{theorem}
\label{theo:local_model_structure_on_stratified_presheave}
There exists a $(I,\Gamma T)$-local model structure on $\relativtPsh{A}{B}$.
\end{theorem}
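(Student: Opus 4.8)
The plan is to apply Theorem~\ref{theo:transfered_model_structure} to the adjunction $(\pi,\iota)$ between $\Psh{t_BA}$ and $\relativtPsh{A}{B}$, with the roles of the theorem's categories $D$ and $C$ played by $\Psh{t_BA}$ and $\relativtPsh{A}{B}$ respectively, the functor $F$ by $\pi$, the functor $G$ by $\iota$, and the interval object the given $I$ (transported via $\iota$). The set $T$ in the statement of Theorem~\ref{theo:transfered_model_structure} is our class of monomorphisms $T$, and the auxiliary set $S$ is taken to be empty: then $\iota(I)$ together with the class of $(\iota(I),\iota(\Gamma T))$-anodyne extensions is a homotopical structure on $\Psh{t_BA}$ in the sense of \cite{cisinski2019higher}, and \cite[Theorem 2.4.19]{cisinski2019higher} gives the required $(\Gamma(\iota T),\iota I)$-local model structure on $\Psh{t_BA}$. (Since $S=\emptyset$, the condition $F(f)=\mathrm{id}$ for $f\in S$ is vacuous, and $\Gamma(GT\cup S)=\Gamma(\iota T)$.)

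Next I would verify the five hypotheses of Theorem~\ref{theo:transfered_model_structure} one by one. Condition (1): $\relativtPsh{A}{B}$ admits the cellular model $Cel(\relativtPsh{A}{B})=\{\partial a\to a\}\cup\{a\to a_t,\ a\in B\}$, and $\iota$ of this set is a cellular model for $\Psh{t_BA}$ — indeed it is $Cel(t_BA)$ up to the observation that all monomorphisms of $\Psh{t_BA}$ landing in a stratified presheaf factor through cells of this form; more precisely $\iota(Cel(\relativtPsh{A}{B}))$ generates all monomorphisms of $\Psh{t_BA}$ by pushout, which one checks by a standard skeletal induction using that $X(a_t)\to X(a)$ is monic for stratified presheaves. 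Condition (2): $\relativtPsh{A}{B}$ is closed under finite limits in $\Psh{t_BA}$ (the monomorphism condition $X(a_t)\to X(a)$ is preserved by finite limits), and $\pi$ preserves finite limits because its value is computed as $(X_{|A},tX)$ with $tX_a$ the image of $X(a_t)\to X(a)$, and taking images commutes with finite limits of presheaves of sets. Condition (3), that $\iota$ preserves cofibrations, is immediate since $\iota$ is a full inclusion and cofibrations on both sides are monomorphisms. Condition (4), that $\pi$ sends a fibration $f\colon X\to\iota(Y)$ to a fibration, is exactly Lemma~\ref{lem:pi_preserve_fibration}. Condition (5), that the unit $\mathrm{id}\to\iota\pi$ is a pointwise weak equivalence, follows from Lemma~\ref{lem:X_to_pi(X)_is_a_trivial_fibration}, since a trivial fibration is in particular a weak equivalence.

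With all five hypotheses checked, Theorem~\ref{theo:transfered_model_structure} produces a $(\Gamma T,I)$-local model structure on $\relativtPsh{A}{B}$, and moreover tells us that $(\pi,\iota)$ is a Quillen equivalence — which is the content of the theorem we are proving. I would simply assemble these citations into a short proof: recall the construction of the model structure on $\Psh{t_BA}$ from the Cisinski machinery, point to Lemmas~\ref{lem:pi_preserve_trivial_fibration}, \ref{lem:X_to_pi(X)_is_a_trivial_fibration}, \ref{lem:iota_preserve_fibrant object}, \ref{lem:iota_preserve_weak_equivalence}, \ref{lem:pi_preserve_fibration} for the individual hypotheses, and invoke Theorem~\ref{theo:transfered_model_structure}. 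The only genuinely delicate point — and the one I would write out most carefully — is hypothesis (1): one must confirm not merely that $\iota$ sends a cellular model to a set of monomorphisms, but that this set is actually a cellular model for \emph{all} of $\Psh{t_BA}$, i.e.\ that every monomorphism of $\Psh{t_BA}$ (not just those between stratified presheaves) decomposes as an anodyne-style tower of pushouts of $\partial a\to a$ and $a\to a_t$; this is where the Eilenberg--Zilber structure of $t_BA$ must be used, via the standard argument that in presheaves over an Eilenberg--Zilber category the boundary inclusions plus the new "thinness" cells $a\to a_t$ generate all monos.

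\begin{proof}
We apply Theorem~\ref{theo:transfered_model_structure} to the adjunction $\pi\dashv\iota$ between $D:=\Psh{t_BA}$ and $C:=\relativtPsh{A}{B}$, taking $S:=\emptyset$, $F:=\pi$, $G:=\iota$ and the interval $I$. As explained above, $\iota(I)$ together with the $(\iota(I),\iota(\Gamma T))$-anodyne extensions is a homotopical structure in the sense of \cite{cisinski2019higher}, so by \cite[Theorem 2.4.19]{cisinski2019higher} there is a $(\Gamma(\iota T),\iota I)$-local model structure on $\Psh{t_BA}$. We check the hypotheses of Theorem~\ref{theo:transfered_model_structure}. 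Condition $(1)$: the set $Cel(\relativtPsh{A}{B}):=\{\partial a\to a\}\cup\{a\to a_t,\ a\in B\}$ is a cellular model for $\relativtPsh{A}{B}$, and $\iota$ carries it to $Cel(t_BA)$, which is a cellular model for $\Psh{t_BA}$ by the standard skeletal induction for presheaves on an Eilenberg--Zilber category: any monomorphism is built by pushouts along the boundary inclusions $\partial a\to a$ together with the thinness inclusions $a\to a_t$. Condition $(2)$: the defining condition for an object of $\relativtPsh{A}{B}$, namely that $X(a_t)\to X(a)$ be a monomorphism, is stable under finite limits taken in $\Psh{t_BA}$, so $\relativtPsh{A}{B}$ is closed under finite limits; and $\pi$ preserves them because $\pi(X)=(X_{|A},tX)$ with $tX_a$ the image of $X(a_t)\to X(a)$, and formation of images in presheaves of sets commutes with finite limits. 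Condition $(3)$ holds since $\iota$ is fully faithful and cofibrations are monomorphisms on both sides. Condition $(4)$ is Lemma~\ref{lem:pi_preserve_fibration}. Condition $(5)$ follows from Lemma~\ref{lem:X_to_pi(X)_is_a_trivial_fibration}, a trivial fibration being in particular a weak equivalence. Theorem~\ref{theo:transfered_model_structure} now yields the desired $(I,\Gamma T)$-local model structure on $\relativtPsh{A}{B}$.
\end{proof}
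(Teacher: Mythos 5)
Your proposal is essentially the paper's own proof: it applies Theorem \ref{theo:transfered_model_structure} to the adjunction $(\pi,\iota)$ with exactly the same division of labour --- conditions (1)--(3) treated as routine (the paper simply calls them obvious), condition (4) supplied by Lemma \ref{lem:pi_preserve_fibration} and condition (5) by Lemma \ref{lem:X_to_pi(X)_is_a_trivial_fibration}. The only caveat is your justification of (2): ``formation of images commutes with finite limits'' is an overstatement (it fails for equalizers in general), but images do commute with finite products, and preservation of finite products (e.g.\ $\pi(X\times\iota I)\cong\pi(X)\times I$ and the Leibniz products) is all that the proof of Theorem \ref{theo:transfered_model_structure} actually uses of that hypothesis, so the argument stands.
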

\begin{proof}
We apply theorem \ref{theo:transfered_model_structure}. Condition $(1)$, $(2)$ and $(3)$ are obvious. The condition $(4)$ is lemma \ref{lem:pi_preserve_fibration}, the fifth is lemma \ref{lem:X_to_pi(X)_is_a_trivial_fibration}. 
\end{proof}

One can show the analogue result for bistratified presheaves:

\begin{theorem}
\label{theo:local_model_structure_on_bistratified_presheave}
Let $I$ be an interval object on $\ctPsh{A}$, and $T$ a set of monomorphisms. 
There exists an $(I,\Gamma T)$-local model structure on $\ctPsh{A}$.
\end{theorem}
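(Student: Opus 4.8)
Looking at the final statement, which is Theorem \ref{theo:local_model_structure_on_bistratified_presheave}, it asserts the existence of an $(I,\Gamma T)$-local model structure on $\ctPsh{A}$ for any interval object $I$ and set of monomorphisms $T$.

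The plan is to mirror exactly the proof of Theorem \ref{theo:local_model_structure_on_stratified_presheave}, replacing stratified presheaves by bistratified presheaves throughout. First I would set up the adjunction $\pi \dashv \iota$ between $\Psh{ctA}$ and $\ctPsh{A}$, where $\iota$ is the forgetful functor (remembering that a bistratified presheaf is a presheaf on $ctA$ with $X(a_t)\to X(a_c)$ and $X(a_c)\to X(a)$ both monomorphisms) and $\pi$ is its left adjoint, replacing a presheaf on $ctA$ by the bistratified presheaf obtained by taking images. As in the stratified case, I would invoke \cite[Theorem 2.4.19]{cisinski2019higher} to get the $(\iota(I),\iota(\Gamma T))$-local model structure on $\Psh{ctA}$, since the pair consisting of $\iota(I)$ together with the $(\iota(I),\iota(\Gamma T))$-anodyne extensions is a homotopical structure in the sense of \cite{cisinski2019higher}.

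Then I would apply Theorem \ref{theo:transfered_model_structure} to this adjunction. The five hypotheses are verified exactly as in the stratified case: condition $(1)$ holds because $ctA$ has the cellular model $Cel(ctA) = Cel(A)\cup\{a\to a_c\}\cup\{a_c\to a_t\}$ and its image under $\iota$ is a cellular model for $\Psh{ctA}$; conditions $(2)$ and $(3)$ are immediate since $\ctPsh{A}$ is closed under finite limits in $\Psh{ctA}$ and $\iota$ preserves monomorphisms; condition $(4)$ (the bistratified analogue of Lemma \ref{lem:pi_preserve_fibration}) follows the same argument, using that $X\to\iota\pi(X)$ is a trivial fibration because it has the right lifting property against the cellular model $Cel(ctA)$; and condition $(5)$ is the bistratified analogue of Lemma \ref{lem:X_to_pi(X)_is_a_trivial_fibration}. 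Each of the intermediate lemmas (\ref{lem:pi_preserve_trivial_fibration}, \ref{lem:X_to_pi(X)_is_a_trivial_fibration}, \ref{lem:iota_preserve_fibrant object}, \ref{lem:iota_preserve_weak_equivalence}, \ref{lem:pi_preserve_fibration}) has a verbatim bistratified counterpart with the same proof, since none of them used anything specific to a single stratification beyond the existence of the cellular model and the fact that $\iota$ is full and preserves cofibrations.

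I do not expect a genuine obstacle here — the proof is a routine transcription. The only point requiring minor care is to make sure the cellular model $Cel(ctA)$ (which now involves two layers of "thinness" maps $a\to a_c$ and $a_c\to a_t$) indeed detects trivial fibrations and that the morphism $X\to\iota\pi X$ lifts against all of it, i.e. that taking images componentwise for both the cartesian and thin structures is compatible; but this is exactly the content of Definition \ref{defi:stratified_presheaf} generalized to the bistratified setting, and is immediate. Thus the proof reads: apply \cite[Theorem 2.4.19]{cisinski2019higher} to obtain the local model structure on $\Psh{ctA}$, then apply Theorem \ref{theo:transfered_model_structure} to the adjunction $\pi\dashv\iota$, whose hypotheses are the bistratified analogues of Lemmas \ref{lem:pi_preserve_fibration} and \ref{lem:X_to_pi(X)_is_a_trivial_fibration} together with the obvious verifications.
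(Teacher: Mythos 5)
Your proof is correct and is essentially the paper's own argument: the paper proves the stratified case via Cisinski's theorem on $\Psh{t_BA}$ plus the transfer Theorem \ref{theo:transfered_model_structure} (conditions verified by Lemmas \ref{lem:pi_preserve_fibration} and \ref{lem:X_to_pi(X)_is_a_trivial_fibration}), and then simply asserts that the bistratified case follows by the same transcription, which is exactly what you carried out.
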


Let $S$ be saturation set for $\relativtPsh{A}{B}$. Theorem \ref{theo:local_model_structure_on_stratified_presheave} implies the existence of a $(I,\Gamma(T\cup S))$-local structure on $\relativtPsh{A}{B}$. We will use this structure and the adjonction: 
\[\begin{tikzcd}
	{\relativtPsh{A}{B}} && {\relativtPsh{A}{B}_{\Gamma S}}
	\arrow[""{name=0, anchor=center, inner sep=0}, "S", curve={height=-6pt}, from=1-1, to=1-3]
	\arrow[""{name=1, anchor=center, inner sep=0}, "i", curve={height=-6pt}, from=1-3, to=1-1]
	\arrow["\dashv"{anchor=center, rotate=-90}, draw=none, from=0, to=1]
\end{tikzcd}\]
to create a model structure on $\relativtPsh{A}{B}_{\Gamma S}$.

\begin{lemma}
\label{lem:X_to_iSX_is_a_weak_equivalence}
The morphism $X\to iRX$ is obtained as a sequence of pushout along morphisms of $\Gamma S$, and is then a weak equivalence.
\end{lemma}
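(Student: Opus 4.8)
The plan is to identify the unit $X\to iRX$ with the factorization produced by the small object argument for the set $\Gamma S$, and then to read off the conclusion from the facts that $\Gamma S$ consists of entire maps and that $\Gamma S\subseteq\Gamma(T\cup S)$.

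First I would record the single structural fact driving the argument: every morphism of $\Gamma S = S~\hat{\times}~Cel(\relativtPsh{A}{B})$ is \emph{entire}, i.e.\ an isomorphism on underlying presheaves. Indeed the morphisms of $S$ are transfinite composites of pushouts of the entire maps $a\to a_t$, hence entire, and the Leibniz product of an entire map with a monomorphism is again entire (a one-line verification on underlying presheaves). Consequently any transfinite composite of pushouts of morphisms of $\Gamma S$ is entire: it enlarges the marking but leaves the underlying presheaf unchanged.

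Next I would run the small object argument for $\Gamma S$ with source $X$, obtaining $X\xrightarrow{j}X_\infty$ where $j$ is a sequence of pushouts along morphisms of $\Gamma S$ and $X_\infty$ has the right lifting property against $\Gamma S$, i.e.\ $X_\infty\in\relativtPsh{A}{B}_{\Gamma S}$. I would then check that $j$ has the universal property of the unit of the reflection $R\dashv i$: for $Y\in\relativtPsh{A}{B}_{\Gamma S}$, any morphism $X\to iY$ extends along $j$ because $iY$ is right orthogonal to $\Gamma S$, and the extension is unique because $j$ is entire, so a morphism out of $X_\infty$ is determined by its underlying morphism of presheaves, which is in turn determined by $X\to iY$. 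Hence $j$ is canonically isomorphic to $X\to iRX$, exhibiting $X\to iRX$ as a sequence of pushouts along morphisms of $\Gamma S$. Finally, since $\Gamma(T\cup S) = \Gamma T\cup\Gamma S$, these morphisms lie in $\Gamma(T\cup S)$, so $X\to iRX$ is an anodyne extension for the $(I,\Gamma(T\cup S))$-local model structure of Theorem~\ref{theo:local_model_structure_on_stratified_presheave}, hence an acyclic cofibration, hence a weak equivalence.

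The only step that is not pure bookkeeping is the identification of the small-object-argument factorization with $R$, and even that is soft: it uses nothing beyond the entireness of $\Gamma S$ to force uniqueness of extensions. I therefore do not expect a genuine obstacle; the work is confined to spelling out the entireness computations and the orthogonality argument.
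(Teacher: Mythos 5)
Your proposal is correct; the paper itself offers no argument beyond ``Straightforward,'' and your write-up is a legitimate way of filling that in. The only real content is exactly what you isolate: morphisms of $S$ are entire (being sequences of pushouts of $a\to a_t$), entireness is stable under Leibniz product with monomorphisms, pushouts and transfinite composition, so a $\Gamma S$-cell complex only enlarges the marking; and since $\Gamma S\subseteq \Gamma(T\cup S)$, such a map is an anodyne extension for the $(I,\Gamma(T\cup S))$-local structure of Theorem \ref{theo:local_model_structure_on_stratified_presheave}, hence a weak equivalence. Where you take a slight detour is the identification step: you run the small object argument and then match its output with $iRX$ via the universal property of the unit, using entireness for uniqueness of extensions. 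This works, but note the more direct version implicit in the paper's definition of $R$: build $(X,tX)=(X,tX_0)\subset (X,tX_1)\subset\cdots$ by transfinitely pushing out along the maps of $\Gamma S$ (each stage adds markings only), observe that the colimit is $\Gamma S$-saturated, and check by transfinite induction that every added marking lies in any saturated marking containing $tX$ (extend the attaching map into $(X,\overline{tX})$ using saturation), so the colimit marking is exactly $\overline{tX}$ and the composite is literally the map $X\to iRX$. Both routes use the same entireness input; yours buys a cleaner abstract identification, the direct one avoids invoking the adjunction at all. One cosmetic point: the small object argument produces pushouts of coproducts of maps in $\Gamma S$, so if you want ``a sequence of pushouts along morphisms of $\Gamma S$'' verbatim you should remark that these can be rearranged cell by cell (harmless here, since all maps are entire and markings form a set).
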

\begin{proof}
Straightforward.
\end{proof}

\begin{lemma}
\label{lem:S_commutes_with_finite_product}
The functor $R$ commutes with finite products. 
\end{lemma}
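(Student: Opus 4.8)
The plan is to exploit that the reflection $R$ alters only the marking, never the underlying presheaf, and that $\Gamma S=S\hat{\times}Cel(\relativtPsh{A}{B})$ is stable under Leibniz product with monomorphisms. I would first record two auxiliary facts. (i) A product of $\Gamma S$-saturated objects is $\Gamma S$-saturated: the right lifting property against a fixed class of maps, viewed as a property of objects over the terminal object, is closed under products (lift componentwise), and the description of limits in $\relativtPsh{A}{B}_{\Gamma S}$ recalled above then shows that the product there is formed on underlying presheaves and markings exactly as in $\relativtPsh{A}{B}$, with no extra reflection. (ii) For the reflection unit $\eta_X\colon(X,tX)\to iR(X,tX)$ and any object $(Y,tY)$, the map $\eta_X\times\mathrm{id}_{(Y,tY)}$ is again a transfinite composite of pushouts of morphisms of $\Gamma S$.

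Granting (i) and (ii), consider the composite
$$
(X,tX)\times(Y,tY)\ \xrightarrow{\ \eta_X\times\mathrm{id}\ }\ iR(X,tX)\times(Y,tY)\ \xrightarrow{\ \mathrm{id}\times\eta_Y\ }\ iR(X,tX)\times iR(Y,tY).
$$
By (ii) this lies in the cellular closure of $\Gamma S$, by (i) its target is $\Gamma S$-saturated, and every morphism appearing is \emph{entire} (the identity on underlying presheaves), because the morphisms of $\Gamma S$ are. Now an entire morphism $k\colon Z\to W$ lying in the cellular closure of $\Gamma S$, with $W$ a $\Gamma S$-saturated object, is automatically a reflection of $Z$: the comparison map $\varphi\colon RZ\to W$ under $Z$ is entire, and since $RZ$ is $\Gamma S$-saturated it has the right lifting property against the cellular map $k$, which yields an entire map $\psi\colon W\to RZ$ with $\psi k=\eta_Z$; as an entire endomorphism of a stratified presheaf is the identity, $\varphi\psi$ and $\psi\varphi$ are both identities, so $\varphi$ is an isomorphism. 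Applying this to the composite displayed above gives $R\big((X,tX)\times(Y,tY)\big)\cong iR(X,tX)\times iR(Y,tY)$, which by (i) is the product $R(X,tX)\times R(Y,tY)$ computed in $\relativtPsh{A}{B}_{\Gamma S}$. Since the terminal stratified presheaf — all of whose cells are thin — is already $\Gamma S$-saturated, $R$ also preserves the terminal object, hence all finite products.

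It remains to prove (ii), and this is exactly where the choice $\Gamma S=S\hat{\times}Cel(\relativtPsh{A}{B})$ rather than $S$ itself is used. I would argue directly from \ref{lem:X_to_iSX_is_a_weak_equivalence}: $\eta_X$ is a transfinite composite of pushouts of maps $s\hat{\times}c$ with $s\in S$ and $c\in Cel(\relativtPsh{A}{B})$; since the Leibniz product commutes with pushouts and transfinite composites in each variable and is associative, and $\eta_X\times\mathrm{id}_{(Y,tY)}=\eta_X\hat{\times}(\emptyset\to(Y,tY))$, it suffices to check that $(s\hat{\times}c)\hat{\times}(\emptyset\to(Y,tY))=s\hat{\times}\big(c\times\mathrm{id}_{(Y,tY)}\big)$ lies in the cellular closure of $\Gamma S$; but $c\times\mathrm{id}_{(Y,tY)}$ is a monomorphism, hence a cellular complex of maps of $Cel(\relativtPsh{A}{B})$, so $s\hat{\times}\big(c\times\mathrm{id}_{(Y,tY)}\big)$ is a cellular complex of maps of $S\hat{\times}Cel(\relativtPsh{A}{B})=\Gamma S$. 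This is an instance of the standard stability of $\Gamma$-cellular maps under Leibniz product with monomorphisms, cf. \cite{cisinski2019higher}.

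The main obstacle is really to avoid the naïve approach — building the saturated marking $\overline{tX\times tY}$ cell by cell and comparing it with $\overline{tX}\times\overline{tY}$ — which is clumsy because a marked cell of $X$ admits no canonical test map along which to propagate it into the product. The argument above instead transports the \emph{entire} reflection of $X$ across the product by Leibniz-stability and then recognizes the result as the reflection by an entireness-and-uniqueness argument. The only input not internal to the present setup is the Leibniz-stability of $\Gamma S$-cellular maps, which is built into the definition of $\Gamma$.
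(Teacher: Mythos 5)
Your proof is correct and takes essentially the same route as the paper's: exhibit $X\times Y\to R(X)\times R(Y)$ as a $\Gamma S$-cellular entire map into a saturated object, produce the comparison map and a lift in the opposite direction using saturation of $R(X\times Y)$, and conclude that both composites are entire and hence identities. The only difference is one of detail: you spell out, via the Leibniz-product decomposition $\eta_X\hat{\times}(\emptyset\to Y)$, the step that the paper only remarks, namely that $X\times Y\to R(X)\times R(Y)$ is a sequence of pushouts along morphisms of $\Gamma S$.
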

\begin{proof}
Let $X$ and $Y$ be two objects of $\Psh{t_BA}$. We will show that $iR(X\times Y) \cong iR(X)\times iR(Y)$. First, we have a morphism $p:iR(X\times Y)\to iR(X)\times iR(Y)$. For the other way, remark that $X\times Y\to R(X)\times R(Y)$ is also obtained  as a sequence of pushout along morphisms of $\Gamma S$. We then have a lift in the following diagram.
\[\begin{tikzcd}
	{X\times Y} & {R(X\times Y)} \\
	{R(X)\times R(Y).}
	\arrow[from=1-1, to=2-1]
	\arrow[from=1-1, to=1-2]
	\arrow["q"', dotted, from=2-1, to=1-2]
\end{tikzcd}\]
Both $pq$ and $qp$ are identities on the underlying presheaf on $A$, and so are equal to identities. 
\end{proof}

\begin{theorem}
\label{theo:local_model_structure_on_saturated_stratified_presheave}
There exists a $(I,\Gamma T)$-local model structure on $\relativtPsh{A}{B}_{\Gamma S}$. 
\end{theorem}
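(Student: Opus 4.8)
The plan is to deduce this from the transfer criterion of Theorem~\ref{theo:transfered_model_structure}, applied to the reflective adjunction whose left adjoint is the saturation functor $R$ and whose right adjoint is the inclusion $i\colon\relativtPsh{A}{B}_{\Gamma S}\hookrightarrow\relativtPsh{A}{B}$. Concretely, I would take $D:=\relativtPsh{A}{B}$ equipped with the $(I,\Gamma(T\cup S))$-local model structure — which exists by Theorem~\ref{theo:local_model_structure_on_stratified_presheave} applied to the saturation class $T\cup S$ — together with $C:=\relativtPsh{A}{B}_{\Gamma S}$, $F:=R$, $G:=i$, and with the distinguished set of Theorem~\ref{theo:transfered_model_structure} taken to be $\Gamma S$. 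That $F$ sends this distinguished set to identities holds because every morphism of $\Gamma S$ is an isomorphism on underlying presheaves while $R$ only alters the stratification; moreover, since the unit $X\to iRX$ is a transfinite composite of pushouts along $\Gamma S$ (Lemma~\ref{lem:X_to_iSX_is_a_weak_equivalence}), the generating sets $\Gamma(i(\Gamma T)\cup\Gamma S)$ and $\Gamma(T\cup S)$ produce the same class of anodyne extensions on $D$. One is then reduced to verifying the five conditions of Theorem~\ref{theo:transfered_model_structure}.

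Conditions~(1) and~(3) are routine: a cellular model of $C$ is provided by the $R$-images of the generating cofibrations of $D$, and $i$ preserves monomorphisms because it is a full inclusion and monomorphisms of stratified presheaves are detected on underlying presheaves. Condition~(2) — that $C$ is closed under finite limits and $R$ preserves them — is Lemma~\ref{lem:S_commutes_with_finite_product} for binary products, the terminal object being trivially $\Gamma S$-saturated and the case of equalizers, hence of all finite limits, being identical, since $R$ is the identity on underlying presheaves and $\Gamma S$-saturation commutes with limits. Condition~(5), that $\mathrm{id}\to iR$ is a pointwise weak equivalence, is exactly Lemma~\ref{lem:X_to_iSX_is_a_weak_equivalence}.

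The real content is condition~(4): for every fibration $f\colon X\to iY$ of $D$, the map $R(f)\colon RX\to Y$ should be a fibration of $C$. I would argue as follows. Because $\Gamma S\subseteq\Gamma(T\cup S)$, the morphisms of $\Gamma S$ are acyclic cofibrations of $D$, so $f$ has the right lifting property against $\Gamma S$; together with the $\Gamma S$-saturation of $iY$ this forces $X$ to be $\Gamma S$-saturated, so $X\cong iRX$ and $R(f)$ is simply $f$ viewed inside $C$. To conclude that $f$ is a fibration of $C$ it then suffices to show that $i$ carries acyclic cofibrations of $C$ to acyclic cofibrations of $D$ — for then $f$ lifts against them by assumption. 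As $i$ already preserves cofibrations, this is the assertion that $i$ preserves weak equivalences, which I would establish as in Lemma~\ref{lem:iota_preserve_weak_equivalence}: any fibrant object $Z$ of $D$ has the right lifting property against $\Gamma S$, hence is $\Gamma S$-saturated and of the form $iZ'$ with $Z'$ fibrant in $C$ (using that $i$ matches $\Gamma T$-anodyne extensions of $C$ with anodyne extensions of $D$, up to $\Gamma S$-cells, in both directions), so fullness of $i$ identifies $g^*\colon[B,Z']_C\to[A,Z']_C$ with $(ig)^*\colon[iB,Z]_D\to[iA,Z]_D$, which is a bijection whenever $g$ is a weak equivalence of $C$.

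The main obstacle is precisely this verification of condition~(4). In the unsaturated case, the corresponding Lemma~\ref{lem:pi_preserve_fibration} relied on the unit $X\to\iota\pi X$ being a \emph{trivial fibration} (Lemma~\ref{lem:X_to_pi(X)_is_a_trivial_fibration}); here the unit $X\to iRX$ is merely an acyclic cofibration, so that proof does not transpose and one must instead exploit the fact that fibrant objects of $D$ are automatically $\Gamma S$-saturated. With condition~(4) in hand, Theorem~\ref{theo:transfered_model_structure} yields the desired $(I,\Gamma T)$-local model structure on $\relativtPsh{A}{B}_{\Gamma S}$, and incidentally exhibits $(R,i)$ as a Quillen equivalence onto it.
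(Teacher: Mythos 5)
Your proposal follows the same route as the paper: apply the transfer criterion of Theorem \ref{theo:transfered_model_structure} to the reflective adjunction $(R,i)$, with $\relativtPsh{A}{B}$ carrying the $(I,\Gamma(T\cup S))$-local structure from Theorem \ref{theo:local_model_structure_on_stratified_presheave}, and verify the five conditions using Lemma \ref{lem:S_commutes_with_finite_product} for condition (2) and Lemma \ref{lem:X_to_iSX_is_a_weak_equivalence} for condition (5) --- exactly the references the paper cites.

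Where you go beyond the paper is condition (4): the paper's proof simply does not address it, whereas you correctly identify it as the only nontrivial point and supply an argument. Your diagnosis is accurate --- the proof of Lemma \ref{lem:pi_preserve_fibration} does not transpose, since the unit $X\to iRX$ is only an acyclic cofibration here, not a trivial fibration --- and your replacement is sound: morphisms of $\Gamma S$ are anodyne in the $(I,\Gamma(T\cup S))$-local structure, so any fibration $f:X\to iY$ has the right lifting property against them; combined with the saturation of $iY$ this forces $X$ to be $\Gamma S$-saturated, so $R(f)$ is $f$ itself, and the lifting problems in $\relativtPsh{A}{B}_{\Gamma S}$ reduce by fullness to lifting problems in $\relativtPsh{A}{B}$ once one knows $i$ sends acyclic cofibrations to acyclic cofibrations. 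Your sketch of that last point (fibrant objects of $\relativtPsh{A}{B}$ are automatically saturated and restrict to fibrant objects of the saturated subcategory, so weak equivalences are preserved by $i$) is the right analogue of Lemma \ref{lem:iota_preserve_weak_equivalence}; the only places you are terser than you should be are the claim that $R$ sends $\Gamma S$ to identities (this needs the observation that every saturated object lifts uniquely against the entire maps of $\Gamma S$, not merely that these maps are isomorphisms on underlying presheaves) and the ``up to $\Gamma S$-cells'' matching of anodyne classes, both of which are straightforward to fill in. In short, your write-up is a more complete version of the paper's own argument rather than a different one.
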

\begin{proof}
We apply theorems \ref{theo:local_model_structure_on_stratified_presheave}. Conditions $(1)$ and $(3)$ are obvious. Condition $(2)$ is \ref{lem:S_commutes_with_finite_product}, and condition $(5)$ is \ref{lem:X_to_iSX_is_a_weak_equivalence}. 
\end{proof}

We also have the analogue for bistratified presheaves.

\begin{theorem}
\label{theo:local_model_structure_on_saturated_bistratified_presheave}
Let $B\subset C\subset ob(A_{>0})$,  $I$ be an interval object on $\relativctPsh{A}{B}{C}$,  $T$ a set of monomorphisms, and $S$ a saturation set for $\relativctPsh{A}{B}{C}$.
There exists a $(\Gamma T,I)$-local model structure on $\relativctPsh{A}{B}{C}_{\Lambda S}$. 
\end{theorem}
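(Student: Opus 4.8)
The statement is the bistratified counterpart of Theorem~\ref{theo:local_model_structure_on_saturated_stratified_presheave}, and its proof is obtained by transposing that argument almost word for word; I indicate only where the bistratified data intervenes. First, applying Theorem~\ref{theo:local_model_structure_on_bistratified_presheave} (in the evident relative form, proved exactly like Theorem~\ref{theo:local_model_structure_on_stratified_presheave}) with the set of monomorphisms $T\cup S$ in place of $T$ produces an $(I,\Gamma(T\cup S))$-local model structure on $\relativctPsh{A}{B}{C}$; since the Leibniz product commutes with unions, its anodyne extensions are the sequences of pushouts along $\Gamma T\cup\Gamma S\cup\Gamma\{\{\epsilon\}\to I\}$. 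I then apply the transfer theorem~\ref{theo:transfered_model_structure} to the reflection adjunction
\[
R:\relativctPsh{A}{B}{C}\;\rightleftarrows\;\relativctPsh{A}{B}{C}_{\Lambda S}:i,
\]
taking $D=\relativctPsh{A}{B}{C}$, $C=\relativctPsh{A}{B}{C}_{\Lambda S}$, $F=R$ the $S$-saturation reflector, $G=i$ the (fully faithful) inclusion, and letting the saturation set $S$ itself play the role of the set denoted $S$ in~\ref{theo:transfered_model_structure}; this is legitimate because $R(f)$ is the identity on underlying presheaves, hence the identity, for every $f\in S$, and because the subcategory of $S$-saturated bistratified presheaves is full and reflective and contains (the image under $R$ of) the interval~$I$.

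It then remains to check the five hypotheses of~\ref{theo:transfered_model_structure}. Conditions~(1) and~(3) are formal: $Cel(ctA)$ from the appendix is a cellular model, its image under $i$ is one as well, and $i$ preserves monomorphisms. Condition~(2) is the bistratified analogue of Lemma~\ref{lem:S_commutes_with_finite_product}: for $X,Y\in\relativctPsh{A}{B}{C}$ the canonical map $X\times Y\to R(X)\times R(Y)$ is again a sequence of pushouts along morphisms of $\Lambda S$ — a pushout along $a\to a_c$ or $a_c\to a_t$ multiplied by a fixed object remains of that shape — so it factors through $R(X\times Y)$, and comparing with the structural map $R(X\times Y)\to R(X)\times R(Y)$ one finds both composites to be identities on the underlying presheaf on $A$, hence identities. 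Condition~(5) is the bistratified analogue of Lemma~\ref{lem:X_to_iSX_is_a_weak_equivalence}: by construction $X\to iR(X)$ is a transfinite composition of pushouts along $\Lambda S$, hence an anodyne extension and a fortiori a weak equivalence. Condition~(4), that $R$ carries a fibration over an object of the reflective subcategory to a fibration, is formal in a reflective localization and is verified exactly as in the proof of Theorem~\ref{theo:local_model_structure_on_saturated_stratified_presheave} (compare the role of Lemma~\ref{lem:pi_preserve_fibration}): $i$ preserves cofibrations and, by condition~(5) together with two-out-of-three, acyclic cofibrations, so lifting problems for $R(f)$ reduce along $X\to iR(X)$ to ones solved by $f$. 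By~\ref{theo:transfered_model_structure} we obtain the desired $(\Gamma T,I)$-local model structure on $\relativctPsh{A}{B}{C}_{\Lambda S}$, together with the fact that $R\dashv i$ is a Quillen equivalence.

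The whole argument is purely formal once the two bistratified lemmas — the product-preservation of the saturation reflector and the identification of $X\to iR(X)$ as a sequence of $\Lambda S$-pushouts — are in hand, and of these only the former carries any content; this product-preservation is therefore the step I expect to be the main obstacle, though it is routine. The extra bistratified markings ($a_c$ and the constraint $tX\subset cX$) play no active role in any of these verifications, since each one is performed either on underlying presheaves or by lifting against generating cofibrations.
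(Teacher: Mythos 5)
Your proposal is correct and follows essentially the same route as the paper, which gives no separate argument for this statement but declares it the analogue of Theorem \ref{theo:local_model_structure_on_saturated_stratified_presheave}: first get the $(I,\Gamma(T\cup S))$-local structure on bistratified presheaves, then transfer along the saturation reflection $R\dashv i$ via Theorem \ref{theo:transfered_model_structure}, using the bistratified versions of Lemmas \ref{lem:S_commutes_with_finite_product} and \ref{lem:X_to_iSX_is_a_weak_equivalence}. Your explicit check of condition (4), which the paper leaves implicit even in the stratified case, is a harmless (and welcome) addition.
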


\section{Technical results}

\subsection{Zigzag of acyclic cofibrations} 
Let $C$ be a category endowed with a model structure where all objects are cofibrant.

\begin{definition}
Let $i:A\to B$ and $i':A'\to B'$ be two cofibrations. A \textit{zigzag of acyclic cofibration} between $i$ and $i'$, denoted $i\leftrightsquigarrow i'$ is a zig zag  in the category of arrows such that all the horizontal maps are acyclic cofibrations, and all the vertical maps are cofibrations. 
\end{definition}

\begin{lemma}
Let $i$ and $j$ be two cofibrations, and $f:X\to Y$ a fibration between fibrant objects. Suppose that we have a morphism in the category of arrows $i\to j$ which is a pointwise  trivial cofibration. Then, if $j$ has the left lifting property against $f$, so has $i$.
\end{lemma}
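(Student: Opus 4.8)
The plan is a straightforward diagram chase using the lifting axioms of the model structure. Write $i\colon A\to B$ and $j\colon A'\to B'$, and let the given morphism of arrows $i\to j$ be encoded by acyclic cofibrations $a\colon A\to A'$ and $b\colon B\to B'$ satisfying $bi = ja$. Suppose we are given a lifting problem for $i$ against $f$, that is, morphisms $u\colon A\to X$ and $v\colon B\to Y$ with $fu = vi$; the goal is to construct a diagonal $B\to X$.

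First I would extend $v$ along $b$: since $Y$ is fibrant and $b$ is an acyclic cofibration, the square with top map $v\colon B\to Y$, left map $b$, and right map $Y\to 1$ admits a lift $v'\colon B'\to Y$ with $v'b = v$. Next I would extend $u$ along $a$ in a way compatible with $f$: the square with top map $u\colon A\to X$, left map $a$, right map $f$, and bottom map $v'j\colon A'\to Y$ commutes, because $v'ja = v'bi = vi = fu$; as $a$ is an acyclic cofibration and $f$ is a fibration, this square admits a lift $u'\colon A'\to X$ with $u'a = u$ and $fu' = v'j$.

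Now $(u',v')$ is a lifting problem for $j$ against $f$, since $fu' = v'j$; by hypothesis it has a solution $w\colon B'\to X$ with $wj = u'$ and $fw = v'$. I claim $wb\colon B\to X$ solves the original problem: on the one hand $f(wb) = v'b = v$, and on the other hand $(wb)i = w(bi) = w(ja) = (wj)a = u'a = u$. This completes the argument.

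There is no real obstacle here; the only point requiring attention is verifying the commutativity of the intermediate square, i.e. the identity $v'ja = vi$, which is exactly where the compatibility condition $bi = ja$ of the morphism of arrows is used. Note that only fibrancy of $Y$ (together with the assumption that $X$ is an object, hence cofibrant in the ambient model structure) enters the proof, and the two extension steps use nothing beyond the lifting of acyclic cofibrations against fibrations.
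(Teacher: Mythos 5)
Your proof is correct and follows exactly the paper's intended argument: the paper's proof is just the diagram with the three dotted lifts $l_0$, $l_1$, $l_2$ constructed "one after the other," which correspond precisely to your $v'$ (extension of $v$ along $b$ using fibrancy of $Y$), $u'$ (lift of the acyclic cofibration $a$ against the fibration $f$), and $w$ (the lift supplied by the hypothesis on $j$), followed by precomposition with $b$. Your write-up simply makes explicit the commutativity check $v'ja = v'bi = vi = fu$ that the paper leaves implicit.
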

\begin{proof}
We consider a diagram of the following shape:
\[\begin{tikzcd}
	A & {A'} & X \\
	B & {B'} & Y.
	\arrow["i", from=1-1, to=2-1]
	\arrow["\sim", from=1-1, to=1-2]
	\arrow["\sim"', from=2-1, to=2-2]
	\arrow["j"', from=1-2, to=2-2]
	\arrow[from=1-3, to=2-3]
	\arrow[curve={height=-18pt}, from=1-1, to=1-3]
	\arrow[curve={height=18pt}, from=2-1, to=2-3]
	\arrow["{l_0}"', dotted, from=2-2, to=2-3]
	\arrow["{l_1}"{description}, dotted, from=1-2, to=1-3]
	\arrow["{l_2}"{description}, dotted, from=2-2, to=1-3]
\end{tikzcd}\]
We construct one after the other lifting $l_0$, $l_1$ and $l_2$.
\end{proof}

\begin{lemma}
Let $i$ and $j$ be two cofibrations, and $f:X\to Y$ a fibration between fibrant objects. Suppose that we have a morphism in the category of arrows $i\to j$ which is a pointwise  trivial cofibration. Then, if $i$ has the right lifting property against $f$, so has $j$.
\end{lemma}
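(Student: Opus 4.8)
The plan is the following. Write the morphism of arrows $i\to j$ as a commutative square with vertical sides $i\colon A\to B$ and $j\colon A'\to B'$ and horizontal sides $\alpha\colon A\to A'$ and $\beta\colon B\to B'$; by hypothesis $\alpha$ and $\beta$ are trivial cofibrations and $j\alpha=\beta i$. Start from an arbitrary lifting problem for $j$ against $f$, given by $u\colon A'\to X$ and $v\colon B'\to Y$ with $fu=vj$. Precomposing with $\alpha$ and $\beta$ turns this into a lifting problem for $i$ against $f$ (indeed $fu\alpha=vj\alpha=v\beta i$), which has a solution $\ell\colon B\to X$ with $\ell i=u\alpha$ and $f\ell=v\beta$ by hypothesis. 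Since $u$ and $\ell$ agree on $A$, they induce a map $w\colon A'\cup_A B\to X$ with $w|_{A'}=u$ and $w|_{B}=\ell$, and $j$ together with $\beta$ induces a map $k\colon A'\cup_A B\to B'$ with $k|_{A'}=j$ and $k|_{B}=\beta$; a direct check on the two summands gives $fw=vk$. So it is enough to find a lift in the square with sides $k$, $w$, $v$, $f$: such a lift $B'\to X$ restricts to $u$ on $A'$ and lies over $v$, hence solves the original problem.

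For this I would first argue that $k$ is a weak equivalence, and then use a factorization. In the pushout square defining $A'\cup_A B$, the map $B\to A'\cup_A B$ is a pushout of the trivial cofibration $\alpha$, hence a trivial cofibration; composing it with $k$ yields $\beta$, so two-out-of-three shows $k$ is a weak equivalence. Factor $k$ as a cofibration $c\colon A'\cup_A B\to W$ followed by a trivial fibration $p\colon W\to B'$; then $c$ is a trivial cofibration by two-out-of-three. As $f$ is a fibration, the lifting problem of $c$ against $f$ with data $w$ and $vp$ (it commutes since $fw=vk=vpc$) has a solution $g\colon W\to X$ with $gc=w$ and $fg=vp$. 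Write $c_0\colon A'\to W$ for the composite $A'\hookrightarrow A'\cup_A B\xrightarrow{c}W$, so that $pc_0=j$. Since $j$ is a cofibration and $p$ a trivial fibration, the square with sides $j$, $c_0$, $\mathrm{id}_{B'}$, $p$ has a lift $s\colon B'\to W$ with $sj=c_0$ and $ps=\mathrm{id}_{B'}$. Then $gs\colon B'\to X$ satisfies $(gs)j=gc_0=w|_{A'}=u$ and $f(gs)=vps=v$, providing the desired lift; hence $j$ has the left lifting property against $f$.

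The step I expect to be the main obstacle is exactly that the canonical map $k\colon A'\cup_A B\to B'$ is in general not a cofibration, since the square of arrows is not assumed to be a pullback; this is why one cannot invoke the lifting axiom for $k$ directly and must route through the factorization $k=pc$. The compatibility $sj=c_0$ — obtained by lifting the cofibration $j$ against the trivial fibration $p$ — is precisely what guarantees that the final map $gs$ restricts to $u$ on $A'$, which is the only nontrivial bookkeeping in the argument. I note that the proof uses only that $f$ is a fibration, not that $X$ and $Y$ are fibrant.
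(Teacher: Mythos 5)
Your proof is correct. It follows the same skeleton as the paper's argument: pull the lifting problem for $j$ back along the square to get a problem for $i$, use the hypothesis to lift over $B$, glue the two partial lifts over the pushout $A'\cup_A B$, and then extend to $B'$. The difference lies in how you perform that last extension. The paper simply asserts, in its diagram, a lift $l_1\colon B'\to X$ along the comparison map $A'\cup_A B\to B'$; but as you correctly point out, this map is only a weak equivalence (by two-out-of-three, since $B\to A'\cup_A B$ is a pushout of the trivial cofibration $\alpha$ and the composite to $B'$ is $\beta$), not in general a cofibration, so one cannot invoke the lifting axiom for it against the fibration $f$ directly. Your detour -- factor the comparison map as a cofibration $c$ (automatically trivial by two-out-of-three) followed by a trivial fibration $p$, lift $c$ against $f$, then lift the cofibration $j$ against $p$ to obtain a section $s$ with $sj=c_0$, and take $gs$ -- supplies exactly the missing justification, and the bookkeeping $gsj=u$, $fgs=v$ checks out. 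So your argument is a more complete version of the paper's terse proof; as a bonus it makes clear that only the fibration property of $f$ is used, not the fibrancy of $X$ and $Y$, and it works in any model category without the standing assumption that all objects are cofibrant.
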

\begin{proof}
We consider a diagram of the following shape:
\[\begin{tikzcd}
	A & {A'} &&& X \\
	B & {B\coprod_A A'} \\
	&& {B'} && Y.
	\arrow[from=1-1, to=2-1]
	\arrow["\sim"', from=1-1, to=1-2]
	\arrow["\sim", from=2-1, to=2-2]
	\arrow["\sim"{description}, from=2-2, to=3-3]
	\arrow["\sim"', curve={height=6pt}, from=2-1, to=3-3]
	\arrow[curve={height=-6pt}, from=1-2, to=3-3]
	\arrow[from=1-2, to=2-2]
	\arrow[from=1-2, to=1-5]
	\arrow[from=3-3, to=3-5]
	\arrow[from=1-5, to=3-5]
	\arrow["{l_0}"{description}, dotted, from=2-2, to=1-5]
	\arrow["{l_1}"{description}, dotted, from=3-3, to=1-5]
\end{tikzcd}\]
We construct one after the other lifting $l_0$, $l_1$.
\end{proof}

\begin{prop}
\label{prop:lifting_property_zigzag_of_acyclic_cofibration}
Let $f$ be a fibration between fibrant objects, $i$, $j$ two cofibrations such that there exists a zigzag of acyclic cofibrations $i\leftrightsquigarrow j$. Then $f$ has the right lifting property against $i$ if and only if it has the right lifting property against $j$. 
\end{prop}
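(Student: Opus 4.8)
The plan is to reduce the statement to the two preceding lemmas and chain them along the length of the zigzag. First I would unpack what a zigzag of acyclic cofibrations $i \leftrightsquigarrow j$ amounts to: a finite sequence of cofibrations $i = i_0, i_1, \dots, i_n = j$ in $C$ together with, for each $0 \le k < n$, a morphism in the category of arrows which is either of the form $i_k \to i_{k+1}$ or of the form $i_{k+1} \to i_k$, and whose two components (the map on sources and the map on targets) are acyclic cofibrations. In both orientations this is a morphism of arrows between cofibrations that is a pointwise trivial cofibration, so each step of the zigzag is an instance of the hypothesis shared by the two lemmas just proved.

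Next I would isolate the single-step claim: if $c \to c'$ is a morphism of arrows which is a pointwise trivial cofibration, with $c$ and $c'$ both cofibrations, and $f$ is a fibration between fibrant objects, then $c$ has the left lifting property against $f$ if and only if $c'$ does. This is exactly the conjunction of the two preceding lemmas — the first gives the implication "if $c'$ lifts against $f$, then $c$ does," and the second gives the reverse implication (its statement, phrased in terms of a "right lifting property," should be read as asserting the left lifting property of the cofibration against $f$; nothing substantive changes). Combining them yields the biconditional for one step, in either orientation.

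Finally I would finish by a trivial induction on $n$: applying the single-step claim to the $k$-th morphism of the zigzag — regardless of which way it points — shows that $i_k$ has the left lifting property against $f$ if and only if $i_{k+1}$ does, and composing these equivalences for $k = 0, \dots, n-1$ gives that $i = i_0$ has the left lifting property against $f$ if and only if $j = i_n$ does. Since "$f$ has the right lifting property against $i$" is by definition "$i$ has the left lifting property against $f$," this is the desired conclusion. I do not expect a real obstacle here; the only point deserving a moment's care is the bookkeeping of the two possible orientations of each step of the zigzag, which is precisely why both lemmas, rather than just one, are needed.
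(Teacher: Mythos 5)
Your proof is correct and follows the same route as the paper: it combines the two preceding lemmas (noting, as you do, the slip in the second lemma's phrasing) into a single-step biconditional and then chains this along the zigzag, which is exactly what the paper's one-line proof "direct consequence of the last two lemmas" means.
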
 
\begin{proof}
This is a direct consequence of the last two lemmas.
\end{proof}

\subsection{Lemmas for marked simplicial sets}

We conclude this section by defining some particular anodyne extensions. 

\begin{definition} Let $0<k<n$ be two integers. We define several marked simplicial sets:
\begin{itemize}[leftmargin=* ,parsep=0cm,itemsep=0cm,topsep=0cm]
\item $\Delta^k[n]\oslash K := (K\times\Delta[n],M_{K\oslash \Delta^k[n]})$. The set $M_{K\oslash \Delta^k[n]}$ is the smaller marking that includes all simplices $(v,s^px)$ such that 
$v_{[p-1,p,p+1]}$ is equal to $[k-1,k,k+1]$ or $[k,k,k+1]$.
\item $\Delta^0[n] \oslash K := (K\times\Delta[n],M_{K\oslash \Delta[n]^0})$. The set $M_{K\oslash \Delta^0[n]}$ is the smaller marking that includes all simplices $(v,s^px)$ such that 
$v_{[p,p+1]}$ is equal to $[0,1]$.
\item $\Delta^k[n]\invoslash K  := (K\times\Delta[n],M_{K\invoslash \Delta^k[n]})$. The set $M_{K\invoslash \Delta^k[n]}$ is the smaller marking that includes all simplices $(v,s^{p-1}x)$ such that 
$v_{[p-1,p,p+1]}$ is equal to $[k-1,k,k+1]$ or $[k-1,k,k]$.
\item $\Delta^n[n]\invoslash K := (K\times\Delta[n],M_{K\invoslash \Delta[n]^0})$. The set $M_{K\invoslash \Delta^n[n]}$ is the smaller marking that includes all simplices $(v,s^{p-1}x)$ such that 
$v_{[p-1,p]}$ is equal to $[n-1,n]$.
\end{itemize}
For $k<n$, we define  the regular inclusion
$$\Lambda^k[n]\oslash K \to_e (\Delta^k[n]\oslash K),$$ and for $0<k$, the regular inclusion $$\Lambda^k[n]\oslash K \to_e (\Delta^k[n]\oslash K).$$ 
\end{definition}

\begin{construction}
Let $0\leq k<n$ be two integers, $L$ a simplicial set. 
Let $w:=(v,x)$ be a non degenerate $m$-simplex of $\Delta[n]\times L$ where $v$ includes $k$ but not $k+1$.  Let $a$ and $b$ be two integers such that $v_c = k$ if and only if $a< c \leq b$.
For $p\in [a,b]$, we define $m$-simplices of $\Delta[n]\times L$:
$$\begin{array}{rcll}
v^p_q& = & v_q &\mbox{ if $q\leq p$},\\
v^p_q& = & k+1 &\mbox{ if $p< q \leq  b$},\\
v^p_q& = & v_{q} &\mbox{ if $b< q$},\\
\end{array}$$
and  for $p\in]a,b]$, $(m+1)$-simplices of $\Delta[n]\times L$
$$\begin{array}{rcll}
\bar{v}^p_q& = & v_k &\mbox{ if $q\leq p$},\\
\bar{v}^p_q& = &k+1 &\mbox{ if $p< q \leq  b+1$},\\
\bar{v}^p_q& = & v_{k-1} &\mbox{ if $b+1< k$}.\\
\end{array}$$

We define  $w^p:=(v,x)$ and $\bar{w}^p:=(\bar{v}^p,s^px)$.
We then have
$$\begin{array}{rcll}
d^q \bar{w}^p &= &\overline{d^q w}^{p-1}  &\mbox{ if $k\leq p$},\\
d^p \bar{w}^p& = & w^{p-1} &\mbox{ if $a< p$},\\
d^{p+1} \bar{w}^p& = & w^{p} &\mbox{ if $a< p$},\\
d^q \bar{w}^p &= &\overline{d^{q} w}^p  &\mbox{ if $k+1<p$}.\\
\end{array}$$ 
The fact that $w$ is non degenerate implies that each $w_p$ and $\overline{w}^p$ are non degenerate.
\end{construction}

\begin{lemma}
\label{lemma:all_simplexe_are_of_the_desired_shape}

Let $0\leq k<n$ be two integers, $K\to L$ a cofibration between simplicial sets.  We consider the cofibration:
 $$\Lambda_k[n]\times L\cup \Delta[n]\times K\to \Delta[n]\times L.$$
Let $w:=(v,x)$ be a non degenerate simplex of $L\times \Delta[n]$ which is not present in the domain. Then one and only one of the following statements is true.
\begin{enumerate}
\item There exists a unique  $v'$ and a unique $p$ such that $(v',x)$ is non degenerate and $w=((v')^p,x)$.
\item There exists a unique $w'$ such that $\overline{(w')}^{p} = w$.
\end{enumerate}
\end{lemma}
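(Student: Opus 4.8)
The plan is to run the standard ``filtration'' bookkeeping for a product of simplices, but carried out on individual non-degenerate simplices rather than on subcomplexes. First I would pin down which non-degenerate $m$-simplices $w=(v,x)$ of $\Delta[n]\times L$ avoid the domain $\Lambda^k[n]\times L\cup\Delta[n]\times K$: these are precisely the jointly non-degenerate pairs with $x\notin K_m$ and $\mathrm{im}(v)\supseteq[n]\setminus\{k\}$, i.e.\ $\mathrm{im}(v)$ is either $[n]$ or $[n]\setminus\{k\}$ (for $(v,x)$ to avoid $\Lambda^k[n]\times L$, $v$ must not factor through any face $d^j$ with $j\neq k$; for $(v,x)$ to avoid $\Delta[n]\times K$, $x$ must not lie in $K$). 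In particular such a $v$ always contains $k+1$, since $k<n$, so I split the argument according to whether $v$ also contains $k$.

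If $k\notin\mathrm{im}(v)$, then $v$ has a single non-empty block of entries equal to $k+1$, say at positions $a+1,\dots,b$; I would take $v'$ to be $v$ with that block replaced by a block of $k$'s of the same length, leaving positions $\le a$ and $>b$ untouched. Then $v'$ contains $k$ but not $k+1$, one has $(v')^{a}=v$ by construction, and $(v',x)$ is non-degenerate precisely because $(v,x)$ is: the only adjacent pairs of equal $v'$-entries lie inside the new $k$-block, where $v$ already had equal ($k+1$) entries, so $x$ already separates them. This exhibits $w=((v')^{p},x)$ with $p=a$; and statement~(2) cannot hold here, because the first coordinate of any $\overline{(w')}^{q}$ contains $k$ at position $q$, while $v$ misses $k$. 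Uniqueness of $(v',p)$ is immediate: $v$ forces $a+1,\dots,b$ to be the $(k+1)$-block, and $(v')^{p}$ contains $k$ iff $p>a$, so $p=a$ is forced.

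If $k\in\mathrm{im}(v)$, let $p$ be the last position of the $k$-block of $v$; then position $p+1$ is the first position of the $(k+1)$-block, so $v$ realizes a single jump $k\to k+1$, at $(p,p+1)$. The whole alternative is then governed by whether $x_{p}=x_{p+1}$. If $x_{p}\neq x_{p+1}$, I would reconstruct $v'$ as in the previous paragraph (merge the $k$- and $(k+1)$-blocks of $v$ into one $k$-block, at positions $a+1,\dots,b$); joint non-degeneracy of $(v',x)$ now uses non-degeneracy of $(v,x)$ inside each of the two old blocks together with $x_{p}\neq x_{p+1}$ at the interface, and $(v')^{p}=v$ with $a<p<b$, so $w$ satisfies~(1). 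Here~(2) fails, since for any $\overline{(w')}^{q}$ the $L$-coordinate repeats at the jump spot $(q,q+1)$, whereas $x_{p}\neq x_{p+1}$. If instead $x_{p}=x_{p+1}$, I would produce $w'=(v',x')$ with $x'$ the $(m-1)$-simplex obtained by deleting the repeated entry of $x$ at $p$, and $v'$ obtained from $v$ by deleting one entry of its $(k+1)$-block and turning the rest into $k$'s (so $v'$ contains $k$ but not $k+1$); a direct index check gives $\overline{(w')}^{p}=w$, and $(v',x')$ is non-degenerate, again from non-degeneracy of $(v,x)$ together with $x_{p}=x_{p+1}$. In this situation~(1) fails, because the candidate first coordinate for~(1) would be degenerate together with $x$ at position $p$. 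Uniqueness in both subcases follows because $p$, the two blocks, and hence $v'$ (and $x'$) are all determined by $w$.

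Assembling the cases yields ``exactly one of (1), (2)'' for every non-degenerate $w$ outside the domain. The only substantive work is the bookkeeping of block positions and index shifts, and in particular checking that the reconstructed simplices are genuinely non-degenerate; the clean organizing principle throughout is that joint non-degeneracy of $(v,x)$ is equivalent to $x$ being non-constant on exactly the maximal blocks on which $v$ is constant, so the trichotomy reduces entirely to the behaviour of $x$ at the unique $k\to k+1$ interface of $v$. Boundary situations (a block meeting an end of $[m]$, $v$ constant, $k=0$, and so on) are absorbed by the index conventions of the construction and need no separate treatment.
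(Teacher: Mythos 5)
Your case structure is essentially the paper's: you split on whether $k\in\mathrm{im}(v)$, and then on the behaviour of $x$ at the interface position $p$ where $v$ jumps from $k$ to $k+1$; your identification of the simplices outside the domain, your treatment of the case $k\notin\mathrm{im}(v)$, and your uniqueness bookkeeping are all fine. The genuine gap is that you draw the decisive dichotomy at the level of \emph{vertices} of $x$ instead of \emph{degeneracy} of $x$. Your stated organizing principle --- that joint non-degeneracy of $(v,x)$ is equivalent to $x$ being non-constant on the blocks where $v$ is constant --- is false for a general simplicial set $L$: a simplex of $L$ can have equal adjacent vertices without being degenerate at that position (a non-degenerate loop already gives a counterexample), and the operation ``delete the repeated entry of $x$ at $p$'' simply does not exist unless $x$ is literally of the form $s^p x'$.

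Concretely, in the case $k\in\mathrm{im}(v)$ with $x_p=x_{p+1}$ as vertices but $x$ \emph{not} degenerate at $p$, your argument places $w$ in alternative (2) and constructs a $w'$ that need not exist; the required identity $\overline{(w')}^{\,p}=w$ would force $x=s^p x'$, which fails, and your claim that (1) fails there is also wrong: in this situation $(v[k+1/k],x)$ is non-degenerate (the only degeneracy of the first coordinate not already present in $v$ is at position $p$, where $x$ is non-degenerate), so $w$ in fact satisfies (1). The repair is exactly the dichotomy the paper uses: split on whether $x$ is degenerate at $p$, i.e. $x=s^p y$ for some $y$ (equivalently $x=s^p d^p x$); if not, (1) holds and (2) fails; if so, set $x'=d^p x$ and take $w'$ accordingly, and (1) fails because $(v[k+1/k],x)$ is then degenerate at $p$. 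With that substitution (and reading your ``$x$ separates them'' as ``$x$ is non-degenerate where $v$ is degenerate'') your argument coincides with the paper's proof.
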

\begin{proof}
We remark that $w:=((v')^p,x')$ if and only if $v' = v[k+1/k]$.

\textbf{Case $k\notin v$.} This implies that $w$ cannot fulfill the second condition and that $(v',x)$ is non degenerate. Finally, one has $w=((v')^{q-1},x)$ where $q$ is the minimal integer such that $v_q=k+1$.
\textbf{Case $k\in v$ and $x$ non degenerate in $p$,} where $p$ is the maximal integer such that $v_p=k$. The simplex $w$ cannot fulfill the second condition and  $w=((v')^{p},x)$.
\textbf{Case $k\in v$ and $x$ degenerate in $p$.} We then have $v_{p+1}=k+1$, but $v'$ is then degenerate on $p$, so is $(v',x)$. The simplex $w$ cannot fulfill the first condition. But $v' = \overline{v}^p$, and $w =( \overline{v},s^pd^px)^p$.
\end{proof}

\begin{lemma}
\label{lemma:oslash}
Let $0\leq k<n$ be two integers, $K\to L$ a cofibration between simplicial sets. 
The cofibration 
$$\Delta^k[n]\oslash K \cup \Lambda^k[n]\oslash L\to \Delta^k[n]\oslash L$$
is an anodyne extension.
\end{lemma}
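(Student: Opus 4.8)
The plan is to mimic the classical argument that pushout–products of horn inclusions with cofibrations are anodyne (cf.\ Proposition \ref{prop:martina}), but carried out at the level of the $\oslash$-construction so that the marking bookkeeping is handled explicitly. First I would reduce, by the usual colimit argument, to the case where $K\to L$ is a single boundary inclusion $\partial\Delta[m]\to\Delta[m]$: the class of cofibrations $K\to L$ for which the statement holds is saturated, and $\Delta^k[n]\oslash(\uvar)$ and $\Lambda^k[n]\oslash(\uvar)$ both preserve colimits and cofibrations. So it suffices to attach the non-degenerate simplices of $\Delta[m]\times\Delta[n]$ not already in the domain, one dimension at a time, checking that each attachment is a pushout of a complicial horn inclusion (or thinness extension) of the appropriate marking.

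The combinatorial heart is Lemma \ref{lemma:all_simplexe_are_of_the_desired_shape} together with the preceding construction, which organises the missing non-degenerate simplices $w=(v,x)$ into matched pairs: either $w=((v')^p,x)$ arises as a ``face'' of some $\bar w^p$, or $w=\overline{(w')}{}^p$ is itself one of the distinguished $(m+1)$-simplices. Following the relations
$$d^p\bar w^p = w^{p-1},\quad d^{p+1}\bar w^p = w^p,\quad d^q\bar w^p = \overline{d^qw}^{\,\bullet},$$
each such pair, ordered suitably by dimension and by the value $p$, is glued along a horn: the inclusion that adds $\bar w^p$ together with the face $w^p$ (the one still missing) to what is already present is a pushout of $\Lambda^{k}[r]\oslash\Delta[m]\to\Delta^{k}[r]\oslash\Delta[m]$ for the relevant $r$, with the marking forced to be exactly the $\oslash$-marking because of the constraint $v_{[p-1,p,p+1]}=[k-1,k,k+1]$ or $[k,k,k+1]$ in the definition of $M_{K\oslash\Delta^k[n]}$ (respectively $[k,k,k+1]$ handling the degenerate-in-$p$ case, which is why the $s^px$ simplices are marked). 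One must separate the cases $k\notin v$, $k\in v$ with $x$ non-degenerate at the maximal $p$ with $v_p=k$, and $k\in v$ with $x$ degenerate there, exactly as in the proof of Lemma \ref{lemma:all_simplexe_are_of_the_desired_shape}; the first two produce genuine horn fillings, the third produces the marking (thinness) extension, still anodyne.

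I would carry this out by a double induction: outer induction on the dimension of the simplices being attached, inner induction (or a well-chosen linear order) on the pairs $(v,p)$ within a fixed dimension, at each stage forming the pushout and invoking that $\Lambda^k[r]\oslash\Delta[m]\to\Delta^k[r]\oslash\Delta[m]$ is in the saturated class generated by complicial horn inclusions and thinness extensions (this last fact being itself either a definition-unwinding or a known input). The main obstacle I anticipate is not any single step but the verification that at every attachment the \emph{induced} marking on the newly glued cell agrees on the nose with the target marking $M_{K\oslash\Delta^k[n]}$ restricted there — i.e.\ that no extra thin simplices sneak in and none are missing — since the $\oslash$-marking is defined by a global ``smallest marking containing certain $(v,s^px)$'' condition rather than cell-by-cell; reconciling that global description with the inductive cell-attachment requires care, and is precisely where the constraints on $v_{[p-1,p,p+1]}$ in the construction do the work. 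Once that is pinned down, the conclusion that the composite is anodyne is immediate from closure of anodyne extensions under pushout and transfinite composition.
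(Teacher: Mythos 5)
Your skeleton---organize the missing simplices via the construction preceding Lemma \ref{lemma:all_simplexe_are_of_the_desired_shape} into matched pairs $(w^{p},\bar{w}^{p})$ and attach them by a double induction on dimension and on $p$---is indeed the paper's strategy, and your preliminary reduction to $\partial\Delta[m]\to\Delta[m]$ is harmless (though unnecessary: the paper's filtration treats an arbitrary cofibration $K\to L$ directly, since any simplex $(v,x)$ with $x\in K$ already lies in the domain). The genuine gap is in what you attach along. You assert that each attachment is a pushout of $\Lambda^{k}[r]\oslash\Delta[m]\to\Delta^{k}[r]\oslash\Delta[m]$ and that the anodyne-ness of such maps is ``either a definition-unwinding or a known input.'' It is neither: taking $K=\emptyset$ in Lemma \ref{lemma:oslash} shows that ``$\Lambda^{k}[n]\oslash L\to\Delta^{k}[n]\oslash L$ is anodyne'' is precisely a special case of the statement being proved, and nothing earlier supplies it (the inclusion is merely \emph{defined} as a regular inclusion; Proposition \ref{prop:martina} concerns the cartesian product and the cited Gray-tensor results concern a different marking). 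As written, your induction is circular at its elementary step.

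What makes the induction run in the paper is that each single attachment is a pushout of an \emph{elementary} complicial horn inclusion $\Lambda^{p}[m+1]\to\Delta^{p}[m+1]$, where the horn index is the degeneracy position $p$ of $\bar{w}^{p}=(\bar{v}^{p},s^{p}x)$ and varies from $b$ down to $a+1$ within a packet; it is not the fixed index $k$, and it is not an $\oslash$-product. At stage $p$ the only missing face of $\bar{w}^{p}$ is $d^{p}\bar{w}^{p}=w^{p-1}$: the face $d^{p+1}\bar{w}^{p}=w^{p}$ was added at the previous stage (or equals $w$ itself, which lies in $\Lambda^{k}[n]\oslash L$ because $v$ omits $k+1$), and the remaining faces are barred simplices of lower-dimensional $w'$, present by the outer induction. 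The marking bookkeeping you flagged as the main obstacle is then settled directly from the definition of $M_{K\oslash\Delta^{k}[n]}$: every face of $\bar{w}^{p}$ reaching $\{p-1,p,p+1\}$ is either in the domain or again of the form $\bar{w'}^{q}$, hence thin, so the horn maps into the marked structure and the pushout stays inside $\Delta^{k}[n]\oslash L$. Note also that your reading of the case analysis is off: the three cases in the proof of Lemma \ref{lemma:all_simplexe_are_of_the_desired_shape} only show that the filtration exhausts the target (each missing simplex occurs exactly once as some $w^{p}$ or some $\bar{w}^{p}$); no attachment in the proof is a thinness extension. Repairing your argument means replacing your invoked maps by these elementary horn fillings and carrying out the thinness check---at which point you have reconstructed the paper's proof.
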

\begin{proof}
We proceed by induction. 
We define $A_0$ as the domain of the cofibration. Suppose construct  a regular sub marked simplicial set $A_{m-1}$, that includes all simplices of shape $\bar{w}^p$ where $w:=(v,x)$ is a simplex of dimension $<m$, such that $v$ includes $k$ and not $k+1$. \newline
Let $w:=(v,x)$ be a non degenerate $m$-simplex, non present in $A_{m-1}$, such that $v$ includes $k$ and not $k+1$. Let $a$ and $b$ be two integers such that $v_c = k$ if and only if $a< c \leq b$. We define a sequence of regular inclusions:
$$A_{m-1}:=B_b \subset B_{b+1}\subset...\subset B_{a+1}=:A_{m-1}^{w}$$
such that $B_{p} := B_{p+1}\cup \bar{w}^p$. Let $d:\Delta[n]\to \Delta[m]$ be an inclusion that reaches $\{p-1,p,p+1\}\cap[n]$. The simplex $d^*\bar{w}^p$ is then either in the domain of the cofibration, or of shape $\bar{w'}^q$ for $w'$ a simplex of dimension strictly inferior to $m$. In both case, it is thin. 
The inclusion $B_{p+1}\to B_p$ then fits in a pushout diagram:
\[\begin{tikzcd}
	{\Lambda^p[n]} & {B_{p+1}} \\
	{\Delta^p[n]} & {B_p.}
	\arrow[from=1-1, to=1-2]
	\arrow[from=1-1, to=2-1]
	\arrow[from=2-1, to=2-2]
	\arrow[from=1-2, to=2-2]
	\arrow["\lrcorner"{anchor=center, pos=0.125, rotate=180}, draw=none, from=2-2, to=1-1]
\end{tikzcd}\]

We then define $A_m$ as the reunion of all $A_m^w$ where $w$ is a $m$-simplex satisfyingly the previous condition, and $A_{\infty} := \Colim_{A_m}$. The lemma \ref{lemma:all_simplexe_are_of_the_desired_shape} implies that $A_\infty$  is equal to $\Delta^k[n]\oslash L$.
\end{proof}

\begin{lemma}
\label{lemma:oslash_saturation_one_way}
Let $(X,tX)$ be a marked simplicial set, $0<k<n$ two integers and a morphism $f:\Delta^k[n]\oslash L \to (X,M)$. Let $w:=(v,x)$ be an object of $\Delta^k[n]\oslash L$ that  includes $k$ and $k-1$,
$$f(v[k+1/k],x)\in tX \Rightarrow f(v,x)\in tX.$$
\end{lemma}
\begin{proof}
Let $w:=(v,x)$ be a simplex fulfilling the desired condition.
Let $a$ and $b$ be two integers such that $v_c = k$ if and only if $a< c \leq b$.
Lemma \ref{lemma:all_simplexe_are_of_the_desired_shape} implies that $w$ is either of the form $\overline{w}^p$ or of the form $w^p$ with $a<p$. In the fist case, $f(v,x)$ is thin. For the second case, we show by decreasing induction on $p$ that $w^p$ is thin. The initialization, i.e the case $p=b$ corresponds to the hypotheses. Now, let's remark that if $a+1<q$, a careful analysis off all the cases shows that $d^lf(\overline{w}^q)$ is thin for all $l\neq q$. This implies that $f(w^{p}) := d^{p+1} f(\overline{w}^p)$ is thin. This proves the result.
\end{proof}

\begin{lemma}
\label{lemma:oslash_saturation_one_way_variation}
Let $(X,M)$ be a marked simplicial set, $0<k<n$ two integers and a morphism $f:\Delta^k[n]\oslash L \to (X,M)$. Suppose furthermore that for all simplices $(v,s^{p}x)$ such that $(k-1)\notin v$ and $v_{[p-1,p,p+1]}=[k,k,k+1]$, 
$f(v,s^{p}x)$ is in $M$. Then if
$w:=(v,x)$ is an object of $\Delta^k[n]\oslash L$ that  includes $k-1$,
$$f(v[k+1/k],x)\in tX \Rightarrow f(v,x)\in tX.$$
\end{lemma}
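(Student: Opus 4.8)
The statement in question is Lemma \ref{lemma:oslash_saturation_one_way_variation}, which is a variant of Lemma \ref{lemma:oslash_saturation_one_way}. The plan is to mimic the proof of that lemma as closely as possible, inserting the extra hypothesis precisely where it is needed.

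\textbf{Setup.} Let $w := (v,x)$ be an $m$-simplex of $\Delta^k[n]\oslash L$ that includes $k-1$, and suppose $f(v[k+1/k],x)$ is thin; we want $f(v,x)$ thin. If $k\notin v$ there is nothing to prove since then $v=v[k+1/k]$, so assume $k\in v$. Let $a<b$ be such that $v_c = k$ precisely for $a<c\leq b$; note that since $k-1\in v$ we have $v_a = k-1$. Exactly as in Lemma \ref{lemma:oslash_saturation_one_way}, Lemma \ref{lemma:all_simplexe_are_of_the_desired_shape} tells us that $w$ is either of the form $\overline{w'}^p$ (in which case $f(w)$ is thin by definition of the marking $M_{L\oslash\Delta^k[n]}$ and we are done) or of the form $(w')^p = ((v')^p, x)$ for some non-degenerate $(v',x)$ with $a<p$, where $v' = v[k+1/k]$. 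In the latter case we prove by decreasing induction on $p\in(a,b]$ that $f((w')^p)$ is thin.

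\textbf{Induction.} The base case $p=b$ is the hypothesis, since $(w')^b = (v,x)$. For the inductive step, we use the $(m+1)$-simplex $\overline{w'}^q$ and the boundary relations $d^q\overline{w'}^q = (w')^{q-1}$, $d^{q+1}\overline{w'}^q = (w')^q$. We need: for $q$ with $a+1<q\leq b$, the simplex $f(\overline{w'}^q)$ has all faces $d^l f(\overline{w'}^q)$ thin for $l\neq q$, so that the complicial thinness extension (applied to the $2$-simplex pattern around $\{q-1,q,q+1\}$, or rather $\Delta^q[m+1]'\to\Delta^q[m+1]''$, together with the fact that $f(\overline{w'}^q)$ itself lies in $M$) forces $f((w')^{q-1}) = d^q f(\overline{w'}^q)$ thin. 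The analysis of the faces $d^l\overline{w'}^q$ for $l\neq q, q+1$ is exactly the one in Lemma \ref{lemma:oslash_saturation_one_way}: they are either in the image of the domain $\Lambda^k[n]\oslash L$ (hence thin), or of the form $\overline{w''}^r$ for a lower-dimensional $w''$ (hence thin), or equal to $(w')^{q}$ (which is thin by the inductive hypothesis), or a degeneracy thereof.

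\textbf{The main obstacle.} The only place where the reasoning of Lemma \ref{lemma:oslash_saturation_one_way} breaks down and the extra hypothesis is needed is at the very last step of the decreasing induction, namely the step $q = a+1$, which produces $f((w')^{a}) = f(v,x)$ from $f(\overline{w'}^{a+1})$. Here the relevant simplex $\overline{w'}^{a+1}$ has $v'_a = k-1$ replaced — one of its faces is $(v'[k/k+1]\text{-like})$, and because $k-1\notin$ that face but the pattern $[k,k,k+1]$ appears, it is not automatically in the standard marking; this is exactly why the hypothesis "$f(v,s^p x)\in M$ whenever $k-1\notin v$ and $v_{[p-1,p,p+1]}=[k,k,k+1]$" is imposed. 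I would carefully identify which face $d^l\overline{w'}^{a+1}$ ($l\neq a+1$) fails to be covered by the argument of Lemma \ref{lemma:oslash_saturation_one_way} — it should be the face obtained by deleting the vertex $a$, which has first relevant pattern $[k,k,k+1]$ and no longer contains $k-1$ — and invoke the new hypothesis to conclude it is thin. With all faces $d^l f(\overline{w'}^{a+1})$, $l\neq a+1$, thin and $f(\overline{w'}^{a+1})$ thin, the complicial thinness extension gives $f(v,x) = d^{a+1}f(\overline{w'}^{a+1})$ thin, completing the induction and the proof. The bookkeeping of exactly which face gets which label is the fiddly part; everything else is a transcription of the earlier lemma.
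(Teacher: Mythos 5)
Your overall template (a decreasing induction along the simplices $(w')^p$, using the $(m+1)$-simplices $\overline{(w')}^{q}$ and the complicial thinness extensions) is the right family of ideas, but the proof as written does not establish the statement. First, you misread the substitution notation: in this paper $v[k+1/k]$ means ``replace $k+1$ by $k$'' (compare the use of $v[\underline{10}/\overline{10}]$ and of $(v[20/00][21/31],x)$ elsewhere in the text), so the case $k\notin v$ is not trivial --- it is the only genuinely new case, since then $k+1\in v$ and $v[k+1/k]\neq v$, whereas when $k\in v$ (together with $k-1$) Lemma \ref{lemma:oslash_saturation_one_way} applies verbatim and nothing new is needed. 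This is exactly how the paper argues: it reduces to $k\notin v$, writes $w=((v')^{a},x)$ with $v'=v[k+1/k]$ non-degenerate, applies Lemma \ref{lemma:oslash_saturation_one_way} to obtain that $f((v')^{a+1},x)$ is thin, and then performs one final thinness-extension step on $\overline{(w')}^{a+1}$ to reach $f(v,x)=f(d^{a+1}\overline{(w')}^{a+1})$. Your text is moreover internally inconsistent on this point: you assume $k\in v$ at the outset, yet your ``main obstacle'' paragraph ends the induction at the step $q=a+1$ producing $(w')^{a}=w$, which happens precisely when $k\notin v$; when $k\in v$ the induction terminates at $q=p_0+1\geq a+2$ and the step you single out never occurs.

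Second, the face analysis at the crucial step --- the one place where the extra hypothesis must enter --- is both left undone (``I would carefully identify which face\dots'') and misdescribed. The simplex $\overline{(w')}^{a+1}=(\bar v,s^{a+1}x)$ contains a single vertex with value $k$ (at position $a+1$), so no face of it can exhibit the pattern $[k,k,k+1]$; the face $d^{a}\overline{(w')}^{a+1}$ that you point to is $(\tilde v,s^{a}d^{a}x)$ with $\tilde v_{[a-1,a,a+1]}=[v'_{a-1},k,k+1]$, and $v'_{a-1}<k-1$ whenever that face really misses $k-1$. Hence your proposed invocation of the hypothesis (which requires the pattern $[k,k,k+1]$ at the degenerate position) does not apply to that face as stated, and the part you defer as ``fiddly bookkeeping'' is precisely the non-routine content of the lemma. (The paper's own treatment of this point is admittedly terse --- ``a careful analysis of all the cases'' --- but a correct write-up must at least identify the problematic face correctly and explain how the hypothesis, or the closure of the marking, forces it to be thin; your sketch does neither.)
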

\begin{proof}
Let $w:=(v,x)$ be a simplex fulfilling the desired conditions.
If $v$ includes $k$, the last lemma proves the assertion. We then suppose that $v$ doesn't include $k$. 
Lemma \ref{lemma:all_simplexe_are_of_the_desired_shape} implies that 
there exists a unique  $v'$ and a unique $p$ such that $(v',x)$ is non degenerate and $w=((v')^p,x)$. The last lemma implies that  $f((v')^{p+1},x)$ is thin. 
A careful analysis off all the cases shows that $d^lf(\overline{w}^{p})$ is thin for all $l\neq p$. This implies that $f(w^{p}) := d^{p+1} f(\overline{w}^p)$ is thin. 
\end{proof}

Similarly, we can prove the  next three lemmas:

\begin{lemma}
\label{lemma:antioslash}
Let $0< k\leq n$ be two integers, $K\to L$ a cofibration between simplicial sets. 
The cofibration 
$$\Delta^k[n]\invoslash K \cup \Lambda^k[n]\invoslash L\to \Delta^k[n]\invoslash L$$
is an anodyne extension.
\end{lemma}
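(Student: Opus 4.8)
The final statement is \cref{lemma:antioslash}, the $\invoslash$-version of \cref{lemma:oslash}: for $0<k\le n$ and a cofibration $K\to L$ of simplicial sets, the map
$$\Delta^k[n]\invoslash K \cup \Lambda^k[n]\invoslash L\to \Delta^k[n]\invoslash L$$
is an anodyne extension. The plan is to mirror, step for step, the proof of \cref{lemma:oslash}, using the $\invoslash$-analogue of the cell-pushing machinery from \cref{lemma:all_simplexe_are_of_the_desired_shape}. The role that the vertex $k$ (present, with $k+1$ absent) played in the $\oslash$ argument is now played by the vertex $k$ present together with $k-1$ absent, i.e.\ we push simplices $(v,x)$ where $v$ contains $k$ but not $k-1$ across horn fillers anchored at $k$, so that the relevant degeneracies land in the prescribed marking $M_{K\invoslash\Delta^k[n]}$ (those $(v,s^{p-1}x)$ with $v_{[p-1,p,p+1]}$ equal to $[k-1,k,k+1]$ or $[k-1,k,k]$).

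Concretely, I would first record the $\invoslash$-version of the combinatorial construction preceding \cref{lemma:all_simplexe_are_of_the_desired_shape}: given a non-degenerate simplex $w=(v,x)$ with $k\in v$, $k-1\notin v$, and $a<c\le b$ the range of indices with $v_c=k$, define the pushed simplices $v^p$ by replacing $k$ with $k-1$ on the initial segment and the lifted $(m+1)$-simplices $\bar v^p$ accordingly, with the face identities $d^q\bar w^p$ matching those in the existing construction (with the obvious index reflection). Then one proves the $\invoslash$-analogue of \cref{lemma:all_simplexe_are_of_the_desired_shape}: every non-degenerate simplex of $\Delta[n]\times L$ not in $\Lambda^k[n]\times L\cup\Delta[n]\times K$ is, exclusively, either of the form $(v^p,x)$ for a unique $(v',x)$ non-degenerate and unique $p$, or of the form $\bar w^p$ for a unique $w$ — the proof is the same case split ($k-1\notin v$; $k-1\in v$ with $x$ non-degenerate in the minimal index $p$ where $v_p=k$; $k-1\in v$ with $x$ degenerate there), just reflected. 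Note the paper in fact already invokes the $\invoslash$-saturation lemmas \ref{lemma:oslash_saturation_two way}, \ref{lemma:oslash_saturation_two way_case_extremum}, \ref{lemma:oslash_saturation_one_way_variation} throughout Section~2, so the symmetric toolkit is expected.

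Second, with the combinatorics in place, the filtration argument is verbatim that of \cref{lemma:oslash}: set $A_0$ to be the domain, build $A_{m-1}\subset A_m$ by attaching, for each eligible non-degenerate $m$-simplex $w$ with $k\in v$, $k-1\notin v$, the chain $B_b\subset B_{b+1}\subset\cdots\subset B_{a+1}=A_{m-1}^w$ with $B_p:=B_{p+1}\cup\bar w^p$, each step a pushout along a horn inclusion $\Lambda^p[n]\to\Delta^p[n]$ (thinness of the relevant faces of $\bar w^p$ comes from their being either in the domain or of the form $\bar{w'}^q$ for lower-dimensional $w'$, exactly as before), and then take $A_\infty=\Colim A_m$. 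The $\invoslash$-analogue of \cref{lemma:all_simplexe_are_of_the_desired_shape} identifies $A_\infty=\Delta^k[n]\invoslash L$.

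The only genuinely non-routine point — and the step I would be most careful about — is checking that the markings line up: the horn fillers used here are complicial horn inclusions $\Lambda^p[n]\to\Delta^p[n]$ (or $\Lambda^p[n]\oslash/\invoslash$ versions when $L$ is replaced by a general marked object), and one must verify that the thin simplices declared by $M_{K\invoslash\Delta^k[n]}$ — namely the $(v,s^{p-1}x)$ with $v_{[p-1,p,p+1]}\in\{[k-1,k,k+1],[k-1,k,k]\}$ — are precisely the ones produced as top cells $\bar w^p$ along the filtration, and that the boundary $\invoslash$-marking restricts correctly to $\Lambda^k[n]\invoslash L$. The case $k=n$ (excluded on the $\oslash$ side but allowed here) needs a separate glance, since then there is no vertex $k+1$ and the condition $v_{[p-1,p,p+1]}=[k-1,k,k+1]$ becomes vacuous; but the $M_{K\invoslash\Delta^n[n]}$ marking is defined by the simpler condition $v_{[p-1,p]}=[n-1,n]$, and the same filtration with horns $\Lambda^p[n]\to\Delta^p[n]$ goes through. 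Since all of this is the reflection of an argument already carried out in detail, I would present it as "Similarly, we can prove" — which is exactly how the excerpt frames it.

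\begin{proof}
This is the reflection of the proof of \cref{lemma:oslash}. One first establishes the $\invoslash$-analogue of \cref{lemma:all_simplexe_are_of_the_desired_shape}: a non-degenerate simplex $w=(v,x)$ of $\Delta[n]\times L$ not lying in $\Lambda^k[n]\times L\cup \Delta[n]\times K$ is, exclusively, either of the form $((v')^p,x)$ for a unique non-degenerate $(v',x)$ and a unique $p$ (where now $(v')^p$ replaces $k-1$ by $k$ on an initial segment, reflecting the construction preceding \cref{lemma:all_simplexe_are_of_the_desired_shape}), or of the form $\overline{(w')}^p$ for a unique $w'$. The proof is the same case analysis on whether $k-1\in v$ and on the degeneracy behaviour of $x$ at the maximal index where $v$ takes the value $k$.

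Then, exactly as in \cref{lemma:oslash}, set $A_0$ to be the domain of the cofibration and build a tower of regular inclusions $A_0\subset A_1\subset\cdots$, where $A_m$ is obtained from $A_{m-1}$ by attaching, for each non-degenerate $m$-simplex $w=(v,x)$ with $k\in v$ and $k-1\notin v$ and not already present, the finite chain $B_b\subset\cdots\subset B_{a+1}$ with $B_p:=B_{p+1}\cup\overline{w}^p$, each step being a pushout of a complicial horn inclusion $\Lambda^p[n]\to \Delta^p[n]$; the faces $d^*\overline{w}^p$ involved are thin, being either in the domain or of the form $\overline{w'}^q$ with $\dim w'<m$, and the top cells $\overline{w}^p$ are precisely the simplices declared thin in $M_{K\invoslash\Delta^k[n]}$ (and in $M_{K\invoslash\Delta^n[n]}$ when $k=n$). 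Taking $A_\infty:=\Colim_m A_m$, the $\invoslash$-analogue of \cref{lemma:all_simplexe_are_of_the_desired_shape} gives $A_\infty = \Delta^k[n]\invoslash L$, so the map of the statement is a sequence of pushouts of complicial horn inclusions, hence an anodyne extension.
\end{proof}
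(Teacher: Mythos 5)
Your proposal is correct and is exactly what the paper intends: the paper itself only says "Similarly, we can prove the next three lemmas," i.e.\ its proof of \cref{lemma:antioslash} is precisely the reflection of the proof of \cref{lemma:oslash}, which is what you carry out (reflected cell decomposition à la \cref{lemma:all_simplexe_are_of_the_desired_shape}, then the same horn-filling filtration). The only nitpick is a wording slip in your proof block ("$(v')^p$ replaces $k-1$ by $k$" should read that $v$ is obtained from $v'$ by replacing part of the run of $k$'s by $k-1$, i.e.\ $v'=v[k-1/k]$), which does not affect the argument.
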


\begin{lemma}
\label{lemma:antioslash_saturation_one_way}
Let $(X,tX)$ be a marked simplicial set, $0<k<n$ two integers and a morphism $f:\Delta^k[n]\oslash L \to (X,M)$. Let $w:=(v,x)$ be an object of $\Delta^k[n]\oslash L$ that  includes $k$ and $k+1$,
$$f(v[k-1/k],x)\in tX \Rightarrow f(v,x)\in tX.$$
\end{lemma}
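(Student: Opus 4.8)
The plan is to obtain this lemma as the mirror image of Lemma \ref{lemma:oslash_saturation_one_way} under the order-reversal self-duality of $\Delta$, in exactly the same way that Lemma \ref{lemma:antioslash} is the mirror of Lemma \ref{lemma:oslash}. Recall that reversing the order on each $[m]$ is a self-equivalence of $\Delta$, and hence induces a self-equivalence $X\mapsto X^{\mathrm{rev}}$ of (marked) simplicial sets — this is precisely the construction underlying the ``op'' duality of \cref{section:The even duality}, restricted to the $\Delta[n]$-factor. The first step is to check, by directly comparing the two lists of marked simplices, that $X\mapsto X^{\mathrm{rev}}$ carries $\Delta^k[n]\invoslash K$ isomorphically onto $\Delta^{n-k}[n]\oslash K^{\mathrm{rev}}$, naturally in $K$. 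Indeed, reversal sends a simplex $v$ with $v_{[p-1,p,p+1]}=[k-1,k,k+1]$ to a simplex $v^{\mathrm{rev}}$ with $v^{\mathrm{rev}}_{[q-1,q,q+1]}=[(n-k)-1,\,n-k,\,(n-k)+1]$ for the mirrored position $q$, sends the degenerate shape $[k-1,k,k]$ to $[n-k,n-k,(n-k)+1]$, and turns a degeneracy operator into its mirror; reading these against the defining list of $\Delta^{n-k}[n]\oslash(-)$ gives the identification. Since $0<k<n$ implies $0<n-k<n$, the hypotheses of Lemma \ref{lemma:oslash_saturation_one_way} apply with $k$ replaced by $n-k$.

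With the isomorphism in hand the lemma is immediate. Given $f:\Delta^k[n]\invoslash L\to (X,M)$, apply the reversal duality to get $f^{\mathrm{rev}}:\Delta^{n-k}[n]\oslash L^{\mathrm{rev}}\to X^{\mathrm{rev}}$; since reversal is an isomorphism, a simplex $(v,x)$ is thin for $f$ if and only if $(v^{\mathrm{rev}},x^{\mathrm{rev}})$ is thin for $f^{\mathrm{rev}}$. A simplex $w=(v,x)$ of $\Delta^k[n]\invoslash L$ containing both $k$ and $k+1$ corresponds to a simplex of $\Delta^{n-k}[n]\oslash L^{\mathrm{rev}}$ containing both $n-k$ and $(n-k)-1$, and the face-relabelling $v\mapsto v[k-1/k]$ corresponds to $v^{\mathrm{rev}}\mapsto v^{\mathrm{rev}}[(n-k)+1/(n-k)]$. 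Hence Lemma \ref{lemma:oslash_saturation_one_way} applied to $f^{\mathrm{rev}}$ gives $f^{\mathrm{rev}}(v^{\mathrm{rev}}[(n-k)+1/(n-k)],x^{\mathrm{rev}})\in tX^{\mathrm{rev}} \Rightarrow f^{\mathrm{rev}}(v^{\mathrm{rev}},x^{\mathrm{rev}})\in tX^{\mathrm{rev}}$, which translates back to the desired implication $f(v[k-1/k],x)\in tX \Rightarrow f(v,x)\in tX$.

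The only real work is the bookkeeping in the first step — verifying term by term that reversal matches the markings of $\Delta^k[n]\invoslash(-)$ with those of $\Delta^{n-k}[n]\oslash(-)$, including the correct reindexing of degeneracies — and this is the step I would expect to be the main (indeed the sole) obstacle, though it is entirely routine. An alternative, which avoids setting up the duality, is to transcribe the proof of Lemma \ref{lemma:oslash_saturation_one_way} verbatim: replay the construction preceding Lemma \ref{lemma:all_simplexe_are_of_the_desired_shape} with the mirrored building blocks $w^p,\overline w^p$ attached to an $m$-simplex $w=(v,x)$ whose $v$ contains $k$ but not $k-1$, observe that every such $w$ is either of the shape $\overline w^p$ (so $f(w)$ is automatically thin, being a degenerate shape of the $\invoslash$-marking) or of the shape $w^p$ with $p$ in the relevant range, and run the induction on $p$ in the reversed direction, using that $d^\ell f(\overline w^q)$ is thin for every $\ell$ away from the pivot index. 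I would present the reversal-duality route, since it packages all the index-chasing into a single reusable isomorphism that also disposes of Lemma \ref{lemma:antioslash} and the remaining ``similar'' lemma.
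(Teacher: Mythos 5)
Your argument is correct, and it is worth noting that the paper itself writes no proof at this point: after Lemma \ref{lemma:oslash_saturation_one_way} it merely declares that this lemma (together with \ref{lemma:antioslash} and \ref{lemma:antioslash_saturation_one_way_variation}) is proved ``similarly'', i.e.\ by rerunning the mirrored induction --- which is exactly your second, transcription route. Your primary route is genuinely different in packaging: you realise the mirror symmetry as the order-reversal involution $(-)^{op}$, check the isomorphism $(\Delta^{k}[n]\invoslash K)^{op}\cong \Delta^{n-k}[n]\oslash K^{op}$ (your bookkeeping is right: the pivot shapes $[k-1,k,k+1]$ and $[k-1,k,k]$ with degeneracy $s^{p-1}$ go to $[(n-k)-1,n-k,(n-k)+1]$ and $[n-k,n-k,(n-k)+1]$ with the degeneracy at the mirrored pivot), and then apply Lemma \ref{lemma:oslash_saturation_one_way} at index $n-k$ (legitimate since $0<k<n$ gives $0<n-k<n$) to $f^{op}$; the translation of hypothesis and conclusion (``contains $k$ and $k+1$'' becomes ``contains $n-k$ and $(n-k)-1$'', and $v[k-1/k]$ becomes $v^{op}[(n-k)+1/(n-k)]$) is also correct. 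Two points you should make explicit when writing this up: first, the statement in the paper says $\Delta^{k}[n]\oslash L$ where $\Delta^{k}[n]\invoslash L$ is clearly intended (as its use together with Lemma \ref{lemma:oslash_saturation_two way} shows), and your reading of the intended $\invoslash$ version is the right one; second, to invoke Lemma \ref{lemma:oslash_saturation_one_way} for $f^{op}:\Delta^{n-k}[n]\oslash L^{op}\to (X,tX)^{op}$ you need $(X,tX)^{op}$ to again be a marked simplicial set, which holds because $(-)^{op}$ permutes the complicial thinness and saturation extensions (sending the index-$k$ extension on $\Delta[n]$ to the index-$(n-k)$ one), and this closure is precisely what the proof of the cited lemma uses. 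What your route buys is reuse --- one relabelling isomorphism simultaneously disposes of Lemma \ref{lemma:antioslash} and Lemma \ref{lemma:antioslash_saturation_one_way_variation}; what the paper's implicit route buys is that it needs no duality formalism, at the cost of repeating the induction of Lemma \ref{lemma:oslash_saturation_one_way} with mirrored indices.
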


\begin{lemma}
\label{lemma:antioslash_saturation_one_way_variation}
Let $(X,M)$ be a marked simplicial set, $0<k<n$ two integers and a morphism $f:\Delta^k[n]\oslash L \to (X,M)$. Suppose furthermore that for all simplices $(v,s^{p}x)$ such that $(k-1)\notin v$ and $v_{[p-1,p,p+1]}=[k,k,k+1]$, 
$f(v,s^{p}x)$ is in $tX$. Then if
$w:=(v,x)$ is an object of $\Delta^k[n]\oslash L$ that  includes $k+1$,
$$f(v[k-1/k],x)\in tX \Rightarrow f(v,x)\in tX.$$
\end{lemma}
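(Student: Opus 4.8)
The plan is to run the proof of Lemma~\ref{lemma:oslash_saturation_one_way_variation} again, with Lemma~\ref{lemma:antioslash_saturation_one_way} playing the role of Lemma~\ref{lemma:oslash_saturation_one_way}. Fix $w:=(v,x)$ with $k+1\in v$ and $f(v[k-1/k],x)\in M$. First I would dispose of the case $k\in v$: then $w$ contains both $k$ and $k+1$, so Lemma~\ref{lemma:antioslash_saturation_one_way} applies directly and gives $f(v,x)\in M$. So assume from now on that $k\notin v$. By Lemma~\ref{lemma:all_simplexe_are_of_the_desired_shape} there are then a unique non-degenerate $(v',x)$, with $v'\ni k$, and a unique coordinate $p$ such that $w=((v')^{p},x)$, the notation $(v')^{p}$ and the auxiliary $(m+1)$-simplex $\overline{(v',x)}^{p}$ being as in the Construction preceding that lemma. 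Applying Lemma~\ref{lemma:antioslash_saturation_one_way} to the hypothesis then shows that $f$ is thin on the neighbouring simplex $((v')^{p\pm 1},x)$.

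The heart of the argument is then a face-by-face inspection of $\overline{(v',x)}^{p}$. First, $f$ sends this simplex into $M$: for generic $p$ because its $\{p-1,p,p+1\}$-plateau is of the shape marked in the domain (so it is already thin there), and in the degenerate boundary cases because of the supplementary hypothesis on simplices $(u,s^{q}y)$ with $k-1\notin u$ and $u_{[q-1,q,q+1]}=[k,k,k+1]$. Second, using the face identities recorded in the Construction — the faces $d^{q}\overline{(v',x)}^{p}$ with $q$ outside $\{p,p+1\}$ being lower-dimensional simplices of the same $\overline{(\,\cdot\,)}^{\bullet}$ form, while $d^{p}\overline{(v',x)}^{p}$ and $d^{p+1}\overline{(v',x)}^{p}$ are $((v')^{p-1},x)$ and $((v')^{p},x)=w$ — one checks by a double induction, on $m=\dim w$ and on $p$, that all faces of $f(\overline{(v',x)}^{p})$ except one are thin. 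Since $(X,M)$ is a marked simplicial set it has the right lifting property against the complicial thinness extension $(\Delta^{j}[m+1])'\to(\Delta^{j}[m+1])''$ for the index $j$ of the missing face, and this forces that last face — which is $f(w)$ — to be thin, as desired.

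I expect the only genuinely delicate point to be this ``careful analysis of all the cases'': for each face of $\overline{(v',x)}^{p}$ one has to decide whether it is degenerate, lies in the marking of the domain, or is a lower-dimensional $\overline{(\,\cdot\,)}^{\bullet}$ covered by the inductive hypothesis, and one must in particular treat separately the configurations in which the $k$-coordinate of $v'$ is adjacent to an endpoint of $[n]$ — exactly the place where the extra hypothesis of the lemma is used. Everything else is formally identical to the bookkeeping already carried out for the $\oslash$-versions in Lemmas~\ref{lemma:oslash}, \ref{lemma:oslash_saturation_one_way} and \ref{lemma:oslash_saturation_one_way_variation}, which is why the paper states it is proven ``similarly''.
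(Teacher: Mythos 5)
Your proposal is correct and follows exactly the route the paper intends: the paper gives no separate proof of this lemma (it is declared to be provable ``similarly'' to Lemma \ref{lemma:oslash_saturation_one_way_variation}), and your argument is precisely that dual argument, with Lemma \ref{lemma:antioslash_saturation_one_way} replacing Lemma \ref{lemma:oslash_saturation_one_way}, the same case split on whether $k\in v$, the same auxiliary $(m+1)$-simplex and complicial-thinness-extension step, and the same role for the supplementary hypothesis in the degenerate boundary configurations. The one point to handle with care is that the interpolating chain and the auxiliary simplices must be taken in the mirrored form adapted to the $\invoslash$ marking (plateau entries replaced downward towards $k-1$, with the degeneracy placed as $s^{p-1}$ so that the auxiliary simplex actually lies in the marking of the domain), i.e.\ the dual of the Construction preceding Lemma \ref{lemma:all_simplexe_are_of_the_desired_shape} rather than that Construction verbatim --- but this dualization is exactly what the paper's ``similarly'' presupposes, so your outline matches its proof.
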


\begin{lemma}
\label{lemma:oslash_saturation_two way}
Let $(X,tX)$ be a marked simplicial set, $0<k<k+1<n$ two integers and $f:(\Delta[n]\times L,\overline{\emptyset}) \to (X,tX)$ a morphism. Suppose furthermore that $f$ lifts to $\Delta^k[n]\oslash L$ and $\Delta^{k+1}[n]\invoslash L$. Then, if $v$ is a simplex that includes $k-1$ and $k+2$
$$f(v[k+1/k],x)\in tX \Leftrightarrow f(v,x)\in tX  \Leftrightarrow f(v[k/k+1],x)\in tX.$$
\end{lemma}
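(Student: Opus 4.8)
The plan is to prove Lemma~\ref{lemma:oslash_saturation_two way} by combining the one-way saturation results for $\oslash$ and $\invoslash$ that have already been established. The statement asserts, for a simplex $(v,x)$ with $v$ containing $k-1$ and $k+2$, the chain of equivalences
$$f(v[k+1/k],x)\in tX \Leftrightarrow f(v,x)\in tX  \Leftrightarrow f(v[k/k+1],x)\in tX.$$
First I would prove the left equivalence. Since $f$ lifts to $\Delta^k[n]\oslash L$, Lemma~\ref{lemma:oslash_saturation_one_way} gives the implication $f(v[k+1/k],x)\in tX \Rightarrow f(v,x)\in tX$ whenever $v$ contains both $k$ and $k-1$; so I need to handle the case where $v$ contains $k-1$ (given) and $k+2$ (given) but possibly not $k$. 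Applying Lemma~\ref{lemma:oslash_saturation_one_way} to $v[k+1/k]$ versus $v$ (i.e.\ running the implication with $v$ replaced by the appropriate simplex containing $k$) and then noting symmetrically that $f$ also lifts to $\Delta^{k+1}[n]\invoslash L$, Lemma~\ref{lemma:antioslash_saturation_one_way} gives the reverse implication $f(v,x)\in tX \Rightarrow f(v[k+1/k],x)\in tX$ — here I use that $v[k+1/k]$ contains $k+1$ and $k+2$ and apply the $\invoslash$-lemma with index $k+1$, reading $v[k-1/(k+1)]$ as $v$. This yields the left equivalence.

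For the right equivalence $f(v,x)\in tX \Leftrightarrow f(v[k/k+1],x)\in tX$, I would argue in exactly the dual manner, swapping the roles of $k$ and $k+1$: the lift to $\Delta^{k+1}[n]\invoslash L$ and Lemma~\ref{lemma:antioslash_saturation_one_way} handle one direction (since $v$ contains $k+2$, which plays the role of $k+1$'s successor, and $k-1$), while the lift to $\Delta^{k}[n]\oslash L$ and Lemma~\ref{lemma:oslash_saturation_one_way} handle the other. The hypotheses that $v$ contains $k-1$ \emph{and} $k+2$ are precisely what is needed so that both the $\oslash$-lemma (which wants the simplex to contain $k-1$) and the $\invoslash$-lemma (which wants the simplex to contain $k+2$, as the neighbour of $k+1$) apply to each of the three simplices $v[k+1/k]$, $v$, $v[k/k+1]$; I would spell out this bookkeeping carefully, since the simplices $v[k+1/k]$ and $v[k/k+1]$ may collapse in dimension when $v$ already contains $k$ or $k+1$, and one must check that the relevant one-way lemma still applies to the non-degenerate representative.

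The main obstacle I anticipate is not conceptual but careful case management: the three simplices in play can each either contain or omit $k$ and $k+1$, and for each configuration one must verify which of Lemmas~\ref{lemma:oslash_saturation_one_way}, \ref{lemma:oslash_saturation_one_way_variation}, \ref{lemma:antioslash_saturation_one_way}, \ref{lemma:antioslash_saturation_one_way_variation} is the right tool — in particular whether one needs the plain version or the ``variation'' version (which additionally requires the degenerate $(v,s^p x)$ with $v_{[p-1,p,p+1]}=[k,k,k+1]$ to be thin). I expect that the lift of $f$ to $\Delta^k[n]\oslash L$ and $\Delta^{k+1}[n]\invoslash L$ automatically supplies those auxiliary thinness conditions, so the variation lemmas can be invoked when $v$ fails to contain $k$ (resp.\ $k+1$); confirming this is the crux of the argument. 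Once the correct lemma is identified in each case, the proof is a short diagram chase with no further computation, so I would organize it as: (1) reduce to showing the two ``outer'' implications and the two ``inner'' implications; (2) for each, pick the simplex among $\{v, v[k+1/k], v[k/k+1]\}$ that contains the needed vertex, cite the appropriate one-way lemma, and conclude.
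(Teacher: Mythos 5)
Your overall strategy coincides with the paper's: the paper disposes of Lemma \ref{lemma:oslash_saturation_two way} in one line, as a direct consequence of Lemmas \ref{lemma:oslash_saturation_one_way_variation} and \ref{lemma:antioslash_saturation_one_way_variation}, and you likewise propose to assemble the equivalences from the one-way $\oslash$/$\invoslash$ lemmas. Your observation that the two lifts automatically supply the auxiliary thinness hypotheses of the variation lemmas is also correct, since the degenerate simplices appearing in those hypotheses are already marked in $\Delta^k[n]\oslash L$, respectively $\Delta^{k+1}[n]\invoslash L$.

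There is, however, a genuine gap at exactly the step you identify as the crux: the implications \emph{out of} $f(v,x)$. Each of the one-way lemmas, as stated, transfers thinness from a substituted simplex \emph{to} the simplex it is applied to. Applying them at the only available indices ($\oslash$ at $k$, $\invoslash$ at $k+1$) to the three simplices in play yields exactly $f(v[k+1/k],x)\Rightarrow f(v,x)$, $f(v[k/k+1],x)\Rightarrow f(v,x)$, and $f(v[k+1/k],x)\Leftrightarrow f(v[k/k+1],x)$ (the last because each substitution applied to the other's output gives the simplex in which $k$ and $k+1$ have been identified); no application has $f(v,x)$ as its hypothesis. Your proposed derivation of the reverse implication --- applying the $\invoslash$-lemma at index $k+1$ to $v[k+1/k]$ and reading its hypothesis as $v$ --- fails whenever $v$ contains both $k$ and $k+1$: the hypothesis simplex of that application is $v[k+1/k][k/k+1]=v[k/k+1]$, not $v$, and the two agree only when $k+1\notin v$ (dually for the other reverse implication when $k\notin v$). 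So your bookkeeping can only close the equivalences in those special cases; in general the missing direction has to come from somewhere else, either from the one-way lemmas with the opposite orientation (which is arguably what their proofs, a decreasing induction starting at the unsubstituted simplex, actually establish), or by running the complicial thinness extensions directly on the connecting degenerate simplices marked by the two lifts, choosing the pivot according to the direction needed. To be fair, the paper's own one-line proof glosses over the same point and its appendix statements are not fully consistent with their proofs here; but as written your argument does not establish $f(v,x)\Rightarrow f(v[k+1/k],x)$ nor $f(v,x)\Rightarrow f(v[k/k+1],x)$.
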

\begin{proof}
This is a direct consequence of \ref{lemma:oslash_saturation_one_way_variation} and \ref{lemma:antioslash_saturation_one_way_variation}.
\end{proof}

\begin{lemma}
\label{lemma:oslash_saturation_two way_case_extremum}
Let $(X,tX)$ be a marked simplicial set, $0<n$ an integer and $f:(\Delta[n]\times L,\overline{\emptyset}) \to (X,tX)$ a morphism. If that $f$ lifts to $\Delta^0[n]\oslash L$ Then, 
$$f(v[1/0],x)\in tX \Leftrightarrow f(v,x)\in tX  \Leftrightarrow f(v[0/1],x)\in tX.$$

If $f$ lifts to $\Delta^n[n]\invoslash L$. Then,
$$f(v[n/n-1],x)\in tX \Leftrightarrow f(v,x)\in tX  \Leftrightarrow f(v[n/n-1],x)\in tX.$$
\end{lemma}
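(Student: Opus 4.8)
The plan is to prove the statement about $\Delta^0[n]\oslash L$ and obtain the one about $\Delta^n[n]\invoslash L$ by the mirror‑image argument (equivalently, by passing to opposite simplicial sets: the op‑functor exchanges $\Delta^0[n]\oslash L$ with $\Delta^n[n]\invoslash L^{\mathrm{op}}$ and the substitution $v\mapsto v[1/0]$ with $v\mapsto v[n/n-1]$, and it preserves marked simplicial sets), exactly as the $\invoslash$‑lemmas are obtained from the $\oslash$‑lemmas. For the first assertion, the key observation is that any two simplices $(v,x)$ and $(v',x)$ of $\Delta[n]\times L$ whose $\Delta[n]$‑components agree outside the initial segment on which $v$ (equivalently $v'$) takes values in $\{0,1\}$ are joined by a chain of elementary moves, each changing the length of the $0$‑run by exactly one; since $v[1/0]$ is the member of this chain whose initial segment is all $1$'s and $v[0/1]$ the one whose initial segment is all $0$'s, it suffices to prove: \emph{if $(u,y)$ is a simplex with $u_q=0$ and $u_{q+1}=1$ and $u'$ is obtained from $u$ by changing the entry $u_q$ from $0$ to $1$, then $f(u,y)\in tX\Leftrightarrow f(u',y)\in tX$.} (When $v$ has no entry in $\{0,1\}$ the statement is vacuous, since then $v[1/0]=v[0/1]=v$.)

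To prove the elementary move, let $\tilde v$ be the $(m+1)$‑simplex of $\Delta[n]$ obtained from $u$ by replacing the block $(\dots,u_q=0,u_{q+1}=1,\dots)$ by $(\dots,0,1,1,\dots)$, so that $d^q\tilde v=u'$, $d^{q+1}\tilde v=u$ and $\tilde v_{[q,q+1]}=[0,1]$, and set $\tilde w:=(\tilde v,s^q y)$. Then $\tilde w$ lies in $M_{L\oslash\Delta^0[n]}$, because its $L$‑component is degenerate at $q$ and $\tilde v_{[q,q+1]}=[0,1]$; hence $f(\tilde w)\in tX$ since $f$ lifts to $\Delta^0[n]\oslash L$. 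A routine computation with the simplicial identities ($d^ls^q=s^{q-1}d^l$ for $l<q$ and $d^ls^q=s^qd^{l-1}$ for $l>q+1$) shows that for every $l\notin\{q,q+1\}$ the face $d^l\tilde w$ is again of the form (a $\Delta[n]$‑simplex sending some edge $[r,r+1]$ to $[0,1]$) paired with an $L$‑simplex degenerate at $r$, hence lies in $M_{L\oslash\Delta^0[n]}$; more generally the same holds for every face of $\tilde w$ spanned by a subset of $[m+1]$ containing $\{q,q+1\}$. Consequently $f(\tilde w)$ is a thin $(m+1)$‑simplex all of whose faces are thin except possibly the two adjacent faces $d^{q}f(\tilde w)=f(u',y)$ and $d^{q+1}f(\tilde w)=f(u,y)$; in particular $f(\tilde w)$, together with the thinness of all the required faces, defines a morphism $(\Delta^{q+1}[m+1])'\to(X,tX)$ as soon as $f(u',y)$ is thin, and a morphism $(\Delta^{q}[m+1])'\to(X,tX)$ as soon as $f(u,y)$ is thin. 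Since $(X,tX)$ is a marked simplicial set it has the right lifting property against the complicial thinness extensions $(\Delta^{k}[m+1])'\to(\Delta^{k}[m+1])''$; applying this with $k=q+1$ gives ``$f(u',y)$ thin $\Rightarrow f(u,y)$ thin'' and with $k=q$ gives the converse. This establishes the elementary move, and chaining the moves from $v$ to $v[1/0]$ and to $v[0/1]$ yields the two displayed equivalences.

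The only genuinely delicate point is this last step: unlike in Lemma~\ref{lemma:oslash_saturation_one_way} and its variants (which are the model for the present argument but require $0<k<n$ and so do not literally apply at an extremal vertex), here $f(\tilde w)$ has \emph{two} a priori non‑thin adjacent faces, so no single thinness extension settles the equivalence; one must run the two complementary extensions, each of which is legitimate precisely because every other face of $f(\tilde w)$ — including, in each case, the relevant $(k\mp1)$‑face — has already been shown thin. The remaining work is the bookkeeping: checking the degenerate cases ($m=0$, or the $0$‑run or $1$‑run of length one, where the thinness extensions $(\Delta^{0}[m+1])'\to(\Delta^{0}[m+1])''$ and $(\Delta^{1}[m+1])'\to(\Delta^{1}[m+1])''$ are used), and verifying the ``routine computation'' that the faces $d^l\tilde w$ with $l\notin\{q,q+1\}$ and the fat subfaces all land in $M_{L\oslash\Delta^0[n]}$; none of this presents any difficulty.
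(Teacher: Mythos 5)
Your argument is correct and is essentially the paper's own proof: your interpolating simplex $\tilde w=(\tilde v,s^{q}y)$ is exactly the simplex $\bar{w}^{p}$ from the construction preceding Lemma~\ref{lemma:all_simplexe_are_of_the_desired_shape}, your chain of elementary moves is the paper's chain of the $w^{p}$'s, and the two complicial thinness extensions supply the two implications just as there (and the second assertion is likewise handled by symmetry). One small wording fix: state the elementary move under the hypothesis $u_{q}=0$ and $u_{q+1}\ge 1$ (or $q=m$) rather than $u_{q+1}=1$, since the first link of your chain, leaving the all-zero configuration, has the $0$-run followed by an entry $\ge 2$ or by nothing; your construction (insert a $1$ after position $q$) and all the face computations go through verbatim in that case.
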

\begin{proof}
We will prove only the first assertion, the second one being symmetric. 
Let $w:=(v,x)$ be any simplex such that $v$ includes $0$ but not $1$. Let $a$ be the maximal integer such that $v_a=0$. According to lemma  \ref{lemma:all_simplexe_are_of_the_desired_shape}, it is enough to show that  for all $p<a$, $w^p$ is thin if and only if $w^{p+1}$ is thin. The simplex $\overline{w}^{p+1}$ is thin, and hypotheses imply that for all $d:\Delta[k]\to \Delta[n]$ that reaches $p$ and $p+1$, $d^*w$ is thin. This implies that  $w^p$ is thin if and only if $w^{p+1}$ is thin, which concludes the proof.
\end{proof}

\subsection{Lemmas for bimarked simplicial sets}

\begin{lemma}
Let $n,m$ be two non null integers. The morphism
$$\Delta[n]_c\times \partial\Delta[m] \cup \Delta[n]\times\Delta[m] \to \Delta[n]_c\times \Delta[m]$$
is an identity in the category of bimarked simplicial sets.
\end{lemma}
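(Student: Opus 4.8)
The plan is to verify the statement by splitting it into three parts: that the map of underlying simplicial sets is the identity, that the thin markings agree, and that the cartesian markings agree. The first is immediate, since $\Delta[n]\times\partial\Delta[m]$ is a subcomplex of $\Delta[n]\times\Delta[m]$, so the underlying simplicial set of the source is already all of $\Delta[n]\times\Delta[m]$ and the comparison map is the identity there. For the thin markings, note that $\Delta[n]_c$ and $\Delta[n]$ have exactly the same thin simplices (only the degenerate ones — the top simplex of $\Delta[n]_c$ is marked cartesian, not thin); hence the thin simplices of $\Delta[n]_c\times\partial\Delta[m]$ lie among those of $\Delta[n]\times\Delta[m]$, and both source and target carry the thin marking of $\Delta[n]\times\Delta[m]$, namely the saturation of the set of pairs of degenerate simplices. (Here one uses that in a bimarked simplicial set the thin marking is obtained from its generators by closing under complicial thinness and saturation extensions only, independently of the cartesian marking.)

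The content is the cartesian marking. Unfolding the definition of the Cartesian product, the cartesian cells of $\Delta[n]_c\times\Delta[m]$ form the closure, under the cartesian thinness extensions relative to the thin marking above, of the set of pairs $(x,y)$ with $x$ cartesian in $\Delta[n]_c$ and $y$ cartesian in $\Delta[m]$, that is, with $y$ degenerate and $x$ either degenerate or equal to $\mathrm{id}_{[n]}$; the cartesian cells of the source form the closure of the analogous generating set in which the pairs with $x=\mathrm{id}_{[n]}$ are kept only when $y$ does not surject onto $[m]$ (these come from $\Delta[n]_c\times\partial\Delta[m]$), together with all pairs of degenerate simplices (coming from $\Delta[n]\times\Delta[m]$). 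The source generators are contained in the target ones, and the difference consists precisely of the "graph" cells $(\mathrm{id}_{[n]},y)$ with $y\colon[n]\twoheadrightarrow[m]$ a degenerate surjection — such cells exist only when $n>m$, and there are finitely many of them. So everything reduces to showing that each such cell already lies in the saturated cartesian marking of $\Delta[n]\times\Delta[m]\subseteq$ source.

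For this I would attach these cells to the cartesian marking of the source one at a time, in a suitable order (for instance a refinement of the pointwise order on the maps $y$), each attachment being an instance of a cartesian thinness extension $\Delta^k[n+1]^{\ast}\to\Delta^k[n+1]^{\ast,\ast}$. Concretely, given $(\mathrm{id}_{[n]},s)$ with $s\colon[n]\twoheadrightarrow[m]$ degenerate, one picks a position $k$ where $s$ jumps and inserts an appropriate new vertex to produce an $(n+1)$-simplex $w$ with $d^k w=(\mathrm{id}_{[n]},s)$; one checks that $w$ has a degenerate component (so that $w$, and every simplex of $\Delta[n+1]$ through $\{k-1,k,k+1\}$, is sent to a thin cell) and that the two flanking faces $d^{k-1}w$ and $d^{k+1}w$ are already cartesian — each being a pair of degenerate simplices, hence cartesian in $\Delta[n]\times\Delta[m]$, or a graph cell $(\mathrm{id}_{[n]},s')$ with $s'$ non-surjective, hence cartesian in $\Delta[n]_c\times\partial\Delta[m]$, or a graph cell attached earlier. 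The cartesian thinness extension then forces $d^k w=(\mathrm{id}_{[n]},s)$ cartesian, and together with the inclusion of generating sets this yields the equality of cartesian markings. The only genuinely delicate point is this last step — choosing, for each $s$, the right vertex to insert and an order of attachment for which the flanking faces are always available — a finite but fiddly combinatorial check; everything else is formal.
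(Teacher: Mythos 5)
Your reduction is exactly the paper's: the map is the identity on underlying simplicial sets, the thin markings coincide, and the only cells at issue are the graph cells $(i_n,v)$, where $i_n$ denotes the top non-degenerate $n$-simplex of $\Delta[n]$ and $v\colon[n]\twoheadrightarrow[m]$ is a (necessarily degenerate, since $n>m$) surjection, to be produced by cartesian thinness extensions applied to suitable $(n+1)$-simplices. However, the step you defer as ``a finite but fiddly combinatorial check'' is the entire mathematical content of the lemma, and your sketch is wrong precisely there. In the componentwise product marking a cell is thin only when \emph{both} components are degenerate, so ``$w$ has a degenerate component'' does not make $w$, let alone all of its faces through $\{k-1,k,k+1\}$, thin. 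This is not cosmetic: it is what forces the inserted vertex to be staggered. For instance the naive insertion $w=(s^{j}i_n,s^{j}v)$ (repeating the $j$-th vertex of the pair) meets the conditions you state, yet $d^{j}w=d^{j+1}w=(i_n,v)$, so one of your two ``flanking'' faces is the very cell you are trying to mark and the argument becomes circular.

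The paper supplies the missing choice: let $p$ be maximal with $v_p=0$ and set $w:=(s^{p+1}i_n,\,s^{p}v)$, i.e.\ insert the vertex $(p+1,v_p)$. Then every face of $w$ containing $\{p,p+1,p+2\}$ has \emph{both} components degenerate (the first component repeats at positions $p+1,p+2$, the second at $p,p+1$), hence is thin; $d^{p+1}w=(i_n,v)$; $d^{p}w=(s^{p}d^{p}i_n,v)$ has both components degenerate, hence is cartesian; and $d^{p+2}w=(i_n,\,s^{p}d^{p+1}v)$ either has non-surjective second component (so lies in $\Delta[n]_c\times\partial\Delta[m]$, where it is cartesian) or is another graph cell whose surjection has strictly more occurrences of $0$. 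So the extension $\Delta^{p+1}[n+1]^{\ast}\to\Delta^{p+1}[n+1]^{\ast,\ast}$ applies, provided one also settles the ordering issue you raise: treat the surjections by decreasing number of occurrences of $0$, the extreme case $v=(0,\dots,0,1,\dots,m)$ serving as the base, so that the face $d^{p+2}w$ is always already cartesian. Without the explicit simplex $w$, the corrected thinness criterion, and this induction, the proposal does not yet prove the lemma.
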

\begin{proof}
If $m\geq n$, this is an identity at the level of bistratified sets. We then suppose $m<n$.
Let $M$ be the set of cartesian simplices of $\Delta[n]_c\times \partial\Delta[m] \cup \Delta[n]$.  Let $v$ be a degeneracy of the non trivial $m$-simplex of $\Delta[m]$. We then have to show that $(i_n,v)$ is in $M$. Let $p$ be the maximal integer such that $v_p=0$.
Let $w:=(s^{p+1}i_n,s^pv)$. The simplex $d^pw$  is thin, and so in $M$, and the simplex $d^{p+2}w$ is in $M$. The cartesian thinness extension implies that $(i_n,v)$ is in $M$.
\end{proof}

\begin{lemma}
Let $n,m$ be two non null integers. 
The morphism
$$\Delta[n]_c\times \Delta[m] \cup \Delta[n]\times\Delta[m]_c \to \Delta[n]_c\times \Delta[m]_c$$
is an identity in the category of bimarked simplicial sets.
\end{lemma}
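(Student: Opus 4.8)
The plan is to note first that the map is \emph{entire} — both its source and target have underlying simplicial set $\Delta[n]\times\Delta[m]$ — and is a monomorphism, so it is an isomorphism as soon as it is surjective on thin and on cartesian simplices. The thin part is immediate: the thin marking of $\Delta[n]_c\times\Delta[m]_c$ is the reflection of the set of pairs $(x,y)$ with $x$ and $y$ both degenerate, and the same description holds for each of $\Delta[n]_c\times\Delta[m]$ and $\Delta[n]\times\Delta[m]_c$; hence the source, being the colimit of these along entire maps, has exactly the same thin marking as the target. So the whole content is the claim that every cartesian simplex of $\Delta[n]_c\times\Delta[m]_c$ is already cartesian in $A:=\Delta[n]_c\times\Delta[m]\cup\Delta[n]\times\Delta[m]_c$.

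Here I would use that, by definition of the product of bimarked simplicial sets, the cartesian marking of $\Delta[n]_c\times\Delta[m]_c$ is generated — under the complicial thinness, saturation and cartesian thinness extensions — by the pairs $(x,y)$ with $x$ a cartesian simplex of $\Delta[n]_c$ (so $x$ degenerate or $x=i_n$) and $y$ a cartesian simplex of $\Delta[m]_c$ (so $y$ degenerate or $y=i_m$). Since $A$ is again a bimarked simplicial set (being a colimit in that category), its cartesian simplices are stable under these extensions, so it suffices to place each generating pair in $A$. Three of the four cases are free: if $x,y$ are both degenerate then $(x,y)$ is thin in $\Delta[n]\times\Delta[m]\subseteq A$; if $x=i_n$ and $y$ is degenerate then $(x,y)$ is already cartesian in $\Delta[n]_c\times\Delta[m]\subseteq A$; and symmetrically when $x$ is degenerate and $y=i_m$. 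The only surviving generator is $(i_n,i_m)$, which can only occur when $n=m$, in which case it is the diagonal $n$-simplex $d=(i_n,i_n)$ of $\Delta[n]\times\Delta[n]$.

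To force $d$ into the cartesian marking of $A$ I would argue as in the proof of the preceding lemma, through one cartesian thinness extension. Take the $(n+1)$-simplex $\sigma:=(s^1 i_n,\,s^0 i_n)$ of $\Delta[n]\times\Delta[n]$. Both of its coordinates are degenerate, so $\sigma$, and more generally every face of $\sigma$ whose vertex set contains $\{0,1,2\}$, is thin in $\Delta[n]\times\Delta[n]\subseteq A$: the first coordinate repeats its value at the vertices $1$ and $2$ and the second at the vertices $0$ and $1$, so any face retaining all of $0,1,2$ stays degenerate in each coordinate. The simplicial identities give $d^{1}\sigma=d$, $d^{0}\sigma=(s^0 d^0 i_n,\,i_n)$ and $d^{2}\sigma=(i_n,\,s^0 d^1 i_n)$; the middle face is cartesian in $\Delta[n]\times\Delta[n]_c\subseteq A$ and the last in $\Delta[n]_c\times\Delta[n]\subseteq A$. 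Hence $\sigma$ defines a map $\Delta^{1}[n+1]^{\ast}\to A$, and lifting against the cartesian thinness extension $\Delta^{1}[n+1]^{\ast}\to\Delta^{1}[n+1]^{\ast,\ast}$ shows that $d^{1}\sigma=d$ is cartesian in $A$. This completes surjectivity on markings, hence the lemma.

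The only delicate point, exactly as in the previous lemma, is the choice of the auxiliary simplex: one needs $\sigma$ arranged so that its $0$- and $2$-faces fall into the two pieces of $A$ while its $1$-face is precisely the diagonal, and one must check that every face of $\sigma$ which the extension demands to be thin genuinely is. With $\sigma=(s^1 i_n,\,s^0 i_n)$ both verifications are short computations with the simplicial identities, but they are where the combinatorial substance of the proof lies.
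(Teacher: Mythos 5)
Your proof is correct and takes essentially the same route as the paper: after reducing to the case $n=m$ and to the single problematic simplex $(i_n,i_n)$, you mark it by one cartesian thinness extension applied to a degenerate $(n+1)$-simplex whose $0$- and $2$-faces lie in the two pieces of the union and whose $1$-face is the diagonal — the paper uses $(s^0 i_n,\,s^1 i_n)$ where you use $(s^1 i_n,\,s^0 i_n)$, an immaterial choice. You merely spell out the verifications (reduction to generating cartesian pairs, thinness of the faces containing $\{0,1,2\}$) that the paper leaves implicit.
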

\begin{proof}
If $m\neq n$, this is an identity at the level of bistratified sets. We then suppose $m=n$.
Let $M$ be the set of cartesian simplices of $\Delta[n]_c\times \Delta[n] \cup \Delta[n]\times\Delta[n]_c$.  We then have to show that $(i_n,i_n)$ is in $M$. Let $w:=(s^{0}i_n,s^1i_n)$. Simplices $d^0w$  and $d^{2}w$ are in $M$. The cartesian thinness extension implies that $(i_n,i_n)$ is in $M$.
\end{proof}

\begin{lemma}
Let $n,m$ be two non null integers. The morphism
$$\Delta[n]_c\times \Delta[m] \cup \Delta[n]\times\Delta[m]_t \to \Delta[n]_c\times \Delta[m]_t$$
is an identity in the category of bimarked simplicial sets.
\end{lemma}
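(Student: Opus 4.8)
The plan is to follow, almost verbatim, the pattern of the two preceding lemmas. First I would observe that the source $E:=\Delta[n]_c\times\Delta[m]\cup\Delta[n]\times\Delta[m]_t$ and the target $\Delta[n]_c\times\Delta[m]_t$ have the same underlying simplicial set $\Delta[n]\times\Delta[m]$ and that the comparison map is entire, so proving it is an identity amounts to the reverse inclusion of markings: every thin (resp.\ cartesian) simplex of $\Delta[n]_c\times\Delta[m]_t$ is thin (resp.\ cartesian) in $E$. Since $E$ is an object of $\bmSset$, it has the right lifting property against complicial thinness, saturation and cartesian thinness extensions, so it suffices to check that each \emph{generating} thin (resp.\ cartesian) simplex of the target — a pair $(v,w)$ with $v$ thin (resp.\ cartesian) in $\Delta[n]_c$ and $w$ thin (resp.\ cartesian) in $\Delta[m]_t$ — already lies in $E$.

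For thin simplices this is immediate: $v$ is then degenerate and $w$ is either degenerate or equal to the top simplex $i_m$, so $(v,w)$ lies in $\Delta[n]\times\Delta[m]_t$. For cartesian simplices, write $i_n,i_m$ for the top non-degenerate simplices; a generating cartesian simplex of the target is a $(v,w)$ with $v\in\{\text{degenerate}\}\cup\{i_n\}$ and $w\in\{\text{degenerate}\}\cup\{i_m\}$. If $w$ is degenerate then $(v,w)\in\Delta[n]_c\times\Delta[m]$; if $w=i_m$ then $v$ is $m$-dimensional, so either $v$ is degenerate, whence $(v,i_m)\in\Delta[n]\times\Delta[m]_t$, or $v=i_n$, which forces $n=m$. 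Thus the only simplex not manifestly in $E$ is $(i_n,i_n)$ in the case $n=m$, exactly the situation of the previous lemma.

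To treat it I would set $w:=(s^0 i_n,s^1 i_n)$, an $(n+1)$-simplex of $\Delta[n]\times\Delta[n]$. Although $w$ is not degenerate, it is thin in $E$ because both of its coordinates are degenerate simplices of $\Delta[n]$, and the same is true of every face of $w$ whose vertex set contains $\{0,1,2\}$ (the first coordinate repeats its value at position $0$, the second at position $1$, and any such face inherits a repeated value). Moreover $d^0w=(i_n,s^0 d^0 i_n)$ is cartesian in $E$ (it lies in $\Delta[n]_c\times\Delta[m]$: first coordinate $i_n$, second coordinate degenerate) and $d^2w=(s^0 d^1 i_n,i_n)$ is cartesian in $E$ (it lies in $\Delta[n]\times\Delta[m]_t$: first coordinate degenerate, second coordinate the thin top simplex $i_m$). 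Hence $w$ defines a morphism $\Delta^1[n+1]^{\ast}\to E$, and the cartesian thinness extension $\Delta^1[n+1]^{\ast}\to\Delta^1[n+1]^{\ast,\ast}$ (legitimate since $0<1<n+1$, as $n\geq 1$) yields a lift showing that $d^1w=(i_n,i_n)$ is cartesian in $E$. This supplies the missing inclusion and finishes the proof.

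The main obstacle is bookkeeping rather than conceptual: one must keep in mind that the thin simplices of a product contain products of degenerate simplices even when such a product is itself non-degenerate (this is precisely what makes $w$ and its relevant faces thin in $E$), and one must verify that all the thin and cartesian simplices demanded by the codomain $\Delta^1[n+1]^{\ast,\ast}$ are present in $E$ before invoking the lifting property. Apart from that, the argument is a direct transcription of the proofs of the two preceding lemmas.
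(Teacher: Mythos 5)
Your proof is correct and follows essentially the same route as the paper's: after reducing to the case $n=m$ and the single problematic simplex $(i_n,i_n)$, it uses the same auxiliary simplex $w=(s^0i_n,s^1i_n)$, the same faces $d^0w$ (cartesian) and $d^2w$ (thin, hence cartesian), and the same cartesian thinness extension to conclude. Your version merely spells out the reduction to generating marked simplices and the thinness of the faces containing $\{0,1,2\}$, which the paper leaves implicit.
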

\begin{proof}
If $m\neq n$, this is an identity at the level of bistratified sets. We then suppose $m=n$.
Let $M$ be the set of cartesian simplices of $\Delta[n]_c\times \Delta[n] \cup \Delta[n]\times\Delta[n]_t$.  We then have to show that $(i_n,i_n)$ is in $M$. Let $w:=(s^{0}i_n,s^1i_n)$. The simplex $d^0w$ is in $M$, and the simplex  $d^{2}w$ is thin. The cartesian thinness extension implies that $(i_n,i_n)$ is in $M$.
\end{proof}

\begin{prop}
\label{prop:behavious_of_times_with_cofibration}
Let $n$ be an non null integer. Let $K\to L$ be a cofibration which is surjective  on $0$-simplices. The map 
$$\Delta[n]_c\times K\cup \Delta[n]\times L\to \Delta[n]_c\times L$$ 
is an identity in the category of bimarked simplicial sets.
\end{prop}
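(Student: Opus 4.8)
The plan is to reduce the statement to the three lemmas immediately above by exploiting the cellular structure of the cofibration $K\to L$.

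First I would observe that the two bimarked simplicial sets in question already have the same underlying simplicial set: since $K\to L$ is a monomorphism, the underlying simplicial set of $\Delta[n]_c\times K\cup\Delta[n]\times L$ is $\Delta[n]\times K\cup\Delta[n]\times L=\Delta[n]\times L$, which is also the underlying simplicial set of $\Delta[n]_c\times L$. A straightforward check shows that the sets of thin simplices coincide as well: $\Delta[n]_c$ and $\Delta[n]$ carry the same (trivial) thin marking, so thinness in either object is controlled entirely by the $L$-factor together with the degeneracies of the $\Delta[n]$-factor, all of which already occur in $\Delta[n]\times L$, and no cartesian thinness extension produces new thin simplices. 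Hence the only content of the proposition is that every cartesian simplex of $\Delta[n]_c\times L$ already belongs to the set $M$ of cartesian simplices of $U:=\Delta[n]_c\times K\cup\Delta[n]\times L$.

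Next, since $K\to L$ is a cofibration which is surjective on $0$-simplices, $L$ is obtained from $K$ by a (possibly transfinite) sequence of pushouts
$$K=L_0\hookrightarrow L_1\hookrightarrow\cdots\hookrightarrow L=\Colim_\alpha L_\alpha$$
along the generating cofibrations of $\bmSset$ of positive dimension, namely $\partial\Delta[d]\to\Delta[d]$, $\Delta[d]\to\Delta[d]_c$ and $\Delta[d]_c\to\Delta[d]_t$ with $d\geq 1$; no cell of the form $\emptyset\to\Delta[0]$ is needed because $K\to L$ hits every $0$-simplex, and the $0$-dimensional instances of the other generators are identities (there are no thin or cartesian $0$-simplices). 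I would then prove by induction on $\alpha$ the identity $\Delta[n]_c\times K\cup\Delta[n]\times L_\alpha=\Delta[n]_c\times L_\alpha$: the colimit step is immediate, and the base case $\alpha=0$ holds because $\Delta[n]\times K\subseteq\Delta[n]_c\times K$. For the successor step one uses the inductive hypothesis to rewrite $\Delta[n]_c\times K\cup\Delta[n]\times L_{\alpha+1}=\Delta[n]_c\times L_\alpha\cup\Delta[n]\times L_{\alpha+1}$, and then restricts along $\id\times y$, where $y\colon\Delta[d]\to L_{\alpha+1}$ is the attaching map of the new cell; the relevant one of the three lemmas, applied with $m=d$, identifies $\Delta[n]_c\times\Delta[d]$ (resp. $\Delta[n]_c\times\Delta[d]_c$, resp. $\Delta[n]_c\times\Delta[d]_t$) with $\Delta[n]_c\times\partial\Delta[d]\cup\Delta[n]\times\Delta[d]$ (resp. $\Delta[n]_c\times\Delta[d]\cup\Delta[n]\times\Delta[d]_c$, resp. $\Delta[n]_c\times\Delta[d]\cup\Delta[n]\times\Delta[d]_t$), and pushing this identity forward along $\id\times y$ places every cartesian simplex of $\Delta[n]_c\times L_{\alpha+1}$ lying over the newly attached cell into $\Delta[n]_c\times L_\alpha\cup\Delta[n]\times L_{\alpha+1}$.

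The hard part is the interaction between this cell-by-cell bookkeeping and the cartesian saturation built into the definition of a bimarked simplicial set: because $\Delta[n]_c\times(-)$ need not preserve colimits of bimarked simplicial sets on the nose, one must argue directly that any cartesian simplex of $\Delta[n]_c\times L_{\alpha+1}$ which is not already cartesian in $\Delta[n]_c\times L_\alpha$ must factor through $\id\times y$ — that is, that the newly created cartesian simplices live only over the newly attached cell. I would handle this exactly as in the proofs of the three lemmas, by running the same cartesian thinness extension argument over the attached simplex to show that all the faces and degeneracies that witness cartesianness are of the form treated there; once this is established the induction closes and yields $U=\Delta[n]_c\times L$, which is the claim.
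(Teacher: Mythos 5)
Your proof is correct and is essentially the argument the paper leaves implicit: its proof of this proposition is just ``a direct consequence of the three last lemmas'', i.e.\ precisely the cellular induction you spell out, decomposing $K\to L$ into pushouts along $\partial\Delta[d]\to\Delta[d]$, $\Delta[d]\to\Delta[d]_c$ and $\Delta[d]_c\to\Delta[d]_t$ with $d\geq 1$ (the absence of $0$-cells being exactly where surjectivity on $0$-simplices enters) and pushing each lemma forward along the characteristic map of the attached cell. The ``hard part'' you flag at the end is actually a non-issue: since saturation is monotone and both sides share the same underlying simplicial set and thin data, it suffices to place the \emph{unsaturated} product marking of $\Delta[n]_c\times L_{\alpha+1}$ (pairs whose two coordinates are cartesian) into the saturation of $\Delta[n]_c\times L_\alpha\cup\Delta[n]\times L_{\alpha+1}$, and such a pair visibly either already lies in $\Delta[n]_c\times L_\alpha$ or factors through the characteristic map of the new cell, so no re-run of the cartesian thinness-extension arguments from the three lemmas is required.
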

\begin{proof}
This is a direct consequence of the three last lemmas.
\end{proof}

\nocite{*}
\bibliography{Dualities_in_the_complicial_model}{}
\bibliographystyle{plain}

\end{document}